\def\RSthmtxt{theorem~}\newref{thm}{name = \RSthmtxt}}
\def\RSlemtxt{lemma~}\newref{lem}{name = \RSlemtxt}}
\theoremstyle{plain}
\newtheorem{thm}{\protect\theoremname}[subsection]
\theoremstyle{definition}
\newtheorem{defn}[thm]{\protect\definitionname}
\theoremstyle{definition}
\newtheorem{rem}[thm]{\protect\remarkname}
\theoremstyle{plain}
\theoremstyle{plain}
\newtheorem{conj}[thm]{Conjecture}
\newtheorem{lem}[thm]{\protect\lemmaname}
\theoremstyle{definition}
\newtheorem{notation}[thm]{\protect\notationname}
\theoremstyle{plain}
\newtheorem{prop}[thm]{\protect\propositionname}
\theoremstyle{plain}
\newtheorem{cor}[thm]{\protect\corollaryname}
\theoremstyle{definition}
\newtheorem{example}[thm]{\protect\examplename}
\newtheorem{theorem}{Theorem}
\providecommand{\corollaryname}{Corollary}
\providecommand{\definitionname}{Definition}
\providecommand{\examplename}{Example}
\providecommand{\lemmaname}{Lemma}
\providecommand{\notationname}{Notation}
\providecommand{\propositionname}{Proposition}
\providecommand{\questionname}{Question}
\providecommand{\remarkname}{Remark}
\providecommand{\theoremname}{Theorem}
\newcommand{\cocone}{\mathbin{\rotatebox[origin=c]{90}{$\triangle$}}}
\newcommand{\cone}{\mathbin{\rotatebox[origin=c]{-90}{$\triangle$}}}
\newcommand{\es}{\operatorname{{\small \normalfont{\text{\O}}}}}
\newcommand{\iso}{\xrightarrow{\,\smash{\raisebox{-0.5ex}{\ensuremath{\scriptstyle\sim}}}\,}}
\DeclareMathOperator{\supp}{supp}
\begin{document}
\global\long\def\into{\hookrightarrow}%
\global\long\def\onto{\twoheadrightarrow}%
\global\long\def\ss{\subseteq}%
\global\long\def\adj{\leftrightarrows}%
\global\long\def\oto#1{\xrightarrow{#1}}%
\global\long\def\nto{\rightarrowtail}%
\global\long\def\from{\leftarrow}%
%\global\long\def\es{\operatorname{{\small{\normalfont{\text{\O}}}}}\normalfont{\text{\O}}}%
%\global\long\def\cone{\triangleright}%
\global\long\def\bb#1{\mathbb{#1}}%
\global\long\def\red#1{\textcolor{DarkBlue}{#1}}%
\global\long\def\white#1{\textcolor{white}{#1}}%

\global\long\def\st{\operatorname{st}}%
\global\long\def\pt{\operatorname{pt}}%
\global\long\def\nm{\operatorname{Nm}}%
\global\long\def\Id{\operatorname{Id}}%
\global\long\def\one{\mathds{1}}%
\global\long\def\bc{\operatorname{BC}}%
\global\long\def\can{\operatorname{can}}%
\global\long\def\tf{\operatorname{tf}}%
\global\long\def\tor{\operatorname{tor}}%
\global\long\def\colim{\operatorname*{\underrightarrow{\lim}}}%
\global\long\def\holim{\operatorname*{\underleftarrow{\lim}}}%
\global\long\def\map{\operatorname{Map}}%
\global\long\def\End{\operatorname{End}}%
\global\long\def\fun{\operatorname{Fun}}%
\global\long\def\hom{\operatorname{Hom}}%
\global\long\def\LMod{\operatorname{LMod}}%
\global\long\def\BMod{\operatorname{BMod}}%
\global\long\def\RMod{\operatorname{RMod}}%
\global\long\def\Mod{\operatorname{Mod}}%
\global\long\def\alg{\operatorname{Alg}}%

\global\long\def\calg{\operatorname{CAlg}}%
\global\long\def\cocalg{\operatorname{coCAlg}}%
\global\long\def\Mod{\operatorname{Mod}}%
\global\long\def\cat{\mathbf{Cat}}%
\global\long\def\Sp{\operatorname{Sp}}%
\global\long\def\comm{\operatorname{CommRing}}%
\global\long\def\frob{\operatorname{FrobRing}}%
\global\long\def\T{T\!}%
\global\long\def\K{K\!}%
\global\long\def\X{F\!}%

\title{Ambidexterity in Chromatic Homotopy Theory}
\author{Shachar Carmeli, Tomer M. Schlank, and Lior Yanovski}
\maketitle

\begin{abstract}
We extend the theory of ambidexterity developed
by M. J. Hopkins and J. Lurie and show that the $\infty$-categories
of $\T\left(n\right)$-local spectra are $\infty$-semiadditive for
all $n$, where $\T\left(n\right)$ is the telescope on a $v_{n}$-self
map of a type $n$ spectrum. This extends and provides a new
proof for the analogous result of Hopkins-Lurie on $\K\left(n\right)$-local
spectra. Moreover, we show that $\K\left(n\right)$-local and $\T\left(n\right)$-local spectra are respectively, the minimal and maximal $1$-semiadditive localizations of spectra with respect to a homotopy ring, and that all such localizations are in fact $\infty$-semiadditive.  
As a consequence, we deduce that several different notions of ``bounded chromatic height'' for homotopy rings are equivalent, and in particular, that $\T\left(n\right)$-homology of $\pi$-finite spaces depends only on the $n$-th Postnikov truncation.
A key ingredient in the proof of the main result is a construction of a certain power operation for commutative ring objects in stable $1$-semiadditive $\infty$-categories.
This is closely related to some known constructions for Morava $E$-theory and is of independent interest. 
Using this power operation we also give a new proof, and a generalization, of a nilpotence conjecture of J.P. May, which was proved by A. Mathew, N. Naumann, and J. Noel.
\end{abstract}

\begin{figure}[H]
\centering{}\includegraphics[scale=0.475]{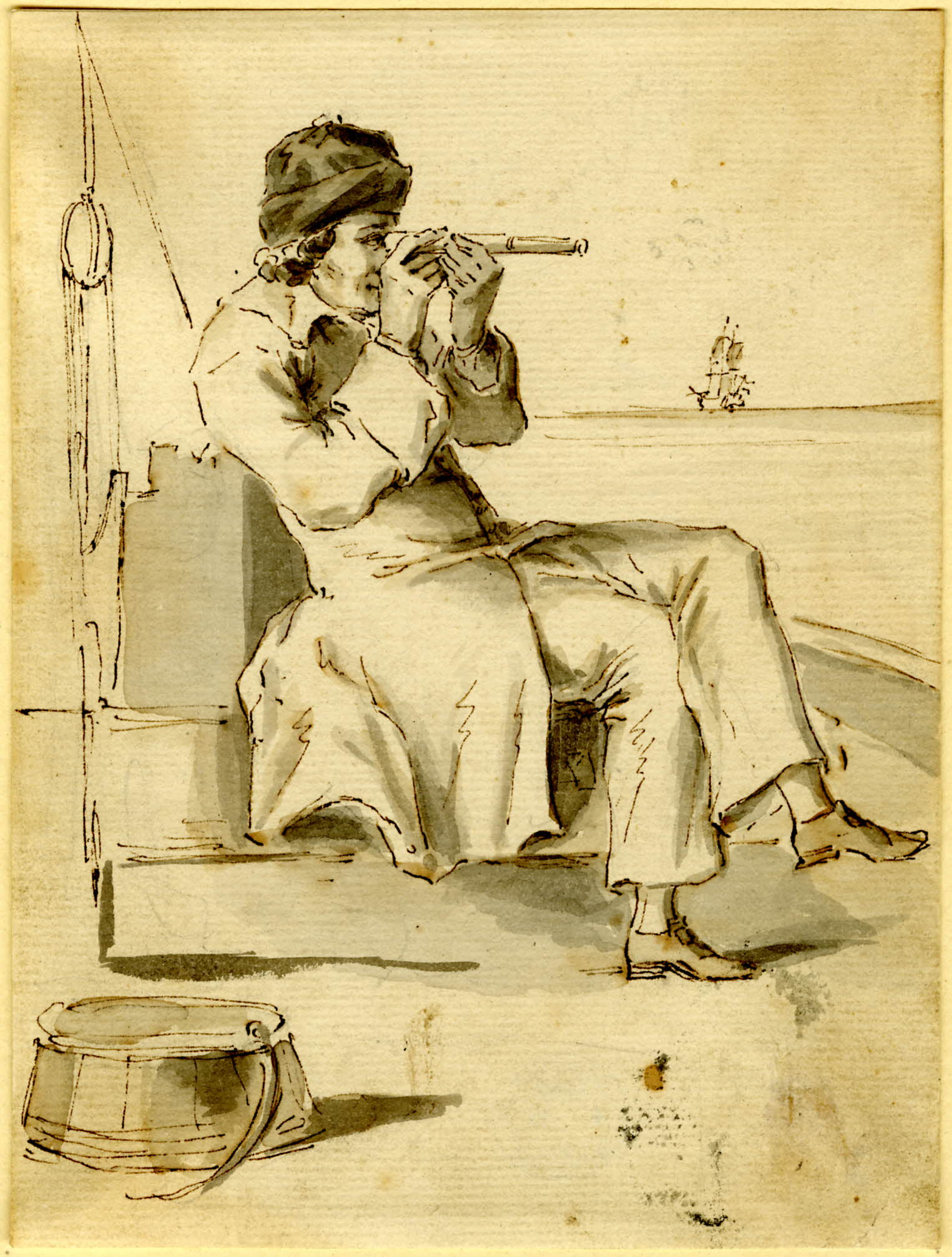}\caption{A Seaman, holding a telescope with two hands, Louis Peter Boitard
{[}{\copyright}Trustees of The British Museum{]}}
\end{figure}

\tableofcontents{}

\section{Introduction}

\subsection{Main Results}
Given an abelian group $A$ with an action of a finite group $G$, summation along the orbit provides a natural map $\nm_G\colon A_G \to A^G$ from the co-invariants to the invariants.
In general, this map may have both non-trivial kernel and cokernel. 
However, when $A$ is a rational vector space, $\nm_G$ is always an isomorphism.
Similarly, given a spectrum $X$  with an action of $G$, the
spectra of homotopy orbits $X_{hG}$ and homotopy fixed points $X^{hG}$
are also related by a canonical norm map $\nm_G\colon X_{hG}\to X^{hG}$.
As before, this map is usually far from being an equivalence. 
However, there are certain homology theories, such that when working locally with respect to them, the analogous norm map is always a local equivalence.
For a spectrum $E$, let us denote by $\Sp_{E}$ the $\infty$-category
of $E$-local spectra, and for $X\in\Sp_{E}$ with a $G$-action, we denote
by $X_{hG}$ and $X^{hG}$ the homotopy orbits and homotopy fixed
points respectively, in the $\infty$-category $\Sp_{E}$. 
\begin{thm}
[Greenlees-Hovey-Sadofsky, \cite{HState,GState}]\label{thm:Hovey_Sadofsky_Greenlees}Let
$\K\left(n\right)$ be Morava $K$-theory of height $n$. For every
$X\in\Sp_{K\left(n\right)}$ with an action of a finite group $G$,
the canonical norm map
\[
\nm_G\colon X_{hG}\iso X^{hG}\quad\in\quad\Sp_{K\left(n\right)}
\]
is an equivalence.
\end{thm}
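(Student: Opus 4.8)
The plan is to convert the statement into the vanishing of a Tate construction, strip off the coefficients so as to reduce to a complex--oriented ring, and then run the computation that makes the height--$n$ hypothesis do its job.

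\emph{From norms to Tate constructions.} For a finite group $G$ and $X\in\fun\!\left(BG,\Sp_{\K(n)}\right)$, the norm sits in a cofiber sequence $X_{hG}\oto{\nm_{G}}X^{hG}\to X^{tG}$, with $X^{tG}$ the Tate construction formed inside $\Sp_{\K(n)}$, so $\nm_{G}$ is an equivalence exactly when $X^{tG}=0$. Since $\left(-\right)^{tG}\colon\fun\!\left(BG,\Sp_{\K(n)}\right)\to\Sp_{\K(n)}$ is lax symmetric monoidal, $X^{tG}$ is a module over the ring $\one_{\K(n)}^{tG}$ built from the unit with its trivial $G$--action; hence the whole theorem reduces to showing $\one_{\K(n)}^{tG}=0$ for every finite $G$. (If one wishes, a standard transfer argument with a $p$--Sylow subgroup, followed by a central $C_{p}$ inside a $p$--group together with transitivity of the norm $\nm_{G}\simeq\nm_{G/C}\circ\left(\nm_{C}\right)_{h\left(G/C\right)}$ and the module argument just given, further reduces this to $G=C_{p}$.)

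\emph{Reduction to a complex--oriented theory.} The functor $\K(n)\otimes\left(-\right)$ is conservative on $\Sp_{\K(n)}$, so it suffices to prove $\K(n)\otimes\one_{\K(n)}^{tG}=0$. Because $\K(n)^{*}\!\left(BG\right)$ is a finitely generated $\K(n)^{*}$--module (a finite free module of rank $p^{n}$ when $G=C_{p}$), the object $L_{\K(n)}\Sigma^{\infty}_{+}BG$ is dualizable in $\Sp_{\K(n)}$; since $\K(n)\otimes\left(-\right)$ is symmetric monoidal it preserves duals, which lets one commute it past the limit defining $\left(-\right)^{hG}$ and identify $\K(n)\otimes\one_{\K(n)}^{tG}\simeq\K(n)^{tG}$, the Tate construction of $\K(n)$ with trivial $G$--action. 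We are thus reduced to the purely complex--oriented statement $\K(n)^{tG}=0$.

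\emph{The computation, and the crux.} For $G=C_{p}$, complex orientability gives $\K(n)^{*}\!\left(BC_{p}\right)=\K(n)^{*}\!\left[\left[x\right]\right]/\!\left(\left[p\right]\!\left(x\right)\right)$ with $x$ the Euler class of the tautological line bundle, and the Tate construction is obtained by inverting $x$, so that $\pi_{*}\K(n)^{tC_{p}}\cong\K(n)^{*}\!\left(\!\left(x\right)\!\right)/\!\left(\left[p\right]\!\left(x\right)\right)$; this is the content of \cite{GState}. But for the height--$n$ Honda formal group law $\left[p\right]\!\left(x\right)=v_{n}x^{p^{n}}$, and once $x$ is inverted $v_{n}x^{p^{n}}$ is a unit, so the target ring is $0$; hence $\K(n)^{tC_{p}}=0$, and the same argument run with a faithful representation of a general finite $G$ (or the reduction to $C_{p}$ above) gives $\K(n)^{tG}=0$. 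The only genuinely non--formal step is this last one: identifying $\pi_{*}\K(n)^{tG}$ with the localized quotient by the $p$--series, and observing that inverting the Euler class turns $\left[p\right]\!\left(x\right)$ into a unit. Everything else — the passage to Tate constructions, the module argument, the dualizability used to base change along $\K(n)$, and the reduction to $C_{p}$ — is formal manipulation with limits, colimits, and transfers, and it is precisely in the $p$--series computation that the hypothesis ``Morava $K$--theory of height $n$'', rather than an arbitrary localization, is indispensable.
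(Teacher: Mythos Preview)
Your proof is correct and is essentially the classical Greenlees--Sadofsky/Hovey--Sadofsky argument, which is precisely what the paper is \emph{citing} rather than proving: this theorem appears in the introduction as background, attributed to \cite{HState,GState}, with no proof given in the paper itself.

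That said, the paper does eventually recover this statement as a byproduct of its main result, and that route is genuinely different from yours. The paper takes Kuhn's theorem (\thmref{Kuhn}, the $1$-semiadditivity of $\Sp_{T(n)}$) as a black box, applies the Bootstrap Machine (\thmref{Bootstrap_Machine}) together with the Ravenel--Wilson computation of $\dim_{\widehat{\Mod}_{E_n}}(B^kC_p)$ to deduce $\infty$-semiadditivity of $\Sp_{T(n)}$ (\thmref{Tn_Semiaddi}), and then localizes to obtain $\infty$-semiadditivity of $\Sp_{K(n)}$ (\corref{Kn_Semiadd}); the $1$-semiadditivity of $\Sp_{K(n)}$, which is exactly the theorem in question, falls out as a special case. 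Your argument is direct, self-contained, and height-specific: it pins down the crux as the vanishing of $[p](x)$ after inverting the Euler class. The paper's route is far more circuitous and uses an external $1$-semiadditivity input (Kuhn's), but in exchange it yields the full $\infty$-semiadditivity and works uniformly for any localization sandwiched between $\Sp_{K(n)}$ and $\Sp_{T(n)}$.
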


Since $\K\left(0\right)=H\mathbb{Q}$, the case $n=0$ follows easily from the invertibility of $\nm_G$ on rational representations of $G$.
However, for $n>0$ this is a remarkable fact, showcasing the intermediary behavior of $\K\left(n\right)$-local homotopy theory, interpolating between zero and positive characteristic.   

Considering the classifying space $BG$ as
an $\infty$-groupoid, the data of an $E$-local spectrum with an action of $G$ is equivalent to a functor $F\colon BG\to\Sp_{E}$. In these terms, the homotopy orbits and homotopy fixed points of the action are then just the colimit and limit of $F$ respectively (again, in $\Sp_{E}$).
In \cite{HopkinsLurie}, Hopkins and Lurie extended \thmref{Hovey_Sadofsky_Greenlees} to more general limits and colimits.  

\begin{defn}
\label{def:m_Finite}Given $m\ge-2$, a space $A$ is called \emph{$m$-finite}
if it is $m$-truncated, has finitely many connected components and
all of its homotopy groups are finite. It is called \emph{$\pi$-finite}
if it is $m$-finite for some $m$.
\end{defn}

\begin{thm}
[Hopkins-Lurie, \cite{HopkinsLurie}]\label{thm:Hopkins_Lurie}Let
$A$ be a $\pi$-finite space. For every $F\colon A\to\Sp_{K\left(n\right)}$,
there is a canonical (and natural) equivalence 
\[
\nm_{A}\colon\colim_A F\iso\holim_A F\quad\in\quad\Sp_{K\left(n\right)}.
\]

\end{thm}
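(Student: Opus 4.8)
The plan is to prove the statement by induction on the truncation level $m$ of $A$, constructing the comparison map $\nm_A$ and establishing that it is an equivalence simultaneously. The structural point to keep in mind is that the very definition of $\nm_A$ for an $m$-finite $A$ uses, inductively, that the $(m-1)$-finite spaces appearing as ``fibers'' of $A$ --- e.g.\ the based loop space $\Omega A$ of each connected component --- are already \emph{ambidextrous}, meaning their own norm maps are defined and invertible for every local system in $\Sp_{K(n)}$. So the induction hypothesis at level $m$ reads: every $m$-finite space is ambidextrous in $\Sp_{K(n)}$. Naturality of the resulting $\nm_A$ is then automatic, since it is assembled entirely out of units and counits of adjunctions.

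Base cases: $m=-2,-1$ are trivial, the space being empty or contractible. For $m=0$, $A$ is a finite set, $\colim_A F$ is a finite coproduct and $\holim_A F$ a finite product, and these agree because $\Sp_{K(n)}$ is stable, hence finitely semiadditive, with $\nm_A$ the evident equivalence. For $m=1$ one reduces to a single connected component $A\simeq BG$ with $G$ a finite group, and the assertion is exactly \thmref{Hovey_Sadofsky_Greenlees}.

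For the inductive step, assume every $m$-finite space is ambidextrous and let $A$ be $(m+1)$-finite. Splitting off connected components, assume $A$ connected, so that $\Omega A$ is $m$-finite. Present $A$ as the geometric realization of the bar construction $[k]\mapsto(\Omega A)^{\times k}$ on the group-like object $\Omega A$. This rewrites $\colim_A F$ as a geometric realization of a simplicial spectrum whose $k$-th term is a colimit over $(\Omega A)^{\times k}$, and dually $\holim_A F$ as a totalization of a cosimplicial spectrum of limits over $(\Omega A)^{\times k}$; by the inductive hypothesis each of these ``along $(\Omega A)^{\times k}$'' norm maps is an equivalence, and one checks they are compatible with the (co)simplicial structure and assemble into $\nm_A$. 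The outcome is that $\nm_A$ is an equivalence for every $F$ provided a single scalar is invertible, namely the ``cardinality'' class $|\Omega A|\in\pi_0\End_{\Sp_{K(n)}}(\one)=\pi_0 L_{K(n)}\mathbb{S}$ given by the composite $\one\to\colim_{\Omega A}\one\oto{\nm}\holim_{\Omega A}\one\to\one$.

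The main obstacle is precisely this last invertibility: one must show that the cardinality of every $\pi$-finite space is a unit in the $K(n)$-local sphere. I would reduce this to Morava $E$-theory $E=E_n$: since $E\otimes(-)$ is conservative on $\Sp_{K(n)}$ (as $K(n)$ is built from $E$ by cofiber sequences), a self-map of $\one$ is an equivalence as soon as its image in $\pi_0 E$ is a unit, so it suffices to show $|\Omega A|$ becomes a unit in the complete local ring $\pi_0 E$. This is where genuine chromatic input enters: $E$-cohomology of every $\pi$-finite space is a finitely generated --- in fact dualizable --- $E^*$-module (the Hopkins--Lurie analysis, building on Ravenel--Wilson), and a transfer/restriction computation then identifies the reduction of $|\Omega A|$ modulo the maximal ideal with a nonzero element of the residue field, forcing $|\Omega A|$ to be a unit. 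Feeding this back through the induction yields the canonical natural equivalence $\nm_A$ for all $\pi$-finite $A$.
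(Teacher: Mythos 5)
Your overall architecture --- induction on the truncation level, reduction to connected $A$, and the principle that amenability of the fiber of a fiber sequence forces ambidexterity of the base (this is \propref{Amenable_Space}, applied to $\Omega A\to PA\to A$, equivalently to your bar-construction presentation $A\simeq B(\Omega A)$) --- is the right skeleton, and it is the skeleton of both the Hopkins--Lurie argument and the proof in this paper. But your inductive step reduces to a claim that is false: the cardinality $|\Omega A|$ of the loop space of a connected $\pi$-finite space is \emph{not} in general a unit in $\pi_{0}\bb S_{K(n)}$. Already for $A=BC_{p}$ one has $|\Omega A|=|C_{p}|=p$, which is not invertible; and for $A=B^{2}C_{p}$ one has $|\Omega A|=|BC_{p}|=p^{n-1}$ in $\pi_{0}E_{n}$ (\lemref{EM_Box_Morava}), a non-unit for every $n\ge2$. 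More generally $|B^{k}C_{p}|=p^{\binom{n-1}{k}}$, which reduces to $0$ --- not to a nonzero element of the residue field --- modulo the maximal ideal whenever $k\le n-1$, so the ``transfer/restriction computation'' you invoke in the last paragraph cannot deliver what you need. Amenability of $\Omega A$ is strictly stronger than ambidexterity of $A$ (the implication in \propref{Amenable_Space} only goes one way), and your induction hypothesis supplies only the latter.

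Repairing exactly this step is the real content of the theorem. One first reduces, via $p$-locality, stability and the Cancellation Theorem (\cite[Proposition 4.4.16]{HopkinsLurie}; cf.\ \thmref{Ambi_Cancellation} and \lemref{Amenable_Semi_Add}), to the single space $B=B^{m+1}C_{p}$, and one is then free to use \emph{any} fiber sequence $A\to E\to B^{m+1}C_{p}$ with $A$ and $E$ being $m$-finite and $A$ amenable; crucially, $A$ need not be $\Omega B=B^{m}C_{p}$. The work is to produce some such amenable $m$-good $A$. In this paper this is done with the additive $p$-derivation $\delta$ on $\pi_{0}$ of the unit: $\delta(|A|)=|BC_{p}\times A|-|A\wr C_{p}|$, and if $0<v_{p}(|A|)<\infty$ then one of the two new $m$-good spaces has cardinality of strictly smaller $p$-adic valuation (\propref{Bootstrap_Algebraic}); iterating from $B^{m}C_{p}$, whose cardinality is nonzero because $\dim(B^{m}C_{p})=p^{\binom{n}{m}}$ in $\widehat{\Mod}_{E_n}$ by Ravenel--Wilson (\corref{Morava_Dimension_EM}), eventually yields an amenable $m$-good space. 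Without some mechanism of this kind for lowering the $p$-adic valuation of the relevant cardinalities, the induction does not close.
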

The special case, where $A=BG$ for a finite group $G$, recovers \thmref{Hovey_Sadofsky_Greenlees}.  

The canonical norms of \thmref{Hopkins_Lurie} (and \thmref{Hovey_Sadofsky_Greenlees})
can be set in the broader context of higher semiadditivity, developed in \cite{HopkinsLurie}.
Let $\mathcal{C}$ be an $\infty$-category that admits all (co)limits
indexed by $\pi$-finite spaces. For every $\pi$-finite space $A$,
we have two functors
\[
\colim\limits _{A}\,,\ \holim\limits _{A}\colon\fun\left(A,\mathcal{C}\right)\to\mathcal{C}.
\]
In \cite{HopkinsLurie}, the authors set up a general process that
attempts to construct canonical natural transformations 
\[
\nm_{A}\colon\colim\limits _{A}\to\holim\limits _{A}
\]
for all $m$-finite spaces $A$, by induction on $m$. The $m$-th
step of this process requires that all canonical norm maps for $\left(m-1\right)$-finite
spaces, that were constructed in the previous step, are \emph{isomorphisms}.
The property of an $\infty$-category $\mathcal{C}$, that these canonical
norm maps can be constructed and are isomorphisms for all $m$-finite
spaces, is called \emph{$m$-semiadditivity} (see \subsecref{Higher_Semiadditivity}).
We can thus restate \thmref{Hovey_Sadofsky_Greenlees} 
as saying that the $\infty$-category $\Sp_{K\left(n\right)}$ is $1$-semiadditive, and \thmref{Hopkins_Lurie} as saying that it
is $\infty$-semiadditive (i.e. $m$-semiadditive for all $m$).

Kuhn extended \thmref{Hovey_Sadofsky_Greenlees} in a different direction, by replacing $\K\left(n\right)$-localization with the closely related telescopic localization. Namely, let $\T\left(n\right)$ be a telescope on a $v_{n}$-self map of some type $n$ finite spectrum.
\begin{thm}
[Kuhn, \cite{Kuhn}]\label{thm:Kuhn}  
The $\infty$-category $\Sp_{\T\left(n\right)}$ is $1$-semiadditive.
\end{thm}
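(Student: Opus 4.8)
The plan is to reduce the statement to the vanishing of a $C_{p}$-Tate construction and then to a $v_{n}$-periodic computation. Since $\Sp_{\T(n)}$ is presentable and stable it is in particular $0$-semiadditive, so the inductive machinery of \cite{HopkinsLurie} produces, for every finite group $G$, a norm map $\nm_{BG}\colon X_{hG}\to X^{hG}$ in $\Sp_{\T(n)}$, and $1$-semiadditivity is exactly the assertion that all of these are equivalences. I would first carry out the standard group-theoretic reductions: using the base-change/composition formalism for norms along fibrations of $\pi$-finite spaces, together with the fact that a nontrivial finite $p$-group has a central subgroup of order $p$ (so that $BP$ fibers over $B(P/C_{p})$ with fiber $BC_{p}$), the case of a $p$-group reduces by induction to $G=C_{p}$; and since for $n\ge1$ every object of $\Sp_{\T(n)}$ is $p$-local (as the unit is), a transfer argument makes $\nm_{BP_{q}}$ automatically an equivalence for $q\ne p$ and identifies $X^{tG}$ as a retract of $X^{tP_{p}}$, while the case $n=0$ is the classical rational one. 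Thus it suffices to show that $X^{tC_{p}}=0$ in $\Sp_{\T(n)}$ for every $X\in\Sp_{\T(n)}$, where $p$ is the prime of $\T(n)$; and since $\Sp_{\T(n)}\ss\Sp$ is closed under limits and $L_{\T(n)}$ is exact, this Tate spectrum is simply $L_{\T(n)}$ applied to the ordinary one, so the goal is $L_{\T(n)}\bigl(X^{tC_{p}}\bigr)=0$.

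Next I would unwind this using the telescope. Fix a finite type-$n$ complex $A$ with a $v_{n}$-self map $v\colon\Sigma^{d}A\to A$, so that $\T(n)\simeq v^{-1}A$ and $\T(n)_{*}Y=v^{-1}A_{*}Y$ for every $Y$. As $A$ is dualizable, smashing with $A$ commutes with $(-)_{hC_{p}}$ and with $(-)^{hC_{p}}$, hence with the whole Tate construction, giving $A\wedge X^{tC_{p}}\simeq(A\wedge X)^{tC_{p}}$ (and $A\wedge X$ is again $\T(n)$-local). Therefore $L_{\T(n)}\bigl(X^{tC_{p}}\bigr)=0$ is equivalent to the contractibility of the mapping telescope of $(A\wedge X)^{tC_{p}}$ along the self-map induced by $v$; that is, the content is that inverting $v_{n}$ annihilates the $C_{p}$-Tate construction of $A\wedge X$. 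The difficulty is that the Tate construction does not commute with the filtered colimit defining $v^{-1}$ — already on a single finite $C_{p}$-spectrum $Y$ the spectrum $v^{-1}Y^{tC_{p}}$ is typically nonzero — so one can neither interchange the limit and colimit naively nor invoke the $K(n)$-local vanishing of \thmref{Hovey_Sadofsky_Greenlees}: because $L_{\T(n)}$ is not smashing, $\T(n)$-acyclicity is strictly weaker than $K(n)$-acyclicity, and the $K(n)$-local statement carries no information about the $v_{n}$-periodic homotopy type.

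The heart of the argument is therefore a genuine $v_{n}$-periodic computation, and I would follow Kuhn's route in \cite{Kuhn} through Goodwillie calculus: writing $X$ as built from finite spectra, one identifies the comparison between the ``naive'' and ``genuine'' forms of $(A\wedge X)^{tC_{p}}$ — i.e. the cofiber of the norm — with data extracted from the layers of a Goodwillie tower whose $d$-homogeneous pieces involve $(\partial_{d}\wedge(-)^{\wedge d})_{h\Sigma_{d}}$ and their Tate analogues, and then shows, using the $v_{n}$-periodic analysis of such towers together with the nilpotence theorem, that after inverting $v_{n}$ the tower degenerates and the norm becomes an equivalence, forcing $v^{-1}(A\wedge X)^{tC_{p}}=0$. (Alternatively, the vanishing can be extracted directly from a description of the $\T(n)$-local, equivalently $v_{n}$-periodic, homotopy type of $\Sigma^{\infty}_{+}BC_{p}$ and of the associated stunted Thom spectra.) Granting this computation, the reductions of the first paragraph immediately promote it to $\nm_{BG}$ being an equivalence for every finite $G$, which together with the automatic $0$-semiadditivity is precisely the statement that $\Sp_{\T(n)}$ is $1$-semiadditive. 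I expect essentially all of the difficulty to lie in this last step: the chromatic subtlety is exactly that one must control the telescopic, rather than the $K(n)$-local, behavior of the Tate construction.
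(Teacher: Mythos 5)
The paper does not prove this statement at all: it is imported verbatim from Kuhn's work, and serves as the base case ($m=1$) of the induction establishing \thmref{Tn_Semiaddi}. So there is no in-paper argument to compare yours against; what you have written is a reconstruction of Kuhn's original proof. Your formal reductions are all sound: the identification of $1$-semiadditivity with invertibility of $\nm_{BG}$ for all finite $G$ (given stability, hence $0$-semiadditivity); the reduction to a Sylow $p$-subgroup and then, via the central extension $BC_p \to BP \to B(P/C_p)$ and the composition/base-change properties of canonical norms, to $G = C_p$; the identification of the obstruction with $L_{T(n)}(X^{tC_p})$ because $\Sp_{T(n)} \subseteq \Sp$ is closed under limits and $L_{T(n)}$ is exact; and the reformulation as contractibility of the $v$-telescope of $(A \otimes X)^{tC_p}$ for a finite type-$n$ complex $A$. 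You are also right that the whole difficulty is the failure of $(-)^{tC_p}$ to commute with the filtered colimit inverting $v$, and that the $K(n)$-local vanishing of \thmref{Hovey_Sadofsky_Greenlees} gives no leverage here since $L_{T(n)}$ is not smashing.

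That said, as a self-contained proof your write-up has a gap exactly where you flag it: the telescopic vanishing $v^{-1}(A \otimes X)^{tC_p} = 0$ is asserted by appeal to Kuhn's Goodwillie-calculus analysis rather than proved, and everything before it is formal. You should also be aware of the alternative route the paper itself leans on later: Clausen and Mathew show that any localization of $\Sp$ admitting a retract of $\Omega^\infty$ (i.e.\ a Bousfield--Kuhn functor $\Phi_n$) is automatically $1$-semiadditive, which is recorded as the implication from condition (2) to condition (4) in \thmref{Monochrom}. That argument replaces the $v_n$-periodic Tate computation with a short formal manipulation using $\Phi_n$, and is the "new (and short) proof" the introduction alludes to; if you want a complete argument without reproducing Kuhn's tower analysis, that is the path of least resistance.
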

%Note that this is a generalization of \thmref{Hovey_Sadofsky_Greenlees}, since $\Sp_{\K\left(n\right)} \subseteq \Sp_{\T\left(n\right)}.$ 
In view of \thmref{Kuhn} and \thmref{Hopkins_Lurie}, M. Hopkins asked whether the $\infty$-category $\Sp_{\T\left(n\right)}$ is $\infty$-semiadditive as well. 
Our first result is an affirmative answer to this question. 
\begin{theorem}[\ref{thm:Tn_Semiaddi}]
	\label{thm:telescopic_infty_semiadditive_intro}
\emph{
	The $\infty$-category $\Sp_{\T\left(n\right)}$ is $\infty$-semiadditive.
}
\end{theorem}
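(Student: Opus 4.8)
The plan is to leverage \thmref{Kuhn} as the base case and induct on the level of semiadditivity, following the general machinery of Hopkins--Lurie but supplying the new input needed to push past height $1$. Recall that for a stable, presentably symmetric monoidal $\infty$-category, $1$-semiadditivity already provides a great deal of structure; the difficulty is purely in verifying that the higher norm maps are equivalences. I would first observe that to prove $\Sp_{\T(n)}$ is $\infty$-semiadditive, it suffices by a standard d\'ecalage/connectivity argument to show that the norm map $\nm_{A}$ is an equivalence for $A=K(\bb{Z}/p,m)$ an Eilenberg--MacLane space, for every $m\ge 1$ and every prime $p$ (dividing the relevant torsion); all other $\pi$-finite spaces are built from these by finite limits/colimits and extensions, which are handled formally once $1$-semiadditivity is in hand. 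So the inductive step reduces to: assuming $\Sp_{\T(n)}$ is $m$-semiadditive, show the norm for $B^{m}(\bb{Z}/p)$ is invertible.

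The key new ingredient, as flagged in the abstract, is a \emph{power operation} for commutative algebra objects in stable $1$-semiadditive $\infty$-categories. The idea is that in a $1$-semiadditive stable $\infty$-category the unit $\one$ carries a canonical map $\nm_{BC_p}\colon \one_{hC_p}\to \one^{hC_p}$ which, while not itself obviously an equivalence at higher heights, can be used to build an additive-to-multiplicative transfer: for a commutative ring $R$, one gets an operation $R\to R^{hC_p}\to R$ (a ``Frobenius-type'' or norm power operation) refining the $p$-th power map. I would use this operation to produce, for any $R$ that is $\T(n)$-locally a homotopy ring, a self-map detecting the failure of the norm on $K(\bb{Z}/p,m)$, and show it is invertible by a nilpotence/Nishida-style argument combined with the fact that $\T(n)_*$ of a type-$n$ generator is a graded field-like object after inverting $v_n$. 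Concretely, one reduces invertibility of $\nm_{B^m C_p}$ to invertibility of a single element in a ring of ``higher $v_n$-operations'' acting on the $\T(n)$-homology of $B^m C_p$, and the power operation is what exhibits that element as a unit.

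The mechanism by which the power operation forces the norm to be an equivalence is the crucial step and, I expect, the main obstacle. The strategy is: (i) for an $m$-semiadditive stable $\infty$-category, the norm $\nm_{B^m C_p}$ is always a \emph{retract} of an idempotent built from lower norms, so it is an equivalence iff a certain ``dimension'' element $|B^m C_p|\in \pi_0 R$ (the Euler characteristic / cardinality of the space in the semiadditive sense) is invertible; (ii) one computes this cardinality recursively --- $|B^m C_p|$ is, up to a unit, $|B^{m-1}C_p|$ raised to a power governed by the power operation --- so invertibility propagates from $m-1$ to $m$ \emph{provided} the power operation sends units to units and interacts correctly with the $C_p$-transfer; (iii) the base of the recursion, $|BC_p|$ invertible in $\T(n)$-local spectra, is exactly the content of \thmref{Kuhn}. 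The hard part is establishing the multiplicativity and unit-preservation properties of the power operation with enough precision to run (ii) --- this is where the analogy with the known total power operations on Morava $E$-theory is exploited, but one must construct everything purely from $1$-semiadditivity without recourse to an $E_\infty$ structure adapted to $K(n)$, since the telescopic setting lacks the structural results available in the $K(n)$-local world. Once the power operation and its properties are in place, the induction closes and $\Sp_{\T(n)}$ is $m$-semiadditive for all $m$.
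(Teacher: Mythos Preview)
Your proposal has the right broad architecture --- induct on $m$ starting from Kuhn's theorem, reduce to $B^{m+1}C_p$, and use a power operation --- but the mechanism you describe in steps (i)--(iii) is wrong in a way that breaks the argument.

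The central error is in (iii): Kuhn's theorem does \emph{not} say that $|BC_p|$ is invertible in $\pi_0\bb S_{T(n)}$. It says that $\Sp_{T(n)}$ is $1$-semiadditive, i.e.\ that the norm map for $BC_p$ is an equivalence. These are different statements: the cardinality $|BC_p|$ maps to $p^{n-1}$ in $\pi_0 E_n$ (\lemref{EM_Box_Morava}), which for $n\ge 2$ is not a unit. So your base case for the recursion fails. Relatedly, (i) is incorrect: invertibility of the norm $\nm_{B^{m+1}C_p}$ is not equivalent to invertibility of $|B^{m+1}C_p|$; rather, the paper shows (\propref{Amenable_Space}) that $\nm_B$ is an equivalence whenever there is a fiber sequence $A\to E\to B$ with $A$ \emph{amenable}, meaning $|A|$ is invertible. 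And (ii) is not how the power operation enters: there is no formula expressing $|B^m C_p|$ as a power of $|B^{m-1}C_p|$.

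What the paper actually does is this. It constructs an additive $p$-derivation $\delta$ on $\pi_0$ of the unit (\thmref{Delta_Semi_Add}), satisfying $\delta(|A|)=|BC_p\times A|-|A\wr C_p|$ (\thmref{Alpha_Box}), whose key algebraic property is that it \emph{lowers $p$-adic valuation} on non-unit nonzero elements. One transports the problem along the nil-conservative functor $\widehat{E}_n[-]\colon\Sp_{T(n)}\to\widehat{\Mod}_{E_n}$, which detects invertibility by the Nilpotence Theorem (\corref{Nil_Conservativity_Telescopic}) --- this comparison is essential and entirely absent from your sketch. In $\pi_0 E_n$ one knows $|B^m C_p|$ is a nonzero rational number (via $\dim(B^m C_p)=p^{\binom{n}{m}}$ from Ravenel--Wilson, \corref{Morava_Dimension_EM}). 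Iterating $\delta$ then produces some $m$-good space $A$ (in general \emph{not} $B^m C_p$ itself) with $|A|$ a $p$-adic unit, hence invertible; amenability of this $A$ yields $(m+1)$-semiadditivity. This is packaged as the ``Bootstrap Machine'' (\thmref{Bootstrap_Machine}).
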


Our proof of \thmref{telescopic_infty_semiadditive_intro} uses the general framework of higher semiadditivity developed by Hopkins and Lurie in \cite{HopkinsLurie}, but is quite different than their proof of \thmref{Hopkins_Lurie} (see \subsecref{Outline_Proof} for an outline). Since the latter is implied by the former, our argument provides an alternative proof for \thmref{Hopkins_Lurie} as well.
 
Our next result concerns the classification of $1$-semiadditive localizations of $p$-local spectra with respect to homotopy rings\footnote{In fact, all the results apply more generally to \emph{weak rings}. That is, spectra equipped with a multiplication map and a one-sided unit, and no associativity conditions (see \defref{Weak_Ring}).}. We show that the $\infty$-categories $\Sp_{\K\left(n\right)}$ and $\Sp_{\T\left(n\right)}$ are precisely the minimal and maximal examples of such localizations.

\begin{theorem}[\ref{thm:Monochrom}]
	\label{thm:Tn_Universal}
\emph{
Let $R$ be a non-zero $p$-local homotopy ring spectrum. The $\infty$-category $\Sp_{R}$ is $1$-semiadditive if and only if there exists a (necessarily unique) integer $n\ge 0$, such that
\[
\Sp_{\K\left(n\right)}\ss\Sp_{R}\ss\Sp_{\T\left(n\right)}.
\]
}
\end{theorem}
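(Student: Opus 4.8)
The plan is to prove the two implications separately, with the "if" direction being a fairly direct consequence of the preceding results and the "only if" direction requiring the bulk of the work. For the "if" direction, assume $\Sp_{\K(n)} \ss \Sp_R \ss \Sp_{\T(n)}$. Since $\Sp_{\T(n)}$ is $\infty$-semiadditive by \thmref{telescopic_infty_semiadditive_intro}, in particular it is $1$-semiadditive. The key observation is that $1$-semiadditivity should descend along the Bousfield localization: the inclusion $\Sp_R \into \Sp_{\T(n)}$ is a reflective subcategory inclusion whose left adjoint is $\T(n)$-localization, and $\pi$-finite (co)limits in $\Sp_R$ are computed by applying $R$-localization to the corresponding (co)limit in $\Sp_{\T(n)}$ (colimits) or agree with those in $\Sp_{\T(n)}$ (limits, since $\Sp_R$ is closed under limits in spectra). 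One checks that the norm map in $\Sp_R$ is obtained from the one in $\Sp_{\T(n)}$ by applying $L_R$, hence is an equivalence; this uses that $L_R$ is symmetric monoidal and that the sandwich $\Sp_{\K(n)} \ss \Sp_R$ guarantees $R$ is nonzero and $\T(n)$-acyclic spectra remain acyclic. So $\Sp_R$ is $1$-semiadditive.

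For the "only if" direction, suppose $\Sp_R$ is $1$-semiadditive with $R$ a non-zero $p$-local homotopy ring. The first step is to produce the integer $n$: I expect one invokes the nilpotence/thick subcategory technology, together with $1$-semiadditivity, to show that $R$ must be "monochromatic" in an appropriate sense — concretely, that $R \otimes \K(m) \neq 0$ for exactly one $m$, and that this forces $\Sp_{\K(n)} \ss \Sp_R$ for $n$ this unique height. The inclusion $\Sp_{\K(n)} \ss \Sp_R$ should follow because $1$-semiadditivity of $\Sp_R$ lets one build idempotents / transfer arguments showing that $\K(n) \in \Sp_R$ is a nonzero $R$-algebra and that $R$-locally everything of type $> n$ dies while everything is $\K(n)$-locally detected. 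The other containment, $\Sp_R \ss \Sp_{\T(n)}$, amounts to showing every $\T(n)$-acyclic spectrum is $R$-acyclic, equivalently that $R$ is $\T(n)$-local, which again should be extracted from the monochromaticity together with the fact (via Kuhn's \thmref{Kuhn} and its proof technology) that $R$ being $1$-semiadditive controls its Bousfield class from below by $\K(n)$ and from above by $\T(n)$.

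The main obstacle, and the technical heart of the argument, is the "only if" direction's first step: showing that a $1$-semiadditive homotopy-ring localization is necessarily monochromatic of some finite height, i.e. producing $n$ and the inclusion $\Sp_{\K(n)} \ss \Sp_R$. This is where one genuinely needs higher-semiadditive input beyond formal nonsense — presumably the power operation mentioned in the abstract, or a dimension/Euler-characteristic argument in $\Sp_R$ computing $|B^k C_p|$, to rule out the localization having "unbounded height" or "mixed height" behavior. I would organize this as: (i) show $R$ cannot be rationally nontrivial unless $n = 0$ (handled by an $H\mathbb{Q}$-comparison); (ii) for the $p$-complete part, use the $1$-semiadditive structure to show the Bousfield class of $R$ is an idempotent that must coincide with that of $\K(n)$ for a single $n$, appealing to the classification of such idempotents among homotopy rings. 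Finally, uniqueness of $n$ is immediate: if $\Sp_{\K(n)} \ss \Sp_R \ss \Sp_{\T(n)}$ and $\Sp_{\K(n')} \ss \Sp_R \ss \Sp_{\T(n')}$ then $\Sp_{\K(n)} \ss \Sp_{\T(n')}$, forcing $n = n'$ since $\K(n) \otimes \T(n') = 0$ for $n \neq n'$.
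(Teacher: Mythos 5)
Your ``if'' direction is essentially the paper's argument: $1$-semiadditivity (indeed $\infty$-semiadditivity) is transferred from $\Sp_{\T(n)}$ to $\Sp_R$ along the symmetric monoidal colimit-preserving localization $L_R\colon\Sp_{\T(n)}\to\Sp_R$ via \corref{Semi_Add_Mode}. (Two small slips: the left adjoint of the inclusion $\Sp_R\into\Sp_{\T(n)}$ is $R$-localization, not $\T(n)$-localization, and the lower inclusion $\Sp_{\K(n)}\ss\Sp_R$ plays no role in this direction.)

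The ``only if'' direction has genuine gaps at precisely the two places you yourself identify as the heart of the matter, and the substitutes you sketch would not work. For monochromaticity, the paper's mechanism (\propref{Semiadd_Monochrom}) is: $1$-semiadditivity plus stability produces an additive $p$-derivation on $\pi_0$ of every $\bb E_{\infty}$-algebra in $\Sp_R$, which forces torsion elements to be nilpotent (\corref{Semiadd_Torsion_Nilpotent}); this rules out $H\mathbb{F}_p\in\Sp_R$ and hence excludes $\infty$ from the support. Two distinct finite heights $m<n$ are excluded by the computation $|BC_p|=p^{k-1}$ in $\pi_0E_k$ (\lemref{EM_Box_Morava}): the nonzero $\bb E_{\infty}$-ring $E_n\widehat{\otimes}E_m$ in $\Sp_R$ would receive both $p^{n-1}$ and $p^{m-1}$ as the single element $|BC_p|$, contradicting the injectivity of $\bb Z\to\pi_0$ (again a consequence of the $p$-derivation). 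Your proposed route --- that the Bousfield class of $R$ ``is an idempotent that must coincide with that of $\K(n)$, appealing to the classification of such idempotents among homotopy rings'' --- is not available: Bousfield classes of homotopy rings are not idempotent in general and no such classification exists.

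The second gap is the inclusion $\Sp_R\ss\Sp_{\T(n)}$, which does not follow formally from monochromaticity; your reformulation as ``$R$ is $\T(n)$-local'' is not the correct equivalence, and ``Kuhn's proof technology'' is not an argument. The paper proves it by the Bousfield equivalence of $R$ with $(\T(0)\otimes R)\oplus\cdots\oplus(\T(n)\otimes R)\oplus(\X(n+1)\otimes R)$ (\lemref{Bousfield_Class_Expansion}), then kills every unwanted summand by applying the Nilpotence Theorem to the weak rings $\T(i)\otimes R$ (whose support is empty for $i\ne n$) and $\X(n+1)\otimes R$; only $\T(n)\otimes R$ survives, giving $\Sp_R\simeq\Sp_{\T(n)\otimes R}\ss\Sp_{\T(n)}$. (By contrast, $\Sp_{\K(n)}\ss\Sp_R$ needs no semiadditivity at all once $R\otimes\K(n)\ne0$ is known --- it is the elementary \lemref{Kn_LSp}.) The architecture of your proof is right, but both load-bearing arguments are absent.
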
  

Equivalently, using the Nilpotence Theorem, $\Sp_R$ is $1$-semiadditive if and only if there is exactly one integer $n\ge0$ for which $R\otimes \K\left(n\right) \neq 0$, and $R \otimes H\mathbb{F}_p =0$. Namely, if $R$ is supported at a unique (finite) chromatic height\footnote{A detailed argument for this equivalence is given in the proof of \thmref{Are_Equivalent_Intro} (\ref{thm:Monochrom}).}. 

Combining \thmref{telescopic_infty_semiadditive_intro} with \thmref{Tn_Universal}, and using the arithmetic square, we show that for localizations of $\Sp$ with respect to homotopy rings, the entire hierarchy of higher semiadditivity collapses.

\begin{theorem}[\ref{cor:Semiadd_Collapse}]
	\label{thm:Main_Theorem} 
\emph{
	Let $R\in \Sp$ be a homotopy ring spectrum. The $\infty$-category $\Sp_R$ is $1$-semiadditive if and only if it is $\infty$-semiadditive. 
}
\end{theorem}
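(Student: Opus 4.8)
The plan is to reduce the general homotopy ring $R$ to the two ``pure'' cases covered by our earlier results. One direction is trivial: $\infty$-semiadditivity formally implies $1$-semiadditivity, since $1$-finite spaces are in particular $\pi$-finite. So assume $\Sp_R$ is $1$-semiadditive; we must show it is $\infty$-semiadditive. First I would dispose of the degenerate cases: if $R=0$ then $\Sp_R$ is the zero category, which is trivially $\infty$-semiadditive. If $R$ is rational, then $\Sp_R$ is a localization of $\Sp_{\bb{Q}}$, which is $\infty$-semiadditive (all norm maps are built from the classical norm, which is an equivalence rationally), and a localization of an $\infty$-semiadditive category is again $\infty$-semiadditive; so we may assume $R$ is non-rational.

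Next, the key structural move is to pass to the $p$-local pieces via the arithmetic (fracture) square. For each prime $p$, consider the $p$-localization $R_{(p)}$, which is again a homotopy ring. The localization $\Sp_{R_{(p)}}$ is a localization of $\Sp_R$ (inverting the integers prime to $p$ is an exact smashing localization that interacts well with the $R$-localization), hence it is also $1$-semiadditive. Now $R_{(p)}$ is a \emph{$p$-local} homotopy ring, so Theorem~\ref{thm:Tn_Universal} (\ref{thm:Monochrom}) applies: either $R_{(p)}=0$, or there is a unique integer $n=n(p)\ge 0$ with $\Sp_{\K(n)}\ss \Sp_{R_{(p)}} \ss \Sp_{\T(n)}$. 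In the latter case, $\Sp_{R_{(p)}}$ is sandwiched between two $\infty$-semiadditive categories: $\Sp_{\K(n)}$ by \thmref{Hopkins_Lurie}, and $\Sp_{\T(n)}$ by \thmref{telescopic_infty_semiadditive_intro}. Here I would invoke the general principle — which should be recorded as a lemma in the semiadditivity section — that if $\Sp_{E_1}\ss\Sp_{E_2}$ are both localizations of $\Sp$ with the smaller sitting inside the larger, and both are $\infty$-semiadditive, then so is every intermediate localization $\Sp_E$; the point is that the inclusion $\Sp_E\into\Sp_{E_2}$ and the localization $\Sp_{E_2}\onto\Sp_E$ let one identify the norm maps in $\Sp_E$ as images of those in $\Sp_{E_2}$, which are isomorphisms, and one uses the inclusion into $\Sp_{E_1}$-containing-$\Sp_E$ direction to see colimits/limits are computed compatibly. (Alternatively: $\Sp_E$ is a localization of $\Sp_{\T(n)}$, and $\infty$-semiadditivity descends along smashing/Bousfield localizations — the cleanest formulation, which I expect is already available from the Hopkins--Lurie machinery.) Either way we conclude each $\Sp_{R_{(p)}}$ is $\infty$-semiadditive.

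Finally, I would reassemble. Having shown $\Sp_{R_{(p)}}$ is $\infty$-semiadditive for every $p$ (vacuously when $R_{(p)}=0$), and having reduced to $R$ non-rational so that $\Sp_R$ sees only positive-characteristic information, I want to conclude $\Sp_R$ itself is $\infty$-semiadditive. The mechanism is the arithmetic fracture square: an $R$-local spectrum $X$ (for $R$ non-rational, or more carefully after checking the rationalization contributes nothing or contributes an $\infty$-semiadditive piece) is reconstructed as a finite limit from its $p$-localizations $X_{(p)}$ and the rationalization $X_{\bb{Q}}$. Since finite limits of diagrams in $\infty$-semiadditive categories, glued along $\infty$-semiadditive categories, remain $\infty$-semiadditive — $m$-semiadditivity is stable under passing to limits of $\infty$-categories, as the norm maps are built functorially — the category $\Sp_R$, being such a fracture of the $\Sp_{R_{(p)}}$ and $\Sp_{R_{\bb{Q}}}$, is $\infty$-semiadditive. \textbf{The main obstacle} I anticipate is not any single deep input — those are exactly \thmref{telescopic_infty_semiadditive_intro} and Theorem~\ref{thm:Tn_Universal} — but rather the bookkeeping in the gluing step: one must check that the norm transformations $\nm_A$ are compatible with the fracture-square functors (localizations and the pullback), so that an inductive construction of $\nm_A$ in $\Sp_R$ really does reduce to its counterparts in the pieces, and that ``$\infty$-semiadditivity is inherited by a lax/iterated limit of $\infty$-categories over $\infty$-semiadditive ones'' is stated in precisely the form needed. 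This is where I'd expect the write-up to spend its care, likely isolating a clean lemma of the shape ``$\infty$-semiadditivity descends along a conservative, limit-preserving functor whose source is built as a finite limit of $\infty$-semiadditive categories.''
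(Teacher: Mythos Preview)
Your reduction to the $p$-local pieces and the conclusion that each $\Sp_{R_{(p)}}$ is $\infty$-semiadditive matches the paper (which simply invokes \thmref{Monochrom} directly, so the ``intermediate localization'' lemma you sketch is not needed). The gap is in the reassembly.

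You assert that for $R$ ``non-rational,'' $\Sp_R$ sees only positive-characteristic information and can be recovered via a categorical fracture square from the $\Sp_{R_{(p)}}$ and $\Sp_{R_{\bb Q}}$. But ``$R$ non-rational'' does not a priori imply $R\otimes H\bb Q=0$, and there is no general decomposition of $\Sp_R$ as a limit of the localization categories $\Sp_{R_{(p)}}$ and $\Sp_{R_{\bb Q}}$ --- the arithmetic square is a statement about individual spectra, not about Bousfield categories. The lemma you anticipate needing, that $\infty$-semiadditivity is inherited by such a limit of $\infty$-categories, is neither stated nor straightforward to prove in the form you would need.

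The paper sidesteps all of this with a sharper dichotomy that uses the $1$-semiadditivity hypothesis once more. If $R\otimes H\bb Q\neq 0$, then for every prime $p$ one has $0\in\supp(R_{(p)})$; since $\Sp_{R_{(p)}}$ is $1$-semiadditive, \thmref{Monochrom} forces $\supp(R_{(p)})=\{0\}$, whence $R\otimes M(p)\simeq R_{(p)}\otimes M(p)=0$ by \thmref{Height_Below_n}, so $R$ is in fact already rational and $\Sp_R=\Sp_{H\bb Q}$. If $R\otimes H\bb Q=0$, then the unit of $R$ has finite order $\ell$, hence $\ell\cdot\Id_R=0$, and an elementary argument (\lemref{torsion_than_product}) gives an \emph{equivalence} $\Sp_R\simeq\prod_{p\mid\ell}\Sp_{R_{(p)}}$; this is a finite product of $\infty$-semiadditive categories, hence $\infty$-semiadditive by \corref{product_semiadditive}. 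No fracture gluing is required: the $1$-semiadditivity hypothesis itself collapses the mixed case you were worried about.
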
 

This leads us to formulate the following general conjecture:
\begin{conj}
Every presentable, stable, and $1$-semiadditive $\infty$-category is $\infty$-semiadditive.
\end{conj}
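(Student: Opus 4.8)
Since this statement is offered only as a conjecture, what follows is a strategy --- with the step I expect to be the genuine obstruction flagged explicitly --- rather than a proof. The plan is first to reduce to a single universal example. For each $m$, the conditions defining $m$-semiadditivity cut out an accessible, and in fact smashing, localization of the $\infty$-category $\mathrm{Pr}^{L}_{\mathrm{st}}$ of presentable stable $\infty$-categories; concretely, the $1$-semiadditive ones are the modules, in $\mathrm{Pr}^{L}$, over an idempotent presentably symmetric monoidal $\infty$-category $\mathcal{U}$ --- the initial presentable stable $1$-semiadditive $\infty$-category, built from $\Sp$ by inverting the norm maps $\nm_{BG}$ for all finite groups $G$. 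If $\mathcal{C}$ is presentable, stable and $1$-semiadditive then $\mathcal{C}\simeq\mathcal{U}\otimes\mathcal{C}$ as a $\mathcal{U}$-module, and $\infty$-semiadditivity passes from a presentably symmetric monoidal $\infty$-semiadditive base to each of its $\mathrm{Pr}^{L}$-modules: ambidexterity of a $\pi$-finite space is equivalent, after the standard reduction along $\pi_{0}$ and then up a Postnikov tower, to the invertibility of certain ``homotopy cardinality'' elements of $\pi_{0}\End(\one)$, and an element invertible in $\pi_{0}\End_{\mathcal{U}}(\one)$ acts invertibly on every $\mathcal{U}$-module. So it would suffice to prove that $\mathcal{U}$ itself is $\infty$-semiadditive.

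Next I would reduce $\infty$-semiadditivity of $\mathcal{U}$ to a tower of invertibility statements, by induction on $m$. By the detection principle of Hopkins--Lurie, an $(m-1)$-semiadditive $\infty$-category is $m$-semiadditive as soon as the norm map $\nm_{B^{m}C_{p}}$ is an equivalence for every prime $p$ --- every $\pi$-finite space is assembled from the Eilenberg--MacLane spaces $B^{k}C_{p}$ and the spaces $BG$ with $G$ finite by fibrations, finite (co)limits and retracts, all of which preserve the ambidextrous class, and the $BG$-case is precisely the $1$-semiadditivity hypothesis. Since $\mathcal{U}$ is symmetric monoidal and generated under colimits by $\one$, for the connected space $B^{m}C_{p}$ this collapses to a single condition: the vanishing of a Tate-type obstruction $\tau_{m}(p)\in\pi_{0}\End_{\mathcal{U}}(\one)$, equivalently the invertibility of the associated semiadditive cardinality. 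The hypothesis on $\mathcal{U}$ gives $\tau_{1}(p)=0$ for all $p$, and the content of the conjecture is that $\tau_{m}(p)=0$ for all $m\geq2$.

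Finally I would attack this tower using the power operation for commutative ring objects constructed in this paper, which applies to $\mathcal{U}$ since $\mathcal{U}$ is stable, $1$-semiadditive and symmetric monoidal, and hence acts on $\one\in\calg(\mathcal{U})$. As in the proof of \thmref{telescopic_infty_semiadditive_intro}, the power operation yields a relation between the consecutive obstructions $\tau_{m}(p)$ and $\tau_{m+1}(p)$, so that $\tau_{m+1}(p)=0$ would follow from $\tau_{m}(p)=0$ together with enough control over $\pi_{0}\End_{\mathcal{U}}(\one)$ --- for example a nilpotence or detection statement bounding the chromatic complexity of $\one$. This is exactly where the argument for $\Sp_{\T(n)}$ feeds in genuinely chromatic input, namely the Nilpotence Theorem and the identity $L_{\K(n)}=L_{\K(n)}L_{\T(n)}$, which pin the ``height'' of the unit to a fixed finite value and thereby truncate the tower (compare \thmref{Tn_Universal}). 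The hard part --- and the reason this remains a conjecture --- is that an abstract $\mathcal{U}$ has no Morava $K$-theory to compare against, and a priori it could contain, after arithmetic and idempotent fracture, ``pieces of unbounded semiadditive height'' for which the $\tau_{m}(p)$ need not vanish; one would need to replace the chromatic input by an intrinsic structure theory showing that a presentable stable $1$-semiadditive $\infty$-category is, after fracture, glued from pieces of bounded semiadditive height, or to show directly that $1$-semiadditivity forces the power-operation tower to close up. A natural intermediate target, reachable by base change along $\Sp_{\T(n)}\to\Mod_{R}(\Sp_{\T(n)})$, is the case $\mathcal{C}=\Mod_{R}(\mathcal{D})$ with $R$ a $1$-semiadditive ring in an $\infty$-category $\mathcal{D}$ already known to be $\infty$-semiadditive; the genuinely new case is any $\mathcal{U}$-module not arising in this way.
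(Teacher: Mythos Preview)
The statement appears in the paper only as a conjecture, with no proof or proof sketch; there is nothing to compare your proposal against. You correctly frame your write-up as a strategy with an explicitly flagged obstruction, and your diagnosis of that obstruction is on target: the Bootstrap Machine (\thmref{Bootstrap_Machine}) requires a nil-conservative comparison to a target where $\dim(B^kC_p)$ is computable, rational and nonzero, and for an abstract universal $\mathcal{U}$ no analogue of $\widehat{E}_n[-]\colon\Sp_{T(n)}\to\widehat{\Mod}_{E_n}$ is available. The additive $p$-derivation $\delta$ does exist on $\pi_0\one_{\mathcal{U}}$ by \thmref{Delta_Semi_Add}, but without a detection map there is no valuation to descend along.

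One technical point in your sketch deserves correction, since it touches the core mechanism of the paper. You write that ambidexterity of a $\pi$-finite space is ``equivalent to the invertibility of certain homotopy cardinality elements of $\pi_0\End(\one)$,'' and that the vanishing of the Tate obstruction for $B^mC_p$ is ``equivalently the invertibility of the associated semiadditive cardinality.'' This conflates ambidexterity with \emph{amenability}. Invertibility of $|A|$ is strictly stronger than ambidexterity of $A$: in $\widehat{\Mod}_{E_n}$ every $B^kC_p$ is ambidextrous, yet $|B^kC_p|=p^{\binom{n-1}{k}}$ is non-invertible for $0<k<n$ (\lemref{EM_Box_Morava}). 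The paper uses amenability of \emph{some} suitable space only as a \emph{sufficient} condition for raising the semiadditivity level (\lemref{Amenable_Semi_Add}, \propref{Amenable_Space}), never as a characterization. Correspondingly, the transfer of $\infty$-semiadditivity from $\mathcal{U}$ to its modules cannot go through ``invertible elements act invertibly on modules''; it should instead go through the self-duality of $\one_A$ (\propref{Iso_Normed_Duality}, \corref{Semi_Add_Mode}), which is what ambidexterity actually provides.
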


Another remarkable property of the localizations $\Sp_{\K(n)}$ and $\Sp_{\T(n)}$, is the existence of the so-called \emph{Bousfield-Kuhn functor}, i.e. a retract of $\Omega^{\infty}{\colon}\Sp_{R}\to\mathcal{S_{*}}$.
This phenomenon turns out to be also strongly connected to higher semiadditivity. In \cite{ClausenAkhil}, the authors gave a new (and short) proof of \thmref{Kuhn}, by showing that every localization of  $\Sp$, that admits a Bousfield-Kuhn functor, is $1$-semiadditive. Combined with all of the above, the situation can be pleasantly summarized as follows:

\begin{theorem}[\ref{thm:Monochrom}]
\emph{
	\label{thm:Are_Equivalent_Intro}
 Let $R$ be a non-zero $p$-local homotopy ring spectrum.
		The following are equivalent:
		\begin{enumerate}
			\item There is exactly one integer $n\ge0$ for which \(R\otimes \K\left(n\right) \ne 0,\) and $R\otimes H\mathbb{F}_p=0$.
			\item There exists a (necessarily unique) integer $n\ge0$, such that 
		\(\Sp_{\K\left(n\right)}\ss\Sp_{R}\ss\Sp_{\T\left(n\right)}.\)
			\item Either $\Sp_{R}=\Sp_{H\bb Q}$, or $\Omega^{\infty}{\colon}\Sp_{R}\to\mathcal{S_{*}}$
			admits a retract.
			\item $\Sp_{R}$ is $1$-semiadditive.
			\item $\Sp_{R}$ is $\infty$-semiadditive.
		\end{enumerate}
}
\end{theorem}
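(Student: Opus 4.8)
The plan is to establish the equivalence of $(1)$--$(5)$ in three pieces: the Bousfield-theoretic equivalence $(1)\Leftrightarrow(2)$, the cycle $(2)\Rightarrow(3)\Rightarrow(4)\Rightarrow(2)$, and $(4)\Leftrightarrow(5)$; together these give $(1)\Leftrightarrow(2)\Leftrightarrow(3)\Leftrightarrow(4)\Leftrightarrow(5)$. The inputs I would use as black boxes are the classification of $1$-semiadditive homotopy-ring localizations (\thmref{Tn_Universal}), the $\infty$-semiadditivity of $\Sp_{\T(n)}$ (\thmref{telescopic_infty_semiadditive_intro}), and the theorem of \cite{ClausenAkhil} that a localization of $\Sp$ admitting a Bousfield--Kuhn functor is $1$-semiadditive. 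The constant bookkeeping device is that for Bousfield localizations $\Sp_A\ss\Sp_B$ holds exactly when every $B$-acyclic spectrum is $A$-acyclic, which turns the inclusions in $(2)$ into assertions about smashing with $R$; and, writing $K(\infty):=H\mathbb{F}_p$, the Nilpotence Theorem supplies the one non-formal fact used throughout: a nonzero homotopy ring $R$ fails to be $K(m)$-acyclic for some $0\le m\le\infty$.

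For $(1)\Leftrightarrow(2)$, the inclusion $\Sp_{\K(n)}\ss\Sp_R$ is equivalent to $R\otimes\K(n)\neq0$: if $R\otimes\K(n)\neq0$ then, $\K(n)$ being a field, $\langle\K(n)\rangle=\langle R\otimes\K(n)\rangle\le\langle R\rangle$; conversely $R\otimes\K(n)=0$ would make $\K(n)$ itself $R$-acyclic and hence $\K(n)$-acyclic, which is impossible. Given $\Sp_R\ss\Sp_{\T(n)}$, the remaining conditions of $(1)$ follow from the standard computation that $\T(n)\otimes K(m)=0$ for all $m\neq n$ (the $v_n$-self map of a type-$n$ complex is invertible on $K(n)$-homology but nilpotent on $K(m)$-homology for finite $m\neq n$ and on mod-$p$ homology), so that $K(m)$ and $H\mathbb{F}_p$ are $\T(n)$-acyclic, hence $R$-acyclic, while $R\otimes\K(n)\ne0$ and uniqueness of $n$ are then automatic. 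The hard half is $(1)\Rightarrow\Sp_R\ss\Sp_{\T(n)}$, i.e.\ $\langle R\rangle\le\langle\T(n)\rangle$ -- this is the ``detailed argument'' alluded to above. One step is clean: since the finite localizations $L_m^f$ are smashing, $L_{n-1}^fR$ is a homotopy ring that is $K(m)$-acyclic for every $0\le m\le\infty$ (by $(1)$ for $m<n$, and because $K(m)$ for $m\ge n$, and by induction on the chromatic filtration also $H\mathbb{F}_p$, is $L_{n-1}^f$-acyclic), so $L_{n-1}^fR=0$ by the Nilpotence Theorem, whence $\langle R\rangle=\langle R\otimes F\rangle$ for any finite type-$n$ complex $F$. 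What remains -- and what I expect to be the main obstacle of the whole argument -- is to descend the telescopic tower to identify $\langle R\otimes F\rangle$ with $\langle R\otimes\T(n)\rangle\le\langle\T(n)\rangle$, i.e.\ to show that the higher telescopic layers $R\otimes F'$ (for $F'$ of type $>n$) vanish; equivalently, that a homotopy ring whose support is concentrated at one finite height is automatically \emph{telescopically} concentrated there. This is precisely where the Nilpotence Theorem enters essentially rather than formally.

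Next, $(2)\Rightarrow(3)$: if $n=0$ then $\Sp_{\K(0)}=\Sp_{\T(0)}=\Sp_{H\mathbb{Q}}$, so $(2)$ forces $\Sp_R=\Sp_{H\mathbb{Q}}$; if $n\ge1$, I would use the telescopic Bousfield--Kuhn functor $\Phi_n\colon\mathcal{S}_*\to\Sp_{\T(n)}$, for which $\Phi_n(\Omega^\infty X)\simeq X$ whenever $X$ is $\T(n)$-local. Because $\Sp_R\ss\Sp_{\T(n)}$ is reflective with reflector $L_R$, the composite $\Phi_R:=L_R\circ\Phi_n\colon\mathcal{S}_*\to\Sp_R$ satisfies, for every $X\in\Sp_R$ (which is $\T(n)$-local), $\Phi_R(\Omega^\infty X)\simeq L_R X=X$ naturally in $X$, so $\Omega^\infty\colon\Sp_R\to\mathcal{S}_*$ acquires the retract $\Phi_R$. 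Then $(3)\Rightarrow(4)$: if $\Sp_R=\Sp_{H\mathbb{Q}}$ it is $1$-semiadditive (indeed $\infty$-semiadditive) by the elementary rational case; otherwise a Bousfield--Kuhn functor exists and \cite{ClausenAkhil} applies. The implication $(4)\Rightarrow(2)$ is \thmref{Tn_Universal}.

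Finally $(4)\Leftrightarrow(5)$: one direction is trivial since $\infty$-semiadditivity subsumes $1$-semiadditivity. For $(4)\Rightarrow(5)$ I would combine $(4)\Rightarrow(2)$ with the fact that $\Sp_R$ is then a reflective localization of $\Sp_{\T(n)}$, together with the observation that $\infty$-semiadditivity is inherited by reflective localizations -- the $\pi$-finite limits being computed as in the ambient category and the $\pi$-finite colimits by applying the reflector, which is compatible with the norm maps -- and conclude via \thmref{telescopic_infty_semiadditive_intro}. This closes the cycle. Apart from the two cited structural theorems and the input of \cite{ClausenAkhil}, the only genuinely hard step is the Bousfield-theoretic passage $(1)\Rightarrow(2)$ flagged above.
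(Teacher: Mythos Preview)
Your overall architecture matches the paper's closely: the implications $(2)\Rightarrow(3)$ via Bousfield--Kuhn, $(3)\Rightarrow(4)$ via \cite{ClausenAkhil}, $(2)\Rightarrow(5)$ via inheritance of $\infty$-semiadditivity under localization, and the Bousfield-class argument for $(1)\Rightarrow(2)$ are all exactly what the paper does (your telescopic descent is the paper's \lemref{Bousfield_Class_Expansion} combined with the Nilpotence Theorem to kill $\T(m)\otimes R$ for $m\neq n$ and $\X(n+1)\otimes R$).

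The genuine gap is your use of \thmref{Tn_Universal} as a black box for $(4)\Rightarrow(2)$. In the paper, \thmref{Tn_Universal} and \thmref{Are_Equivalent_Intro} are \emph{the same theorem}: both are labelled \ref{thm:Monochrom} and are proved simultaneously in one argument. So citing \thmref{Tn_Universal} to close your cycle is circular---you are assuming exactly the implication $(4)\Leftrightarrow(2)$ you need to establish. What the paper actually does for this step (its $(4)\Rightarrow(5')$) is \propref{Semiadd_Monochrom}: if $\Sp_L$ is $1$-semiadditive then $\supp(L)$ contains at most one finite height and excludes $\infty$. The argument is that $H\bb F_p\in\Sp_L$ would violate \corref{Semiadd_Torsion_Nilpotent} (torsion in a $1$-semiadditive commutative ring is nilpotent, a consequence of the additive $p$-derivation $\delta$), while two distinct finite heights $m<n$ in $\supp(L)$ would force $|BC_p|$ to equal both $p^{m-1}$ and $p^{n-1}$ in the nonzero ring $\pi_0(E_m\,\widehat{\otimes}\,E_n)$, contradicting \propref{Delta_Injective}. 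This is the substantive missing ingredient in your sketch---once you supply it, your route and the paper's coincide.
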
   

It seems appropriate at this point to say a few words about the results summarized in \thmref{Are_Equivalent_Intro}, in light of the (still open) Telescope Conjecture, which asserts that $\Sp_{\K\left(n\right)} = \Sp_{\T\left(n\right)}$ (see \cite{ravconj}). 
If true, the property of higher semiadditivity characterizes completely the $\K\left(n\right)$-local $\infty$-categories among localizations of $\Sp$ with respect to homotopy rings (as does the existence of the Bousfield-Kuhn functor). 
If false, the property of higher semiadditivity fails to detect the difference, but on the upside, we are provided with more examples of $\infty$-semiadditive $\infty$-categories. 
At any rate, our results corroborate, the by now well-established fact, that the Telescope Conjecture is rather subtle.

The $1$-semiadditivity of $\Sp_{T(n)}$ and $\Sp_{K(n)}$, has found many applications in chromatic homotopy theory. For example, it was used to analyze the Balmer spectrum in an equivariant setting \cite{barthel2017balmer}. 
It was also recently used in \cite{heuts2018lie} to generalize Quillen's rational homotopy theory to higher chromatic heights. 
In an upcoming work we shall use \thmref{telescopic_infty_semiadditive_intro}, i.e. the \emph{$\infty$-semiadditivity} of $\Sp_{T(n)}$, to lift the maximal abelain Galois extension of $\Sp_{K(n)}$ to $\Sp_{T(n)}$ and draw consequences for the Picard group of $\Sp_{T(n)}$.

We shall now describe an application of $\infty$-semiadditivity of $\Sp_{T(n)}$ to a matter of chromatic homotopy theory, that does not mention higher semiadditivity explicitly.

\begin{theorem}[\ref{thm:Height_Below_n}]
	\label{thm:Bounded_Height}
\emph{
	Let $R$ be a $p$-local homotopy ring spectrum and let $d\ge0$. The following are equivalent\footnote{The equivalence of (1) and (2) is well known. The new content is that they are both equivalent to (3).}:
	\begin{enumerate}
		\item $R\otimes \K\left(m\right)=0$ for all $m> d$.
		\item $R\otimes \X\left(d+1\right)=0$ for a finite spectrum $\X\left(d+1\right)$ of type $d+1$. 
		\item $R\otimes \Sigma^{\infty}A=0$ for every $d$-connected $\pi$-finite space $A$.
	\end{enumerate}
}
\end{theorem}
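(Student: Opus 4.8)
The plan is to take the classical equivalence $(1)\Leftrightarrow(2)$ --- which rests on the Nilpotence and Thick Subcategory theorems --- for granted, and to prove $(2)\Rightarrow(3)$ and $(3)\Rightarrow(1)$; combined with $(1)\Leftrightarrow(2)$ this closes the loop $(1)\Rightarrow(2)\Rightarrow(3)\Rightarrow(1)$. Throughout I work $p$-locally, which is harmless since $R$ is $p$-local. The first move is to reformulate $(2)$: as $\X(d+1)$ and its Spanier--Whitehead dual are both finite of type $d+1$, condition $(2)$ says precisely that $R$ is local with respect to the finite localization $L^{f}_{d}$ away from type-$(d+1)$ complexes, i.e.\ $R\simeq L^{f}_{d}R$. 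I will use two standard facts: $L^{f}_{d}$ is \emph{smashing}, and it is Bousfield localization at $H\mathbb{Q}\vee\T(1)\vee\cdots\vee\T(d)$.

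For $(2)\Rightarrow(3)$, let $A$ be a $d$-connected $\pi$-finite space. Since $L^{f}_{d}$ is smashing, $L^{f}_{d}\Sigma^{\infty}A=0$ will follow once $\T(i)\otimes\Sigma^{\infty}A=0$ for every $0\le i\le d$, where $\T(0):=H\mathbb{Q}$. Being $d$-connected, $A$ is $i$-connected for each such $i$, so $\tau_{\le i}A\simeq\ast$; and the $\T(i)$-homology of a $\pi$-finite space depends only on its $i$-truncation --- for $i=0$ this is the classical rational triviality of connected $\pi$-finite spaces, and for $1\le i\le d$ it is a consequence of the $\infty$-semiadditivity of $\Sp_{\T(i)}$ together with its bounded-height property (see \thmref{Tn_Semiaddi}). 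Hence $\T(i)\otimes\Sigma^{\infty}A\simeq\T(i)\otimes\Sigma^{\infty}(\tau_{\le i}A)\simeq 0$, so $L^{f}_{d}\Sigma^{\infty}A=0$. Using $R\simeq L^{f}_{d}R$ and smashingness, $R\otimes\Sigma^{\infty}A\simeq (L^{f}_{d}R)\otimes\Sigma^{\infty}A\simeq L^{f}_{d}(R\otimes\Sigma^{\infty}A)\simeq R\otimes(L^{f}_{d}\Sigma^{\infty}A)\simeq 0$, which is $(3)$.

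For $(3)\Rightarrow(1)$, fix an integer $m>d$ and suppose $R\otimes\K(m)\ne 0$. The Eilenberg--MacLane space $K(\mathbb{Z}/p,m)$ is $(m-1)$-connected, hence $d$-connected, and $\pi$-finite, so $(3)$ gives $R\otimes\Sigma^{\infty}K(\mathbb{Z}/p,m)=0$ and therefore $R\otimes\K(m)\otimes\Sigma^{\infty}K(\mathbb{Z}/p,m)=0$. But $R\otimes\K(m)$ is a non-zero module over the graded field $\K(m)_{*}$, hence a non-empty wedge of suspensions of $\K(m)$; smashing it with $\Sigma^{\infty}K(\mathbb{Z}/p,m)$ and invoking the Ravenel--Wilson computation $\K(m)\otimes\Sigma^{\infty}K(\mathbb{Z}/p,m)\ne 0$ yields $R\otimes\K(m)\otimes\Sigma^{\infty}K(\mathbb{Z}/p,m)\ne 0$, a contradiction. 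So $R\otimes\K(m)=0$ for all $m>d$; running the same argument with $K(\mathbb{Z}/p,d+1)$ and $H\mathbb{F}_p$ in place of $K(\mathbb{Z}/p,m)$ and $\K(m)$ --- using $H\mathbb{F}_p\otimes\Sigma^{\infty}K(\mathbb{Z}/p,d+1)\ne 0$ --- also gives $R\otimes H\mathbb{F}_p=0$, which is the content of $(1)$.

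The one non-formal ingredient is the input to $(2)\Rightarrow(3)$: that the $\T(i)$-homology of a $\pi$-finite space depends only on its $i$-truncation. This is precisely where $\infty$-semiadditivity is essential, and I expect establishing it --- rather than the bookkeeping above --- to be the substantive part of the argument. Deducing it from \thmref{Tn_Semiaddi} splits into (i) the base case $\T(i)\otimes\Sigma^{\infty}K(\mathbb{Z}/p,\,i+1)\simeq 0$ (the `height $\le i$' property of $\Sp_{\T(i)}$), and (ii) an ascent through the Postnikov tower: for a fibration $K(\pi_{k},k)\to A_{k}\to A_{k-1}$ with $k>i$ one identifies $L_{\T(i)}\Sigma^{\infty}_{+}A_{k}$ with the colimit, over the $\pi$-finite base $A_{k-1}$, of the local system sending a point $a$ to $L_{\T(i)}\Sigma^{\infty}_{+}$ of the fiber over $a$ --- and it is $\infty$-semiadditivity that renders such colimits over $\pi$-finite spaces computable. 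Granting that statement, $(2)\Rightarrow(3)$ is a formal consequence of the smashing property of $L^{f}_{d}$, and $(3)\Rightarrow(1)$ of the classical computations of the Morava $K$-theory and mod-$p$ homology of Eilenberg--MacLane spaces.
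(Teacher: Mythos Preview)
Your proof is correct and follows essentially the same route as the paper: the cycle $(2)\Rightarrow(3)\Rightarrow(1)$ with $(1)\Leftrightarrow(2)$ taken as classical, the key input for $(2)\Rightarrow(3)$ being that $\T(i)$-homology of $\pi$-finite spaces depends only on the $i$-truncation (the paper's \thmref{Tn_Contractiblity}, a consequence of \thmref{Tn_Semiaddi}), and $(3)\Rightarrow(1)$ via Ravenel--Wilson. The paper packages $(2)\Rightarrow(3)$ as \corref{General_Contractibility} rather than invoking the smashing property of $L^f_d$ explicitly, and for $(3)\Rightarrow(1)$ uses the single space $B^{d+1}C_p$ for all $m>d$ rather than $B^mC_p$ for each $m$, but these are cosmetic differences.

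One remark on your closing sketch of how the key input follows from $\infty$-semiadditivity: your ``base case'' (i) as stated, $\T(i)\otimes\Sigma^\infty B^{i+1}C_p\simeq 0$, does not by itself feed the Postnikov ascent (ii), since the fibers there are $K(\pi_k,k)$ for \emph{all} $k>i$. The paper does not set this up as an induction with a single base case; rather it proves $\T(i)\otimes\Sigma^\infty B^kC_p\simeq 0$ for every $k\ge i+1$ uniformly, via the amenability criterion (\propref{Amenable_Contractible}): one shows $|B^{k-1}C_p|$ is invertible in $\pi_0\bb S_{\T(i)}$ for each $k\ge i+1$ by nil-conservativity against $\widehat{\Mod}_{E_i}$ and the computation $|B^{k-1}C_p|_{E_i}=p^{\binom{i-1}{k-1}}=1$ (\lemref{EM_Box_Morava}, \lemref{EM_Box_Tn_Inv}). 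Your remark that ``$\infty$-semiadditivity renders such colimits computable'' points in the right direction but is not the mechanism actually used. Since you explicitly flag this as the substantive part you are deferring, it does not affect the correctness of your proof of the theorem itself.
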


Namely, we obtain an equivalence of three different notions of ``height $\le d$'' for a homotopy ring: (1) the ``algebraic'' one using Morava $K$-theories, (2) the ``geometric'' one using finite complexes, and (3) the ``categorical'' one using $\pi$-finite spaces. 
The categorical height of a spectrum (i.e. the minimal $d$ for which condition (3) holds) was considered, using different terminology, by Bousfield in \cite{Bousfield82}. 
The most prominent example of such $R$ is $\K\left(n\right)$, which by \cite{RavenelWilson}, has categorical height $n$. Bousfield's work also implies that for all $n\ge0$, the spectrum $\T\left(n\right)$ has \emph{some} finite categorical height, but determining its \emph{precise} value has been an open question\footnote{By comparing with $K(n)$, it is clearly at least $n$, but not much has been known about it beyond that.}.
This can be now settled using \thmref{Bounded_Height}; as the algebraic and geometric heights of $\T\left(n\right)$ are known to be equal to $n$, so must the categorical height.

The proof of the above results relies on establishing certain consequences of $1$-semiadditivity, especially in the context of \emph{stable} $\infty$-categories.
The main one, which is central to the proof of
\thmref{telescopic_infty_semiadditive_intro}, but is also of independent interest, is the existence of certain canonical ``power operations''. 
\begin{theorem}[\ref{thm:Delta_Semi_Add}, \ref{thm:Frob_Lift}]
\emph{
Let $E\in\Sp$, such that $\Sp_{E}$ is $1$-semiadditive (e.g. $E=\T\left(n\right)$)
and let $X$ be an $\bb E_{\infty}$-algebra in $\Sp_{E}$. The commutative
ring $R=\pi_{0}X$ admits a canonical additive $p$-derivation $\delta\colon R\to R$
(see \defref{Delta}). In particular, the operation
\[
\psi\left(x\right)=x^{p}+p\delta\left(x\right)
\]
is a linear map, which is a canonical lift of the Frobenius endomorphism
modulo $p$. The operation $\delta$ (and hence $\psi$) is functorial
with respect to maps of $\bb E_{\infty}$-algebras.
}
\end{theorem}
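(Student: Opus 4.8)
The plan is to construct the additive $p$-derivation $\delta$ on $\pi_0 X$ by first building, at the level of the $1$-semiadditive symmetric monoidal $\infty$-category $\mathcal{C} = \Sp_E$, a natural "total power operation" out of the ambidextrous integration over the $\pi$-finite space $B\Sigma_p$ (or rather its Borel construction), and then extracting its effect on homotopy groups. The key point is that $1$-semiadditivity gives us, for the finite group $\Sigma_p$ acting on the $p$-fold tensor power $X^{\otimes p}$, a canonical norm equivalence $\nm\colon (X^{\otimes p})_{h\Sigma_p} \iso (X^{\otimes p})^{h\Sigma_p}$. Composing the $p$-th power map $X \to (X^{\otimes p})^{h\Sigma_p}$ (which exists because $X$ is $\mathbb{E}_\infty$, hence the multiplication is $\Sigma_p$-equivariant) with the inverse of this norm, followed by the canonical map $(X^{\otimes p})_{h\Sigma_p} \to X^{\otimes p} \to X$ (using the $\Sigma_p$-equivariant multiplication again, or the transfer/collapse), produces an \emph{additive} power operation $\mathcal{P}\colon X \to X$ — additivity being exactly the miracle that comes from factoring through the \emph{colimit} $(X^{\otimes p})_{h\Sigma_p}$, since colimits of additive functors are additive, whereas the naive $p$-th power is not. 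One then identifies $\mathcal{P}$ on $\pi_0$ with $x \mapsto x^p + p\,\delta(x)$ for a well-defined function $\delta$, using that the difference $\mathcal{P}(x) - x^p$ is divisible by $p$ — this divisibility should fall out of comparing the two natural maps from $(X^{\otimes p})_{h\Sigma_p}$, one through coinvariants and one through the "diagonal" copy, whose difference is multiplication by the transfer element, which is $p$ on the trivial part.

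The key steps, in order, are: (1) set up the $\Sigma_p$-action on $X^{\otimes p}$ and the two $\Sigma_p$-equivariant maps $X \to X^{\otimes p}$ (diagonal) and $X^{\otimes p} \to X$ (multiplication), noting their composite is $x \mapsto x^p$; (2) invoke $1$-semiadditivity of $\mathcal{C}$ to get the norm equivalence $\nm_{B\Sigma_p}$, and define $\mathcal{P}$ as the composite $X \to (X^{\otimes p})^{h\Sigma_p} \xrightarrow{\nm^{-1}} (X^{\otimes p})_{h\Sigma_p} \to X$; (3) prove $\mathcal{P}$ is additive — here the point is that $\nm^{-1}$ lands us in a colimit, and the colimit over $B\Sigma_p$ of the functor $V \mapsto V^{\otimes p}$ applied to a sum decomposes, via the binomial-type splitting of $(V \oplus W)^{\otimes p}$ as a $\Sigma_p$-representation, into a part where $\Sigma_p$ acts with no fixed summand (contributing $0$ to the additive/trace part after passing to coinvariants, or rather contributing the cross terms that vanish) plus the pure terms $V^{\otimes p}$ and $W^{\otimes p}$; (4) compute on $\pi_0$: show $\psi = \mathcal{P}|_{\pi_0}$ satisfies $\psi(x) \equiv x^p \pmod p$, hence $\delta := (\psi(x) - x^p)/p$ is well-defined (using that $\pi_0 X$ need not be $p$-torsion-free a priori — but one argues $\psi - (\cdot)^p$ factors through multiplication by $p$ at the spectrum level, so $\delta$ is literally a function, and then checks it is additive and satisfies the Leibniz-type identity making it a $p$-derivation in the sense of \defref{Delta}); (5) naturality in $\mathbb{E}_\infty$-maps is automatic since every construction (tensor power, norm, multiplication) is functorial.

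The main obstacle I expect is step (3)–(4): proving that $\mathcal{P}$ is genuinely additive and that the resulting $\delta$ is an additive $p$-derivation, rather than merely a congruence-Frobenius lift. Additivity of a power operation is notoriously the subtle part — for Morava $E$-theory it is a theorem of Rezk/Ando–Hopkins–Strickland type, and here it must be derived \emph{purely} from higher semiadditivity. The crux is understanding the $\Sigma_p$-equivariant structure of $(V \oplus W)^{\otimes p}$: the "mixed" summands are induced representations $\mathrm{Ind}_{\Sigma_j \times \Sigma_{p-j}}^{\Sigma_p}(V^{\otimes j} \otimes W^{\otimes p-j})$ for $0 < j < p$, and one needs that their ambidextrous integral over $B\Sigma_p$ contributes a \emph{canonically nullhomotopic} (or at least $p$-divisible, and on the additive part zero) term. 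This should follow from a transitivity/base-change property of the ambidextrous integral: integrating over $B\Sigma_p$ an induced representation from $B(\Sigma_j \times \Sigma_{p-j})$ reduces to integrating over the latter, and the composite with the multiplication map then produces the product of lower power operations times a binomial coefficient $\binom{p}{j}$, which is divisible by $p$. Making the bookkeeping of these equivariant decompositions and the naturality of the norm maps precise — and checking it assembles into the Leibniz identity for $\delta$ — is where the real work lies; everything else is formal manipulation of the Hopkins–Lurie integration calculus.
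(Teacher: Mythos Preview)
Your strategy has the right shape --- use the $1$-semiadditive norm to build a power operation and analyze cross terms via the equivariant decomposition of $(V\oplus W)^{\otimes p}$ --- but there is a genuine gap at two linked points. First, there is no spectrum-level map $X \to (X^{\otimes p})^{h\Sigma_p}$ coming from the $\bb E_\infty$-\emph{algebra} structure on $X$; that structure gives only the multiplication $(X^{\otimes p})_{h\Sigma_p} \to X$ in the other direction. What one actually has is an operation on \emph{elements}: for $x\colon \one \to X$, the paper uses the coalgebra structure on $\one$ together with the algebra structure on $X$ to define $\alpha(x)\colon \one \to X$ (\defref{Alpha}). This is not induced by a single map of spectra, and it is \emph{not} additive. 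The computation (\propref{Alpha_Additvity}, carried out with $C_p$) gives
\[
\alpha(f+g)=\alpha(f)+\alpha(g)+\frac{(f+g)^p-f^p-g^p}{p},
\]
where the cross terms arise precisely from the free $C_p$-orbits of non-pure words in $\{x,y\}^p$; they do not vanish. Your heuristic that ``factoring through a colimit makes things additive'' fails because it is the assignment $f\mapsto\Theta^p(f)$ that is nonlinear, not the subsequent integration $\overline{\alpha}$.

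Second, your plan to extract $\delta$ by dividing, $\delta(x)\coloneqq(\psi(x)-x^p)/p$, is not well-posed when $\pi_0 X$ has $p$-torsion, and in any case $\alpha(x)-x^p$ is not $p$-divisible in general: already $\alpha(1)=|BC_p|$ equals $p^{n-1}$ in $\pi_0 E_n$, so $\alpha(1)-1\not\equiv 0\pmod p$ for $n\ge 2$. The paper's resolution is to define $\delta$ \emph{directly}, with no division:
\[
\delta(f)\coloneqq|BC_p|\cdot f-\alpha(f).
\]
The additive-$p$-derivation identity then follows immediately from the displayed formula for $\alpha(f+g)$ (the cross-term sign flips to the required one), and the normalization $\delta(1)=0$ is exactly the identity $\alpha(1)=|BC_p|$ (\lemref{Alpha_Normalization}). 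The additive Frobenius lift $\psi(x)=x^p+p\delta(x)$ is then a \emph{consequence}, not the starting point. One further remark: the paper works with $C_p$ rather than $\Sigma_p$, and this is not cosmetic --- with $C_p$ the nontrivial orbits on $\{x,y\}^p$ are free, so the cross terms are bare monomials $f^jg^{p-j}$; with $\Sigma_p$ the stabilizers $\Sigma_j\times\Sigma_{p-j}$ are nontrivial and would entangle the formula with lower power operations.
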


For $K(1)$-local $\bb{E}_{\infty}$-rings, Hopkins has constructed in \cite{HopkinsK1} similar looking power operations denoted $(\psi,\theta)$. Generalizations of these operations to higher heights were studied by different authors including \cite{StricklandSym} and \cite{Rezkpower}. In particular, a canonical lift of Frobenius was constructed in \cite{StapletonLift} for the Morava $E$-theory cohomology ring of a space. However, even for $K(1)$-local rings, our power operation $\delta$ turns out to be \emph{different} from the operation $\theta$ constructed by Hopkins. We differ the detailed study of the wealth of power operations on $\bb{E}_{\infty}$-algebras in $1$-semiadditive stable symmetric monoidal $\infty$-categories to a future work. 

Employing this power operation we obtain a general criterion for detecting nilpotence in the homotopy groups of an $\mathbb{E}_{\infty}$-ring, which is inspired by (and generalizes) a conjecture of J. P. May, that was proved in \cite{MathewMay}.

\begin{theorem}[\ref{thm:May}]
\emph{
\label{thm:Main_Sofic}Let $E$ be a homotopy commutative ring spectrum,
such that $\Sp_{E}$ is $1$-semiadditive and let $R$ be an $\bb E_{\infty}$-ring
spectrum\footnote{In fact, it suffices that $R$ is an $H_\infty$-ring spectrum.}. For every $x\in\pi_{*}R$, if the image of $x$ in $\pi_{*}\left(H\bb Q\otimes R\right)$
is nilpotent, then the image of $x$ in $\pi_{*}\left(E\otimes R\right)$
is nilpotent (i.e. the single homology theory $H\bb Q$ detects nilpotence
in all $1$-semiadditive multiplicative homology theories).
}
\end{theorem}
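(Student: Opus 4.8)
\emph{The plan} is to turn the statement into a vanishing assertion for $E$-localizations of $\mathbb{E}_{\infty}$-rings, reduce it to a single universal example, and then squeeze the vanishing out of the power operation of \thmref{Delta_Semi_Add}, \thmref{Frob_Lift}.

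\emph{Step 1 (reduction to a vanishing statement).} Replacing $x$ by $x^{2}$ if necessary we may assume $\lvert x\rvert$ is even, so that $S:=R[x^{-1}]=\colim\bigl(R\xrightarrow{x}\Sigma^{-\lvert x\rvert}R\xrightarrow{x}\cdots\bigr)$ is again an $\mathbb{E}_{\infty}$-ring, being a filtered colimit (pushout) of $\mathbb{E}_{\infty}$-rings; in the $H_{\infty}$ case one argues as in \cite{MathewMay}. Since $-\otimes(-)$ preserves colimits, $Y\otimes S=(Y\otimes R)[x^{-1}]$ for every spectrum $Y$, so the image of $x$ in $\pi_{*}(Y\otimes R)$ is nilpotent if and only if $Y\otimes S=0$. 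Taking $Y=H\mathbb{Q}$ and $Y=E$, the theorem becomes: \emph{if $S$ is an $\mathbb{E}_{\infty}$- (resp.\ $H_\infty$-) ring with $S\otimes H\mathbb{Q}=0$, then $S\otimes E=0$.}

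\emph{Step 2 (reduction to the free $\mathbb{E}_\infty$-ring with a nullhomotopy of $p$).} The hypothesis says $S$ is torsion, hence so is $E\otimes S$, which therefore vanishes iff $E\otimes S_{(p)}=0$ for every prime $p$; and $E\otimes S_{(p)}=E_{(p)}\otimes S_{(p)}$, so we may replace $E$ by $E_{(p)}$ (still $1$-semiadditive). Now $\pi_{0}S_{(p)}$ is $p$-power torsion, so $1$ has some order $p^{s}$; if $s=0$ then $S_{(p)}=0$, and otherwise $p^{s}\cdot\operatorname{id}_{S_{(p)}}=0$, so $S_{(p)}$ is an algebra over the free $\mathbb{E}_{\infty}$-ring $\mathbb{S}/\!\!/p^{s}$ carrying a nullhomotopy of $p^{s}$. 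Hence $S_{(p)}\otimes E=S_{(p)}\otimes_{\mathbb{S}/\!\!/p^{s}}\bigl(\mathbb{S}/\!\!/p^{s}\otimes E\bigr)$, and it suffices to prove that each $\mathbb{S}/\!\!/p^{s}$ is $E$-acyclic, i.e.\ that $T:=L_{E}(\mathbb{S}/\!\!/p^{s})=0$.

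\emph{Step 3 (the power operation forces $T=0$ --- the main obstacle).} Here $T$ is an $\mathbb{E}_{\infty}$-ring in the $1$-semiadditive $\Sp_{E}$ with $p^{s}\cdot1=0$ in $A:=\pi_{0}T$, and we must show $A=0$. By \thmref{Delta_Semi_Add}, \thmref{Frob_Lift}, $A$ carries a canonical additive $p$-derivation $\delta$, so $\psi(a)=a^{p}+p\delta(a)$ is an additive endomorphism with $\psi(1)=1$ and $\psi(a)\equiv a^{p}\pmod{pA}$; I would exploit $\psi$ --- for instance comparing $\psi(p^{s-1}\cdot1)=p^{s-1}\cdot1$ with $\psi(p^{s-1}\cdot1)=p^{p(s-1)}\cdot1+p\,\delta(p^{s-1}\cdot1)$ and running an induction on $s$ down to the vanishing of $\pi_{0}L_{E}(\mathbb{S}/\!\!/p)$ --- to force $1=0$. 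This step is the crux and the genuine difficulty: the bare formal properties of $\psi$ (additivity, $\psi(1)=1$, lifting Frobenius mod $p$) are also enjoyed by nonzero rings such as $\mathbb{Z}/p^{s}$, so the argument must really use the spectral construction of $\delta$ and the constraint of being $E$-local --- this is the analogue, in the $1$-semiadditive setting, of the Tate-valued Frobenius input in the Mathew, Naumann, and Noel proof of May's conjecture \cite{MathewMay}. Granting $T=0$, Step 2 gives $S\otimes E=0$, and unwinding Step 1 shows that the image of $x$ in $\pi_{*}(E\otimes R)$ is nilpotent, as desired.
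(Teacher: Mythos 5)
Your Steps 1--2 are sound (Step 2 is an unnecessary detour --- the paper applies its vanishing criterion directly to $L_E(x^{-1}R)$ rather than passing through the universal example $\bb S/\!\!/p^s$ --- but nothing there is wrong). The genuine gap is Step 3, which you leave open, and your diagnosis of why it is hard is incorrect. You argue that the formal properties of the Frobenius lift $\psi$ cannot suffice because $\bb Z/p^s$ also carries such a $\psi$ (namely the identity). True --- but the structure provided by \thmref{Delta_Semi_Add} is the additive $p$-derivation $\delta$ itself, which is \emph{strictly stronger} than $\psi$: its additivity law $\delta(x+y)=\delta(x)+\delta(y)+\frac{x^p+y^p-(x+y)^p}{p}$ has the divided polynomial built in, and this is exactly what $\bb Z/p^s$ does \emph{not} support. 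Indeed, \lemref{Delta_Module} (a purely formal consequence of that additivity law) gives $\delta(t\cdot 1)=\tilde{\delta}(t)\cdot 1$ for every integer $t$, where $\tilde{\delta}$ is the Fermat quotient; so if $p^s\cdot 1=0$ in a $p$-local semi-$\delta$-ring, then $0=\delta(p^s\cdot 1)=\tilde{\delta}(p^s)\cdot 1$, and since $v_p(\tilde{\delta}(p^s))=s-1$ (\lemref{Delta_Valuation}) this forces $p^{s-1}\cdot 1=0$; iterating gives $1=0$. This is precisely \propref{Delta_Torsion_Nilpotent}/\propref{Delta_Injective}, packaged as \corref{Semiadd_Torsion_Nilpotent}: in a stable, $p$-local, $1$-semiadditively symmetric monoidal $\infty$-category, an $\bb E_\infty$-algebra with torsion unit is zero. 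No ``spectral construction of $\delta$'' beyond its existence, and no analogue of the Tate-valued Frobenius, is needed --- the crux you flag is pure algebra of semi-$\delta$-rings.

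For comparison, the paper's proof runs: reduce to $x$ torsion and homogeneous, form the telescope $x^{-1}R$ (an $\bb E_\infty$-ring with torsion unit), observe that $L_E(x^{-1}R)$ is then an $\bb E_\infty$-algebra with torsion unit in the $1$-semiadditive category $\Sp_E$, and conclude $L_E(x^{-1}R)=0$ from \corref{Semiadd_Torsion_Nilpotent}; hence $\tilde{x}^{-1}\pi_*(E\otimes R)=0$, i.e.\ $\tilde{x}$ is nilpotent. Your outline would become a correct (if slightly roundabout) proof once Step 3 is replaced by this algebraic argument.
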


In fact, we prove \thmref{Main_Sofic} for a wider class of spectra
$E$, which we call \emph{sofic} (see \defref{sofic}). These include all spectra whose Bousfield class is contained in a sum of spectra $E$ for which $\Sp_{E}$ is $1$-semiadditive. 
In particular, it can be applied to the Morava $E$-theories of any height and the finite localizations of the sphere $L_{n}^{f}\bb S$. 

\subsection{Background on Higher Semiadditivity\label{subsec:Higher_Semiadditivity}}

We shall now give an informal introduction to higher semiadditivity. The goal is to motivate both the concept of higher semiadditivity introduced in \cite[section 4]{HopkinsLurie} and the more general perspective on it, that we develop in this paper, using abstract norms and integration.

\subsubsection{From Norms to Integration}
Since the construction of the canonical norm maps is inductive, it
will be helpful to begin with describing some \emph{consequences}
of having invertible norm maps. This will also clarify their relation
to the classical notion of semiadditivity. For an ordinary category
$\mathcal{C}$, semiadditivity is a property, whose main feature is
the ability to sum a finite family of morphisms between two objects.
Similarly, for an $\infty$-category $\mathcal{C}$, being $m$-semiadditive
is a property, whose main feature is the ability to sum an \emph{$m$-finite}
family of morphisms between two objects. Namely, given an $m$-finite
space $A$ and a map 
\[
\varphi\colon A\to\map_{\mathcal{C}}\left(X,Y\right),
\]
we define a map 
\[
\int\limits _{A}\varphi\colon X\to Y,
\]
which we should think of as the sum (or integral) of $\varphi$ over
$A$, as the composition
\[
X\oto{\,\Delta\,}\holim\limits _{A}X\oto{\holim\varphi}\holim\limits _{A}Y\oto{\nm_{A}^{-1}}\colim\limits _{A}Y\oto{\,\nabla\,}Y.
\]

Note, that for an ordinary semiadditive category, summation over a
finite set $A$ is indeed obtained in this way using the \emph{canonical}
isomorphism
\[
\nm_{A}\colon\coprod_{A}X\iso\prod_{A}X.
\]

As a special case, for every object $X\in\mathcal{C}$, integrating
the constant $A$-family on $\Id_{X}$, produces an endomorphism $|A|\in\map_{\mathcal{C}}\left(X,X\right)$.
This generalizes the ``multiplication by $k$'' endomorphism of $X$ for an integer $k$ and should be thought of as multiplication by the ``cardinality of $A$''. 

\subsubsection{From Integration to Norms}

We now turn things around and \emph{construct} norm maps for $m$-finite
spaces by integrating some $\left(m-1\right)$-finite families of
maps. In general, given any space $A$ and a diagram $F\colon A\to\mathcal{C}$,
to specify a morphism 
\[
\nm_{A}\colon\colim\limits _{A}F\to\holim\limits _{A}F,
\]
roughly amounts to specifying a compatible collection of morphisms
\[
a,b\in A\colon\quad\nm_{A}^{a,b}\colon F\left(a\right)\to F\left(b\right).
\]
Fixing $a,b\in A$ and denoting by $A_{a,b}$ the space of paths from
$a$ to $b$, the diagram $F$ itself provides a \emph{family} of
candidates for $\nm_{A}^{a,b}$:
\[
F_{a,b}\colon A_{a,b}\to\map\left(F\left(a\right),F\left(b\right)\right).
\]
There is a priori no obvious (compatible) way to choose one of them,
but assuming we are able to integrate maps over the spaces $A_{a,b}$,
we can just ``sum them all'' 
\[
\nm_{A}^{a,b}=\int\limits _{A_{a,b}}F_{a,b}.
\]
This construction is somewhat easier to grasp when $F$ is constant
on an object $X$. In this special case, a morphism 
\[
\colim\limits _{A}X\to\holim\limits _{A}X,
\]
is the same as a map of spaces 
\[
A\times A\to\map_{\mathcal{C}}\left(X,X\right).
\]
That is, an ``$A\times A$ matrix'' of endomorphism of $X$, where
the $\left(a,b\right)\in A\times A$ entry corresponds to $\nm_{A}^{a,b}$.
The construction sketched above specializes to give $\nm_{A}^{a,b}=|A_{a,b}|$.
The construction of the norm in the general case can be thought of
as a ``twisted'' version of the one for the constant diagram.

\subsubsection{The Inductive Process}

To tie things up, we observe that if $A$ is $m$-finite, then the
path spaces $A_{a,b}$ are $\left(m-1\right)$-finite. Thus, assuming
inductively that we have \emph{invertible} canonical norm maps $\nm_{A}$
for all $\left(m-1\right)$-finite spaces $A$, we obtain a canonical
way to integrate $\left(m-1\right)$-finite families of morphisms.
As explained above, this allows us to define norm maps for all $m$-finite
spaces. It is now a \emph{property} that all those new norm maps are
isomorphisms, which in turn induces an operation of integration over
$m$-finite spaces and so on. We spell out the situation for small
values of $m$.
\begin{itemize}
\item [($-2$)]We define \emph{every} $\infty$-category to be $\left(-2\right)$-semiadditive.
Indeed, if $A$ is $\left(-2\right)$-finite, then $A\simeq\pt$ and
the canonical norm map $\nm_{\pt}$ is the identity natural transformation
of the identity functor. In particular, we get a canonical way to
sum a one point family of maps, which is just
taking the value at the point itself. 
\item [($-1$)]The only non-contractible $\left(-1\right)$-finite space
is $A=\es$. The associated norm map is the unique map 
\[
\nm_{\es}\colon0_{\mathcal{C}}\to1_{\mathcal{C}}
\]
from the initial object to the terminal object of $\mathcal{C}$.
Requiring this map to be an isomorphism is to require the existence
of a zero object. Thus, $\mathcal{C}$ is $\left(-1\right)$-semiadditive
if and only if it is \emph{pointed}. This in turn allows us to integrate
an empty family of morphisms. Namely, given $X,Y\in\mathcal{C}$,
we get a canonical zero map given by the composition 
\[
X\to1_{\mathcal{C}}\iso0_{\mathcal{C}}\to Y.
\]
\item [($0$)]A $0$-finite space is one that is equivalent to a finite
set $A$. Given a collection of objects $\left\{ X_{a}\right\} _{a\in A}$
in a \emph{pointed} $\infty$-category $\mathcal{C}$, we get a canonical
map 
\[
\nm_{A}\colon\coprod_{a\in A}X_{a}\to\prod_{a\in A}X_{a}.
\]
This map is given by the ``identity matrix'' (this uses the zero
maps, which in turn use the inverse of $\nm_{\es}$). Requiring these
maps to be isomorphisms is precisely the usual property of being \emph{semiadditive},
which allows one to sum a finite family of morphisms. 
\item [($1$)]A connected $1$-finite space is of the form $A=BG$ for a
finite group $G$. A diagram $F\colon BG\to\mathcal{C}$ is equivalent
to an object $X\in\mathcal{C}$ equipped with an action of $G$. When
$\mathcal{C}$ is \emph{semiadditive}, one can construct the canonical
norm map
\[
\nm_{BG}\colon X_{hG}\to X^{hG}
\]
and it can be identified with the classical norm of $G$. If $\mathcal{C}$
is stable, then $\nm_{BG}$ is an isomorphism if and only if its cofiber,
the Tate construction $X^{tG}$, vanishes. It is in this form that
\thmref{Hovey_Sadofsky_Greenlees} and \thmref{Kuhn} were originally
stated and proved.
\end{itemize}

\subsubsection{Relative and Axiomatic Integration}

Just like with ordinary semiadditivity, integration of $m$-finite
families of maps satisfies various compatibilities. These generalize
associativity, changing summation order, distributivity with respect to composition, etc. To conveniently manage those compatibility relations
it is useful to extend the integral operation to the relative case.
Given a map of $m$-finite spaces $q\colon A\to B$, the pullback
along $q$ functor
\[
q^{*}\colon\fun\left(B,\mathcal{C}\right)\to\fun\left(A,\mathcal{C}\right),
\]
admits a left and right adjoint, which we denote by $q_{!}$ and $q_{*}$
respectively. If $\mathcal{C}$ is $\left(m-1\right)$-semiadditive,
one can construct a canonical norm map $\nm_{q}\colon q_{!}\to q_{*}$
and it is an isomorphism when $\mathcal{C}$ is $m$-semiadditive.
Similarly to the absolute case, given objects $X,Y\in\fun\left(B,\mathcal{C}\right)$,
one can use the inverse of $\nm_{q}$ to define ``integration along
the fibers of $q$'',
\[
\int\limits _{q}\colon\map_{\fun\left(A,\mathcal{C}\right)}\left(q^{*}X,q^{*}Y\right)\to\map_{\fun\left(B,\mathcal{C}\right)}\left(X,Y\right).
\]

The approach we take in this paper is to further generalize the situation and to put it in an axiomatic framework. We define a \emph{normed
functor} 
\[
	q\colon\mathcal{D}\nto\mathcal{C},
\] 
to be a functor
\(
q^{*}\colon\mathcal{C}\to\mathcal{D},
\)
that admits a left adjoint $q_{!}$, a right adjoint $q_{*}$, and
is equipped with a natural transformation $\nm_{q}\colon q_{!}\to q_{*}$.
If this natural transformation is an isomorphism, we can use the same
formulas as above to define an abstract integration operation 
\[
\int\limits _{q}\colon\map_{\mathcal{D}}\left(q^{*}X,q^{*}Y\right)\to\map_{\mathcal{C}}\left(X,Y\right)
\]
for all $X,Y\in\mathcal{C}$. We proceed to develop a general calculus
of normed functors and integration, which can then be applied to the
context of higher semiadditivity. One advantage of this axiomatic
approach, is that it separates the formal aspects of this ``calculus''
from the rather involved inductive construction of the canonical norm
maps. Another advantage is that it unifies many seemingly different
phenomena as special cases of several general formal statements. This
renders the development of the theory more economic and streamlined. Finally, we
believe that this axiomatic framework might be of use elsewhere.

\subsection{Outline of the Proof} 
	\label{subsec:Outline_Proof}
The core result of this paper is the $\infty$-semiadditivity of $\Sp_{\T\left(n\right)}$. For the convenience of the reader, we shall now sketch the proof.
The argument is inductive on the level of semiadditivity $m$. 
The basis of the induction is $m=1$, which is given by \thmref{Kuhn}. 
Assume that $\Sp_{T\left(n\right)}$ is $m$-semiadditive. 
In order to show that $\Sp_{T\left(n\right)}$ is $\left(m+1\right)$-semiadditive, we need to prove that for every $\left(m+1\right)$-finite space $B$, the natural transformation $\nm_{B}\colon\colim_{B}\to\holim_{B}$ is an isomorphism. 
We proceed by a sequence of reductions. 
First, since $\Sp_{T\left(n\right)}$ is stable and $p$-local, by \cite[Proposition 4.4.16]{HopkinsLurie}, it suffices to show that
\begin{quotation}
(1) The norm map $\nm_{B}$ is an isomorphism for the single space
$B=B^{m+1}C_{p}$. 
\end{quotation}
Now, consider a fiber sequence of spaces
\[
\left(*\right)\quad A\to E\to B,
\]
where $A$ and $E$ are $m$-finite, and $B$ is connected and $\left(m+1\right)$-finite.
We prove that if the natural transformation $|A|$ is \emph{invertible}
(we call such $A$ \emph{amenable}), then $\nm_{B}$ is an isomorphism
(\propref{Amenable_Space}). In fact, it suffices to show that the
component of $|A|$ at the monoidal unit $\bb S_{T\left(n\right)}$
is invertible (\lemref{Box_Unit}). By abuse of notation, we denote
this component also by $|A|$.

In order to apply the above to $B=B^{m+1}C_{p}$, we introduce the
following class of ``candidates'' for $A$. We call a space $A$,
\emph{$m$-good} if it is connected, $m$-finite with $\pi_{m}A\neq0$,
and all homotopy groups of $A$ are $p$-groups. Since such $A$ is
in particular nilpotent, one can always fit it in a fiber sequence
$\left(*\right)$ with $B=B^{m+1}C_{p}$. Thus, we are reduced to
showing that
\begin{quotation}
(2) There exists an $m$-good space $A$, such that $|A|\in\pi_{0}\bb S_{T\left(n\right)}$
is invertible.
\end{quotation}
To detect invertibility in the ring $\pi_{0}\bb S_{T\left(n\right)}$,
we transport the problem into a better understood setting. Let $E_{n}$
be the Morava $E$-theory $\bb E_{\infty}$-ring spectrum of height
$n$, and let $\widehat{\Mod}_{E_n}$ be the
$\infty$-category of $\K\left(n\right)$-local $E_{n}$-modules.
The functor 
\[
E_n\widehat{\otimes}\left(-\right)\colon\Sp_{T\left(n\right)}\to\widehat{\Mod}_{E_n}
\]
is symmetric monoidal, and hence induces a map of commutative rings
\[
f\colon\pi_{0}\bb S_{T\left(n\right)}\to\pi_{0}E_{n}=\bb Z_{p}[[u_{1},\dots,u_{n-1}]].
\]
Using the Nilpotence Theorem and standard techniques of chromatic
homotopy theory, we show that an element of $\pi_{0}\bb S_{T\left(n\right)}$
is invertible, if and only if its image under $f$ is invertible (\corref{Nil_Conservativity_Telescopic}).
Moreover, the functor $E_{n}\widehat{\otimes}\left(-\right)$ is colimit
preserving. Thus, by general arguments of higher semiadditivity we can deduce that $\widehat{\Mod}_{E_n}$
is also $m$-semiadditive (\corref{Semi_Add_Mode}(2)), and moreover,
$f\left(|A|\right)$ coincides with the element $|A|$
of $\pi_{0}E_{n}$ (\corref{Integral_Functor}). Thus, we can replace
$\Sp_{T\left(n\right)}$ with the more approachable $\infty$-category
$\widehat{\Mod}_{E_n}.$ Namely,
it suffices to show
\begin{quotation}
(3) There exists an $m$-good space $A$, such that $|A|\in\pi_{0}E_{n}$
is \emph{invertible}.
\end{quotation}
By \cite[Lemma 1.33]{BobkovaG}, the image of $f$ is contained in the constants $\bb Z_{p}$. Hence, $|A|$ is invertible, if and only
if its $p$-adic valuation is zero. On $\bb Z_{p},$ we have the Fermat
quotient operation
\[
\tilde{\delta}\left(x\right)=\frac{x-x^{p}}{p},
\]
with the salient property of reducing the $p$-adic valuation of non-invertible
non-zero elements. The heart of the proof comprises of realizing the
algebraic operation $\tilde{\delta}$ in a way that acts on the elements
$|A|$ in an understood way. It is for this step that it
is crucial that our induction base is $m=1$. Namely, for
a presentable, \emph{$1$-semiadditive}, stable, $p$-local, symmetric
monoidal $\infty$-category $(\mathcal{C},\otimes, \one_{\mathcal{C}})$, we construct a ``power
operation'' (\defref{Delta} and \thmref{Delta_Semi_Add})
\[
\delta\colon\pi_{0}\left(\one_{\mathcal{C}}\right)\to\pi_{0}\left(\one_{\mathcal{C}}\right),
\]
that shares many of the formal properties of $\tilde{\delta}$.
In particular, specializing to the case $\mathcal{C}=\widehat{\Mod}_{E_n}$,
the operation $\delta$ coincides with $\tilde{\delta}$ on $\bb Z_{p}\ss\pi_{0}E_{n}$.
Moreover, for an $m$-good $A$, we have 
\[
\delta\left(|A|\right)=|A'|-|A''|,
\]
where $A'$ and $A''$ are also $m$-good (combine \defref{Delta_Semi_Add}
and \thmref{Alpha_Box}). It follows that if $|A|$ is
non-zero (and not already invertible), then at least one of $|A'|$
and $|A''|$ has \emph{lower} $p$-adic valuation than
$|A|$. The prototypical $m$-good space is the Eilenberg-MacLane
space $B^{m}C_{p}$. Hence, it suffices to show that
\begin{quotation}
(4) The element $|B^{m}C_{p}|\in\pi_{0}E_{n}$ is \emph{non-zero}.
\end{quotation}
To get a grip on the elements $|A|$, we reformulate them
in terms of the symmetric monoidal dimension (which does not refer
at all to higher semiadditivity). Let us denote by $A\otimes E_{n}$,
the colimit of the constant $A$-shaped diagram on $E_{n}$ in $\widehat{\Mod}_{E_n}$.
We show that $A\otimes E_{n}$ is a dualizable object\footnote{We show that this follows from higher semiadditivity, but it can be also deduced directly from the finite dimensionality of $K(n)_*(A)$ (\cite{RavenelWilson}). See \cite{hoveystrickland} and \cite{RognesStablyDualizable}.}, and that (\corref{Dim_Sym})
\[
\dim\left(A\otimes E_{n}\right)=|A^{S^{1}}|\quad\in\pi_{0}E_{n}.
\]
Since 
\[
|\left(B^{m}C_{p}\right)^{S^{1}}| = |B^{m}C_{p}\times B^{m-1}C_{p}| 
=  |B^{m}C_{p}||B^{m-1}C_{p}|,
\]
it suffices to show that
\begin{quotation}
(5) The element $\dim\left(B^{m}C_{p}\otimes E_{n}\right)\in\pi_{0}E_{n}$
is \emph{non-zero}.
\end{quotation}
Finally, it can be shown that $\dim\left(A\otimes E_{n}\right)$ equals
the Euler characteristic of the $2$-periodic Morava $K$-theory (\lemref{Morava_Dimension})\footnote{We prove this only for $B^{m}C_{p}$, as this suffices for our purposes,
but this is true in general.}
\[
\chi_{n}\left(A\right)=\dim_{\bb F_{p}}\K\left(n\right)_{0}A-\dim_{\bb F_{p}}\K\left(n\right)_{1}A.
\]

Hence, it suffices to prove that
\begin{quotation}
(6) The integer $\chi_{n}\left(B^{m}C_{p}\right)$ is \emph{non-zero}.
\end{quotation}
This is an immediate consequence of the explicit computation of $\K\left(n\right)_{*}(B^{m}C_{p})$,
carried out in \cite{RavenelWilson}. 

We alert the reader that at several points, this outline diverges
from the actual proof we give. Most significantly, we make use of
the fact that the steps (1)--(5) are completely formal and the ideas
involved can be formalized in a much greater generality. Instead of
the functor $E_{n}\widehat{\otimes}\left(-\right)$, we can consider any
colimit preserving symmetric monoidal functor $F\colon\mathcal{C}\to\mathcal{D}$
between stable, $p$-local, symmetric monoidal $\infty$-categories.
Given such a functor $F$, we show how to bootstrap $1$-semiadditivity
to higher semiadditivity under appropriate conditions (\thmref{Bootstrap_Machine}).
This necessitates some technical changes in the argument outlined
above\footnote{In particular, we bypass \cite{BobkovaG} using a somewhat different
and more general argument.}. It is only in the final section that we specialize to $\mathcal{C}=\Sp_{T\left(n\right)}$,
and verify the assumptions of this general criterion.
%\begin{rem}
%Informally, the argument presented above shows that, while $1$-semiadditive
%categories are very special among presentably symmetric monoidal stable
%$p$-local $\infty$-categories, it is quite easy for a $1$-semiadditive
%$\infty$-category to be $\infty$-semiadditive. This intuition leads
%us to ask the following:
%\end{rem}

\subsection{Organization}

We now describe the content of each section of the paper.

In section 2, we develop the axiomatic framework of normed functors
and integration. We begin by developing some general calculus for
this notion and study its functoriality properties. We then study
the interaction of integration with symmetric monoidal structures and
the notion of duality. We conclude with a discussion of the property
of \emph{amenability}.

In section 3, we apply the axiomatic theory of section 2 to the
setting of local systems valued in an $m$-semiadditive $\infty$-category.
We begin by recalling the canonical norm on the pullback functor along
an $m$-finite map (introduced in \cite[Section 4.1]{HopkinsLurie}),
and its interaction with various operations. We then consider $m$-finite
colimit preserving functors between $m$-semiadditive $\infty$-categories
(a.k.a $m$-semiadditive functors), and their behavior with respect
to integration. We continue with studying the interaction of $m$-semiadditivity
with symmetric monoidal structures, duality, and dimension. Finally,
we study the behavior of equivariant powers in $1$-semiadditive $\infty$-categories,
which is used in the sequel in the construction of power operations.

In section 4, we construct the above-mentioned power operations for
$1$-semiadditive stable $\infty$-categories. First, we introduce
the algebraic notion of an additive $p$-derivation and study some
of its properties. We then construct an auxiliary operation $\alpha$
in the presence of $1$-semiadditivity. Specializing to the \emph{stable}
($p$-local) case, we construct from $\alpha$ the additive $p$-derivation $\delta$ and establish its naturality properties. 
Finally, we formulate and
prove the ``bootstrap machine'', that
gives general conditions for a $1$-semiadditive $\infty$-category to be $\infty$-semiadditive. We conclude the section with a discussion of ``nil-conservativity'' which is a natural setup to which one can apply the bootstrap machine. 

In section 5, we apply the abstract theory of sections 2--4 to chromatic
homotopy theory. After some generalities, we use the additive $p$-derivation
of section 4 to derive a generalization of a conjecture of May about nilpotence in $H_{\infty}$-rings. 
We then apply the ``bootstrap machine'' to the $1$-semiadditive $\infty$-category $\Sp_{\T\left(n\right)}$, to show that it is $\infty$-semiadditive, and deduce that  $\T\left(n\right)$-homology of $\pi$-finite spaces depends only on the $n$-th Postnikov truncation. Finally, we consider localizations with respect to general weak rings. We show, among other things, that in this setting $1$-semiadditivity implies $\infty$-semiadditivity, and that various notions of ``bounded height'' coincide.

\subsection{Acknowledgments}

We would like to thank Tobias Barthel, Agn{\`e}s Beaudry, Gijs Heuts,
and Nathaniel Stapleton for useful discussions, and Shay Ben Moshe for his valuable comments on an earlier draft of the manuscript. We would especially
like to thank Michael Hopkins, for suggesting this question and for
useful discussions. Finally, we thank the anonymous referee for his/her careful reading of the manuscript and the many helpful comments and suggestions.  

The first author is partially supported by the Adams Fellowship of
the Israeli Academy of Science. The second author is supported by
the Alon Fellowship and ISF1588/18. The third author is supported
by the ISF grant 1650/15~

The second author would like to thank the Isaac Newton Institute for
Mathematical Sciences, Cambridge, for support and hospitality during
the programme ``Homotopy Harnessing Higher Structures'', where work
on this paper was undertaken. This work was supported by EPSRC grant
no EP/K032208/1.

\subsection{Terminology and Notation}

Throughout the paper we work in the framework of $\infty$-categories (a.k.a. quasicategories), introduced by A. Joyal \cite{joyalquasicat}, and extensively developed by Lurie in \cite{htt} and \cite{ha}. We shall also use the following terminology and notation:
\begin{enumerate}
\item We use the term \emph{isomorphism} for an invertible morphism of an
$\infty$-category (i.e. an equivalence).
\item We say that a space $A$ is 
\begin{enumerate}
\item $\left(-2\right)$-finite, if it is contractible. 
\item $m$-finite for $m\ge-1$, if $\pi_{0}A$ is finite and all the fibers
of the diagonal map $\Delta_{A}\colon A\to A\times A$ are $\left(m-1\right)$-finite
(for $m\ge0$, this is equivalent to $A$ having finitely many components,
each of them $m$-truncated with finite homotopy groups).
\item $\pi$-finite, if it is $m$-finite for some integer $m\ge-2$. 
\end{enumerate}
\item We say that an $\pi$-finite space $A$ is a $p$-space, if all the
homotopy groups of $A$ are $p$-groups.
\item Given a map of spaces $q\colon A\to B$, for every $b\in B$ we denote
by $q^{-1}\left(b\right)$ the homotopy fiber of $q$ over $b$.
\item For $m\ge-2$, we say that a map of spaces $q\colon A\to B$ is $m$-finite (resp. $\pi$-finite) if $q^{-1}(b)$ is $m$-finite (resp. $\pi$-finite) for all $b\in B$.
\item Given an $\infty$-category $\mathcal{C}$, we say that $\mathcal{C}$
admits all $q$-limits (resp. $q$-colimits) if it admits all limits
(resp. colimits) of shape $q^{-1}\left(b\right)$ for all $b\in B$. 
\item Given a functor $F\colon\mathcal{C}\to\mathcal{D}$ of $\infty$-categories,
we say that $F$ preserves $q$-colimits (resp. $q$-limits) if it
preserves all colimits (resp. limits) of shape $q^{-1}\left(b\right)$
for all $b\in B$.
\item We use the notation 
\[
f\colon X\oto gY\oto hZ
\]
to denote that $f\colon X\to Z$ is the composition $h\circ g$ (which
is well defined up to a contractible space of choices). We use similar
notation for composition of more than two morphisms.
\item Given functors $F,G\colon\mathcal{C}\to\mathcal{D}$ and $H,K\colon\mathcal{D}\to\mathcal{E}$,
and natural transformations $\alpha\colon F\to G$ and $\beta\colon H\to K$,
we denote their horizontal composition by $\beta\star\alpha\colon HF\to KG$.
The vertical composition of natural transformations is denoted simply
by juxtaposition.
\item For a symmetric monoidal $\infty$-category $\mathcal{C}$, we denote
by $\calg(\mathcal{C})$ the $\infty$-category of $\mathbb{E}_{\infty}$-algebras
in $\mathcal{C}$. We denote $\cocalg(\mathcal{C})=\calg(\mathcal{C}^{op})^{op}$
the $\infty$-category of $\bb E_{\infty}$-coalgebras in $\mathcal{C}$,
where $\mathcal{C}^{op}$ is endowed with the canonical symmetric
monoidal structure induced from $\mathcal{C}$.
\item For an abelian group $A$ and $k\ge0$, we denote by $B^{k}A$ the
Eilenberg MacLane space with $k$-th homotopy group equal to $A$.
\end{enumerate}

\section{Norms and Integration}
\label{sec:norms_and_integration}
In this section, we develop an abstract formal framework of norms on
functors between $\infty$-categories and the operation of integration
on maps, that such norms induce. This framework abstracts, axiomatizes, and generalizes
the theory of norms and integrals arising from ambidexterity developed
in \cite[Section 4]{HopkinsLurie}. We develop a ``calculus'' for such integrals
and study their functoriality properties and interaction with monoidal
structures. 

\subsection{Normed Functors and Integration}

\subsubsection{Norms and Iso-Norms}

We begin by fixing some terminology regarding adjunctions of $\infty$-categories.
\begin{defn}
Let $F\colon\mathcal{C}\to\mathcal{D}$ be a functor of $\infty$-categories. 
\begin{enumerate}
\item By a \emph{left adjoint} to $F$, we mean a pair $\left(L,u\right)$,
where $L\colon\mathcal{D}\to\mathcal{C}$ is a functor and 
\[
u\colon\Id_{\mathcal{D}}\to F\circ L
\]
is a unit natural transformation in the sense of \cite[Definition 5.2.2.7]{htt}. 
\item By a \emph{right adjoint} to $F$, we mean a pair $\left(R,c\right)$,
where $R\colon\mathcal{D}\to\mathcal{C}$ is a functor and 
\[
c\colon F\circ R\to\Id_{\mathcal{D}}
\]
a counit natural transformation (i.e. satisfying the dual of \cite[Definition 5.2.2.7]{htt}).
\end{enumerate}
\end{defn}

Given a datum of a left adjoint $\left(L,u\right)$, there exists
a map $c\colon L\circ F\to\Id_{\mathcal{C}}$, such that $u$ and
$c$ satisfy the zig-zag identities up to homotopy. From this also
follows that $c$ is a counit map exhibiting $\left(F,c\right)$ as
a right adjoint to $L$. This counit map $c$ is unique up to homotopy,
and we shall therefore sometimes speak of ``the'' associated counit
map (in fact, the space of such maps together with a homotopy witnessing
\emph{one} of the zig-zag identities is contractible \cite[Proposition 4.4.7]{RiehlV}).
We shall similarly speak of the unit map $u\colon\Id_{\mathcal{C}}\to R\circ F$
associated with a right adjoint $\left(R,c\right)$. 

Adjoint functors can be composed in the following (usual) sense:
\begin{defn}
\label{def:Adj_Composition}Given a pair of composable functors
\[
\xymatrix{\mathcal{C}\ar[r]^{F} & \mathcal{D}\ar[r]^{F'} & \mathcal{E}}
,
\]
with left adjoints $\left(L,u\right)$ and $\left(L',u'\right)$ respectively,
the composite map 
\[
u''\colon\Id_{\mathcal{E}}\oto{u'}F'L'\oto uF'FLL',
\]
which is well defined up to homotopy, is a unit map exhibiting $LL'$
as left adjoint to $F'F$. We define the counit map of the composition
of right adjoints in a similar way. 
\end{defn}

The central notion we are about to study in this section is the following:
\begin{defn}
\label{def:Normed_Functor}Given $\infty$-categories $\mathcal{C}$
and $\mathcal{D}$, a \emph{normed functor} 
\[
q\colon\mathcal{D}\nto\mathcal{C},
\]
is a functor $q^{*}\colon\mathcal{C}\to\mathcal{D}$ together with
a left adjoint $\left(q_{!},u_{!}^{q}\right)$, a right adjoint $\left(q_{*},c_{*}^{q}\right)$,
and a natural transformation 
\[
\nm_{q}\colon q_{!}\to q_{*},
\]
which we call a \emph{norm}. We say that $q$ is \emph{iso-normed},
if $\nm_{q}$ is an isomorphism natural transformation. For $X\in \mathcal{C}$, we also write
$X_{q}=q_{!}q^{*}X$, and denote by $c_{!}^{q}\colon q_{!}q^{*}\to\Id$
and $u_{*}^{q}\colon\Id\to q_{*}q^{*}$, the associated counit and
unit of the respective adjunctions. We drop the superscript $q$ whenever
it is clear from the context.
\end{defn}

\begin{rem}
In subsequent sections, we shall sometimes abuse language and refer
to $\nm_{q}$ as a \emph{norm on $q^{*}$} and to $q^{*}$ itself
(with the data of $\nm_{q}$) as a normed functor. Since the left
and right adjoints of $q^{*}$ are essentially unique (when they exist),
this seems to be a rather harmless convention.
\end{rem}

There is a useful criterion for detecting when a normed functor is
iso-normed.
\begin{lem}
\label{lem:Iso_Normed_Criterion} A normed functor $q\colon\mathcal{D}\nto\mathcal{C}$
is iso-normed if and only if the norm $\nm_{q}\colon q_{!}\to q_{*}$
is an isomorphism at $q^{*}X$ for all $X\in\mathcal{C}$.
\end{lem}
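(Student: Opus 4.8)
The plan is to reduce the general statement to the pointwise hypothesis by exploiting the interaction between the norm $\nm_q\colon q_! \to q_*$ and the (co)unit maps of the two adjunctions. The ``only if'' direction is immediate: if $\nm_q$ is an isomorphism of natural transformations, then in particular its component at every object of $\mathcal{C}$, and hence at every $q^*X$, is an isomorphism. So the content is the ``if'' direction: assuming $\nm_q$ is an isomorphism at $q^*X$ for all $X \in \mathcal{C}$, we must show it is an isomorphism at every object $D \in \mathcal{D}$.

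The key observation is that an arbitrary object $D \in \mathcal{D}$ is a retract of an object of the form $q^*X$. Indeed, take $X = q_! D \in \mathcal{C}$; the counit $c_!^q \colon q_! q^* \to \Id$ of the adjunction $(q_! \dashv q^*)$, together with the unit $u_!^q \colon \Id \to q^* q_!$, exhibit $D$ as a retract of $q^* q_! D$ via the zig-zag identity $c_!^q \star u_!^q \simeq \Id_D$ — more precisely, the composite $D \xrightarrow{u_!^q} q^* q_! D \to D$ is homotopic to the identity, where the second map is the component at $D$ of $q^*$ applied to the counit, or dually one uses that the unit $u_!$ admits a retraction. (Here I would be slightly careful about which (co)unit to use: since $q^*$ has a left adjoint $q_!$, the relevant statement is that for every $D$ the unit $D \to q^* q_! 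D$ is a split monomorphism, split by $q^*$ applied to $c_!^q$ at... — the cleanest formulation is simply that $D$ is a retract of $q^*(q_! D)$ in $\mathcal{D}$.) Then I invoke naturality of $\nm_q$ with respect to the retraction maps: in the commuting square
\[
\xymatrix{
q_! D \ar[r]^{\nm_{q,D}} \ar[d] & q_* D \ar[d] \\
q_! q^* q_! D \ar[r]^{\nm_{q,\,q^*q_!D}} \ar[d] & q_* q^* q_! D \ar[d] \\
q_! D \ar[r]^{\nm_{q,D}} & q_* D
}
\]
the vertical composites on both sides are the identity (functoriality of $q_!$ and $q_*$ applied to a retraction in $\mathcal{D}$), and the middle horizontal map $\nm_{q,\,q^*q_!D}$ is an isomorphism by hypothesis. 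A morphism that is a retract of an isomorphism is an isomorphism, so $\nm_{q,D}$ is an isomorphism, as desired.

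The main obstacle — really the only point requiring care — is verifying that $D$ is genuinely a retract of $q^* q_! D$ \emph{in the relevant sense}, i.e. that the retraction is witnessed in a way compatible with applying the functors $q_!$ and $q_*$ and then the naturality square of $\nm_q$. This is exactly the triangle (zig-zag) identity for the adjunction $q_! \dashv q^*$, as recalled in the discussion following \defref{Normed_Functor} (existence of the counit $c_!^q$ satisfying the zig-zag identities up to homotopy); once one has the splitting $\Id_D \simeq (c_!^q \text{ at } D) \circ (u_!^q \text{ at } D)$ — or its dual using $c_*^q$ and $u_*^q$ for the right adjoint — the rest is the formal fact that isomorphisms are closed under retracts in any $\infty$-category. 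I would present the argument using whichever of the two adjunctions makes the bookkeeping of units and counits most transparent, probably phrasing it as: $\id_{q_! D}$ factors through $\nm_{q,\,q^*q_!D}$ after composing with maps induced by the counit, hence $\nm_{q,D}$ is a retract of an equivalence.
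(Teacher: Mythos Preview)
Your proposal contains a genuine gap. The claimed retraction of $D$ off of $q^*q_!D$ does not exist in general. The zig-zag identities for $q_!\dashv q^*$ say that $q^*(c_!^q)\circ u_!^q|_{q^*X}=\Id_{q^*X}$ and $(c_!^q)|_{q_!D}\circ q_!(u_!^q|_D)=\Id_{q_!D}$; neither of these produces a splitting of $u_!^q\colon D\to q^*q_!D$ for arbitrary $D\in\mathcal{D}$. (Concretely: take $q^*\colon\pt\to\Set$ picking out the one-point set; then $q^*q_!D=\pt$ for every set $D$, and certainly not every set is a retract of a point.) Consequently, in your three-row diagram the right-hand vertical composite $q_*D\to q_*q^*q_!D\to q_*D$ need not be the identity, and the ``retract of an isomorphism'' argument does not go through.

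The paper's proof avoids this by using \emph{both} adjunctions, one for each direction of inverse. From the $(q_!\dashv q^*)$ zig-zag identity one gets that the top row $q_!\xrightarrow{q_!u_!}q_!q^*q_!\xrightarrow{c_!}q_!$ is the identity; combining this with naturality of $\nm_q$ and the hypothesis that $\nm_q$ is invertible at $q^*q_!D$ yields a \emph{left} inverse to $\nm_{q,D}$. Dually, the $(q^*\dashv q_*)$ zig-zag identity gives that $q_*\xrightarrow{u_*}q_*q^*q_*\xrightarrow{q_*c_*}q_*$ is the identity, and this produces a \emph{right} inverse to $\nm_{q,D}$. The point is that the single adjunction only buys you a one-sided inverse; you genuinely need the other adjunction to finish. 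Your diagram is essentially the paper's second diagram, but you attempted to close it up into a full retract using only $q_!\dashv q^*$, which is not possible.
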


\begin{proof}
The ``only if'' part is clear. For the ``if'' part, consider the
two diagrams
\[
\xymatrix@C=3pc{ & q_{!}q^{*}q_{*}\ar[d]_{\nm_{q}}^{\wr}\ar[r]^{\ c_{*}} & q_{!}\ar[d]_{\nm_{q}}\\
q_{*}\ar[r]^{u_{*}\ } & q_{*}q^{*}q_{*}\ar[r]^{\ c_{*}} & q_{*},
}
\qquad \qquad
\xymatrix@C=3pc{q_{!}\ar[d]^{\nm_{q}}\ar[r]^{u_{!}\ } & q_{!}q^{*}q_{!}\ar[d]_{\wr}^{\nm_{q}}\ar[r]^{\ c_{!}} & q_{!}\\
q_{*}\ar[r]^{u_{!}\ } & q_{*}q^{*}q_{!},
}
\]
which commute by naturality of the (co)unit maps. By the zig-zag identities,
the composition along the bottom row in the left diagram is the identity.
Thus, the left diagram shows that $\nm_{q}$ has a right inverse.
Similarly, the right diagram shows that $\nm_{q}$ has a left inverse
and therefore $\nm_{q}$ is an isomorphism.
\end{proof}
Given a functor $q^{*}\colon\mathcal{C}\to\mathcal{D}$ with a left
adjoint $\left(q_{!},u_{!}^{q}\right)$ and a right adjoint $\left(q_{*},c_{*}^{q}\right)$,
the data of a natural transformation $\nm_{q}\colon q_{!}\to q_{*}$
is equivalent to the data of its mate $\nu_{q}\colon q^{*}q_{!}\to\Id$.
Moreover,
\begin{lem}
\label{lem:Norm_Counit} Let $q\colon\mathcal{D}\nto\mathcal{C}$
be a normed functor. For every $Y\in\mathcal{D}$, the map $\nm_{q}\colon q_{!}\to q_{*}$
is an isomorphism at $Y\in\mathcal{D}$ if and only if the mate $\nu_{q}\colon q^{*}q_{!}\to\Id$
is a counit map at $Y$. Namely, for all $X\in\mathcal{C}$, the composition
\[
\map_{\mathcal{C}}\left(X,q_{!}Y\right)\oto{q^{*}}\map_{\mathcal{D}}\left(q^{*}X,q^{*}q_{!}Y\right)\oto{\nu\circ-}\map_{\mathcal{D}}\left(q^{*}X,Y\right)
\]
is a homotopy equivalence.
\end{lem}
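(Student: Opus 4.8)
The plan is to unwind the definition of the mate $\nu_q\colon q^*q_!\to\Id_{\mathcal D}$ and recognize the displayed composition as the one that computes, for a right adjoint, the Hom-space comparison map. Recall that $\nu_q$ is obtained from $\nm_q\colon q_!\to q_*$ by whiskering with $q^*$ on the left and then post-composing with the counit $c^q_*\colon q^*q_*\to\Id_{\mathcal D}$, i.e. $\nu_q\colon q^*q_!\oto{q^*\star\nm_q} q^*q_*\oto{c^q_*}\Id_{\mathcal D}$. First I would observe that, since $(q_*,c^q_*)$ is a right adjoint to $q^*$, for every $X\in\mathcal C$ and $Z\in\mathcal D$ the composite
\[
\map_{\mathcal C}(X,q_*Z)\oto{q^*}\map_{\mathcal D}(q^*X,q^*q_*Z)\oto{c^q_*\circ-}\map_{\mathcal D}(q^*X,Z)
\]
is a homotopy equivalence — this is precisely the (contravariant-in-$X$, covariant-in-$Z$) statement that $c^q_*$ is a counit, which holds by hypothesis for \emph{all} $Z$. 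Applying this with $Z=q_!Y$ gives that $\map_{\mathcal C}(X,q_*q_!Y)\to\map_{\mathcal D}(q^*X,q_!q^*Y)$... but more usefully, I would instead run the argument with the counit $\nu_q$ itself playing the role of $c^q_*$ and ask when it is a counit \emph{at the single object $Y$}.

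Next I would make the two-out-of-three comparison explicit. Post-composition with $\nm_q$ at $Y$ gives a commuting triangle of Hom-spaces: the map $(\nu_q\circ-)\circ q^*\colon\map_{\mathcal C}(X,q_!Y)\to\map_{\mathcal D}(q^*X,Y)$ factors as
\[
\map_{\mathcal C}(X,q_!Y)\oto{\nm_q\circ-}\map_{\mathcal C}(X,q_*Y)\oto{q^*}\map_{\mathcal D}(q^*X,q^*q_*Y)\oto{c^q_*\circ-}\map_{\mathcal D}(q^*X,Y),
\]
because $\nu_q = c^q_* \circ (q^*\star\nm_q)$ and naturality of $q^*$ as a functor on mapping spaces lets one slide $\nm_q$ past $q^*$. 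In this factorization the last two arrows compose to a homotopy equivalence for \emph{every} $X$ (and every target, in particular $Y$), since $(q_*,c^q_*)$ is a genuine right adjoint. Therefore $(\nu_q\circ-)\circ q^*$ is a homotopy equivalence for all $X$ if and only if $(\nm_q\circ-)\colon\map_{\mathcal C}(X,q_!Y)\to\map_{\mathcal C}(X,q_*Y)$ is a homotopy equivalence for all $X$. By the Yoneda lemma, the latter holds for all $X$ exactly when $\nm_q\colon q_!Y\to q_*Y$ is an isomorphism in $\mathcal C$, i.e. when $\nm_q$ is an isomorphism at $Y$. This is the claimed equivalence.

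The only point requiring care — and the part I'd expect to be mildly fiddly rather than a real obstacle — is the coherent identification of the factorization above, i.e. checking that the mate construction genuinely produces $\nu_q = c^q_*\circ(q^*\star\nm_q)$ and that post-composition is compatible with whiskering at the level of mapping spaces; this is where one uses that the space of adjunction data (unit/counit plus one zig-zag homotopy) is contractible, as recalled just before \defref{Normed_Functor}, so all these comparisons are canonical. Given that, the argument is a formal diagram chase in mapping spaces together with one application of Yoneda, entirely parallel in spirit to the proof of \lemref{Iso_Normed_Criterion}.
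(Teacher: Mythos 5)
Your proof is correct and is essentially the paper's own argument: the paper draws the same factorization as a commutative diagram of mapping spaces (with the adjunction equivalence for $(q_*,c_*^q)$ as the diagonal) and applies 2-out-of-3 plus Yoneda exactly as you do. The aborted aside in your second paragraph ("Applying this with $Z=q_!Y$...") can simply be deleted; the rest stands as is.
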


\begin{proof}
For every $X\in\mathcal{C}$, consider the commutative diagram in
the homotopy category of spaces:
\[
\xymatrix@C=4pc{\map_{\mathcal{C}}\left(X,q_{!}Y\right)\ar[d]^{q^{*}}\ar[r]^{\nm_{q}\circ-} & \map_{\mathcal{C}}\left(X,q_{*}Y\right)\ar[d]^{q^{*}}\ar[dr]^{\sim}\\
\map_{\mathcal{D}}\left(q^{*}X,q^{*}q_{!}Y\right)\ar[r]^{\nm_{q}\circ-}\ar@/_{2pc}/[rr]_{\nu_{q}\circ-} & \map_{\mathcal{D}}\left(q^{*}X,q^{*}q_{*}Y\right)\ar[r]^{c_{*}\circ-}  & \map_{\mathcal{D}}\left(q^{*}X,Y\right).
}
\]

By the Yoneda lemma, $\nm_{q}$ is an isomorphism at $Y$ if and only
if the top map in the diagram is an isomorphism for all $X\in\mathcal{C}$.
By 2-out-of-3, this is the case if and only if the composition of the top map
and the diagonal map is an isomorphism for all $X$. Since the diagram
commutes, this is if and only if the composition of the left vertical
map with the long bottom map is an isomorphism for all $X$, which
is by definition if and only if $\nu_{q}$ is a counit at $Y$.
\end{proof}
\begin{notation}
When $\nm_{q}$ is an isomorphism at $q^*X$, and hence $\nu_{q}$ is
a counit at $X$, we denote the associated \emph{unit} by $\mu_{q,X}\colon X\to q_{!}q^{*}X=X_{q}$.
If $q$ is iso-normed, we let $\mu_{q}\colon\Id\to q_{!}q^{*}$ be
the unit natural transformation associated with $\nu_{q}$. As usual,
we drop the subscript $q$, whenever the map is understood from the
context.
\end{notation}

\begin{rem}
We will use the two points of view, that of a norm $\nm_{q}\colon q_{!}\to q_{*}$
and that of a ``wrong way counit'' $\nu_{q}\colon q^{*}q_{!}\to\Id$
interchangeably. Each point of view has its own advantages. We note
that the definition using $\nu_{q}$ seems to be slightly more general
as it is available even if $q^{*}$ does not (a priori) admit a right
adjoint. In practice, we are mainly interested in situations where
$\nu_{q}$ is indeed a counit map for an adjunction, exhibiting $q_{!}$
as a right adjoint of $q^{*}$. Thus, the gain in generality is rather
negligible. 
\end{rem}

\begin{defn}
\label{def:Norm_Construction}We define the identity normed functor
and composition of normed functors (up to homotopy) as follows.
\begin{enumerate}
\item (Identity) For every $\infty$-category $\mathcal{C}$, the identity
normed functor $\Id\colon\mathcal{C}\nto\mathcal{C}$ consists of
the identity functor $\Id\colon\mathcal{C}\to\mathcal{C}$ viewed
as a left and right adjoint to itself using the identity natural transformation
$\Id\to\Id$ as the (co)unit map and with the identity natural transformation
$\Id\to\Id$ as the norm. 
\item (Composition) Given a pair of composable normed functors
\[
\xymatrix{\mathcal{E}\ \ar@{>->}[r]^{p} & \mathcal{D}\ \ar@{>->}[r]^{q} & \mathcal{C}}
,
\]
we define their composition $qp\colon\mathcal{E}\nto\mathcal{C}$
by composing the adjunctions (\defref{Adj_Composition})
\[
\left(qp\right)^{*}=p^{*}q^{*},\quad\left(qp\right)_{!}=q_{!}p_{!},\quad\left(qp\right)_{*}=q_{*}p_{*}
\]
and take the norm map to be the horizontal composition of the norms
(the order does not matter) 
\[
q_{!}p_{!}\oto{\nm_{q}}q_{*}p_{!}\oto{\nm_{p}}q_{*}p_{*}.
\]
We denote the norm of the composite by $\nm_{qp}$. If $p$ and $q$
are iso-normed, then so is $qp$. 
\end{enumerate}
\end{defn}

\begin{rem}
\label{rem:Cat_Norm}It is possible to define an $\infty$-category
$\widehat{\cat}_{\infty}^{\nm}$, whose objects are $\infty$-categories
and morphisms are normed functors, such that the above constructions
give the identity morphisms and composition in the homotopy category.
This $\infty$-category captures the higher coherences manifest in
the above definitions. We intend to elaborate on this point in a future
work, but for the purposes of this one, which will not use the higher
coherences in any way, we shall be content with the above explicit
definitions up to homotopy.
\end{rem}

\subsubsection{Integration}

The main feature of iso-normed functors is that they allow us to define
a formal notion of ``integration'' of maps.
\begin{defn}
Let $q\colon\mathcal{D}\nto\mathcal{C}$ be an iso-normed functor.
For every $X,Y\in\mathcal{C}$, we define an \emph{integral} map 
\[
\int\limits _{q}\colon\map_{\mathcal{D}}\left(q^{*}X,q^{*}Y\right)\to\map_{\mathcal{C}}\left(X,Y\right),
\]
which is natural in $X$ and $Y$, as the composition
\[
\map_{\mathcal{D}}\left(q^{*}X,q^{*}Y\right)\oto{q_{*}}\map_{\mathcal{C}}\left(q_{*}q^{*}X,q_{*}q^{*}Y\right)\oto{\nm_{q}^{-1}}\map_{\mathcal{C}}\left(q_{*}q^{*}X,q_{!}q^{*}Y\right)\oto{c_{!}\circ-\circ u_{*}}\map_{\mathcal{C}}\left(X,Y\right).
\]
\end{defn}

\begin{rem}
\label{rem:Integration_Unit}Alternatively, using the \emph{wrong
way unit }$\mu_{q}\colon\Id\to q_{!}q^{*}$, one can define the integral
as the composition
\[
\map_{\mathcal{D}}\left(q^{*}X,q^{*}Y\right)\oto{q_{!}}\map_{\mathcal{C}}\left(q_{!}q^{*}X,q_{!}q^{*}Y\right)\oto{c_{!}\circ-\circ\mu}\map_{\mathcal{C}}\left(X,Y\right).
\]
\end{rem}

As a special case we have
\begin{defn}
Let $q\colon\mathcal{D}\nto\mathcal{C}$ be an iso-normed functor.
For every $X\in\mathcal{C}$, we define a map 
\[
|q|_{X}\colon X\to X
\]
by 
\[
|q|_{X}\coloneqq \int\limits _{q}q^{*}\Id_{X}=\int\limits _{q}\Id_{q^{*}X}.
\]
These are the components of the natural endomorphism $|q|=c_{!}^{q}\circ\mu_{q}$
of $\Id_{\mathcal{C}}$.
\end{defn}

Integration satisfies a form of ``homogeneity''.
\begin{prop}
[Homogeneity]\label{prop:Homogenity}Let $q\colon\mathcal{D}\nto\mathcal{C}$
be an iso-normed functor and let  $X,Y,Z\in\mathcal{C}$.
\begin{enumerate}
\item For all maps $f\colon q^{*}X\to q^{*}Y$ and $g\colon Y\to Z$ we
have
\[
g\circ\left(\int\limits _{q}f\right)=\int\limits _{q}\left(q^{*}g\circ f\right)\quad\in\hom_{h\mathcal{C}}\left(X,Z\right).
\]
\item For all maps $f\colon X\to Y$ and $g\colon q^{*}Y\to q^{*}Z$ we
have
\[
\left(\int\limits _{q}g\right)\circ f=\int\limits _{q}\left(g\circ q^{*}f\right)\quad\in\hom_{h\mathcal{C}}\left(X,Z\right).
\]
\end{enumerate}
\end{prop}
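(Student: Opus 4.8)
The plan is to unwind the definition of $\int_q$ in each of the two displayed formulas and use only naturality of the various (co)units and the definition of $|q|$. I will prove item~(1) using the ``wrong way unit'' description of the integral from \remref{Integration_Unit}, and item~(2) using the description via $q_*$ and $\nm_q^{-1}$ from the main definition; by symmetry one could just as well do it the other way around, but each choice makes one of the two statements essentially immediate.

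For item~(1), recall that $\int_q f = c_!^q \circ q_!(f) \circ \mu_{q,X}$ as a map $X \to Y$, where $\mu_{q,X}\colon X \to q_!q^*X$ is the wrong-way unit and $c_!^q\colon q_!q^* \to \Id$ is the counit. Then
\[
g \circ \Bigl(\int_q f\Bigr) = g \circ c_!^q \circ q_!(f)\circ \mu_{q,X}.
\]
The key point is naturality of the counit $c_!^q\colon q_!q^* \to \Id_{\mathcal C}$ applied to the morphism $g\colon Y \to Z$: this gives a commuting square $g\circ c_{!,Y}^q = c_{!,Z}^q \circ q_!q^*(g)$. Substituting and using functoriality of $q_!$ to combine $q_!q^*(g)\circ q_!(f) = q_!(q^*g \circ f)$, we obtain
\[
g\circ\Bigl(\int_q f\Bigr) = c_{!,Z}^q \circ q_!(q^*g\circ f)\circ \mu_{q,X} = \int_q(q^*g\circ f),
\]
which is exactly the claimed identity in $\hom_{h\mathcal C}(X,Z)$. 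All of this takes place in the homotopy category, so no coherence issues arise.

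For item~(2), I would use $\int_q g = (c_!^q \circ - \circ u_*^q)(\nm_q^{-1}\circ q_*(g))$, i.e. the composite $q_*q^*Y \xrightarrow{q_*g} q_*q^*Z$ is post-composed with $c_!^q$ after applying $\nm_q^{-1}$ on the target, and pre-composed with the unit $u_{*,Y}^q\colon Y \to q_*q^*Y$. Precomposing the whole thing with $f\colon X \to Y$ and using naturality of the unit $u_*^q\colon \Id \to q_*q^*$ on $f$, we can slide $f$ through: $u_{*,Y}^q\circ f = q_*q^*(f)\circ u_{*,X}^q$. Then $q_*(g)\circ q_*q^*(f) = q_*(g\circ q^*f)$ by functoriality of $q_*$, and since $\nm_q$ (hence $\nm_q^{-1}$) and $c_!^q$ are natural transformations they commute past the remaining manipulation, yielding $\bigl(\int_q g\bigr)\circ f = \int_q(g\circ q^*f)$.

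The argument is entirely formal: the only inputs are the definitions of $\int_q$ and $|q|$, functoriality of $q_!$ and $q_*$, and naturality of the counit $c_!^q$ and unit $u_*^q$ (and of $\nm_q$). There is no real obstacle; the one place to be slightly careful is bookkeeping which unit/counit description of $\int_q$ makes each half transparent, and to note that passing to $\hom_{h\mathcal C}$ means we only need the zig-zag identities up to homotopy, which hold by construction of the normed functor. I expect the ``hard part'' — such as it is — to be purely notational: keeping straight the two equivalent formulas for the integral and invoking the correct naturality square in each case.
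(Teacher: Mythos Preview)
Your proof is correct and takes essentially the same approach as the paper: both arguments simply unwind the definition of $\int_q$ and invoke naturality of the relevant (co)units. The only cosmetic difference is that the paper uses the wrong-way-unit formula $c_!\circ q_!(-)\circ\mu$ from \remref{Integration_Unit} for \emph{both} parts (drawing a commutative square built from naturality of $\mu$ and $c_!$), whereas you switch to the $u_*$/$\nm_q^{-1}$ description for part~(2); since these are equivalent formulas for the same map, this is not a substantive difference.
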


\begin{proof}
For (1), consider the commutative diagram
\[
\xymatrix@C=3pc{X\ar[d]^{\mu}\ar[r]^{\mu} & q_{!}q^{*}X\ar[d]^{f}\ar[r]^{f} & q_{!}q^{*}Y\ar[d]^{g}\ar[r]^{c_{!}} & Y\ar[d]^{g}\\
q_{!}q^{*}X\ar[r]^{f} & q_{!}q^{*}Y\ar[r]^{g} & q_{!}q^{*}Z\ar[r]^{c_{!}} & Z.
}
\]

The composition along the top and then right path is $g\circ\int\limits _{q}f$,
while the composition along the left and then bottom path is $\int\limits _{q}\left(q^{*}g\circ f\right)$
(see \remref{Integration_Unit}). 

For (2), consider the diagram
\[
\xymatrix@C=3pc{X\ar[d]^{f}\ar[r]^{\mu} & q_{!}q^{*}X\ar[d]^{f}\ar[r]^{f} & q_{!}q^{*}Y\ar[d]^{g}\ar[r]^{g} & q_{!}q^{*}Z\ar[d]^{c_{!}}\\
Y\ar[r]^{\mu} & q_{!}q^{*}Y\ar[r]^{g} & q_{!}q^{*}Z\ar[r]^{c_{!}} & Z
}
\]
and apply an analogous argument.
\end{proof}
Integration also satisfies a form of ``Fubini's Theorem''.
\begin{prop}
[Higher Fubini's Theorem]\label{prop:Fubini} Given a pair of composable
iso-normed functors
\[
\xymatrix{\mathcal{E}\ \ar@{>->}[r]^{p} & \mathcal{D}\ \ar@{>->}[r]^{q} & \mathcal{C}}
,
\]
for all $X,Y\in\mathcal{C}$, and $f\colon p^{*}q^{*}X\to p^{*}q^{*}Y$,
we have
\[
\int\limits _{q}\left(\int\limits _{p}f\right)=\int\limits _{qp}f\qquad\in\hom_{h\mathcal{C}}\left(X,Y\right).
\]
\end{prop}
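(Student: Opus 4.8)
The plan is to unwind both sides of the claimed identity using the ``wrong way unit'' formulation of the integral (\remref{Integration_Unit}), which is the most economical bookkeeping device here. Recall that for an iso-normed functor $r$ the integral of $h\colon r^{*}X\to r^{*}Y$ is computed as $c_{!}^{r}\circ r_{!}(h)\circ\mu_{r,X}$, where $\mu_{r}\colon\Id\to r_{!}r^{*}$ is the wrong-way unit and $c_{!}^{r}\colon r_{!}r^{*}\to\Id$ is the genuine counit. The first thing I would do is make explicit, from \defref{Norm_Construction}(2), how the structure maps of the composite $qp$ decompose: we have $(qp)^{*}=p^{*}q^{*}$, $(qp)_{!}=q_{!}p_{!}$, the counit $c_{!}^{qp}$ is the composite $q_{!}p_{!}p^{*}q^{*}\oto{q_{!}c_{!}^{p}q^{*}}q_{!}q^{*}\oto{c_{!}^{q}}\Id$, and—this is the key compatibility—the wrong-way unit $\mu_{qp}$ is the composite $\Id\oto{\mu_{q}}q_{!}q^{*}\oto{q_{!}\mu_{p}q^{*}}q_{!}p_{!}p^{*}q^{*}$. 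This last identity is itself something I would need to check: it follows because the mate $\nu_{qp}$ of $\nm_{qp}=\nm_{q}\star\nm_{p}$ is the horizontal composite of the mates $\nu_{q}$ and $\nu_{p}$ (a formal ``mate of a pasting is the pasting of mates'' statement for the $2$-categorical calculus of adjunctions), and the wrong-way units are the units associated to these wrong-way counits.

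Granting the decomposition of $\mu_{qp}$, the rest is a diagram chase. Writing $g=\int_{p}f=c_{!}^{p}\circ p_{!}(f)\circ\mu_{p,q^{*}X}$, a map $q^{*}X\to q^{*}Y$, we get
\[
\int\limits_{q}\Bigl(\int\limits_{p}f\Bigr)=c_{!}^{q}\circ q_{!}(g)\circ\mu_{q,X}
=c_{!}^{q}\circ q_{!}\bigl(c_{!}^{p}\circ p_{!}(f)\circ\mu_{p,q^{*}X}\bigr)\circ\mu_{q,X}.
\]
Using functoriality of $q_{!}$ to split $q_{!}(g)$ into $q_{!}(c_{!}^{p})\circ q_{!}(p_{!}f)\circ q_{!}(\mu_{p})$, and then naturality of $\mu_{q}$ (more precisely: commuting $\mu_{q,X}$ past $q_{!}(\mu_{p,q^{*}X})$, which is exactly the statement $q_{!}\mu_{p}q^{*}\circ\mu_{q}=\mu_{qp}$ established above), one rewrites the composite as
\[
c_{!}^{q}\circ q_{!}(c_{!}^{p}q^{*})\circ q_{!}p_{!}(f)\circ\mu_{qp,X}
=\bigl(c_{!}^{q}\circ q_{!}(c_{!}^{p}q^{*})\bigr)\circ (qp)_{!}(f)\circ\mu_{qp,X}
=c_{!}^{qp}\circ (qp)_{!}(f)\circ\mu_{qp,X}=\int\limits_{qp}f,
\]
where the last line is precisely the decomposition of $c_{!}^{qp}$ recalled above. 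All the manipulations take place in the homotopy category $h\mathcal{C}$, which is all that is claimed, so coherence issues do not intervene.

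The main obstacle is the compatibility $\mu_{qp}=q_{!}(\mu_{p}q^{*})\circ\mu_{q}$ (equivalently, that the wrong-way counit of the composite is the horizontal composite of the wrong-way counits). This is the one genuinely structural input; everything else is naturality and the zig-zag identities. One clean way to see it is via \lemref{Norm_Counit}: $\nu_{q}$ being a counit exhibiting $q_{!}\dashv q^{*}$ on the right, and similarly for $p$, the composite adjunction $q_{!}p_{!}\dashv p^{*}q^{*}$ has counit the pasting of $\nu_{q}$ and $\nu_{p}$, and by uniqueness of mates this pasting agrees with $\nu_{qp}$ (since $\nm_{qp}$ was defined as the horizontal composite of the norms and the mate of a horizontal composite of norms is the corresponding pasting of wrong-way counits). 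Once this is in hand, taking associated units gives the displayed identity for $\mu$, and the Fubini identity follows formally. I would present the argument as a single large commutative diagram in $h\mathcal{C}$ built from four naturality squares and one application of the $c^{qp}_!$/$\mu_{qp}$ decompositions, in the same style as the proof of \propref{Homogenity}.
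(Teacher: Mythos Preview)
Your argument is correct and arrives at the same identity, but it takes a slightly different route from the paper. The paper works directly with the original definition of the integral via $u_*$, $\nm^{-1}$, and $c_!$: it builds a single diagram whose commutativity reduces entirely to facts that are \emph{definitional} for the composite normed functor, namely the decompositions of $u_*^{qp}$, $\nm_{qp}$, and $c_!^{qp}$ from \defref{Adj_Composition} and \defref{Norm_Construction}(2). Nothing beyond naturality and these definitions is invoked.

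You instead use the wrong-way-unit formulation of \remref{Integration_Unit}, which trades the pair $(u_*^{qp},\nm_{qp}^{-1})$ for the single map $\mu_{qp}$. This is notationally cleaner, but the price is that the decomposition $\mu_{qp}=q_{!}(\mu_{p}q^{*})\circ\mu_{q}$ is not given by definition; it has to be derived, as you correctly flag. Your sketch of that derivation (pass to wrong-way counits, check $\nu_{qp}=\nu_{p}\circ p^{*}\nu_{q}p_{!}$ via the mate of the horizontal composite $\nm_{qp}=\nm_{q}\star\nm_{p}$, then take associated units) is sound. So both proofs are short diagram chases; the paper's is marginally more economical because every structural identity it uses is literally a definition, whereas yours front-loads one small lemma in exchange for a tidier endgame.
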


\begin{proof}
Since $q$ and $p$ are iso-normed, we can construct the following
diagram
\[
\xymatrix@C=3pc{ & q_{*}p_{*}p^{*}q^{*}X\ar[r]^{f} & q_{*}p_{*}p^{*}q^{*}Y\ar[d]_{\nm_{p}^{-1}}\ar[r]^{\nm_{qp}^{-1}} & q_{!}p_{!}p^{*}q^{*}Y\ar@{=}[d]\ar[rd]^{c_{!}^{qp}}\\
X\ar[dr]_{u_{*}^{q}}\ar[ur]^{u_{*}^{qp}} &  & q_{*}p_{!}p^{*}q^{*}Y\ar[d]_{c_{!}^{p}}\ar[r]^{\nm_{q}^{-1}} & q_{!}p_{!}p^{*}q^{*}Y\ar[d]_{c_{!}^{p}} & Y.\\
 & q_{*}q^{*}X\ar[uu]_{u_{*}^{p}}\ar[r]^{\int\limits _{p}\negmedspace f} & q_{*}q^{*}Y\ar[r]^{\nm_{q}^{-1}} & q_{!}q^{*}Y\ar[ru]_{c_{!}^{q}}
}
\]
The triangles and the bottom right square commute for formal reasons.
The top right square commutes by the way norms are composed (\defref{Norm_Construction}(2))
and the left rectangle commutes by definition of $\int\limits _{p}f$.
Thus, the composition along the top path, which is $\int\limits _{qp}f$,
is homotopic to the composition along the bottom path, which is $\int\limits _{q}\left(\int\limits _{p}f\right)$.
\end{proof}

\subsection{Ambidextrous Squares \& Beck-Chevalley Conditions}

In this section we study functoriality properties of norms and integrals
and develop further the ``calculus of integration''. 

\subsubsection{Beck-Chevalley Conditions}

We begin by recalling some standard material regarding commuting squares
involving adjoint functors (e.g. see beginning of \cite[Section 7.3.1]{htt}).
A commutative square of functors 
\[
\qquad \quad
\vcenter{
\xymatrix@C=3pc{\mathcal{C}\ar[d]_{q^{*}}\ar[r]^{F_{\mathcal{C}}} & \tilde{\mathcal{C}}\ar[d]^{\tilde{q}^{*}}\\
	\mathcal{D}\ar[r]^{F_{\mathcal{D}}} & \tilde{\mathcal{D}}
}}
\qquad\left(\square\right)
\]

is formally a natural isomorphism
\[
F_{\mathcal{D}}q^{*}\iso\tilde{q}^{*}F_{\mathcal{C}}.
\]

If the vertical functors admit left adjoints $q_{!}\dashv q^{*}$
and $\tilde{q}_{!}\dashv\tilde{q}^{*}$ (suppressing the units),
we get a $\bc_{!}$ (Beck-Chevalley) natural transformation
\[
\beta_{!}\colon\tilde{q}_{!}F_{\mathcal{D}}\oto{u_!^q}
\tilde{q}_{!}F_{\mathcal{D}}q^*q_!\iso
\tilde{q}_{!}\tilde{q}^{*}F_{\mathcal{C}}q_! \oto{c_!^{\tilde{q}}}
F_{\mathcal{C}}q_{!}.
\]
Similarly, if the vertical functors admit right adjoints $q^{*}\dashv q_{*}$
and $\tilde{q}^{*}\dashv\tilde{q}_{*}$, we get a $\bc_{*}$ (Beck-Chevalley)
natural transformation

\[
\beta_{*}\colon F_{\mathcal{C}}q_{*}\oto{u_*^{\tilde{q}}}
\tilde{q}_*\tilde{q}^*F_{\mathcal{C}}q_{*}\iso
\tilde{q}_*F_{\mathcal{D}}q^*q_{*}\oto{c_*^q}
\tilde{q}_{*}F_{\mathcal{D}}.
\]

\begin{defn}
We say that the square $\square$ satisfies the $\text{BC}_{!}$ (resp.
$\text{BC}_{*}$) condition, if $q^{*}$ and $\tilde{q}^{*}$ admit
left (resp. right) adjoints and the map $\beta_{!}$ (resp. $\beta_{*}$)
is an isomorphism.
\end{defn}

\begin{rem}
It may happen that in $\square$, the horizontal functors $F_{\mathcal{C}}$
and $F_{\mathcal{D}}$ also have left or right adjoints. In this case,
there are other BC maps one can write. To avoid confusion, we will
always speak about the BC maps with respect to the \emph{vertical}
functors. 
\end{rem}

Given a commutative square $\square$ as above, we denote $u_{*}=u_{*}^{q}$
and $\tilde{u}_{*}=u_{*}^{\tilde{q}}$ and similarly for other (co)unit
maps (when they exist). It is an easy verification using the zig-zag
identities, that the BC-maps are compatible with these units and counits
in the following sense.
\begin{lem}
\label{lem:BC_Co_Units} Given a commutative square of functors $\square$,
such that $q^{*}$ and $\tilde{q}^{*}$ admit left (resp. right) adjoints,
the following four diagrams commute up to homotopy (when they are
defined)
\[
(1)\xymatrix{ & \red{F_{\mathcal{C}}}{}q_{*}q^{*}\ar[d]^{\beta_{*}}\\
\red{F_{\mathcal{C}}}{}\ar[ru]^{u_{*}}\ar[rd]_{\tilde{u}_{*}} & \tilde{q}_{*}\red{F_{\mathcal{D}}}{}q^{*}\ar[d]^{\wr}\\
 & \tilde{q}_{*}\tilde{q}^{*}\red{F_{\mathcal{C}}}{}
}
\qquad
(2)\quad\xymatrix{\red{F_{\mathcal{D}}}{}q^{*}q_{*}\ar[d]^{\wr}\ar[rd]^{c_{*}}\\
\tilde{q}^{*}\red{F_{\mathcal{C}}}{}q_{*}\ar[d]^{\beta_{*}} & \red{F_{\mathcal{D}}}{}\\
\tilde{q}^{*}\tilde{q}_{*}\red{F_{\mathcal{D}}}{}\ar[ru]_{\tilde{c}_{*}}
}
(3)\xymatrix{ & \tilde{q}^{*}\tilde{q}_{!}\red{F_{\mathcal{D}}}{}\ar[d]^{\beta_{!}}\\
\red{F_{\mathcal{D}}}{}\ar[ru]^{\tilde{u}_{!}}\ar[rd]_{u_{!}} & \tilde{q}^{*}\red{F_{\mathcal{C}}}{}q_{!}\ar[d]^{\wr}\\
 & \red{F_{\mathcal{D}}}{}q^{*}q_{!}
}
\qquad
(4)\quad\xymatrix{\tilde{q}_{!}\tilde{q}^{*}\red{F_{\mathcal{C}}}{}\ar[d]^{\wr}\ar[rd]^{\tilde{c}_{!}}\\
\tilde{q}_{!}\red{F_{\mathcal{D}}}{}q^{*}\ar[d]^{\beta_{!}} & \red{F_{\mathcal{C}}}{}.\\
\red{F_{\mathcal{C}}}{}q_{!}q^{*}\ar[ru]_{c_{!}}
}
\]
\end{lem}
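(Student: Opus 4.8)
The statement to prove is \lemref{BC_Co_Units}, which asserts that four naturality diagrams involving Beck--Chevalley transformations and the (co)units of the vertical adjunctions in a commutative square $\square$ commute up to homotopy. The plan is to verify each diagram by unwinding the definition of $\beta_!$ (resp.\ $\beta_*$) as a pasting of a unit, the horizontal isomorphism filling $\square$, and a counit, and then to invoke the triangle (zig-zag) identities for the relevant adjunction. Since all four diagrams are formally dual in pairs --- diagrams (1),(2) concern the right adjoints $q_*,\tilde q_*$ and diagrams (3),(4) concern the left adjoints $q_!,\tilde q_!$, with (1) and (4) (resp.\ (2) and (3)) related by passing to the mate --- it suffices to treat one of them in detail, say (2), and obtain the others by the evident dualization; I would state this reduction explicitly and then carry out one case.

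For diagram (2): spelling out $\beta_*\colon F_{\mathcal D}q^*q_* \iso \tilde q^*F_{\mathcal C}q_* \to \tilde q^*\tilde q_*F_{\mathcal D}$, the second arrow is $\tilde q^*$ applied to $\beta_*\colon F_{\mathcal C}q_* \to \tilde q_* F_{\mathcal D}$, which itself is the composite $F_{\mathcal C}q_* \xrightarrow{u_*^{\tilde q}} \tilde q_*\tilde q^* F_{\mathcal C}q_* \iso \tilde q_* F_{\mathcal D}q^* q_* \xrightarrow{c_*^q} \tilde q_* F_{\mathcal D}$. Substituting this into the left-then-bottom path of (2) and using naturality of $c_*^q\colon q^*q_*\to \Id$ and of the structural isomorphism of $\square$ to slide things past one another, the composite collapses: the occurrence of $u_*^{\tilde q}$ followed (after the slide) by $\tilde c_*^{\tilde q}$ is killed by one triangle identity for $\tilde q^*\dashv \tilde q_*$, leaving exactly $c_*^q$, i.e.\ the diagonal arrow. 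I would present this as a single enlarged commutative diagram in which every inner cell is either naturality of a (co)unit, functoriality of $\tilde q^*$, or the isomorphism of $\square$, with precisely one cell being a triangle identity, and note that commutativity of the outer boundary is then immediate.

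The remaining three diagrams are handled identically: (1) is the mate of (2) under the $q_*\dashv$-nothing / Yoneda correspondence between a transformation $F_{\mathcal C}\to \tilde q_*\tilde q^* F_{\mathcal C}$ and its transpose, so it follows by applying the same argument with the roles of unit and counit, and $\square$ replaced by the same square, interchanged; concretely one pastes (1) against the relevant triangle identity for $\tilde q^*\dashv\tilde q_*$ on the other side. Diagrams (3) and (4) are obtained from (2) and (1) respectively by replacing $\square$ with the square of left adjoints --- equivalently, by passing to the opposite $\infty$-categories, under which left adjoints become right adjoints, $\beta_!$ becomes $\beta_*$, and units and counits swap roles --- so no new work is required. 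I would remark that this last duality is exactly the reason the lemma is stated with all four diagrams at once.

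The main obstacle here is purely organizational rather than mathematical: there is no hard content, only the bookkeeping of pasting diagrams of $2$-cells and correctly tracking which whiskering of which triangle identity is being used, all ``up to homotopy'' in the homotopy $2$-category so that one never has to worry about coherence of the homotopies themselves. The only point requiring a little care is making sure that the structural isomorphism $F_{\mathcal D}q^* \iso \tilde q^* F_{\mathcal C}$ filling $\square$ is inserted with the correct orientation in each of $\beta_!$ and $\beta_*$ (it appears as stated in the definitions of $\beta_!,\beta_*$ given just above the lemma), since a reversed orientation would make the diagrams fail; once the conventions are pinned down, each verification is a one-line appeal to a zig-zag identity. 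Accordingly I would keep the written proof terse, proving (2) via one displayed pasting diagram and dispatching (1), (3), (4) by the duality remarks above.
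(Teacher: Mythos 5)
Your argument is correct and is exactly the verification the paper leaves to the reader: the paper states this lemma without proof, remarking only that it is ``an easy verification using the zig-zag identities,'' and your unwinding of $\beta_*$ into unit--iso--counit, followed by two naturality slides and one triangle identity, is precisely that verification, with the remaining three diagrams correctly dispatched by mate/duality considerations.
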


The $\bc$ maps also satisfy some naturality properties with respect
to horizontal and vertical pasting, as well as multiplication and
exponentiation of squares. We begin with horizontal pasting. Given
a commutative diagram of $\infty$-categories and functors
\[
\quad \qquad \qquad
\vcenter{
\xymatrix@C=3pc{\mathcal{C}\ar[d]^{q^{*}}\ar[r]^{F_{\mathcal{C}}} &  \mathcal{\tilde{C}}\ar[d]^{\tilde{q}^{*}}\ar[r]^{G_{\mathcal{C}}} &  \mathcal{\tilde{\tilde{C}}}\ar[d]^{\tilde{\tilde{q}}^{*}}\\
\mathcal{D}\ar[r]^{F_{\mathcal{D}}} & \tilde{\mathcal{D}}\ar[r]^{G_{\mathcal{D}}}  & \mathcal{\tilde{\tilde{D}}},
}}
\qquad\left(*\right)
\]
we call the outer square the \emph{horizontal pasting} of the
left and right small squares. The following is easy to verify.
\begin{lem}
\label{lem:Horizontal_Pasting_Formula}Given a horizontal pasting
diagram $\left(*\right)$ as above,
\begin{enumerate}
\item The $\bc_{!}$-map for the outer square is homotopic to the composition
of the $\bc_{!}$ maps for the left and right squares
\[
\tilde{\tilde{q}}_{!}\red{G_{\mathcal{D}}}{}\red{F_{\mathcal{D}}}{}\to\red{G_{\mathcal{C}}}{}\tilde{q}_{!}\red{F_{\mathcal{D}}}{}\to\red{G_{\mathcal{C}}}{}\red{F_{\mathcal{C}}}{}q_{!}.
\]
\item The $\bc_{*}$-map for the outer square is homotopic to the composition
of the $\bc_{*}$ maps for the left and right squares
\[
\red{G_{\mathcal{C}}}{}\red{F_{\mathcal{C}}}{}q_{*}\to\red{G_{\mathcal{C}}}{}\tilde{q}_{*}\red{F_{\mathcal{D}}}{}\to\tilde{\tilde{q}}_{*}\red{G_{\mathcal{D}}}{}\red{F_{\mathcal{D}}}{}.
\]
\end{enumerate}
\end{lem}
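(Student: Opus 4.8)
Lemma \ref{lem:Horizontal_Pasting_Formula} asserts that, given the horizontal pasting diagram $(*)$, the $\bc_!$-map (resp. $\bc_*$-map) of the outer rectangle factors, up to homotopy, as the composite of the $\bc_!$-maps (resp. $\bc_*$-maps) of the two inner squares.

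\textbf{The plan.} The proof is a diagram chase using only the definition of the Beck--Chevalley transformations and the zig-zag identities, so I will only sketch it. I will treat the $\bc_!$ case; the $\bc_*$ case is entirely dual (apply the $\bc_!$ statement to the squares obtained by passing to opposite categories, which swaps left and right adjoints). First I would unwind both sides. By definition, $\beta_!$ for the outer square is the composite
\[
\tilde{\tilde{q}}_{!}G_{\mathcal D}F_{\mathcal D}
\oto{u_!^{q}}
\tilde{\tilde{q}}_{!}G_{\mathcal D}F_{\mathcal D}q^{*}q_{!}
\iso
\tilde{\tilde{q}}_{!}\tilde{\tilde{q}}^{*}G_{\mathcal C}F_{\mathcal C}q_{!}
\oto{c_!^{\tilde{\tilde{q}}}}
G_{\mathcal C}F_{\mathcal C}q_{!},
\]
where the middle isomorphism is the horizontal pasting of the two given commuting-square isomorphisms $G_{\mathcal D}\tilde q^{*}\iso\tilde{\tilde q}^{*}G_{\mathcal C}$ and $F_{\mathcal D}q^{*}\iso\tilde q^{*}F_{\mathcal C}$. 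On the other side, the asserted composite is $\beta_!^{\text{right}}\star 1$ followed by $1\star\beta_!^{\text{left}}$ (with the appropriate whiskering), i.e. first insert the unit $u_!^{\tilde q}$ and counit $c_!^{\tilde{\tilde q}}$ for the right square, then insert $u_!^{q}$ and $c_!^{\tilde q}$ for the left square, interleaved with the two square isomorphisms.

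\textbf{Key steps.} (1) Expand the right-hand composite fully into a string of unit/counit insertions and square isomorphisms. (2) Identify the subdiagram of the form $\tilde q_{!}\tilde q^{*}$ sitting between the left-square counit $c_!^{\tilde q}$ and the right-square unit $u_!^{\tilde q}$: after whiskering, one gets a triangle involving $\tilde q_{!}$, $\tilde q^{*}$, the unit $u_!^{\tilde q}$ and the counit $c_!^{\tilde q}$, and by one of the zig-zag identities this composite $\tilde q^{*}\oto{u_!^{\tilde q}}\tilde q^{*}\tilde q_{!}\tilde q^{*}\oto{c_!^{\tilde q}}\tilde q^{*}$ (suitably whiskered) is the identity. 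Cancelling it collapses the two-step composite to the single expression defining $\beta_!$ for the outer rectangle. (3) The only remaining point is that the middle isomorphism thus produced agrees with the horizontal pasting of the two individual square isomorphisms — but that is exactly how composition/pasting of commuting squares of functors is defined, so there is nothing to check beyond naturality (functoriality of whiskering). I would present this as a single large commuting diagram in which every cell commutes either by naturality of a (co)unit, by the definition of $\beta_!$ for one of the inner squares, or by a zig-zag identity, and then read off the two paths around the boundary.

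\textbf{Main obstacle.} There is no genuine mathematical obstacle here; the content is purely bookkeeping. The only mildly delicate point is organizing the whiskering so that the zig-zag cancellation in step (2) is visibly a zig-zag identity and not something that requires the (nontrivial) higher coherence of the adjunctions. Since Remark \ref{rem:Cat_Norm} explicitly defers all higher-coherence issues and we work only up to homotopy, it suffices to exhibit the relevant $2$-cells in the homotopy $2$-category of $\infty$-categories, where the zig-zag identities hold strictly; thus the argument goes through at the level of homotopy categories of functor categories, exactly as in the proof of \lemref{Iso_Normed_Criterion}. I would therefore keep the written proof short: draw the pasting diagram, point to the zig-zag identity for the cancellation, and invoke \lemref{BC_Co_Units}-style naturality for the rest.
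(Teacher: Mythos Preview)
Your sketch is correct and is exactly the standard argument: after expanding both sides, one commutes the various (co)units past each other by naturality until the right-square unit $u_!^{\tilde q}$ and the left-square counit $c_!^{\tilde q}$ are adjacent, whereupon the zig-zag identity $\tilde q^{*}\oto{u_!^{\tilde q}}\tilde q^{*}\tilde q_{!}\tilde q^{*}\oto{c_!^{\tilde q}}\tilde q^{*}=\Id$ collapses the composite to the outer $\beta_!$. The paper itself gives no proof beyond declaring the lemma ``easy to verify'', so your write-up is already more detailed than what appears there; there is nothing further to compare.
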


This immediately implies the following horizontal pasting lemma for
$\bc$ conditions.
\begin{cor}
\label{cor:Horizontal_Pasting_BC}Given a horizontal pasting diagram
 $\left(*\right)$ as above, denote by $\square_{L}$, $\square_{R}$
and $\square$, the left, right and outer squares respectively.
\begin{enumerate}
\item If $\square_{L}$ and $\square_{R}$ satisfy the $\bc_{!}$ (resp.
$\bc_{*}$) condition, then so does $\square$.
\item If $\square_{R}$ and $\square$ satisfy the $\bc_{!}$ (resp. $\bc_{*}$)
condition and $G_{\mathcal{C}}$ is conservative, the so does $\square_{L}$.
\end{enumerate}
\end{cor}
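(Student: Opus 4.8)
The plan is to deduce both statements directly from the composition formula of Lemma \ref{lem:Horizontal_Pasting_Formula}, using only elementary facts about isomorphisms of natural transformations and conservativity. For part (1), suppose $\square_L$ and $\square_R$ satisfy the $\bc_!$ condition. By definition this means the $\bc_!$ maps $\tilde{q}_! F_{\mathcal{D}} \to F_{\mathcal{C}} q_!$ and $\tilde{\tilde{q}}_! G_{\mathcal{D}} \to G_{\mathcal{C}} \tilde{q}_!$ are isomorphisms (and in particular the relevant left adjoints exist, so that the left adjoint of $(G q)^* = q^* F^*$ — or rather of the outer vertical functor — exists as well via \defref{Adj_Composition}). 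By Lemma \ref{lem:Horizontal_Pasting_Formula}(1), the $\bc_!$ map for the outer square $\square$ is homotopic to the composite
\[
\tilde{\tilde{q}}_{!}\,G_{\mathcal{D}}\,F_{\mathcal{D}} \;\longrightarrow\; G_{\mathcal{C}}\,\tilde{q}_{!}\,F_{\mathcal{D}} \;\longrightarrow\; G_{\mathcal{C}}\,F_{\mathcal{C}}\,q_{!},
\]
where the first arrow is $G_{\mathcal{C}}$ applied (horizontally composed) to the $\bc_!$ map of $\square_R$ pre-composed appropriately, and the second is the $\bc_!$ map of $\square_L$ post-composed with $G_{\mathcal{C}}$. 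Wait — I should be careful about which functor is whiskered where; reading off Lemma \ref{lem:Horizontal_Pasting_Formula}(1) literally, the first map is the $\bc_!$ map for $\square_R$ whiskered on the right by $F_{\mathcal{D}}$, and the second is the $\bc_!$ map for $\square_L$ whiskered on the left by $G_{\mathcal{C}}$. A natural transformation whiskered by a functor on either side remains an isomorphism whenever it is one, and a composite of two isomorphism natural transformations is an isomorphism. Hence the $\bc_!$ map for $\square$ is an isomorphism, i.e. $\square$ satisfies $\bc_!$. The $\bc_*$ case is entirely dual, using Lemma \ref{lem:Horizontal_Pasting_Formula}(2) in place of (1).

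For part (2), assume $\square_R$ and $\square$ satisfy the $\bc_!$ condition and $G_{\mathcal{C}}$ is conservative. Again by Lemma \ref{lem:Horizontal_Pasting_Formula}(1), the $\bc_!$ map of $\square$ factors as the composite of the $\bc_!$ map of $\square_R$ (whiskered by $F_{\mathcal{D}}$) followed by $G_{\mathcal{C}}$ whiskered onto the $\bc_!$ map $\beta_!^{L}\colon \tilde{q}_! F_{\mathcal{D}} \to F_{\mathcal{C}} q_!$ of $\square_L$. By hypothesis the outer composite is an isomorphism and the first factor (being the $\bc_!$ map of $\square_R$ whiskered by a functor) is an isomorphism; by the two-out-of-three property for isomorphisms of natural transformations, $G_{\mathcal{C}} \star \beta_!^{L}$ is an isomorphism. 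Evaluating at any object, $G_{\mathcal{C}}$ carries the morphism $\beta_!^{L}$ (at that object) to an isomorphism; since $G_{\mathcal{C}}$ is conservative, $\beta_!^{L}$ is itself an isomorphism at that object, hence an isomorphism of natural transformations. Therefore $\square_L$ satisfies $\bc_!$. The $\bc_*$ variant follows by the same argument applied to Lemma \ref{lem:Horizontal_Pasting_Formula}(2), noting that in that case the $\bc_*$ map of $\square$ factors through $G_{\mathcal{C}}$ applied to the $\bc_*$ map of $\square_L$ \emph{first}, followed by the $\bc_*$ map of $\square_R$; one then uses two-out-of-three on the other side and conservativity of $G_{\mathcal{C}}$ in the same way.

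I do not expect any genuine obstacle here — the corollary is a formal consequence of the preceding lemma. The only point requiring a modicum of care is bookkeeping: making sure that in part (2) the factor being ``cancelled'' via two-out-of-three is indeed the one coming from $\square_R$ (which is assumed to be an isomorphism) and not inadvertently the one from $\square_L$, and that the remaining factor is $G_{\mathcal{C}}$ whiskered onto $\beta_!^{L}$ rather than $\beta_!^{L}$ whiskered by something on the $G_{\mathcal{C}}$ side — this is exactly where conservativity of $G_{\mathcal{C}}$, rather than of some other functor, gets used. One should also record, in passing, that the existence of the requisite adjoints for $\square_L$ in case (2) is automatic: $q^*$ admits a left adjoint because it appears as the left vertical in $\square$ which satisfies $\bc_!$, and $\tilde{q}^*$ admits a left adjoint because it is the left vertical in $\square_R$.
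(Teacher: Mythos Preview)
Your proposal is correct and takes exactly the approach the paper intends: the corollary is stated there without proof, as an immediate consequence of Lemma~\ref{lem:Horizontal_Pasting_Formula}, and you have simply spelled out the two-out-of-three and conservativity argument that this entails. Your bookkeeping is accurate, including the identification of which factor is $G_{\mathcal{C}}\star\beta_!^{L}$.
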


We now turn to vertical pasting. Given a commutative diagram of $\infty$-categories
and functors
\[
\qquad \qquad \qquad
\vcenter{
\xymatrix@C=3pc{\mathcal{C}\ar[d]_{q^{*}}\ar[r]^{F_{\mathcal{C}}} &  \mathcal{\tilde{C}}\ar[d]^{\tilde{q}^{*}}\\
\mathcal{D}\ar[d]_{p^{*}}\ar[r]^{F_{\mathcal{D}}} & \tilde{\mathcal{D}}\ar[d]^{\tilde{p}^{*}}\\
\mathcal{E}\ar[r]^{F_{\mathcal{E}}} & \tilde{\mathcal{E}},
}}
\qquad\left(**\right)
\]
we call the big outer square (i.e. rectangle) the \emph{vertical pasting} of the top
and bottom small squares. The following is easy to verify.
\begin{lem}
\label{lem:Vertical_Pasting_Formula}Given a vertical pasting diagram
$\left(**\right)$ as above, 
\begin{enumerate}
\item The $\bc_{!}$-map for the outer square is homotopic to the composition
of the $\bc_{!}$ maps for the top and bottom squares
\[
\tilde{q}_{!}\tilde{p}_{!}\red{F_{\mathcal{E}}}{}\to\tilde{q}_{!}\red{F_{\mathcal{D}}}{}p_{!}\to\red{F_{\mathcal{C}}}{}q_{!}p_{!}.
\]
\item The $\bc_{*}$-map for the outer square is homotopic to the composition
of the $\bc_{*}$ maps for the top and bottom squares
\[
\red{F_{\mathcal{C}}}{}q_{*}p_{*}\to\tilde{q}_{*}\red{F_{\mathcal{D}}}{}p_{*}\to\tilde{q}_{*}\tilde{p}_{*}\red{F_{\mathcal{E}}}{}.
\]
\end{enumerate}
\end{lem}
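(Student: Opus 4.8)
The statement to prove is Lemma~\ref{lem:Vertical_Pasting_Formula}, the vertical pasting formula for Beck--Chevalley transformations. The plan is to treat part (1) in detail and obtain part (2) by a formally dual argument (replacing $q^*\dashv q_*$ adjunctions by $q_!\dashv q^*$ ones and reversing 2-cells), so I focus on the $\bc_!$ case.

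First I would unwind the definition of $\beta_!$ for each of the three squares (top, bottom, outer) directly from the formula given just before the lemma, namely $\beta_!\colon \tilde q_! F_{\mathcal D}\xrightarrow{u_!^q}\tilde q_! F_{\mathcal D}q^*q_!\xrightarrow{\sim}\tilde q_!\tilde q^* F_{\mathcal C}q_!\xrightarrow{c_!^{\tilde q}}F_{\mathcal C}q_!$. For the outer square one uses the composed adjunction data of Definition~\ref{def:Adj_Composition}: the unit for $q_!p_!\dashv p^*q^*$ is the composite $u_!^{qp}\colon \Id\xrightarrow{u_!^p} p^*p_!\xrightarrow{u_!^q} p^*q^*q_!p_!$, and dually for the counit $c_!^{\tilde q\tilde p}$. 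So the outer $\beta_!$ is a string of four or five 2-cells built from $u_!^p,u_!^q$ on the left and $c_!^{\tilde q},c_!^{\tilde p}$ on the right, interleaved with the structural isomorphisms of the two commuting squares.

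The key step is then a 2-categorical diagram chase: insert the composite-unit and composite-counit into the outer $\beta_!$, and rewrite the resulting pasting diagram so that the ``bottom square'' part $\tilde p_! F_{\mathcal E}\to F_{\mathcal D}p_!$ (with its own $u_!^p$ and $c_!^{\tilde p}$) is carved out as a sub-2-cell, leaving the ``top square'' part $\tilde q_! F_{\mathcal D}\to F_{\mathcal C}q_!$ applied to $p_!$. Concretely one checks that in
\[
\tilde q_!\tilde p_! F_{\mathcal E}\xrightarrow{u_!^p}\tilde q_!\tilde p_! F_{\mathcal E}p^*p_!\xrightarrow{\sim}\tilde q_!\tilde p_!\tilde p^* F_{\mathcal D}p_!\xrightarrow{u_!^q}\cdots\xrightarrow{c_!^{\tilde q}}\cdots\xrightarrow{c_!^{\tilde p}}F_{\mathcal C}q_!p_!
\]
the two insertions of $u_!^q$ (one from the composite unit, living ``inside'' near $F_{\mathcal E}$, one applied after whiskering by $p_!$) can be slid past each other using interchange, and the $c_!^{\tilde p}$ applied at the end can be slid to the left past the $\tilde q_!$-whiskered portion; after this reorganization the diagram literally factors as the horizontal composite $\tilde q_!\tilde p_! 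F_{\mathcal E}\xrightarrow{\beta_!^{\mathrm{bot}}\text{-part}}\tilde q_! F_{\mathcal D}p_!\xrightarrow{\beta_!^{\mathrm{top}}\text{-part}}F_{\mathcal C}q_!p_!$. The only real inputs are the interchange law for horizontal/vertical composition of natural transformations, naturality of the structural isomorphisms of $(\square_{\mathrm{top}})$ and $(\square_{\mathrm{bot}})$, and one application of a zig-zag identity for the $p$-adjunction to collapse a $u_!^p$ followed by $c_!^p$ — though in fact no zig-zag is needed if one is careful, since each $u$ and $c$ appears exactly once.

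The main obstacle I expect is purely bookkeeping: keeping track of which whiskering each unit/counit carries once the composite adjunction of Definition~\ref{def:Adj_Composition} is expanded, since the ``interleaving'' makes the pasting diagram two-dimensional and the naive linear string notation hides where the sub-cells sit. A clean way to manage this — and the route I would take in writing it up — is to present the proof as a single large commutative diagram of functors and natural transformations (as is done for Lemma~\ref{lem:BC_Co_Units} and Proposition~\ref{prop:Fubini} earlier), in which the top region is the top-square $\beta_!$ whiskered by $p_!$, the bottom region is the bottom-square $\beta_!$ whiskered by $\tilde q_!$, each inner region commutes by naturality or by Definition~\ref{def:Adj_Composition}, and the outer boundary is by inspection the outer $\beta_!$. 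Part (2) follows by the same diagram read in $\mathcal C^{\mathrm{op}},\mathcal D^{\mathrm{op}},\mathcal E^{\mathrm{op}}$, interchanging left and right adjoints.
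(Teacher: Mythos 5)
Your proposal is correct. The paper offers no proof of this lemma (it is introduced with ``The following is easy to verify''), and your argument --- expanding the composite unit $u_!^{qp}$ and counit $c_!^{\tilde q\tilde p}$ of \defref{Adj_Composition}, then sliding the $u_!^q$-insertion past the bottom square's structural isomorphism and the $c_!^{\tilde p}$-collapse past the top square's data using the interchange law --- is exactly the intended verification; your observation that no zig-zag identity is actually needed, since each unit and counit occurs exactly once, is also correct, as is the passage to opposite categories for the $\bc_*$ statement.
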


Again, this immediately implies the following vertical pasting lemma
for $\bc$ conditions.
\begin{cor}
\label{cor:Vertical_Pasting_BC}Given a vertical pasting diagram $\left(**\right)$
as above, denote by $\square_{T}$, $\square_{B}$ and $\square$,
the top, bottom, and outer squares respectively. If $\square_{T}$
and $\square_{B}$ satisfy the $\bc_{!}$ (resp. $\bc_{*}$) condition,
then so does $\square$. 
\end{cor}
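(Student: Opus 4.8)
The plan is to reduce the vertical pasting statement for $\bc$ conditions to the vertical pasting formula of \lemref{Vertical_Pasting_Formula}, which expresses the Beck–Chevalley map of the outer rectangle as a composition of the Beck–Chevalley maps of the two constituent squares. Concretely, suppose $\square_T$ and $\square_B$ satisfy the $\bc_!$ condition. First I would observe that this is exactly the assertion that $q^*$ and $\tilde q^*$ admit left adjoints (from $\square_T$), and that $p^*$ and $\tilde p^*$ admit left adjoints (from $\square_B$). Composing adjunctions (\defref{Adj_Composition}) then gives that $(q^*p^*)$ and $(\tilde q^*\tilde p^*)$ — i.e. the vertical functors of the outer rectangle $\square$, read with the correct variance — admit left adjoints $p_!q_!$ and $\tilde p_!\tilde q_!$ respectively. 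So the $\bc_!$-map $\beta_!$ for $\square$ is defined.

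Next I would invoke \lemref{Vertical_Pasting_Formula}(1): the map $\beta_!$ for the outer square is homotopic to the composition
\[
\tilde{q}_{!}\tilde{p}_{!}\red{F_{\mathcal{E}}}{}\to\tilde{q}_{!}\red{F_{\mathcal{D}}}{}p_{!}\to\red{F_{\mathcal{C}}}{}q_{!}p_{!}
\]
of (the whiskerings of) the $\bc_!$-maps for $\square_T$ and $\square_B$. The first arrow is $\tilde q_!$ whiskered with the $\bc_!$-map $\beta_!^{B}$ of $\square_B$, and the second is the $\bc_!$-map $\beta_!^{T}$ of $\square_T$ whiskered with $p_!$. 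By hypothesis $\beta_!^{T}$ and $\beta_!^{B}$ are isomorphisms of natural transformations, and whiskering an isomorphism natural transformation by any functor again yields an isomorphism; hence both arrows in the composition are isomorphisms, so their composite $\beta_!$ is an isomorphism. This is precisely the $\bc_!$ condition for $\square$. The argument for $\bc_*$ is formally dual, using part (2) of \lemref{Vertical_Pasting_Formula} together with the composition of right adjoints, so I would simply say ``dually'' rather than repeat it.

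There is essentially no obstacle here: the corollary is a one-line consequence of the pasting formula once one notes that a composite of isomorphisms (and a whiskering of an isomorphism) is an isomorphism. The only mild point of care is bookkeeping of variances — the vertical functors in the excerpt point \emph{downward} as $q^*, p^*$, so the left adjoints of the outer rectangle's vertical functor $p^*q^*\colon\mathcal C\to\mathcal E$ is $q_!p_!$, and one must make sure \lemref{Vertical_Pasting_Formula} is being applied with the adjoints composed in the matching order; but this is exactly what that lemma already records, so nothing new is needed. I would therefore keep the proof to two or three sentences, citing \lemref{Vertical_Pasting_Formula} and the closure of isomorphisms under composition and whiskering.
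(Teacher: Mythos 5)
Your proof is correct and is exactly the argument the paper intends: the paper derives \corref{Vertical_Pasting_BC} immediately from \lemref{Vertical_Pasting_Formula}, since the outer $\bc$-map is a composite of whiskerings of the two given $\bc$-maps and isomorphisms are closed under whiskering and composition. Your bookkeeping of the variances and of the composed adjoints ($q_!p_!$ for $p^*q^*$) matches \defref{Norm_Construction}, so nothing is missing.
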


Finally, the $\bc$ conditions are also natural with respect to multiplication
and exponentiation.
\begin{lem}
\label{lem:Exponential_Rule_BC}Given a pair of squares corresponding under the adjunction
$
(-)\times\mathcal{E} \dashv \fun(\mathcal{E},-),
$
\[
\qquad \qquad
\vcenter{
\xymatrix@R=2.25pc@C=3pc{\mathcal{C}\times\mathcal{E}\ar[d]_{q^{*}\times\Id}\ar[r]^{F_{\mathcal{C}}}  & \tilde{\mathcal{C}}\ar[d]^{\tilde{q}^{*}}\\
\mathcal{D}\times\mathcal{E}\ar[r]^{F_{\mathcal{D}}} &  \tilde{\mathcal{D}}
}}
\quad\left(\square_{1}\right),\qquad \qquad
\vcenter{
\xymatrix@C=3pc{\mathcal{C}\ar[d]_{q^{*}}\ar[r]^{\widehat{F}_{\mathcal{C}}\quad} &  \fun(\mathcal{E},\tilde{\mathcal{C}})\ar[d]^{\left(q^{*}\right)^{\mathcal{E}}}\\
\mathcal{D}\ar[r]^{\widehat{F}_{\mathcal{D}}\quad} & \fun(\mathcal{E},\tilde{\mathcal{D}})
}}
\quad\left(\square_{2}\right).
\]
the square $\square_{1}$ satisfies the $\bc_{!}$ (resp. $\bc_{*}$)
if and only if $\square_{2}$ satisfies the $\bc_{!}$ (resp. $\bc_{*}$)
condition.
\end{lem}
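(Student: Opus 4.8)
The plan is to express both Beck--Chevalley maps $\beta_!$ and $\beta_*$ for $\square_1$ and $\square_2$ purely in terms of the units and counits of the relevant adjunctions, and then observe that the adjunction $(-)\times\mathcal{E}\dashv\fun(\mathcal{E},-)$ matches up all the pieces. First I would record the adjoint data: $q^*\times\Id_{\mathcal{E}}\colon\mathcal{C}\times\mathcal{E}\to\mathcal{D}\times\mathcal{E}$ has left adjoint $q_!\times\Id_{\mathcal{E}}$ and right adjoint $q_*\times\Id_{\mathcal{E}}$, with (co)units simply $u_!^q\times\Id$, $c_!^q\times\Id$, etc.; while $(q^*)^{\mathcal{E}}\colon\fun(\mathcal{E},\mathcal{C})\to\fun(\mathcal{E},\mathcal{D})$ obtained by postcomposition has left adjoint $(q_!)^{\mathcal{E}}$ and right adjoint $(q_*)^{\mathcal{E}}$, with (co)units the ``whiskered'' transformations $u_!^q\ast\Id$, $c_!^q\ast\Id$, etc. The key point is that under $(-)\times\mathcal{E}\dashv\fun(\mathcal{E},-)$ the functor $\widehat{F}_{\mathcal{C}}\colon\mathcal{C}\to\fun(\mathcal{E},\tilde{\mathcal{C}})$ is precisely the mate of $F_{\mathcal{C}}\colon\mathcal{C}\times\mathcal{E}\to\tilde{\mathcal{C}}$, and likewise for $F_{\mathcal{D}}$; moreover the commutativity datum $F_{\mathcal{D}}(q^*\times\Id)\iso\tilde{q}^*F_{\mathcal{C}}$ transposes to the commutativity datum $\widehat{F}_{\mathcal{D}}q^*\iso(\tilde{q}^*)^{\mathcal{E}}\widehat{F}_{\mathcal{C}}$ of $\square_2$.

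Next I would write out $\beta_!$ for $\square_2$ using its defining three-step formula
\[
(\tilde{q}_!)^{\mathcal{E}}\widehat{F}_{\mathcal{D}}
\xrightarrow{\ u_!^q\ }
(\tilde{q}_!)^{\mathcal{E}}\widehat{F}_{\mathcal{D}}q^*q_!
\iso
(\tilde{q}_!)^{\mathcal{E}}(\tilde{q}^*)^{\mathcal{E}}\widehat{F}_{\mathcal{C}}q_!
\xrightarrow{\ c_!^{\tilde{q}}\ }
\widehat{F}_{\mathcal{C}}q_!,
\]
and compare it term by term with the transpose of $\beta_!$ for $\square_1$. Each arrow in this composite is the transpose (under $(-)\times\mathcal{E}\dashv\fun(\mathcal{E},-)$) of the corresponding arrow in the analogous composite defining $\beta_!$ for $\square_1$: the unit step $u_!^q$ corresponds to $u_!^{q}\times\Id_{\mathcal{E}}$, the isomorphism step uses the transposed commutativity datum, and the counit step $c_!^{\tilde q}$ corresponds to $c_!^{\tilde q}\times\Id_{\mathcal{E}}$. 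Hence the mate of $\beta_!^{\square_2}$ is $\beta_!^{\square_1}$. Since $(-)\times\mathcal{E}\dashv\fun(\mathcal{E},-)$ is an adjoint equivalence on 2-cells in the relevant sense (more precisely, mate correspondence is a bijection on natural transformations, and a natural transformation between functors into $\fun(\mathcal{E},-)$ is an isomorphism iff its mate is), we conclude $\beta_!^{\square_1}$ is an isomorphism iff $\beta_!^{\square_2}$ is. The argument for $\beta_*$ is formally identical, using the $\bc_*$ formula and the right adjoints $q_*\times\Id$, $(q_*)^{\mathcal{E}}$ in place of the left ones.

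The only genuinely non-formal point—the ``main obstacle''—is bookkeeping the coherence isomorphisms: one must check that postcomposition $(-)^{\mathcal{E}}$ really does carry the adjunction $q_!\dashv q^*\dashv q_*$ to $(q_!)^{\mathcal{E}}\dashv(q^*)^{\mathcal{E}}\dashv(q_*)^{\mathcal{E}}$ with the whiskered (co)units (this is standard, e.g.\ from the fact that $\fun(\mathcal{E},-)$ preserves adjunctions), and that the transposition under $(-)\times\mathcal{E}\dashv\fun(\mathcal{E},-)$ is natural enough to respect horizontal and vertical composition of the 2-cells involved, so that transposing a three-fold composite is the three-fold composite of the transposes. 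Both are routine once the adjunction data are pinned down, and I would simply cite the standard calculus of mates (e.g.\ \cite{RiehlV}) rather than belabor it, concluding that $\square_1$ satisfies $\bc_!$ (resp.\ $\bc_*$) if and only if $\square_2$ does.
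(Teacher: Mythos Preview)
Your proposal is correct and takes essentially the same approach as the paper: both identify the $\bc_!$ (resp.\ $\bc_*$) map of $\square_1$ with that of $\square_2$ via the currying equivalence $\fun(\mathcal{D}\times\mathcal{E},\tilde{\mathcal{C}})\simeq\fun(\mathcal{D},\fun(\mathcal{E},\tilde{\mathcal{C}}))$, and then note that isomorphisms correspond to isomorphisms. The paper's proof is a two-sentence assertion of this correspondence, whereas you unwind the three-step definition of $\beta_!$ and check each piece transposes correctly; this is a more explicit version of the same argument rather than a different one.
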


\begin{proof}
Under the canonical equivalence of $\infty$-categories 
\[
\fun(\mathcal{D}\times\mathcal{E},\tilde{\mathcal{C}})\simeq\fun(\mathcal{D},\fun(\mathcal{E},\tilde{\mathcal{C}})),
\]
the $\text{BC}_{!}$ (resp. $\text{BC}_{*}$) map for $\square_{1}$
corresponds to the $\text{BC}_{!}$ (resp. $\text{BC}_{*}$) map of
$\square_{2}$ and isomorphisms correspond to isomorphisms.
\end{proof}

\subsubsection{Normed and Ambidextrous Squares}

We now consider commuting squares of $\infty$-categories, where the
vertical functors are \emph{normed}.
\begin{defn}
\label{def:Normed_Ambi_Square}We define:
\end{defn}

\begin{enumerate}
\item A \emph{normed square} is a pair of normed functors $q\colon\mathcal{D}\nto\mathcal{C}$
and $\tilde{q}\colon\tilde{\mathcal{D}}\nto\tilde{\mathcal{C}}$,
together with a commutative diagram 
\[
\qquad \quad
\vcenter{
\xymatrix@C=3pc{\mathcal{C}\ar[d]_{q^{*}}\ar[r]^{F_{\mathcal{C}}} & \tilde{\mathcal{C}}\ar[d]^{\tilde{q}^{*}}\\
\mathcal{D}\ar[r]^{F_{\mathcal{D}}} & \tilde{\mathcal{D}}.
}}
\qquad\left(*\right)
\]
It is \emph{iso-normed} if $q$ and $\tilde{q}$ are iso-normed.
\item Given a normed square as in (1), we have an associated \emph{norm-diagram}:
\[
\qquad \quad
\vcenter{
\xymatrix@C=3pc{\red{F_{\mathcal{C}}}{}q_{!}\ar[r]^{\nm_{q}} & \red{F_{\mathcal{C}}}{}q_{*}\ar[d]^{\beta_{*}}\\
\tilde{q}_{!}\red{F_{\mathcal{D}}}{}\ar[u]^{\beta_{!}}\ar[r]^{\nm_{\tilde{q}}} & \tilde{q}_{*}\red{F_{\mathcal{D}}}{}.
}}
\qquad\left(\square\right)
\]
\item A \emph{weakly ambidextrous }square is a normed square, such that
the associated norm diagram $\square$ commutes up to homotopy. An
\emph{ambidextrous square }is a weakly ambidextrous square that is
iso-normed (note that an ambidextrous square satisfies the $\bc_{!}$
condition if and only if it satisfies the $\bc_{*}$ condition).
\end{enumerate}
\begin{rem}
We shall often abuse language and say that $\left(*\right)$ is a
normed (or ambidextrous) square implying by this that we also have
normed functors $q$ and $\tilde{q}$ as in the definition.
\end{rem}

As with any definition regarding norms, we can recast the definition
of an ambidextrous square in terms of wrong way counits. As this will
be used in the sequel, we shall spell this out.
\begin{lem}
\label{lem:Triangle_Unit_Counit_Norm_Diagram} Let $\left(*\right)$
be a normed square as in \defref{Normed_Ambi_Square}(1). Consider
the diagrams (where $\cocone$ is defined only when $\left(*\right)$
is iso-normed).
\[
\qquad \qquad 
\vcenter{
\xymatrix{\red{F_{D}}{}q^{*}q_{!}\ar[rd]^{\nu_{q}}\\
\tilde{q}^{*}\red{F_{\mathcal{C}}}{}q_{!}\ar[u]^{\wr} & \red{F_{D}}\\
\tilde{q}^{*}\tilde{q}_{!}\red{F_{\mathcal{D}}}{}\ar[u]^{\beta_{!}}\ar[ru]_{\nu_{\tilde{q}}}}}
\qquad \left(\cone\right), \qquad \qquad
\vcenter{
\xymatrix{ & \tilde{q}_{!}\tilde{q}^{*}\red{F_{\mathcal{C}}}{}\ar[d]^{\wr}\\
\red{F_{\mathcal{C}}}{}\ar[ru]^{\mu_{\tilde{q}}}\ar[rd]_{\mu_{q}} & \tilde{q}_{!}\red{F_{\mathcal{D}}}{}q^{*}\ar[d]^{\beta_{!}}\\
 & \red{F_{\mathcal{C}}}{}q_{!}q^{*}.
}}
\qquad\left(\cocone\right)
\]
\begin{enumerate}
\item The norm-diagram $\square$ commutes if and only if the diagram $\cone$
commutes.
\item If $\left(*\right)$ is iso-normed, satisfies the $\bc_{!}$ condition
and the norm-diagram $\square$ commutes, then the diagram $\cocone$
commutes.
\end{enumerate}
\end{lem}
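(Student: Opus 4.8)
In both parts the strategy is to rephrase ``the diagram commutes'' as an equality of natural transformations and transport it across an adjunction, so that it becomes an equality of mates. Write $\theta\colon F_{\mathcal{D}}q^{*}\iso\tilde{q}^{*}F_{\mathcal{C}}$ for the structure isomorphism of the square $(*)$, and recall that by construction $\nu_{q}=c_{*}^{q}\circ(q^{*}\star\nm_{q})$ is the mate of $\nm_{q}$ under $q^{*}\dashv q_{*}$, and similarly $\nu_{\tilde{q}}$ is the mate of $\nm_{\tilde{q}}$ under $\tilde{q}^{*}\dashv\tilde{q}_{*}$.

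For (1): the two legs of $\square$ are maps $\tilde{q}_{!}F_{\mathcal{D}}\to\tilde{q}_{*}F_{\mathcal{D}}$, and passing to mates under $\tilde{q}^{*}\dashv\tilde{q}_{*}$ is a bijection onto $\hom(\tilde{q}^{*}\tilde{q}_{!}F_{\mathcal{D}},F_{\mathcal{D}})$; thus $\square$ commutes iff the mates of the two legs agree. The mate of $\nm_{\tilde{q}}$ is $\nu_{\tilde{q}}$ by definition. For the other leg $\beta_{*}\circ(F_{\mathcal{C}}\star\nm_{q})\circ\beta_{!}$ I would compute its mate in three moves: use \lemref{BC_Co_Units}(2) to replace $c_{*}^{\tilde{q}}\circ(\tilde{q}^{*}\star\beta_{*})$ by $(F_{\mathcal{D}}\star c_{*}^{q})\circ(\theta^{-1}\star q_{*})$; slide $\nm_{q}$ across $\theta^{-1}$ by naturality; and fold $(F_{\mathcal{D}}\star c_{*}^{q})\circ(F_{\mathcal{D}}q^{*}\star\nm_{q})$ into $F_{\mathcal{D}}\star\nu_{q}$ using the definition of $\nu_{q}$. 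The outcome is $(F_{\mathcal{D}}\star\nu_{q})\circ(\theta^{-1}\star q_{!})\circ(\tilde{q}^{*}\star\beta_{!})$, which is exactly the composite occurring in $\cone$. Hence $\square$ commutes iff $\cone$ commutes, both directions coming from the mate bijection.

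For (2): assume $(*)$ is iso-normed, satisfies $\bc_{!}$, and $\square$ commutes; by (1) we may use that $\cone$ commutes. Being iso-normed, $(\mu_{q},\nu_{q})$ and $(\mu_{\tilde{q}},\nu_{\tilde{q}})$ are unit--counit pairs for $q^{*}\dashv q_{!}$ and $\tilde{q}^{*}\dashv\tilde{q}_{!}$; by $\bc_{!}$ the transformation $\beta_{!}$ is invertible, so the structure maps of $\cocone$ assemble into an isomorphism $\Psi\colon\tilde{q}_{!}\tilde{q}^{*}F_{\mathcal{C}}\iso F_{\mathcal{C}}q_{!}q^{*}$, and $\cocone$ commutes iff $\Psi^{-1}\circ(F_{\mathcal{C}}\star\mu_{q})=\mu_{\tilde{q}}\star F_{\mathcal{C}}$. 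I would establish this by transposing both sides across $\tilde{q}^{*}\dashv\tilde{q}_{!}$ into endomorphisms of $\tilde{q}^{*}F_{\mathcal{C}}$: the transpose of $\mu_{\tilde{q}}\star F_{\mathcal{C}}$ is $\Id$ by a triangle identity for $\tilde{q}$, and the transpose of $\Psi^{-1}\circ(F_{\mathcal{C}}\star\mu_{q})$ unwinds to $\Id$ after five elementary steps: naturality of $\nu_{\tilde{q}}$ to pull $\theta$ out; $\cone$ to rewrite $\nu_{\tilde{q}}$ through $\nu_{q},\beta_{!},\theta$; the cancellation $\beta_{!}\circ\beta_{!}^{-1}=\Id$; naturality of $\theta$ to bring $\mu_{q}$ next to $\nu_{q}$; and the triangle identity $(\nu_{q}\star q^{*})\circ(q^{*}\star\mu_{q})=\Id_{q^{*}}$. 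Since transposition is a bijection, equality of the transposes yields $\cocone$.

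The whole argument is formal diagram-chasing; the only non-elementary inputs are the description of a wrong-way (co)unit as a mate, \lemref{BC_Co_Units}, and the triangle identities. The one genuinely load-bearing hypothesis is $\bc_{!}$ in part (2): it is exactly what makes $\beta_{!}$ invertible and hence cancellable against $\beta_{!}^{-1}$, and the implication $\square\Rightarrow\cocone$ fails without it. The main practical hazard, as usual, is bookkeeping: keeping straight the directions of $\theta$ and of the Beck--Chevalley maps, and which triangle identity of which adjunction is invoked at each step.
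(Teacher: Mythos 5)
Your proposal is correct. Part (1) is essentially the paper's own argument: both pass to mates under $\tilde{q}^{*}\dashv\tilde{q}_{*}$, identify the mate of $\nm_{\tilde{q}}$ with $\nu_{\tilde{q}}$, and compute the mate of the other leg of $\square$ using \lemref{BC_Co_Units}(2), naturality, and the definition of $\nu_{q}$ as the mate of $\nm_{q}$; the paper merely packages this as a single commutative diagram rather than a chain of rewrites.

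For part (2) you take a genuinely different route. The paper uses the commutativity and invertibility of all maps in $\square$ to write $\beta_{!}^{-1}$ as the composite $\nm_{q}$, then $\beta_{*}$, then $\nm_{\tilde{q}}^{-1}$, recognizes this as the $\bc_{*}$-map of the \emph{wrong-way} adjunctions $q^{*}\dashv q_{!}$ and $\tilde{q}^{*}\dashv\tilde{q}_{!}$, and then concludes in one stroke by citing the compatibility of $\bc_{*}$-maps with units (\lemref{BC_Co_Units}(1)). You instead transpose both legs of $\cocone$ across $\tilde{q}^{*}\dashv\tilde{q}_{!}$ and reduce each transpose to the identity, feeding in the already-established commutativity of $\cone$, the cancellation $\beta_{!}\circ\beta_{!}^{-1}=\Id$, naturality, and the triangle identity for $(\mu_{q},\nu_{q})$. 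Both arguments are valid and both isolate $\bc_{!}$ as the load-bearing hypothesis (invertibility of $\beta_{!}$). The paper's version is shorter and reuses an existing lemma verbatim, at the cost of the slightly slippery "unwinding the definitions" step identifying $\beta_{!}^{-1}$ with a wrong-way $\bc_{*}$-map; yours is more explicit and makes the dependence of (2) on (1) visible, at the cost of the heavier bookkeeping you yourself flag. No gap in either part.
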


\begin{proof}
We begin with (1). The norm-diagram $\square$ commutes if and only
if the two maps $\tilde{q}_{!}F_{\mathcal{D}}\to\tilde{q}_{*}F_{\mathcal{D}}$
are homotopic. This holds if and only if their mates $\tilde{q}^{*}\tilde{q}_{!}F_{\mathcal{D}}\to F_{\mathcal{D}}$
are homotopic. To compute the mate, one applies $\tilde{q}^{*}$ and
post-composes with the counit $\tilde{c}_{*}\colon\tilde{q}^{*}\tilde{q}_{*}\to\Id$
(of the right way adjunction). Now, consider the diagram
\[
\xymatrix@C=3pc{\red{F_{D}}{}q^{*}q_{!}\ar[r]^{\nm_{q}} & \red{F_{D}}{}q^{*}q_{*}\ar[d]^{\wr}\ar[rd]^{c_{*}}\\
\tilde{q}^{*}\red{F_{\mathcal{C}}}{}q_{!}\ar[u]^{\wr}\ar[r]^{\nm_{q}}  & \tilde{q}^{*}\red{F_{\mathcal{C}}}{}q_{*}\ar[d]^{\beta_{*}} & \red{F_{D}}.\\
\tilde{q}^{*}\tilde{q}_{!}\red{F_{\mathcal{D}}}{}\ar[u]^{\beta_{!}}\ar[r]^{\nm_{\tilde{q}}} & \tilde{q}^{*}\tilde{q}_{*}\red{F_{\mathcal{D}}}{}\ar[ru]_{\tilde{c}_{*}}
}
\]
The triangle on the right commutes by \lemref{BC_Co_Units}(2). The
composition of the top maps is $F_{\mathcal{D}}\nu_{q}$ and of the
bottom maps is $\nu_{\tilde{q}}F_{\mathcal{D}}$. Hence, $\square$
commutes, if and only if $\cone$ commutes.

We now turn to (2). To check the commutativity of $\cocone$,
we may replace $\beta_{!}$ with its inverse. By assumption, all maps
in $\square$ are isomorphisms. Thus, the map $\beta_{!}^{-1}$ in
$\cocone$ is homotopic to the composition

\[
\red{F_{\mathcal{C}}}q_{!}\oto{\nm_{q}}\red{F_{\mathcal{C}}}q_{*}\oto{\beta_{*}}\tilde{q}_{*}\red{F_{\mathcal{D}}}\oto{\left(\nm_{\tilde{q}}\right)^{-1}}\tilde{q}_{!}\red{F_{\mathcal{D}}}.
\]
Unwinding the definitions, this exhibits $\beta_{!}^{-1}$ as the $\bc_{*}$-map of the \emph{wrong
way} adjunctions $q^{*}\dashv q_{!}$ and $\tilde{q}^{*}\dashv\tilde{q}_{!}$.
The commutativity of $\cocone$ now follows from the compatibility
of $\bc$-maps with units (\lemref{BC_Co_Units}(1)). 
\end{proof}
The main feature of ambidextrous squares is that they behave well
with respect to the integral operation.
\begin{prop}
\label{prop:Integral_Ambi}Let 
\[
\qquad \quad
\vcenter{
\xymatrix@C=3pc{\mathcal{C}\ar[d]_{q^{*}}\ar[r]^{F_{\mathcal{C}}} & \tilde{\mathcal{C}}\ar[d]^{\tilde{q}^{*}}\\
\mathcal{D}\ar[r]^{F_{\mathcal{D}}} & \tilde{\mathcal{D}}
}}
\qquad\left(\square\right)
\]

be an ambidextrous square that satisfies the $\bc_{!}$ condition
(and hence the $\bc_{*}$ condition). For all $X,Y\in\mathcal{C}$
and $f\colon q^{*}X\to q^{*}Y,$ we have
\[
F_{\mathcal{C}}\left(\int_{q}f\right)=\int_{\tilde{q}}F_{\mathcal{D}}\left(f\right)\quad\in\hom_{h\tilde{\mathcal{C}}}\left(F_{\mathcal{C}}X,F_{\mathcal{C}}Y\right).
\]
In particular, for all $X\in\mathcal{C}$, we have
\[
F_{\mathcal{C}}\left(|q|_{X}\right)=|\tilde{q}|_{F_{\mathcal{C}}\left(X\right)}\quad\in\hom_{h\tilde{\mathcal{C}}}\left(F_{\mathcal{C}}X,F_{\mathcal{C}}X\right).
\]
\end{prop}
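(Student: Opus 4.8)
The plan is to compute both integrals via the ``wrong way unit'' formula of \remref{Integration_Unit} and to transport one computation to the other along the Beck--Chevalley isomorphism, using the structure of the ambidextrous square packaged in \lemref{Triangle_Unit_Counit_Norm_Diagram}(2) and \lemref{BC_Co_Units}(4). Write $\phi\colon\tilde{q}^{*}F_{\mathcal{C}}\iso F_{\mathcal{D}}q^{*}$ for the natural isomorphism underlying the square $\square$; through it, $F_{\mathcal{D}}(f)$ corresponds to a map $g\colon\tilde{q}^{*}F_{\mathcal{C}}X\to\tilde{q}^{*}F_{\mathcal{C}}Y$, and by definition $\int_{\tilde{q}}F_{\mathcal{D}}(f)=(c_{!}^{\tilde{q}})_{F_{\mathcal{C}}Y}\circ\tilde{q}_{!}(g)\circ\mu_{\tilde{q},F_{\mathcal{C}}X}$, whereas $F_{\mathcal{C}}(\int_{q}f)=F_{\mathcal{C}}((c_{!}^{q})_{Y})\circ F_{\mathcal{C}}(q_{!}f)\circ F_{\mathcal{C}}(\mu_{q,X})$. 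Since $\square$ satisfies the $\bc_{!}$ condition, the composite natural transformation
\[
\Psi\colon\tilde{q}_{!}\tilde{q}^{*}F_{\mathcal{C}}\oto{\tilde{q}_{!}\phi}\tilde{q}_{!}F_{\mathcal{D}}q^{*}\oto{\beta_{!}}F_{\mathcal{C}}q_{!}q^{*}
\]
(whose second arrow is the whiskering of $\beta_{!}$ by $q^{*}$) is an isomorphism, and it will serve as the bridge between the two sides.

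Next I would record three compatibilities of $\Psi$. (i) \emph{With the counits:} \lemref{BC_Co_Units}(4), the compatibility of $\beta_{!}$ with the counit maps, says precisely that $(F_{\mathcal{C}}\star c_{!}^{q})\circ\Psi=c_{!}^{\tilde{q}}\star F_{\mathcal{C}}$ as natural transformations $\tilde{q}_{!}\tilde{q}^{*}F_{\mathcal{C}}\to F_{\mathcal{C}}$. (ii) \emph{With the wrong way units:} since $\square$ is an ambidextrous square satisfying $\bc_{!}$, the hypotheses of \lemref{Triangle_Unit_Counit_Norm_Diagram}(2) hold, so the diagram $\cocone$ commutes; reading it off gives $\Psi\circ(\mu_{\tilde{q}}\star F_{\mathcal{C}})=F_{\mathcal{C}}\star\mu_{q}$, i.e. $\Psi_{X}\circ\mu_{\tilde{q},F_{\mathcal{C}}X}=F_{\mathcal{C}}(\mu_{q,X})$ for every $X\in\mathcal{C}$. (iii) \emph{With $f$:} the naturality square of $\beta_{!}\colon\tilde{q}_{!}F_{\mathcal{D}}\to F_{\mathcal{C}}q_{!}$ evaluated on the morphism $f\colon q^{*}X\to q^{*}Y$ of $\mathcal{D}$, together with the naturality of $\phi$, yields $\Psi_{Y}\circ\tilde{q}_{!}(g)=F_{\mathcal{C}}(q_{!}f)\circ\Psi_{X}$.

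It then remains to splice (i)--(iii). Using (ii) to rewrite $\mu_{\tilde{q},F_{\mathcal{C}}X}=\Psi_{X}^{-1}\circ F_{\mathcal{C}}(\mu_{q,X})$, then (iii) to slide $\tilde{q}_{!}(g)$ across $\Psi$, and finally (i) to rewrite $(c_{!}^{\tilde{q}})_{F_{\mathcal{C}}Y}=F_{\mathcal{C}}((c_{!}^{q})_{Y})\circ\Psi_{Y}$, one obtains
\[
\int_{\tilde{q}}F_{\mathcal{D}}(f)=(c_{!}^{\tilde{q}})_{F_{\mathcal{C}}Y}\circ\tilde{q}_{!}(g)\circ\mu_{\tilde{q},F_{\mathcal{C}}X}=F_{\mathcal{C}}((c_{!}^{q})_{Y})\circ F_{\mathcal{C}}(q_{!}f)\circ F_{\mathcal{C}}(\mu_{q,X})=F_{\mathcal{C}}\Bigl(\int_{q}f\Bigr)
\]
in $\hom_{h\tilde{\mathcal{C}}}(F_{\mathcal{C}}X,F_{\mathcal{C}}Y)$; equivalently this is the commutativity of the ladder of three squares joining the two displayed composites, with vertical maps $\Id$, $\Psi_{X}$, $\Psi_{Y}$, $\Id$, the squares commuting by (ii), (iii), (i) respectively. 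The ``in particular'' clause is the special case $f=\Id_{q^{*}X}$, since $|q|_{X}=\int_{q}\Id_{q^{*}X}$ and $F_{\mathcal{D}}$ sends $\Id_{q^{*}X}$ to the identity (matched with $\Id_{\tilde{q}^{*}F_{\mathcal{C}}X}$ under $\phi$).

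The step I expect to be the crux is (ii): one genuinely needs the ``iso-normed plus $\bc_{!}$'' conclusion of \lemref{Triangle_Unit_Counit_Norm_Diagram}(2), not merely its hypothesis-free part (1), which is exactly where the $\bc_{!}$ assumption in the statement gets used. The remainder is a diagram chase, the only care required being to express all three compatibilities in terms of the \emph{same} bridge transformation $\Psi$, keeping careful track of the square isomorphism $\phi$ and of the relevant whiskerings and mates.
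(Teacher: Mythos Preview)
Your proof is correct. The approach differs from the paper's only in which formula for the integral you unfold: you use the ``wrong way unit'' description of \remref{Integration_Unit} (so your ladder lives entirely on the $q_{!}$ side and the ambidexterity enters via \lemref{Triangle_Unit_Counit_Norm_Diagram}(2)), whereas the paper uses the original $u_{*}$--$\nm_{q}^{-1}$--$c_{!}$ definition and invokes the norm diagram directly together with \lemref{BC_Co_Units}(1) and (4). Both are the same diagram chase in spirit; your version avoids ever writing $q_{*}$ or $\nm^{-1}$ at the cost of citing the slightly more processed \lemref{Triangle_Unit_Counit_Norm_Diagram}(2), while the paper's version makes more visibly explicit where the commuting norm diagram is used.
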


\begin{proof}
Since $\square$ is iso-normed, we can construct the following diagram:

\[
\xymatrix@C=3pc{ & \red{F_{\mathcal{C}}}{}q_{*}q^{*}X\ar[d]_{\beta_{*}}^{\wr}\ar[r]^{f} & \red{F_{\mathcal{C}}}{}q_{*}q^{*}Y\ar[d]_{\beta_{*}}^{\wr}\ar[r]^{\nm_{q}^{-1}} & \red{F_{\mathcal{C}}}{}q_{!}q^{*}Y\ar[dr]^{c_{!}}\\
\red{F_{\mathcal{C}}}{}X\ar[ru]^{u_{*}}\ar[dr]_{\tilde{u}_{*}} & \tilde{q}_{*}\red{F_{\mathcal{D}}}{}q^{*}X\ar@{-}[d]^{\wr}\ar[r]^{f} & \tilde{q}_{*}\red{F_{\mathcal{D}}}{}q^{*}Y\ar@{-}[d]^{\wr}\ar[r]^{\nm_{\tilde{q}}^{-1}} & \tilde{q}_{!}\red{F_{\mathcal{D}}}{}q^{*}Y\ar@{-}[d]^{\wr}\ar[u]_{\wr}^{\beta_{!}} & \red{F_{\mathcal{C}}}{}Y.\\
 & \tilde{q}_{*}\tilde{q}^{*}\red{F_{\mathcal{C}}}{}X\ar[r]^{f} & \tilde{q}_{*}\tilde{q}^{*}\red{F_{\mathcal{C}}}{}Y\ar[r]^{\nm_{\tilde{q}}^{-1}} & \tilde{q}_{!}\tilde{q}^{*}\red{F_{\mathcal{C}}}{}Y\ar[ru]_{\tilde{c}_{!}}
}
\]

The left and right triangles commute by the compatibility of $\bc$ maps
with (co)units (\lemref{BC_Co_Units}, diagrams (1), and (4) respectively).
The top right square commutes by the assumption that the square $\square$ is
ambidextrous and satisfies the $\bc$ conditions and the rest of
the squares commute for trivial reasons. Hence, the composition along
the top path is homotopic to the composition along the bottom path,
which proves the first claim. The second claim follows from the first
applied to the map $f=q^{*}\Id_{X}$.
\end{proof}

\subsubsection{Calculus of Normed Squares}

As discussed before, squares of functors can be pasted horizontally
and vertically. We extend these operations to \emph{normed} squares
and consider their compatibility with the notion of ambidexterity.
We begin with horizontal pasting. Given normed functors 
\[
q\colon\mathcal{D}\nto\mathcal{C},\quad\tilde{q}\colon\tilde{\mathcal{D}}\nto\tilde{\mathcal{C}},\quad\tilde{\tilde{q}}\colon\mathcal{\tilde{\tilde{D}}}\nto\tilde{\tilde{\mathcal{C}}},
\]

and a commutative diagram
\[
\qquad \qquad 
\vcenter{
\xymatrix@C=3pc{\mathcal{C}\ar[d]^{q^{*}}\ar[r]^{F_{\mathcal{C}}}  & \mathcal{\tilde{C}}\ar[d]^{\tilde{q}^{*}}\ar[r]^{G_{\mathcal{C}}}  & \mathcal{\tilde{\tilde{C}}}\ar[d]^{\tilde{\tilde{q}}^{*}}\\
\mathcal{D}\ar[r]^{F_{\mathcal{D}}}  & \tilde{\mathcal{D}}\ar[r]^{G_{\mathcal{D}}}  & \mathcal{\tilde{\tilde{D}}},
}}
\qquad\left(*\right)
\]

we call the big outer \emph{normed} square the \emph{horizontal pasting}
of the left and right small \emph{normed} squares. We have the following
horizontal pasting lemma for ambidexterity.
\begin{lem}
[Horizontal Pasting]\label{lem:Horizontal_Pasting_Ambi}Let $\left(*\right)$
be a horizontal pasting diagram of normed squares as above. We denote
by $\square_{L}$, $\square_{R}$ and $\square$, the left, right,
and outer normed squares respectively. If $\square_{L}$ and $\square_{R}$
are (weakly) ambidextrous, then so is $\square$.
\end{lem}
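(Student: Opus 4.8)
The plan is to reduce everything to the commutativity of the norm-diagram $\square$ for the outer square, and to build that from the norm-diagrams of $\square_L$ and $\square_R$ together with the pasting formulas for Beck-Chevalley maps already established in \lemref{Horizontal_Pasting_Formula}. First I would recall that, by \defref{Norm_Construction}(2), the norm of the composite $\tilde{\tilde{q}}q$ (in the vertical direction) is irrelevant here — what matters is that the norm of the \emph{horizontal} pasting is just the data of $q,\tilde q,\tilde{\tilde q}$ and the big commutative square — so being weakly ambidextrous for $\square$ means precisely that the outer norm-diagram
\[
\xymatrix@C=3pc{G_{\mathcal C}F_{\mathcal C}q_{!}\ar[r]^{\nm_{q}} & G_{\mathcal C}F_{\mathcal C}q_{*}\ar[d]^{\beta_{*}}\\
\tilde{\tilde q}_{!}G_{\mathcal D}F_{\mathcal D}\ar[u]^{\beta_{!}}\ar[r]^{\nm_{\tilde{\tilde q}}} & \tilde{\tilde q}_{*}G_{\mathcal D}F_{\mathcal D}
}
\]
commutes up to homotopy, where now $\beta_{!}$ and $\beta_{*}$ are the BC-maps of the outer square. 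By \lemref{Horizontal_Pasting_Formula}, $\beta_{!}$ for $\square$ factors as the composite of the $\bc_{!}$-maps of $\square_R$ then $\square_L$, and dually for $\beta_{*}$; so the outer norm-diagram decomposes into a horizontal concatenation of two hexagonal/rectangular pieces, one "living over" $\square_L$ and one over $\square_R$.

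The main step is then to assemble the following pasting of commutative squares. On the $\square_R$ side I would use that $\square_R$ is weakly ambidextrous: its norm-diagram, whiskered on the left by $G_{\mathcal C}$ (resp. with $G_{\mathcal D}$ inserted appropriately), gives a commuting square relating $G_{\mathcal C}\tilde q_{!}F_{\mathcal D}$, $G_{\mathcal C}\tilde q_{*}F_{\mathcal D}$, $\tilde{\tilde q}_{!}G_{\mathcal D}F_{\mathcal D}$, and $\tilde{\tilde q}_{*}G_{\mathcal D}F_{\mathcal D}$, via $\nm_{\tilde q}$, $\nm_{\tilde{\tilde q}}$ and the BC-maps of $\square_R$. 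On the $\square_L$ side, weak ambidexterity of $\square_L$ gives a commuting square relating $F_{\mathcal C}q_{!}$, $F_{\mathcal C}q_{*}$, $\tilde q_{!}F_{\mathcal D}$, $\tilde q_{*}F_{\mathcal D}$ via $\nm_{q}$, $\nm_{\tilde q}$ and the BC-maps of $\square_L$; whiskering this on the left by $G_{\mathcal C}$ keeps it commutative. Concatenating these two commuting squares horizontally, and invoking naturality of $\nm_{\tilde q}$ (so that the shared edge $\nm_{\tilde q}$ slides past the various functors in the expected way) together with the identification of $\beta_{!},\beta_{*}$ for the outer square as the composites from \lemref{Horizontal_Pasting_Formula}, yields exactly the commutativity of the outer norm-diagram. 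Finally, for the \emph{non-weak} case: if in addition $\square_L$ and $\square_R$ are ambidextrous, then $q,\tilde q,\tilde{\tilde q}$ are iso-normed, hence by \defref{Norm_Construction}(2) the horizontally-composed... rather, here the vertical functors $q,\tilde{\tilde q}$ are already individually iso-normed (the hypothesis on $\square_L,\square_R$ is about the \emph{same} three functors), so $\square$ is iso-normed as well, and a weakly ambidextrous iso-normed square is by definition ambidextrous.

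The step I expect to be the main obstacle is bookkeeping the whiskering and the direction of the BC-maps: the outer norm-diagram is not literally the horizontal juxtaposition of the two inner ones, but only becomes so after (i) rewriting $\beta_{!},\beta_{*}$ of $\square$ via \lemref{Horizontal_Pasting_Formula}, and (ii) using naturality of the norm transformations $\nm_{\tilde q}$ and of the (co)units to commute them past $G_{\mathcal C}$, $G_{\mathcal D}$. Everything is formal — a diagram chase in $2$-categories of $\infty$-categories — but one has to be careful that the intermediate object carrying $\nm_{\tilde q}$ appears consistently (it is $G_{\mathcal C}\tilde q_{?}F_{\mathcal D}$ with $?\in\{!,*\}$), and that the two halves glue along this common edge. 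Once the diagram is drawn correctly, commutativity of $\square$ is immediate from commutativity of $\square_L$, $\square_R$, and the naturality squares, with no further input.
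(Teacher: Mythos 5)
Your proposal is correct and follows essentially the same route as the paper: whisker the norm-diagram of $\square_L$ by $G_{\mathcal C}$ on the left and that of $\square_R$ by $F_{\mathcal D}$ on the right, paste them along the common edge $\nm_{\tilde q}$, and identify the outer rectangle with the norm-diagram of $\square$ via \lemref{Horizontal_Pasting_Formula}. The only slips are cosmetic (the $\square_R$ diagram is whiskered by $F_{\mathcal D}$, not by $G_{\mathcal C}$, and no appeal to naturality of $\nm_{\tilde q}$ is needed once the whiskerings are set up correctly), and your handling of the iso-normed case is right.
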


\begin{proof}
Consider the following diagram composed of whiskerings of the norm
diagrams of $\square_{L}$ and $\square_{R}$ (with all horizontal
maps the respective $\bc$-maps).
\[
\xymatrix{\red{G_{\mathcal{C}}}{}\red{F_{\mathcal{C}}}{}q_{!}\ar[d]^{\nm_{q}} & \red{G_{\mathcal{C}}}{}\tilde{q}_{!}\red{F_{\mathcal{D}}}{}\ar[d]^{\nm_{\tilde{q}}}\ar[l] & \tilde{\tilde{q}}_{!}\red{G_{\mathcal{D}}}{}\red{F_{\mathcal{D}}}{}\ar[d]^{\nm_{\tilde{\tilde{q}}}}\ar[l]\\
\red{G_{\mathcal{C}}}{}\red{F_{\mathcal{C}}}{}q_{*}\ar[r] & \red{G_{\mathcal{C}}}{}\tilde{q}_{*}\red{F_{\mathcal{D}}}{}\ar[r] & \tilde{\tilde{q}}_{*}\red{G_{\mathcal{D}}}{}\red{F_{\mathcal{D}}}.
}
\]
By \lemref{Horizontal_Pasting_Formula}, the outer square is the norm
diagram for $\square$, which implies the claim.
\end{proof}
We now turn to vertical pasting. Given normed functors
\[
q\colon\mathcal{D}\nto\mathcal{C},\quad\tilde{q}\colon\tilde{\mathcal{D}}\nto\mathcal{\tilde{C}},\quad p\colon\mathcal{E}\nto\mathcal{D},\quad\tilde{p}\colon\tilde{\mathcal{E}}\nto\tilde{\mathcal{D}}
\]
and a commutative diagram 
\[
\qquad \qquad
\vcenter{
\xymatrix@C=3pc{\mathcal{C}\ar[d]_{q^{*}}\ar[r]^{F_{\mathcal{C}}}  & \mathcal{\tilde{C}}\ar[d]^{\tilde{q}^{*}}\\
\mathcal{D}\ar[d]_{p^{*}}\ar[r]^{F_{\mathcal{D}}}  & \tilde{\mathcal{D}}\ar[d]^{\tilde{p}^{*}}\\
\mathcal{E}\ar[r]^{F_{\mathcal{E}}}  & \tilde{\mathcal{E}},
}}
\qquad\left(**\right)
\]

we call the big outer normed square, with respect to the compositions
of normed functors $qp$ and $\tilde{q}\tilde{p}$, the \emph{vertical
pasting} of the top and bottom small normed squares. We have the following
vertical pasting lemma for ambidexterity.
\begin{lem}
[Vertical Pasting]\label{lem:Vertical_Pasting_Ambi}Let $\left(**\right)$
be a vertical pasting diagram of normed squares as above. We denote
by $\square_{T}$, $\square_{B}$ and $\square$, the top, bottom, and outer normed squares respectively. If $\square_{T}$ and $\square_{B}$
are (weakly) ambidextrous, then so is $\square$.
\end{lem}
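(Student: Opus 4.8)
The plan is to reduce the vertical pasting statement to the vertical pasting lemma for Beck--Chevalley conditions (\lemref{Vertical_Pasting_BC}) together with a diagram chase identifying the norm diagram of the outer square with a pasting of the norm diagrams of $\square_{T}$ and $\square_{B}$. Recall that by \defref{Norm_Construction}(2), the norm $\nm_{qp}$ of the composite normed functor $qp\colon\mathcal{E}\nto\mathcal{C}$ is the horizontal composite $q_{!}p_{!}\oto{\nm_{q}}q_{*}p_{!}\oto{\nm_{p}}q_{*}p_{*}$, and similarly $\nm_{\tilde q\tilde p}=\nm_{\tilde q}\star\nm_{\tilde p}$ in the appropriate order. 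By \lemref{Vertical_Pasting_Formula}, the $\bc_{!}$-map for $\square$ is the composite $\tilde q_{!}\tilde p_{!}F_{\mathcal{E}}\to\tilde q_{!}F_{\mathcal{D}}p_{!}\to F_{\mathcal{C}}q_{!}p_{!}$ of the $\bc_{!}$-maps of $\square_{T}$ and $\square_{B}$ (whiskered appropriately), and dually for $\bc_{*}$.

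First I would assemble the relevant grid: a rectangular diagram with columns indexed by the three ``layers'' $(!,\text{mixed},*)$ and rows corresponding to the vertical structure, in which every small square is either a whiskering of the norm diagram of $\square_{T}$, a whiskering of the norm diagram of $\square_{B}$, or a naturality square of one of the $\bc$-maps against one of the norm transformations $\nm_{q},\nm_{p}$ (or their tilde versions). Concretely, one wants the commuting diagram
\[
\vcenter{
\xymatrix@C=2.5pc{
F_{\mathcal{C}}q_{!}p_{!}\ar[r]^{\nm_{q}} & F_{\mathcal{C}}q_{*}p_{!}\ar[r]^{\nm_{p}} & F_{\mathcal{C}}q_{*}p_{*}\ar[d]^{\beta_{*}}\\
\tilde q_{!}F_{\mathcal{D}}p_{!}\ar[u]^{\beta_{!}}\ar[r]^{\nm_{p}}\ar[d] & \ar@{}[r]|{} & \tilde q_{*}F_{\mathcal{D}}p_{*}\ar[d]\\
\tilde q_{!}\tilde p_{!}F_{\mathcal{E}}\ar[u]\ar[r]^{\nm_{\tilde p}} & \tilde q_{!}\tilde p_{*}F_{\mathcal{E}}\ar[r]^{\nm_{\tilde q}} & \tilde q_{*}\tilde p_{*}F_{\mathcal{E}},
}}
\]
where I have suppressed the explicit middle entries; the point is that the left column is the $\bc_{!}$-map of $\square$ (by \lemref{Vertical_Pasting_Formula}(1)), the right column is the $\bc_{*}$-map of $\square$ (by \lemref{Vertical_Pasting_Formula}(2)), the top row is $\nm_{qp}$ post-composed with $\beta_{*}$, and the bottom row is $\beta_{!}$ pre-composed with $\nm_{\tilde q\tilde p}$ — so commutativity of the whole is exactly weak ambidexterity of $\square$. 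I would fill in the interior: the square built from $\nm_{q}$ and the $\bc_{*}$-maps of $\square_{B}$ (whiskered by $F$ on the right) commutes because it is the norm diagram of $\square_{T}$ whiskered on the right by $p_{!}$ (using the $\bc_{*}$ of $\square_{B}$ in place of $p_{!}\to p_{*}$ appropriately) — wait, more carefully, the relevant commuting cells are: (a) the norm diagram of $\square_{T}$ whiskered on the right by the functors $p_{!},p_{*},\tilde p_{!},\tilde p_{*}$; (b) the norm diagram of $\square_{B}$ whiskered on the left by $q_{!},q_{*},\tilde q_{!},\tilde q_{*}$; and (c) pure naturality squares expressing that $\nm_{q}$ (a map $q_{!}\to q_{*}$, whiskered with $F_{\mathcal{D}}$ and a $\bc$-map) is natural. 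Each of (a), (b) commutes by hypothesis that $\square_{T},\square_{B}$ are weakly ambidextrous; each of type (c) commutes by functoriality of horizontal composition of natural transformations.

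The main obstacle I anticipate is purely bookkeeping: correctly placing the interchange of the two ``independent'' norm transformations $\nm_{q}$ (living over the top square) and $\nm_{p}$ (living over the bottom square) with the four $\bc$-maps, so that the outer boundary genuinely reads as the norm diagram of the pasted square rather than some reordered variant. The remark in \defref{Norm_Construction}(2) that ``the order does not matter'' in forming $\nm_{qp}$ is exactly what guarantees there is no ambiguity here, and it is an instance of the interchange law for horizontal and vertical composition of $2$-cells; I would invoke it explicitly. For the ``ambidextrous'' (as opposed to merely ``weakly ambidextrous'') case, I would additionally note that if $\square_{T}$ and $\square_{B}$ are iso-normed then so is $\square$: by \defref{Norm_Construction}(2), the composite of iso-normed functors is iso-normed, so $qp$ and $\tilde q\tilde p$ are iso-normed, and weak ambidexterity of $\square$ was just established — hence $\square$ is ambidextrous. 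No further argument is needed for that upgrade.
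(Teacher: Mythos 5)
Your proposal is correct and follows essentially the same route as the paper: both assemble a $2\times 2$ grid of squares whose outer boundary is identified with the norm diagram of $\square$ via \lemref{Vertical_Pasting_Formula}, with the two "diagonal" cells being whiskerings of the norm diagrams of $\square_{T}$ and $\square_{B}$ and the remaining two cells commuting by interchange/naturality. Your closing observation that the iso-normed case follows from \defref{Norm_Construction}(2) is exactly how the "ambidextrous" (as opposed to "weakly ambidextrous") upgrade works.
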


\begin{proof}
Consider the following diagram composed of whiskerings of the norm
diagrams of $\square_{T}$ and $\square_{B}$ (with all horizontal
maps the respective $\bc$-maps).
\[
\xymatrix{\red{F_{\mathcal{C}}}{}q_{!}p_{!}\ar[d]^{\nm_{q}} & \tilde{q}_{!}\red{F_{\mathcal{D}}}{}p_{!}\ar[d]^{\nm_{\tilde{q}}}\ar[l] & \tilde{q}_{!}\tilde{p}_{!}\red{F_{\mathcal{E}}}{}\ar[d]^{\nm_{\tilde{q}}}\ar[l]\\
\red{F_{\mathcal{C}}}{}q_{*}p_{!}\ar[d]^{\nm_{p}}\ar[r] & \tilde{q}_{*}\red{F_{\mathcal{D}}}{}p_{!}\ar[d]^{\nm_{p}} & \tilde{q}_{*}\tilde{p}_{!}\red{F_{\mathcal{E}}}{}\ar[d]^{\nm_{\tilde{p}}}\ar[l]\\
\red{F_{\mathcal{C}}}{}q_{*}p_{*}\ar[r] & \tilde{q}_{*}\red{F_{\mathcal{D}}}{}p_{*}\ar[r] & \tilde{q}_{*}\tilde{p}_{*}\red{F_{\mathcal{E}}}.
}
\]
By \lemref{Vertical_Pasting_Formula}, the outer diagram is the norm
diagram for $\square$. Thus, it is enough to check that all four
small squares commute. The top right and bottom left squares commute
for trivial reasons. The top left and bottom right squares are whiskerings
of the norm diagrams of $\square_{T}$ and $\square_{B}$ respectively
and hence commute by assumption.
\end{proof}

\subsection{Monoidal Structure and Duality}

In this section, we study the interaction of norms and integration
with (symmetric) monoidal structures on the source and target $\infty$-categories.
Under suitable hypotheses, this interaction allows us to reduce questions
about ambidexterity to questions about duality.

\subsubsection{Tensor Normed Functors}
\begin{defn}
\label{def:Tensor_Normed_Functor}Let $\mathcal{C}$ and $\mathcal{D}$
be monoidal $\infty$-categories. A \emph{$\otimes$-normed functor}
from $\mathcal{D}$ to $\mathcal{C}$, is a normed functor $q\colon\mathcal{D}\nto\mathcal{C}$,
such that $q^{*}$ is monoidal (and hence $q_{!}$ is colax monoidal
by the dual of \cite[Corollary 7.3.2.7]{ha}) and for all $Y\in\mathcal{D}$
and $X\in\mathcal{C}$, the compositions of the canonical maps 
\[
q_{!}\left(Y\otimes\left(q^{*}X\right)\right)\to\left(q_{!}Y\right)\otimes\left(q_{!}q^{*}X\right)\oto{\Id\otimes c_{!}}\left(q_{!}Y\right)\otimes X
\]
and
\[
q_{!}\left(\left(q^{*}X\right)\otimes Y\right)\to\left(q_{!}q^{*}X\right)\otimes\left(q_{!}Y\right)\oto{c_{!}\otimes\Id}X\otimes\left(q_{!}Y\right)
\]
are isomorphisms.
\end{defn}

\begin{rem}
The above definition does not depend on the norm and is actually just
a property of the functor $q^{*}$. However, we shall only be interested
in this property in the context of normed functors.
\end{rem}

\begin{notation}
To make diagrams involving (co)units more readable, we shall employ
the following graphical convention. When writing a unit map of an
adjunction whiskered by some functors, we enclose in parenthesis the
effected terms in the target. Similarly, when writing a counit map
of an adjunction whiskered by some functors, we underline the effected
terms in the source.
\end{notation}

We adopt the definitions and terminology of \cite{HopkinsLurie} regarding
duality in monoidal $\infty$-categories. In the situation of \defref{Tensor_Normed_Functor}, substituting $q^*\one_{\mathcal{C}}$ for $Y$, gives a natural isomorphism from the functor $q_{!}q^{*}$ to the functor $\one_{q}\otimes-$,
where $\one_{q}=q_{!}q^{*}\one_{\mathcal{C}}$. We can therefore consider
the map
\[
\varepsilon\colon\one_{q}\otimes\one_{q}\simeq q_{!}\underline{q^{*}q_{!}}q^{*}\one_{\mathcal{C}}\oto{\nu}\underline{q_{!}q^{*}}\one_{\mathcal{C}}\oto{c_{!}}\one_{\mathcal{C}}.
\]

\begin{prop}
\label{prop:Iso_Normed_Duality}Let $q\colon\mathcal{D}\nto\mathcal{C}$
be a $\otimes$-normed functor of monoidal $\infty$-categories. The following are equivalent:
\begin{enumerate}
\item $\nm_{q}$ is an isomorphism natural transformation (i.e. $q$ is
iso-normed).
\item $\nm_{q}$ is an isomorphism at $q^{*}\one_{\mathcal{C}}$. 
\item The map $\varepsilon\colon\one_{q}\otimes\one_{q}\to\one_{\mathcal{C}}$
is a duality datum (exhibiting $\one_{q}$ as a self dual object in
$\mathcal{C}$).
\end{enumerate}
\end{prop}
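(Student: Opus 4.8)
The plan is to prove the chain of implications $(1) \Rightarrow (2) \Rightarrow (3) \Rightarrow (1)$. The implication $(1) \Rightarrow (2)$ is trivial, since being an isomorphism natural transformation in particular means being an isomorphism at the object $q^{*}\one_{\mathcal{C}}$. The real content is in the other two implications, and I expect $(2) \Rightarrow (3)$ to require the most care.

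For $(2) \Rightarrow (3)$, I would argue as follows. Assuming $\nm_{q}$ is an isomorphism at $q^{*}\one_{\mathcal{C}}$, \lemref{Norm_Counit} tells us that the mate $\nu_{q}\colon q^{*}q_{!} \to \Id$ is a counit at $\one_{\mathcal{C}}$, and hence we have an associated wrong-way unit $\mu_{q,\one_{\mathcal{C}}}\colon \one_{\mathcal{C}} \to q_{!}q^{*}\one_{\mathcal{C}} = \one_{q}$. Using the $\otimes$-normed structure to identify $q_{!}q^{*}(-)$ with $\one_{q}\otimes(-)$, I want to produce a coevaluation map $\eta\colon \one_{\mathcal{C}} \to \one_{q}\otimes\one_{q}$ and verify the triangle (zig-zag) identities against the evaluation map $\varepsilon$ defined just above the proposition. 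The natural candidate for $\eta$ is $\mu_{q,\one_{q}}\colon \one_{q} \to \one_{q}\otimes\one_{q}$ precomposed with $\mu_{q,\one_{\mathcal{C}}}\colon \one_{\mathcal{C}} \to \one_{q}$ — or, said differently, the map obtained by applying $\one_{q}\otimes(-)$ (i.e. $q_{!}q^{*}$) to $\mu_{q,\one_{\mathcal{C}}}$ and composing with $\mu_{q,\one_{\mathcal{C}}}$ itself. The two triangle identities should then both reduce, after unwinding the identification of $q_{!}q^{*}$ with $\one_{q}\otimes(-)$ and the definition of $\varepsilon$ in terms of $\nu$ and $c_{!}$, to the zig-zag identities already known to hold for the wrong-way adjunction $q^{*}\dashv q_{!}$ (with unit $\mu$ and counit $\nu$). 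The bookkeeping here — keeping straight which $\otimes$-factor each (co)unit is whiskered onto, and matching the colax-monoidal comparison maps of $q_{!}$ with the canonical isomorphisms $q_{!}q^{*}\simeq \one_{q}\otimes(-)$ — is where I expect the main obstacle to lie; this is the kind of diagram chase that is conceptually routine but genuinely fiddly, and the graphical parenthesis/underline convention introduced just before the proposition is presumably there precisely to manage it.

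For $(3) \Rightarrow (1)$, I would use \lemref{Iso_Normed_Criterion}: it suffices to show $\nm_{q}$ is an isomorphism at $q^{*}X$ for every $X \in \mathcal{C}$. Given that $\varepsilon$ exhibits $\one_{q}$ as self-dual, the functor $\one_{q}\otimes(-)$ is its own right adjoint (with unit and counit built from $\eta$ and $\varepsilon$), and in particular preserves all limits and colimits and is conservative — actually what I really want is that $\one_{q}\otimes(-) \simeq q_{!}q^{*}$ being self-adjoint forces $\nu_{q}$ to be a counit not just at $\one_{\mathcal{C}}$ but at every object, since $\nu_{q}$ at $X$ is obtained from $\nu_{q}$ at $\one_{\mathcal{C}}$ by tensoring with $X$ (using the projection-formula isomorphisms from the $\otimes$-normed hypothesis). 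Concretely: the $\otimes$-normed axiom gives $q_{!}(q^{*}X \otimes q^{*}Y) \simeq X \otimes q_{!}q^{*}Y$, and under this identification $\nu_{q}$ at $q^{*}X$ (appropriately interpreted) is $\Id_{X}\otimes \nu_{q,\one_{\mathcal{C}}}$; once $\nu_{q,\one_{\mathcal{C}}}$ is a counit (equivalently, $\one_{q}$ is self-dual), tensoring with $X$ keeps it a counit, so by \lemref{Norm_Counit} again $\nm_{q}$ is an isomorphism at $q^{*}X$ for all $X$, and we conclude by \lemref{Iso_Normed_Criterion}. The one point to be careful about is making the passage ``$\one_{q}$ self-dual $\Leftrightarrow$ $\nu_{q,\one_{\mathcal{C}}}$ is a counit'' precise, which is again just matching the definition of $\varepsilon$ against the evaluation of the duality and invoking uniqueness of adjoints; this closes the loop back to $(1)$.
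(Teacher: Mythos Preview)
Your cycle $(1)\Rightarrow(2)\Rightarrow(3)\Rightarrow(1)$ and your use of \lemref{Iso_Normed_Criterion} and \lemref{Norm_Counit} match the paper, but there is a real gap in your $(2)\Rightarrow(3)$. Your proposed coevaluation $\eta$ is built entirely from the wrong-way unit $\mu$. Your first description, $\mu_{q,\one_q}\circ\mu_{q,\one_{\mathcal{C}}}$, invokes $\mu_{q,\one_q}$ --- but under hypothesis~(2) you only know $\nm_q$ is invertible at $q^{*}\one_{\mathcal{C}}$, not at $q^{*}\one_q=q^{*}q_!q^{*}\one_{\mathcal{C}}$, so this map need not exist. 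Your second description, $(q_!q^{*}\mu_{\one})\circ\mu_{\one}$, is well-defined but is a different map (the two agree only once $\mu$ is a \emph{natural} transformation, i.e.\ once (1) already holds). More importantly, your claim that the triangle identities reduce to the zig-zag identities for the wrong-way adjunction $q^{*}\dashv q_!$ \emph{alone} cannot be right: the evaluation $\varepsilon$ is by definition the wrong-way counit $\nu$ followed by the \emph{right-way} counit $c_!$, so any triangle identity must cancel both. Accordingly the paper takes
\[
\eta\colon\ \one_{\mathcal{C}}\xrightarrow{\ \mu_{\one}\ } q_!q^{*}\one_{\mathcal{C}}\xrightarrow{\ u_!\ } q_!(q^{*}q_!)q^{*}\one_{\mathcal{C}}\simeq\one_q\otimes\one_q,
\]
pairing the wrong-way unit $\mu_{\one}$ with the right-way unit $u_!$; each triangle identity then decomposes into one zig-zag from each of the two adjunctions. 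Your $\eta$ gives $c_!$ nothing to cancel against.

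For $(3)\Rightarrow(1)$ your idea is close to the paper's, but the parenthetical ``once $\nu_{q,\one_{\mathcal{C}}}$ is a counit (equivalently, $\one_q$ is self-dual)'' quietly assumes the implication $(3)\Rightarrow(2)$, which you have not argued separately. The paper avoids this by directly verifying, via a diagram chase, that under the projection-formula isomorphism $q_!q^{*}X\simeq\one_q\otimes X$ the composite
\[
\map_{\mathcal{C}}(Y,q_!q^{*}X)\xrightarrow{\ q^{*}\ }\map_{\mathcal{D}}(q^{*}Y,q^{*}q_!q^{*}X)\xrightarrow{\ \nu\circ-\ }\map_{\mathcal{D}}(q^{*}Y,q^{*}X)
\]
coincides with the isomorphism furnished by the self-duality of $\one_q$. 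This identification is exactly the step you gesture at (``matching the definition of $\varepsilon$ against the evaluation of the duality'') but do not actually carry out.
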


\begin{proof}
(1) $\implies$ (2) is obvious. Assume (2). The map $\nm_{q}\colon q_{!}\to q_{*}$
has a mate $\nu\colon q^{*}q_{!}\to\Id$. By \lemref{Norm_Counit},
since $\nm_{q}$ is an isomorphism at $q^{*}\one_{\mathcal{C}}$,
the map $\nu$ is a counit map at $q^{*}\one_{\mathcal{C}}$ and has
an associated unit map $\mu_{\one}\colon\one_{\mathcal{C}}\to q_{!}q^{*}\one_{\mathcal{C}}$.
Let
\[
\eta\colon\one_{\mathcal{C}}\oto{\mu_{\one}}\left(q_{!}q^{*}\right)\one_{\mathcal{C}}\oto{u_{!}}q_{!}\left(q^{*}q_{!}\right)q^{*}\one_{\mathcal{C}}=\one_{q}\otimes\one_{q}.
\]

We prove (3) by showing that $\varepsilon$ and $\eta$ satisfy the
zig-zag identities. As above, we identify $\one_{q}$ with $q_{!}q^{*}\one_{\mathcal{C}}$
and $\one_{q}\otimes\one_{q}$ with $q_{!}q^{*}q_{!}q^{*}\one_{\mathcal{C}}$.
For the first zig-zag identity, consider the diagram
\[
\xymatrix{q_{!}q^{*}\one_{\mathcal{C}}\ar[rr]^{\mu_{\one}}\ar@{=}[rrd]\ar@/^{2pc}/[rrr]^{\eta\star\Id} &  & (q_{!}\underline{q^{*})q_{!}}q^{*}\one_{\mathcal{C}}\ar[r]^{u_{!}\quad}\ar[d]^{\nu} & q_{!}\left(q^{*}q_{!}\right)\underline{q^{*}q_{!}}q^{*}\one_{\mathcal{C}}\ar[d]^{\nu}\ar@/^{2.5pc}/@<1ex>[dd]^{\Id\star\varepsilon}\\
 &  & q_{!}q^{*}\one_{\mathcal{C}}\ar[r]^{u_{!}\quad}\ar@{=}[rd] & q_{!}(q^{*}\underline{q_{!})q^{*}}\one_{\mathcal{C}}\ar[d]^{c_{!}}\\
 &  &  & q_{!}q^{*}\one_{\mathcal{C}}.
}
\]
The square commutes by the interchange law for natural transformations. The upper triangle by
the definition of $\mu_{\one}$ (i.e. the corresponding zig-zag identity
at $\one_{\mathcal{C}}$) and the bottom by the zig-zag identities
for $u_{!}$ and $c_{!}$. For the second zig-zag identity, consider
a similar diagram
\[
\xymatrix{q_{!}q^{*}\ar[rr]^{\mu_{\one}}\one_{\mathcal{C}}\ar@{=}[rrd]\ar@/^{2pc}/[rrr]^{\Id\star\eta} &  & q_{!}\underline{q^{*}(q_{!}}q^{*})\one_{\mathcal{C}}\ar[r]^{u_{!}\quad}\ar[d]^{\nu} & q_{!}\underline{q^{*}q_{!}}\left(q^{*}q_{!}\right)q^{*}\one_{\mathcal{C}}\ar[d]^{\nu}\ar@/^{2.5pc}/@<1ex>[dd]^{\varepsilon\star\Id}\\
 &  & q_{!}q^{*}\ar[r]^{u_{!}\quad}\one_{\mathcal{C}}\ar@{=}[rd] & \underline{q_{!}(q^{*}}q_{!})q^{*}\one_{\mathcal{C}}\ar[d]^{c_{!}}\\
 &  &  & q_{!}q^{*}\one_{\mathcal{C}}.
}
\]

Assume (3). By \lemref{Iso_Normed_Criterion} and \lemref{Norm_Counit},
it is enough to show that $\nu$ is a counit at $q^{*}X$ for all
$X\in\mathcal{C}$. Consider the following diagram
\[
\scalebox{0.9}{
\xymatrix{ & \map\left(Y,\one_{q}\otimes X\right)\ar[dl]_{\sim}\ar[rr]^{\one_{q}\otimes-} &  & \map\left(\one_{q}\otimes Y,\one_{q}\otimes\one_{q}\otimes X\right)\ar[ld]_{\sim}\ar[dd]^{\varepsilon\circ-}\\
\map\left(Y,q_{!}q^{*}X\right)\ar[r]^{q^{*}}\ar@{-->}[rd] & \map\left(q^{*}Y,\underline{q^{*}q_{!}}q^{*}X\right)\ar[d]^{\nu\circ-}\ar[r]^{q_{!}} & \map\left(q_{!}q^{*}Y,q_{!}\underline{q^{*}q_{!}}q^{*}X\right)\ar[d]^{\nu\circ-}\\
 & \map\left(q^{*}Y,q^{*}X\right)\ar[r]^{q_{!}}\ar[rd]_{\sim} & \map\left(q_{!}q^{*}Y,\underline{q_{!}q^{*}}X\right)\ar[d]^{c_{!}\circ-} & \map\left(\one_{q}\otimes Y,X\right)\ar[ld]_{\sim}\\
 &  & \map\left(q_{!}q^{*}Y,X\right)
}}
\]

The triangles commute by definition and the rest by naturality. The
composition along the top and then right path is an isomorphism since
$\varepsilon$ is an evaluation map of a duality datum on $\one_{q}$.
Thus, the dashed arrow is an isomorphism by 2-out-of-3, which proves
that $\nu$ is a counit at $q^{*}X$. 
\end{proof}
\begin{rem}
A similar result is given in \cite[Proposition 5.1.8]{HopkinsLurie}.
\end{rem}

\subsubsection{Tensor Normed Squares}

The following is the analogous notion to a normed square in the monoidal
setting.
\begin{defn}
A \emph{$\otimes$-normed square} is a pair of $\otimes$-normed functors
$q\colon\mathcal{D}\nto\mathcal{C}$ and $\tilde{q}\colon\tilde{\mathcal{D}}\nto\tilde{\mathcal{C}}$
and a commutative square of monoidal $\infty$-categories and monoidal
functors
\[
\qquad \quad
\vcenter{
\xymatrix@C=3pc{\mathcal{C}\ar[d]_{q^{*}}\ar[r]^{F_{\mathcal{C}}} & \tilde{\mathcal{C}}\ar[d]^{\tilde{q}^{*}}\\
\mathcal{D}\ar[r]^{F_{\mathcal{D}}} & \tilde{\mathcal{D}}.
}}
\qquad\left(*\right)
\]

For a $\otimes$-normed square $\left(*\right)$ as above, we define
a colax natural transformation of functors
\[
\theta\colon\left(-\right)_{\tilde{q}}F_{\mathcal{C}}=\tilde{q}_{!}\tilde{q}^{*}F_{\mathcal{C}}\simeq\tilde{q}_{!}F_{\mathcal{D}}q^{*}\oto{\beta_{!}}F_{\mathcal{C}}q_{!}q^{*}=F_{\mathcal{C}}\left(-\right)_{q}.
\]
Using the isomorphisms from \defref{Tensor_Normed_Functor} we define
the natural isomorphisms

\[
L_{q}\colon\left(X\otimes Y\right)_{q}=q_{!}q^{*}\left(X\otimes Y\right)\simeq q_{!}\left(q^{*}X\otimes q^{*}Y\right)\iso q_{!}q^{*}X\otimes Y=X_{q}\otimes Y,
\]
\[
R_{q}\colon\left(X\otimes Y\right)_{q}=q_{!}q^{*}\left(X\otimes Y\right)\simeq q_{!}\left(q^{*}X\otimes q^{*}Y\right)\iso X\otimes q_{!}q^{*}Y=X\otimes Y_{q}.
\]

We shall need a technical lemma regarding the compatibility of the
maps $L$, $R$, and $\theta$.
\end{defn}

\begin{lem}
\label{lem:Tensor_Square_Compatibility}Let $\left(*\right)$ be a
$\otimes$-normed square as above. For all $X,Y\in\mathcal{C}$, the
following diagram:
\[
\scalebox{0.95}{
\xymatrix{F_{\mathcal{C}}\left(X\otimes Y\right)_{\tilde{q}\tilde{q}}\ar[d]^{\theta_{X\otimes Y}}\ar@{-}[r]^{\sim} & \left(F_{\mathcal{C}}\left(X\right)\otimes F_{\mathcal{C}}\left(Y\right)\right)_{\tilde{q}\tilde{q}}\ar[r]^{R_{\tilde{q}}} & \left(F_{\mathcal{C}}\left(X\right)\otimes F_{\mathcal{C}}\left(Y\right)_{\tilde{q}}\right)_{\tilde{q}}\ar[d]^{\Id\otimes\theta_{Y}}\ar[r]^{L_{\tilde{q}}} & F_{\mathcal{C}}\left(X\right)_{\tilde{q}}\otimes F_{\mathcal{C}}\left(Y\right)_{\tilde{q}}\ar[d]^{\Id\otimes\theta_{Y}}\\
F_{\mathcal{C}}\left(\left(X\otimes Y\right)_{q}\right)_{\tilde{q}}\ar[d]^{\theta_{\left(X\otimes Y\right)_{q}}}\ar[r]^{R_{q}} & F_{\mathcal{C}}\left(X\otimes Y_{q}\right)_{\tilde{q}}\ar[d]^{\theta_{X\otimes Y_{q}}}\ar@{-}[r]^{\sim} & \left(F_{\mathcal{C}}\left(X\right)\otimes F_{\mathcal{C}}\left(Y_{q}\right)\right)_{\tilde{q}}\ar[r]^{L_{\tilde{q}}} & F_{\mathcal{C}}\left(X\right)_{\tilde{q}}\otimes F_{\mathcal{C}}\left(Y_{q}\right)\ar[d]^{\theta_{X}\otimes\Id}\\
F_{\mathcal{C}}\left(\left(X\otimes Y\right)_{qq}\right)\ar[r]^{R_{q}} & F_{\mathcal{C}}\left(\left(X\otimes Y_{q}\right)_{q}\right)\ar[r]^{L_{q}} & F_{\mathcal{C}}\left(X_{q}\otimes Y_{q}\right)\ar@{-}[r]^{\sim} & F_{\mathcal{C}}\left(X_{q}\right)\otimes F_{\mathcal{C}}\left(Y_{q}\right)
}}
\]
 commutes up to homotopy.
\end{lem}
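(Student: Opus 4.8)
The plan is to reduce the statement to the definitions and verify it by a diagram chase, organized so that the large diagram decomposes into a bounded number of cells, each commuting for an identifiable reason. The first observation is that every arrow in the displayed diagram is assembled from a small set of ``atoms'': the adjunction units and counits $u_{!}^{q},c_{!}^{q},u_{!}^{\tilde q},c_{!}^{\tilde q}$; the monoidality constraints of the four monoidal functors $q^{*},\tilde q^{*},F_{\mathcal C},F_{\mathcal D}$; the commutativity datum $F_{\mathcal D}q^{*}\simeq\tilde q^{*}F_{\mathcal C}$, which is part of the data of a $\otimes$-normed square and is a \emph{monoidal} natural isomorphism; and the colax structure maps $q_{!}(A\otimes B)\to q_{!}A\otimes q_{!}B$ and $\tilde q_{!}(A\otimes B)\to \tilde q_{!}A\otimes \tilde q_{!}B$ coming from the monoidality of $q^{*},\tilde q^{*}$ via (the dual of) \cite[Corollary 7.3.2.7]{ha}. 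Unwinding \defref{Tensor_Normed_Functor}, the map $L_{q}$ (resp.\ $R_{q}$) is the composite of the monoidality isomorphism $q^{*}(X\otimes Y)\simeq q^{*}X\otimes q^{*}Y$, the colax structure map of $q_{!}$, and the counit $c_{!}^{q}$ applied on the left (resp.\ right) tensor factor, and likewise for $L_{\tilde q},R_{\tilde q}$; the transformation $\theta$ is the Beck--Chevalley map $\beta_{!}\colon\tilde q_{!}F_{\mathcal D}\to F_{\mathcal C}q_{!}$ conjugated by the commutativity datum, with $\beta_{!}$ built from $u_{!}^{q}$ and $c_{!}^{\tilde q}$.

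The single non-formal ingredient is the compatibility of $\theta$ with ``extracting a copy of $q$ from a tensor product'': concretely, the two ``inner'' cells of the grid --- the one with vertical edges $\theta$ and horizontal edges $R_{q},R_{\tilde q}$, and the one with horizontal edges $L_{q},L_{\tilde q}$ --- commute. To see this, expand $\beta_{!}$ through its defining $u_{!}^{q}$ and $c_{!}^{\tilde q}$ and chase, using naturality, the interchange law, the zig-zag identities, the compatibility of $\beta_{!}$ with the counit $c_{!}^{q}$ (this is \lemref{BC_Co_Units}), and --- crucially --- the compatibility of the commutativity datum $F_{\mathcal D}q^{*}\simeq\tilde q^{*}F_{\mathcal C}$ with the monoidality constraints of all four functors $q^{*},\tilde q^{*},F_{\mathcal C},F_{\mathcal D}$. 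Conceptually this amounts to the assertion that $\beta_{!}$ is a colax-monoidal natural transformation between the colax-monoidal functors $\tilde q_{!}F_{\mathcal D}$ and $F_{\mathcal C}q_{!}$, which in turn follows from the fact that the units and counits of an adjunction with strong-monoidal right adjoint are colax-monoidal together with the monoidality of the square's commutativity datum.

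With the inner cells established, the remaining cells commute for purely formal reasons. The cells involving only $L$'s and $R$'s (no $\theta$) express the mutual compatibility of the $\otimes$-normed isomorphisms --- extracting $q$ on the left versus on the right, and extraction before or after the associativity constraints --- and commute by the coherence of the monoidal structures together with the associativity-coherence axiom for the colax structures of $q_{!},\tilde q_{!}$. The cells pairing a $\theta$ with a whiskered $\Id\otimes\theta_{Y}$ or $\theta_{X}\otimes\Id$ commute by naturality of $\theta$ (i.e.\ of $\beta_{!}$) in the relevant variable and the interchange law for horizontal composition of natural transformations. Pasting all cells together yields the asserted homotopy-commutativity.

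I expect the second step --- the colax-monoidality of $\beta_{!}$, equivalently the commutativity of the two inner cells --- to be the main obstacle. It is not deep, but $\beta_{!}$ unwinds into several whiskered units and counits that must be manipulated simultaneously, and it is the only point at which the monoidality of the horizontal functors $F_{\mathcal C},F_{\mathcal D}$, and its compatibility with the square, is used in an essential way; everything outside this step is bookkeeping with naturality and monoidal coherence.
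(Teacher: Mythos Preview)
Your approach and key ingredients match the paper's: the hard content is indeed the monoidality of $\theta$ (what you call the colax-monoidality of $\beta_{!}$), together with \lemref{BC_Co_Units} for the compatibility with counits, and the rest is naturality and coherence.

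However, your description of the cell decomposition is garbled. The diagram has exactly four cells. The two easy ones are the top-right square (both vertical edges are $\Id\otimes\theta_{Y}$; it commutes by naturality of $L_{\tilde q}$) and the bottom-left square (both vertical edges are instances of $\theta$ at different objects; it commutes by naturality of $\theta$). The two hard ones are the top-left and bottom-right rectangles, which are precisely the cells ``pairing a $\theta$ with a whiskered $\Id\otimes\theta_{Y}$ or $\theta_{X}\otimes\Id$'' --- so these are \emph{not} the cells that commute by mere naturality of $\theta$, contrary to what you wrote. There are no cells in the diagram ``involving only $L$'s and $R$'s (no $\theta$)''. The paper treats the top-left rectangle by unfolding the definitions of $R_{q},R_{\tilde q}$ and splitting the result into three sub-cells: one commuting by the monoidality of $\theta$, one by naturality, and one that factors as a tensor of two squares --- one trivial and one an instance of \lemref{BC_Co_Units}(4); the bottom-right rectangle is handled analogously with $L$ in place of $R$.
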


\begin{proof}
The top right square commutes by naturality of $L_{\tilde{q}}$ and
the bottom left square commutes by naturality of $\theta$. We now
show the commutativity of the top left rectangle (the commutativity
of the bottom right rectangle is completely analogous). By unwinding
the definition of $R_{q}$, the top left rectangle is obtained by
applying $\left(-\right)_{\tilde{q}}$ to the following diagram
\[
\scalebox{0.95}{
\xymatrix{\left(F_{\mathcal{C}}\left(X\right)\otimes F_{\mathcal{C}}\left(Y\right)\right)_{\tilde{q}}\ar@{-}[d]^{\wr}\ar[r]\ar@/^{2pc}/[rr]^{R_{\tilde{q}}} & F_{\mathcal{C}}\left(X\right)_{\tilde{q}}\otimes F_{\mathcal{C}}\left(Y\right)_{\tilde{q}}\ar[d]^{\theta_{X}\otimes\theta_{Y}}\ar[r]^{\tilde{c}_{!}\otimes\Id} & F_{\mathcal{C}}\left(X\right)\otimes F_{\mathcal{C}}\left(Y\right)_{\tilde{q}}\ar[d]^{\Id\otimes\theta_{Y}}\\
F_{\mathcal{C}}\left(X\otimes Y\right)_{\tilde{q}}\ar[d]^{\theta_{X\otimes Y}} & F_{\mathcal{C}}\left(X_{q}\right)\otimes F_{\mathcal{C}}\left(Y_{q}\right)\ar@{-}[d]^{\wr}\ar[r]^{c_{!}\otimes\Id} & F_{\mathcal{C}}\left(X\right)\otimes F_{\mathcal{C}}\left(Y_{q}\right)\ar@{-}[d]^{\wr}\\
F_{\mathcal{C}}\left(\left(X\otimes Y\right)_{q}\right)\ar[r]\ar@/_{2pc}/[rr]_{R_{q}} & F_{\mathcal{C}}\left(X_{q}\otimes Y_{q}\right)\ar[r]^{c_{!}\otimes\Id} & F_{\mathcal{C}}\left(X\otimes Y_{q}\right).
}}
\]
The left rectangle commutes by the monoidality of $\theta$ and the
bottom right square commutes by naturality. The top right square is
a tensor product of two squares 
\[
\qquad \quad
\vcenter{
\xymatrix@C=3pc{F_{\mathcal{C}}\left(X\right)_{\tilde{q}}\ar[d]^{\theta_{X}}\ar[r]^{\tilde{c}_{!}} & F_{\mathcal{C}}\left(X\right)\ar[d]^{\Id}\\
F_{\mathcal{C}}\left(X_{q}\right)\ar[r]^{c_{!}} & F_{\mathcal{C}}\left(X\right)
}}
\qquad\left(\square_{1}\right),\qquad \qquad
\vcenter{
\xymatrix@C=3pc{F_{\mathcal{C}}\left(Y\right)_{\tilde{q}}\ar[d]^{\theta_{Y}}\ar[r]^{\Id} & F_{\mathcal{C}}\left(Y\right)_{\tilde{q}}\ar[d]^{\theta_{Y}}\\
F_{\mathcal{C}}\left(Y_{q}\right)\ar[r]^{\Id} & F_{\mathcal{C}}\left(Y_{q}\right).
}}
\qquad\left(\square_{2}\right)
\]
 The square $\square_{2}$ commutes for trivial reasons and the square
$\square_{1}$ commutes by the compatibility of $\bc$-maps with counits
(\lemref{BC_Co_Units}(4)).
\end{proof}
The main fact we shall use about $\otimes$-normed squares is the
following:
\begin{prop}
\label{prop:Tensor_Ambi}Let $\left(*\right)$ be a $\otimes$-normed square
as above. Assume that $\left(*\right)$ is weakly ambidextrous and
satisfies the $\bc_{!}$-condition. If $q$ is iso-normed, then $\tilde{q}$
is iso-normed and the $\bc_{*}$ condition is satisfied as well.
\end{prop}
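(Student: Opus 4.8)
The plan is to deduce everything from the monoidal characterization of iso-normedness in \propref{Iso_Normed_Duality}. Since $q$ and $\tilde q$ are $\otimes$-normed, that proposition reduces the statement ``$\tilde q$ is iso-normed'' to showing that the evaluation map $\varepsilon_{\tilde q}\colon\one_{\tilde q}\otimes\one_{\tilde q}\to\one_{\tilde{\mathcal C}}$ — built, as in the discussion preceding \propref{Iso_Normed_Duality}, from the wrong-way counit $\nu_{\tilde q}$ followed by $c_!^{\tilde q}$ — is a duality datum. Once this is known, the square is iso-normed and weakly ambidextrous, hence ambidextrous, and since it satisfies the $\bc_!$ condition it therefore also satisfies $\bc_*$ (\defref{Normed_Ambi_Square}(3)). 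So the entire content is the identification of $\varepsilon_{\tilde q}$ with a duality datum transported from $\mathcal C$.

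First I would produce the transported datum. Applying \propref{Iso_Normed_Duality} to the iso-normed $q$ shows that $\varepsilon_q\colon\one_q\otimes\one_q\to\one_{\mathcal C}$ is a duality datum for $\one_q$. Since $F_{\mathcal C}$ is monoidal, hence preserves the unit and sends duality data to duality data, $F_{\mathcal C}(\varepsilon_q)\colon F_{\mathcal C}\one_q\otimes F_{\mathcal C}\one_q\to\one_{\tilde{\mathcal C}}$ is a duality datum for $F_{\mathcal C}\one_q$. Evaluating the colax natural transformation $\theta\colon(-)_{\tilde q}F_{\mathcal C}\to F_{\mathcal C}(-)_q$ at $\one_{\mathcal C}$ gives $\theta_{\one}\colon\one_{\tilde q}\to F_{\mathcal C}\one_q$, which factors through $\beta_!$; as the $\bc_!$ condition holds, $\beta_!$, and hence $\theta_{\one}$, is an isomorphism. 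It thus suffices to prove the identity $F_{\mathcal C}(\varepsilon_q)\circ(\theta_{\one}\otimes\theta_{\one})=\varepsilon_{\tilde q}$, where the source of $F_{\mathcal C}(\varepsilon_q)$ is identified with $F_{\mathcal C}(\one_q\otimes\one_q)$ via monoidality of $F_{\mathcal C}$.

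This last identity is the heart of the matter. Unwinding the definition of the $\varepsilon$'s and of the $\otimes$-normed structure maps (\defref{Tensor_Normed_Functor} and the paragraph after it), each side is a composite of the relevant occurrence of $\nu$ followed by $c_!$, after rewriting $\one_q\otimes\one_q\simeq q_!q^*q_!q^*\one_{\mathcal C}$ and $\one_{\tilde q}\otimes\one_{\tilde q}\simeq\tilde q_!\tilde q^*\tilde q_!\tilde q^*\one_{\tilde{\mathcal C}}$. I would assemble the required commuting rectangle from three already-available ingredients: (i)~diagram~(4) of \lemref{BC_Co_Units}, expressing that $\theta$ intertwines $c_!^{\tilde q}$ with $c_!^{q}$; (ii)~the triangle $\cone$ of \lemref{Triangle_Unit_Counit_Norm_Diagram}(1), which holds precisely because the square is weakly ambidextrous and expresses that $\theta$ intertwines the wrong-way counits $\nu_{\tilde q}$ and $\nu_q$; and (iii)~\lemref{Tensor_Square_Compatibility} with $X=Y=\one_{\mathcal C}$, which supplies the monoidal bookkeeping relating $\theta_{\one}\otimes\theta_{\one}$ on the doubled unit to a two-fold vertical application of $\theta$ along the $(-)_q$-column — exactly the purpose for which that lemma was isolated. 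Pasting (i)--(iii) gives the identity, whence $\varepsilon_{\tilde q}$ is a duality datum, so $\tilde q$ is iso-normed by \propref{Iso_Normed_Duality}, and the $\bc_*$ condition follows as explained above.

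I expect ingredient (iii) — simultaneously juggling the $\bc$-maps, the two flavors of counit ($c_!$ and $\nu$), and the monoidal coherence isomorphisms $L$ and $R$ — to be the genuinely delicate step; the rest is formal manipulation of the previously established lemmas.
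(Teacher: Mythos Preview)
Your proposal is correct and follows the same route as the paper: reduce via \propref{Iso_Normed_Duality} to showing $\varepsilon_{\tilde q}$ is a duality datum, transport $\varepsilon_q$ through the monoidal functor $F_{\mathcal C}$ and the isomorphism $\theta$ (an isomorphism by $\bc_!$), and verify the identification using \lemref{Tensor_Square_Compatibility} at $X=Y=\one_{\mathcal C}$ together with \lemref{BC_Co_Units}(4). The one small difference is that you explicitly invoke weak ambidexterity via the triangle~$\cone$ of \lemref{Triangle_Unit_Counit_Norm_Diagram}(1) for the compatibility of the wrong-way counits $\nu$, whereas the paper attributes that step to \lemref{BC_Co_Units}(4) as well.
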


\begin{proof}
By the assumption of the $\bc_{!}$-condition, the operation $\theta$
is an isomorphism. Observe that $\one_{\tilde{q}}\simeq F_{\mathcal{C}}\left(\one_{\mathcal{C}}\right)_{\tilde{q}}$
and consider the following diagram:

\[
\xymatrix@C=3pc{\one_{\tilde{q}}\otimes\one_{\tilde{q}}\ar[d]_{\theta\otimes\theta}^{\wr}\ar[r]^-{\quad L_{\tilde{q}}^{-1}R_{\tilde{q}}^{-1}\quad} & \tilde{q}_{!}\underline{\tilde{q}^{*}\tilde{q}_{!}}\tilde{q}^{*}F_{\mathcal{C}}\left(\one_{\mathcal{C}}\right)\ar[dd]^{\theta\star\theta}\ar[r]^-{\tilde{\nu}} & \underline{\tilde{q}_{!}\tilde{q}^{*}}F_{\mathcal{C}}\left(\one_{\mathcal{C}}\right)\ar[dd]^{\theta}\ar[rd]^{\tilde{c}_{!}}\\
F_{\mathcal{C}}\left(\one_{q}\right)\otimes F_{\mathcal{C}}\left(\one_{q}\right)\ar[d]^{\wr} &  &  & F_{\mathcal{C}}\left(\one_{\mathcal{C}}\right)\simeq\one_{\tilde{\mathcal{C}}}.\\
F_{\mathcal{C}}\left(\one_{q}\otimes\one_{q}\right)\ar[r]^-{L_{q}^{-1}R_{q}^{-1}} & F_{\mathcal{C}}\left(q_{!}\underline{q^{*}q_{!}}q^{*}\one_{\mathcal{C}}\right)\ar[r]^-{\nu} & F_{\mathcal{C}}\left(\underline{q_{!}q^{*}}\one_{\mathcal{C}}\right)\ar[ru]_{c_{!}}
}
\]
The middle rectangle and the triangle commute by the compatibility
of BC maps with counits (\lemref{BC_Co_Units}(4)). The left rectangle
commutes by applying \lemref{Tensor_Square_Compatibility} with $X=Y=\one_{\mathcal{C}}$.
By \propref{Iso_Normed_Duality}, $\varepsilon_{q}\colon\one_{q}\otimes\one_{q}\to\one_{\mathcal{C}}$
is a duality datum and since $F_{\mathcal{C}}$ is monoidal, 
\[
F_{\mathcal{C}}\left(\varepsilon_{q}\right)\colon F_{\mathcal{C}}\left(\one_{q}\right)\otimes F_{\mathcal{C}}\left(\one_{q}\right)\to F_{\mathcal{C}}\left(\one_{\mathcal{C}}\right)\simeq\one_{\tilde{\mathcal{C}}}
\]
is a duality datum as well. The commutativity of the above diagram,
identifies $F_{\mathcal{C}}\left(\varepsilon_{q}\right)$ with $\varepsilon_{\tilde{q}}$
and hence $\varepsilon_{\tilde{q}}$ is a duality datum for $\one_{\tilde{q}}$.
By \propref{Iso_Normed_Duality} again, $\tilde{q}$ is iso-normed.
Finally, the $\bc_{*}$ condition is satisfied by 2-out-of-3 for the
norm diagram.
\end{proof}

\subsection{Amenability}
\begin{defn}
An iso-normed functor $q\colon\mathcal{D}\nto\mathcal{C}$ is called
\emph{amenable}, if $|q|$ is an isomorphism natural transformation. 
\end{defn}

\begin{rem}
The name is inspired by the notion of amenability in geometric group
theory. Given an object $X\in\mathcal{C}$, the integral operation
\[
\int\limits _{q}\colon\map\left(q^{*}X,q^{*}X\right)\to\map\left(X,X\right)
\]
can be thought of intuitively as ``\emph{summation} over the fibers
of $q$''. Amenability allows us to ``\emph{average} over the fibers
of $q$'' by multiplying the integral with $|q|^{-1}$.
This is especially suggestive in the prototypical example of local-systems,
which we study in the next section.
\end{rem}

\begin{lem}
\label{lem:Amenable_Conservative}Let 
\[
\xymatrix@C=3pc{\mathcal{C}\ar[d]_{q^{*}}\ar[r]^{F_{\mathcal{C}}} & \tilde{\mathcal{C}}\ar[d]^{\tilde{q}^{*}}\\
\mathcal{D}\ar[r]^{F_{\mathcal{D}}} & \tilde{\mathcal{D}}
}
\]
be an ambidextrous square, such that $F_{\mathcal{C}}$ is conservative.
If $\tilde{q}$ is amenable, then $q$ is amenable.
\end{lem}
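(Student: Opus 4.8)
The plan is to exploit \propref{Integral_Ambi}, which states that for an ambidextrous square satisfying the $\bc_!$ condition, $F_{\mathcal{C}}$ intertwines the two integral operations, and in particular $F_{\mathcal{C}}(|q|_X) = |\tilde q|_{F_{\mathcal{C}}(X)}$ for every $X \in \mathcal{C}$. The hypothesis that $\tilde q$ is amenable says precisely that $|\tilde q|$ is an isomorphism natural transformation of $\Id_{\tilde{\mathcal{C}}}$, so each component $|\tilde q|_{F_{\mathcal{C}}(X)}\colon F_{\mathcal{C}}(X) \to F_{\mathcal{C}}(X)$ is an equivalence. Thus for every $X$, the morphism $F_{\mathcal{C}}(|q|_X)$ is an equivalence in $\tilde{\mathcal{C}}$, and since $F_{\mathcal{C}}$ is conservative, $|q|_X$ is an equivalence in $\mathcal{C}$. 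A natural transformation all of whose components are equivalences is an isomorphism of natural transformations, so $|q|$ is invertible, i.e. $q$ is amenable.

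One small gap to address first: \propref{Integral_Ambi} requires the square to satisfy the $\bc_!$ condition, whereas \defref{Normed_Ambi_Square}(3) only builds this into the definition of ``ambidextrous'' via the parenthetical remark that for an iso-normed square $\bc_!$ holds iff $\bc_*$ holds — it is not automatic. So I would either (a) read the statement of \lemref{Amenable_Conservative} as implicitly assuming the square satisfies the Beck–Chevalley condition (consistent with how ``ambidextrous square'' is used throughout the section, e.g. in \propref{Integral_Ambi}), or (b) if the convention in this paper is that ``ambidextrous square'' always carries the $\bc$ condition, simply invoke it. Either way, once $\bc_!$ is in force, the argument above goes through verbatim. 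I do not expect any genuine obstacle here — the lemma is a direct corollary of \propref{Integral_Ambi} together with conservativity, and the only subtlety is making sure the Beck–Chevalley hypothesis needed to cite that proposition is available.

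To present it cleanly: first note $q$ is iso-normed (part of being ambidextrous), so $|q| = c_!^q \circ \mu_q$ is a well-defined natural endomorphism of $\Id_{\mathcal{C}}$. Then for each $X \in \mathcal{C}$ apply \propref{Integral_Ambi} to get $F_{\mathcal{C}}(|q|_X) = |\tilde q|_{F_{\mathcal{C}}(X)}$; invoke amenability of $\tilde q$ to see the right-hand side is an isomorphism; invoke conservativity of $F_{\mathcal{C}}$ to conclude $|q|_X$ is an isomorphism; and finally observe that a natural transformation which is a pointwise isomorphism is an isomorphism, so $|q|$ is invertible and $q$ is amenable. The whole proof is three or four lines.
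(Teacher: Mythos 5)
Your proof is correct and is essentially identical to the paper's: both apply \propref{Integral_Ambi} to get $F_{\mathcal{C}}(|q|_X)=|\tilde q|_{F_{\mathcal{C}}(X)}$ and then conclude by conservativity. Your side remark about the $\bc_!$ hypothesis is a fair observation — the paper's statement of the lemma also omits it and its proof cites \propref{Integral_Ambi} without comment — and your proposed reading (that the squares to which the lemma is applied do satisfy the Beck--Chevalley condition) matches how the lemma is actually used later, e.g.\ in \corref{Amenable_Base_Change}.
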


\begin{proof}
Given $X\in\mathcal{C}$, since the square is ambidextrous, we have
by \propref{Integral_Ambi},
\[
F_{\mathcal{C}}\left(|q|_{X}\right)=|\tilde{q}|_{F_{\mathcal{C}}\left(X\right)}.
\]
The claim follows from the assumption that $F_{\mathcal{C}}$ is conservative.
\end{proof}
The next result demonstrates how can amenability be profitably used
for ``averaging''.
\begin{thm}
[Higher Maschke's Theorem]\label{thm:Amenable_Section} Let $q\colon\mathcal{D}\nto\mathcal{C}$
be an iso-normed functor. If $q$ is amenable, then for every $X\in\mathcal{C}$
the counit map $c_{!}\colon q_{!}q^{*}X\to X$ has a section (i.e.
left inverse) up to homotopy. In particular, every object of $\mathcal{C}$
is a retract of an object in the essential image of $q_{!}$.
\end{thm}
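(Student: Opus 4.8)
The plan is to produce an explicit section of $c_!$ out of the inverse of the natural endomorphism $|q|$ of $\Id_{\mathcal{C}}$. Recall that since $q$ is iso-normed the wrong-way unit $\mu_q\colon\Id_{\mathcal{C}}\to q_!q^*$ is defined, and that by construction $|q| = c_!^q\circ\mu_q$; evaluating at $X\in\mathcal{C}$ this reads
\[
|q|_X = \Bigl(X\oto{\ \mu_{q,X}\ }q_!q^*X\oto{\ c_!\ }X\Bigr).
\]
The first thing I would invoke is amenability: by hypothesis $|q|$ is an isomorphism natural transformation, so $|q|_X\colon X\to X$ is an isomorphism in $\mathcal{C}$ for every object $X$.

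Next I would simply set
\[
s_X \;\coloneqq\; \Bigl(X\oto{\ |q|_X^{-1}\ }X\oto{\ \mu_{q,X}\ }q_!q^*X\Bigr),
\]
and compute, using the displayed identity, that $c_!\circ s_X = c_!\circ\mu_{q,X}\circ|q|_X^{-1} = |q|_X\circ|q|_X^{-1} \simeq \Id_X$. Thus $c_!\colon q_!q^*X\to X$ admits a section up to homotopy, which is the first assertion; and since $q_!q^*X = q_!(q^*X)$ lies in the essential image of $q_!$, the existence of $s_X$ exhibits every $X\in\mathcal{C}$ as a retract of an object of that image, giving the ``in particular''.

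There is essentially no obstacle here: the content is entirely the identity $|q| = c_!^q\circ\mu_q$ coming from the definition of $|q|$ together with the definition of amenability, and the verification is object-wise (naturality of $\mu_q$ and of $c_!$ is not even needed). The only point worth keeping in mind is that $\mu_q$, and hence $|q|$ and its inverse, are available precisely because $q$ was assumed iso-normed, which is built into the hypothesis ``amenable''.
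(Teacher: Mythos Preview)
Your proof is correct. Both your argument and the paper's hinge on factoring $|q|_X$ through $(c_!)_X$ and then invoking amenability to invert $|q|_X$; you use the identity $|q|=c_!^q\circ\mu_q$ directly from the definition of $|q|$, whereas the paper takes a slightly longer route: it writes $\Id_{q^*X}=q^*(c_!)_X\circ(u_!)_{q^*X}$ via the zig-zag identity, integrates over $q$, and applies the Homogeneity proposition to pull $(c_!)_X$ out of the integral, obtaining $|q|_X=(c_!)_X\circ\int_q(u_!)_{q^*X}$. In fact $\int_q(u_!)_{q^*X}=\mu_{q,X}$ (unwinding the integral via the wrong-way unit and using the other zig-zag identity), so the two sections agree and your argument is simply the more direct version of the same proof.
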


\begin{proof}
Let $X\in\mathcal{C}$. By the zig-zag identities, 
\[
q^{*}X\oto{u_{!}}(q^{*}\underline{q_{!})q^{*}}X\oto{c_{!}}q^{*}X
\]
is the identity on $q^{*}X$. Integrating along $q$ and using \propref{Homogenity}(1),
we get
\[
|q|_{X}=\int\limits _{q}\Id_{\left(q^{*}X\right)}=\int\limits _{q}\left(q^{*}\left(c_{!}\right)_{X}\circ\left(u_{!}\right)_{q^{*}X}\right)=\left(c_{!}\right)_{X}\circ\int\limits _{q}\left(u_{!}\right)_{q^{*}X}.
\]
Hence, if $|q|_{X}$ is an isomorphism, then $\left(c_{!}\right)_{X}$
has a section up to homotopy.
\end{proof}
\begin{thm}
[Cancellation Theorem]\label{thm:Ambi_Cancellation}Let
\[
\xymatrix{\mathcal{E}\ \ar@{>->}[r]^{p} & \mathcal{D}\ \ar@{>->}[r]^{q} & \mathcal{C}}
\]
be a pair of normed functors. If $p$ is amenable and $qp$
is iso-normed, then $q$ is iso-normed.
\end{thm}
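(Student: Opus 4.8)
The plan is to reduce, via \lemref{Iso_Normed_Criterion}, to showing that $\nm_{q}$ is an isomorphism at $q^{*}X$ for every $X\in\mathcal{C}$, and to extract this from two inputs: the behaviour of norms under composition, and the Higher Maschke Theorem (\thmref{Amenable_Section}) applied to the amenable functor $p$.

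First I would unpack the hypothesis that $qp$ is iso-normed. By \defref{Norm_Construction}(2), the norm $\nm_{qp}\colon q_{!}p_{!}\to q_{*}p_{*}$ is the composite
\[
q_{!}p_{!}\oto{\,\nm_{q}\star\Id_{p_{!}}\,}q_{*}p_{!}\oto{\,\Id_{q_{*}}\star\nm_{p}\,}q_{*}p_{*}.
\]
Since $p$ is amenable, and in particular iso-normed, $\nm_{p}$ is an isomorphism of natural transformations, hence so is its left whiskering $\Id_{q_{*}}\star\nm_{p}$. As $\nm_{qp}$ is an isomorphism by hypothesis, two-out-of-three forces $\nm_{q}\star\Id_{p_{!}}$ to be an isomorphism as well; that is, $\nm_{q}$ is an isomorphism at every object of the form $p_{!}Y$ with $Y\in\mathcal{E}$, i.e. at every object in the essential image of $p_{!}$.

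Next I would bring in amenability of $p$. Applying the Higher Maschke Theorem (\thmref{Amenable_Section}) to the amenable iso-normed functor $p$, every object $Z\in\mathcal{D}$ is a retract of $p_{!}p^{*}Z$, which lies in the essential image of $p_{!}$. Specializing to $Z=q^{*}X$ for $X\in\mathcal{C}$, the object $q^{*}X$ is a retract of $p_{!}p^{*}q^{*}X=p_{!}(qp)^{*}X$. A natural transformation that is an isomorphism at an object is automatically an isomorphism at any retract of that object — viewing $(\nm_{q})_{q^{*}X}$ as a retract of $(\nm_{q})_{p_{!}(qp)^{*}X}$ in the arrow category and using that equivalences are stable under retracts. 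Combined with the previous paragraph, this shows $\nm_{q}$ is an isomorphism at $q^{*}X$ for all $X\in\mathcal{C}$, and \lemref{Iso_Normed_Criterion} then gives that $q$ is iso-normed.

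The argument is essentially formal once these two inputs are in place; the only points requiring mild care are the bookkeeping with whiskered natural transformations (together with the ``order does not matter'' clause in the definition of the composite norm) and the invocation of the standard fact that equivalences are closed under retracts. I do not anticipate a genuine obstacle, so the ``hard part'' here is really just organizing these observations in the right order rather than proving anything deep.
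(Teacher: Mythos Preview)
Your proof is correct and follows essentially the same approach as the paper: decompose $\nm_{qp}$ using \defref{Norm_Construction}(2), deduce that $\nm_q$ is an isomorphism on the essential image of $p_!$, and then use the Higher Maschke Theorem together with closure of isomorphisms under retracts. The only difference is that the paper applies the retract argument to \emph{every} object of $\mathcal{D}$ directly (so $\nm_q$ is an isomorphism everywhere, and \lemref{Iso_Normed_Criterion} is not needed), whereas you specialize to objects of the form $q^*X$ and then invoke that lemma---a harmless but unnecessary detour.
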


\begin{proof}
The map $\nm_{qp}$ is given by the composition
\[
q_{!}p_{!}\oto{\nm_{q}}q_{*}p_{!}\oto{\nm_{p}}q_{*}p_{*}.
\]
Since $\nm_{qp}$ and $\nm_{p}$ are isomorphisms, so is $q_{!}p_{!}\oto{\nm_{q}}q_{*}p_{!}$.
By \thmref{Amenable_Section}, every $X\in\mathcal{D}$ is a retract
of $p_{!}Y$ for some $Y\in\mathcal{E}$. Isomorphisms are closed
under retracts, and so $\nm_{q}$ is an isomorphism for every $X\in\mathcal{D}$.
\end{proof}
\begin{rem}
This is essentially the same argument as the one used in the proof
of \cite[Proposition 4.4.16]{HopkinsLurie}.
\end{rem}

\section{Local-Systems and Ambidexterity}
\label{sec:local_systems}
The main examples of normed functors that we are interested in are
the ones provided by the theory of higher semiadditivity developed in \cite{HopkinsLurie}
and further in \cite{Harpaz}. In what follows, we first briefly recall
the relevant definitions and explain how they fit into the abstract
framework developed in the previous section. Then we apply the theory
of the previous section to this special case. The theory developed
in \cite{HopkinsLurie} is set up in a rather general framework of
Beck-Chevalley fibrations. Even though this framework fits into our
theory of normed functors, for concreteness and clarity, we shall
confine ourselves to the special case of local systems.

\subsection{Local-Systems and Canonical Norms}

Let $\mathcal{C}$ be an $\infty$-category and let $A$ be a space
viewed as an $\infty$-groupoid. We call $\fun\left(A,\mathcal{C}\right)$
the $\infty$-category of $\mathcal{C}$-valued \emph{local systems}
on $A$. Let $q\colon A\to B$ be a map of spaces and assume that
$\mathcal{C}$ admits all $q$-limits and $q$-colimits. The functor of precomposition with $q$, denote by
\[
q^{*}\colon\fun\left(B,\mathcal{C}\right)\to\fun\left(A,\mathcal{C}\right),
\]
admits both a left adjoint $q_{!}$ and a right adjoint $q_{*}$ (given
by left and right Kan extension respectively). We shall define, after
\cite[Section 4.1]{HopkinsLurie}, a class of \emph{weakly $\mathcal{C}$-ambidextrous}
maps $q$, to which we associate a canonical norm map $\nm_{q}\colon q_{!}\to q_{*}$.
This norm map gives rise to a normed functor
\[
q^{\can}\colon\fun\left(A,\mathcal{C}\right)\nto\fun\left(B,\mathcal{C}\right).
\]

A map $q$ is called \emph{$\mathcal{C}$-ambidextrous }if it is weakly
$\mathcal{C}$-ambidextrous and the associated canonical norm is an
isomorphism (i.e. $q^{\can}$ is iso-normed). 

\subsubsection{Base Change \& Canonical Norms}

We begin with some terminology regarding the operation of base change
for local-systems.
\begin{defn}
\label{def:Base_Change_Square}Given an $\infty$-category $\mathcal{C}$
and a pullback diagram of spaces 
\[
\qquad \quad
\vcenter{
\xymatrix@C=3pc{\tilde{A}\ar[d]_{\tilde{q}}\ar[r]^{s_{A}} & A\ar[d]^{q}\\
\tilde{B}\ar[r]^{s_{B}} & B
}}
\qquad\left(*\right)
\]
the associated \emph{base-change square} (of $\mathcal{C}$-valued
local-systems) is 
\[
\qquad \quad
\vcenter{
\xymatrix@C=3pc{\fun\left(B,\mathcal{C}\right)\ar[d]_{q^{*}}\ar[r]^{s_{B}^{*}} & \fun(\tilde{B},\mathcal{C})\ar[d]^{\tilde{q}^{*}}\\
\fun\left(A,\mathcal{C}\right)\ar[r]^{s_{A}^{*}} & \fun(\tilde{A},\mathcal{C}).
}}
\qquad\left(\square\right)
\]
\end{defn}

\begin{lem}
\label{lem:Base_Change_BC}Let $\mathcal{C}$ be an $\infty$-category
and let $\left(*\right)$ be a pullback diagram of spaces as in \defref{Base_Change_Square}
above. If $\mathcal{C}$ admits all $q$-colimits (resp. $q$-limits),
then the associated base-change square $\square$ satisfies the $\bc_{!}$
(resp. $\bc_{*}$ condition).
\end{lem}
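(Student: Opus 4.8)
The plan is to reduce everything to the pointwise formula for left Kan extension along a map of $\infty$-groupoids, together with the fact that a pullback square of spaces induces equivalences on fibers. First, the $\bc_*$ statement follows from the $\bc_!$ statement by passing to opposite categories: since $A$ is an $\infty$-groupoid, $\fun(A,\mathcal{C}^{op})\simeq\fun(A,\mathcal{C})^{op}$, under which $q^*$ for $\mathcal{C}^{op}$-valued local systems is again $q^*$, its left adjoint is $q_*$ (of $\mathcal{C}$) and its right adjoint is $q_!$; moreover $\mathcal{C}$ admits all $q$-limits iff $\mathcal{C}^{op}$ admits all $q$-colimits, and the base-change square of $\left(*\right)$ for $\mathcal{C}^{op}$ is the one for $\mathcal{C}$ with all $\infty$-categories replaced by their opposites, so its $\bc_!$-map is (the opposite of) the $\bc_*$-map of $\square$. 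Hence we may assume $\mathcal{C}$ admits all $q$-colimits and prove the $\bc_!$ condition. Since equivalences in $\fun(\tilde{B},\mathcal{C})$ are detected objectwise, it suffices to show that for each $b\in\tilde{B}$ and $F\in\fun(B,\mathcal{C})$ the Beck--Chevalley map
\[
\beta_!\colon\tilde{q}_!s_A^*\oto{u_!^q}\tilde{q}_!s_A^*q^*q_!\iso\tilde{q}_!\tilde{q}^*s_B^*q_!\oto{c_!^{\tilde{q}}}s_B^*q_!
\]
is an equivalence at $(b,F)$.

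For a map of $\infty$-groupoids $r\colon X\to Y$ and a point $y\in Y$, the comma $\infty$-category $X\times_Y Y_{/y}$ is equivalent to the homotopy fiber $r^{-1}(y)$, and the pointwise formula for left Kan extension gives a natural equivalence $(r_!G)(y)\simeq\colim_{r^{-1}(y)}\bigl(G|_{r^{-1}(y)}\bigr)$. Applying this to $\tilde{q}$ at $b$ and to $q$ at $s_B(b)$,
\[
(\tilde{q}_!s_A^*F)(b)\simeq\colim_{\tilde{q}^{-1}(b)}\bigl((s_A^*F)|_{\tilde{q}^{-1}(b)}\bigr),\qquad (s_B^*q_!F)(b)=(q_!F)(s_B(b))\simeq\colim_{q^{-1}(s_B(b))}\bigl(F|_{q^{-1}(s_B(b))}\bigr).
\]
Since $\left(*\right)$ is a pullback square, the map $s_A$ restricts to an equivalence of fibers $\tilde{q}^{-1}(b)\iso q^{-1}(s_B(b))$, and along it $(s_A^*F)|_{\tilde{q}^{-1}(b)}$ is carried to $F|_{q^{-1}(s_B(b))}$; hence the two colimits above are canonically identified, and both evaluations above are equivalences onto this common colimit.

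What remains --- and this is the only non-formal point --- is to check that the composite of these canonical identifications is precisely $\beta_!$ at $(b,F)$. One evaluates the three stages of $\beta_!$ through the pointwise colimit formula: the unit $u_!^q$ goes over to the universal cocone exhibiting $(q_!F)(s_B(b))$ as the colimit over $q^{-1}(s_B(b))$, the middle isomorphism goes over to the base-change equivalence $\tilde{q}^{-1}(b)\iso q^{-1}(s_B(b))$, and the counit $c_!^{\tilde{q}}$ goes over to the identity of the colimit; chasing these through shows the composite is the identification of the previous paragraph, so $\beta_!$ is an equivalence. The main obstacle is thus purely this bookkeeping of units and counits through the pointwise Kan extension formula; alternatively, one may invoke the compatibility of left Kan extensions with base change from \cite{htt} (that $q_!$ commutes with the restriction $s_B^*$ along a pullback square) and merely identify the resulting comparison map with $\beta_!$.
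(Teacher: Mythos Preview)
Your argument is correct and is essentially the content of the reference the paper cites: the paper's proof is simply ``For $\bc_!$ this is \cite[Proposition 4.3.3]{HopkinsLurie}\ldots\ For $\bc_*$ a completely analogous argument works,'' and what you have written is precisely an unpacking of that proposition (pointwise left Kan extension along a map of groupoids plus the equivalence of fibers in a pullback). The only minor difference is that you deduce the $\bc_*$ case from the $\bc_!$ case by passing to opposite categories, whereas the paper indicates one should rerun the dual argument directly; both are equally valid and equally short.
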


\begin{proof}
For $\bc_{!}$ this is \cite[Proposition 4.3.3]{HopkinsLurie} (note
we only need $q$-colimits). For $\bc_{*}$ a completely analogous
argument works.
\end{proof}
The construction of the canonical norm rests on the following more
general construction.
\begin{defn}
\label{def:Diagonal_Induction}Let $q\colon A\to B$ be a map of spaces
and let $\delta\colon A\to A\times_{B}A$ be the diagonal of $q$. Let $\mathcal{C}$
be an $\infty$-category that admits all $q$-(co)limits and $\delta$-(co)limits.
Given an isomorphism natural transformation 
\[
\nm_{\delta}\colon\delta_{!}\iso\delta_{*},
\]
 we define the \emph{diagonally induced} norm map 
\[
\nm_{q}\colon q_{!}\to q_{*}
\]

as follows. Consider the commutative diagram
\[
\xymatrix@C=3pc{A\ar[rd]^{\delta}\ar@/^{1pc}/@{=}[rrd]\ar@/_{1pc}/@{=}[rdd]\\
 & A\times_{B}A\ar[d]^{\pi_{2}}\ar[r]^{\pi_{1}} & A\ar[d]^{q}\\
 & A\ar[r]^{q} & B.
}
\]

To the iso-norm $\nm_{\delta}$, corresponds a wrong way unit map
$\mu_{\delta}\colon\Id\to\delta_{!}\delta^{*}$. By \lemref{Base_Change_BC},
the base change square associated with $\left(*\right)$ satisfies
the $\bc_{!}$ condition, and so we can define the composition 
\[
\nu_{q}\colon q^{*}q_{!}\oto{\beta_{!}^{-1}}\left(\pi_{2}\right)_{!}\pi_{1}^{*}\oto{\mu_{\delta}}\left(\pi_{2}\right)_{!}\delta_{!}\delta^{*}\pi_{1}^{*}\iso\Id.
\]
We define $\nm_{q}\colon q_{!}\to q_{*}$ to be the mate of $\nu_{q}$
under the adjunction $q^{*}\dashv q_{*}$.
\end{defn}

\begin{rem}
\label{rem:Diagonal_Induction_Integral}In light of \cite[Remark 4.1.9]{HopkinsLurie},
we can informally say that the diagonally induced norm map on $q$
is obtained by integrating the identity map along the diagonal $\delta$.
Though we shall not use this perspective, it is helpful to keep it
in mind.
\end{rem}

Note that if $q\colon A\to B$ is $m$-truncated for some $m\ge-1$,
then $\delta$ is $\left(m-1\right)$-truncated. This allows us to
define canonical norm maps inductively on the level of truncatedness
of the map. 
\begin{defn}
\label{def:Ambidexterity}Let $\mathcal{C}$ be an $\infty$-category
and $m\ge-2$ an integer. A map of spaces $q\colon A\to B$ is called 
\begin{enumerate}
\item \emph{weakly $m$-$\mathcal{C}$-ambidextrous}, if $q$ is $m$-truncated,
$\mathcal{C}$ admits $q$-(co)limits and either of the two holds:
\begin{itemize}
\item $m=-2$, in which case the inverse of $q^{*}$ is both a left and
right adjoint of $q^{*}$. We define the \emph{canonical norm} map
on $q^{*}$ to be the identity of some inverse of $q^{*}$. 
\item $m\ge-1$, and the diagonal $\delta\colon A\to A\times_{B}A$ of $q$
is \emph{$\left(m-1\right)$-$\mathcal{C}$-ambidextrous}.\emph{ }In
this case we define the \emph{canonical norm} on $q^{*}$ to be the
diagonally induced one from the canonical norm of $\delta$.
\end{itemize}
\item \emph{$m$-$\mathcal{C}$-ambidextrous}, if it is weakly $m$-$\mathcal{C}$-ambidextrous
and its canonical norm map is an isomorphism.
\end{enumerate}
A map of spaces $q\colon A\to B$ is called (weakly) $\mathcal{C}$-ambidextrous
if it is (weakly) $m$-$\mathcal{C}$-ambidextrous for some $m$. 
\end{defn}

By \cite[Proposition 4.1.10 (5)]{HopkinsLurie}, the canonical norm
associated with a map $q\colon A\to B$, that is $m$-truncated for
some $m$, is independent of $m$. 
\begin{defn}
In the situation of \defref{Ambidexterity}, given a  map $q\colon A\to B$ that is weakly $\mathcal{C}$-ambidextrous, we define the associated \emph{canonical normed
functor} 
\[
q_{\mathcal{C}}^{\can}\colon\fun\left(A,\mathcal{C}\right)\nto\fun\left(B,\mathcal{C}\right),
\]
by 
\[
\left(q_{\mathcal{C}}^{\can}\right)^{*}=q^{*},\quad\left(q_{\mathcal{C}}^{\can}\right)_{!}=q_{!},\quad\left(q_{\mathcal{C}}^{\can}\right)_{*}=q_{*},
\]
and the norm map $\nm_{q}\colon q_{!}\to q_{*}$ the canonical norm
of \defref{Ambidexterity}. 
\end{defn}

Note that the normed functor $q_{\mathcal{C}}^{\can}$ is iso-normed
if and only if $q$ is $\mathcal{C}$-ambidextrous. We add the following
definition.
\begin{defn}
Let $\mathcal{C}$ be an $\infty$-category. A $\mathcal{C}$-ambidextrous
map $q\colon A\to B$ is called \emph{$\mathcal{C}$-amenable} if
$q^{\can}$ is amenable.
\end{defn}

\begin{notation}
\label{not:Canonical_Norm_Notation}Given a weakly $\mathcal{C}$-ambidextrous
map of spaces $q\colon A\to B$, we write $q^{\can}$ for $q_{\mathcal{C}}^{\can}$
if $\mathcal{C}$ is understood from the context. We also write $\left(-\right)_{q}$,
$\int_{q}$ and $|q|$ instead of $\left(-\right)_{q^{\can}},$
$\int_{q^{\can}}$ and $|q^{\can}|$. For a map $q\colon A\to\pt$,
we shall also say that $A$ is (weakly) $\mathcal{C}$-ambidextrous
or amenable if $q$ is, and write $\left(-\right)_{A}$, $\int_{A}$
, and $|A|$ instead of $\left(-\right)_{q}$, $\int_{q}$
and $|q|$.
\end{notation}

The next proposition ensures that the canonical norms are preserved
under base change, compositions and identity as in \defref{Norm_Construction}.
\begin{prop}
\label{prop:Canonical_Norms_Compatibility}Let $\mathcal{C}$ be an
$\infty$-category.
\begin{enumerate}
\item (Identity) Given an isomorphism of spaces $q\colon A\iso B$, the
functor $q^{*}$ is $\mathcal{C}$-ambidextrous and its canonical
norm is the identity of the left and right adjoint inverse of $q^{*}$.
\item (Composition) Given (weakly) $\mathcal{C}$-ambidextrous maps $q\colon A\to B$
and $p\colon B\to C$, the composition $pq\colon A\to C$ is (weakly)
$\mathcal{C}$-ambidextrous and $\left(pq\right)^{\can}$ can be identified
with $p^{\can}q^{\can}$.
\item (Base-change) Let $\left(*\right)$ be a pullback diagram of spaces
as in \defref{Base_Change_Square}. If $q$ is (weakly) $\mathcal{C}$-ambidextrous,
then $\tilde{q}$ is (weakly) $\mathcal{C}$-ambidextrous and the
associated base-change square 
\[
\xymatrix@C=3pc{\fun\left(B,\mathcal{C}\right)\ar[d]_{q^{*}}\ar[r]^{s_{B}^{*}} & \fun(\tilde{B},\mathcal{C})\ar[d]^{\tilde{q}^{*}}\\
\fun\left(A,\mathcal{C}\right)\ar[r]^{s_{A}^{*}} & \fun(\tilde{A},\mathcal{C})
}
\]
is (weakly) ambidextrous.
\end{enumerate}
\end{prop}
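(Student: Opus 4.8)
The plan is to treat the three parts in order, deferring the substantive inductive constructions to \cite[Section 4.1]{HopkinsLurie} and using the calculus developed above only to repackage their output in the language of normed functors. Part (1) is immediate: an isomorphism of spaces $q\colon A\iso B$ is $(-2)$-truncated, so by \defref{Ambidexterity} it is weakly $(-2)$-$\mathcal{C}$-ambidextrous and the canonical norm on $q^{*}$ is the identity of a two-sided adjoint inverse of $q^{*}$; this is an isomorphism, hence $q$ is $\mathcal{C}$-ambidextrous, and $q^{\can}$ is precisely the normed functor of \defref{Norm_Construction}(1).

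For part (3) I would induct on an integer $m$ for which $q$, and hence its pullback $\tilde q$, is $m$-truncated. The base case $m=-2$ follows from part (1), since a pullback of an equivalence is an equivalence and the two canonical norms are the compatible identities. For the inductive step the key point is that the diagonal $\tilde\delta\colon\tilde A\to\tilde A\times_{\tilde B}\tilde A$ is again a base change of the diagonal $\delta\colon A\to A\times_{B}A$: writing $\tilde A=\tilde B\times_{B}A$ one identifies $\tilde A\times_{\tilde B}\tilde A\simeq\tilde B\times_{B}(A\times_{B}A)$ and $\tilde\delta$ with $\Id_{\tilde B}\times_{B}\delta$. Since $q$ is weakly $m$-$\mathcal{C}$-ambidextrous, $\delta$ is (honestly) $(m-1)$-$\mathcal{C}$-ambidextrous, so the induction hypothesis applies to $\delta$ and shows that $\tilde\delta$ is $(m-1)$-$\mathcal{C}$-ambidextrous with canonical norm a base change of $\nm_{\delta}$. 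It then remains to verify that the diagonal-induction recipe of \defref{Diagonal_Induction} --- which builds $\nm_{q}$ out of $\mu_{\delta}$, the Beck--Chevalley isomorphism $\beta_{!}$ of \lemref{Base_Change_BC}, and the pullback square appearing there --- commutes with base change along $s_{B}$. This is a diagram chase: one stacks the relevant pullback squares and applies the horizontal and vertical pasting lemmas for $\bc$ conditions (\corref{Horizontal_Pasting_BC}, \corref{Vertical_Pasting_BC}) together with the compatibility of $\bc$ maps with (co)units (\lemref{BC_Co_Units}). The upshot is that, under the canonical identifications, the canonical norm of $\tilde q$ is obtained from that of $q$ by applying $s_{A}^{*}$; thus the base-change square is weakly ambidextrous, and since applying a functor to an isomorphism yields an isomorphism, $\tilde q$ is $\mathcal{C}$-ambidextrous whenever $q$ is.

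Part (2) is the main obstacle. The difficulty is that the diagonal of a composite $pq$ is not built in any simple way from the diagonals of $p$ and $q$, so the truncatedness induction cannot be applied to $pq$ by naively differentiating the defining formula; this is precisely the content of \cite[Proposition 4.1.10]{HopkinsLurie}, established there in the generality of Beck--Chevalley fibrations (of which $\mathcal{C}$-valued local systems are an instance), and part (3) above is one of the ingredients that enter that argument. Granting it, one obtains that $pq$ is (weakly) $\mathcal{C}$-ambidextrous. What remains on our side is bookkeeping: the functor $(pq)^{*}=q^{*}p^{*}$ has left adjoint $p_{!}q_{!}$ and right adjoint $p_{*}q_{*}$ by uniqueness of adjoints and \defref{Adj_Composition}, and one checks that the canonical norm $\nm_{pq}$ coincides with the horizontal composite $\nm_{p}\star\nm_{q}$, so that $(pq)^{\can}$ is the composition $p^{\can}q^{\can}$ in the sense of \defref{Norm_Construction}(2); in particular iso-normedness is inherited, which accounts for the parenthetical ``(weakly)'' in the statement.
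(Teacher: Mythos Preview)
Your approach is essentially the same as the paper's: part (1) is immediate from the definition, and parts (2) and (3) are deferred to \cite{HopkinsLurie}, with your inductive sketch of (3) being a faithful outline of what happens there. The paper cites \cite[Remark 4.2.4]{HopkinsLurie} for (2) and \cite[Remark 4.2.3]{HopkinsLurie} for (3) rather than Proposition 4.1.10, but these are the specializations of that material to the Beck--Chevalley fibration of local systems, so the content is the same.
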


\begin{proof}
(1) follows directly from the definition. (2) is the content of \cite[Remark 4.2.4]{HopkinsLurie}.
(3) is a restatement of \cite[Remark 4.2.3]{HopkinsLurie}.
\end{proof}
The following is a central notion for this paper.
\begin{defn}\label{def:Semiadd}
Let $m\ge-2$ be an integer. An $\infty$-category $\mathcal{C}$
is called $m$-semiadditive, if it admits all $m$-finite limits and
$m$-finite colimits and every $m$-finite map of spaces is $\mathcal{C}$-ambidextrous.
It is called $\infty$-semiadditive if it is $m$-semiadditive for
all $m$.
\end{defn}

\begin{rem}
Our definition of $m$-semiadditivity agrees with \cite[Definition 3.1]{Harpaz}
and differs slightly from \cite[Definition 4.4.2]{HopkinsLurie} in
that we do not require $\mathcal{C}$ to admit \emph{all} small colimits,
but only $m$-finite ones. Note that using the ``wrong way counit''
perspective, one could phrase $m$\textendash semiadditivity without
the \emph{assumption} that $\mathcal{C}$ admits $m$-finite limits,
but this would then be a direct \emph{consequence}. Thus, \defref{Semiadd}
is somewhat more general then \cite[Definition 4.4.2]{HopkinsLurie}. 
\end{rem}

\subsubsection{Base Change and Integration}

We can now apply the theory of integration developed in the previous
section to the canonically normed functors associated with ambidextrous
maps.
\begin{example}
\label{exa:Integral_Sum}(see \cite[Remark 4.4.11]{HopkinsLurie})
Let $\mathcal{C}$ be a 0-semiadditive $\infty$-category (e.g. $\mathcal{C}$ is stable). For every \emph{finite set} $A$, the map $q\colon A\to\pt$ is $\mathcal{C}$-ambidextrous.
Given $X,Y\in\mathcal{C}$, a map of local systems $f\colon q^{*}X\to q^{*}Y$,
can be viewed as a collection of maps $\left\{ f_{a}\colon X\to Y\right\} _{a\in A}$.
We have 
\[
\int\limits _{A}f=\sum_{a\in A}f_{a}\quad\in\hom_{h\mathcal{C}}\left(X,Y\right).
\]
\end{example}

Applying the general theory of integration to base change squares,
we get
\begin{prop}
\label{prop:Base_Change_Ambi}Let $\mathcal{C}$ be an $\infty$-category
and let $\left(*\right)$ be a pullback diagram of spaces as in \defref{Base_Change_Square},
such that $q$ (and hence $\tilde{q}$) is $\mathcal{C}$-ambidextrous.
For all $X,Y\in\fun\left(B,\mathcal{C}\right)$ and $f\colon q^{*}X\to q^{*}Y$,
we have
\[
s_{B}^{*}\int\limits _{q}f=\int\limits _{\tilde{q}}s_{A}^{*}f\quad\in\hom_{h\fun\left(\tilde{B},\mathcal{C}\right)}\left(s_{B}^{*}X,s_{B}^{*}Y\right).
\]
In particular, for all $X\in\fun\left(B,\mathcal{C}\right)$ we have
\[
s_{B}^{*}|q|_{X}=|\tilde{q}|_{s_{B}^{*}X}\quad\in\hom_{h\fun\left(\tilde{B},\mathcal{C}\right)}\left(s_{B}^{*}X,s_{B}^{*}X\right).
\]
\end{prop}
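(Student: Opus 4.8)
The plan is to obtain the identity as a direct instance of \propref{Integral_Ambi}. Given the pullback diagram $\left(*\right)$, form the associated base-change square of local systems as in \defref{Base_Change_Square}, and promote its vertical legs to the canonically normed functors $q^{\can}$ and $\tilde{q}^{\can}$; note that $\tilde{q}^{\can}$ is defined because $\tilde{q}$ is $\mathcal{C}$-ambidextrous by \propref{Canonical_Norms_Compatibility}(3). Taking $F_{\mathcal{C}}=s_{B}^{*}$ and $F_{\mathcal{D}}=s_{A}^{*}$ as the horizontal functors, the statement to prove is exactly the conclusion of \propref{Integral_Ambi} for this square.

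So what remains is to check the three hypotheses of \propref{Integral_Ambi} for the base-change square: that it is iso-normed, weakly ambidextrous, and satisfies the $\bc_{!}$ condition. Iso-normedness means $q^{\can}$ and $\tilde{q}^{\can}$ are iso-normed, which is precisely the assumption that $q$ (and hence $\tilde{q}$) is $\mathcal{C}$-ambidextrous. Since $q$ is $\mathcal{C}$-ambidextrous, $\mathcal{C}$ in particular admits all $q$-colimits, so \lemref{Base_Change_BC} supplies the $\bc_{!}$ condition. Finally, the norm diagram of the base-change square commutes, i.e. the square is weakly ambidextrous; this is the ``(weakly) ambidextrous'' clause of \propref{Canonical_Norms_Compatibility}(3). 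Feeding these into \propref{Integral_Ambi} yields
\[
s_{B}^{*}\Bigl(\int\limits_{q}f\Bigr)=\int\limits_{\tilde{q}}s_{A}^{*}(f)\qquad\in\hom_{h\fun(\tilde{B},\mathcal{C})}\bigl(s_{B}^{*}X,s_{B}^{*}Y\bigr),
\]
and the special case $f=q^{*}\Id_{X}$ gives the asserted identity for $|q|_{X}$, exactly as in \propref{Integral_Ambi}.

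There is essentially no obstacle beyond bookkeeping: the genuine content --- the commutativity of the norm diagram for base-change squares and the Beck-Chevalley condition --- is already imported from \propref{Canonical_Norms_Compatibility}(3) (ultimately \cite{HopkinsLurie}) and from \lemref{Base_Change_BC}, while the compatibility of integration with ambidextrous squares satisfying $\bc_{!}$ is \propref{Integral_Ambi} verbatim. The one point worth checking carefully is that the canonical normed functor $q^{\can}$ produced by the inductive \defref{Ambidexterity} is genuinely the left vertical leg of the square \emph{as a normed functor} (not merely as a bare functor), so that ``ambidextrous square'' in the sense of \defref{Normed_Ambi_Square} applies without modification; this is immediate from the definitions but is the natural place for a reader to pause.
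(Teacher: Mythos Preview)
Your proof is correct and follows exactly the same route as the paper: invoke \propref{Canonical_Norms_Compatibility}(3) for ambidexterity of the base-change square, \lemref{Base_Change_BC} for the $\bc_{!}$ condition, and then apply \propref{Integral_Ambi}. Your write-up is simply a more detailed unpacking of the paper's two-line argument.
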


\begin{proof}
Denote by $\square$ the associated base-change square. By \propref{Canonical_Norms_Compatibility}(3),
$\square$ is ambidextrous and by \lemref{Base_Change_BC}, it satisfies
the $\bc_{!}$ condition. Now, the result follows from \propref{Integral_Ambi}.
\end{proof}
As a consequence, we get a form of ``distributivity'' for integration.
\begin{cor}
\label{cor:Distributivity}Let $\mathcal{C}$ be an $\infty$-category
and let $q_{1}\colon A_{1}\to B$ and $q_{2}\colon A_{2}\to B$ be
two $\mathcal{C}$-ambidextrous maps of spaces. Consider the pullback
square
\[
\xymatrix@C=3pc{A_{2}\times_{B}A_{1}\ar[d]_{\pi_{2}}\ar[r]^{\pi_{1}}\ar[dr]|{q_{2}\times_{B}q_{1}} & A_{1}\ar[d]^{q_{1}}\\
A_{2}\ar[r]^{q_{2}} & B.
}
\]
The map $q_{2}\times_{B}q_{1}$ is $\mathcal{C}$-ambidextrous and
for all $X,Y,Z\in\fun\left(B,\mathcal{C}\right)$ and maps 
\[
f_{1}\colon q_{1}^{*}X\to q_{1}^{*}Y,\quad f_{2}\colon q_{2}^{*}Y\to q_{2}^{*}Z,
\]
we have
\[
\int\limits _{q_{2}\times_{B}q_{1}}\left(\pi_{2}^{*}f_{2}\circ\pi_{1}^{*}f_{1}\right)=\int\limits _{q_{2}}f_{2}\circ\int\limits _{q_{1}}f_{1}\quad\in\hom_{h\fun\left(B,\mathcal{C}\right)}\left(X,Z\right).
\]

In particular, for every $X\in\fun\left(B,\mathcal{C}\right)$, we
have
\[
|q_{2}\times_{B}q_{1}|_{X}=|q_{2}|_{X}\circ|q_{1}|_{X}\quad\in\hom_{h\fun\left(B,\mathcal{C}\right)}\left(X,X\right).
\]
\end{cor}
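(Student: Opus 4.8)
The plan is to deduce the statement from the general calculus already in place: the higher Fubini theorem (\propref{Fubini}), homogeneity of integration (\propref{Homogenity}), base-change compatibility (\propref{Base_Change_Ambi}), and the stability of $\mathcal{C}$-ambidexterity under composition and base change (\propref{Canonical_Norms_Compatibility}).

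First I would dispose of the ambidexterity claim. Factoring $q_{2}\times_{B}q_{1}$ as $A_{2}\times_{B}A_{1}\oto{\pi_{1}}A_{1}\oto{q_{1}}B$, the map $\pi_{1}$ is the base change of $q_{2}$ along $q_{1}$, hence $\mathcal{C}$-ambidextrous by \propref{Canonical_Norms_Compatibility}(3); then $q_{1}\circ\pi_{1}=q_{2}\times_{B}q_{1}$ is $\mathcal{C}$-ambidextrous by \propref{Canonical_Norms_Compatibility}(2), and $\left(q_{2}\times_{B}q_{1}\right)^{\can}$ is identified with $q_{1}^{\can}\pi_{1}^{\can}$. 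In particular all the normed functors in sight are iso-normed, so the integrals below are defined and Fubini applies to the composable pair $\pi_{1}^{\can},q_{1}^{\can}$.

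For the formula I would run the following chain of rewrites, using throughout the canonical identification $\pi_{1}^{*}q_{1}^{*}\simeq\left(q_{1}\pi_{1}\right)^{*}=\left(q_{2}\pi_{2}\right)^{*}\simeq\pi_{2}^{*}q_{2}^{*}$, under which $\pi_{2}^{*}f_{2}\circ\pi_{1}^{*}f_{1}$ becomes a well-typed self-map of $\left(q_{2}\times_{B}q_{1}\right)^{*}$-pullbacks. Fubini (\propref{Fubini}) for $\pi_{1}^{\can},q_{1}^{\can}$ gives $\int_{q_{2}\times_{B}q_{1}}\!\left(\pi_{2}^{*}f_{2}\circ\pi_{1}^{*}f_{1}\right)=\int_{q_{1}}\int_{\pi_{1}}\!\left(\pi_{2}^{*}f_{2}\circ\pi_{1}^{*}f_{1}\right)$. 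Homogeneity (\propref{Homogenity}(2)) for $\pi_{1}^{\can}$, pulling $f_{1}$ (a morphism in $\fun\left(A_{1},\mathcal{C}\right)$) out on the right, rewrites the inner integral as $\left(\int_{\pi_{1}}\pi_{2}^{*}f_{2}\right)\circ f_{1}$. Base change (\propref{Base_Change_Ambi}) applied to the pullback square with vertical legs $\pi_{1},q_{2}$ and horizontal legs $\pi_{2},q_{1}$ evaluates $\int_{\pi_{1}}\pi_{2}^{*}f_{2}=q_{1}^{*}\!\left(\int_{q_{2}}f_{2}\right)$. Finally, homogeneity (\propref{Homogenity}(1)) for $q_{1}^{\can}$, with $g=\int_{q_{2}}f_{2}$ a morphism in $\fun\left(B,\mathcal{C}\right)$, gives $\int_{q_{1}}\!\left(q_{1}^{*}\!\left(\int_{q_{2}}f_{2}\right)\circ f_{1}\right)=\left(\int_{q_{2}}f_{2}\right)\circ\int_{q_{1}}f_{1}$. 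Concatenating these four equalities proves the general formula; the ``in particular'' statement then follows by taking $X=Y=Z$ and $f_{1}=\Id_{q_{1}^{*}X}$, $f_{2}=\Id_{q_{2}^{*}X}$, so that $\pi_{2}^{*}f_{2}\circ\pi_{1}^{*}f_{1}=\Id$ and the two sides specialize to $|q_{2}\times_{B}q_{1}|_{X}$ and $|q_{2}|_{X}\circ|q_{1}|_{X}$ respectively.

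There is no real conceptual obstacle here: the content is bookkeeping. The one point needing care is keeping track of the identifications $\pi_{1}^{*}q_{1}^{*}\simeq\pi_{2}^{*}q_{2}^{*}$ so that each of Fubini, homogeneity, and base change is applied to genuinely composable and well-typed data, together with orienting the pullback square correctly when invoking \propref{Base_Change_Ambi} (i.e.\ which leg plays the role of $\tilde{q}$ and which of $s_{B}$). The decomposition through $\pi_{1}$ rather than $\pi_{2}$ is a symmetric choice and leads to the same conclusion.
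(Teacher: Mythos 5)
Your argument is correct and is essentially the paper's proof reflected through the symmetry of the pullback square: the paper factors $q_{2}\times_{B}q_{1}=q_{2}\circ\pi_{2}$ and applies Fubini, then homogeneity (1), base change, and homogeneity (2), whereas you factor through $\pi_{1}$ and apply the same four ingredients with the roles of the two homogeneity statements swapped. Both the ambidexterity claim (base change plus composition) and the specialization to $f_{i}=\Id$ for the $|{-}|$ identity match the paper's treatment.
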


\begin{proof}
The map $\pi_{2}$ is $\mathcal{C}$-ambidextrous by \propref{Canonical_Norms_Compatibility}(3)
and therefore $q_{2}\times_{B}q_{1}=q_{2}\pi_{2}$ is $\mathcal{C}$-ambidextrous
by \propref{Canonical_Norms_Compatibility}(2). We now start from
the left hand side and use \propref{Fubini}, \propref{Homogenity}(1),
\propref{Base_Change_Ambi} and \propref{Homogenity}(2) (in that
order).

\[
\int\limits _{q_{2}\times_{B}q_{1}}\left(\pi_{2}^{*}f_{2}\circ\pi_{1}^{*}f_{1}\right)=\int\limits _{q_{2}\pi_{2}}\left(\pi_{2}^{*}f_{2}\circ\pi_{1}^{*}f_{1}\right)=\int\limits _{q_{2}}\int\limits _{\pi_{2}}\left(\pi_{2}^{*}f_{2}\circ\pi_{1}^{*}f_{1}\right)=
\]
\[
\int\limits _{q_{2}}\left(f_{2}\circ\int\limits _{\pi_{2}}\pi_{1}^{*}f_{1}\right)=\int\limits _{q_{2}}\left(f_{2}\circ q_{2}^{*}\int\limits _{q_{1}}f_{1}\right)=\int\limits _{q_{2}}f_{2}\circ\int\limits _{q_{1}}f_{1}.
\]

The second claim follows from applying the first to $f_{2}=q_{2}^{*}\Id_{X}$
and $f_{1}=q_{1}^{*}\Id_{X}$.
\end{proof}
As another consequence, we obtain the additivity property of the integral.
\begin{prop}[Integral Additivity]
\label{prop:Integral_Additivity}Let $\mathcal{C}$ be a $0$-semiadditive
$\infty$-category and let $q_i\colon A_i\to B$ for $i=1,\dots,k$ be $\mathcal{C}$-ambidextrous maps. Then,
\[
\left(q_{1},\dots,q_{k}\right)\colon A_{1}\sqcup\cdots\sqcup A_{k}\to B
\]
is $\mathcal{C}$-ambidextrous and for all $X,Y\in\fun\left(B,\mathcal{C}\right)$ and maps $f_{i}\colon q_{i}^{*}X\to q_{i}^{*}Y$ for $i=1,\dots,k$,
we have 
\[
\int\limits _{\left(q_{1},\dots,q_{k}\right)}\left(f_{1},\dots,f_{k}\right)=\sum_{i=1}^{k}\left(\int\limits _{q_{i}}f_{i}\right)\quad\in\hom_{h\fun\left(B,\mathcal{C}\right)}\left(X,Y\right).
\]
\end{prop}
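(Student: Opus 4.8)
The plan is to factor the fold map through the product $S\times B$, where $S=\{1,\dots,k\}$, and to split the resulting composite into a piece that is ``componentwise $\mathcal{C}$-ambidextrous'' and a piece whose fibres are the finite set $S$. Concretely, set $S=\{1,\dots,k\}$, identify $S\times B\simeq\coprod_{i\in S}B$, and factor
\[
\left(q_{1},\dots,q_{k}\right)\colon\coprod_{i=1}^{k}A_{i}\;\xrightarrow{\ \tilde{q}\ }\;S\times B\;\xrightarrow{\ \pi\ }\;B,
\]
where $\tilde{q}$ restricts to $q_{i}$ over the $i$-th copy of $B$ and $\pi$ is the projection. Since $S$ is a finite set, it is a $0$-finite space, so $\pi$ is a $0$-finite map and hence $\mathcal{C}$-ambidextrous by $0$-semiadditivity of $\mathcal{C}$.

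For $\tilde{q}$ I would use the canonical equivalence $\fun\left(S\times B,\mathcal{C}\right)\simeq\prod_{i\in S}\fun\left(B,\mathcal{C}\right)$. Because (co)limits in a product of $\infty$-categories are computed factorwise, and because the fibre product $\left(\coprod_{i}A_{i}\right)\times_{S\times B}\left(\coprod_{i}A_{i}\right)\simeq\coprod_{i}\left(A_{i}\times_{B}A_{i}\right)$ has no off-diagonal terms, the functor $\tilde{q}^{*}$, its Kan extensions $\tilde{q}_{!}$ and $\tilde{q}_{*}$, its diagonal, and therefore, by the inductive \defref{Ambidexterity}, its canonical norm, all correspond under this equivalence to the factorwise data $\left(q_{i}^{*}\right)_{i}$, $\left(\left(q_{i}\right)_{!}\right)_{i}$, $\left(\left(q_{i}\right)_{*}\right)_{i}$, $\left(\nm_{q_{i}}\right)_{i}$. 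In particular $\tilde{q}$ is $\mathcal{C}$-ambidextrous (each $q_{i}$ is), and its integral $\int_{\tilde{q}}$ is computed factorwise as $\left(\int_{q_{i}}\right)_{i}$. By \propref{Canonical_Norms_Compatibility}(2), $\left(q_{1},\dots,q_{k}\right)=\pi\circ\tilde{q}$ is then $\mathcal{C}$-ambidextrous, which is the first assertion.

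For the integral formula, fix $X,Y\in\fun\left(B,\mathcal{C}\right)$ and $f_{i}\colon q_{i}^{*}X\to q_{i}^{*}Y$. By the higher Fubini theorem (\propref{Fubini}),
\[
\int\limits_{\left(q_{1},\dots,q_{k}\right)}\left(f_{1},\dots,f_{k}\right)=\int\limits_{\pi}\int\limits_{\tilde{q}}\left(f_{1},\dots,f_{k}\right).
\]
By the factorwise description above, $\int_{\tilde{q}}\left(f_{1},\dots,f_{k}\right)$ is the morphism $\pi^{*}X\to\pi^{*}Y$ of local systems on $S\times B$ whose component over the $i$-th copy of $B$ is $\int_{q_{i}}f_{i}$. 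Finally, identifying $\fun\left(S\times B,\mathcal{C}\right)\simeq\fun\left(S,\fun\left(B,\mathcal{C}\right)\right)$ and noting that $\fun\left(B,\mathcal{C}\right)$ is again $0$-semiadditive (its finite biproducts, and the norms defining $0$-semiadditivity, are computed objectwise in $B$), \exaref{Integral_Sum} applied to the finite set $S$ gives $\int_{\pi}\left(g_{i}\right)_{i\in S}=\sum_{i\in S}g_{i}$; taking $g_{i}=\int_{q_{i}}f_{i}$ yields the claim.

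The only point I expect to require genuine care is the assertion that the entire data entering the construction of the canonical norm of $\tilde{q}$ decomposes factorwise over the components $\{i\}\times B$ of $S\times B$. This is purely formal — it reduces to the facts that $\fun\left(-,\mathcal{C}\right)$ turns a disjoint union into a product, that Kan extensions along $\coprod_{i}q_{i}$ are computed componentwise, and that the diagonal of a coproduct of maps over a disjoint base has vanishing off-diagonal terms — but it deserves to be spelled out. (Alternatively, one may reduce to the case $k=2$ by induction on $k$, which does not shorten the verification but makes the bookkeeping lighter.) Everything else is a direct application of \propref{Fubini}, \propref{Canonical_Norms_Compatibility} and \exaref{Integral_Sum}.
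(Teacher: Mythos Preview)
Your proof is correct and follows essentially the same route as the paper: both factor $(q_1,\dots,q_k)$ through the fold map $\coprod_i B\to B$, use Fubini (\propref{Fubini}) to split the integral, and finish with \exaref{Integral_Sum} for the finite-set piece. The one methodological difference is in how the ``factorwise'' step for $\tilde{q}=\coprod_i q_i$ is justified: the paper cites \cite[Proposition 4.3.5]{HopkinsLurie} for ambidexterity of $\tilde{q}$ and then applies \propref{Base_Change_Ambi} to the pullback squares along the inclusions $j_i\colon B\hookrightarrow\coprod_i B$ to identify $j_i^*\!\int_{\tilde{q}}(f_1,\dots,f_k)$ with $\int_{q_i}f_i$ --- this is exactly the rigorous version of your ``everything decomposes factorwise'' claim, and you may find it cleaner than unwinding \defref{Ambidexterity} componentwise.
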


\begin{proof}
By induction, we may assume $k=2$. Write $\left(q_{1},q_{2}\right)$ as a composition
\[
A_{1}\sqcup A_{2}\oto{q_{1}\sqcup q_{2}}B\sqcup B\oto{\nabla}B,
\]
where $\nabla$ is the fold map. By \cite[Proposition 4.3.5]{HopkinsLurie}, the map $q_1\sqcup q_2$ is $\mathcal{C}$-ambidextrous. Consider the pullback square of spaces, with $j_1$ the natural inclusion inclusion of the first summand,
\[
\qquad \quad
\vcenter{
\xymatrix@C=3pc{A_{1}\ar[d]_{q_{1}}\ar[r]^-{j_{1}} & A_{1}\sqcup A_{2}\ar[d]^{q_{1}\sqcup q_{2}}\\
B\ar[r]^-{j_{1}} & B\sqcup B.
}}
\qquad\left(*\right)
\]
By \propref{Base_Change_Ambi} applied to the base-change square of
$\left(*\right)$, we get that 
\[
j_{1}^{*}\left(\int\limits _{q_{1}\sqcup q_{2}}\left(f_{1},f_{2}\right)\right)\simeq\int\limits _{q_{1}}f_{1}.
\]
Applying the analogous argument to the second component, we get
\[
\int\limits _{q_{1}\sqcup q_{2}}\left(f_{1},f_{2}\right)=\left(\int\limits _{q_{1}}f_{1},\int\limits _{q_{2}}f_{2}\right).
\]

Since $\nabla\colon B\sqcup B\to B$ is $0$-finite and $\mathcal{C}$ is $0$-semiadditive, $\nabla$ is $\mathcal{C}$-ambidextrous and the map $(q_1,q_2)$ is $\mathcal{C}$-ambidextrous as a composition of two such (\propref{Canonical_Norms_Compatibility}(2)). Using Fubini's Theorem (\propref{Fubini}), and a direct computation from the definition of the integral over $\nabla$ (identical to \exaref{Integral_Sum}) we get
\[
\int\limits _{\left(q_{1},q_{2}\right)}\left(f_{1},f_{2}\right)\simeq\int\limits _{\nabla}\int\limits _{q_{1}\sqcup q_{2}}\left(f_{1},f_{2}\right)=\int\limits _{\nabla}\left(\int\limits _{q_{1}}f_{1},\int\limits _{q_{2}}f_{2}\right)=\int\limits _{q_{1}}f_{1}+\int\limits _{q_{2}}f_{2}.
\]
\end{proof}

\subsubsection{Amenable Spaces}

Ambidexterity of the base-change square has also a corollary for the
notion of amenability.
\begin{cor}
\label{cor:Amenable_Base_Change}Let $\mathcal{C}$ be an $\infty$-category
and let $\left(*\right)$ be a pullback diagram of spaces as in \defref{Base_Change_Square}.
If $s_{B}$ is surjective on connected components and $\tilde{q}$
is $\mathcal{C}$-amenable, then $q$ is $\mathcal{C}$-amenable.
\end{cor}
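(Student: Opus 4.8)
The plan is to recognize the statement as an instance of \lemref{Amenable_Conservative} applied to the base-change square of $\left(*\right)$. Concretely, in the base-change square of \defref{Base_Change_Square} take $F_{\mathcal{C}}=s_{B}^{*}$ and $F_{\mathcal{D}}=s_{A}^{*}$. Since a morphism in $\fun\left(B,\mathcal{C}\right)$ is invertible if and only if it is invertible pointwise, and pointwise invertibility depends only on the connected component of the point, surjectivity of $s_{B}$ on connected components makes $s_{B}^{*}$ conservative. Provided $q$ is $\mathcal{C}$-ambidextrous, \propref{Canonical_Norms_Compatibility}(3) says the associated base-change square is ambidextrous and \lemref{Base_Change_BC} says it satisfies the $\bc_{!}$-condition; then \lemref{Amenable_Conservative}, with $F_{\mathcal{C}}=s_{B}^{*}$, transports amenability of $\tilde{q}^{\can}$ to amenability of $q^{\can}$, that is, $q$ is $\mathcal{C}$-amenable.

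What needs a word of justification is that $q$ is $\mathcal{C}$-ambidextrous; note this is already implicit in asking whether $q$ is $\mathcal{C}$-amenable. First, $\mathcal{C}$ admits all $q$-(co)limits, because each fiber of $q$ agrees up to equivalence with a fiber of $\tilde{q}$: every component of $B$ meets the image of $s_{B}$, and homotopy fibers over points of the same component coincide. Ambidexterity of $q$ is then inherited from $\tilde{q}$ by induction on the truncatedness $m$, which is common to $q$ and $\tilde{q}$ by the same fiber comparison. The base case $m=-2$ says $\tilde{q}$ is an equivalence, which forces every fiber of $q$ to be contractible, so $q$ is an equivalence. For the inductive step, the diagonal $\delta_{\tilde{q}}$ is the base change of $\delta_{q}$ along $\tilde{A}\times_{\tilde{B}}\tilde{A}\simeq\tilde{B}\times_{B}(A\times_{B}A)\to A\times_{B}A$; this map is again a base change of $s_{B}$, hence still surjective on connected components, so the inductive hypothesis applied to $\delta_{q}$ (which is $(m-1)$-truncated) shows $\delta_{q}$ is $(m-1)$-$\mathcal{C}$-ambidextrous, i.e.\ $q$ is weakly $m$-$\mathcal{C}$-ambidextrous. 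Finally, \propref{Canonical_Norms_Compatibility}(3) and \lemref{Base_Change_BC} yield a weakly ambidextrous base-change square with invertible $\bc$-maps, so the commuting norm diagram exhibits $s_{B}^{*}\star\nm_{q}$ as an isomorphism, and conservativity of $s_{B}^{*}$ promotes $\nm_{q}$ to an isomorphism; hence $q$ is $m$-$\mathcal{C}$-ambidextrous.

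The only real obstacle is this descent of ambidexterity along a map surjective on connected components; the hypothesis on $s_{B}$ is used twice, once to make $s_{B}^{*}$ conservative (so that invertibility of the norm of $q$ can be detected after pulling back along $s_{B}$) and once to guarantee that the fibers and iterated diagonals of $q$ are realized by those of $\tilde{q}$. Everything else is formal: it is the combination of \lemref{Amenable_Conservative} with the compatibility of canonical norms and integrals with base change recorded in \propref{Canonical_Norms_Compatibility} and \propref{Integral_Ambi}. If instead one builds ``$q$ is $\mathcal{C}$-ambidextrous'' into the hypothesis, as in \propref{Canonical_Norms_Compatibility}(3), only the first paragraph remains.
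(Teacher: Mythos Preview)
Your proof is correct and follows the same approach as the paper: establish that $q$ is $\mathcal{C}$-ambidextrous, then apply \propref{Canonical_Norms_Compatibility}(3) to make the base-change square ambidextrous, and conclude via \lemref{Amenable_Conservative} using conservativity of $s_B^*$. The only difference is that where the paper dispatches the descent of ambidexterity from $\tilde{q}$ to $q$ by citing \cite[Corollary 4.3.6]{HopkinsLurie}, you supply an explicit inductive argument on the level of truncation (which is essentially what that corollary records); your version is thus more self-contained but otherwise identical in structure.
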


\begin{proof}
Since $s_B$ is surjective on connected components, the $\mathcal{C}$-ambidexterity of $\tilde{q}$ implies the $\mathcal{C}$-ambidexterity of $q$ by \cite[Corollary 4.3.6]{HopkinsLurie}. Thus, by \propref{Canonical_Norms_Compatibility}(3), the diagram $\square$ of \defref{Base_Change_Square} is ambidextrous. Since $s_{B}$ is surjective on connected components, $s_{B}^{*}$ is conservative and the claim follwos from \lemref{Amenable_Conservative}. 
\end{proof}
The following two propositions give the core properties of amenable
spaces.
\begin{prop}
\label{prop:Amenable_Space}Let $\mathcal{C}$ be an $\infty$-category
and let $A\to E\oto pB$ be a fiber sequence of weakly $\mathcal{C}$-ambidextrous
spaces, where $B$ is connected. If $E$ is $\mathcal{C}$-ambidextrous
and $A$ is $\mathcal{C}$-amenable, then $B$ is $\mathcal{C}$-ambidextrous.
\end{prop}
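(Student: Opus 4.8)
The plan is to exhibit the canonical normed functor $p^{\can}\colon\fun(E,\mathcal{C})\nto\fun(B,\mathcal{C})$ as a composition of two normed functors, one of which is amenable and the other iso-normed, and then apply the Cancellation Theorem (\thmref{Ambi_Cancellation}). Since $B$ is connected, pick a basepoint $b\in B$ with fiber $A=p^{-1}(b)$, and consider the map $e\colon E\to B$ together with the structure map $r\colon B\to\pt$. The key observation is that $E$ being $\mathcal{C}$-ambidextrous means $r\circ p\colon E\to\pt$ is $\mathcal{C}$-ambidextrous, so by \propref{Canonical_Norms_Compatibility}(2) the composite normed functor $(rp)^{\can}$ can be identified with $r^{\can}p^{\can}$. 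Thus, writing $\mathcal{E}=\fun(E,\mathcal{C})$, $\mathcal{D}=\fun(B,\mathcal{C})$, $\mathcal{C}'=\fun(\pt,\mathcal{C})=\mathcal{C}$, we have composable normed functors
\[
\xymatrix{\mathcal{E}\ \ar@{>->}[r]^{p^{\can}} & \mathcal{D}\ \ar@{>->}[r]^{r^{\can}} & \mathcal{C}}
\]
with $r^{\can}p^{\can}$ iso-normed (this is exactly the hypothesis that $E$ is $\mathcal{C}$-ambidextrous). By \thmref{Ambi_Cancellation}, to conclude that $r^{\can}$ is iso-normed — i.e. that $B$ is $\mathcal{C}$-ambidextrous — it suffices to show that $p^{\can}$ is amenable.

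So the remaining task is: \emph{$p$ is $\mathcal{C}$-amenable}. The natural endomorphism $|p^{\can}|$ of $\Id_{\fun(B,\mathcal{C})}$ should be computed fiberwise. Concretely, I would use base change: for each point $b\in B$, let $s_b\colon\pt\to B$ be the inclusion and form the pullback square with $\tilde q\colon p^{-1}(b)\to\pt$; since $B$ is connected, all fibers $p^{-1}(b)$ are equivalent to $A$. By \propref{Base_Change_Ambi}, $s_b^*|p|_X = |\tilde q|_{s_b^*X} = |A|_{s_b^*X}$ for every $X\in\fun(B,\mathcal{C})$. Now $A$ is $\mathcal{C}$-amenable by hypothesis, so $|A|$ is an isomorphism natural transformation on $\fun(\pt,\mathcal{C})$; hence $s_b^*|p|_X$ is an isomorphism in $\mathcal{C}$ for every $b\in B$. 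Since a morphism of local systems on $B$ is an isomorphism iff it is so after pulling back along every point (the family $\{s_b^*\}_{b\in B}$ is jointly conservative — here connectedness of $B$ is not even needed, only that $B$ is nonempty to get amenability off the ground, but connectedness ensures all fibers agree with the given $A$), we conclude $|p|_X$ is an isomorphism for all $X$, i.e. $|p^{\can}|$ is an isomorphism natural transformation. That is precisely amenability of $p^{\can}$.

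The main obstacle — and the point to be careful about — is the identification of the base-changed norm $|\tilde q|$ with the given $|A|$ uniformly in $b$, and the bookkeeping that $p$ itself is $\mathcal{C}$-ambidextrous (not merely weakly so) so that $p^{\can}$ is genuinely an iso-normed functor to which \thmref{Ambi_Cancellation} applies. For the former, connectedness of $B$ gives a path from $b$ to the chosen basepoint, hence an equivalence of fibers $p^{-1}(b)\simeq A$ compatible with the structure maps to $\pt$, and \propref{Base_Change_Ambi} transports the computation of $|{-}|$ along this equivalence. For the latter, note that $p$ is weakly $\mathcal{C}$-ambidextrous by assumption, and $rp$ is $\mathcal{C}$-ambidextrous; the Cancellation Theorem as stated only needs $p$ to be amenable (which forces, in particular, that $p^{\can}$ is iso-normed, since $|p^{\can}|$ being invertible implies $\nm_p$ is via the factorization in \thmref{Amenable_Section}) and $qp=rp$ iso-normed, so in fact amenability of $p$ is the only thing that needs proving, and the argument above delivers it.
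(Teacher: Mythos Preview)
Your overall strategy is exactly the paper's: show that $p$ is $\mathcal{C}$-amenable and then apply the Cancellation Theorem (\thmref{Ambi_Cancellation}) to the composite $r^{\can}p^{\can}$. The paper packages the first step as a direct citation of \corref{Amenable_Base_Change}.

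There is, however, a genuine gap in your argument for the amenability of $p$. To invoke \propref{Base_Change_Ambi} and compute $s_b^*|p|_X = |A|_{s_b^*X}$, you need $p$ to already be $\mathcal{C}$-ambidextrous: the integral, and hence the natural transformation $|p|$, are only defined once $p^{\can}$ is iso-normed. Your attempted workaround in the final paragraph is circular. The claim that ``$|p^{\can}|$ being invertible implies $\nm_p$ is [invertible] via the factorization in \thmref{Amenable_Section}'' has it backwards: amenability is \emph{by definition} a property of iso-normed functors, so $|p^{\can}|$ is not even defined until $\nm_p$ is already known to be an isomorphism, and \thmref{Amenable_Section} does not supply any such implication. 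Likewise, ``$p$ is weakly $\mathcal{C}$-ambidextrous by assumption'' is not justified: the hypothesis concerns weak ambidexterity of the \emph{spaces} $A$, $E$, $B$ (i.e., of their structure maps to $\pt$), not of the map $p\colon E\to B$.

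The paper fills this gap inside the proof of \corref{Amenable_Base_Change}: since $A$ is $\mathcal{C}$-amenable it is in particular $\mathcal{C}$-ambidextrous, and since the basepoint inclusion $\pt\to B$ is surjective on connected components ($B$ being connected), \cite[Corollary~4.3.6]{HopkinsLurie} yields that $p$ itself is $\mathcal{C}$-ambidextrous. With that in hand, your fiberwise computation goes through and is precisely the remainder of the proof of \corref{Amenable_Base_Change} (via \lemref{Amenable_Conservative}).
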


\begin{proof}
By assumption, $A$ is $\mathcal{C}$-amenable and $B$ is connected,
hence by \corref{Amenable_Base_Change}, the map $p$ is $\mathcal{C}$-amenable.
Denote $q\colon B\to\pt$ and consider the pair of composable canonically
normed functors 
\[
\xymatrix{\fun\left(E,\mathcal{C}\right)\ \ar@{>->}[r]^{p^{\can}} & \fun\left(B,\mathcal{C}\right)\ \ar@{>->}[r]^{q^{\can}} & \fun\left(\pt,\mathcal{C}\right).}
\]

Since $p^{\can}$ is amenable and $\left(qp\right)^{\can}=q^{\can}p^{\can}$
is iso-normed, by \thmref{Ambi_Cancellation}, $q^{\can}$ is iso-normed.
In other words, the map $q$ (namely, the space $B$) is $\mathcal{C}$-ambidextrous.
\end{proof}
\begin{prop}
\label{prop:Amenable_Contractible}Let $\mathcal{C}$ be an $\infty$-category
and let $A$ be a connected space, such that $\mathcal{C}$ admits
all $A$-(co)limits and $\Omega A$-(co)limits. Denoting $q\colon A\to\pt$,
if $\Omega A$ is $\mathcal{C}$-amenable, then the counit map 
\[
c_{!}^{q}\colon q_{!}q^{*}\to\Id,
\]
is an isomorphism.
\end{prop}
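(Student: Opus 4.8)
The plan is to pass to a basepoint. Since $A$ is connected it is in particular nonempty; choose a point $a\in A$ and let $p\colon\pt\to A$ be the corresponding map, so that $qp=\Id_{\pt}$ and the homotopy fibers of $p$ are all equivalent to $\Omega A$. Everything will be deduced from the amenability of these fibers.

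The first step is to show that $p$ is $\mathcal{C}$-amenable. Pulling $p$ back along itself produces a pullback square whose upper-left corner is $\pt\times_{A}\pt\simeq\Omega A$ and whose two legs lying over $A$ are the projection $\Omega A\to\pt$ and $p\colon\pt\to A$ itself; the latter is surjective on connected components because $A$ is connected, and $\Omega A\to\pt$ is $\mathcal{C}$-amenable by hypothesis, so \corref{Amenable_Base_Change} applies and shows that $p$ is $\mathcal{C}$-amenable (hence $\mathcal{C}$-ambidextrous, so $p^{\can}$ is an amenable iso-normed functor). All (co)limits implicitly needed for this are indexed either by $A$ or by the spaces occurring in the diagonal tower of $\Omega A\to\pt$, and these exist by the standing hypotheses together with the $\mathcal{C}$-ambidexterity of $\Omega A$.

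Now fix $X\in\mathcal{C}$ and apply the Higher Maschke Theorem (\thmref{Amenable_Section}) to the amenable iso-normed functor $p^{\can}$ at the object $q^{*}X\in\fun(A,\mathcal{C})$: the counit $(c_{!}^{p})_{q^{*}X}\colon p_{!}p^{*}q^{*}X\to q^{*}X$ admits a section $\sigma\colon q^{*}X\to p_{!}p^{*}q^{*}X$. Using the identifications $p^{*}q^{*}=(qp)^{*}=\Id_{\mathcal{C}}$ and $q_{!}p_{!}=(qp)_{!}=\Id_{\mathcal{C}}$, applying $q_{!}$ yields two morphisms
\[
s\coloneqq q_{!}\big((c_{!}^{p})_{q^{*}X}\big)\colon X\longrightarrow q_{!}q^{*}X,\qquad
r\coloneqq q_{!}(\sigma)\colon q_{!}q^{*}X\longrightarrow X,
\]
and the identity $(c_{!}^{p})_{q^{*}X}\circ\sigma=\Id_{q^{*}X}$ gives $s\circ r=\Id_{q_{!}q^{*}X}$ by functoriality of $q_{!}$. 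On the other hand, by \propref{Canonical_Norms_Compatibility} the normed functor $(qp)^{\can}=q^{\can}p^{\can}$ is (canonically identified with) the identity normed functor, so its counit is the identity; since that counit is the composite
\[
q_{!}p_{!}p^{*}q^{*}\xrightarrow{\,q_{!}c_{!}^{p}q^{*}\,}q_{!}q^{*}\xrightarrow{\,c_{!}^{q}\,}\Id_{\mathcal{C}},
\]
evaluating at $X$ gives $c_{!}^{q}\circ s=\Id_{X}$.

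To finish, combine the two relations: $r=(c_{!}^{q}\circ s)\circ r=c_{!}^{q}\circ(s\circ r)=c_{!}^{q}$ in $h\mathcal{C}$, whence $c_{!}^{q}\circ s=\Id_{X}$ and $s\circ c_{!}^{q}=s\circ r=\Id_{q_{!}q^{*}X}$; thus $c_{!}^{q}$ is an isomorphism at every $X$, and therefore an isomorphism natural transformation. I expect the only delicate points to be bookkeeping rather than anything substantive: identifying $q_{!}p_{!}$ and $p^{*}q^{*}$ with the identity, recognizing $c_{!}^{q}\circ(q_{!}c_{!}^{p}q^{*})$ as the counit of $(qp)^{\can}=\Id$, and checking that the assumed $A$- and $\Omega A$-(co)limits suffice to invoke \corref{Amenable_Base_Change} and \thmref{Amenable_Section}. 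Since the claim that $c_{!}^{q}$ is invertible is detected already in $h\mathcal{C}$, no higher-coherence considerations enter.
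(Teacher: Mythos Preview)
Your argument is correct and follows essentially the same route as the paper's proof: choose a basepoint $p\colon\pt\to A$ (the paper calls it $e$), use \corref{Amenable_Base_Change} to deduce that $p$ is $\mathcal{C}$-amenable, observe that the composite counit $c_{!}^{q}\circ(q_{!}c_{!}^{p}q^{*})$ is the counit of the identity adjunction and hence an isomorphism, and then use the section of $c_{!}^{p}$ supplied by \thmref{Amenable_Section} to conclude. The only difference is presentational: you work pointwise at an object $X$ and spell out explicitly the elementary ``two-out-of-three'' step showing that $c_{!}^{q}\circ s=\Id$ and $s\circ r=\Id$ force $c_{!}^{q}$ to be invertible, whereas the paper leaves this implicit.
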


\begin{proof}
Let $e\colon\pt\to A$ be a choice of a base point. The composition
\[
\Id=q_{!}\underline{e_{!}e^{*}}q^{*}\oto{c_{!}^{e}}\underline{q_{!}q^{*}}\oto{c_{!}^{q}}\Id
\]

is the counit of the adjunction 
\[
\Id=q_{!}e_{!}\dashv e^{*}q^{*}=\Id,
\]
and hence an isomorphism. Thus, the whiskering $q_{!}c_{!}^{e}q^{*}$
is a left inverse of $c_{!}^{q}$ up to isomorphism. It therefore
suffices to show that $c_{!}^{e}$ has itself a left inverse. Since
$A$ is connected and $\Omega A$ is $\mathcal{C}$-amenable, the
map $e$ is $\mathcal{C}$-amenable by \corref{Amenable_Base_Change}.
Thus, by \thmref{Amenable_Section}, the map $c_{!}^{e}$ has a left
inverse.
\end{proof}

\subsubsection{Higher Semiadditivity \& Spans }

We conclude with recalling from \cite{Harpaz} some results regarding
the universality of spans of $m$-finite spaces among $m$-semiadditive
$\infty$-categories. These results are useful in reducing questions
about general $m$-semiadditive categories to the universal case,
in which they are sometimes easier to solve.

Let $\mathcal{S}_{m}\ss\mathcal{S}$ be the full subcategory spanned
by $m$-finite spaces and let $\mathcal{S}_{m}^{m}$ be the $\infty$-category
of spans of $m$-finite spaces (see \cite{Barwick}). Roughly, 
\begin{itemize}
\item The objects of $\mathcal{S}_{m}^{m}$ are $m$-finite spaces. 
\item A morphism from $A$ to $B$ is a span $A\from E\to B$, where $E$
is $m$-finite as well. 
\item Composition, up to homotopy, is given by pullback of spans. 
\end{itemize}
By \cite[Section 2.2]{Harpaz}, the $\infty$-category $\mathcal{S}_{m}^{m}$
of spans of $m$-finite spaces inherits a symmetric monoidal structure
from the Cartesian symmetric monoidal structure on $\mathcal{S}_{m}$.
While this symmetric monoidal structure is not itself Cartesian, the
unit is $\pt\in\mathcal{S}_{m}^{m}$ and the tensor of two maps $A_{1}\xleftarrow{q_{1}}E_{1}\oto{r_{1}}B_{1}$
and $A_{2}\xleftarrow{q_{2}}E_{2}\oto{r_{2}}B_{2}$ is equivalent
to 
\[
A_{1}\times A_{2}\xleftarrow{q_{1}\times q_{2}}E_{1}\times E_{2}\oto{r_{1}\times r_{2}}B_{1}\times B_{2}.
\]

One of the main results of \cite{Harpaz} is that $\mathcal{S}_{m}^{m}$
canonically acts on any $m$-semiadditive $\infty$-category (and
the existence of such an action is in fact equivalent to $m$-semiadditivity).
Formally,
\begin{thm}
[Harpaz, {{\cite[Corollary 5.2]{Harpaz}}}]\label{thm:=00005BHarpaz=00005DSpan_Action}
For every $m$-semiadditive $\mathcal{C}$, there is a unique monoidal
$m$-finite colimit preserving functor $\mathcal{S}_{m}^{m}\to\fun\left(\mathcal{C},\mathcal{C}\right)$. 
\end{thm}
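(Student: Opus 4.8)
We sketch the argument of \cite{Harpaz}. The plan is to produce the functor explicitly, as ``copowering with $m$-finite spaces'', and to deduce uniqueness from the fact that $\mathcal{S}_{m}^{m}$ is generated under $m$-finite colimits by its monoidal unit.

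On objects, the desired functor $\Phi\colon\mathcal{S}_{m}^{m}\to\fun(\mathcal{C},\mathcal{C})$ sends an $m$-finite space $A$ to the endofunctor $A\otimes(-)\colon X\mapsto\colim_{A}X$ (the colimit of the constant $A$-shaped diagram), which exists since $\mathcal{C}$ admits $m$-finite colimits. A morphism in $\mathcal{S}_{m}^{m}$ is a span $A\xleftarrow{f}E\xrightarrow{g}B$ with $E$, and hence $f$ and $g$, $m$-finite (that a map between $m$-finite spaces is $m$-finite is a small lemma, via the long exact sequence of homotopy groups). As $\mathcal{C}$ is $m$-semiadditive, $f$ is $\mathcal{C}$-ambidextrous, so the counit $f_{!}f^{*}\to\Id$ of the associated canonical normed functor has a wrong-way unit $\mu_{f}\colon\Id\to f_{!}f^{*}$, and one sends the span to the natural transformation
\[
A\otimes X \;=\; \colim_{A}X \;\oto{\colim_{A}\mu_{f}}\; \colim_{A}f_{!}f^{*}X \;\simeq\; \colim_{E}X \;\oto{g_{!}}\; \colim_{B}X \;=\; B\otimes X,
\]
where the middle equivalence is $\colim_{A}\circ f_{!}\simeq\colim_{E}$. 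Every morphism of $\mathcal{S}_{m}^{m}$ factors as such a ``backward'' span followed by a ``forward'' span $E\xleftarrow{=}E\xrightarrow{g}B$, on which $\Phi$ is just functoriality of $\colim$. The monoidal structure is supplied by the Fubini equivalence $\colim_{A\times B}\simeq\colim_{A}\circ\colim_{B}$, which identifies $(A\times B)\otimes(-)$ with $(A\otimes(-))\circ(B\otimes(-))$, compatibly with $\pt\otimes(-)\simeq\Id_{\mathcal{C}}$.

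For uniqueness, note that every $m$-finite space $A$ is the colimit in $\mathcal{S}_{m}^{m}$ of the constant $A$-shaped diagram on $\pt$ (part of the identification of $\mathcal{S}_{m}^{m}$ with the free $m$-semiadditive $\infty$-category on the point, equivalently the $m$-semiadditive completion of $\pt$), so $\mathcal{S}_{m}^{m}$ is generated under $m$-finite colimits by its unit $\pt$. Hence any monoidal, $m$-finite-colimit-preserving $\Phi$ must carry $\pt$ to $\Id_{\mathcal{C}}$ and is thereby determined on objects; its values on forward spans are forced by functoriality of colimits, and on backward spans they are forced to be the wrong-way units by $\mathcal{C}$-ambidexterity, with all higher coherences pinned down by the same generation property. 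The main obstacle is carrying this out coherently at the level of (symmetric monoidal) $\infty$-categories rather than homotopy categories: this is where the technology of \cite{Harpaz} enters --- the description of $\mathcal{S}_{m}^{m}$ via iterated spans and $m$-commutative monoids, the fact that $\mathcal{S}_{m}^{m}$ is an idempotent algebra in the $\infty$-category of $\infty$-categories with $m$-finite colimits (so that its modules are exactly the $m$-semiadditive $\infty$-categories, each carrying a unique module structure), and the compatibility of the canonical norms of \cite{HopkinsLurie} with all of this. We refer to \cite[Section~5]{Harpaz} for the details.
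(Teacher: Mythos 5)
Your sketch is correct and matches how the paper treats this result: the paper simply cites \cite[Corollary 5.2]{Harpaz} and then unwinds the action by exactly the formulas you give (objects go to $a_{!}a^{*}$, forward arrows to right-way counits, backward arrows to wrong-way units, with the monoidal structure from $\colim_{A\times B}\simeq\colim_{A}\circ\colim_{B}$), deferring the coherence issues to Harpaz's idempotent-algebra machinery just as you do. The paper's subsequent remark — that on homotopy categories these formulas can be taken as the definition, with compositions checked via \cite[Proposition 4.2.1(2)]{HopkinsLurie} — is precisely the level of rigor your proposal supplies.
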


Unwinding the definition of this action, we get that
\begin{itemize}
\item The image of an $m$-finite space $a\colon A\to\pt$ is equivalent
to the functor 
\[
\left(-\right)_{A}=a_{!}a^{*}\colon\mathcal{C}\to\mathcal{C}
\]
(i.e. colimit over the constant $A$-shaped diagram).
\item The image of a ``right way'' arrow $A\xleftarrow{=}A\oto rB$ is
homotopic to the right way counit map
\[
\left(-\right)_{A}=a_{!}a^{*}\simeq b_{!}\underline{r_{!}r^{*}}b^{*}\oto{c_{!}^{r}}b_{!}b^{*}=\left(-\right)_{B},
\]
where $a\colon A\to\pt$ and $b\colon B\to\pt$ are the unique maps
(i.e. it is the natural map induced on colimits).
\item The image of a ``wrong way'' arrow $B\xleftarrow{q}A\xrightarrow{=}A$
is homotopic to the wrong way unit map 
\[
\left(-\right)_{B}=b_{!}b^{*}\oto{\mu_{q}}b_{!}\left(q_{!}q^{*}\right)b^{*}\simeq a_{!}a^{*}=\left(-\right)_{A}
\]
(which can informally be thought of as ``integration along the fibers
of $q$'').
\item The natural transformation $|A|$ at $\pt\in \mathcal{S}_m^m$, is given by the span $\pt \from A \to \pt.$
\end{itemize}
\begin{rem}
If one is only interested in this functor on the level of homotopy
categories (as we are),
\[
h\mathcal{S}_{m}^{m}\to h\fun\left(\mathcal{C},\mathcal{C}\right),
\]
one can use the above formulas as a \emph{definition}. The compatibility
with composition can be verified using \cite[Proposition 4.2.1 (2)]{HopkinsLurie}.
\end{rem}

\subsection{Higher Semiadditive Functors}

In this section, we study $m$-finite colimit preserving functors between
$m$-semiadditive $\infty$-categories and study their behavior with
respect to integration. We call such functors\emph{ $m$-semiadditive.}
\begin{defn}
Let $F\colon\mathcal{C}\to\mathcal{D}$ be a functor of $\infty$-categories
and $q\colon A\to B$ a map of spaces. We define the \emph{$\left(F,q\right)$-square
}to be the commutative square
\[
\xymatrix@C=3pc{\fun\left(B,\mathcal{C}\right)\ar[d]_{q^{*}}\ar[r]^{F_{*}} & \fun\left(B,\mathcal{D}\right)\ar[d]^{q^{*}}\\
\fun\left(A,\mathcal{C}\right)\ar[r]^{F_{*}} & \fun\left(A,\mathcal{D}\right),
}
\]
where the horizontal functors are post-composition with $F$ and the
vertical functors are pre-composition with $q$. If $q$ is weakly $\mathcal{C}$ and $\mathcal{D}$ ambidextrous, then this square is canonically normed.
\end{defn}

\begin{prop}
\label{prop:BC_Functor}Let $F\colon\mathcal{C}\to\mathcal{D}$ be
a functor of $\infty$-categories and $q\colon A\to B$ a map of spaces.
If $\mathcal{C}$ and $\mathcal{D}$ admit, and $F$ preserves, all
$q$-colimits (resp. $q$-limits), then the $\left(F,q\right)$-square
satisfies the $\bc_{!}$ (resp. $\bc_{*}$) condition.
\end{prop}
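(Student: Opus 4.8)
The plan is to verify the $\bc_!$ condition (and, dually, $\bc_*$) objectwise over $B$, thereby reducing it to the single statement that $F$ preserves colimits over the fibres of $q$. Since $\mathcal{C}$ and $\mathcal{D}$ admit all $q$-colimits, the functors $q^{*}\colon\fun(B,\mathcal{C})\to\fun(A,\mathcal{C})$ and $q^{*}\colon\fun(B,\mathcal{D})\to\fun(A,\mathcal{D})$ admit left adjoints $q_{!}$ (pointwise left Kan extension), so the $\bc_!$-map $\beta_{!}\colon q_{!}F_{*}\to F_{*}q_{!}$ of the $(F,q)$-square is defined and it remains to show it is an isomorphism. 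As $B$ is a space, an equivalence of functors with values in $\fun(B,\mathcal{D})$ is detected after composing with the evaluation functors $b^{*}\colon\fun(B,\mathcal{D})\to\mathcal{D}$ (precomposition with $b\colon\pt\to B$) for $b\in B$, which form a jointly conservative family; so it suffices to show $b^{*}\star\beta_{!}$ is an isomorphism for each fixed $b$.

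Fix $b\in B$ and write $\iota_{b}\colon q^{-1}(b)\into A$ for the inclusion of the homotopy fibre and $q_{b}\colon q^{-1}(b)\to\pt$, so that $\iota_{b},q_{b},q,b$ form a pullback square of spaces. Consider the outer commutative square
\[
\xymatrix@C=3pc{\fun(B,\mathcal{C})\ar[d]_{q^{*}}\ar[r]^{F_{*}} & \fun(B,\mathcal{D})\ar[d]^{q^{*}}\ar[r]^{b^{*}} & \mathcal{D}\ar[d]^{q_{b}^{*}}\\
\fun(A,\mathcal{C})\ar[r]^{F_{*}} & \fun(A,\mathcal{D})\ar[r]^{\ \iota_{b}^{*}} & \fun(q^{-1}(b),\mathcal{D}),}
\]
whose horizontal composites are $b^{*}F_{*}$ and $\iota_{b}^{*}F_{*}$. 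As drawn it is the horizontal pasting of the $(F,q)$-square with the base-change square of $\mathcal{D}$-valued local systems attached to the above pullback. Since precomposition with $b$ (resp.\ with $\iota_{b}$) commutes strictly with postcomposition with $F$, the very same outer square, with horizontal composites rewritten as $F_{*}b^{*}$ and $F_{*}\iota_{b}^{*}$, is also the horizontal pasting, now through the column $\mathcal{C},\fun(q^{-1}(b),\mathcal{C})$, of the base-change square of $\mathcal{C}$-valued local systems with the $(F,q_{b})$-square. All vertical functors involved admit left adjoints (fibrewise left Kan extensions, which exist by the hypotheses on $\mathcal{C}$ and $\mathcal{D}$), so \lemref{Horizontal_Pasting_Formula}(1) applies to each factorization and identifies the $\bc_!$-map of the outer square with the appropriate whiskered composite of the two constituent $\bc_!$-maps.

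In the second factorization the $\mathcal{C}$-valued base-change square satisfies $\bc_!$ by \lemref{Base_Change_BC}, and the $\bc_!$-map of the $(F,q_{b})$-square is, unwinding the definition and using that $q_{b}^{*}$ is the constant-diagram functor so that $(q_{b})_{!}=\colim_{q^{-1}(b)}$, the canonical comparison transformation $\colim_{q^{-1}(b)}\!\circ\,F_{*}\to F\circ\colim_{q^{-1}(b)}$; this is an isomorphism precisely because $F$ preserves $q$-colimits, i.e.\ colimits of shape $q^{-1}(b)$. Hence the $\bc_!$-map of the outer square is an isomorphism. In the first factorization this same map is $b^{*}\star\beta_{!}$ postcomposed with the $\bc_!$-map of the $\mathcal{D}$-valued base-change square, again invertible by \lemref{Base_Change_BC}; therefore $b^{*}\star\beta_{!}$ is an isomorphism. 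Letting $b$ vary, $\beta_{!}$ is an isomorphism, which is the $\bc_!$ assertion. The $\bc_*$ statement follows by the formally dual argument, with $q_{!},\beta_{!},\colim$ and ``preserves $q$-colimits'' replaced by $q_{*},\beta_{*},\holim$ and ``preserves $q$-limits'', invoking the $\bc_*$ clauses of \lemref{Base_Change_BC} and \lemref{Horizontal_Pasting_Formula}. The only delicate point is the bookkeeping of the two horizontal factorizations and of which whiskered $\bc$-maps occur in \lemref{Horizontal_Pasting_Formula}, together with the routine identification of the $\bc_!$-map of an $(F,q_{b})$-square over a point with the canonical colimit-comparison map; there is no serious obstacle.
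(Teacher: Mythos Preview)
Your proof is correct and is essentially a careful unpacking of the paper's one-line argument, which reads in its entirety: ``This follows from the point-wise formulas for the left and right Kan extensions.'' Your use of \lemref{Base_Change_BC} and \lemref{Horizontal_Pasting_Formula} to reduce to points of $B$, and then identify the $\bc_!$-map with the canonical colimit-comparison map, is exactly what that sentence encodes.
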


\begin{proof}
This follows from the point-wise formulas for the left and right Kan
extensions.
\end{proof}
The following is the main result of this section.
\begin{thm}
\label{thm:Ambi_Functors}Let $F\colon\mathcal{C}\to\mathcal{D}$
be a functor of $\infty$-categories which preserves $\left(m-1\right)$-finite
colimits. Let $q\colon A\to B$ be an $m$-finite map of spaces. If
$q$ is (weakly) $\mathcal{C}$-ambidextrous and (weakly) $\mathcal{D}$-ambidextrous,
then the $\left(F,q\right)$-square is (weakly) ambidextrous. 
\end{thm}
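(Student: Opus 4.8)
The plan is to prove the statement by induction on the truncatedness level $m$ of the map $q$, using the diagonal-induction construction of the canonical norm (\defref{Diagonal_Induction}, \defref{Ambidexterity}) together with the pasting calculus for normed squares developed in Section 2. First I would treat the base case $m=-2$: here $q$ is an equivalence of spaces, both $q^{*}$ on $\mathcal{C}$-valued and $\mathcal{D}$-valued local systems are equivalences with canonical norm the identity, and the $(F,q)$-square is tautologically ambidextrous (the norm diagram $\square$ of \defref{Normed_Ambi_Square}(2) consists of identity maps), regardless of whether $F$ preserves anything. For the inductive step, assume the claim for $(m-1)$-truncated maps and let $q\colon A\to B$ be $m$-truncated; then its diagonal $\delta\colon A\to A\times_{B}A$ is $(m-1)$-truncated, and by hypothesis $\delta$ is (weakly) $\mathcal{C}$- and $\mathcal{D}$-ambidextrous. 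The key point is that $F$ preserves $(m-1)$-finite colimits, hence in particular $\delta$-colimits (the fibers of $\delta$ are $(m-1)$-finite), so \propref{BC_Functor} gives the $\bc_{!}$ condition for the $(F,\delta)$-square, and the inductive hypothesis gives that the $(F,\delta)$-square is (weakly) ambidextrous.

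The heart of the argument is then to deduce ambidexterity of the $(F,q)$-square from ambidexterity of the $(F,\delta)$-square, i.e.\ to show that the diagonal-induction construction is compatible with post-composition by $F$. I would set this up by writing the canonical norm $\nm_{q}$, via \defref{Diagonal_Induction}, in terms of the wrong-way unit $\nu_{q}\colon q^{*}q_{!}\to\Id$ factored through the base-change square for the pullback of $q$ along itself (with projections $\pi_{1},\pi_{2}$) and the wrong-way unit $\mu_{\delta}$ of $\delta$. Post-composing everything with $F_{*}$, one gets a diagram in $\mathcal{D}$-valued local systems; the task is to identify the two natural transformations obtained — first take $\nu_{q}$ then apply $F_{*}$, versus first apply $F_{*}$ then take $\nu_{\tilde q}$ — as homotopic. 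This is precisely where the $\bc$-compatibility lemmas of Section 2.2 (\lemref{BC_Co_Units}, \lemref{Horizontal_Pasting_Formula}, \lemref{Vertical_Pasting_Formula}, and especially \lemref{Triangle_Unit_Counit_Norm_Diagram}) do the bookkeeping: the relevant base-change squares for $F$ satisfy $\bc_{!}$ by \propref{BC_Functor}, and the $(F,\delta)$-square being ambidextrous lets one commute $F_{*}$ past $\mu_{\delta}$. Concretely, I would exhibit the $(F,q)$-norm diagram as a suitable pasting (horizontal and vertical) of the $(F,\delta)$-norm diagram with base-change squares of the pullback diagram defining $q$, all of which are (weakly) ambidextrous or satisfy the appropriate $\bc$ condition, and then invoke \lemref{Horizontal_Pasting_Ambi} and \lemref{Vertical_Pasting_Ambi}.

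I expect the main obstacle to be the honest verification that the canonical-norm square for $q$ genuinely arises from the canonical-norm square for $\delta$ by the pasting operations for which we have ambidexterity-stability lemmas — in other words, matching the ad hoc diagonal-induction formula of \defref{Diagonal_Induction} with the clean categorical pasting calculus of Section 2, including keeping track of the $\bc_{!}$ versus $\bc_{*}$ distinction and the fact that in the weakly-ambidextrous case we only have commutativity of the norm diagram, not invertibility. A secondary technical point is confirming that $F$ preserving $(m-1)$-finite colimits suffices (rather than needing $m$-finite colimits): the construction of $\nm_{q}$ only uses $\delta$-colimits and the base-change $\bc_{!}$ isomorphism, both of which live at level $m-1$, so no $m$-finite colimit preservation is needed — but I would state this carefully, as it is exactly the feature that makes the theorem useful. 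Once the norm diagram is identified, the ``weakly ambidextrous'' conclusion is immediate from \lemref{Triangle_Unit_Counit_Norm_Diagram}(1), and the ``ambidextrous'' conclusion (when $q$ is honestly $\mathcal{C}$- and $\mathcal{D}$-ambidextrous) follows since $\beta_{!}$ and $\beta_{*}$ are then isomorphisms by \propref{BC_Functor} and the norms $\nm_{q}$, $\nm_{\tilde q}$ are isomorphisms by hypothesis.
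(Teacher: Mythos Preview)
Your inductive strategy is exactly the paper's: induct on $m$, base case $m=-2$ trivial, and for the inductive step use that the $(F,\delta)$-square is ambidextrous (by induction) and satisfies $\bc_!$ (by \propref{BC_Functor}, since $F$ preserves $(m-1)$-finite colimits). Your identification of the main obstacle is also accurate.

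However, the concrete mechanism you propose --- exhibiting the $(F,q)$-norm diagram as a horizontal/vertical pasting of normed squares and invoking \lemref{Horizontal_Pasting_Ambi} and \lemref{Vertical_Pasting_Ambi} --- does not work as stated. Those lemmas apply to pastings of \emph{normed squares}, i.e.\ diagrams where the vertical functors themselves carry norms and are being composed or juxtaposed. The diagonal-induction construction of $\nm_q$ from $\nm_\delta$ (\defref{Diagonal_Induction}) is not of this shape: $\delta$ goes from $A$ to $A\times_B A$, not along any edge of a pasting that would yield the $(F,q)$-square. So there is no decomposition of the $(F,q)$-square into pieces to which those two lemmas apply.

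What the paper actually does is closer to your second description (``commute $F_*$ past $\mu_\delta$'') than to the pasting plan. It uses \lemref{Triangle_Unit_Counit_Norm_Diagram}(1) to reduce to the wrong-way counit triangle $\cone$, then writes out a large diagram tracking $\nu_q$ on each side through its definition $q^*q_! \xrightarrow{\beta^{-1}} (\pi_2)_!\pi_1^* \xrightarrow{\mu_\delta} \Id$. Two pieces require work: a rectangle comparing the two $\beta^{-1}$'s, handled by reading a commutative \emph{cube} of functor categories (built from $(\heartsuit)$ and $F_*$) two ways via \lemref{Horizontal_Pasting_Formula}(1) --- the formula, not the Ambi lemma; and a triangle comparing the two $\mu_\delta$'s, which is exactly the $\cocone$ diagram of \lemref{Triangle_Unit_Counit_Norm_Diagram}(2) for the $(F,\delta)$-square, available by the inductive hypothesis and the $\bc_!$ condition. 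Your plan has all the right ingredients; you just need to replace the pasting-ambidexterity lemmas with this direct diagram chase.
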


\begin{proof}
The statement about ambidexterity follows immediately from the ambidexterity of $q$ and the statement about weak ambidexterity.
We shall prove the latter by induction on $m$. For $m=-2$, both vertical
maps in the $\left(F,q\right)$-square are isomorphisms, and so the
claim follows from \propref{Canonical_Norms_Compatibility}(1). We
therefore assume $m\ge-1$. Consider the diagram
\[
\qquad \quad
\vcenter{
\xymatrix@C=3pc{A\ar[rd]^{\delta}\ar@/^{1pc}/@{=}[rrd]\ar@/_{1pc}/@{=}[rdd]\\
 & A\times_{B}A\ar[d]^{\pi_{2}}\ar[r]^{\pi_{1}} & A\ar[d]^{q}\\
 & A\ar[r]^{q} & B.
}}
\qquad\left(\heartsuit\right)
\]
The square in the diagram induces a $\bc_{!}$ map $\beta_!\colon\left(\pi_{2}\right)_{!}\pi_{1}^{*}\to q^{*}q_{!}$,
which is an isomorphism by \lemref{Base_Change_BC}. By definition,
$\nu_{q}^{\mathcal{C}}$ is the composition of maps 
\[
q^{*}q_{!}\xrightarrow{\beta^{-1}}\left(\pi_{2}\right)_{!}\pi_{1}^{*}\xrightarrow{\mu_{\delta}}\left(\pi_{2}\right)_{!}\delta_{!}\delta^{*}\pi_{1}^{*}\simeq\Id.
\]

By \lemref{Triangle_Unit_Counit_Norm_Diagram}(1), it suffices to
show that the wrong way counit diagram of $q$ commutes. This will
follow from the commutativity of the (solid) diagram:
\[
\xymatrix@C=3pc{q^{*}q_{!}\red{F_{*}}{}\ar[dd]^{\beta_{!}}\ar[r]^-{\beta^{-1}} & \left(\pi_{2}\right)_{!}\left(\pi_{1}\right)^{*}\red{F_{*}}{}\ar@{-->}[dd]^{\wr}\ar[r]^-{\mu_{\delta}^{\mathcal{D}}} & \left(\pi_{2}\right)_{!}\delta_{!}\delta^{*}\left(\pi_{1}\right)^{*}\red{F_{*}}{}\ar@{-->}[d]^{\wr}\ar@/^{1pc}/[rrdd]^{\sim}\\
 &  & \left(\pi_{2}\right)_{!}\delta_{!}\delta^{*}\red{F_{*}}{}\left(\pi_{1}\right)^{*}\ar@{-->}[d]^{\wr}\\
q^{*}\red{F_{*}}{}q_{!}\ar[dd]^{\wr} & \left(\pi_{2}\right)_{!}\red{F_{*}}{}\left(\pi_{1}\right)^{*}\ar@{-->}[dd]\ar@{-->}[ru]^{\mu_{\delta}^{\mathcal{D}}}\ar@{-->}[rd]_{\mu_{\delta}^{\mathcal{C}}} & \left(\pi_{2}\right)_{!}\delta_{!}\red{F_{*}}{}\delta^{*}\left(\pi_{1}\right)^{*}\ar@{-->}[d]\ar@{-->}[rr]^{\sim} &  & \red{F_{*}}.\\
 &  & \left(\pi_{2}\right)_{!}\red{F_{*}}{}\delta_{!}\delta^{*}\left(\pi_{1}\right)^{*}\ar@{-->}[d]\\
\red{F_{*}}{}q^{*}q_{!}\ar[r]^-{\beta^{-1}} & \red{F_{*}}{}\left(\pi_{2}\right)_{!}\left(\pi_{1}\right)^{*}\ar[r]^-{\mu_{\delta}^{\mathcal{C}}} & \red{F_{*}}{}\left(\pi_{2}\right)_{!}\delta_{!}\delta^{*}\left(\pi_{1}\right)^{*}\ar@/_{1pc}/[rruu]_{\sim}
}
\]
The two trapezoids and the upper triangle commute for formal reasons.
The bottom triangle commutes by \lemref{Vertical_Pasting_Formula}(1)
and the fact that $\pi_{2}\circ\delta=\Id$. For the rectangle on
the left, it is enough to prove the commutativity of the associated
rectangular diagram with $\beta$ instead of $\beta^{-1}$ in both
horizontal lines, which we denote by $\left(*\right)$. We can now
consider the commutative cubical diagram
\[
\vcenter{
\xymatrix@R=1pc@C=1pc{\fun\left(B,\mathcal{C}\right)\ar[dd]\sb(0.3){q^{*}}
\ar[rrr]\sp(0.6){\red{F_{*}}{}}\ar[rd]^{q^{*}}\ar@{..>}[rrrrd] &  &  & \fun\left(B,\mathcal{D}\right)\ar@{->}'[d][dd]\sp(0.3){q^{*}}\ar[rd]^{q^{*}}\\
 & \fun\left(A,\mathcal{C}\right)\ar[dd]\sb(0.3){\pi_{2}^{*}}\ar[rrr]\sp(0.4){\red{F_{*}}{}} &  &  & \fun\left(A,\mathcal{D}\right)\ar[dd]\sp(0.65){\pi_{2}^{*}}\\
\fun\left(A,\mathcal{C}\right)\ar@{..>}[rrrrd]\ar@{->}'[r][rrr]\sp(0.4){\red{F_{*}}{}\qquad}\ar[rd]^{\pi_{1}^{*}} &  &  & \fun\left(A,\mathcal{D}\right)\ar[rd]^{\pi_{1}^{*}}\\
 & \fun\left(A\times_{B}A,\mathcal{C}\right)\ar[rrr]\sp(0.4){\red{F_{*}}{}} &  &  & \fun\left(A\times_{B}A,\mathcal{D}\right).
}}
\qquad\left(\spadesuit\right)
\]

Applying \lemref{Horizontal_Pasting_Formula}(1) once to the back
and then right face of $\left(\spadesuit\right)$ and once to the
left and then front face of $\left(\spadesuit\right)$, we get two
presentations of the $\bc_{!}$ map of the diagram
\[
\xymatrix@C=3pc{
\fun(B,\mathcal{C})\ar[d]_{q^*}\ar@{..>}[r] & \fun(A,\mathcal{D})\ar[d]^{\pi_{2}^{*}} \\
\fun(A,\mathcal{C})\ar@{..>}[r] & \fun(A\times_{B}A,\mathcal{D}).
}
\]
These two presentations correspond precisely to the two paths
in $\left(*\right)$.

It is left to check the commutativity of the triangle in the middle,
which is a whiskering of the diagram
\[
\qquad
\vcenter{
\xymatrix{ & \delta_{!}\delta^{*}\red{F_{*}}{}\ar[d]^{\wr}\\
\red{F_{*}}{}\ar[ru]^{\mu_{\delta}^{\mathcal{D}}}\ar[rd]_{\mu_{\delta}^{\mathcal{C}}} & \delta_{!}\red{F_{*}}{}\delta^{*}\ar[d]^{\beta_{!}}\\
 & \red{F_{*}}{}\delta_{!}\delta^{*}.
}}
\qquad\left(\cocone\right)
\]

The map $\delta$ is an $\left(m-1\right)$-finite map that is both
$\mathcal{C}$-ambidextrous and $\mathcal{D}$-ambidextrous. By assumption,
$F$ preserves $\left(m-1\right)$-finite colimits and so, by the
inductive hypothesis, the norm diagram of the $\left(F,\delta\right)$-square
commutes. Thus, $\cocone$ commutes by \lemref{Triangle_Unit_Counit_Norm_Diagram}(2).
\end{proof}
As a corollary, we get a higher analogue of a known fact about $0$-semiadditive
categories.
\begin{cor}
\label{cor:Semi_Add_Functors}Let $F\colon\mathcal{C}\to\mathcal{D}$
be a functor of $m$-semiadditive $\infty$-categories. The functor
$F$ preserves $m$-finite colimits if and only if it preserves $m$-finite
limits.
\end{cor}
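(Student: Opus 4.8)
The plan is to deduce both implications from \thmref{Ambi_Functors}, combined with the formal observation recorded in \defref{Normed_Ambi_Square}(3) that an ambidextrous square satisfies the $\bc_!$ condition if and only if it satisfies the $\bc_*$ condition, and with the self-duality of $m$-semiadditivity. For the latter, note that it is straightforward to verify, by induction on the truncatedness of maps as in \defref{Ambidexterity}, that an $\infty$-category $\mathcal{C}$ is $m$-semiadditive if and only if $\mathcal{C}^{op}$ is (passing to the opposite interchanges $q_!$ and $q_*$, and the diagonally induced norm, so that the canonical norm of $q$ in $\mathcal{C}^{op}$ is the opposite of the canonical norm of $q$ in $\mathcal{C}$; in particular it is an isomorphism in one case exactly when it is in the other).

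First I would prove the implication ``$F$ preserves $m$-finite colimits $\implies$ $F$ preserves $m$-finite limits''. So assume $F$ preserves $m$-finite colimits, and let $q\colon A\to B$ be an arbitrary $m$-finite map of spaces. Since $\mathcal{C}$ and $\mathcal{D}$ are $m$-semiadditive, $q$ is both $\mathcal{C}$-ambidextrous and $\mathcal{D}$-ambidextrous. As $F$ preserves $m$-finite colimits it in particular preserves $(m-1)$-finite colimits, so \thmref{Ambi_Functors} applies and shows that the $(F,q)$-square is ambidextrous. On the other hand, $q$-colimits are colimits over the $m$-finite spaces $q^{-1}(b)$, so $F$ preserves $q$-colimits, and hence by \propref{BC_Functor} the $(F,q)$-square satisfies the $\bc_!$ condition. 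Being ambidextrous, it therefore also satisfies the $\bc_*$ condition, which, by the pointwise formula for the right Kan extension (as in the proof of \propref{BC_Functor}), is exactly the assertion that $F$ preserves $q$-limits. Applying this with $B=\pt$ (so that $q$-limits are limits of shape the $m$-finite space $A$) shows that $F$ preserves all $m$-finite limits.

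For the converse, suppose $F$ preserves $m$-finite limits. Then $F^{op}\colon\mathcal{C}^{op}\to\mathcal{D}^{op}$ is a functor between $m$-semiadditive $\infty$-categories (by self-duality of $m$-semiadditivity) which preserves $m$-finite colimits, so by the case already established $F^{op}$ preserves $m$-finite limits, i.e.\ $F$ preserves $m$-finite colimits. The only point that needs any care is the identification of the $\bc_!$ and $\bc_*$ conditions for the $(F,q)$-square with preservation of $q$-colimits and $q$-limits respectively, which is precisely \propref{BC_Functor} together with the pointwise Kan extension formulas; everything else is formal, given \thmref{Ambi_Functors}.
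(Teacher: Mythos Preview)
Your proposal is correct. The forward direction is essentially what the paper does: apply \thmref{Ambi_Functors} to get the $(F,q)$-square ambidextrous, then use that an ambidextrous square satisfies $\bc_!$ iff it satisfies $\bc_*$.

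The difference lies in the converse. The paper argues by induction on $m$: assuming the statement for $m-1$, if $F$ preserves $m$-finite limits then in particular it preserves $(m-1)$-finite limits, hence by the inductive hypothesis $(m-1)$-finite colimits, so \thmref{Ambi_Functors} applies directly and the same $\bc_!\Leftrightarrow\bc_*$ argument finishes. You instead pass to opposite categories and invoke the self-duality of $m$-semiadditivity. This is valid, but note that the paper never states or proves that $\mathcal{C}$ is $m$-semiadditive iff $\mathcal{C}^{op}$ is; your parenthetical sketch is correct in spirit, but verifying that the \emph{canonical} norm of \defref{Diagonal_Induction} in $\mathcal{C}^{op}$ is the opposite of the canonical norm in $\mathcal{C}$ requires unwinding the inductive construction (the $\bc_!$ isomorphism and the wrong-way unit $\mu_\delta$ get replaced by their $*$-analogues under op), which is a small but genuine check. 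The paper's inductive argument sidesteps this by staying entirely within $\mathcal{C}$ and $\mathcal{D}$, at the cost of a mild induction; your argument is more symmetric conceptually but imports an external lemma.
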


\begin{proof}
We proceed by induction on $m$. For $m=-2$, there is nothing to
prove. For $m\ge-1$, assume by induction the claim holds for $m-1$.
Since $\mathcal{C}$ and $\mathcal{D}$ are in particular $\left(m-1\right)$-semiadditive
and $F$ preserves either $\left(m-1\right)$-colimits or $\left(m-1\right)$-limits,
we deduce that $F$ preserves both. For every $m$-finite $A$, consider
the map $q\colon A\to\pt$. Since $\mathcal{C}$ and $\mathcal{D}$
are in particular $\left(m-1\right)$-semiadditive and $F$ preserves
$\left(m-1\right)$-colimits, by \thmref{Ambi_Functors}, the $\left(F,q\right)$-square
is weakly ambidextrous. Since $\mathcal{C}$ and $\mathcal{D}$ are
$m$-semiadditive, the $\left(F,q\right)$-square is in fact ambidextrous.
It follows that the $\left(F,q\right)$-square satisfies the $\bc_{!}$
condition if and only if it satisfies the $\bc_{*}$ condition. Namely,
$F$ preserves $A$-shaped colimits if and only if it preserves $A$-shaped
limits.
\end{proof}

\begin{cor}
\label{cor:product_semiadditive}
Let $\{\mathcal{C}_i\}_{i\in I}$ be a collection of $m$-semiadditive $\infty$-categories. The $\infty$-category $\mathcal{C}\coloneqq \prod_{i\in I}\mathcal{C}_i$ is $m$-semiadditive.   
\end{cor}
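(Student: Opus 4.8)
The plan is to reduce everything to the elementary fact that limits and colimits in a product $\infty$-category are computed factorwise, and then to feed this through the inductive definition of the canonical norm.

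First I would record the natural identification $\fun(A,\mathcal{C})\simeq\prod_{i\in I}\fun(A,\mathcal{C}_i)$, under which precomposition along a map $q\colon A\to B$ corresponds to $\prod_{i}q^{*}$. Since each $\mathcal{C}_i$ admits all $m$-finite limits and colimits and these are formed in each factor, so does $\mathcal{C}$, hence also $\fun(A,\mathcal{C})$; moreover the left and right Kan extensions $q_{!},q_{*}$ along an $m$-finite map $q$, being given by pointwise colimit and limit formulas, correspond under this identification to $\prod_{i}q_{!}$ and $\prod_{i}q_{*}$, with units and counits the corresponding products. In particular $\mathcal{C}$ admits all $m$-finite (co)limits, which is one half of $m$-semiadditivity (\defref{Semiadd}).

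The main step is to prove, by induction on $m\ge-2$, that for an $m$-finite map of spaces $q\colon A\to B$, the map $q$ is weakly $m$-$\mathcal{C}$-ambidextrous if and only if it is weakly $m$-$\mathcal{C}_i$-ambidextrous for every $i$, and that in this case the canonical norm $\nm_{q}$ for $\mathcal{C}$ corresponds under the product decomposition to the family $(\nm_{q}^{\mathcal{C}_i})_{i\in I}$. The base case $m=-2$ is immediate, since there $q$ is an equivalence and the inverse adjoint together with the identity norm are manifestly factorwise. For the inductive step, the diagonal $\delta\colon A\to A\times_{B}A$ is $(m-1)$-finite, so by the inductive hypothesis $\delta$ is $(m-1)$-$\mathcal{C}$-ambidextrous iff it is $(m-1)$-$\mathcal{C}_i$-ambidextrous for all $i$, with componentwise canonical norms; by \defref{Ambidexterity} this gives at once the corresponding equivalence for the weak $m$-ambidexterity of $q$. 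To identify the diagonally induced norm (\defref{Diagonal_Induction}), one checks that the base-change square attached to the pullback of $q$ along itself decomposes as the product of the base-change squares for the $\mathcal{C}_i$, so its $\bc_{!}$-transformation is the product of theirs (each an isomorphism by \lemref{Base_Change_BC}); the wrong-way unit $\mu_{\delta}$ is the product of the $\mu_{\delta}$'s because $\nm_{\delta}$ is; and passing to the mate under $q^{*}\dashv q_{*}$ is factorwise. Hence $\nu_{q}$, and therefore $\nm_{q}$, is the product of the $\nm_{q}^{\mathcal{C}_i}$. Since a natural transformation in a product $\infty$-category is an isomorphism exactly when each of its components is, it follows that $q$ is $m$-$\mathcal{C}$-ambidextrous if and only if it is $m$-$\mathcal{C}_i$-ambidextrous for all $i$.

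To conclude, let $q$ be any $m$-finite map. Each $\mathcal{C}_i$ is $m$-semiadditive, so $q$ is $m$-$\mathcal{C}_i$-ambidextrous for every $i$; by the claim $q$ is $m$-$\mathcal{C}$-ambidextrous, hence $\mathcal{C}$-ambidextrous. Together with the existence of $m$-finite (co)limits in $\mathcal{C}$, this is precisely the $m$-semiadditivity of $\mathcal{C}$. The only real subtlety — and it is bookkeeping rather than a genuine obstacle — is verifying that the inductively defined canonical norm of \defref{Diagonal_Induction}, built out of base-change squares and their Beck-Chevalley transformations, is compatible with the product decomposition at each stage; once the ``everything is factorwise'' slogan is made precise there, the rest is formal.
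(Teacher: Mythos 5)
Your argument is correct, and its skeleton is the same as the paper's: induct on $m$, identify the component of $\nm_q^{\mathcal{C}}$ in each factor with the canonical norm of $\mathcal{C}_i$, and conclude because a map in a product is an isomorphism iff all its components are. The difference is purely in how that identification is obtained. You establish it by hand, unwinding \defref{Diagonal_Induction} and checking that the base-change square, its $\bc_{!}$-transformation, the wrong-way unit $\mu_{\delta}$, and the mate construction all decompose factorwise --- which is exactly the ``bookkeeping'' you flag at the end. The paper instead applies \thmref{Ambi_Functors} to each projection $\pi_i\colon\mathcal{C}\to\mathcal{C}_i$: since $\pi_i$ preserves all limits and colimits, the $(\pi_i,q)$-square is weakly ambidextrous and satisfies both $\bc$ conditions (\propref{BC_Functor}), so $\pi_i\nm_q^{\mathcal{C}}$ is identified with $\nm_q^{\mathcal{C}_i}$ with no further coherence checking; joint conservativity of the $\pi_i$ then finishes. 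Your route is self-contained but essentially re-proves the special case of \thmref{Ambi_Functors} needed here; the paper's route buys the compatibility for free from machinery already established, at the cost of invoking a heavier theorem. Note also that the inductive hypothesis is doing slightly different work in the two arguments: the paper uses it only to know that $\mathcal{C}$ is $(m-1)$-semiadditive, hence that $q$ is weakly $\mathcal{C}$-ambidextrous and $\nm_q^{\mathcal{C}}$ is defined at all, whereas you additionally carry through the inductive identification of the norm on the diagonal. Both are sound.
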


\begin{proof}
We proceed by induction on $m$. 
For $m=-2$ there is nothing to prove, and so we may assume that $m\ge -1$. Let $q\colon A\to B$ be an $m$-finite map of spaces. 
By induction, $\mathcal{C}$ is $(m-1)$-semiadditive, and hence $q$ is weakly $\mathcal{C}$-ambidextrous. 
In particular, the map $\nm_q^{\mathcal{C}}$ is defined and it is left to show that it is an isomorphism.
For every $i\in I$, the map $q$ is $\mathcal{C}_i$-ambidextrous and the projection $\pi_i \colon \mathcal{C}\to \mathcal{C}_i$ preserves colimits. Thus, by \thmref{Ambi_Functors}, the $(\pi_i,q)$-square is weakly ambidextrous. Additionally, as $\pi_i$ commutes with limits and colimits, the $(\pi_i,q)$-square satisfies both the $\bc_!$ and $\bc_*$ conditions. The $\mathcal{C}_i$-ambidexterity of $q$ implies now that the natural transformation \[
	\pi_i \nm_q^{\mathcal{C}}\colon \pi_i q_! \to \pi_i q_*
\] 
	is a natural isomorphism. Finally, since the collection $\{\pi_i\}_{i\in I}$ is jointly conservative, we deduce that $\nm_q^{\mathcal{C}}$ is an isomorphism.     
\end{proof}

\begin{defn}
Let $\mathcal{C}$ and $\mathcal{D}$ be $m$-semiadditive $\infty$-categories.
A functor $F\colon\mathcal{C}\to\mathcal{D}$ is called \emph{$m$-semiadditive},
if\emph{ }it preserves $m$-finite (co)limits.
\end{defn}

The fundamental property of $m$-semiadditive functors, which justifies
their name, is
\begin{cor}
\label{cor:Integral_Functor} Let $F\colon\mathcal{C}\to\mathcal{D}$
be an $m$-semiadditive functor and let $q\colon A\to B$ be an $m$-finite map
of spaces.
For all $X,Y\in\fun\left(B,\mathcal{C}\right)$ and $f\colon q^{*}X\to q^{*}Y$,
we have 
\[
F\left(\int\limits _{q}f\right)=\int\limits _{q}F\left(f\right)\quad\in\quad\hom_{h\fun\left(B,\mathcal{D}\right)}\left(FX,FY\right).
\]
In particular, for all $X\in\fun\left(B,\mathcal{C}\right)$ we have
\[
F\left(|q|_{X}\right)=|q|_{F\left(X\right)}\quad\in\quad\hom_{h\fun\left(B,\mathcal{D}\right)}\left(FX,FX\right).
\]
\end{cor}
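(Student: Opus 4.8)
The plan is to deduce this as a special case of \propref{Integral_Ambi} applied to the $(F,q)$-square. First I would observe that since $F$ is an $m$-semiadditive functor between $m$-semiadditive $\infty$-categories and $q\colon A\to B$ is an $m$-finite map, both $\mathcal{C}$ and $\mathcal{D}$ are $m$-semiadditive, so $q$ is $\mathcal{C}$-ambidextrous and $\mathcal{D}$-ambidextrous by \defref{Semiadd}. Hence the $(F,q)$-square is a normed square. Since $F$ preserves $m$-finite colimits, it in particular preserves $(m-1)$-finite colimits, so by \thmref{Ambi_Functors} the $(F,q)$-square is ambidextrous. Moreover, since $F$ preserves $m$-finite colimits (and $\mathcal{C},\mathcal{D}$ admit them), \propref{BC_Functor} shows that the $(F,q)$-square satisfies the $\bc_!$ condition.

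Having checked these hypotheses, I would simply invoke \propref{Integral_Ambi} with $F_{\mathcal{C}} = F_{\mathcal{D}} = F_*$ (postcomposition with $F$), $q^{\can}$ in place of $q$, and $\tilde q^{\can}$ in place of $\tilde q$. This yields
\[
F_*\left(\int\limits_{q}f\right) = \int\limits_{q}F_*(f) \quad\in\quad \hom_{h\fun(B,\mathcal{D})}(F_*X, F_*Y)
\]
for all $X,Y\in\fun(B,\mathcal{C})$ and $f\colon q^*X\to q^*Y$, which is exactly the claimed identity once one unwinds that postcomposition with $F$ on local systems restricts to $F$ on values. The ``in particular'' statement follows by taking $f = q^*\Id_X$, so that $\int_q f = |q|_X$ and $\int_q F_*(f) = |q|_{F(X)}$, again directly from the second assertion of \propref{Integral_Ambi}.

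I do not anticipate a genuine obstacle here: the corollary is essentially a bookkeeping exercise assembling \thmref{Ambi_Functors} (weak ambidexterity, hence ambidexterity, of the $(F,q)$-square), \propref{BC_Functor} (the Beck--Chevalley condition), and \propref{Integral_Ambi} (compatibility of integration with ambidextrous squares satisfying $\bc_!$). The only minor point to be careful about is the identification of the $(F,q)$-square's horizontal functors with $F_*$ and the observation that $\tilde q = q$ here, i.e. that both vertical functors in the square are precomposition with the same map $q$, so that the two integrals appearing in \propref{Integral_Ambi} are both ``$\int_q$'' in the notation of \notref{Canonical_Norm_Notation}.
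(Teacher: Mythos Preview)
Your proposal is correct and follows essentially the same approach as the paper: invoke \thmref{Ambi_Functors} for ambidexterity of the $(F,q)$-square, \propref{BC_Functor} for the $\bc_!$ condition, and then apply \propref{Integral_Ambi}. The paper's proof is a one-sentence version of exactly this argument.
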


\begin{proof}
The $\left(F,q\right)$-square is ambidextrous by \thmref{Ambi_Functors}
and satisfies the $\bc$ conditions by \propref{BC_Functor}, and
so the claim follows form \propref{Integral_Ambi}.
\end{proof}
\begin{rem}
In view of \remref{Diagonal_Induction_Integral}, one can reinterpret
\thmref{Ambi_Functors} informally, as saying that 
\[
\int\limits _{\delta}F\left(\Id\right)=F\left(\int\limits _{\delta}\Id\right),
\]
where $\delta\colon A\to A\times_{B}A$ is the diagonal of $q\colon A\to B$.
Since $\delta$ is $\left(m-1\right)$-finite, this in turn follows
inductively from \corref{Integral_Functor}. Turning this argument
into a rigorous proof requires some categorical maneuvers that we
preferred to avoid.
\end{rem}

\subsubsection{Multivariate Functors}

We now discuss a multivariate version of higher semiadditive functors. 
\begin{defn}
\label{def:External_Peoduct}Let $\mathcal{C}_{1},\dots,\mathcal{C}_{k}$
and $\mathcal{D}$ be $\infty$-categories and $F\colon\prod\limits _{i=1}^{k}\mathcal{C}_{i}\to\mathcal{D}$
a functor. Given a collection of diagrams $X_{i}\colon A_{i}\to\mathcal{C}_{i}$
for $i=1,\dots,k$, their external product $X_{1}\boxtimes\dots\boxtimes X_{k}$
is defined to be the composition 
\[
\prod_{i=1}^{k}A_{i}\oto{\prod_{i=1}^{k}X_{i}}\prod_{i=1}^{k}\mathcal{C}_{i}\oto F\mathcal{D}.
\]
This assembles to give a functor 
\[
\boxtimes\colon\prod_{i=1}^{k}\fun\left(A_{i},\mathcal{C}_{i}\right)\to\fun\left(\prod_{i=1}^{k}A_{i},\mathcal{D}\right).
\]
Given a collection of maps of spaces $q_{i}\colon A_{i}\to B_{i}$
for $i=1,\dots,k$, we obtain the associated \emph{external product
square}:
\[
\qquad \qquad
\vcenter{
\xymatrix@R=3pc{\prod\limits _{i=1}^{k}\fun\left(B_{i},\mathcal{C}_{i}\right)\ar[d]^{\prod\limits _{i=1}^{k}q_{i}^{*}}\ar[rr]^{\boxtimes} &  & \fun\left(\prod\limits _{i=1}^{k}B_{i},\mathcal{D}\right)\ar[d]^{\left(\prod\limits _{i=1}^{k}q_{i}\right)^{*}}\\
\prod\limits _{i=1}^{k}\fun\left(A_{i},\mathcal{C}_{i}\right)\ar[rr]^{\boxtimes} &  & \fun\left(\prod\limits _{i=1}^{k}A_{i},\mathcal{D}\right).
}}
\qquad\left(*\right)
\]
\end{defn}

\begin{prop}
\label{prop:BC_External_Product}Let $\mathcal{C}_{1},\dots,\mathcal{C}_{k}$
and $\mathcal{D}$ be $\infty$-categories and $F\colon\prod\limits _{i=1}^{k}\mathcal{C}_{i}\to\mathcal{D}$
a functor. Additionally, let $q_{i}\colon A_{i}\to B_{i}$ for $i=1,\dots,k$
be a collection of maps of spaces. If $F$ preserves all $q_{i}$-colimits
(resp. $q_{i}$-limits) in the $i$-th coordinate, then the external
product square $\left(*\right)$ satisfies the $\text{BC}_{!}$ (resp.
$\text{BC}_{*}$) condition.
\end{prop}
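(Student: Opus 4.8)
The plan is to verify the $\bc_!$ condition directly, by evaluating the Beck--Chevalley transformation pointwise; the $\bc_*$ statement will then follow by the dual argument (limits in place of colimits, right Kan extensions, and the hypothesis on $q_i$-limits). Write $q=\prod_{i=1}^{k}q_i\colon\prod_iA_i\to\prod_iB_i$, so that the left adjoint of the right-hand vertical functor of $(*)$ is $q_!$ and the left adjoint of the left-hand vertical functor is $\prod_i(q_i)_!$. The map in question is a natural transformation
\[
\beta_!\colon q_!\circ\boxtimes\ \longrightarrow\ \boxtimes\circ\prod_i(q_i)_!
\]
of functors $\prod_i\fun(A_i,\mathcal C_i)\to\fun(\prod_iB_i,\mathcal D)$, and since equivalences in a functor $\infty$-category are detected pointwise it suffices to show that for every object $\mathbf X=(X_i)_i$ and every point $\mathbf b=(b_i)_i\in\prod_iB_i$ the induced map in $\mathcal D$ is an isomorphism. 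First I would fix such data, set $S_i=q_i^{-1}(b_i)$ with inclusion $\iota_i\colon S_i\to A_i$, and put $\bar X_i=X_i\circ\iota_i$. Using the pointwise formula for left Kan extensions, together with $q^{-1}(\mathbf b)=\prod_iS_i$, the source of $\beta_!(\mathbf X)$ at $\mathbf b$ is $\colim_{(a_i)_i\in\prod_iS_i}F\bigl((\bar X_i(a_i))_i\bigr)$ and the target is $F\bigl((\colim_{S_i}\bar X_i)_i\bigr)$.

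The key step is then to identify $\beta_!(\mathbf X)_{\mathbf b}$, under these two identifications, with the canonical comparison map
\[
\colim_{(a_i)_i\in\prod_iS_i}F\bigl((\bar X_i(a_i))_i\bigr)\ \longrightarrow\ F\Bigl(\bigl(\colim_{a_i\in S_i}\bar X_i(a_i)\bigr)_i\Bigr)
\]
induced by the colimit cocones in each coordinate and the functoriality of $F$. This is a routine unwinding of the definition of $\beta_!$ as the composite $q_!\boxtimes\to q_!\boxtimes q^*q_!\xrightarrow{\sim}q_!q^*\boxtimes q_!\to\boxtimes q_!$ (the unit producing the cocones, the middle isomorphism being the commutativity of $(*)$, and the counit producing the colimit over $\prod_iS_i$), using the compatibility of these (co)units with the pointwise Kan extension formulas.

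Granting this, it remains to show the comparison map is an isomorphism, which I would do by pushing colimits into $F$ one coordinate at a time: a colimit over a product of spaces is computed as an iterated colimit $\colim_{S_1}\cdots\colim_{S_k}$, and since $F$ preserves $q_k$-colimits in the $k$-th coordinate, then $q_{k-1}$-colimits in the $(k-1)$-st coordinate, and so on (the hypothesis being applicable for arbitrary objects in the remaining slots), we obtain
\[
\colim_{a_1\in S_1}\cdots\colim_{a_k\in S_k}F\bigl((\bar X_i(a_i))_i\bigr)\ \simeq\ F\Bigl(\bigl(\colim_{a_i\in S_i}\bar X_i(a_i)\bigr)_i\Bigr),
\]
and one checks that the resulting equivalence is exactly the comparison map.

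I expect the main obstacle to be the middle step: pinning down that the pointwise value of $\beta_!$ genuinely is the evident comparison map, rather than carrying out the final colimit manipulation, which is essentially formal. (One should also remark that the hypotheses implicitly ensure the relevant colimits and left Kan extensions exist, so that all expressions above are defined.) It is tempting to instead decompose $(*)$ as a horizontal pasting, factoring $\boxtimes$ through $\fun(\prod_iA_i,\prod_i\mathcal C_i)$ and invoking \corref{Horizontal_Pasting_BC} together with \propref{BC_Functor}; however, the first factor of such a decomposition fails the $\bc_!$ condition (it introduces spurious copower factors coming from the fibers of the other $q_j$'s), so the pointwise verification seems to be the cleanest route.
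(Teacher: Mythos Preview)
Your pointwise approach is correct, and the ``routine unwinding'' step you flag as the main obstacle really is routine; the final colimit manipulation is sound. However, the paper takes a genuinely different route: rather than evaluating $\beta_!$ at points, it reduces entirely via pasting lemmas. First it uses horizontal pasting (\corref{Horizontal_Pasting_BC}) and induction on $k$ to reduce to $k=2$; then it factors $q_1\times q_2$ as $(q_1\times\Id)\circ(\Id\times q_2)$ and uses vertical pasting (\corref{Vertical_Pasting_BC}) to reduce to a square in which only one coordinate is being restricted; after absorbing the other coordinate into functor categories it applies the exponential rule (\lemref{Exponential_Rule_BC}) to curry $F$ into $F^\vee\colon\mathcal C_1\to\fun(\mathcal C_2,\mathcal D)$ and invokes the univariate \propref{BC_Functor}. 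So your remark that a pasting decomposition ``fails'' is a bit misleading: the particular factorization you had in mind (through $\fun(\prod_iA_i,\prod_i\mathcal C_i)$) does fail, but the paper's different pasting strategy succeeds. Your direct argument has the virtue of being self-contained and not requiring \lemref{Exponential_Rule_BC}; the paper's argument has the virtue of staying at the level of formal BC calculus and never unwinding the map explicitly.
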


\begin{proof}
We proceed by a sequence of reductions. First, by induction on $k$ and
horizontal pasting (\corref{Horizontal_Pasting_BC}), we can reduce
to $k=2$. Write $q_{1}\times q_{2}$ as a composition 
\[
A_{1}\times A_{2}\oto{q_{1}\times\Id}B_{1}\times A_{2}\oto{\Id\times q_{2}}B_{1}\times B_{2}.
\]
The diagram 
\[
\xymatrix@R=2pc@C=3pc{\fun\left(B_{1},\mathcal{C}_{1}\right)\times\fun\left(B_{2},\mathcal{C}_{2}\right)\ar[d]^{\Id\times q_{2}^{*}}\ar[r]^-{\boxtimes} & \fun\left(B_{1}\times B_{2},\mathcal{D}\right)\ar[d]^{\left(\Id\times q_{2}\right)^{*}}\\
\fun\left(B_{1},\mathcal{C}_{1}\right)\times\fun\left(A_{2},\mathcal{C}_{2}\right)\ar[d]^{q_{1}^{*}\times\Id}\ar[r]^-{\boxtimes} & \fun\left(B_{1}\times A_{2},\mathcal{D}\right)\ar[d]^{\left(q_{1}\times\Id\right)^{*}}\\
\fun\left(A_{1},\mathcal{C}_{1}\right)\times\fun\left(A_{2},\mathcal{C}_{2}\right)\ar[r]^-{\boxtimes} & \fun\left(A_{1}\times A_{2},\mathcal{D}\right)
}
\]

exhibits $\left(*\right)$ as a vertical pasting of the top and bottom
squares. Hence, by \corref{Vertical_Pasting_BC}, it is enough to
show that each of them satisfies the $\bc_{!}$ (resp. $\bc_{*}$)
condition. We will focus on the bottom square (the argument for the
top square is analogous). Since (co)limits in $A_{2}$-local systems
are computed point-wise, the external product functor
\[
F_{A_{2}}\colon\mathcal{C}_{1}\times\fun\left(A_{2},\mathcal{C}_{2}\right)\to\fun\left(A_{2},\mathcal{D}\right)
\]
preserves in each coordinate the (co)limits which are preserved by
$F$. By replacing the $\infty$-category $\mathcal{C}_{2}$ with
$\fun\left(A_{2},\mathcal{C}_{2}\right)$, the $\infty$-category
$\mathcal{D}$ with $\fun\left(A_{2},\mathcal{D}\right)$ and the
functor $F$ with $F_{A_{2}}$, we may assume without loss of generality
that $A_{2}=\Delta^{0}$. The bottom square becomes 
\[
\xymatrix@R=2pc@C=3pc{\fun\left(B_{1},\mathcal{C}_{1}\right)\times\mathcal{C}_{2}\ar[d]^{q_{1}^{*}\times\Id}\ar[r]^{\quad\boxtimes} & \fun\left(B_{1},\mathcal{D}\right)\ar[d]^{q_{1}^{*}}\\
\fun\left(A_{1},\mathcal{C}_{1}\right)\times\mathcal{C}_{2}\ar[r]^{\quad\boxtimes} & \fun\left(A_{1},\mathcal{D}\right).
}
\]

By the exponential rule (\lemref{Exponential_Rule_BC}), it is enough
to show that the left square in the following diagram satisfies the
$\bc_{!}$ (resp. $\bc_{*}$) condition:

\[
\xymatrix@R=2pc@C=3pc{\fun\left(B_{1},\mathcal{C}_{1}\right)\ar[d]^{q_{1}^{*}}\ar[r]^{\boxtimes\qquad} & \fun\left(\mathcal{C}_{2},\fun\left(B_{1},\mathcal{D}\right)\right)\ar[d]^{\left(q_{1}^{\mathcal{C}}\right)^{*}}\ar[r]^{\sim} & \fun\left(B_{1},\fun\left(\mathcal{C}_{2},\mathcal{D}\right)\right)\ar[d]^{q_{1}^{*}}\\
\fun\left(A_{1},\mathcal{C}_{1}\right)\ar[r]^{\boxtimes\qquad} & \fun\left(\mathcal{C}_{2},\fun\left(A_{1},\mathcal{D}\right)\right)\ar[r]^{\sim} & \fun\left(A_{1},\fun\left(\mathcal{C}_{2},\mathcal{D}\right)\right).
}
\]
Equivalently, it is enough to show that the outer square $\square$
satisfies the $\bc_{!}$ (resp. $\bc_{*}$) condition. Observe that
$\square$ is the $\left(F^{\vee},q_{1}\right)$-square for the functor
\[
F^{\vee}\colon\mathcal{C}_{1}\to\fun\left(\mathcal{C}_{2},\mathcal{D}\right),
\]
which is the mate of $F$. From the assumption on $F$, the functor
$F^{\vee}$ preserves $q_{1}$-colimits (resp. $q_{1}$-limits) and
therefore $\square$ satisfies the $\text{BC}_{!}$ (resp. $\text{BC}_{*}$)
condition by the univariate version of \propref{BC_Functor}.
\end{proof}
\begin{defn}
Let $\mathcal{C}_{1},\dots,\mathcal{C}_{k}$ and $\mathcal{D}$ be
$m$-semiadditive $\infty$-categories. An \emph{$m$-semiadditive
multi-functor} $F\colon\prod\limits _{i=1}^{k}\mathcal{C}_{i}\to\mathcal{D}$
is a functor that preserves $m$-finite colimits in each coordinate
separately.
\end{defn}

In particular, we get
\begin{cor}
\label{cor:Semi_Add_Multi_Functor} Let $\mathcal{C}_{1},\dots,\mathcal{C}_{k}$
and $\mathcal{D}$ be $m$-semiadditive $\infty$-categories. Let
$F\colon\prod\limits _{i=1}^{k}\mathcal{C}_{i}\to\mathcal{D}$ be
an $m$-semiadditive multi-functor. For every collection of $m$-finite
maps $q_{i}\colon A_{i}\to B_{i}$ for $i=1,\dots k$, the external
product square $\left(*\right)$ from \defref{External_Peoduct} satisfies
both $\bc$-conditions.
\end{cor}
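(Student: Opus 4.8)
The plan is to deduce both Beck--Chevalley conditions directly from \propref{BC_External_Product}, which reduces everything to checking that $F$ preserves $q_i$-colimits and $q_i$-limits in each coordinate. For the $\bc_!$-condition there is nothing to do: by hypothesis $F$ is an $m$-semiadditive multi-functor, so it preserves $m$-finite colimits in each coordinate separately, and since each $q_i$ is an $m$-finite map, its fibers $q_i^{-1}(b)$ are $m$-finite spaces; hence $F$ preserves $q_i$-colimits in the $i$-th coordinate and \propref{BC_External_Product} yields the $\bc_!$-condition for $(*)$.

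For the $\bc_*$-condition the key point is that $F$ automatically preserves $m$-finite \emph{limits} in each coordinate as well. Fix an index $i$ and objects $X_j\in\mathcal{C}_j$ for $j\neq i$, and let $F_i\colon\mathcal{C}_i\to\mathcal{D}$ denote the functor obtained from $F$ by freezing these coordinates. By definition of ``preserves $m$-finite colimits in the $i$-th coordinate'', $F_i$ preserves $m$-finite colimits, and both $\mathcal{C}_i$ and $\mathcal{D}$ are $m$-semiadditive; hence \corref{Semi_Add_Functors} shows that $F_i$ preserves $m$-finite limits. Since this holds for every choice of the remaining coordinates, $F$ preserves $m$-finite limits in each coordinate separately, and in particular $q_i$-limits in the $i$-th coordinate for each $i$. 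Applying \propref{BC_External_Product} once more, now to the limit-preservation hypothesis, gives that $(*)$ satisfies the $\bc_*$-condition. Together with the previous paragraph this proves the corollary.

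The only non-formal ingredient is \corref{Semi_Add_Functors}, invoked once for each coordinate with the other coordinates frozen; the rest is unwinding the definitions of an $m$-semiadditive multi-functor and of $q_i$-(co)limits. Consequently I do not anticipate any genuine obstacle here: the statement is essentially the multivariate packaging of the univariate facts \propref{BC_External_Product} and \corref{Semi_Add_Functors}.
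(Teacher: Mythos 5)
Your argument is correct and is precisely the (implicit) reasoning behind the paper's ``In particular, we get'': the $\bc_!$-condition is immediate from \propref{BC_External_Product} and the definition of an $m$-semiadditive multi-functor, and the $\bc_*$-condition follows by applying \corref{Semi_Add_Functors} coordinatewise (with the other coordinates frozen) to upgrade colimit-preservation to limit-preservation before invoking \propref{BC_External_Product} again. No gaps; this matches the paper's approach.
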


\subsection{Symmetric Monoidal Structure}

In this section, we study the interaction of higher semiadditivity
with (symmetric) monoidal structures. 

\subsubsection{Monoidal Local Systems}

Let $\left(\mathcal{C},\otimes,\one\right)$ be a (symmetric) monoidal
$\infty$-category. For every space $A$, the $\infty$-category $\fun\left(A,\mathcal{C}\right)$ 
acquires a point-wise (symmetric) monoidal structure. Moreover, given
a map of spaces $q\colon A\to B$, the functor
\[
q^{*}\colon\fun\left(B,\mathcal{C}\right)\to\fun\left(A,\mathcal{C}\right)
\]
is (symmetric) monoidal in a canonical way (\cite[Example 3.2.4.4]{ha}). 
\begin{prop}
\label{prop:Local_Systems_Tensor_Normed}Let $\left(\mathcal{C},\otimes,\one\right)$
be a monoidal $\infty$-category. Let $q\colon A\to B$ be a weakly
$\mathcal{C}$-ambidextrous map of spaces, such that $\otimes$ distributes
over $q$-colimits. The normed functor 
\[
q^{\can}\colon\fun\left(A,\mathcal{C}\right)\nto\fun\left(B,\mathcal{C}\right)
\]
is $\otimes$-normed in a canonical way (see \defref{Tensor_Normed_Functor}).
\end{prop}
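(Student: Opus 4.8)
The plan is to check the two conditions of \defref{Tensor_Normed_Functor}; as observed in the remark following that definition, the norm data plays no role, so what must be shown is a property of $q^{*}$. That $q^{*}$ is monoidal for the pointwise structure on functor categories is \cite[Example 3.2.4.4]{ha}, and then $q_{!}$ is colax monoidal by the dual of \cite[Corollary 7.3.2.7]{ha}; hence the two comparison maps
\[
\rho_{X,Y}\colon q_{!}\left(Y\otimes q^{*}X\right)\to\left(q_{!}Y\right)\otimes\left(q_{!}q^{*}X\right)\oto{\Id\otimes c_{!}}\left(q_{!}Y\right)\otimes X,
\]
and its mirror image $\rho'_{X,Y}\colon q_{!}\left(q^{*}X\otimes Y\right)\oto{c_{!}\otimes\Id}X\otimes q_{!}Y$, are defined, and what remains is to prove that they are isomorphisms in $\fun\left(B,\mathcal{C}\right)$. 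I would treat $\rho$; the argument for $\rho'$ is the same with the two variables of $\otimes$ interchanged throughout.

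Since isomorphisms in $\fun\left(B,\mathcal{C}\right)$ are detected pointwise, the first step is to fix $b\in B$ and base change to the fiber $A_{b}=q^{-1}\left(b\right)$. Write $i_{b}\colon A_{b}\into A$ for the inclusion and $q_{b}\colon A_{b}\to\pt$. The evaluation functor $\mathrm{ev}_{b}\colon\fun\left(B,\mathcal{C}\right)\to\mathcal{C}$ is pullback along $\{b\}\into B$; it is monoidal, as is $i_{b}^{*}$, and $i_{b}^{*}q^{*}\simeq q_{b}^{*}\,\mathrm{ev}_{b}$ since $q\circ i_{b}$ factors through $b$. As $q$ is weakly $\mathcal{C}$-ambidextrous, $\mathcal{C}$ admits $q$-colimits, so by \lemref{Base_Change_BC} the relevant base-change square satisfies the $\bc_{!}$ condition, giving $\mathrm{ev}_{b}\,q_{!}\simeq\left(q_{b}\right)_{!}i_{b}^{*}$. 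The colax structure maps of $q_{!}$ and the counit $c_{!}$ are assembled from the units and counits of $q_{!}\dashv q^{*}$ and the monoidal coherences of $q^{*}$, all of which are compatible with the Beck--Chevalley transformation (this is \lemref{BC_Co_Units}(3),(4) together with the naturality, in the map of spaces, of the canonical monoidal structure on pullback functors); hence a routine diagram chase identifies $\mathrm{ev}_{b}\left(\rho_{X,Y}\right)$ with the corresponding comparison map for $q_{b}\colon A_{b}\to\pt$. This reduces us to the case $B=\pt$.

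When $B=\pt$ we have $q_{!}=\colim_{A}$ and $q^{*}X$ is the constant diagram on $X\in\mathcal{C}$. Unwinding the colax structure of the colimit functor, the first arrow of $\rho_{X,Y}$ is the map $\colim_{a}\left(Y\left(a\right)\otimes X\right)\to\left(\colim_{a}Y\left(a\right)\right)\otimes\left(\colim_{a}X\right)$ classified by the cocone whose leg at $a$ is the cocone leg of $Y$ at $a$ tensored with the cocone leg of the constant diagram at $a$; post-composing with $\Id\otimes c_{!}$, and using the zig-zag identity which says that the cocone leg of the constant diagram followed by the counit $\colim_{a}X\to X$ is $\Id_{X}$, one identifies $\rho_{X,Y}$ with the canonical comparison map
\[
\colim_{a\in A}\left(Y\left(a\right)\otimes X\right)\longrightarrow\left(\colim_{a\in A}Y\left(a\right)\right)\otimes X
\]
classified by the cocone $\{Y(a)\to\colim_{A}Y\}$ tensored with $\Id_{X}$. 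The hypothesis that $\otimes$ distributes over $q$-colimits says exactly that $\left(-\right)\otimes X$ preserves $A$-indexed colimits, so this map is an equivalence. Running the same argument with the variables of $\otimes$ interchanged, now using that $X\otimes\left(-\right)$ preserves $q$-colimits, shows $\rho'$ is an equivalence, completing the verification.

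I expect the only mildly delicate point to be the bookkeeping of the second paragraph, namely that forming the projection-formula map commutes with restriction to a fiber; this is purely formal given \lemref{Base_Change_BC} and \lemref{BC_Co_Units}. The actual content is the elementary observation of the third paragraph, that for a constant second argument the projection-formula map is nothing but the evident colimit-comparison map, which is an equivalence precisely under the stated distributivity hypothesis.
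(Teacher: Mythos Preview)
Your proof is correct and has the same essential content as the paper's: both reduce the projection-formula map to the pointwise distributivity hypothesis via Beck--Chevalley considerations. The organisation differs slightly. You first base-change to a fiber (reducing to $B=\pt$) and then unwind the colax structure to identify $\rho$ with the colimit-comparison map; this requires the ``mildly delicate'' check that forming $\rho$ commutes with restriction to a fiber. The paper instead stays over $B$ and identifies $\rho$ directly as the $\bc_{!}$ map for the commutative square
\[
\xymatrix@C=4pc{
\fun(B,\mathcal{C})\ar[d]_{q^*}\ar[r]^-{X \otimes (-)} &
\fun(B,\mathcal{C})\ar[d]^{q^*} \\
\fun(A,\mathcal{C})\ar[r]^-{q^*X \otimes (-)} &
\fun(A,\mathcal{C}),
}
\]
using a short diagram chase and a zig-zag identity; the pointwise check then becomes simply the statement that this square satisfies the $\bc_{!}$ condition. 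The paper's packaging avoids your base-change compatibility verification, while your version makes the endpoint (the ordinary colimit-comparison map) more explicit. Either way the actual input is the same: $(-)\otimes X$ and $X\otimes(-)$ preserve $q$-colimits.
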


\begin{proof}
Consider the diagram
\[
\xymatrix@C=4pc{
q_!(q^*X \otimes Y)\ar@{-->}@/_2pc/[ddr]\ar[r]^-{u_{!,X} \otimes u_{!,Y}} & 
q_!((q^*\underline{q_!)q^*}X \otimes (q^*q_!)Y)\ar[d]^{\wr}\ar[r]^-{c_{!,X}\otimes \Id} & 
q_!(q^*X \otimes q^*q_!Y)\ar[d]^{\wr} \\
 & \underline{q_!q^*}(\underline{q_!q^*}X \otimes q_!Y)\ar[d]^{c_{!,(X\otimes q_!Y)}}\ar[r]^-{c_{!,X}\otimes \Id} & 
\underline{q_!q^*}(X \otimes q_!Y)\ar[d]^{c_{!,(X\otimes q_!Y)}} \\
 & \underline{q_!q^*}X \otimes q_!Y\ar[r]^-{c_{!,X}\otimes \Id} & 
X \otimes q_!Y.
}
\]
The triangle on the left commutes by definition, where the dashed arrow is induced by the colax monoidality of $q_!$. The rest of the diagram commutes for formal reasons. The composition along the bottom path of the diagram is the second map in \defref{Tensor_Normed_Functor} and we shall show it is an isomorphism (the proof for the first one follows by symmetry).  Since the diagram commutes, it suffices to show that the composition along the top and then right path is an isomorphism. By the zig-zag identities, the latter is homotopic to the composition
\[
\xymatrix@C=4pc{
	q_!(q^*X \otimes Y)\ar[r]^-{\Id \otimes u_{!,Y}} & 
	q_!(q^*X \otimes q^*q_!Y)\iso
	q_!q^*(X \otimes q_!Y)\ar[r]^-{c_{!,(X\otimes q!Y)}} & 
	X \otimes q_!Y.
}
\]
Finally, this composition is by definition the $\bc_!$ map $\beta_!$ for the square
\[
\xymatrix@C=4pc{
\fun(B,\mathcal{C})\ar[d]^{q^*}\ar[r]^-{X \otimes (-)} &
\fun(B,\mathcal{C})\ar[d]^{q^*} \\
\fun(A,\mathcal{C})\ar[r]^-{q^*X \otimes (-)} &
\fun(A,\mathcal{C}).
}
\]
To see that $\beta_!$ is an isomorphism, it is enough to check this after pulling back to every point $b\in B$. This in turn follows from the assumption that $\otimes$ distributes over $q$-colimits.
\end{proof}

This allows us to apply the general results about $\otimes$-normed
functors to the setting of local systems.
\begin{cor}
\label{cor:Semi_Add_Mode}Let $F\colon\mathcal{C}\to\mathcal{D}$
be an $m$-finite colimit preserving monoidal functor between monoidal
categories that admit, and the tensor product distributes over, $m$-finite
colimits.
\begin{enumerate}
\item An $m$-finite map of spaces $q\colon A\to B$, that is $\mathcal{C}$-ambidextrous
and weakly $\mathcal{D}$-ambidextrous, is $\mathcal{D}$-ambidextrous.
\item If $\mathcal{C}$ is $m$-semiadditive, then $\mathcal{D}$ is also
$m$-semiadditive.
\end{enumerate}
\end{cor}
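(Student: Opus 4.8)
The plan is to recognize the $(F,q)$-square as a $\otimes$-normed square and deduce (1) from \propref{Tensor_Ambi}, and then to bootstrap (1) along an induction on $m$ to obtain (2).

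For (1): since $F$ is monoidal, post-composition with $F$ is a monoidal functor $\fun(B,\mathcal{C})\to\fun(B,\mathcal{D})$ (and similarly over $A$) for the point-wise monoidal structures, so the horizontal functors of the $(F,q)$-square are monoidal. As $q$ is weakly $\mathcal{C}$-ambidextrous and weakly $\mathcal{D}$-ambidextrous and $\otimes$ distributes over $m$-finite colimits — hence over $q$-colimits — in both $\mathcal{C}$ and $\mathcal{D}$, \propref{Local_Systems_Tensor_Normed} makes both vertical functors $\otimes$-normed; thus the $(F,q)$-square is a $\otimes$-normed square. It is weakly ambidextrous by \thmref{Ambi_Functors} (using that $q$ is $m$-finite and weakly $\mathcal{C}$- and $\mathcal{D}$-ambidextrous, and that $F$ preserves $m$-finite, hence $(m-1)$-finite, colimits), and it satisfies the $\bc_{!}$ condition by \propref{BC_Functor} (since $F$ preserves $q$-colimits). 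Finally, $q$ being $\mathcal{C}$-ambidextrous means $q^{\can}_{\mathcal{C}}$ is iso-normed, so \propref{Tensor_Ambi} gives that $q^{\can}_{\mathcal{D}}$ is iso-normed — that is, $q$ is $\mathcal{D}$-ambidextrous — and that the $\bc_{*}$ condition holds as well.

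For (2): proceed by induction on $m$, the case $m=-2$ being trivial. Assuming the statement for $m-1$ and noting that $\mathcal{C}$ is $(m-1)$-semiadditive, $F$ preserves $(m-1)$-finite colimits, and $\otimes$ distributes over $(m-1)$-finite colimits in both categories, the inductive hypothesis yields that $\mathcal{D}$ is $(m-1)$-semiadditive. Now let $q\colon A\to B$ be an arbitrary $m$-finite map. Its diagonal $\delta\colon A\to A\times_{B}A$ is $(m-1)$-finite, hence $\mathcal{C}$-ambidextrous and, by $(m-1)$-semiadditivity of $\mathcal{D}$, also $\mathcal{D}$-ambidextrous; since $q$ is moreover $m$-truncated and $\mathcal{D}$ admits $q$-colimits, $q$ is weakly $\mathcal{C}$-ambidextrous and weakly $\mathcal{D}$-ambidextrous. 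As $\mathcal{C}$ is $m$-semiadditive, $q$ is even $\mathcal{C}$-ambidextrous, so part (1) applies and yields that $q$ is $\mathcal{D}$-ambidextrous. Since $q$ was arbitrary and $\mathcal{D}$ admits $m$-finite colimits — and hence, via $q_{!}\simeq q_{*}$ for $q\colon A\to\pt$, also $m$-finite limits — $\mathcal{D}$ is $m$-semiadditive.

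The one point requiring care — and the main obstacle — is the apparent circularity in (2): weak $\mathcal{D}$-ambidexterity of $q$ is precisely what makes part (1) applicable, yet it nominally presupposes that $\mathcal{D}$ admits $q$-limits, which is among the things being proved. This is resolved by the induction: at stage $m$ the diagonal of an $m$-finite map is already $(m-1)$-finite, hence $\mathcal{D}$-ambidextrous, so the canonical norm on $q^{*}$ is produced from its ``wrong-way counit'' $\nu_{q}\colon q^{*}q_{!}\to\Id$ using only $m$-finite colimits (as in the remark following \defref{Semiadd}), and the existence of $m$-finite limits in $\mathcal{D}$ emerges as a consequence rather than an input.
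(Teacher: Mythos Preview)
Your proof is correct and follows essentially the same route as the paper: part (1) via \propref{Local_Systems_Tensor_Normed}, \thmref{Ambi_Functors}, and \propref{Tensor_Ambi}, and part (2) by induction on $m$ using (1). Your final paragraph on the apparent circularity regarding $m$-finite limits in $\mathcal{D}$ is a welcome clarification that the paper leaves implicit (it is absorbed into the remark following \defref{Semiadd}).
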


\begin{proof}
By \propref{Local_Systems_Tensor_Normed}, $q^{\can}$ is $\otimes$-normed.
By \thmref{Ambi_Functors}, the $\left(F,q\right)$-square is weakly ambidextrous.
Since $F$ preserves $m$-finite colimits, the $\left(F,q\right)$-square
satisfies the $\bc_{!}$-condition. (1) now follows from \propref{Tensor_Ambi}.
We prove (2) by induction on $m$. For $m=-2$, there is nothing to
prove, and so we assume $m\ge-1$. By the inductive hypothesis, we
may assume $\mathcal{D}$ is $\left(m-1\right)$-semiadditive. In
this case, every $m$-finite map $q\colon A\to B$ is weakly $\mathcal{D}$-ambidextrous
and $\mathcal{C}$-ambidextrous, hence by (1), is $\mathcal{D}$-ambidextrous.
\end{proof}
The following definition is the natural notion of (symmetric) monoidal
structure in the realm of $m$-semiadditive $\infty$-categories.
\begin{defn}
An \emph{$m$-semiadditively} (symmetric) monoidal $\infty$-category,
is an $m$-semiadditive (symmetric) monoidal $\infty$-category $\mathcal{C}$,
such that the tensor product distributes over $m$-finite colimits.
\end{defn}

\begin{lem}
\label{lem:Box_Unit}Let $\left(\mathcal{C},\otimes,\one\right)$
be an $m$-semiadditively monoidal $\infty$-category and $A$ an
$m$-finite space. 
\begin{enumerate}
\item For every $X\in\mathcal{C}$, we have $|A|_{X}\simeq\Id_{X}\otimes|A|_{\one}$.
\item $A$ is $\mathcal{C}$-amenable if and only if $|A|_{\one}$
is an isomorphism.
\end{enumerate}
\end{lem}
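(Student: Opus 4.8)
The plan is to derive both parts from the compatibility of the ``cardinality'' operation $|A|$ with $m$-semiadditive functors, applied to the functor of tensoring with a fixed object. Note first that since $\mathcal{C}$ is $m$-semiadditive and $A$ is $m$-finite, the terminal map $q\colon A\to\pt$ is $\mathcal{C}$-ambidextrous, so $q^{\can}$ is iso-normed and $|A|=|q^{\can}|$ is a well-defined natural endomorphism of $\Id_{\mathcal{C}}$.

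For (1), I would fix $X\in\mathcal{C}$ and consider $F=X\otimes(-)\colon\mathcal{C}\to\mathcal{C}$. Since $\mathcal{C}$ is \emph{$m$-semiadditively} monoidal, the tensor product distributes over $m$-finite colimits, so $F$ preserves $m$-finite colimits; being a functor between $m$-semiadditive $\infty$-categories it then also preserves $m$-finite limits by \corref{Semi_Add_Functors}, and is therefore an $m$-semiadditive functor. Identifying $\fun(\pt,\mathcal{C})\simeq\mathcal{C}$ and applying \corref{Integral_Functor} to $F$, to the map $q\colon A\to\pt$, and to the object $\one$, I would get $F(|A|_{\one})=|A|_{F(\one)}$, i.e.\ $\Id_{X}\otimes|A|_{\one}=|A|_{X\otimes\one}$. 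Postcomposing with the unit isomorphism $X\otimes\one\simeq X$ and using naturality of the natural transformation $|A|$, this gives $|A|_{X}\simeq\Id_{X}\otimes|A|_{\one}$, which is (1).

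Part (2) then follows formally. By definition $A$ is $\mathcal{C}$-amenable exactly when $|A|=|q^{\can}|$ is an isomorphism natural transformation, i.e.\ when $|A|_{X}$ is invertible for every $X\in\mathcal{C}$; taking $X=\one$ gives one implication. For the converse, if $|A|_{\one}$ is invertible then so is $\Id_{X}\otimes|A|_{\one}$ for every $X$ (apply the functor $X\otimes(-)$), hence so is $|A|_{X}$ by (1), and $A$ is $\mathcal{C}$-amenable. I do not expect any real difficulty here: the only points that require care are checking that $X\otimes(-)$ is genuinely $m$-semiadditive — which is precisely the content of the $m$-semiadditively monoidal hypothesis together with \corref{Semi_Add_Functors} — and observing that \corref{Integral_Functor} in the degenerate case $B=\pt$ says exactly that an $m$-semiadditive functor carries $|A|_{\one}$ to $|A|_{F(\one)}$, which is what makes the tensoring functor transport the computation at $\one$ to the computation at $X$.
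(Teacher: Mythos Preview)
Your proof is correct and follows essentially the same approach as the paper: both consider the functor $F_X = X\otimes(-)$, observe it preserves $m$-finite colimits by the $m$-semiadditively monoidal hypothesis, and apply \corref{Integral_Functor} to obtain $\Id_X\otimes|A|_{\one}=F_X(|A|_{\one})=|A|_{F_X(\one)}=|A|_X$, with (2) following immediately. You spell out a couple of steps more explicitly (invoking \corref{Semi_Add_Functors} for limit preservation and the unit isomorphism), but the argument is the same.
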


\begin{proof}
We start with (1). Given an object $X\in\mathcal{C}$, the functor
$F_{X}\colon\mathcal{C}\to\mathcal{C}$, given by 
\[
F_{X}\left(Y\right)=X\otimes Y,
\]
preserves $m$-finite colimits. Thus, by \corref{Integral_Functor}
we have:
\[
\Id_{X}\otimes|A|_{\one}=F_{X}\left(|A|_{\one}\right)=|A|_{F_{X}\left(\one\right)}=|A|_{X}.
\]

(2) is an immediate corollary of (1).
\end{proof}
\begin{notation}
For an $m$-semiadditively symmetric monoidal $\infty$-category $\left(\mathcal{C},\otimes,\one\right)$
and an $m$-finite space $A$, we abuse notation by identifying $|A|_{\one}$
with $|A|$. If we want to emphasize the $\infty$-category
$\mathcal{C}$, we write $|A|_{\mathcal{C}}$. By \lemref{Box_Unit},
this conflation of terminology is rather harmless.
\end{notation}

We also have the following consequence for dualizability.
\begin{prop}
\label{cor:Ambi_Dualizable_Spaces} Let $\left(\mathcal{C},\otimes,\one\right)$
be a monoidal $\infty$-category. For every $\mathcal{C}$-ambidextrous space $A$ such that $\otimes$ distributes over $A$-colimits, the object
$\one_{A}$ (see Notation \ref{not:Canonical_Norm_Notation}) is dualizable. In particular, if $\left(\mathcal{C},\otimes,\one\right)$
is $m$-semiadditively monoidal $\infty$-category, then $\one_{A}$
is dualizable for every $m$-finite space $A$.
\end{prop}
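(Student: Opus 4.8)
The plan is to deduce this directly from the calculus of $\otimes$-normed functors developed in the previous subsection, applied to the terminal map $q\colon A\to\pt$, for which $\one_A=q_!q^*\one_{\mathcal{C}}$.

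First I would dispose of the ``in particular'' clause. If $(\mathcal{C},\otimes,\one)$ is $m$-semiadditively monoidal and $A$ is $m$-finite, then the terminal map $q\colon A\to\pt$ is an $m$-finite map of spaces, hence $\mathcal{C}$-ambidextrous by \defref{Semiadd}; moreover $\otimes$ distributes over $m$-finite colimits, and in particular over $A$-colimits (which are precisely the colimits of shape $q^{-1}(\pt)=A$). So this case is subsumed by the general statement.

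For the general statement, since $A$ is $\mathcal{C}$-ambidextrous the map $q\colon A\to\pt$ is weakly $\mathcal{C}$-ambidextrous and the canonical normed functor $q^{\can}\colon\fun(A,\mathcal{C})\nto\mathcal{C}$ is iso-normed. By \propref{Local_Systems_Tensor_Normed}, the hypothesis that $\otimes$ distributes over $A$-colimits (i.e.\ over $q$-colimits) ensures that $q^{\can}$ is in fact a $\otimes$-normed functor, with respect to the canonical monoidal structure on $q^*$. Now I would invoke \propref{Iso_Normed_Duality}: for a $\otimes$-normed functor which is iso-normed, the map $\varepsilon\colon\one_q\otimes\one_q\to\one_{\mathcal{C}}$ is a duality datum exhibiting $\one_q$ as a self-dual object of $\mathcal{C}$. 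Since $\one_q=\one_A$, this proves that $\one_A$ is dualizable (indeed self-dual), which is the assertion.

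The proof is essentially a chaining of the two cited propositions, so I do not expect a genuine obstacle: the substantive content — the zig-zag verifications producing the duality datum $\varepsilon$ in \propref{Iso_Normed_Duality}, and the construction of the $\otimes$-normed structure on $q^{\can}$ in \propref{Local_Systems_Tensor_Normed} — has already been carried out. The only point requiring a moment's care is the bookkeeping identification of ``$q$-colimits'' for the terminal map $q\colon A\to\pt$ with ``$A$-colimits'', so that the distributivity hypothesis of \propref{Local_Systems_Tensor_Normed} is met.
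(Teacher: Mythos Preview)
Your proposal is correct and follows essentially the same approach as the paper's proof: invoke \propref{Local_Systems_Tensor_Normed} to see that $q^{\can}$ is $\otimes$-normed, identify $\one_A=\one_q$, and conclude dualizability from \propref{Iso_Normed_Duality}. Your additional remarks unpacking the ``in particular'' clause and the identification of $q$-colimits with $A$-colimits are accurate but the paper leaves them implicit.
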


\begin{proof}
By \propref{Local_Systems_Tensor_Normed}, the map $q\colon A\to\pt$
corresponds to a $\otimes$-normed functor 
\[
q^{\can}\colon\fun\left(A,\mathcal{C}\right)\nto\mathcal{C}
\]
and by definition $\one_{A}=\one_{q}=q_{!}q^{*}\one$. Thus, the claim
follows from \propref{Iso_Normed_Duality}.
\end{proof}

\subsubsection{Symmetric Monoidal Dimension}

We now specialize to the \emph{symmetric} monoidal case. We begin
with recalling the definition of dimension for a dualizable object
of a symmetric monoidal $\infty$-category. As in \cite[Section 5.1]{HopkinsLurie}, a dualizable object $X$
in a symmetric monoidal $\infty$-category $\left(\mathcal{C},\otimes,\one\right)$
has a notion of dimension, which is defined as follows. Let $X^{\vee}$
be the dual of $X$ and let 
\[
\varepsilon\colon X^{\vee}\otimes X\to\one,\quad\eta\colon\one\to X\otimes X^{\vee}
\]
be the evaluation and coevaluation maps respectively. 
\begin{defn}
We denote by 
\[
\dim_{\mathcal{C}}\left(X\right)\in\End_{\mathcal{C}}\left(\one\right)
\]
the composition
\[
\one\oto{\eta}X\otimes X^{\vee}\oto{\sigma}X^{\vee}\otimes X\oto{\varepsilon}\one,
\]
where $\sigma$ is the swap map of the symmetric monoidal structure.
We say that a space $A$ is dualizable in $\mathcal{C}$, if $\one_{A}$
is dualizable in $\mathcal{C}$ and we denote 
\[
\dim_{\mathcal{C}}\left(A\right)=\dim_{\mathcal{C}}\left(\one_{A}\right).
\]
\end{defn}

Dualizability of $m$-finite spaces in $\mathcal{S}_{m}^{m}$ assumes
a particularly simple form. 
\begin{prop}
\label{prop:Dim_Sym_Span}Every $m$-finite space $A$ is self dual
in $\mathcal{S}_{m}^{m}$ and satisfies 
\[
\dim_{\mathcal{S}_{m}^{m}}\left(A\right)=(\pt\from A^{S^{1}}\to\pt)=|A^{S^{1}}|\in\End_{\mathcal{S}_{m}^{m}}\left(\pt\right).
\]
\end{prop}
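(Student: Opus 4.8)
The plan is to exhibit explicit duality data for $A$ in $\mathcal{S}_{m}^{m}$ built from the diagonal, verify the zig-zag identities by a single homotopy pullback, and then read off the dimension.

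Recall from \cite[Section 2.2]{Harpaz} (and the discussion preceding \propref{Dim_Sym_Span}) that the tensor product on $\mathcal{S}_{m}^{m}$ is induced by the cartesian product of $m$-finite spaces, the unit is $\pt$, and a morphism is a span, with composition computed by pullback. Let $p_{A}\colon A\to\pt$ be the terminal map and $\Delta_{A}\colon A\to A\times A$ the diagonal; note that $\Delta_{A}$ is symmetric, i.e. $\sigma\circ\Delta_{A}\simeq\Delta_{A}$ where $\sigma$ is the swap of $A\times A$. I would define
\[
\eta_{A}\colon\pt\to A\otimes A,\qquad\eta_{A}=\bigl(\pt\xleftarrow{\,p_{A}\,}A\xrightarrow{\,\Delta_{A}\,}A\times A\bigr),
\]
\[
\varepsilon_{A}\colon A\otimes A\to\pt,\qquad\varepsilon_{A}=\bigl(A\times A\xleftarrow{\,\Delta_{A}\,}A\xrightarrow{\,p_{A}\,}\pt\bigr),
\]
and first check that $(\varepsilon_{A},\eta_{A})$ satisfy the two triangle identities up to homotopy. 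Since dualizability in a symmetric monoidal $\infty$-category is a property detectable in the homotopy category (it suffices to produce evaluation and coevaluation maps for which both zig-zag composites are homotopic to the identities), this already shows $A$ is self dual. Unwinding the external products of spans and the composition by pullback, each triangle identity reduces to the single assertion that the pullback of $\Delta_{A}\times\Id_{A}\colon A\times A\to A\times A\times A$ along $\Id_{A}\times\Delta_{A}\colon A\times A\to A\times A\times A$ is, via the total diagonal $A\to A\times A$, equivalent to $A$, and that under this identification the resulting composite span is $\Id_{A}$. This is immediate once one observes both maps force the $A\times A\times A$-coordinates to coincide. (Alternatively, one may simply invoke the general fact that a span $\infty$-category is symmetric monoidal with every object self dual, with duality data given as above.)

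It then remains to compute $\dim_{\mathcal{S}_{m}^{m}}(A)=\varepsilon_{A}\circ\sigma\circ\eta_{A}$. Because $\Delta_{A}$ is symmetric we have $\sigma\circ\eta_{A}\simeq\eta_{A}$, so the dimension equals the composite span $\varepsilon_{A}\circ\eta_{A}$. By definition of composition in $\mathcal{S}_{m}^{m}$, this composite has apex the pullback of $\Delta_{A}$ against $\Delta_{A}$, namely $A\times_{A\times A}A$; since $S^{1}$ is the homotopy pushout $\pt\sqcup_{\pt\sqcup\pt}\pt$, this homotopy pullback is the free loop space $A^{S^{1}}=\map(S^{1},A)$, which is again $m$-finite as a finite limit of $m$-finite spaces. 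Tracking the two legs, the composite span is $\bigl(\pt\leftarrow A^{S^{1}}\to\pt\bigr)$, and by the description of the $\mathcal{S}_{m}^{m}$-action recalled above (the span $\pt\leftarrow B\to\pt$ acts as $|B|$) this is precisely $|A^{S^{1}}|\in\End_{\mathcal{S}_{m}^{m}}(\pt)$. I expect the main obstacle to be the careful bookkeeping of external products of spans and their composites by pullback in the verification of the triangle identities (getting every leg pointing the right way), together with the standard but worth-stating point that only the homotopy-level zig-zag identities must be checked; the loop-space identification and the comparison with $|A^{S^{1}}|$ are then formal.
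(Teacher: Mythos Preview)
Your proposal is correct and follows essentially the same approach as the paper: the same duality data $\eta$ and $\varepsilon$ built from the diagonal, the same use of the symmetry of $\Delta_A$ to drop the swap, and the same identification of the composite span's apex $A\times_{A\times A}A$ with $A^{S^1}$. The only cosmetic difference is that the paper invokes $\varepsilon\circ\sigma\simeq\varepsilon$ while you use the equivalent $\sigma\circ\eta\simeq\eta$, and you spell out the zig-zag pullback where the paper simply declares it ``straightforward.''
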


\begin{proof}
It is straightforward to check that the spans
\[
\varepsilon\colon(A\times A\xleftarrow{\Delta}A\to\pt)
\]
\[
\eta\colon(\pt\from A\oto{\Delta}A\times A),
\]
satisfy the zig-zag identities and therefore $\varepsilon$ is a duality
pairing exhibiting $A$ as self dual. Moreover, since $\varepsilon\circ\sigma$
is homotopic to $\varepsilon$, where $\sigma\colon X\times X\to X\times X$
is the symmetric monoidal swap, we get $\dim\left(A\right)=\varepsilon\circ\eta$
. Computing the relevant pullback explicitly, 
\[
\xymatrix@R=1pc@C=1pc{ &  & \ar[dl]\quad A^{S^{1}}\ar[dr]\\
 & \ar[dl]A\ar[dr] &  & \ar[dl]A\ar[dr]\\
\quad\pt\quad &  & A\times A &  & \quad\pt\quad
}
\]
we obtain the desired result.
\end{proof}
As a symmetric monoidal $\infty$-category, $\mathcal{S}_{m}^{m}$
has also the following universal property.
\begin{thm}
[Harpaz, {{\cite[Corollary 5.8 ]{Harpaz}}}]\label{thm:=00005BHarpaz=00005DSymm_Mon}
Let $\left(\mathcal{C},\otimes,\one\right)$ be an $m$-semiadditively
symmetric monoidal $\infty$-category. There exists a unique $m$-semiadditive
symmetric monoidal functor $\mathcal{S}_{m}^{m}\to\mathcal{C}$ and
its underlying functor is $\one_{\left(-\right)}$.
\end{thm}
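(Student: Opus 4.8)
The plan is to split the statement into three tasks: (i) identifying the underlying functor of any candidate $F\colon\mathcal{S}_{m}^{m}\to\mathcal{C}$, (ii) deducing uniqueness, and (iii) producing the functor. Tasks (i) and (ii) are essentially formal once one knows that $\mathcal{S}_{m}^{m}$ is generated by its unit; task (iii) is the crux.

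For (i): any $m$-semiadditive symmetric monoidal $F$ sends the unit $\pt\in\mathcal{S}_{m}^{m}$ to $\one$. The covariant inclusion $\mathcal{S}_{m}\into\mathcal{S}_{m}^{m}$ — sending $f\colon A\to B$ to the span $A\xleftarrow{=}A\oto{f}B$ — preserves $m$-finite colimits, and inside $\mathcal{S}_{m}$ each $m$-finite space $A$ is the colimit $\colim_{A}\pt$ of the constant point-valued diagram; hence $A\simeq\colim_{A}\pt$ already in $\mathcal{S}_{m}^{m}$, and $F(A)\simeq\colim_{A}F(\pt)=\colim_{A}\one=\one_{A}$. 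For a general span $A\xleftarrow{q}E\oto{r}B$, I would factor it as the ``wrong-way'' span $A\xleftarrow{q}E\xrightarrow{=}E$ followed by the ``right-way'' span $E\xleftarrow{=}E\oto{r}B$: $F$ must send the right-way part to the natural colimit-comparison $\one_{E}\to\one_{B}$, and the wrong-way part to the integration/norm map $\one_{A}\to\one_{E}$, since an $m$-semiadditive functor commutes with integration (\corref{Integral_Functor}) and hence preserves the wrong-way unit maps out of which the canonical norms are built. So on homotopy categories $F$ is forced, and its underlying functor is $\one_{(-)}$.

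For (ii): one must promote ``forced on homotopy categories'' to ``the space of such $F$ is contractible''. This is precisely the statement that $\mathcal{S}_{m}^{m}$ is the \emph{free} $m$-semiadditively symmetric monoidal $\infty$-category, i.e. the initial object of the $\infty$-category of $m$-semiadditively symmetric monoidal $\infty$-categories and $m$-semiadditive symmetric monoidal functors. I would deduce this — together with existence in (iii) — from the ``mode'' structure: there is a symmetric monoidal structure on the $\infty$-category $\cat_{\infty}^{m\textrm{-sa}}$ of $m$-semiadditive $\infty$-categories and $m$-semiadditive functors whose unit is $\mathcal{S}_{m}^{m}$ and whose commutative algebra objects are exactly the $m$-semiadditively symmetric monoidal $\infty$-categories; as for any symmetric monoidal $\infty$-category, the unit is initial among commutative algebras, and the unique commutative-algebra map $\mathcal{S}_{m}^{m}\to\mathcal{C}$ is the desired functor, with underlying functor $\one_{(-)}$ by (i).

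Task (iii) — the construction, equivalently the existence of that mode structure — is the main obstacle, and is the real content of the cited result. The homotopy-level data is easy: by \corref{Ambi_Dualizable_Spaces} the assignment $A\mapsto\one_{A}$ lands in dualizable objects, \lemref{Box_Unit}(1) together with distributivity of $\otimes$ over $m$-finite colimits (\propref{Local_Systems_Tensor_Normed}) gives natural equivalences $\one_{A}\otimes\one_{B}\simeq\one_{A\times B}$, \lemref{Tensor_Square_Compatibility} records their compatibility with composition of spans, and the symmetry is handled as in the self-duality computation of \propref{Dim_Sym_Span}. The difficulty is assembling these into a coherent symmetric monoidal functor of $\infty$-categories (equivalently, showing that $m$-semiadditivity is detected by modules over $\mathcal{S}_{m}^{m}$ with the relative tensor product, so that the passage to $m$-semiadditive $\infty$-categories is compatible with $\otimes$); the univariate input is the monoidal action theorem \thmref{=00005BHarpaz=00005DSpan_Action} — applied with $\mathcal{C}$ replaced by $\fun(\mathcal{C},\mathcal{C})$ and restricted to translation functors $X\otimes(-)$ — but marshalling it into the symmetric monoidal universal property needs the machinery of \cite{Harpaz}, which is why I would cite it rather than reprove it.
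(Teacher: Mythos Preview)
The paper does not prove this statement at all: it is stated as a theorem of Harpaz and attributed to \cite[Corollary 5.8]{Harpaz}, with no argument given. So there is no ``paper's own proof'' to compare against; the paper simply cites the result, exactly as you conclude you would do at the end of your proposal.

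Your sketch of (i)--(iii) is a reasonable informal outline of why the result is plausible and of where the genuine content lies, and your identification of the underlying functor via $A\simeq\colim_A\pt$ in $\mathcal{S}_m^m$ together with preservation of $m$-finite colimits is correct in spirit. But be aware that the ``mode'' formulation you invoke in (ii)--(iii) (a symmetric monoidal structure on $\cat_\infty^{m\text{-sa}}$ with unit $\mathcal{S}_m^m$) is itself something that would need justification at the level of \cite{Harpaz}, so your argument for uniqueness is not really independent of the existence machinery; in practice both parts are proved together in Harpaz's framework. Since the paper treats the whole statement as a black-box citation, your proposal already goes beyond what is required.
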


From this we immediately get
\begin{cor}
\label{cor:Dim_Sym}Let $\left(\mathcal{C},\otimes,\one\right)$ be
an $m$-semiadditively symmetric monoidal $\infty$-category. Every
$m$-finite space $A$ is dualizable in $\mathcal{C}$ and 
\[
\dim_{\mathcal{C}}\left(A\right)=|A^{S^{1}}|\quad\in\hom_{h\mathcal{C}}\left(\one_{\mathcal{C}},\one_{\mathcal{C}}\right).
\]
In particular, if $A$ is a loop space (e.g. $A=B^{k}C_{p}$), we
have 
\[
\dim_{\mathcal{C}}\left(A\right)=|A||\Omega A|.
\]
\end{cor}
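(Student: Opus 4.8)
The plan is to transport the statement along the universal $m$-semiadditive symmetric monoidal functor of \thmref{=00005BHarpaz=00005DSymm_Mon}, reducing it to the computation of dimensions in the span category $\mathcal{S}_m^m$ carried out in \propref{Dim_Sym_Span}. Let $F\colon\mathcal{S}_m^m\to\mathcal{C}$ be that functor, so $F(A)\simeq\one_A$ for every $m$-finite space $A$, and recall that $F$ is symmetric monoidal and $m$-semiadditive (in particular it preserves $m$-finite colimits).

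First I would observe that, since $A$ is self-dual in $\mathcal{S}_m^m$ via the explicit evaluation and coevaluation spans $\varepsilon,\eta$ of \propref{Dim_Sym_Span}, applying the symmetric monoidal $F$ produces a self-duality datum $\big(F(\varepsilon),F(\eta)\big)$ for $\one_A=F(A)$ in $\mathcal{C}$; this recovers the dualizability already recorded in \corref{Ambi_Dualizable_Spaces}. Since a symmetric monoidal functor also carries the swap $\sigma$ of its source to the swap of its target and $\dim$ is by definition the composite $\varepsilon\circ\sigma\circ\eta$, we get
\[
\dim_{\mathcal{C}}(A)=\dim_{\mathcal{C}}(\one_A)=F(\varepsilon)\circ F(\sigma)\circ F(\eta)=F\!\left(\dim_{\mathcal{S}_m^m}(A)\right).
\]
By \propref{Dim_Sym_Span}, $\dim_{\mathcal{S}_m^m}(A)$ is the endomorphism $|A^{S^1}|$ of the unit $\pt\in\mathcal{S}_m^m$, the free loop space $A^{S^1}$ being $m$-finite as a pullback of the two diagonals of $A$.

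Next I would identify $F(|A^{S^1}|)$ with $|A^{S^1}|_{\mathcal{C}}\in\End_{\mathcal{C}}(\one_{\mathcal{C}})$: as $F$ is $m$-semiadditive and $F(\pt)=\one_{\mathcal{C}}$, \corref{Integral_Functor} applied to the terminal map $A^{S^1}\to\pt$ yields $F(|A^{S^1}|)=|A^{S^1}|_{\mathcal{C}}$ (equivalently, this is the explicit description of the $\mathcal{S}_m^m$-action recalled after \thmref{=00005BHarpaz=00005DSpan_Action}). Together with the previous display this gives $\dim_{\mathcal{C}}(A)=|A^{S^1}|$.

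For the last assertion I would use the classical splitting of the free loop space of a loop space: modelling $A$ by a grouplike topological monoid $G$, the evaluation fibration trivializes, $A^{S^1}\cong G\times\Omega G$, so $|A^{S^1}|=|A\times\Omega A|$; and multiplicativity of $|-|$ under Cartesian products — from \corref{Distributivity} with $q_1\colon A\to\pt$, $q_2\colon\Omega A\to\pt$, or from the symmetric monoidality of $\one_{(-)}$ — gives $|A\times\Omega A|=|A|\,|\Omega A|$. For $A=B^kC_p$ this is immediate since $\Omega B^kC_p\simeq B^{k-1}C_p$. I do not expect a genuine obstacle: the substantive inputs (the universal property of $\mathcal{S}_m^m$, the dimension computation therein, and the free-loop-space splitting) are available or classical, and what remains is the routine verification that symmetric monoidal functors preserve duality data and dimension, that $|-|$ is multiplicative under products, and the minor bookkeeping of keeping all identifications at the level of homotopy categories.
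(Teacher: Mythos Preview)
Your proposal is correct and follows essentially the same approach as the paper: transport along the universal symmetric monoidal $m$-semiadditive functor $F\colon\mathcal{S}_m^m\to\mathcal{C}$ from \thmref{=00005BHarpaz=00005DSymm_Mon}, use that symmetric monoidal functors preserve dimension, invoke \propref{Dim_Sym_Span} for the universal case, and apply \corref{Integral_Functor} to identify $F(|A^{S^1}|)$ with $|A^{S^1}|_{\mathcal{C}}$; the loop-space statement via $A^{S^1}\simeq A\times\Omega A$ and \corref{Distributivity} is likewise the paper's argument.
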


\begin{proof}
By \thmref{=00005BHarpaz=00005DSymm_Mon}, there is a canonical $m$-finite
colimit preserving symmetric monoidal functor $F\colon\mathcal{S}_{m}^{m}\to\mathcal{C}$.
Since $F\left(A\right)=\one_{A}$ and $F$ is symmetric monoidal,
we have 
\[
F\left(\dim_{\mathcal{S}_{m}^{m}}A\right)=\dim_{\mathcal{C}}\left(\one_{A}\right).
\]
Since $F$ also preserves $m$-finite colimits, we have by \corref{Integral_Functor},
that 
\[
F\left(|B|_{\pt}\right)=|B|_{F\left(\pt\right)}=|B|_{\one_{\mathcal{C}}}
\]
 for all $m$-finite $B$. We are therefore reduced to the universal
case $\mathcal{C}=\mathcal{S}_{m}^{m}$, which is given by \propref{Dim_Sym_Span}.
The last claim follows from the fact that if $A$ is a
loop-space, then $A^{S^{1}}\simeq A\times\Omega A$ and \corref{Distributivity}. 
\end{proof}

\subsection{Equivariant Powers}

Let $\mathcal{C}$ be a symmetric monoidal $\infty$-category and
$p$ a prime. As we shall recall below, for every object $X\in\mathcal{C}$, the $p$-th tensor
power $X^{\otimes p}$ carries a natural action of the cyclic group
$C_{p}\ss\Sigma_{p}$. Moreover, given a map $f\colon X\to Y$, we
get a $C_{p}$-equivariant morphism $f^{\otimes p}\colon X^{\otimes p}\to Y^{\otimes p}$.
Namely, there is a functor 
\[
\Theta^{p}\colon\mathcal{C}\to\fun\left(BC_{p},\mathcal{C}\right),
\]
whose composition with $e^{*}\colon\fun\left(BC_{p},\mathcal{C}\right)\to\mathcal{C}$
(where $e\colon\pt\to BC_{p}$) is homotopic to the $p$-th power
functor $\left(-\right)^{\otimes p}\colon\mathcal{C}\to\mathcal{C}$.
In this section, we study the functor $\Theta^{p}$, its naturality
and additivity properties.

\subsubsection{Functoriality \& Integration}

We begin by describing $\Theta^{p}$ formally. It will be useful to work in the greater level of generality of $\mathcal{C}$-valued \emph{local-systems} instead of single objects. 
Given a simplicial set $K$ we define the $C_{p}$-equivariant $p$-power of $K$ to be the simplicial set
$K^p_{hC_p} = (K^p\times EC_p)/C_p.$
For $K=\mathcal{C}$ a quasi-category, one can easily varify that $\mathcal{C}^p_{hC_p}$ is a quasi-category as well. 
Moreover, since the $C_p$ action on $\mathcal{C}^p\times EC_p$ is \emph{free}, the quasi-category $\mathcal{C}^p_{hC_p}$ is a model for the $\infty$-categorical quotient of $\mathcal{C}^p$ by $C_p$\footnote{Compare \cite[Section 6.1.4]{ha}, where the analogous construction of $\Sigma_n$-equivariant powers is discussed.}. In particular, we can consider this construction for every $A\in \mathcal{S}$, which we also denote by 
$A\wr C_{p}=\left(A^{p}\right)_{hC_{p}}.$ 
\begin{lem} \label{lem:Wr_Fiber_Product}
    The functor 
    $\left(-\right)\wr C_{p}\colon \mathcal{S} \to \mathcal{S}$ 
    preserves fiber products.
\end{lem}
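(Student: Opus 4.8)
The plan is to reduce the statement to the fact that $(-)^p$ and $(-)_{hC_p}$ interact well with finite limits, handled one at a time. Write $P\colon\mathcal{S}\to\fun(BC_p,\mathcal{S})$ for the functor $A\mapsto A^p$ with its natural $C_p$-action (the restriction along $BC_p\to B\Sigma_p$ of the $p$-th power construction), so that $(-)\wr C_p = (-)_{hC_p}\circ P$, where $(-)_{hC_p}=q_!$ for $q\colon BC_p\to\pt$. First I would observe that $P$ preserves fiber products: the underlying functor of $P$ (after forgetting the $C_p$-action, i.e.\ composing with $e^*$) is $A\mapsto A^p$, which is a finite product of copies of the identity and hence preserves all limits, and limits in $\fun(BC_p,\mathcal{S})$ are computed pointwise, so $P$ itself preserves fiber products. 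Thus it remains to show that $q_!=(-)_{hC_p}\colon\fun(BC_p,\mathcal{S})\to\mathcal{S}$ preserves fiber products of diagrams that lie in the essential image of $P$, or more precisely fiber products of the specific shape $P(A)\to P(B)\leftarrow P(C)$.

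The key step is the following: for a fiber sequence $A\to E\xrightarrow{f} B$ of spaces, the induced diagram
\[
A\wr C_p \longrightarrow E\wr C_p \longrightarrow B\wr C_p
\]
is again a fiber sequence. Equivalently, for $q\colon BC_p\to\pt$ and a $C_p$-equivariant map $P(f)\colon P(E)\to P(B)$, the Beck–Chevalley-type square comparing $q_!$ of a pullback with the pullback of $q_!$'s is cartesian. I would prove this by analyzing the fiber of $E^p_{hC_p}\to B^p_{hC_p}$ over a point $b\in B$ represented after passing to $EC_p$: the fiber of $E^p\to B^p$ over $(b,\dots,b)$ is $f^{-1}(b)^p=A^p$, with its $C_p$-action, and the long exact/fibration sequence associated to the Borel construction identifies the fiber of $E^p_{hC_p}\to B^p_{hC_p}$ with $(A^p)_{hC_p}=A\wr C_p$ — using that the $C_p$-action on $B^p$ fixes the diagonal point $(b,\dots,b)$, so the base of the Borel fibration over that point is exactly $BC_p$ again. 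Concretely: $(-)_{hC_p}$ applied to a map of $BC_p$-local systems which is a pointwise fibration with fiber $F$ (a $BC_p$-local system itself) yields a fibration with fiber $F_{hC_p}$; this is a standard fact about the Borel construction, provable e.g.\ by the pasting lemma for pullbacks applied to $EC_p\times_{C_p}(-)$.

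Granting this, fiber products $P(A)\times_{P(B)}P(C)$ can be written by choosing a fibrant replacement so that $P(A)\to P(B)$ is a $C_p$-equivariant fibration with (pointwise) fiber the $C_p$-space $P(A)\times_{P(B)}P(C)$ restricted over each point — wait, more carefully: factor $A\to B$ as $A\xrightarrow{\sim}E\to B$ with the second map a fibration; then $P(A)\to P(B)$ becomes $P(E)\to P(B)$, a $C_p$-equivariant map whose pointwise fibers are $p$-th powers of fibers of $E\to B$, and $A\times_B C\simeq E\times_B C$. Applying $(-)_{hC_p}$, the previous paragraph shows the resulting square is cartesian, giving $(A\times_B C)\wr C_p\simeq (A\wr C_p)\times_{B\wr C_p}(C\wr C_p)$. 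The main obstacle I anticipate is the careful bookkeeping in the fibration-sequence step: making precise, at the level of quasicategories rather than model-categorical spaces, that $(-)_{hC_p}$ sends pointwise fiber sequences of $BC_p$-local systems to fiber sequences, and that the relevant base point of $B^p$ has stabilizer all of $C_p$ so the "new" base recovered is $BC_p$; once that is nailed down the rest is formal. One can also phrase the whole argument via the equivalence $\fun(BC_p,\mathcal{S})\simeq \mathcal{S}_{/BC_p}$ and the observation that $(-)_{hC_p}$ corresponds to the forgetful functor to $\mathcal{S}$ along $BC_p\to\pt$, which preserves pullbacks trivially, and that $P$ lands compatibly in $\mathcal{S}_{/BC_p}$ — this is perhaps the cleanest route and I would likely present it that way.
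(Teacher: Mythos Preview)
Your proposal is correct, and the ``cleanest route'' you sketch in your final sentence is exactly the paper's proof: factor $(-)\wr C_p$ as $\mathcal{S}\xrightarrow{P}\fun(BC_p,\mathcal{S})\simeq\mathcal{S}_{/BC_p}\xrightarrow{\pi}\mathcal{S}$, observe that $\pi$ preserves pullbacks (limits of contractible shape), and that $P$ preserves fiber products. The only refinement the paper adds is to identify your functor $P$ with the right Kan extension $e_*$ along $e\colon\pt\to BC_p$, so that limit preservation for $P$ is immediate from it being a right adjoint rather than needing your pointwise check; the Borel-fibration argument in the body of your proposal is a valid but unnecessarily hands-on way to reprove that $\pi$ preserves pullbacks.
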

\begin{proof}
    The functor $\left(-\right)\wr C_{p}$ can be identified with the composition
    \[
    \mathcal{S} \oto{e_*} \fun(BC_p,\mathcal{S})
    \simeq \mathcal{S}_{/BC_p} \oto{\pi} \mathcal{S},
\]
where $\pt \oto{e} BC_p$ is a choice of a base point. The functor $e_*$ preserves limits as it is a right adjoint, and the canonical projection $\pi$ preserves limits of contractible shape \cite[Proposition 4.4.2.9]{ha} 
\end{proof}

The construction $ \left(-\right)\wr C_{p}$ induces a functor 
\[
\left(-\right)_{hC_{p}}^{p}:\fun\left(A,\mathcal{C}\right)\to\fun(\left(A^{p}\right)_{hC_{p}},\left(\mathcal{C}^{p}\right)_{hC_{p}}).
\]

Using this have the following:
\begin{defn}
\label{def:Theta}Given a symmetric monoidal $\infty$-category $\mathcal{C}$,
we define the functor 
\[
\Theta_A^{p}:\fun\left(A,\mathcal{C}\right)\to\fun\left(A\wr C_{p},\mathcal{C}\right)
\]
to be the composition of $\left(-\right)_{hC_{p}}^{p}$ with 
\[
\left(\mathcal{C}^{p}\right)_{hC_{p}}\to\left(\mathcal{C}^{p}\right)_{h\Sigma_{p}}\oto{\otimes}\mathcal{C}.
\]
We shall suppress the subscript $A$ in $\Theta_A$ when the space $A$ is understood from the context. 
\end{defn}

The $\Theta^{p}$ operation is functorial in the following sense.
\begin{lem}
\label{lem:Theta_Functoriality} Let $F\colon\mathcal{C}\to\mathcal{D}$
be a symmetric monoidal functor between symmetric monoidal $\infty$-categories.
For every space $A$, the diagram
\[
\xymatrix@C=3pc{\fun\left(A,\mathcal{C}\right)\ar[d]_{F_{*}}\ar[r]^-{\Theta^{p}} & \fun\left(A\wr C_{p},\mathcal{C}\right)\ar[d]^{F_{*}}\\
\fun\left(A,\mathcal{D}\right)\ar[r]^-{\Theta^{p}} & \fun\left(A\wr C_{p},\mathcal{D}\right)
}
\]
commutes up to homotopy.
\end{lem}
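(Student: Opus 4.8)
The plan is to unwind \defref{Theta} and reduce the claimed square to two separate commuting squares, one for the ``free quotient'' construction $(-)^p_{hC_p}$ and one for the tensor product $\otimes\colon(\mathcal{C}^p)_{h\Sigma_p}\to\mathcal{C}$. First I would observe that $\Theta^p_A$ factors as
\[
\fun(A,\mathcal{C})\xrightarrow{(-)^p_{hC_p}}\fun\bigl(A\wr C_p,(\mathcal{C}^p)_{hC_p}\bigr)\xrightarrow{(r_{\mathcal{C}})_*}\fun\bigl(A\wr C_p,(\mathcal{C}^p)_{h\Sigma_p}\bigr)\xrightarrow{(\otimes_{\mathcal{C}})_*}\fun(A\wr C_p,\mathcal{C}),
\]
where $r_{\mathcal{C}}\colon(\mathcal{C}^p)_{hC_p}\to(\mathcal{C}^p)_{h\Sigma_p}$ is the canonical map and $\otimes_{\mathcal{C}}$ is the $\Sigma_p$-equivariant $p$-fold tensor functor. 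The same factorization holds for $\mathcal{D}$. Post-composition with $F$ is natural in the target $\infty$-category, so it suffices to exhibit, for each of the three stages, a commuting square relating the $\mathcal{C}$-version to the $\mathcal{D}$-version via the functors induced by $F$ (and by $F^p_{hC_p}$, $F^p_{h\Sigma_p}$ at the intermediate stages). Pasting these three squares horizontally yields the desired square, since the leftmost vertical is $F_*$ and the rightmost vertical is $F_*$, while the intermediate verticals cancel in the pasting.

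For the first stage, the square
\[
\vcenter{\xymatrix@C=3pc{
\fun(A,\mathcal{C})\ar[d]_{F_*}\ar[r]^-{(-)^p_{hC_p}} & \fun\bigl(A\wr C_p,(\mathcal{C}^p)_{hC_p}\bigr)\ar[d]^{(F^p_{hC_p})_*}\\
\fun(A,\mathcal{D})\ar[r]^-{(-)^p_{hC_p}} & \fun\bigl(A\wr C_p,(\mathcal{D}^p)_{hC_p}\bigr)
}}
\]
commutes because the construction $K\mapsto K^p_{hC_p}=(K^p\times EC_p)/C_p$ is a functor of simplicial sets that is compatible with the cotensoring by $A$ used to form local systems; concretely, applying $F$ pointwise to a diagram $X\colon A\to\mathcal{C}$ and then taking the $C_p$-equivariant $p$-th power agrees, up to canonical homotopy, with first taking the $p$-th power and then post-composing with $F^p_{hC_p}$, because $F^p$ is $C_p$-equivariant by functoriality of $(-)^p$ and the $EC_p$ factor is unchanged. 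For the second stage, the relevant square commutes because $F^p$ is not merely $C_p$-equivariant but $\Sigma_p$-equivariant (the symmetric group acts on $p$-fold products of any functor), so the canonical maps $r_{\mathcal{C}}$ and $r_{\mathcal{D}}$ are compatible with $F^p_{hC_p}$ and $F^p_{h\Sigma_p}$; this is a naturality statement for the quotient construction applied to the equivariant map $F^p$. For the third stage, the square
\[
\vcenter{\xymatrix@C=3pc{
(\mathcal{C}^p)_{h\Sigma_p}\ar[d]_{F^p_{h\Sigma_p}}\ar[r]^-{\otimes_{\mathcal{C}}} & \mathcal{C}\ar[d]^{F}\\
(\mathcal{D}^p)_{h\Sigma_p}\ar[r]^-{\otimes_{\mathcal{D}}} & \mathcal{D}
}}
\]
commutes up to homotopy precisely because $F$ is symmetric monoidal: the coherence data of a symmetric monoidal functor provides exactly a homotopy between $F\circ\otimes_{\mathcal{C}}$ and $\otimes_{\mathcal{D}}\circ F^p$ as $\Sigma_p$-equivariant functors, hence a commuting square after passing to homotopy quotients. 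Post-composing with $(-)_*$ along $A\wr C_p$ preserves all three commuting squares, and pasting completes the argument.

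The main obstacle is the second and third stages at the level of \emph{equivariant} functors and their homotopy quotients: one must be careful that ``$F$ is symmetric monoidal'' genuinely supplies a $\Sigma_p$-equivariant natural transformation $F\circ\otimes_{\mathcal{C}}\Rightarrow\otimes_{\mathcal{D}}\circ F^p$ with all higher coherences, and that the homotopy-quotient functor $(-)_{h\Sigma_p}$ (modeled here by the Borel construction on the relevant quasi-categories) sends such equivariant transformations to honest natural transformations of the quotient functors. This is essentially the content of \cite[Section 6.1.4, especially the discussion of equivariant tensor powers]{ha}; the cleanest route is to cite that $\Theta^p$ and the intermediate functors are defined precisely so as to be natural in symmetric monoidal functors of $\mathcal{C}$, and then the lemma is the instance of that naturality applied to $F$. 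I expect the verification at stages one and three to be routine once the factorization is set up; stage two is where one must invoke the $\Sigma_p$-equivariance of $F^p$ and the naturality of the free-quotient construction, which is where I would spend most of the care in writing the full proof.
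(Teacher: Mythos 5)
Your proposal is correct and follows essentially the same route as the paper: the paper also factors $\Theta^{p}$ through the equivariant power construction and pastes the resulting squares horizontally, attributing the first square to the functoriality of $\mathcal{C}\mapsto\mathcal{C}^{p}_{hC_{p}}$ and the last to the symmetric monoidality of $F$. The only cosmetic difference is that the paper collapses your second and third stages (the passage from the $C_{p}$-quotient to the $\Sigma_{p}$-quotient and the tensor functor) into a single arrow, so your three-square pasting is just a finer version of its two-square pasting.
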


\begin{proof}
The square in question is the outer square of the following diagram
\[
\xymatrix@C=3pc{\fun\left(A,\mathcal{C}\right)\ar[d]_{F_{*}}\ar[r] & \fun(A\wr C_{p},\mathcal{C}_{hC_{p}}^{p})\ar[d]^{F_{*}}\ar[r]^-{\otimes} & \fun\left(A\wr C_{p},\mathcal{C}\right)\ar[d]^{F_{*}}\\
\fun\left(A,\mathcal{D}\right)\ar[r] & \fun(A\wr C_{p},\mathcal{D}_{hC_{p}}^{p})\ar[r]^-{\otimes} & \fun\left(A\wr C_{p},\mathcal{D}\right).
}
\]
The left square commutes by the functoriality of $\mathcal{C}\mapsto\mathcal{C}_{hC_{p}}^{p}$
and the right, since $F$ is symmetric monoidal.
\end{proof}
\begin{defn}
For a map of spaces $q\colon A\to B$, the naturality of \defref{Theta}
gives a commutative square 
\[
\xymatrix@C=3pc{\fun\left(B,\mathcal{C}\right)\ar[d]^{q^{*}}\ar[r]^-{\Theta_B^{p}} & \fun\left(B\wr C_{p},\mathcal{C}\right)\ar[d]^{\left(q\wr C_{p}\right)^{*}}\\
\fun\left(A,\mathcal{C}\right)\ar[r]^-{\Theta_A^{p}} & \fun\left(A\wr C_{p},\mathcal{C}\right).
}
\]
\end{defn}

We call this the $\Theta^{p}$-square of $q$. If
$q$ is $m$-finite, then so is $q\wr C_{p}$. If additionally $\mathcal{C}$
is $\left(m-1\right)$-semiadditive and admits $m$-finite (co)limits,
the $\Theta^{p}$-square is canonically normed.
\begin{example}
For a space $A$, we have a canonical fiber sequence
\[
A^{p}\to A\wr C_{p}\xrightarrow{\pi}BC_{p}.
\]
The $\Theta^{p}$-square of $q\colon A\to\pt$ is 
\[
\xymatrix@C=3pc{\fun\left(\pt,\mathcal{C}\right)\ar[d]^{q^{*}}\ar[r]^-{\Theta^{p}} & \fun\left(BC_{p},\mathcal{C}\right)\ar[d]^{\pi^{*}}\\
\fun\left(A,\mathcal{C}\right)\ar[r]^-{\Theta^{p}} & \fun\left(A\wr C_{p},\mathcal{C}\right).
}
\]
\end{example}

\begin{lem}
\label{lem:Theta_BC}Let $q\colon A\to B$ be a map of spaces and
let $\left(\mathcal{C},\otimes,\one\right)$ be a symmetric monoidal
$\infty$-category that admits all $q$-(co)limits. If $\otimes$
distributes over all $q$-colimits (resp. $q$-limits), then the $\Theta^{p}$-square
satisfies the $\text{BC}_{!}$ (resp. $\text{BC}_{*}$) condition.
\end{lem}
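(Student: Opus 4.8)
The plan is to verify the $\bc_!$-condition (the $\bc_*$-case being entirely analogous, replacing colimits by limits and citing the $\bc_*$-parts of the quoted results throughout). The strategy is a sequence of pasting reductions that transports the problem onto the already-established statements about $(F,q)$-squares and external product squares, so that no new point-wise computation is needed. First I would pull the $\Theta^{p}$-square back along the fiber inclusion $j_{B}\colon B^{p}\into B\wr C_{p}$ of the fibration $\pi\colon B\wr C_{p}\to BC_{p}$ over the basepoint. Using \lemref{Wr_Fiber_Product} (or directly, since both rows are the fiber over the basepoint of $BC_p$), the square
\[
\xymatrix@C=3pc{A^{p}\ar[d]_{q^{p}}\ar[r]^{j_{A}} & A\wr C_{p}\ar[d]^{q\wr C_{p}}\\B^{p}\ar[r]^{j_{B}} & B\wr C_{p}}
\]
is a pullback of spaces. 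Since $\mathcal{C}$ admits all $q$-colimits it also admits all $(q\wr C_{p})$-colimits (a fiber of $q\wr C_{p}$ is a finite product of fibers of $q$, over which colimits can be formed iteratively), so by \lemref{Base_Change_BC} the associated base-change square satisfies $\bc_!$. Horizontally pasting this base-change square to the right of the $\Theta^{p}$-square and invoking \lemref{Horizontal_Pasting_Formula}(1), the $\bc_!$-map of the $\Theta^{p}$-square whiskered by $j_{B}^{*}$ agrees, up to composition with the (invertible) $\bc_!$-map of the base-change square, with the $\bc_!$-map of the outer pasted square. As $j_{B}$ is surjective on connected components, $j_{B}^{*}$ is conservative and detects invertibility of natural transformations, so it suffices to prove that the outer square satisfies $\bc_!$.

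Next I would identify that outer square. Unwinding \defref{Theta} together with the description of $j_{B}$ as $b\mapsto[(b,e)]$, the composite $j_{B}^{*}\circ\Theta_{B}^{p}$ carries $X\colon B\to\mathcal{C}$ to the local system on $B^{p}$ with value $X(b_{1})\otimes\cdots\otimes X(b_{p})$ at $(b_{1},\dots,b_{p})$, and this identification is natural with respect to restriction along $q$. Equivalently, $j_{B}^{*}\circ\Theta_{B}^{p}\simeq\boxtimes\circ\,\Delta$, where $\Delta$ is post-composition with the diagonal $\mathcal{C}\to\mathcal{C}^{p}$ and $\boxtimes$ is the external product of \defref{External_Peoduct} for $F=\otimes\colon\mathcal{C}^{p}\to\mathcal{C}$. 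Hence the outer square is the horizontal pasting of the $(\delta_{\mathcal{C}},q)$-square (where $\delta_{\mathcal{C}}\colon\mathcal{C}\to\mathcal{C}^{p}$ is the diagonal) with the external product square for the constant family of maps $q_{i}=q$ and the functor $\otimes$. The functor $\delta_{\mathcal{C}}$ preserves all colimits, so the first square satisfies $\bc_!$ by \propref{BC_Functor}; and $\otimes$ preserves $q$-colimits in each variable separately, which is exactly the hypothesis that $\otimes$ distributes over $q$-colimits, so the second square satisfies $\bc_!$ by \propref{BC_External_Product}. By \corref{Horizontal_Pasting_BC}(1) the outer square satisfies $\bc_!$, which by the previous paragraph completes the proof.

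The main obstacle is the bookkeeping in the identification $j_{B}^{*}\circ\Theta_{B}^{p}\simeq\boxtimes\circ\,\Delta$: although morally this is just the statement that the underlying non-equivariant value of the $C_{p}$-equivariant $p$-th tensor power of a local system is its external $p$-fold tensor power, one must check it compatibly with the restriction functors $q^{*}$, so that the outer square produced in the first step genuinely coincides with the pasted square of the second step. Everything else is a formal consequence of the calculus of Beck--Chevalley conditions (horizontal pasting, base change, and the univariate and multivariate functoriality results) developed in the preceding subsections.
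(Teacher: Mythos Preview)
Your proposal is correct and follows essentially the same argument as the paper. You paste the $\Theta^{p}$-square with the base-change square for the pullback $B^{p}\to B\wr C_{p}$ (the paper calls this map $\pi_{B}$ rather than $j_{B}$), use conservativity of restriction along this surjective-on-$\pi_{0}$ map to reduce to the outer square via \corref{Horizontal_Pasting_BC}(2), and then decompose the outer square as the diagonal square followed by the external product square, invoking \propref{BC_External_Product} for the latter; the only cosmetic difference is that the paper handles the diagonal square by remarking it is ``trivial'' (it is a product of identity squares under $\fun(B,\mathcal{C})^{p}\simeq\fun(B,\mathcal{C}^{p})$) whereas you cite \propref{BC_Functor}.
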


\begin{proof}
We horizontally paste the $\Theta^{p}$-square for $q$ with the square
induced by the pullback diagram 
\[
\xymatrix@C=3pc{A^{p}\ar[d]_{q^{p}}\ar[r]^-{\pi_{A}} & A\wr C_{p}\ar[d]^{q\wr C_{p}}\\
B^{p}\ar[r]^-{\pi_{B}} & B\wr C_{p},
}
\]
to obtain 
\[
\qquad
\vcenter{
\xymatrix@C=3pc{\fun\left(B,\mathcal{C}\right)\ar[d]^{q^{*}}\ar[r]^-{\Theta_{B}^{p}} & \fun\left(B\wr C_{p},\mathcal{C}\right)\ar[d]^{\left(q\wr C_{p}\right)^{*}}\ar[r]^-{\ \pi_{B}^{*}} & \fun\left(B^{p},\mathcal{C}\right)\ar[d]^{\left(q^{p}\right)^{*}}\\
\fun\left(A,\mathcal{C}\right)\ar[r]^-{\Theta_{A}^{p}} & \fun\left(A\wr C_{p},\mathcal{C}\right)\ar[r]^-{\ \pi_{A}^{*}} & \fun\left(A^{p},\mathcal{C}\right).
}}
\qquad\left(*\right)
\]
The right square $\square_{R}$ satisfies both $\text{BC}$-conditions
by \lemref{Base_Change_BC}. Since $\pi_{B}^{*}$ is conservative
($\pi_{B}$ is surjective on connected components), by \corref{Horizontal_Pasting_BC}(2),
it is enough to show that the outer square $\square$ satisfies the
$\text{BC}_{!}$ (resp. $\text{BC}_{*}$) condition. We can now write
$\square$ as a horizontal pasting of two squares $\square_{L}'$
and $\square_{R}'$ in a different way:
\[
\xymatrix@C=3pc{\fun\left(B,\mathcal{C}\right)\ar[d]^{q^{*}}\ar[r]^-{\Delta} & \fun\left(B,\mathcal{C}\right)^{p}\ar[d]^{\left(q^{*}\right)^{p}}\ar[r]^-{\boxtimes^{p}} & \fun\left(B^{p},\mathcal{C}\right)\ar[d]^{\left(q^{p}\right)^{*}}\\
\fun\left(A,\mathcal{C}\right)\ar[r]^-{\Delta} & \fun\left(A,\mathcal{C}\right)^{p}\ar[r]^-{\boxtimes^{p}} & \fun\left(A^{p},\mathcal{C}\right).
}
\]

The square $\square_{L}'$ satisfies the $\text{BC}$-conditions trivially and
$\square_{R}'$ by \propref{BC_External_Product}. 
\end{proof}
\begin{prop}
\label{prop:Theta_Ambi}Let $\left(\mathcal{C},\otimes,\one\right)$ be an $m$-semiadditively symmetric monoidal $\infty$-category and let $q\colon A\to B$ be an $m$-finite map of spaces. The corresponding $\Theta^{p}$-square is ambidextrous. 
\end{prop}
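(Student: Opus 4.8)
The $\Theta^{p}$-square of $q$ is automatically iso-normed: since $q$ is $m$-finite so is $q\wr C_{p}$, and $m$-semiadditivity of $\mathcal{C}$ makes $q$ and $q\wr C_{p}$ both $\mathcal{C}$-ambidextrous, so the canonical normed functors $q^{\can}$ and $(q\wr C_{p})^{\can}$ are iso-normed. Hence the content of the proposition is that the square is \emph{weakly} ambidextrous, i.e. that its norm diagram commutes.

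The natural approach is to recycle the decomposition from the proof of \lemref{Theta_BC}. Writing $\square_{\Theta}$ for the $\Theta^{p}$-square of $q$, I would horizontally paste it with the base-change square $\square_{\pi}$ of the pullback of spaces with corners $A^{p},A\wr C_{p},B^{p},B\wr C_{p}$ and horizontal legs $\pi_{A},\pi_{B}$, obtaining an outer square $\square$ which, as observed there, also equals the horizontal pasting $\square_{\Delta}\circ\square_{\boxtimes^{p}}$, where $\square_{\Delta}$ is the base-change square of the pullback of the $p$-fold fold map $B^{\sqcup p}\to B$ along $q$ and $\square_{\boxtimes^{p}}$ is the external product square (\defref{External_Peoduct}) of the $p$-fold tensor functor $\mathcal{C}^{p}\to\mathcal{C}$ with $q$ in each coordinate. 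Now $\square_{\pi}$ and $\square_{\Delta}$ are base-change squares of pullbacks of spaces, and since $q$ is $m$-finite so are $q^{p},q\wr C_{p}$ and $q^{\sqcup p}$; these are therefore $\mathcal{C}$-ambidextrous, so by \propref{Canonical_Norms_Compatibility}(3) the squares $\square_{\pi},\square_{\Delta}$ are ambidextrous. For $\square_{\boxtimes^{p}}$: by \corref{product_semiadditive} the $\infty$-category $\mathcal{C}^{p}$ is $m$-semiadditive, and since $\mathcal{C}$ is $m$-semiadditively symmetric monoidal the $p$-fold tensor is an $m$-semiadditive multi-functor; the external product square of such a functor along $m$-finite maps is weakly ambidextrous — this is the multivariate analogue of \thmref{Ambi_Functors}, which I would prove by the currying reduction used for \propref{BC_External_Product} (reduce to two variables by \lemref{Vertical_Pasting_Ambi}, contract the non-tensored variable to a point, use the exponential rule to recognise the remaining square as the $(\otimes^{\vee},q)$-square of the mate $\otimes^{\vee}\colon\mathcal{C}\to\fun(\mathcal{C},\mathcal{C})$, and apply \thmref{Ambi_Functors}). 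By \lemref{Horizontal_Pasting_Ambi} it then follows that $\square=\square_{\Delta}\circ\square_{\boxtimes^{p}}$, equivalently $\square_{\Theta}\circ\square_{\pi}$, is weakly ambidextrous.

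The crux — and what I expect to be the main obstacle — is to descend this to $\square_{\Theta}$ alone, i.e. to cancel $\square_{\pi}$ on the right. For the $\bc$-conditions this is exactly \corref{Horizontal_Pasting_BC}(2) and uses only the conservativity of $\pi_{B}^{*}$; but weak ambidexterity asserts an \emph{equality of natural transformations}, and $\pi_{B}^{*}$ — essentially the functor forgetting the $C_{p}$-equivariance on $B^{p}$-local systems — is conservative but not faithful on mapping spaces, so the argument of \corref{Horizontal_Pasting_BC}(2) does not transcribe verbatim. Using \lemref{Horizontal_Pasting_Formula} one does obtain that $\pi_{B}^{*}$ whiskered onto the two composites of the norm diagram of $\square_{\Theta}$ makes them agree (since the $\bc$-maps of $\square_{\pi}$ are isomorphisms and its own norm diagram commutes), but upgrading this to equality of the unwhiskered composites requires more. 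I see two routes to close the gap: either a careful diagram chase that tracks the actual (co)units and $\bc$-isomorphisms of $\square_{\pi}$ rather than mere conservativity; or a reduction to the universal case, using that by \thmref{=00005BHarpaz=00005DSymm_Mon} there is an $m$-semiadditive symmetric monoidal functor $G\colon\mathcal{S}_{m}^{m}\to\mathcal{C}$ and, by \lemref{Theta_Functoriality} together with the compatibility of $m$-semiadditive functors with canonical norms and with $q_{!},q_{*}$ (cf.\ \corref{Integral_Functor}, \corref{Semi_Add_Functors}), the $\Theta^{p}$-square of $q$ over $\mathcal{C}$ is the image under $G$ of the $\Theta^{p}$-square over $\mathcal{S}_{m}^{m}$; one is then reduced to checking weak ambidexterity of the $\Theta^{p}$-square in $\mathcal{S}_{m}^{m}$, where it is an explicit computation with spans. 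Either way, combining weak ambidexterity with the iso-normedness observed above yields that the $\Theta^{p}$-square is ambidextrous.
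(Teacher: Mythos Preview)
Your diagnosis is accurate: the horizontal-pasting decomposition from \lemref{Theta_BC} handles the $\bc$-conditions, but the cancellation of $\square_{\pi}$ for \emph{weak} ambidexterity is a genuine gap, and neither of your proposed routes closes it. Route~2 fails for essentially the same reason as the naive cancellation: the norm diagram of the $\Theta^{p}$-square over $\mathcal{C}$ is a statement about natural transformations between functors defined on all of $\fun(A,\mathcal{C})$, whereas the comparison with $\mathcal{S}_{m}^{m}$ via $G$ only controls these after whiskering with $G_{*}$, which is not essentially surjective. You are again left with ``equal after whiskering with a conservative-but-not-faithful functor''. Route~1 is not a proof; and note that along the way you also invoke a multivariate analogue of \thmref{Ambi_Functors} and an exponential-rule for weak ambidexterity, neither of which is established in the paper.

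The paper avoids the cancellation problem entirely by arguing directly, by induction on $m$, in the pattern of \thmref{Ambi_Functors}. The structural point that makes this work is \lemref{Wr_Fiber_Product}: the functor $(-)\wr C_{p}$ preserves fiber products, so if $\delta\colon A\to A\times_{B}A$ is the diagonal of $q$, then $\delta\wr C_{p}$ is the diagonal of $q\wr C_{p}$. One unwinds the canonical norms on $q$ and on $q\wr C_{p}$ simultaneously via their diagonals (the diagram $(\heartsuit)$ and its $(-)\wr C_{p}$ image), and by \lemref{Triangle_Unit_Counit_Norm_Diagram}(1) the norm diagram of the $\Theta^{p}$-square of $q$ reduces to the same large diagram as in \thmref{Ambi_Functors} with $\Theta^{p}$ in place of $F_{*}$. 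The middle triangle is exactly the $\cocone$-diagram for the $\Theta^{p}$-square of $\delta$; since $\delta$ is $(m-1)$-finite, the inductive hypothesis together with \lemref{Theta_BC} and \lemref{Triangle_Unit_Counit_Norm_Diagram}(2) makes it commute. The left rectangle is handled by the cube $(\spadesuit)$, rebuilt from the naturality of $\Theta^{p}$ in the base space and the fact that $(-)\wr C_{p}$ preserves the pullback in $(\heartsuit)$. No faithfulness of $\pi_{B}^{*}$ is ever needed.
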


\begin{proof}
Since $\mathcal{C}$ is $m$-semiadditive, the $\Theta^{p}$-square
for $q$ is iso-normed and hence it suffices to show that it is weakly
ambidextrous. Namely, that the associated norm-diagram commutes. The
proof is very similar to the argument given in \thmref{Ambi_Functors},
and therefore we shall use similar notation and indicate only the
changes that need to be made. We proceed by induction on $m$ using
the diagram of spaces
\[
\qquad
\vcenter{
\xymatrix@C=3pc{A\ar[rd]^{\delta}\ar@/^{1pc}/@{=}[rrd]\ar@/_{1pc}/@{=}[rdd]\\
 & A\times_{B}A\ar[d]^{\pi_{2}}\ar[r]^{\pi_{1}} & A\ar[d]^{q}\\
 & A\ar[r]^{q} & B.
}}
\qquad\left(\heartsuit\right)
\]
Denoting $\tilde{\left(-\right)}=\left(-\right)\wr C_{p}$, we consider
the diagram of functors from $\fun(A,\mathcal{C})$ to $\fun(A\wr C_{p},\mathcal{C})$
(where all unnamed arrows are $\bc$-maps)
\[
\xymatrix@C=3pc{\tilde{q}^{*}\tilde{q}_{!}\red{\Theta_{A}^{p}}{}\ar[dd]^{\beta_{!}}\ar[r]^-{\beta^{-1}} & \left(\tilde{\pi}_{2}\right)_{!}\left(\tilde{\pi}_{1}\right)^{*}\red{\Theta_{A}^{p}}{}\ar@{-->}[dd]^{\wr}\ar[r]^-{\mu_{\tilde{\delta}}} & \left(\tilde{\pi}_{2}\right)_{!}\tilde{\delta}_{!}\tilde{\delta}^{*}\left(\tilde{\pi}_{1}\right)^{*}\red{\Theta_{A}^{p}}{}\ar@{-->}[d]^{\wr}\ar@/^{1pc}/[rrdd]^{\sim}\\
 &  & \left(\tilde{\pi}_{2}\right)_{!}\tilde{\delta}_{!}\tilde{\delta}^{*}\red{\Theta_{A\times_{B}A}^{p}}{}\left(\pi_{1}\right)^{*}\ar@{-->}[d]^{\wr}\\
\tilde{q}^{*}\red{\Theta_{B}^{p}}{}q_{!}\ar[dd]^{\wr} & \left(\tilde{\pi}_{2}\right)_{!}\red{\Theta_{A\times_{B}A}^{p}}{}\left(\pi_{1}\right)^{*}\ar@{-->}[dd]\ar@{-->}[ru]^{\mu_{\tilde{\delta}}}\ar@{-->}[rd]_{\mu_{\delta}} & \left(\tilde{\pi}_{2}\right)_{!}\tilde{\delta}_{!}\red{\Theta_{A}^{p}}{}\delta^{*}\left(\pi_{1}\right)^{*}\ar@{-->}[d]\ar@{-->}[rr]^{\sim} &  & \red{\Theta_{A}^{p}}.\\
 &  & \left(\tilde{\pi}_{2}\right)_{!}\red{\Theta_{A\times_{B}A}^{p}}{}\delta_{!}\delta^{*}\left(\pi_{1}\right)^{*}\ar@{-->}[d]\\
\red{\Theta_{A}^{p}}{}q^{*}q_{!}\ar[r]^-{\beta^{-1}} & \red{\Theta_{A}^{p}}{}\left(\pi_{2}\right)_{!}\left(\pi_{1}\right)^{*}\ar[r]^-{\mu_{\delta}} & \red{\Theta_{A}^{p}}{}\left(\pi_{2}\right)_{!}\delta_{!}\delta^{*}\left(\pi_{1}\right)^{*}\ar@/_{1pc}/[rruu]_{\sim}
}
\]
By \lemref{Triangle_Unit_Counit_Norm_Diagram}(1), it suffices to
show that the above (solid) diagram commutes. As in the proof of \thmref{Ambi_Functors},
all the parts except for the rectangle on the left and the triangle in the middle, commute for formal reasons. The functor 
$\left(-\right)\wr C_{p}\colon \mathcal{S}\to \mathcal{S}$
preserves fiber products (\lemref{Wr_Fiber_Product}) and therefore $\tilde{\delta}$ can be identified with the diagonal of $\tilde{q}$. 
By \lemref{Theta_BC}, the $\bc_{!}$
map in the middle triangle is an isomorphism. Thus, the middle triangle
commutes by the inductive hypothesis and \lemref{Triangle_Unit_Counit_Norm_Diagram}(2).
As for the rectangle, we apply a similar argument to the one in \thmref{Ambi_Functors},
using again that the functor $\left(-\right)\wr C_{p}$ preserves
fiber products, and the commutative cubical diagram
\[
\qquad
\vcenter{
\xymatrix@R=1pc@C=1pc{\fun\left(B,\mathcal{C}\right)\ar[dd]\sb(0.3){q^{*}}\ar[rrr]\sp(0.6){\red{\Theta_{B}^{p}}{}}\ar[rd]^{q^{*}}\ar@{..>}[rrrrd] &  &  & \fun\left(B\wr C_{p},\mathcal{C}\right)\ar@{->}'[d][dd]\sp(0.3){\tilde{q}^{*}}\ar[rd]^{\tilde{q}^{*}}\\
 & \fun\left(A,\mathcal{C}\right)\ar[dd]\sb(0.3){\pi_{2}^{*}}\ar[rrr]\sp(0.4){\red{\Theta_{A}^{p}}{}} &  &  & \fun\left(A\wr C_{p},\mathcal{C}\right)\ar[dd]\sp(0.65){\tilde{\pi}_{2}^{*}}\\
\fun\left(A,\mathcal{C}\right)\ar@{->}'[r][rrr]\sp(0.4){\red{\Theta_{A}^{p}}{}\qquad}\ar[rd]^{\pi_{1}^{*}}\ar@{..>}[rrrrd] &  &  & \fun\left(A\wr C_{p},\mathcal{C}\right)\ar[rd]^{\tilde{\pi}_{1}^{*}}\\
 & \fun\left(A\times_{B}A,\mathcal{C}\right)\ar[rrr]\sp(0.4){\red{\Theta_{A\times_{B}A}^{p}}{}} &  &  & \fun\left((A\times_{B}A)\wr C_{p},\mathcal{C}\right).}
}
\qquad\left(\spadesuit\right)
\]
\end{proof}
\begin{thm}
\label{thm:Theta_Integral} Let $\mathcal{C}$ be an $m$-semiadditively
symmetric monoidal $\infty$-category and $q\colon A\to B$ an $m$-finite
map of spaces. For every $X,Y\in\fun\left(B,\mathcal{C}\right)$ and
$f\colon q^{*}X\to q^{*}Y$, we have
\[
\Theta_{B}^{p}\left(\int\limits _{q}f\right)=\int\limits _{q\wr C_{p}}\Theta_{A}^{p}\left(f\right)\quad\in\hom_{h\fun\left(B\wr C_{p},\mathcal{C}\right)}\left(\Theta^{p}X,\Theta^{p}Y\right).
\]
\end{thm}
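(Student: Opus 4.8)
The plan is to recognize that this theorem is simply the instance of the general compatibility of integration with normed functors, \propref{Integral_Ambi}, applied to the $\Theta^{p}$-square of $q$; consequently essentially all of the genuine work has already been carried out in \propref{Theta_Ambi} and \lemref{Theta_BC}, and what remains is bookkeeping.

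First I would recall the $\Theta^{p}$-square of $q$: the commutative square with vertical functors $q^{*}$ and $(q\wr C_{p})^{*}$ and horizontal functors $\Theta_{B}^{p}$ and $\Theta_{A}^{p}$. Since $q$ is $m$-finite, so is $q\wr C_{p}$, and since $\mathcal{C}$ is $m$-semiadditive, both $q$ and $q\wr C_{p}$ are $\mathcal{C}$-ambidextrous; hence this square is canonically normed, with respect to the canonical norms of $q$ and of $q\wr C_{p}$, and both integrals $\int_{q}$ and $\int_{q\wr C_{p}}$ appearing in the statement are defined. Unwinding the square, a map $f\colon q^{*}X\to q^{*}Y$ is carried by $\Theta_{A}^{p}$ to a map $\Theta_{A}^{p}(f)\colon (q\wr C_{p})^{*}\Theta_{B}^{p}X\to(q\wr C_{p})^{*}\Theta_{B}^{p}Y$, so that $\int_{q\wr C_{p}}\Theta_{A}^{p}(f)$ is indeed an element of $\hom_{h\fun(B\wr C_{p},\mathcal{C})}(\Theta^{p}X,\Theta^{p}Y)$.

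Next I would feed in the two structural inputs. By \propref{Theta_Ambi}, the $\Theta^{p}$-square of $q$ is ambidextrous — this is where the bulk of the argument, modelled on the proof of \thmref{Ambi_Functors}, has been done. By \lemref{Theta_BC}, using that $\mathcal{C}$ is $m$-semiadditively symmetric monoidal so that $\otimes$ distributes over the $m$-finite colimits along $q$, the $\Theta^{p}$-square satisfies the $\bc_{!}$ condition (and hence also the $\bc_{*}$ condition, the two being equivalent for an ambidextrous square).

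Finally, I would apply \propref{Integral_Ambi} to this ambidextrous, $\bc_{!}$-satisfying square, with $F_{\mathcal{C}}=\Theta_{B}^{p}$ and $F_{\mathcal{D}}=\Theta_{A}^{p}$; its conclusion reads
\[
\Theta_{B}^{p}\Bigl(\int\limits_{q}f\Bigr)=\int\limits_{q\wr C_{p}}\Theta_{A}^{p}(f),
\]
which is precisely the assertion. I do not expect a genuine obstacle: the only point requiring care is that the norms in the $\Theta^{p}$-square are the canonical ones, so that the two integrals output by \propref{Integral_Ambi} agree with those in the statement — but this is built into the definition of the $\Theta^{p}$-square as a canonically normed square. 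The substantive difficulty has already been isolated and dispatched in \propref{Theta_Ambi}.
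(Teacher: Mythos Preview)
Your proposal is correct and follows essentially the same approach as the paper: invoke \lemref{Theta_BC} for the $\bc_{!}$ condition, \propref{Theta_Ambi} for ambidexterity, and conclude by \propref{Integral_Ambi}. The paper's proof is in fact just these three citations in a single sentence; your additional bookkeeping (well-definedness of the integrals, canonicity of the norms) is accurate but not strictly needed.
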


\begin{proof}
By \lemref{Theta_BC}, the $\Theta^{p}$-square satisfies the $\bc$
conditions, and by \propref{Theta_Ambi}, it is ambidextrous. Thus,
the claim follows from \propref{Integral_Ambi}.
\end{proof}

\subsubsection{Additivity of Theta}

We now investigate the interaction of $\Theta^{p}$ with addition
of morphisms. Let $\mathcal{C}$ be a $0$-semiadditively symmetric
monoidal $\infty$-category. Given two objects $X,Y\in\mathcal{C}$
and two maps $f,g\colon X\to Y$, we can express $f+g$ as an integral
of the pair $\left(f,g\right)$ over $q\colon\pt\sqcup\pt\to\pt$
(see \exaref{Integral_Sum}). Applying \thmref{Theta_Integral} to
this special case and analyzing the result, we will derive a formula
of the form
\[
\Theta^{p}\left(f+g\right)=\Theta^{p}\left(f\right)+\Theta^{p}\left(g\right)+\text{``induced terms''}.
\]

The $\Theta^{p}$-square for $q\colon\pt\sqcup\pt\to\pt$ is
\[
\qquad
\vcenter{
\xymatrix{\fun\left(\pt,\mathcal{C}\right)\ar[d]^{q^{*}}\ar[rr]^{\Theta^{p}_{\pt}\qquad} &  & \fun\left(BC_{p},\mathcal{C}\right)\ar[d]^{\left(q\wr C_{p}\right)^{*}}\\
\fun\left(\pt\sqcup\pt,\mathcal{C}\right)\ar[rr]^{\Theta_{\pt\sqcup\pt}^{p}\quad} &  & \fun\left(\left(\pt\sqcup\pt\right)\wr C_{p},\mathcal{C}\right).
}}
\qquad\left(*\right)
\]
Our first goal is to make this diagram more explicit. First, we can
identify $q^{*}$ with the diagonal 
\[
\Delta\colon\mathcal{C}\to\mathcal{C}\times\mathcal{C}.
\]
Next, let $S$ be the set 
\[
S=\left\{ w\in\left\{ x,y\right\} ^{p}\mid w\neq x^{p},y^{p}\right\} ,
\]
with $x,y$ formal variables and let $\overline{S}$ is the set of
orbits of $S$ under the action of $C_{p}$ by cyclic shift. We have
a homotopy equivalence of spaces
\[
\left(\pt\sqcup\pt\right)\wr C_{p}\simeq BC_{p}\sqcup BC_{p}\sqcup\overline{S},
\]
and therefore an equivalence of $\infty$-categories
\[
\fun\left(\left(\pt\sqcup\pt\right)\wr C_{p},\mathcal{C}\right)\simeq\mathcal{C}^{BC_{p}}\times\mathcal{C}^{BC_{p}}\times\prod_{\overline{w}\in\overline{S}}\mathcal{C}.
\]

Choosing a base point map $e\colon\pt\to BC_{p}$, we see that up
to homotopy, we have
\[
q\wr C_{p}= (\Id,\Id, e,\dots, e).
\]

Similarly, the bottom arrow of $\left(*\right)$ can be identified
with a functor
\[
\Phi\colon\mathcal{C}\times\mathcal{C}\to\mathcal{C}^{BC_{p}}\times\mathcal{C}^{BC_{p}}\times\prod_{\overline{w}\in\overline{S}}\mathcal{C},
\]
which we now describe. For each $\overline{w}\in\overline{S}$, let
\[
e_{\overline{w}}\colon\pt\to\left(\pt\sqcup\pt\right)\wr C_{p}
\]
be the map choosing the point $\overline{w}\in\overline{S}$ and let
$e_{w}\colon\pt\to\overline{w}$ be the map choosing the point $w\in\overline{w}$.
Given an element $w\in\left\{ x,y\right\} ^{p}$ we define a functor
$w\left(-,-\right):\mathcal{C}\times\mathcal{C}\to\mathcal{C}$ as
the composition
\[
\xymatrix{\fun\left(\pt\sqcup\pt,\mathcal{C}\right)\ar[r]^-{\Delta} & \fun\left(\pt\sqcup\pt,\mathcal{C}\right)^{p}\ar[r]^-{\boxtimes} & \fun\left(\left(\pt\sqcup\pt\right)^{p},\mathcal{C}\right)\ar[r]^-{w^{*}} & \fun\left(\pt,\mathcal{C}\right).}
\]

Informally, for objects $X,Y\in\mathcal{C}$, we have 
\[
w\left(X,Y\right)=Z_{1}\otimes Z_{2}\otimes\cdots\otimes Z_{p},\qquad Z_{i}=\begin{cases}
X & \text{if}\quad w_{i}=x\\
Y & \text{if}\quad w_{i}=y
\end{cases}.
\]

\begin{lem}
\label{lem:Phi_Theta}
There is a natural isomorphism of functors
\[
\Phi\simeq\left(\Theta^{p}\circ p_{1},\Theta^{p}\circ p_{2},\left\{ w\left(-,-\right)\right\} _{\overline{w}\in S}\right),
\]
where $p_{i}\colon\mathcal{C}\times\mathcal{C}\to\mathcal{C}$ is
the projection to the $i$-th component (it does not matter which
representative $w$ we take for each $\overline{w}\in S$).
\end{lem}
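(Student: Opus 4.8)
The plan is to unwind \defref{Theta} for the map $q\colon\pt\sqcup\pt\to\pt$ componentwise over the decomposition of the space $\left(\pt\sqcup\pt\right)\wr C_{p}$ recorded just above the statement. Write $A=\pt\sqcup\pt$ and identify $A^{p}$ with the discrete set of words $\{x,y\}^{p}$, on which $C_{p}$ acts by cyclic shift of coordinates. The fixed points of this action are exactly the constant words $x^{p}$ and $y^{p}$, while the complement $S$ is a disjoint union of free $C_{p}$-orbits indexed by $\overline{S}=S/C_{p}$. Since $\pt_{hC_{p}}\simeq BC_{p}$ and the homotopy quotient of a free $C_{p}$-orbit is contractible, applying $\left(-\right)_{hC_{p}}$ gives the equivalence $A\wr C_{p}\simeq BC_{p}\sqcup BC_{p}\sqcup\overline{S}$, hence $\fun\left(A\wr C_{p},\mathcal{C}\right)\simeq\mathcal{C}^{BC_{p}}\times\mathcal{C}^{BC_{p}}\times\prod_{\overline{w}\in\overline{S}}\mathcal{C}$, and it remains to compute each of the three ``coordinates'' of the functor $\Phi=\Theta^{p}_{\pt\sqcup\pt}$ under these identifications.

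For the two $BC_{p}$-coordinates I would invoke the $\Theta^{p}$-square of the inclusion $e_{i}\colon\pt\to A$ of the $i$-th point. By naturality of \defref{Theta}, that square reads $\Theta^{p}_{\pt}\circ e_{i}^{*}\simeq\left(e_{i}\wr C_{p}\right)^{*}\circ\Theta^{p}_{A}$. Now $\left(e_{i}\wr C_{p}\right)\colon BC_{p}=\pt\wr C_{p}\to A\wr C_{p}$ is, up to homotopy, the inclusion of the $BC_{p}$-summand corresponding to the constant word $x^{p}$ (resp.\ $y^{p}$), because $\left(e_{i}\right)^{p}\colon\pt\to A^{p}$ is the $C_{p}$-fixed point $x^{p}$ (resp.\ $y^{p}$); and $e_{i}^{*}\colon\fun\left(A,\mathcal{C}\right)\simeq\mathcal{C}\times\mathcal{C}\to\mathcal{C}$ is the projection $p_{i}$. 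Commutativity of the square then identifies the restriction of $\Phi$ to the $i$-th $BC_{p}$-coordinate with $\Theta^{p}_{\pt}\circ p_{i}$, including the claim that the $C_{p}$-action on the fiber is the standard cyclic action, since that is exactly what $\Theta^{p}_{\pt}$ produces (the composite $(\mathcal{C}^{p})_{hC_{p}}\to(\mathcal{C}^{p})_{h\Sigma_{p}}\oto{\otimes}\mathcal{C}$ sends the constant tuple with cyclic action to $X^{\otimes p}$ with the residual $C_{p}\ss\Sigma_{p}$ action).

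For the $\overline{S}$-coordinates I would restrict one step further, along a chosen representative $w\in\overline{w}$ of each free orbit. Writing $Z\colon A\to\mathcal{C}$ for the local system with $Z(1)=X$ and $Z(2)=Y$, unwinding $\left(-\right)^{p}_{hC_{p}}$ followed by $\otimes$ shows that the fiber of $\Theta^{p}_{A}(X,Y)$ over the point of $A\wr C_{p}$ determined by $\overline{w}$ is $Z(w_{1})\otimes\cdots\otimes Z(w_{p})$; this is precisely the value of the functor $w\left(-,-\right)$, which by definition is the composite $\Delta$, then $\boxtimes$ (with respect to the $p$-fold tensor), then $w^{*}$. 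Changing the representative $w$ within its orbit alters the result only through the symmetry constraint of the symmetric monoidal structure, so the functor attached to $\overline{w}$ is well defined up to natural isomorphism, matching the parenthetical remark in the statement. Assembling the three coordinates gives the asserted natural isomorphism $\Phi\simeq\left(\Theta^{p}\circ p_{1},\Theta^{p}\circ p_{2},\left\{w\left(-,-\right)\right\}_{\overline{w}\in\overline{S}}\right)$.

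The main obstacle I anticipate is the careful bookkeeping of the $C_{p}$-equivariance: one must be sure that the contribution of the fixed word $x^{p}$ carries \emph{exactly} the standard cyclic action (and not some twist), and that the passage from the free-orbit data on $A^{p}$ to its contractible quotient identifies the fiber with a single tensor word in the stated way. Both are instances of the naturality of the assignment $\mathcal{C}\mapsto(\mathcal{C}^{p})_{hC_{p}}$ with respect to $C_{p}$-equivariant maps $\pt\to A^{p}$ and $C_{p}\into A^{p}$ respectively, together with \lemref{Theta_Functoriality} and \lemref{Wr_Fiber_Product}; so the proof is a matter of chasing these naturality squares rather than of any new input.
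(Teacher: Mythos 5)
Your proposal is correct and follows essentially the same route as the paper: the two $BC_{p}$-coordinates are identified via the naturality ($\Theta^{p}$-square) of $\Theta^{p}$ applied to the two inclusions $\pt\into\pt\sqcup\pt$, and the $\overline{S}$-coordinates are computed by restricting along the orbit $\overline{w}$ and then a chosen point $w\in\overline{w}$, exactly as in the paper's pullback-square argument (where the composite $e_{w}^{*}\circ\Delta=\Id$ does the final bookkeeping). Your added remarks on independence of the representative $w$ and on the $C_{p}$-equivariance of the fixed-word contributions are consistent with, and slightly amplify, the paper's proof.
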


\begin{proof}
The claim about the first two components follows from the commutativity
of the $\Theta^{p}$-square applied to the two inclusion maps $\pt\into\pt\sqcup\pt$.
The pullback square 
\[
\xymatrix{\overline{w}\ar[d]\ar[rr] &  & \left(\pt\sqcup\pt\right)^{p}\ar[d]^{\sigma}\\
\pt\ar[rr]^{e_{\overline{w}}\qquad} &  & \left(\pt\sqcup\pt\right)\wr C_{p}
}
\]
induces the commutative square in the following diagram
\[
\xymatrix{\fun\left(\pt\sqcup\pt,\mathcal{C}\right)\ar[rr]^{\Theta_{\pt\sqcup\pt}^{p}\quad} &  & \fun\left(\left(\pt\sqcup\pt\right)\wr C_{p},\mathcal{C}\right)\ar[d]^{e_{\overline{w}}^{*}}\ar[r]^{\sigma^{*}} & \fun\left(\left(\pt\sqcup\pt\right)^{p},\mathcal{C}\right)\ar[d]\ar@{-->}[rd]^{w^{*}}\\
 &  & \fun\left(\pt,\mathcal{C}\right)\ar[r]^{\Delta} & \fun\left(\overline{w},\mathcal{C}\right)\ar[r]^{e_{w}^{*}} & \fun\left(\pt,\mathcal{C}\right).
}
\]
Observe that the composition of the leftmost horizontal functor and
the left vertical functor is the $\overline{w}$ component of $\Phi$.
Since the composition of the two bottom horizontal functors is the
identity, it suffices to show that the resulting functor
\[
\fun\left(\pt\sqcup\pt,\mathcal{C}\right)\to\fun\left(\pt,\mathcal{C}\right),
\]
obtained from the composition along the entire bottom path of the
diagram, is naturally isomorphic to $w\left(-,-\right)$. Since the
diagram commutes, this is isomorphic to the composition along the
top path of the diagram, which is $w\left(-,-\right)$ by definition.
\end{proof}
Summing up, we have identified the $\Theta^{p}$-square $\left(*\right)$
with the following square
\[
\qquad
\vcenter{
\xymatrix{\mathcal{C}\ar[d]^{\Delta}\ar[rrrrr]^{\Theta^{p}} &  &  &  &  & \mathcal{C}^{BC_{p}}\ar[d]^{\left(\Id,\Id,e,\dots,e\right)^{*}}\\
\mathcal{C}\times\mathcal{C}\ar[rrrrr]^{\left(\Theta^{p}\circ p_{1},\Theta^{p}\circ p_{2},\left\{ w\left(-,-\right)\right\} _{\overline{w}\in\overline{S}}\right)\qquad\qquad} &  &  &  &  & \mathcal{C}^{BC_{p}}\times\mathcal{C}^{BC_{p}}\times\prod\limits _{\overline{w}\in\overline{S}}\mathcal{C}.
}}
\qquad\left(**\right)
\]
Using this we can compute the effect of $\Theta^{p}$ on the sum of
two maps.
\begin{prop}
\label{prop:Theta_Additiive}Let $\mathcal{C}$ be a $0$-semiadditively
symmetric monoidal $\infty$-category, Given $X,Y\in\mathcal{C}$
and a pair of maps $f,g\colon X\to Y,$ we have
\[
\Theta^{p}\left(f+g\right)=\Theta^{p}\left(f\right)+\Theta^{p}\left(g\right)+\sum_{\overline{w}\in\overline{S}}\left(\int\limits _{e}w\left(f,g\right)\right).
\]
\end{prop}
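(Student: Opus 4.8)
The plan is to deduce the formula from \thmref{Theta_Integral} by writing $f+g$ as an integral over the fold-type map $q\colon\pt\sqcup\pt\to\pt$ and then evaluating the resulting integral along $q\wr C_p$ by means of the explicit identification of the $\Theta^p$-square recorded in the square $(**)$ and in \lemref{Phi_Theta}. First I would view $(f,g)$ as a morphism $q^*X\to q^*Y$ of constant local systems on $\pt\sqcup\pt$; by \exaref{Integral_Sum} we then have $\int_q(f,g)=f+g$. Since $q$ is $0$-finite and $\mathcal{C}$ is $0$-semiadditively symmetric monoidal, the map $q\wr C_p$ is also $0$-finite, so \thmref{Theta_Integral} (with $m=0$) applies and gives
\[
\Theta^p(f+g)=\Theta^p_{\pt}\Bigl(\int_q(f,g)\Bigr)=\int_{q\wr C_p}\Theta^p_{\pt\sqcup\pt}\bigl((f,g)\bigr)\quad\in\hom_{h\fun(BC_p,\mathcal{C})}\bigl(\Theta^p X,\Theta^p Y\bigr),
\]
so the problem reduces to computing the right-hand integral.

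To do this I would transport everything through the equivalences assembled into the square $(**)$: under $(\pt\sqcup\pt)\wr C_p\simeq BC_p\sqcup BC_p\sqcup\overline S$ the map $q\wr C_p$ becomes $(\Id,\Id,e,\dots,e)$, and by \lemref{Phi_Theta} the bottom functor of the $\Theta^p$-square is $\bigl(\Theta^p\circ p_1,\Theta^p\circ p_2,\{w(-,-)\}_{\overline w\in\overline S}\bigr)$. Using the commutativity of the $\Theta^p$-square to identify $(q\wr C_p)^*\Theta^p(-)$ with $\Theta^p_{\pt\sqcup\pt}\Delta(-)$, it follows that $\Theta^p_{\pt\sqcup\pt}((f,g))$ is, componentwise along the three types of summands, the morphisms $\Theta^p(f)$, $\Theta^p(g)$, and $\{w(f,g)\}_{\overline w\in\overline S}$. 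Applying \propref{Integral_Additivity} over the base $B=BC_p$ to the maps $q_1=q_2=\Id_{BC_p}$ and $q_{\overline w}=e\colon\pt\to BC_p$ for $\overline w\in\overline S$ — all $\mathcal{C}$-ambidextrous, the first two trivially and $e$ since it is a $0$-finite map — then yields
\[
\int_{q\wr C_p}\Theta^p_{\pt\sqcup\pt}\bigl((f,g)\bigr)=\int_{\Id_{BC_p}}\Theta^p(f)+\int_{\Id_{BC_p}}\Theta^p(g)+\sum_{\overline w\in\overline S}\int_e w(f,g).
\]
Finally, by \propref{Canonical_Norms_Compatibility}(1) the canonical normed functor of $\Id_{BC_p}$ is the identity normed functor, so $\int_{\Id_{BC_p}}$ is the identity on morphisms; combining this with the two displayed equations gives the asserted formula.

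The only point that requires genuine care is the middle step: one must verify that the restriction of $\Theta^p_{\pt\sqcup\pt}((f,g))$ to the $\overline S$-summand is exactly the family $\{w(f,g)\}$, and that the composite $\pt\xrightarrow{\overline w}(\pt\sqcup\pt)\wr C_p\xrightarrow{q\wr C_p}BC_p$ is homotopic to $e$, so that \propref{Integral_Additivity} may be invoked with precisely this data. Both facts are immediate from the constructions preceding \lemref{Phi_Theta} together with the description of $q\wr C_p$ given there, so no new computation is needed; everything else is formal bookkeeping with integrals.
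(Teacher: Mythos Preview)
Your proof is correct and follows essentially the same approach as the paper: write $f+g=\int_q(f,g)$, apply \thmref{Theta_Integral}, identify $\Theta^p_{\pt\sqcup\pt}((f,g))$ componentwise via \lemref{Phi_Theta} and the square $(**)$, and then split the integral over $(\Id,\Id,e,\dots,e)$ using \propref{Integral_Additivity}. The paper's proof is simply a more condensed version of the same argument.
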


\begin{proof}
The pair $\left(f,g\right)$ can be considered as a map $\left(f,g\right)\colon q^{*}X\to q^{*}Y$.
By \thmref{Theta_Integral}, \lemref{Phi_Theta} and the additivity of the integral (\propref{Integral_Additivity})
we have 
\[
\Theta^{p}\left(f+g\right)=\Theta^{p}\left(\int\limits _{q}\left(f,g\right)\right)=\int\limits _{\left(\Id,\Id,e,\dots,e\right)}\left(\Theta^{p}\left(f\right),\Theta^{p}\left(g\right),\left\{ w\left(f,g\right)\right\} _{\overline{w}\in\overline{S}}\right)
\]
\[
=\Theta^{p}\left(f\right)+\Theta^{p}\left(g\right)+\sum_{\overline{w}\in\overline{S}}\left(\int\limits _{e}w\left(f,g\right)\right).
\]

\end{proof}

\section{Higher Semiadditivity and Additive Derivations}

Let $\mathcal{C}$ be a \emph{stable} symmetric monoidal $\infty$-category such that the tensor product distributes over finite coproducts. For every pair of objects $X,Y\in \mathcal{C}$, the set 
\[
\hom_{h\mathcal{C}}\left(X,Y\right)=\pi_{0}\map_{\mathcal{C}}\left(X,Y\right)
\]
has a canonical structure of an abelian group. Furthermore, if 
\[
X\in\cocalg\left(\mathcal{C}\right),\quad Y\in\calg\left(\mathcal{C}\right),
\]
then the set $\hom_{h\mathcal{C}}\left(X,Y\right)$ acquires a commutative ring structure in the following way. Given $f,g\colon X\to Y$, we define their product as the composition
\[
    X \oto{\mathrm{co-mult}} X\otimes X \oto{f\otimes g} Y\otimes Y \oto{\mathrm{mult}} Y.
\]
Fixing a prime $p$ and assuming further that $\mathcal{C}$
is $1$-semiadditively symmetric monoidal, we will construct in this section an operation (which depends on $p$) 
\[
\delta\colon\hom_{h\mathcal{C}}\left(X,Y\right)\to\hom_{h\mathcal{C}}\left(X,Y\right),
\]
and show that it is an ``additive $p$-derivation''. We begin with
a general discussion of the algebraic notion of an additive $p$-derivation.
We proceed to construct an auxiliary operation $\alpha$ (which does
not require stability) and study its properties. We then specialize
to the stable case, construct the operation $\delta$ above, and study
its behavior on elements of the form $|A|$. Finally,
we shall use the properties of the operation $\delta$ to provide
a general criterion for deducing $\infty$-semiadditivity
of a presentably symmetric monoidal, $1$-semiadditive, stable, $p$-local
$\infty$-category.

\subsection{Additive \(p\)-derivations}

This section is devoted to the algebraic notion of an additive $p$-derivation.
We recall the definition and establish some of its basic properties.

\subsubsection{Definition \& Properties}

The following is a variant on the notion of a $p$-derivation (e.g.
see \cite[Definition 2.1]{buium2005arithmetic}), in which we do not
require the multiplicative property.
\begin{defn}
\label{def:Delta}Let $R$ be a commutative ring. An \emph{additive
$p$-derivation} on $R$, is a function of sets 
\[
\delta\colon R\to R,
\]
that satisfies:
\begin{enumerate}
\item (additivity) $\delta\left(x+y\right)=\delta\left(x\right)+\delta\left(y\right)+\frac{x^{p}+y^{p}-\left(x+y\right)^{p}}{p}$
for all $x,y\in R$.
\item (normalization) $\delta\left(0\right)=\delta\left(1\right)=0.$
\end{enumerate}
The pair $\left(R,\delta\right)$ is called a \emph{semi-$\delta$-ring}.
A \emph{semi-$\delta$-ring homomorphism} from $\left(R,\delta\right)$
to $\left(R',\delta'\right)$, is a ring homomorphism $f\colon R\to R'$,
that satisfies $f\circ\delta=\delta'\circ f$.
\end{defn}

\begin{rem}
\label{rem:Integer_Coeff_Polynom} The expression 
\[
\frac{x^{p}+y^{p}-\left(x+y\right)^{p}}{p}
\]
is actually a polynomial with integer coefficients in the variables
$x$ and $y$ and does not involve division by $p$. In particular,
this is well defined for all $x,y\in R$, even when $R$ has $p$-torsion.
\end{rem}

\begin{rem}
In fact, the condition $\delta\left(0\right)=0$ is superfluous, as
it follows from the additivity property, and we include it in the
definition only for emphasis. 
\end{rem}

The following follows immediately from the definitions. 
\begin{lem}
\label{lem:Delta_Frob}Let $\delta\colon R\to R$ be an additive $p$-derivation
on a commutative ring $R$. The function $\psi\colon R\to R$ given
by 
\[
\psi\left(x\right)=x^{p}+p\delta\left(x\right)
\]
is an additive lift of Frobenius, i.e. it is a homomorphism of abelian groups and agrees with the Frobenius modulo $p$.
\end{lem}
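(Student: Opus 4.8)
The plan is to verify the two asserted properties of $\psi$ by direct manipulation of the axioms in \defref{Delta}, exactly as the phrase ``follows immediately from the definitions'' suggests. First I would establish additivity. Starting from $\psi(x+y) = (x+y)^{p} + p\,\delta(x+y)$, I substitute the additivity axiom $\delta(x+y) = \delta(x) + \delta(y) + \tfrac{x^{p}+y^{p}-(x+y)^{p}}{p}$, so that $p\,\delta(x+y) = p\,\delta(x) + p\,\delta(y) + x^{p} + y^{p} - (x+y)^{p}$. At this point I would invoke \remref{Integer_Coeff_Polynom} to justify that $\tfrac{x^{p}+y^{p}-(x+y)^{p}}{p}$ is a genuine element of $R$ cut out by an integer-coefficient polynomial, so that multiplying it by $p$ legitimately returns $x^{p}+y^{p}-(x+y)^{p}$ on the nose, even when $R$ has $p$-torsion and no division by $p$ is available. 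The two occurrences of $(x+y)^{p}$ then cancel, leaving $\psi(x+y) = x^{p} + p\,\delta(x) + y^{p} + p\,\delta(y) = \psi(x) + \psi(y)$, which exhibits $\psi$ as a homomorphism of the underlying abelian groups; in particular $\psi(0)=0$, consistently with the normalization $\delta(0)=0$.

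For the remaining claim, that $\psi$ is a lift of the Frobenius modulo $p$, I would simply note that $\psi(x) - x^{p} = p\,\delta(x) \in pR$, so that the image of $\psi(x)$ in $R/pR$ coincides with the image of $x^{p}$, i.e.\ with the Frobenius of the reduction of $x$. If one wishes to record slightly more, the normalization axiom also gives $\psi(1) = 1 + p\,\delta(1) = 1$, though this is not needed for the stated conclusion.

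I do not anticipate any genuine obstacle here: both verifications are one-line algebraic manipulations of the defining identities, and the only point requiring a moment of care --- that the ``$p$-divided'' term in the additivity axiom is an honest element of $R$ rather than a formal fraction --- has already been isolated in \remref{Integer_Coeff_Polynom}, so it can be cited rather than reproved. The write-up will therefore be short, amounting essentially to the displayed chain of equalities above together with the congruence $\psi(x)\equiv x^{p}\ (\mathrm{mod}\ p)$.
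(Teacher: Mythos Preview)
Your proposal is correct and matches the paper's approach exactly: the paper states only that this ``follows immediately from the definitions'' and gives no further argument, so your explicit unpacking of the additivity axiom and the congruence $\psi(x)\equiv x^{p}\pmod{p}$ is precisely the computation being gestured at.
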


\begin{example}
\label{exa:Delta_Canonical} The following are some examples of additive
$p$-derivations.
\end{example}

\begin{enumerate}
\item For $R$ a subring of $\bb Q$, the \emph{Fermat quotient} 
\[
\tilde{\delta}\left(x\right)=\frac{x-x^{p}}{p}
\]
is an additive $p$-derivation (we shall soon show that it is the
\emph{unique} additive $p$-derivation on any such $R$).
\item The same formula as for the Fermat quotient defines the unique additive $p$-derivation on the ring of $p$-adic integers $\bb Z_{p}$.
\item Fix $m\ge1$. Let $\mathcal{R}_{m}^{\square}$ be the commutative
ring freely generated by formal elements $|A|$, where
$A$ is an $m$-finite space, subject to the relations
\[
|A\sqcup B|=|A|+|B|,\quad|A\times B|=|A||B|.
\]
It is easy to verify that the operation
\[
\delta\left(|A|\right)=|BC_{p}\times A|-|A\wr C_{p}|
\]
is well defined and is an additive $p$-derivation on $\mathcal{R}_{m}^{\square}$.
\end{enumerate}
\begin{defn}
For every $x\in\bb Q$, we denote by $v_{p}\left(x\right)\in\bb Z\cup\left\{ \infty\right\} $
the $p$-adic valuation of $x$.
\end{defn}

The fundamental property of the Fermat quotient is
\begin{lem}
\label{lem:Delta_Valuation}For every $x\in\bb Q$, if $0<v_{p}\left(x\right)<\infty$,
then 
\[
v_{p}\left(\tilde{\delta}\left(x\right)\right)=v_{p}\left(x\right)-1.
\]
\end{lem}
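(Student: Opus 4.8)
The statement is elementary and the plan is to unwind the definition of $\tilde\delta$ and use the basic properties of the $p$-adic valuation $v_p$ on $\mathbb{Q}$: namely $v_p(ab) = v_p(a) + v_p(b)$, $v_p(a/b) = v_p(a) - v_p(b)$, and the ultrametric inequality $v_p(a+b) \ge \min(v_p(a), v_p(b))$ with \emph{equality} whenever $v_p(a) \ne v_p(b)$.

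First I would set $k = v_p(x)$ and note that by hypothesis $1 \le k < \infty$; in particular $x \ne 0$, so all the quantities below are well defined. Then $v_p(x^p) = p\cdot v_p(x) = pk$. Since $p$ is a prime (so $p \ge 2$) and $k \ge 1$, we have $pk \ge 2k > k$, i.e. $v_p(x^p) > v_p(x)$. Consequently the two terms in $x - x^p$ have distinct valuations, so by the equality case of the ultrametric inequality
\[
v_p(x - x^p) = \min\bigl(v_p(x),\, v_p(x^p)\bigr) = v_p(x) = k.
\]
Finally, since $v_p(p) = 1$,
\[
v_p\bigl(\tilde\delta(x)\bigr) = v_p\!\left(\frac{x - x^p}{p}\right) = v_p(x - x^p) - v_p(p) = k - 1 = v_p(x) - 1,
\]
as claimed.

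The only place where the hypothesis $v_p(x) > 0$ is actually used is in deducing the strict inequality $pk > k$ (if $k$ could be negative this would fail, and indeed for, say, $v_p(x) < 0$ the dominant term of $x - x^p$ would be $-x^p$ and the conclusion would change); the hypothesis $v_p(x) < \infty$ is used only to guarantee $x \ne 0$. There is no genuine obstacle here — the argument is a two-line valuation computation — so I would simply record it as above.
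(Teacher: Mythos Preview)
Your proof is correct and follows essentially the same approach as the paper's: both compute $v_p(x^p) = p\,v_p(x) > v_p(x)$ from the hypothesis $v_p(x)>0$, apply the ultrametric equality to get $v_p(x-x^p) = v_p(x)$, and then subtract $1$ for the division by $p$. The paper's version is just a terser two-line variant of what you wrote.
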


\begin{proof}
Since $v_{p}\left(x\right)>0$, we have 
\[
v_{p}\left(x^{p}\right)=pv_{p}\left(x\right)>v_{p}\left(x\right).
\]
Thus, 
\[
v_{p}\left(\frac{x-x^{p}}{p}\right)=v_{p}\left(x-x^{p}\right)-1=v_{p}\left(x\right)-1.
\]
\end{proof}
\begin{defn}
Let $R$ be a commutative ring. Let $\phi_{0}\colon\bb Z\to R$ be
the unique ring homomorphism and let $S_{R}$ be the set of primes
$p$, such that $\phi_{0}\left(p\right)\in R^{\times}$. We denote
\[
\bb Q_{R}=\bb Z[S_{R}^{-1}]\ss\bb Q
\]
and $\phi\colon\bb Q_{R}\to R$, the unique extension of $\phi_{0}$.
We call an element $x\in R$ \emph{rational} if it is in the image
of $\phi$. By \exaref{Delta_Canonical}(1), $\left(\bb Q_{R},\tilde{\delta}\right)$
is a semi-$\delta$-ring.
\end{defn}

The following elementary lemma will have several useful consequences.
\begin{lem}
\label{lem:Delta_Module}Let $\left(R,\delta\right)$ be a semi-$\delta$-ring
and let $\tilde{\delta}$ denote the Fermat quotient on $\bb Q_{R}$.
For all $t\in\bb Q_{R}$ and $x\in R$, we have
\[
\delta\left(tx\right)=t\delta\left(x\right)+\tilde{\delta}\left(t\right)x^{p}.
\]
\end{lem}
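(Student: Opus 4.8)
The plan is to fix the asserted identity and consider the set of $t\in\bb Q_R$ for which it holds for every $x\in R$; call such a $t$ \emph{admissible}. I will show that the admissible elements form a subring of $\bb Q_R$ which contains $\bb Z$ and, for each prime $q\in S_R$, the element $\phi(q)^{-1}$. Since every element of $\bb Q_R$ is a product of an integer with finitely many such inverses, this proves the lemma. The only inputs are the two axioms for an additive $p$-derivation (\defref{Delta}), the fact (\remref{Integer_Coeff_Polynom}) that $\frac{a^p+b^p-(a+b)^p}{p}$ is obtained by applying $\phi$ to a polynomial with integer coefficients, and two identities for the Fermat quotient in $\bb Q$: the additivity identity $\tilde\delta(t+s)=\tilde\delta(t)+\tilde\delta(s)+\frac{t^p+s^p-(t+s)^p}{p}$ (part of \exaref{Delta_Canonical}(1)), and the $p$-derivation identity $\tilde\delta(ts)=t\,\tilde\delta(s)+s^p\,\tilde\delta(t)$, which is the one-line verification $t\,\tilde\delta(s)+s^p\,\tilde\delta(t)=\frac{ts-ts^p+s^pt-s^pt^p}{p}=\frac{ts-(ts)^p}{p}$. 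Both identities are transported to $R$ along the ring homomorphism $\phi$.

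First I would treat the base cases: $0$ and $1$ are admissible, immediately from $\delta(0)=\delta(1)=0$ and $\tilde\delta(0)=\tilde\delta(1)=0$; and $-1$ is admissible because applying additivity of $\delta$ to $0=\delta(x+(-x))$ yields $\delta(-x)=-\delta(x)-\frac{1+(-1)^p}{p}x^p$, while $\tilde\delta(-1)=\frac{-1-(-1)^p}{p}=-\frac{1+(-1)^p}{p}$.

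Next I would prove closure of admissibility under addition and multiplication. For a sum: given admissible $t,s$, expand $\delta((t+s)x)=\delta(tx+sx)$ using additivity of $\delta$, then replace $\delta(tx)$ and $\delta(sx)$ using admissibility of $t$ and $s$; the coefficient of $x^p$ becomes $\tilde\delta(t)+\tilde\delta(s)+\frac{t^p+s^p-(t+s)^p}{p}$, which equals $\tilde\delta(t+s)$ by the additivity identity. For a product: apply admissibility of $t$ to the element $sx\in R$ to get $\delta(tsx)=t\,\delta(sx)+\tilde\delta(t)(sx)^p$, then apply admissibility of $s$ to $\delta(sx)$; the coefficient of $x^p$ becomes $t\,\tilde\delta(s)+s^p\,\tilde\delta(t)=\tilde\delta(ts)$ by the $p$-derivation identity. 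Together with the base cases this shows every integer is admissible. Finally, for a prime $q\in S_R$, so that $\phi(q)\in R^{\times}$, applying admissibility of the integer $q$ to the element $\phi(q)^{-1}x$ gives $\delta(x)=\phi(q)\,\delta(\phi(q)^{-1}x)+\tilde\delta(q)\,\phi(q)^{-p}x^p$; solving for $\delta(\phi(q)^{-1}x)$ and using the instance $\tilde\delta(q^{-1})=-q^{-p-1}\tilde\delta(q)$ of the $p$-derivation identity (taken with $t=q^{-1}$, $s=q$, where $\tilde\delta(1)=0$) shows $\phi(q)^{-1}$ is admissible. Closure under products then finishes the proof.

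I do not anticipate a genuine obstacle; the only point requiring care is that one must never divide by $p$ inside $R$, which may have $p$-torsion. Every division by $p$ takes place inside $\bb Q$, in the two identities for $\tilde\delta$, and is only afterwards pushed into $R$ along $\phi$; inside $R$ one multiplies only by the honestly invertible elements $\phi(q)^{-1}$ with $q\in S_R$. With that discipline, each of the steps above is a routine rearrangement.
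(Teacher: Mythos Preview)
Your proof is correct, but it works harder than necessary compared with the paper's argument. The paper fixes $x\in R$, sets $\varphi(t)=\delta(tx)-\tilde\delta(t)x^p$, and shows $\varphi$ is additive in $t$ --- this is exactly your closure-under-addition computation. It then observes $\varphi(1)=\delta(x)$ and invokes the single general fact that any group homomorphism out of a localization of $\bb Z$ is automatically linear over that localization, giving $\varphi(t)=t\delta(x)$ immediately. Your approach instead proves this $\bb Q_R$-linearity by hand: you separately verify closure under products (using the multiplicative $p$-derivation identity for $\tilde\delta$, which the paper never needs) and closure under inverting primes in $S_R$. What you gain is a completely self-contained argument that never appeals to a module-theoretic fact; what the paper gains is brevity and the avoidance of the multiplicativity identity for $\tilde\delta$ altogether.
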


\begin{proof}
Fix $x\in R$ and consider the function $\varphi\colon\bb Q_{R}\to R$
given by
\[
\varphi\left(t\right)=\delta\left(tx\right)-\tilde{\delta}\left(t\right)x^{p}.
\]
Since
\[
\delta\left(tx+sx\right)=\delta\left(tx\right)+\delta\left(sx\right)+\frac{\left(tx\right)^{p}+\left(sx\right)^{p}-\left(tx+sx\right)^{p}}{p}=
\]
\[
\delta\left(tx\right)+\delta\left(sx\right)+\left(\tilde{\delta}\left(t+s\right)-\tilde{\delta}\left(t\right)-\tilde{\delta}\left(s\right)\right)x^{p}=\varphi\left(t\right)+\varphi\left(s\right)+\tilde{\delta}\left(t+s\right)x^{p}.
\]
we get
\[
\varphi\left(t+s\right)=\delta\left(tx+sx\right)-\tilde{\delta}\left(t+s\right)x^{p}=\varphi\left(t\right)+\varphi\left(s\right).
\]
Hence, $\varphi$ is additive and $\varphi\left(1\right)=\delta\left(x\right)$.
Since $\bb Q_{R}$ is a localization of $\bb Z$, $\varphi$ is a
map of $\bb Q_{R}$-modules and we deduce that $\varphi\left(t\right)=t\delta\left(x\right)$
for all $t\in\bb Q_{R}$.
\end{proof}

\subsubsection{$p$-Local Rings}

In the case where $R$ is a \emph{$p$-local} commutative ring, which
is the case we are mainly interested in, the existence of an additive
$p$-derivation on $R$ has several interesting implications.
\begin{prop}
\label{prop:Delta_Torsion_Nilpotent}Let $\left(R,\delta\right)$
be a $p$-local semi-$\delta$-ring. If $x\in R$ is torsion, then
$x$ is nilpotent.
\end{prop}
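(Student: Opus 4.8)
The plan is to reduce to the case that $x$ is annihilated by a power of $p$, and then to show, by iterating one elementary step built from \lemref{Delta_Module}, that $p^{k}x=0$ forces $x^{(p+1)^{k}}=0$.

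First I would carry out the reduction. If $x$ is torsion, choose $n\ge 1$ with $nx=0$ and write $n=p^{k}m$ with $\gcd(m,p)=1$. Since $R$ is $p$-local, every integer prime to $p$ is invertible in $R$, so $m\in R^{\times}$ and hence $p^{k}x=m^{-1}(nx)=0$. Thus it suffices to treat a $p^{k}$-torsion element $x$; the case $k=0$ is immediate, so assume $k\ge 1$.

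The key step is the following claim: \emph{if $z\in R$ satisfies $p^{j}z=0$ with $j\ge 1$, then $p^{j-1}z^{p+1}=0$.} To prove it, apply \lemref{Delta_Module} with $t=p^{j}\in\bb Z\ss\bb Q_{R}$ and the element $z$. Since $\delta(p^{j}z)=\delta(0)=0$ and $\tilde{\delta}(p^{j})=\frac{p^{j}-p^{jp}}{p}=p^{j-1}-p^{jp-1}$, the lemma gives
\[
0=p^{j}\delta(z)+\bigl(p^{j-1}-p^{jp-1}\bigr)z^{p}.
\]
Because $j\ge 1$ and $p\ge 2$ we have $jp-1\ge j$, so $p^{jp-1}z^{p}=p^{jp-1-j}(p^{j}z)\,z^{p-1}=0$, leaving $p^{j-1}z^{p}=-p^{j}\delta(z)$. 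Multiplying by $z$ and using commutativity of $R$,
\[
p^{j-1}z^{p+1}=-p^{j}\delta(z)\,z=-\delta(z)\,(p^{j}z)=0,
\]
which proves the claim.

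Finally I would iterate the claim starting from $p^{k}x=0$: it yields that $x^{p+1}$ is $p^{k-1}$-torsion, then $(x^{p+1})^{p+1}=x^{(p+1)^{2}}$ is $p^{k-2}$-torsion, and after $k$ applications $x^{(p+1)^{k}}$ is annihilated by $p^{0}=1$, i.e.\ $x^{(p+1)^{k}}=0$; hence $x$ is nilpotent. This is a soft argument and I do not expect a genuine obstacle. The only points requiring a little care are the inequality $jp-1\ge j$ (which is what kills the $p^{jp-1}z^{p}$ term) and the observation that the claim must be reapplied to the new element $x^{p+1}$ at each stage rather than only to $x$ — which is exactly why the claim is phrased for an arbitrary torsion element $z$.
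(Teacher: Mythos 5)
Your proof is correct and follows essentially the same route as the paper: apply \lemref{Delta_Module} to $\delta(p^{k}x)=0$, multiply by the element to kill the $\delta$-term, conclude $p^{k-1}x^{p+1}=0$, and iterate. The only cosmetic difference is that you compute $\tilde{\delta}(p^{j})=p^{j-1}-p^{jp-1}$ explicitly and discard the higher-order term by hand, whereas the paper invokes \lemref{Delta_Valuation} together with $p$-locality to replace $\tilde{\delta}(p^{d})$ by $p^{d-1}$.
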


\begin{proof}
Since $R$ is $p$-local, if $x$ is torsion, then there is $d\in\bb N$,
such that $p^{d}x=0$. By \lemref{Delta_Module}, we have
\[
0=\delta\left(0\right)=\delta\left(p^{d}x\right)=p^{d}\delta\left(x\right)+\tilde{\delta}\left(p^{d}\right)x^{p}.
\]
Multiplying by $x$, we obtain $\tilde{\delta}\left(p^{d}\right)x^{p+1}=0$.
By \lemref{Delta_Valuation}, $v_p\left(\tilde{\delta}\left(p^{d}\right)\right)=d-1$,
and since $R$ is $p$-local, we get $p^{d-1}x^{p+1}=0$. Iterating
this $d$ times we get $x^{\left(p+1\right)^{d}}=0$.
\end{proof}
\begin{prop}
\label{prop:Delta_Injective} Let $\left(R,\delta\right)$ be a non-zero
$p$-local semi-$\delta$-ring. The map $\phi\colon\bb Q_{R}\to R$
is an injective semi-$\delta$-ring homomorphism. In particular $\tilde{\delta}$
is the unique additive $p$-derivation on $\bb Q_{R}$.
\end{prop}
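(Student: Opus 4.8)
The plan is to derive both assertions from the two results just established, \lemref{Delta_Module} and \propref{Delta_Torsion_Nilpotent}, using that $\bb Q_R=\bb Z[S_R^{-1}]$ is a localization of $\bb Z$ and hence has a very simple ideal structure.

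I would first prove injectivity. Since $\bb Q_R$ is a localization of $\bb Z$, its ideal $\ker\phi$ is of the form $n\bb Q_R$ for some integer $n\ge 0$. If $n>0$, then $n\cdot 1_R=\phi_0(n)=0$ exhibits $1_R$ as a torsion element of $R$; by \propref{Delta_Torsion_Nilpotent} it is then nilpotent, and since $1_R$ is idempotent this forces $1_R=0$, i.e.\ $R=0$, contradicting the hypothesis that $R\neq 0$. Hence $n=0$ and $\phi$ is injective. (Concretely, $p$-locality of $R$ means every prime $\ell\neq p$ is invertible in $R$ and hence lies in $S_R$, so $\bb Q_R$ is $\bb Z_{(p)}$ or $\bb Q$; the only conceivable obstruction to injectivity would be nilpotence of $\phi_0(p)$, which is exactly what \propref{Delta_Torsion_Nilpotent} excludes.)

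For the compatibility with the derivations I would specialize \lemref{Delta_Module} to $x=1\in R$ and an arbitrary $t\in\bb Q_R$: the identity $\delta(tx)=t\delta(x)+\tilde{\delta}(t)x^p$ collapses, via the normalization $\delta(1)=0$, to $\delta(\phi(t))=\phi(\tilde{\delta}(t))$, which is precisely $\delta\circ\phi=\phi\circ\tilde{\delta}$. Together with injectivity this says that $\phi$ is an injective semi-$\delta$-ring homomorphism. The ``in particular'' then follows by applying this very statement to the non-zero $p$-local semi-$\delta$-ring $(\bb Q_R,\delta')$, where $\delta'$ is any additive $p$-derivation on $\bb Q_R$: since the primes invertible in $\bb Q_R$ are exactly those of $S_R$, one has $\bb Q_{\bb Q_R}=\bb Q_R$ with structure map the identity, and the semi-$\delta$-ring homomorphism identity then reads $\tilde{\delta}=\delta'$.

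I do not anticipate any genuine obstacle, since the real content has been front-loaded into \lemref{Delta_Module} and \propref{Delta_Torsion_Nilpotent}; the only things needing a little care are the elementary bookkeeping of the ideals of $\bb Q_R$ and keeping straight that ``$p$-local'' means precisely that every prime other than $p$ acts invertibly on $R$.
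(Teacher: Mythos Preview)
Your proposal is correct and follows essentially the same approach as the paper: the paper also derives $\phi\circ\tilde{\delta}=\delta\circ\phi$ by specializing \lemref{Delta_Module} to $x=1$, and proves injectivity by observing that non-injectivity of $\phi$ forces $1_R$ to be torsion, hence nilpotent by \propref{Delta_Torsion_Nilpotent}, hence $R=0$. Your treatment is somewhat more explicit (you spell out the ideal structure of $\bb Q_R$ and actually argue the ``in particular'' clause, which the paper leaves to the reader), but the substance is identical.
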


\begin{proof}
Applying \lemref{Delta_Module} to $x=1$, we see that $\phi\circ\tilde{\delta}=\delta\circ\phi$.
If $\phi$ is non-injective, then so is $\phi_{0}\colon\bb Z\to R$
and hence $1\in R$ is torsion. By \propref{Delta_Torsion_Nilpotent},
$1$ is nilpotent and hence $R=0$.
\end{proof}
\begin{rem}
For a non-zero $p$-local semi-$\delta$-ring $\left(R,\delta\right)$,
we abuse notation by identifying $\bb Q_{R}$ with the subset of rational
elements of $R$. There are two options:
\begin{enumerate}
\item If $p\in R^{\times}$, then $\bb Q_{R}=\bb Q\ss R$ and all non-zero rational elements are invertible.
\item If $p\notin R^{\times}$, then $\bb Q_{R}=\bb Z_{\left(p\right)}\ss R$,
and $x\in\bb Q_{R}$ is invertible if and only if $v_{p}\left(x\right)=0$.
\end{enumerate}
\end{rem}

\begin{prop}
\label{prop:Delta_Torsion_Ideal}Let $\left(R,\delta\right)$ be a
$p$-local semi-$\delta$-ring. The ideal $I_{\tor}\ss R$ of torsion
elements is closed under $\delta$.
\end{prop}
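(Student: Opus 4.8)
The plan is to derive everything from the ``module over $\bb Q_{R}$'' identity \lemref{Delta_Module} together with the elementary fact that the torsion elements of a commutative ring form an ideal. The first step is to recall why $I_{\tor}$ is an ideal: if $p^{a}x=0$ and $p^{b}y=0$ then $p^{\max(a,b)}(x+y)=0$, and $p^{a}(rx)=r(p^{a}x)=0$ for every $r\in R$. Here I use, as elsewhere in this section, that $R$ being $p$-local forces every torsion element to be annihilated by a power of $p$ (primes other than $p$ act invertibly), which is exactly what makes the annihilator bookkeeping below legitimate.

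Next, given $x\in I_{\tor}$, I would pick $d\ge0$ with $p^{d}x=0$ and apply \lemref{Delta_Module} with $t=p^{d}\in\bb Q_{R}$, using the normalization $\delta(0)=0$, to obtain
\[
0=\delta(p^{d}x)=p^{d}\,\delta(x)+\tilde{\delta}(p^{d})\,x^{p},
\]
hence
\[
p^{d}\,\delta(x)=-\,\tilde{\delta}(p^{d})\,x^{p}.
\]
Since $I_{\tor}$ is an ideal and $x\in I_{\tor}$, the element $x^{p}$, and therefore also $\tilde{\delta}(p^{d})\,x^{p}$, lies in $I_{\tor}$; consequently $p^{d}\,\delta(x)\in I_{\tor}$. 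Choosing $e\ge0$ with $p^{e}\bigl(p^{d}\,\delta(x)\bigr)=0$ then yields $p^{d+e}\,\delta(x)=0$, i.e. $\delta(x)\in I_{\tor}$, which is the assertion.

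I do not anticipate a genuine obstacle: the argument is just the $p$-adic bookkeeping above. The only two points that deserve a word of care are that \lemref{Delta_Module} applies verbatim with the \emph{integer} $t=p^{d}$, so that (in contrast to \propref{Delta_Torsion_Nilpotent}) no valuation estimate as in \lemref{Delta_Valuation} is required; and that ``torsion'' in a $p$-local ring means ``$p$-power torsion'', which is what justifies all of the manipulations with $I_{\tor}$.
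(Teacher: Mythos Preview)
Your argument is correct and follows the same core strategy as the paper: apply \lemref{Delta_Module} to the vanishing element $p^{d}x$ and read off that $p^{d}\delta(x)$ is controlled by $\tilde{\delta}(p^{d})x^{p}$. The paper instead plugs in $t=p^{d+1}$ and invokes \lemref{Delta_Valuation} to compute $v_{p}(\tilde{\delta}(p^{d+1}))=d$, obtaining the explicit annihilator $p^{d+1}\delta(x)=0$; your route bypasses the valuation lemma entirely by just using that $x^{p}\in I_{\tor}$, at the cost of a less explicit bound $p^{d+e}$ on the annihilator --- a mild simplification that trades precision for brevity.
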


\begin{proof}
For $x\in I_{\tor}$, there is $d\in\bb N$, such that $p^{d}x=0$.
By \lemref{Delta_Module}, 
\[
0=\delta\left(p^{d+1}x\right)=p^{d+1}\delta\left(x\right)+\tilde{\delta}\left(p^{d+1}\right)x^{p}.
\]
By \lemref{Delta_Valuation}, $v_p\left(\tilde{\delta}\left(p^{d+1}\right)\right)=d$
and therefore $\tilde{\delta}\left(p^{d+1}\right)x^{p}=0$. We get
$p^{d+1}\delta\left(x\right)=0$ and hence $\delta\left(x\right)\in I_{\tor}$.
\end{proof}
\begin{defn}
For every commutative ring $R$, we define $I_{\tor}\ss R$ to be
the ideal of torsion elements, and $R^{\tf}=R/I_{\tor}$ to be the
torsion free ring obtained from $R$. 
\end{defn}

The following proposition will allow us to ``ignore torsion'' when
dealing with questions of invertibility in $p$-local semi-$\delta$-ring.
First,
\begin{defn}
Given a ring homomorphism $f\colon R\to S$, we say that $f$ \emph{detects
invertibility} if for every $x\in R$, if $f\left(x\right)$ is invertible,
then $x$ is invertible.
\end{defn}

\begin{prop}
\label{prop:Delta_Torsion_Free}Let $\left(R,\delta\right)$ be a
$p$-local semi-$\delta$-ring. There is a unique additive $p$-derivation
$\delta$ on $R^{\tf}$, such that the quotient map $g\colon R\onto R^{\tf}$
is a homomorphism of semi-$\delta$-rings. In addition, $g$ detects
invertibility.
\end{prop}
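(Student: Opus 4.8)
The plan is to obtain the additive $p$-derivation on $R^{\tf}$ by descending $\delta$ along the surjection $g\colon R\onto R^{\tf}$, and to deduce that $g$ detects invertibility from the nilpotence of torsion elements proved in \propref{Delta_Torsion_Nilpotent}. Uniqueness is immediate: since $g$ is surjective, any additive $p$-derivation $\delta'$ on $R^{\tf}$ with $g\circ\delta=\delta'\circ g$ must satisfy $\delta'(g(x))=g(\delta(x))$, so it is determined. Thus the only real content on the existence side is that the formula $\bar\delta(g(x))\coloneqq g(\delta(x))$ is well defined, i.e. that $x-y\in I_{\tor}$ forces $\delta(x)-\delta(y)\in I_{\tor}$.

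For well-definedness I would use the additivity axiom in the form
\[
\delta(x)-\delta(y)=\delta(x-y)+\frac{(x-y)^{p}+y^{p}-x^{p}}{p}.
\]
The first term lies in $I_{\tor}$ by \propref{Delta_Torsion_Ideal}. For the second, recall (\remref{Integer_Coeff_Polynom}) that $C_p(a,b)=\tfrac1p\bigl(a^{p}+b^{p}-(a+b)^{p}\bigr)=-\sum_{k=1}^{p-1}\tfrac1p\binom{p}{k}a^{k}b^{p-k}\in\bb Z[a,b]$, and note that every monomial of $C_p$ is divisible by $a$; hence $C_p(x-y,y)$ lies in the ideal $(x-y)\ss I_{\tor}$. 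Therefore $\delta(x)-\delta(y)\in I_{\tor}$ and $\bar\delta$ is well defined. The additivity and normalization axioms for $\bar\delta$, as well as the identity $g\circ\delta=\bar\delta\circ g$ exhibiting $g$ as a homomorphism of semi-$\delta$-rings, then follow formally from the corresponding facts for $\delta$, using that $g$ is a surjective ring homomorphism.

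Finally, to see that $g$ detects invertibility, suppose $x\in R$ with $g(x)\in(R^{\tf})^{\times}$, and pick $y\in R$ with $g(xy)=1$; then $xy=1+t$ with $t\in I_{\tor}$. By \propref{Delta_Torsion_Nilpotent} the element $t$ is nilpotent, so $1+t\in R^{\times}$, hence $xy\in R^{\times}$ and, $R$ being commutative, $x\in R^{\times}$. The only step involving a computation is the verification that $C_p(x-y,y)\in(x-y)$, and this — together with the invocations of the earlier torsion lemmas — is where essentially all the (quite mild) work sits; everything else is formal.
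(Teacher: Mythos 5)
Your proof is correct and follows essentially the same route as the paper's: both establish well-definedness by using the additivity axiom together with \propref{Delta_Torsion_Ideal} and the observation that the correction polynomial $\frac{a^{p}+b^{p}-(a+b)^{p}}{p}$ is divisible by either variable, and both reduce invertibility-detection to the nilpotence of torsion elements from \propref{Delta_Torsion_Nilpotent}. The only difference is cosmetic (you compare $x$ and $y$ with $x-y\in I_{\tor}$ where the paper compares $x$ and $x+y$ with $y\in I_{\tor}$, and you spell out the standard ``kernel consists of nilpotents'' argument).
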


\begin{proof}
Let $x\in R$ and $y\in I_{\tor}$. We have 
\[
\delta\left(x+y\right)-\delta\left(x\right)=\delta\left(y\right)+\left(\frac{x^{p}+y^{p}-\left(x+y\right)^{p}}{p}\right)\in I_{\tor}
\]
since $\delta\left(y\right)\in I_{\tor}$ by \propref{Delta_Torsion_Ideal}
and the expression in parenthesis is a multiple of $y$. Thus, 
\[
\delta\left(x+I_{\tor}\right):=\delta\left(x\right)+I_{\tor}
\]
is a well defined function on $R^{\tf}$. The operation $\delta$
is an additive $p$-derivation and makes $g$ a homomorphism of semi-$\delta$-rings.
The operation $\delta$ is unique by the surjectivity of $g$. For
the second claim, the kernel of $g$ consists of nilpotent elements
by \propref{Delta_Torsion_Nilpotent} and hence $g$ detects invertibility.
\end{proof}

\subsection{The Alpha Operation}

Let $\mathcal{C}$ be a $0$-semiadditively symmetric monoidal $\infty$-category
and let 
\[
X\in\cocalg\left(\mathcal{C}\right),\qquad Y\in\calg\left(\mathcal{C}\right).
\]
Fix a prime $p$. The set 
\[
\hom_{h\mathcal{C}}\left(X,Y\right)=\pi_{0}\map_{\mathcal{C}}\left(X,Y\right)
\]
has a structure of a \emph{commutative rig} (i.e. like a ring, but
without additive inverses). Assuming further that $\mathcal{C}$ is
$1$-semiadditively symmetric monoidal, we construct an operation
$\alpha$ (which depends on $p$) on $\hom_{h\mathcal{C}}\left(X,Y\right)$
and study its properties and interaction with the rig structure.

Throughout the section we denote 
\[
\pt\oto eBC_{p}\oto r\pt.
\]

\subsubsection{Definition and Naturality}

The $\bb E_{\infty}$-coalgebra and $\bb E_{\infty}$-algebra structures,
on $X$ and $Y$ respectively, provide symmetric comultiplication
and multiplication maps:
\[
\overline{t}_{X}\colon X\to\left(X^{\otimes p}\right)^{hC_{p}}=r_{*}\Theta^{p}\left(X\right)
\]
\[
\overline{m}_{Y}\colon r_{!}\Theta^{p}\left(Y\right)=\left(Y^{\otimes p}\right)_{hC_{p}}\to Y.
\]
These maps have mates
\[
t_{X}\colon r^{*}X\to\Theta^{p}\left(X\right),\qquad m_{Y}\colon\Theta^{p}\left(Y\right)\to r^{*}Y,
\]
such that
\[
e^{*}t_{X}\colon X=e^{*}r^{*}X\to e^{*}\Theta^{p}\left(X\right)=X^{\otimes p}
\]
\[
e^{*}m_{Y}\colon Y^{\otimes p}=e^{*}\Theta^{p}\left(Y\right)\to e^{*}r^{*}Y=Y,
\]
are the ordinary comultiplication and multiplication maps. 
\begin{defn}
\label{def:Alpha}Let $\mathcal{C}$ be a $1$-semiadditively symmetric
monoidal $\infty$-category and let 
\[
X\in\cocalg\left(\mathcal{C}\right),\qquad Y\in\calg\left(\mathcal{C}\right).
\]
\begin{enumerate}
\item Given $g\colon\Theta^{p}\left(X\right)\to\Theta^{p}\left(Y\right)$,
we define $\overline{\alpha}\left(g\right)\colon X\to Y$ to be either
of the compositions in the commutative diagram
\[
\xymatrix@C=3pc{X\ar[r]^{\overline{t}_{X}\quad} & r_{*}\Theta^{p}\left(X\right)\ar[d]^{g}\ar[r]^{\nm_{r}^{-1}} & r_{!}\Theta^{p}\left(X\right)\ar[d]^{g}\\
 & r_{*}\Theta^{p}\left(Y\right)\ar[r]^{\nm_{r}^{-1}} & r_{!}\Theta^{p}\left(Y\right)\ar[r]^{\quad\overline{m}_{Y}} & Y.
}
\]
\item Given $f\colon X\to Y$, we define $\alpha\left(f\right)=\overline{\alpha}\left(\Theta^{p}\left(f\right)\right)$.
\end{enumerate}
\end{defn}

\begin{rem}
\label{rem:H_Infty}In fact, the definition of $\alpha$ uses only
the $H_{\infty}$-algebra structure of $Y$ and the $H_{\infty}$-coalgebra
structure of $X$. Moreover, everything we state and prove in this
section about the properties of $\alpha$ holds when we replace $\bb E_{\infty}$
with $H_{\infty}$. 
\end{rem}

\begin{lem}
\label{lem:Alpha_Bar_Additive}The map $\overline{\alpha}\colon \pi_0\map(\Theta^p X,\Theta^p Y) \to \pi_0\map(X,Y)$
is additive. 
\end{lem}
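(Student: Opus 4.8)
The plan is to exhibit $\overline{\alpha}$ as a \emph{functor} applied to morphisms, pre- and post-composed with fixed morphisms, all of which are additive operations. Reading off the composition in the defining square of \defref{Alpha} (which commutes by naturality of $\nm_{r}$), one has, for $g\colon\Theta^{p}\left(X\right)\to\Theta^{p}\left(Y\right)$,
\[
\overline{\alpha}\left(g\right)=\overline{m}_{Y}\circ\nm_{r}^{-1}\circ r_{*}\left(g\right)\circ\overline{t}_{X}\quad\in\hom_{h\mathcal{C}}\left(X,Y\right),
\]
where $r_{*}\left(g\right)\colon r_{*}\Theta^{p}\left(X\right)\to r_{*}\Theta^{p}\left(Y\right)$ is the image of $g$ under the functor $r_{*}\colon\fun\left(BC_{p},\mathcal{C}\right)\to\mathcal{C}$, and $\overline{t}_{X}$, $\overline{m}_{Y}$, $\nm_{r}^{-1}$ are independent of $g$. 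Since $\mathcal{C}$ is $1$-semiadditive, hence $0$-semiadditive, and so is $\fun\left(BC_{p},\mathcal{C}\right)$ with its point-wise structure, the morphism sets in play are commutative monoids, and pre-composition, post-composition, and composition with the isomorphism $\nm_{r}^{-1}$ are all additive maps between them. Hence it suffices to prove that $g\mapsto r_{*}\left(g\right)$ is additive on $\pi_{0}$ of mapping spaces.

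For this step I would invoke the general fact that a functor between $0$-semiadditive $\infty$-categories which preserves finite products (equivalently, the zero object and the biproduct structure) preserves the commutative monoid structure on mapping spaces. The functor $r_{*}$ is a right adjoint, so it preserves the terminal object — which is the zero object — and all finite products. The general fact itself follows from the biproduct description of addition, $h_{1}+h_{2}=\nabla\circ\left(h_{1}\times h_{2}\right)\circ\Delta$, where $\Delta$ is a diagonal and $\nabla$ a co-diagonal, both expressible purely through zero maps and product projections via the semiadditive calculus: a finite-product-preserving functor sends this presentation for $h_{1},h_{2}$ to the corresponding one for $r_{*}\left(h_{1}\right),r_{*}\left(h_{2}\right)$, giving $r_{*}\left(h_{1}+h_{2}\right)=r_{*}\left(h_{1}\right)+r_{*}\left(h_{2}\right)$. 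Combined with the first paragraph, this proves $\overline{\alpha}$ is additive.

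The only genuinely delicate point is this last general fact, but it is standard and, importantly, uses nothing about $\Theta^{p}$; all the structure specific to the situation ($\overline{t}_{X}$, $\overline{m}_{Y}$, the norm $\nm_{r}$) enters only as fixed morphisms, whose pre/post-composition is automatically additive. If one prefers to remain within the integration formalism, an equivalent route is to pass to mates and write $\overline{\alpha}\left(g\right)=\int_{r}\left(m_{Y}\circ g\circ t_{X}\right)$; then additivity of $\overline{\alpha}$ follows from additivity of $\int_{r}$ (which is once again the statement that $r_{*}$, or $r_{!}$, is additive) together with the evident additivity of $g\mapsto m_{Y}\circ g\circ t_{X}$.
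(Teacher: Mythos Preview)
Your proof is correct and follows essentially the same approach as the paper: both argue that $\overline{\alpha}$ is the composite of the additive map induced by $r_{*}$ with pre- and post-composition by fixed morphisms in a $0$-semiadditive $\infty$-category. You supply more detail than the paper does (justifying the additivity of $r_{*}$ via preservation of finite products), and your alternative integration-formalism route is a nice aside, but the core argument is identical.
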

\begin{proof}
    Since $r_*$ is an additive functor, it induces an additive map 
    \[
        \pi_0\map(\Theta^p X,\Theta^p Y) \to \pi_0\map(r_*\Theta^p X,r_*\Theta^p Y).
    \]
    The operation $\bar{\alpha}$ consists of the application of this followed by pre- and post-composition with fixed maps in a $0$-semiadditive $\infty$-category. 
\end{proof}

The operation $\alpha$ is natural with respect to (co)algebra homomorphisms
in the following sense.
\begin{lem}
\label{lem:Alpha_Naturality} Let $\mathcal{C}$ be a $1$-semiadditively
symmetric monoidal $\infty$-category and let 
\[
X,X'\in\cocalg\left(\mathcal{C}\right),\qquad Y,Y'\in\calg\left(\mathcal{C}\right).
\]
Given maps $g\colon Y\to Y'$ and $h\colon X'\to X$ of commutative
algebras and coalgebras respectively, for every map $f\colon X\to Y$,
we have 
\[
\alpha\left(g\circ f\circ h\right)=g\circ\alpha\left(f\right)\circ h\quad\in\hom_{h\mathcal{C}}\left(X',Y'\right).
\]
\end{lem}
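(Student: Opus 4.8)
The plan is to reduce the statement to the naturality of the operation $\Theta^p$ (\lemref{Theta_Functoriality}) together with the fact that the comultiplication map $\overline{t}_X$ and multiplication map $\overline{m}_Y$ are natural with respect to coalgebra and algebra homomorphisms respectively. First I would unwind the definition of $\alpha$: by \defref{Alpha}, $\alpha(g\circ f\circ h) = \overline{\alpha}(\Theta^p(g\circ f\circ h))$, and since $\Theta^p$ is a functor, $\Theta^p(g\circ f\circ h) = \Theta^p(g)\circ\Theta^p(f)\circ\Theta^p(h)$. So it suffices to show that $\overline{\alpha}$ intertwines pre-composition by $\Theta^p(h)$ with pre-composition by $h$, and post-composition by $\Theta^p(g)$ with post-composition by $g$; that is, $\overline{\alpha}(\Theta^p(g)\circ k\circ\Theta^p(h)) = g\circ\overline{\alpha}(k)\circ h$ for all $k\colon\Theta^p X\to\Theta^p Y$.

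Next I would assemble the relevant commuting diagram. Using the $r_*$-version of the formula in \defref{Alpha}(1), $\overline{\alpha}(k)$ is the composition $X\xrightarrow{\overline{t}_X} r_*\Theta^p X \xrightarrow{\nm_r^{-1}} r_!\Theta^p X \xrightarrow{r_!(k)} r_!\Theta^p Y \xrightarrow{\overline{m}_Y} Y$ (note $\nm_r$ being an isomorphism uses $1$-semiadditivity at the space $BC_p$). The key inputs are: (i) naturality of $\overline{t}$ on $\cocalg(\mathcal{C})$, giving a commuting square relating $\overline{t}_{X'}$, $h$, $\overline{t}_X$, and $r_*\Theta^p(h)$; (ii) naturality of $\overline{m}$ on $\calg(\mathcal{C})$, giving the dual square for $\overline{m}_{Y'}$, $g$, $\overline{m}_Y$, $r_!\Theta^p(g)$; (iii) naturality of $\nm_r^{-1}\colon r_*\to r_!$ as a natural transformation of functors, which lets the middle square $r_*\Theta^p X\to r_!\Theta^p X$, with vertical maps induced by $\Theta^p(h)$, commute; and (iv) pure functoriality of $r_!$ and $r_*$ for the composability of $k$ with $\Theta^p(h)$ and $\Theta^p(g)$. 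Pasting these squares horizontally produces exactly the equality $g\circ\overline{\alpha}(k)\circ h = \overline{\alpha}(\Theta^p(g)\circ k\circ\Theta^p(h))$ in $\hom_{h\mathcal{C}}(X',Y')$.

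The main obstacle, such as it is, is bookkeeping rather than conceptual: one must be careful that the comultiplication and multiplication maps $\overline{t}_X$, $\overline{m}_Y$ as used in \defref{Alpha} are genuinely the structure maps underlying the $\bb E_\infty$-(co)algebra structures in a way that makes a coalgebra morphism $h$ commute with $\overline{t}$ and an algebra morphism $g$ commute with $\overline{m}$ — this is where the $\bb E_\infty$ hypothesis (or, per \remref{H_Infty}, the $H_\infty$ hypothesis) is actually consumed, since a mere homotopy-ring map need not respect the symmetric multiplication $\overline{m}_Y\colon (Y^{\otimes p})_{hC_p}\to Y$. Granting that, everything else is formal diagram pasting, and I would present it as a single large commutative diagram with the four squares identified as above, concluding by reading off the outer composite along the two paths.
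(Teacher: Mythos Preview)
Your proposal is correct and follows essentially the same route as the paper's proof: both arguments unfold $\alpha$ via \defref{Alpha}, use functoriality of $\Theta^p$ to split $\Theta^p(g\circ f\circ h)$, and then paste together a grid of squares whose commutativity comes from (i) $h$ being a coalgebra map (compatibility with $\overline{t}$), (ii) $g$ being an algebra map (compatibility with $\overline{m}$), and (iii) naturality of $\nm_r^{-1}$. One small remark: your citation of \lemref{Theta_Functoriality} is slightly off---that lemma concerns naturality of $\Theta^p$ in the \emph{category} variable under symmetric monoidal functors, whereas what you need here is simply that $\Theta^p\colon\mathcal{C}\to\fun(BC_p,\mathcal{C})$ is a functor, which is immediate from its definition.
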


\begin{proof}
Consider the diagram
\[
\xymatrix@C=3pc{X'\ar@{.>}[r]^{\overline{t}_{X'}\ }\ar@{.>}[d]^{h} & r_{*}\Theta^{p}X'\ar[r]^{\nm_{r}^{-1}}\ar[d]^{h}\ar@{.>}[dddr] & r_{!}\Theta^{p}X'\ar[d]^{h}\\
X\ar[r]^{\overline{t}_{X}\ }\ar@{.>}[drrr] & r_{*}\Theta^{p}X\ar[r]^{\nm_{r}^{-1}}\ar[d]^{f} & r_{!}\Theta^{p}X\ar[d]^{f}\\
 & r_{*}\Theta^{p}Y\ar[r]^{\nm_{r}^{-1}}\ar[d]^{g} & r_{!}\Theta^{p}Y\ar[r]^{\ \overline{m}_{Y}}\ar[d]^{g} & Y\ar@{.>}[d]^{g}\\
 & r_{*}\Theta^{p}Y'\ar[r]^{\nm_{r}^{-1}} & r_{!}\Theta^{p}Y'\ar@{.>}[r]^{\ \overline{m}_{Y'}} & Y'.
}
\]
The squares in the middle column commute by the naturality of the
norm map. The homotopy rendering the bottom right square commutative
is provided by the data that makes $g$ into a morphism of commutative
algebras and similarly for the upper left square and $h$. The composition
along one of the dotted paths is $\alpha\left(g\circ f\circ h\right)$,
while composition along the other dotted path is $g\circ\alpha\left(f\right)\circ h$,
which completes the proof.
\end{proof}
The operation $\alpha$ is also functorial in the following sense.
\begin{lem}
\label{lem:Alpha_Functoriality}Let $F\colon\mathcal{C}\to\mathcal{D}$
be a $1$-semiadditive symmetric monoidal functor between two $1$-semiadditively
symmetric monoidal $\infty$-categories, and let $X\in\cocalg\left(\mathcal{C}\right)$
and $Y\in\calg\left(\mathcal{C}\right)$. The induced map of commutative
rings
\[
F\colon\hom_{h\mathcal{C}}\left(X,Y\right)\to\hom_{h\mathcal{D}}\left(FX,FY\right),
\]
commutes with the operation $\alpha$.
\end{lem}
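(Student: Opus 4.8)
Lemma \ref{lem:Alpha_Functoriality}: for a $1$-semiadditive symmetric monoidal functor $F\colon\mathcal{C}\to\mathcal{D}$ between $1$-semiadditively symmetric monoidal $\infty$-categories, and $X\in\cocalg(\mathcal{C})$, $Y\in\calg(\mathcal{C})$, the induced map $F\colon\hom_{h\mathcal{C}}(X,Y)\to\hom_{h\mathcal{D}}(FX,FY)$ commutes with $\alpha$.

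**Plan.** The proof is a matter of chasing the defining diagram of $\alpha$ (\defref{Alpha}) through $F$ and checking that $F$ carries each building block to the corresponding block on the $\mathcal{D}$ side. There are essentially three ingredients to line up. First, since $F$ is symmetric monoidal, it sends $\bb E_\infty$-coalgebras to $\bb E_\infty$-coalgebras and $\bb E_\infty$-algebras to $\bb E_\infty$-algebras, and the comultiplication $\overline t_{FX}$ (resp. multiplication $\overline m_{FY}$) of $FX$ (resp. $FY$) is identified with $F$ applied to $\overline t_X$ (resp. $\overline m_Y$) under the canonical identification $F(X^{\otimes p})\simeq (FX)^{\otimes p}$ and its $C_p$-equivariant refinement. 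Second, by \lemref{Theta_Functoriality}, the square relating $\Theta^p$ on $\mathcal{C}$ and on $\mathcal{D}$ via $F_*$ commutes, so $F_*\Theta^p_{\pt}(f)\simeq\Theta^p_{\pt}(Ff)$ for any $f\colon X\to Y$; more precisely we get $F_*\Theta^p(X)\simeq\Theta^p(FX)$ as objects of $\fun(BC_p,\mathcal{D})$, compatibly with $t$ and $m$. Third, and this is the only non-formal point, $F$ must commute with the inverse norm $\nm_r^{-1}$ appearing in \defref{Alpha}, where $r\colon BC_p\to\pt$.

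**The key step.** The map $r\colon BC_p\to\pt$ is $1$-finite, hence by \corref{Integral_Functor} (applied to the $1$-semiadditive functor $F$ — here one uses that $F$ preserves $1$-finite colimits, and by \corref{Semi_Add_Functors} also $1$-finite limits) the $(F,r)$-square is ambidextrous and satisfies the Beck--Chevalley conditions. Therefore $F$ intertwines the canonical norms: $F_*\nm_r^{\mathcal{C}}$ is identified with $\nm_r^{\mathcal{D}}$ (via $\beta_!$, $\beta_*$), and consequently $F_*(\nm_r^{\mathcal{C}})^{-1}$ is identified with $(\nm_r^{\mathcal{D}})^{-1}$. This is the crux: applying $F_*$ to the commuting square in \defref{Alpha}(1) and inserting the identifications $F_* r_*\Theta^p(X)\simeq r_*\Theta^p(FX)$, $F_* r_!\Theta^p(Y)\simeq r_!\Theta^p(FY)$, the $\bc$ compatibility turns the $\mathcal{C}$-side diagram computing $\overline\alpha$ into the $\mathcal{D}$-side diagram computing $\overline\alpha$. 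Post-composing with $F$ applied to $\overline m_Y$ (identified with $\overline m_{FY}$) and pre-composing with $F$ applied to $\overline t_X$ (identified with $\overline t_{FX}$), we get $F(\overline\alpha(g))=\overline\alpha(F_* g)$ for $g\colon\Theta^p(X)\to\Theta^p(Y)$. Specializing to $g=\Theta^p(f)$ and using \lemref{Theta_Functoriality} to write $F_*\Theta^p(f)\simeq\Theta^p(Ff)$ gives $F(\alpha(f))=\alpha(Ff)$, as desired.

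**What I expect to be routine vs. hard.** The bookkeeping of the various canonical identifications ($F$ preserves $(-)^{\otimes p}$ with its $C_p$-action, $F_*$ commutes with $r_!$ and $r_*$ up to the Beck--Chevalley maps, $F_*\circ\Theta^p\simeq\Theta^p\circ F$) is routine but needs care to state coherently; the genuinely substantive input is the ambidexterity of the $(F,r)$-square from \thmref{Ambi_Functors}/\corref{Integral_Functor}, which is exactly what makes $F$ commute with $\nm_r^{-1}$ — and this is where $1$-semiadditivity of $F$ (not merely $0$-semiadditivity) is used. A clean way to package the whole argument is to invoke \propref{Integral_Ambi} directly: $\overline\alpha(g)$ is, by \remref{Integration_Unit} and \defref{Alpha}, expressible as an integral $\int_r$ of a map built from $g$ by pre/post-composition with $\overline t_X$, $\overline m_Y$; then \propref{Integral_Ambi} for the ambidextrous, $\bc$-satisfying $(F,r)$-square gives $F(\int_r(\cdots))=\int_r(F_*(\cdots))$, and one identifies the right-hand side with $\overline\alpha(F_* g)$ using the (co)algebra compatibilities of $F$. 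I would write the proof in this second form, as it minimizes diagram-chasing.
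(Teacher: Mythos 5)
Your proposal is correct and matches the paper's argument: the paper proves the lemma by exactly the diagram chase you describe, connecting $F$ applied to the defining diagram of $\overline{\alpha}$ to the corresponding diagram in $\mathcal{D}$ via the $\bc$ maps and the $\Theta^{p}$-functoriality isomorphisms of \lemref{Theta_Functoriality}, with the norm square handled by the ambidexterity of the $\left(F,r\right)$-square (\thmref{Ambi_Functors}) and the triangles by the compatibility of $F$ with the (co)algebra structures. Your suggested repackaging via $\overline{\alpha}\left(g\right)=\int_{r}\left(m_{Y}\circ g\circ t_{X}\right)$ and \propref{Integral_Ambi} is a legitimate variant, but it is the same argument in substance, since \propref{Integral_Ambi} is proved by the identical chase.
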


\begin{proof}
Given a map $g\colon X\to Y$, consider the following diagram:
\[
\xymatrix@C=3pc{ & \red{F}\left(r_{*}\Theta^{p}\left(X\right)\right)\ar[d]_{\wr}^{\beta_{*}}\ar[r]^{\nm_{r}^{-1}}  & \red{F}\left(r_{!}\Theta^{p}\left(X\right)\ar[r]^{g}\right) & \red{F}\left(r_{!}\Theta^{p}\left(Y\right)\ar[rd]^{\quad\overline{m}_{Y}}\right)\\
\red{F}X\ar[ru]^{\overline{t}_{X}\quad}\ar[rd]_{\overline{t}_{FX}\quad} & r_{*}\red{F}\left(\Theta^{p}\left(X\right)\right)\ar@{-}[d]^{\wr}\ar[r]^{\nm_{r}^{-1}} & r_{!}\red{F}\left(\Theta^{p}\left(X\right)\right)\ar@{-}[d]^{\wr}\ar[u]_{\wr}^{\beta_{!}}\ar[r]^{g} & r_{!}\red{F}\left(\Theta^{p}\left(Y\right)\right)\ar@{-}[d]^{\wr}\ar[u]_{\wr}^{\beta_{!}} & \red{F}Y.\\
 & r_{*}\Theta^{p}\left(\red{F}X\right)\ar[r]^{\nm_{r}^{-1}} & r_{!}\Theta^{p}\left(\red{F}X\right)\ar[r]^{g} & r_{!}\Theta^{p}\left(\red{F}Y\right)\ar[ru]_{\quad\overline{m}_{\red{F}Y}}
}
\]

The vertical isomorphisms in the bottom squares are defined by \lemref{Theta_Functoriality} and the squares commute by the interchange law. The top left square commutes by the
ambidexterity of the $\left(F,r\right)$-square (\thmref{Ambi_Functors})
and the top right square by naturality of the $\bc_{!}$ map. The
triangles commute by the definition of the commutative coalgebra (resp.\!
algebra) structure on $F\left(X\right)$ (resp.\! $F\left(Y\right)$).
Thus, the composition along the top path, which is $F\left(\alpha\left(g\right)\right)$,
is homotopic to the composition along the bottom path, which is $\alpha\left(F\left(g\right)\right)$.
\end{proof}

\subsubsection{Additivity of Alpha}

Our next goal is to understand the interaction of $\alpha$ with sums.
For this, we first need to describe the effect of $\overline{\alpha}$
on ``induced maps''. Recall the notation
\[
\pt\oto eBC_{p}\oto r\pt.
\]

\begin{lem}
\label{lem:Alpha_Induced}Let $\mathcal{C}$ be a $1$-semiadditively
symmetric monoidal $\infty$-category and let $X\in\cocalg\left(\mathcal{C}\right)$
and $Y\in\calg\left(\mathcal{C}\right)$. For every map
\[
h\colon X^{\otimes p}=e^{*}\Theta^{p}\left(X\right)\to e^{*}\Theta^{p}\left(Y\right)=Y^{\otimes p},
\]
the map $\overline{\alpha}\left(\int\limits _{e}h\right)$ is homotopic
to the composition 
\[
X\oto{e^{*}t_{X}}X^{\otimes p}\oto hY^{\otimes p}\oto{e^{*}m_{Y}}Y.
\]
\end{lem}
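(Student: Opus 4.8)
The plan is to unwind the definition of $\overline{\alpha}$ applied to $\int_e h$ and recognize the map $\int_e h$ as arising from the wrong-way unit $\mu_e$ of the amenable-type adjunction $e^* \dashv e_!$, then trace everything through the $\bc$-compatibilities for the composable pair $\pt \oto{e} BC_p \oto{r} \pt$. Recall that, by definition, $\int_e h = c_!^e \circ e_!(h) \circ \mu_e$ (in the form of \remref{Integration_Unit}), where $\mu_e\colon \Id \to e_!e^*$ is the wrong-way unit and $c_!^e\colon e_!e^* \to \Id$ the ordinary counit of $e^*\dashv e_!$. First I would write $\overline{\alpha}\left(\int_e h\right)$ using the $\nm_r^{-1}$-form of \defref{Alpha}(1), i.e. as the composition
\[
X \oto{\overline{t}_X} r_*\Theta^p(X) \oto{\nm_r^{-1}} r_!\Theta^p(X) \oto{\int_e h} r_!\Theta^p(Y) \oto{\overline{m}_Y} Y,
\]
and then replace the middle map $\int_e h$ by the composition $r_!\Theta^p(X) \xrightarrow{\mu_e} r_!e_!e^*\Theta^p(X) \xrightarrow{e_!(h)} r_!e_!e^*\Theta^p(Y) \xrightarrow{c_!^e} r_!\Theta^p(Y)$, using \propref{Integral_Ambi} or just the definition of the integral along $e$ whiskered by $r_!$ — noting that $r\circ e \simeq \Id_{\pt}$, so $r_!e_! \simeq \Id$ and $r_!e_!e^*\Theta^p \simeq e^*\Theta^p = (-)^{\otimes p}$.

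Next I would identify the two outer "pieces" of the resulting long composition with $e^*t_X$ and $e^*m_Y$. On the source side, the composition $X \oto{\overline{t}_X} r_*\Theta^p(X) \oto{\nm_r^{-1}} r_!\Theta^p(X) \oto{\mu_e} r_!e_!e^*\Theta^p(X) \simeq e^*\Theta^p(X) = X^{\otimes p}$ should be exactly $e^*t_X$; this is because $t_X\colon r^*X \to \Theta^p(X)$ is the mate of $\overline{t}_X$ under $r^* \dashv r_*$, and precomposing the $r_!$-whiskered $\mu_e$ with $\nm_r^{-1}\circ \overline{t}_X$ implements passage to the mate followed by restriction along $e$ — concretely, $e^*t_X$ is recovered from $\overline{t}_X$ by applying $r_*$, inverting $\nm_r$, and using the counit/unit identities, which is what the composite above does after the canonical identification $r_!e_! \simeq \Id$. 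Dually, $r_!e_!e^*\Theta^p(Y) \oto{c_!^e} r_!\Theta^p(Y) \oto{\overline{m}_Y} Y$ should be $e^*m_Y$ followed by nothing, i.e. equals $e^*m_Y$ under the identification $r_!e_!e^*\Theta^p(Y)\simeq Y^{\otimes p}$, since $m_Y$ is the mate of $\overline{m}_Y$ under $r^*\dashv r_*$ but evaluated through the $r_!$-counit it is simply $\overline{m}_Y\circ c_!^e$, and $e^*m_Y\colon Y^{\otimes p} \to Y$ is precisely this. The middle map $e_!(h)$ becomes $h$ itself under the identification, so the three pieces assemble to $X \oto{e^*t_X} X^{\otimes p} \oto{h} Y^{\otimes p} \oto{e^*m_Y} Y$, as desired.

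The cleanest way to organize all of this is a single commutative diagram with the four-term composition defining $\overline{\alpha}(\int_e h)$ along the top, the three-term composition $e^*t_X, h, e^*m_Y$ along the bottom, and vertical identifications coming from: (i) the definition of the integral along $e$ (\remref{Integration_Unit}); (ii) the compatibility of the $\bc_!$-map with the counit of $e^*\dashv e_!$ (\lemref{BC_Co_Units}); and (iii) the fact that $r e \simeq \Id$ together with \propref{Canonical_Norms_Compatibility}(2) so that $(re)^{\can}$ is the identity normed functor, which pins down the identifications $r_!e_! \simeq \Id$ and $r_*e_* \simeq \Id$ and their compatibility with the norms. The intermediate vertical arrows are all instances of naturality of $\mu_e$, $c_!^e$, and $\nm_r$. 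The main obstacle I anticipate is the bookkeeping in step identifying the source composite with $e^*t_X$: one must be careful that "the mate of $\overline{t}_X$ under $r^*\dashv r_*$, then restricted along $e$" genuinely equals "$\nm_r^{-1}$, then $\mu_e$ whiskered by $r_!$, then the canonical iso $r_!e_!e^* \simeq e^*$" — this requires invoking the definition of $t_X$ as a mate, the zig-zag identities for both $r^*\dashv r_*$ and $e^*\dashv e_!$, and \lemref{Norm_Counit} to see that $\nm_r^{-1}$ converts the right-way unit $u_*^r$ into the wrong-way structure; but this is formal diagram-chasing of the same flavor as \propref{Tensor_Ambi} and \lemref{Alpha_Naturality}, so no genuinely new idea is needed.
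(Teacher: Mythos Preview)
Your approach is correct and follows the same overall strategy as the paper: unwind the integral, plug into the definition of $\overline{\alpha}$, and reduce the two ``outer pieces'' to $e^*t_X$ and $e^*m_Y$ using that $re\simeq\Id$ so the composite normed functor is the identity. The one organizational difference is that you expand $\int_e h$ via the wrong-way unit form $c_!^e\circ e_!(h)\circ\mu_e$ (\remref{Integration_Unit}), whereas the paper uses the original form $c_!^e\circ\nm_e^{-1}\circ e_*(h)\circ u_*^e$. With the paper's choice the diagram separates cleanly: the source-side triangle involves only the right adjoints and reduces to the fact that $r_*u_*^e\circ\overline{t}_X$ is $e^*t_X$ under $r_*e_*=\Id$ (because the composite unit of $\Id=e^*r^*\dashv r_*e_*=\Id$ is the identity); the target-side triangle is the dual statement with $r_!e_!=\Id$; and the norm cancellation $\nm_r^{-1}\circ\nm_e^{-1}=\nm_{re}^{-1}=\Id$ sits in a single middle square. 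In your arrangement the norm cancellation is hidden inside the source-side identification (since $\mu_e=\nm_e^{-1}\circ u_*^e$ and you still have $\nm_r^{-1}$ to absorb), which is why that step feels harder --- it is doing the work of both the paper's left triangle and its middle square at once. Either packaging works; the paper's just isolates the three ingredients more transparently. (One small slip: $m_Y$ is the mate of $\overline{m}_Y$ under $r_!\dashv r^*$, not $r^*\dashv r_*$.)
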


\begin{proof}
Unwinding the definition of the integral, the map $\int\limits _{e}h$
is homotopic to the composition of the following maps 
\[
\Theta^{p}\left(X\right)\oto{u_{*}^{e}}e_{*}e^{*}\Theta^{p}\left(X\right)\oto he_{*}e^{*}\Theta^{p}\left(X\right)\oto{\nm_{e}^{-1}}e_{!}e^{*}\Theta^{p}\left(X\right)\oto{c_{!}^{e}}\Theta^{p}\left(X\right).
\]

Plugging this into the definition of $\overline{\alpha}$, we get
that $\overline{\alpha}\left(\int\limits _{e}h\right)$ equals the
composition along the top and then right path in the following diagram
\[
\xymatrix{X\ar[rrd]_{e^{*}t_{X}}\ar[r]^{\overline{t}_{X}\quad} & r_{*}\Theta^{p}\left(X\right)\ar[r]^{u_{*}^{e}\quad} & r_{*}e_{*}e^{*}\Theta^{p}\left(X\right)\ar@{=}[d]\ar[r]^{h} & r_{*}e_{*}e^{*}\Theta^{p}\left(Y\right)\ar@{=}[d]\ar[r]^{\nm_{e}^{-1}} & r_{*}e_{!}e^{*}\Theta^{p}\left(Y\right)\ar[d]^{\nm_{r}^{-1}}\ar[r]^{c_{!}^{e}} & r_{*}\Theta^{p}\left(Y\right)\ar[d]^{\nm_{r}^{-1}}\\
 &  & e^{*}\Theta^{p}\left(X\right)\ar[r]^{h} & e^{*}\Theta^{p}\left(Y\right)\ar[rrd]_{e^{*}m_{Y}}\ar@{=}[r] & r_{!}e_{!}e^{*}\Theta^{p}\left(Y\right)\ar[r]^{c_{!}^{e}} & r_{!}\Theta^{p}\left(Y\right)\ar[d]^{\overline{m}_{Y}}\\
 &  &  &  &  & Y.
}
\]

We denote this diagram by $\left(*\right)$. The left square commutes
for trivial reasons, the right square by the interchange law and the
middle by 
\[
\nm_{r}^{-1}\circ\nm_{e}^{-1}=\left(\nm_{e}\circ\nm_{r}\right)^{-1}=\left(\nm_{re}\right)^{-1}=\Id.
\]

To see that the left triangle commutes, consider the diagram
\[
\xymatrix@C=3pc{X\ar[rd]_{u_{*}^{r}}\ar[r]^{\overline{t}_{X}\quad} & r_{*}\Theta^{p}\left(X\right)\ar[r]^{u_{*}^{e}\quad} & r_{*}e_{*}e^{*}\Theta^{p}\left(X\right)\\
 & r_{*}r^{*}X\ar[u]_{t_{X}}\ar[r]^{u_{*}^{e}} & r_{*}e_{*}e^{*}r^{*}X\ar@{=}[r]\ar[u]_{t_{X}} & X\ar[lu]_-{e^{*}t_{X}}.
}
\]
The square commutes by naturality, and the left triangle by the definition
of mates. Note that the composition along the bottom path is the unit
of the composed adjunction 
\[
\Id=e^{*}r^{*}\dashv r_{*}e_{*}=\Id,
\]
and hence is the identity map. It follows that the left triangle in
$\left(*\right)$ is commutative. The proof that the right triangle
in $\left(*\right)$ commutes is completely analogous. Thus, $\left(*\right)$
is commutative and $\overline{\alpha}\left(\int\limits _{e}h\right)$
equals the composition along the bottom diagonal path in $\left(*\right)$,
which completes the proof.
\end{proof}
The main property of $\alpha$ is that it satisfies the following
``addition formula''.
\begin{prop}
\label{prop:Alpha_Additvity}Let $\mathcal{C}$ be a $1$-semiadditively
symmetric monoidal $\infty$-category and let 
\[
X\in\cocalg\left(\mathcal{C}\right),\quad Y\in\calg\left(\mathcal{C}\right).
\]
For every $f,g\colon X\to Y$, we have 
\[
\alpha\left(f+g\right)=\alpha\left(f\right)+\alpha\left(g\right)+\frac{\left(f+g\right)^{p}-f^{p}-g^{p}}{p}\quad\in\hom_{h\mathcal{C}}\left(X,Y\right)
\]
(as in \remref{Integer_Coeff_Polynom}, this expression does not actually
involve division by $p$).
\end{prop}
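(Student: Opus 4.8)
The plan is to apply the additive operation $\overline{\alpha}$ to the ``addition formula'' for $\Theta^{p}$ established in \propref{Theta_Additiive}. Recall that by \defref{Alpha}(2) we have $\alpha(f)=\overline{\alpha}(\Theta^{p}(f))$, and that \propref{Theta_Additiive} gives
\[
\Theta^{p}(f+g)=\Theta^{p}(f)+\Theta^{p}(g)+\sum_{\overline{w}\in\overline{S}}\int\limits_{e}w(f,g)\quad\in\hom_{h\fun(BC_{p},\mathcal{C})}(\Theta^{p}X,\Theta^{p}Y).
\]
Since $\overline{\alpha}$ is additive by \lemref{Alpha_Bar_Additive}, applying it to both sides yields $\alpha(f+g)=\alpha(f)+\alpha(g)+\sum_{\overline{w}\in\overline{S}}\overline{\alpha}\bigl(\int_{e}w(f,g)\bigr)$, so it remains to identify the last sum with the polynomial term.

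Next I would evaluate each summand by means of \lemref{Alpha_Induced}: for $h=w(f,g)\colon X^{\otimes p}\to Y^{\otimes p}$, the element $\overline{\alpha}\bigl(\int_{e}w(f,g)\bigr)$ is represented by the composite $X\xrightarrow{e^{*}t_{X}}X^{\otimes p}\xrightarrow{w(f,g)}Y^{\otimes p}\xrightarrow{e^{*}m_{Y}}Y$. Unwinding the definition of $w(-,-)$ from the additivity-of-Theta subsection, $w(f,g)$ is the tensor product $h_{1}\otimes\cdots\otimes h_{p}$ with $h_{i}=f$ when $w_{i}=x$ and $h_{i}=g$ when $w_{i}=y$; since $e^{*}t_{X}$ and $e^{*}m_{Y}$ are the iterated comultiplication and multiplication, the above composite is exactly the $p$-fold product $h_{1}h_{2}\cdots h_{p}$ in the commutative rig $\hom_{h\mathcal{C}}(X,Y)$, which by commutativity equals the monomial $f^{k}g^{p-k}$, where $k=k(w)$ is the number of letters of $w$ equal to $x$. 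This depends only on the orbit $\overline{w}$, consistently with the statement of \propref{Theta_Additiive}.

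Then I would carry out the combinatorial bookkeeping. Because $p$ is prime, the cyclic group $C_{p}$ acts freely on $S=\{w\in\{x,y\}^{p}:w\neq x^{p},y^{p}\}$ (a word fixed by a nontrivial shift is constant), so every orbit in $\overline{S}$ has exactly $p$ elements; hence for each $0<k<p$ the $\binom{p}{k}$ words with $k$ occurrences of $x$ form $\binom{p}{k}/p$ orbits (an integer). Therefore
\[
\sum_{\overline{w}\in\overline{S}}f^{k(\overline{w})}g^{p-k(\overline{w})}=\sum_{k=1}^{p-1}\frac{1}{p}\binom{p}{k}f^{k}g^{p-k}=\frac{(f+g)^{p}-f^{p}-g^{p}}{p},
\]
the right-hand side being understood as the integer-coefficient polynomial of \remref{Integer_Coeff_Polynom}; the last equality is the binomial expansion of $(f+g)^{p}$ in the commutative rig $\hom_{h\mathcal{C}}(X,Y)$. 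Combining the three displays proves the formula.

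The only non-formal point, and hence the main obstacle, is the identification in the second step of $e^{*}m_{Y}\circ w(f,g)\circ e^{*}t_{X}$ with the rig monomial $f^{k}g^{p-k}$: one must check that the iterated (co)multiplications $e^{*}t_{X}$ and $e^{*}m_{Y}$ agree, via (co)associativity, with the inductive definition of the product on $\hom_{h\mathcal{C}}(X,Y)$, and that permuting the tensor factors of $w(f,g)$ by a cyclic shift does not change the resulting element, which uses (co)commutativity. Everything else is straightforward bookkeeping.
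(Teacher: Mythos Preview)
Your proposal is correct and follows essentially the same route as the paper: apply additivity of $\overline{\alpha}$ to \propref{Theta_Additiive}, then use \lemref{Alpha_Induced} to identify each induced term as a monomial in $f$ and $g$. You spell out the orbit-counting combinatorics and the identification with the rig monomial more explicitly than the paper, which simply asserts that the composite is ``by definition $f^{w_x}g^{w_y}$'' and stops there.
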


\begin{proof}
Since $\overline{\alpha}$ is additive (see \lemref{Alpha_Bar_Additive}),
we get by \propref{Theta_Additiive},
\[
\alpha\left(f+g\right)=\overline{\alpha}\left(\Theta^{p}\left(f+g\right)\right)=\overline{\alpha}\left(\Theta^{p}\left(f\right)+\Theta^{p}\left(g\right)+\sum_{\overline{w}\in\overline{S}}\left(\int\limits _{e}w\left(f,g\right)\right)\right)
\]
\[
=\alpha\left(f\right)+\alpha\left(g\right)+\sum_{\overline{w}\in\overline{S}}\overline{\alpha}\left(\int\limits _{e}w\left(f,g\right)\right).
\]
Now, by \lemref{Alpha_Induced}, the map $\overline{\alpha}\left(\int\limits _{e}w\left(f,g\right)\right)$
is homotopic to the composition 
\[
X\oto{e^{*}t_{X}}X^{\otimes p}\oto{w\left(f,g\right)}Y^{\otimes p}\oto{e^{*}m_{Y}}Y.
\]

This is by definition $f^{w_{x}}g^{w_{y}}$, where $w_{x}$ and $w_{y}$
are the number of $x$-s and $y$-s in $w$ respectively and this
completes the proof.
\end{proof}

\subsubsection{Alpha and The Unit}

We shall now apply the above discussion of the operation $\alpha$
to the special case where $X=Y=\one$ is the unit of a symmetric monoidal
$\infty$-category $\mathcal{C}$. The unit $\one\in\mathcal{C}$
has a unique $\mathbb{E}_\infty$-algebra structure and this structure makes
it initial in $\text{CAlg}\left(\mathcal{C}\right)$. The same argument
applied to $\mathcal{C}^{op}$ shows that $\one$ has also a unique
$\mathbb{E}_\infty$-coalgebra structure and it is terminal with respect to
it. 
\begin{defn}
Let $\left(\mathcal{C},\otimes,\one\right)$ be a symmetric monoidal
$\infty$-category. We denote 
\[
R_{\mathcal{C}}=\hom_{h\mathcal{C}}\left(\one,\one\right)
\]
as a commutative monoid. If $\mathcal{C}$ is $0$-semiadditive, then
$R$ is naturally a commutative \emph{rig} and if $\mathcal{C}$ is
stable, then it is a commutative \emph{ring}. Given a symmetric monoidal
functor $F\colon\mathcal{C}\to\mathcal{D},$ the induced map $\varphi:R_{\mathcal{C}}\to R_{\mathcal{D}}$
is a monoid homomorphism. It is also a rig (resp. ring) homomorphism,
when $\mathcal{C}$ and $\mathcal{D}$ are 0-semiadditive (resp. stable)
and $F$ is a $0$-semiadditive functor.
\end{defn}

The goal of this section is to study the operation $\alpha$ on $R_{\mathcal{C}}$.
We begin with a few preliminaries. Recall the notation 
\[
\pt\oto eBC_{p}\oto r\pt.
\]

\begin{lem}
Let $\left(\mathcal{C},\otimes,\one\right)$ be a symmetric monoidal
$\infty$-category. The action of $C_{p}$ on $\one^{\otimes p}\simeq\one$
is trivial. Namely, $\Theta^{p}\left(\one\right)=r^{*}\one$. 
\end{lem}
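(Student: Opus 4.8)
The plan is to exhibit the equivalence $\Theta^{p}(\one)\simeq r^{*}\one$ as a single canonical morphism rather than constructing it by hand. Since $\one$ is the monoidal unit it carries a unique $\bb E_{\infty}$-coalgebra structure, terminal in $\cocalg(\mathcal{C})$; in particular the map $t_{\one}\colon r^{*}\one\to\Theta^{p}(\one)$ from the preamble to \defref{Alpha} is defined as a morphism in $\fun(BC_{p},\mathcal{C})$. As $r^{*}\one$ is by construction $\one$ equipped with the trivial $C_{p}$-action, it is enough to show that $t_{\one}$ is an equivalence; this simultaneously yields $\Theta^{p}(\one)\simeq r^{*}\one$ and the triviality of the $C_{p}$-action on $\one^{\otimes p}$.

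To verify that $t_{\one}$ is an equivalence I would work pointwise. The basepoint $e\colon\pt\to BC_{p}$ is surjective on connected components, so $e^{*}\colon\fun(BC_{p},\mathcal{C})\to\mathcal{C}$ is conservative, and it suffices to see that $e^{*}t_{\one}$ is an equivalence in $\mathcal{C}$. By the description recalled just before \defref{Alpha}, $e^{*}t_{\one}\colon\one=e^{*}r^{*}\one\to e^{*}\Theta^{p}(\one)=\one^{\otimes p}$ is the ordinary ($p$-ary) comultiplication of the $\bb E_{\infty}$-coalgebra $\one$. I would then use that the counit of this coalgebra is $\Id_{\one}$ — dually to the fact that the unit map of the initial $\bb E_{\infty}$-algebra $\one$ is $\Id_{\one}$ — so that the counit axiom identifies the composite of $e^{*}t_{\one}$ with the iterated unit-constraint equivalence $\one^{\otimes p}\iso\one$ as $\Id_{\one}$. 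Hence $e^{*}t_{\one}$ has an equivalence for a one-sided inverse, and is therefore itself an equivalence.

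The only real content is this last point: that the comultiplication of the terminal $\bb E_{\infty}$-coalgebra $\one$ is inverse to the canonical identification $\one^{\otimes p}\simeq\one$. It is standard, but it is the step I would write out with some care; the remaining ingredients (conservativity of $e^{*}$, the identification of $r^{*}\one$ with the trivial action, and $e^{*}r^{*}\simeq\Id$) are purely formal. An alternative, essentially equivalent, route would be to apply the functoriality of $\Theta^{p}$ — the analogue of \lemref{Theta_Functoriality} for the unique symmetric monoidal functor from the terminal symmetric monoidal $\infty$-category into $\mathcal{C}$, which sends the point to $\one$ — and observe that in the terminal category $\Theta^{p}$ is visibly the trivial-action construction, so this transports to $\Theta^{p}(\one)\simeq r^{*}\one$; I would keep the coalgebra argument as the main proof, since it uses only notation already introduced.
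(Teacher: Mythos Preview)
Your argument is correct, but it takes a different route from the paper's. The paper argues on the algebra side: the forgetful functor $U\colon\calg(\mathcal{C})\to\mathcal{C}$ is symmetric monoidal for the cocartesian structure on $\calg(\mathcal{C})$, so the $\Sigma_{k}$-action on $U(A)^{\otimes k}$ is induced from the $\Sigma_{k}$-action on $A^{\sqcup k}$; since $\one$ is initial in $\calg(\mathcal{C})$, any $\Sigma_{k}$-action on it there is trivial, and the claim follows (for all $k$ at once). Your approach instead uses the \emph{coalgebra} structure and the specific equivariant map $t_{\one}\colon r^{*}\one\to\Theta^{p}(\one)$, reducing via conservativity of $e^{*}$ to the underlying comultiplication being an equivalence.

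Each buys something. The paper's argument is slightly more conceptual and yields the $\Sigma_{k}$-statement uniformly, without singling out a particular map. Your argument, on the other hand, proves exactly the statement that is used next: the subsequent lemma (\lemref{Co_Algebra_Unit}) identifies $\overline{t}_{\one}$ and $\overline{m}_{\one}$ with $u_{*}$ and $c_{!}$, and your proof that $t_{\one}$ is an equivalence is essentially the same computation, so the two lemmas merge naturally in your approach. One minor caveat: in the paper $t_{X}$ is introduced as the mate of $\overline{t}_{X}$, which presupposes $r_{*}$; since the present lemma is stated for an arbitrary symmetric monoidal $\mathcal{C}$, you should note that $t_{\one}$ can be obtained directly from the $\bb E_{\infty}$-coalgebra structure as a $C_{p}$-equivariant map, without passing through $r_{*}$.
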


\begin{proof}
We shall show more generally that the action of $\Sigma_{k}$ on $\one^{\otimes k}\simeq\one$
is trivial. The forgetful functor $U\colon\calg\left(\mathcal{C}\right)\to\mathcal{C}$
is symmetric monoidal with respect to the coproduct on $\calg\left(\mathcal{C}\right)$
and $\otimes$ on $\mathcal{C}$ (by \cite[Example 3.2.4.4]{ha} and
\cite[Proposition 3.2.4.7]{ha}). In particular, for every commutative algebra
$A$, the action of $\Sigma_{k}$ on $U\left(A\right)^{\otimes k}$
is induced by the action of $\Sigma_{k}$ on $A^{\sqcup k}$. Since
$\one$ has a canonical commutative algebra structure, and as an object of 
$\calg\left(\mathcal{C}\right)$ it is \emph{initial}
(\cite[Corollary 3.2.1.9]{ha}), any $\Sigma_{k}$ action on it as
a commutative algebra is trivial.
\end{proof}
It follows by the above that $r_{!}\Theta^{p}\left(\one\right)\simeq r_{!}r^{*}\one$
and $r_{*}\Theta^{p}\left(\one\right)\simeq r_{*}r^{*}\one$. 
\begin{lem}
\label{lem:Co_Algebra_Unit}Let $\left(\mathcal{C},\otimes,\one\right)$
be a symmetric monoidal $\infty$-category. The maps 
\[
\overline{t}_{\one}\colon\one\to r_{*}\Theta^{p}\left(\one\right),\quad\overline{m}_{\one}\colon r_{!}\Theta^{p}\left(\one\right)\to\one,
\]
induced from the commutative algebra and coalgebra structures, are
equivalent to the unit and counit maps (respectively)
\[
u_{*}\colon\one\to r_{*}r^{*}\one,\qquad c_{!}\colon r_{!}r^{*}\one\to\one.
\]
\end{lem}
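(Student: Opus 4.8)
I would prove the statement for $\overline{m}_{\one}$ first, and then obtain the statement for $\overline{t}_{\one}$ by passing to the opposite category $\mathcal{C}^{op}$ (which is symmetric monoidal with the same unit), under which $\cocalg(\mathcal{C})$ becomes $\calg(\mathcal{C}^{op})$, the functor $r_*$ is interchanged with $r_!$, the unit $u_*$ of $r^*\dashv r_*$ is interchanged with the counit $c_!$ of $r_!\dashv r^*$, the functor $\Theta^{p}$ is unchanged, and the comultiplication $\overline{t}_{\one}$ becomes the multiplication $\overline{m}_{\one}$ for the unit of $\mathcal{C}^{op}$. So once the first statement is known in full generality, the second follows by applying it in $\mathcal{C}^{op}$.

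For $\overline{m}_{\one}$: by construction $\overline{m}_{\one}\colon r_!\Theta^{p}(\one)\to\one$ is the mate, under the adjunction $r_!\dashv r^{*}$, of $m_{\one}\colon\Theta^{p}(\one)\to r^{*}\one$, and by the preceding lemma $\Theta^{p}(\one)\simeq r^{*}\one$, so $m_{\one}$ is an endomorphism of $r^{*}\one$ in $\fun(BC_{p},\mathcal{C})$. Under the adjunction equivalence $\map_{\mathcal{C}}(r_!r^{*}\one,\one)\simeq\map_{\fun(BC_{p},\mathcal{C})}(r^{*}\one,r^{*}\one)$, the counit $c_!\colon r_!r^{*}\one\to\one$ corresponds to $\Id_{r^{*}\one}$ by a triangle identity; hence it suffices to show $m_{\one}\simeq\Id_{r^{*}\one}$.

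This identification is the \emph{main obstacle}: one cannot simply check it after applying $e^{*}\colon\fun(BC_{p},\mathcal{C})\to\mathcal{C}$, since $e^{*}$ is conservative but does not reflect homotopy classes of maps, so knowing $e^{*}m_{\one}=\Id_{\one}$ is insufficient. Instead I would use the commutative algebra structure, exactly as in the preceding lemma. The map $m_{\one}$ is the $p$-fold multiplication of $\one$ made $C_{p}$-equivariant, i.e. a morphism of $\bb E_{\infty}$-algebras $\Theta^{p}(\one)\to r^{*}\one$ in $\fun(BC_{p},\mathcal{C})$, where $\Theta^{p}(\one)=\one^{\otimes p}$ carries the tensor-power algebra structure with the permutation $C_{p}$-action. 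Since $U\colon\calg(\mathcal{C})\to\mathcal{C}$ is symmetric monoidal (sending coproducts to tensor products), $\Theta^{p}(\one)$ is identified with $U$ of the $p$-fold coproduct of $\one$ in $\calg(\mathcal{C})$, which is again the initial object, and the initial object has contractible endomorphism space in $\calg(\mathcal{C})$; therefore both the $C_{p}$-action and the algebra map $\Theta^{p}(\one)\to r^{*}\one$ are the essentially unique ones, so $m_{\one}\simeq\Id_{r^{*}\one}$ in $\fun(BC_{p},\mathcal{C})$, giving $\overline{m}_{\one}\simeq c_!$.

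To finish, I would write out the $\mathcal{C}^{op}$ bookkeeping sketched above to deduce $\overline{t}_{\one}\simeq u_*$ from the first part; this step is routine.
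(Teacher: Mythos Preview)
Your proposal is correct and follows essentially the same approach as the paper: reduce to showing the mate $m_{\one}\colon \Theta^{p}(\one)\to r^{*}\one$ is the identity, then use that $r^{*}\one$ is initial in $\fun(BC_{p},\calg(\mathcal{C}))$ so its endomorphism space is contractible. The only cosmetic difference is that the paper handles the coalgebra case by saying ``a similar argument applies'' (i.e.\ using terminality of $\one$ in $\cocalg(\mathcal{C})$ directly), whereas you formally pass to $\mathcal{C}^{op}$; these amount to the same thing.
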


\begin{proof}
This is equivalent to showing that the mate (in both cases) is the
identity map $r^{*}\one\to r^{*}\one$. The algebra structure on $\one\in\mathcal{C}$
is induced from the algebra structure on $\underline{\one}\in\calg\left(\mathcal{C}\right)$,
where $\calg\left(\mathcal{C}\right)$ is endowed with the coCartesian
symmetric monoidal structure and in which $\underline{\one}$ is initial
(\cite[Corollary 3.2.1.9]{ha}). Now, the object $r^{*}\underline{\one}$
is initial in $\fun\left(BC_{p},\calg\left(\mathcal{C}\right)\right)$,
and therefore the only map $r^{*}\underline{\one}\to r^{*}\underline{\one}$
is the identity. A similar argument applies for the comultiplication
map.
\end{proof}
As a consequence, we can describe the effect of $\alpha$ on any element
of $R_{\mathcal{C}}$ using the integral operation.
\begin{prop}
\label{prop:Alpha_One}Let $\left(\mathcal{C},\otimes,\one\right)$
be a $1$-semiadditively symmetric monoidal $\infty$-category. For
every $f\in R_{\mathcal{C}}$, we have 
\[
\alpha\left(f\right)=\int\limits _{BC_{p}}\Theta^{p}\left(f\right)\quad\in\mathcal{R}_{\mathcal{C}}.
\]
\end{prop}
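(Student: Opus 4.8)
The plan is to observe that \propref{Alpha_One} is, in the special case $X = Y = \one$, a direct unwinding of \defref{Alpha} once the relevant structure maps on the unit are identified; no genuinely new computation is needed.

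First I would collect the ingredients. Since $\mathcal{C}$ is $1$-semiadditive, the map $r\colon BC_{p}\to\pt$ is $\mathcal{C}$-ambidextrous, so $r^{\can}$ is iso-normed, $\nm_{r}\colon r_{!}\to r_{*}$ is an isomorphism, and the integral $\int_{BC_{p}}=\int_{r^{\can}}$ is defined. By the lemma asserting that the $C_{p}$-action on $\one^{\otimes p}\simeq\one$ is trivial, we have $\Theta^{p}(\one)\simeq r^{*}\one$, hence $r_{*}\Theta^{p}(\one)\simeq r_{*}r^{*}\one$ and $r_{!}\Theta^{p}(\one)\simeq r_{!}r^{*}\one$; and by \lemref{Co_Algebra_Unit}, under these identifications the comultiplication $\overline{t}_{\one}\colon\one\to r_{*}\Theta^{p}(\one)$ and multiplication $\overline{m}_{\one}\colon r_{!}\Theta^{p}(\one)\to\one$ coming from the (unique) $\bb E_{\infty}$-coalgebra and $\bb E_{\infty}$-algebra structures on $\one$ become, respectively, the unit $u_{*}^{r}$ and counit $c_{!}^{r}$ of the adjunctions $r^{*}\dashv r_{*}$ and $r_{!}\dashv r^{*}$.

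Then I would trace through \defref{Alpha}: for $f\in R_{\mathcal{C}}$ we have $\alpha(f)=\overline{\alpha}(\Theta^{p}(f))$, and $\overline{\alpha}(\Theta^{p}(f))$ is, by the commuting square in \defref{Alpha} (taking the path through $r_{*}\Theta^{p}(Y)$), the composite
\[
\overline{m}_{\one}\circ\nm_{r}^{-1}\circ r_{*}\bigl(\Theta^{p}(f)\bigr)\circ\overline{t}_{\one}\colon\one\to\one .
\]
Substituting $\overline{t}_{\one}=u_{*}^{r}$, $\overline{m}_{\one}=c_{!}^{r}$ and $\Theta^{p}(\one)=r^{*}\one$ turns this into $c_{!}^{r}\circ\nm_{r}^{-1}\circ r_{*}(\Theta^{p}(f))\circ u_{*}^{r}$, which is verbatim the definition of the integral $\int_{r^{\can}}\Theta^{p}(f)=\int_{BC_{p}}\Theta^{p}(f)$ of the iso-normed functor $r^{\can}$ (equivalently, one recognizes $\nm_{r}^{-1}\circ u_{*}^{r}$ as the wrong-way unit $\mu_{r}$ and applies \remref{Integration_Unit}). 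This proves the proposition.

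The argument involves no real obstacle: the only point that demands care is checking that the equivalence $\Theta^{p}(\one)\simeq r^{*}\one$ is compatible with the (co)algebra structure maps occurring in $\overline{\alpha}$ — i.e. that the comultiplication and multiplication on $\one$ are the trivial ones — which is precisely the content of the two cited lemmas, ultimately because $\one$ is initial in $\calg(\mathcal{C})$ and terminal in $\cocalg(\mathcal{C})$.
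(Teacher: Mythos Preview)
Your proof is correct and follows essentially the same approach as the paper: unwind \defref{Alpha} for $X=Y=\one$, use the triviality of the $C_p$-action on $\one^{\otimes p}$ to identify $\Theta^p(\one)\simeq r^*\one$, and invoke \lemref{Co_Algebra_Unit} to identify $\overline{t}_\one,\overline{m}_\one$ with $u_*^r,c_!^r$, whereupon $\overline{\alpha}(-)$ becomes literally the definition of $\int_{BC_p}(-)$. The paper's proof is the same argument, just stated more tersely.
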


\begin{proof}
Unwinding the definition of $\overline{\alpha}$ (\defref{Alpha})
in this case and using \lemref{Co_Algebra_Unit}, we get $\overline{\alpha}\left(-\right)=\int\limits _{BC_{p}}\left(-\right)$.
Hence, 
\[
\alpha\left(f\right)=\overline{\alpha}\left(\Theta^{p}\left(f\right)\right)=
\int\limits _{BC_{p}}\Theta^{p}\left(f\right).
\]
\end{proof}
In particular, we get an explicit formula for the operation $\alpha$
on elements of the form $|A|\in\mathcal{R}_{\mathcal{C}}$.
\begin{thm}
\label{thm:Alpha_Box} Let $\mathcal{C}$ be an $m$-semiadditively
symmetric monoidal $\infty$-category for $m\ge1$. For every $m$-finite
space $A$, we have 
\[
\alpha\left(|A|\right)=|A\wr C_{p}|\quad\in\mathcal{R}_{\mathcal{C}}.
\]
\end{thm}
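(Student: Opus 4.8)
The plan is to unwind the definition of $\alpha$ on the element $|A| \in \mathcal{R}_{\mathcal{C}}$ by combining \propref{Alpha_One} with \thmref{Theta_Integral}, and then to identify the wreath-product space that comes out. First, recall that $|A| = \int_{A} \Id_{\one_A}$ is by definition the image under the integration map (along $a \colon A \to \pt$) of the identity local system, and that $|A|$ may be regarded equally as a self-map of $\one_{\mathcal{C}}$ (by \lemref{Box_Unit}) or as the endomorphism $|A|$ of $\Id_{\mathcal{C}}$. By \propref{Alpha_One}, we have $\alpha(|A|) = \int_{BC_p} \Theta^p(|A|)$. So the task reduces to computing $\Theta^p(|A|)$ and then integrating it over $BC_p$.

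The key step is to apply \thmref{Theta_Integral} to the map $a \colon A \to \pt$ and the morphism $f = \Id_{\one} \colon \one = a^*\one \to a^*\one$ wait — more precisely, to the map $a \colon A \to \pt$ and the map $f \colon a^* \one_{\mathcal C} \to a^*\one_{\mathcal C}$ given by the identity, whose integral over $a$ is $|A|$. Since $\mathcal{C}$ is $m$-semiadditively symmetric monoidal with $m \ge 1$, \thmref{Theta_Integral} gives
\[
\Theta^p_{\pt}\!\left(\int_{a} \Id\right) = \int_{a \wr C_p} \Theta^p_A(\Id) \quad \in \hom_{h\fun(\pt \wr C_p, \mathcal{C})}(\Theta^p\one, \Theta^p\one),
\]
and $\pt \wr C_p = BC_p$. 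Now $\Theta^p_A$ applied to an identity map is again (essentially) an identity map — $\Theta^p$ is a functor, so it takes $\Id_{a^*\one}$ to $\Id_{\Theta^p(a^*\one)}$ — and using the identification $\Theta^p(\one_{\mathcal{C}}) \simeq r^*\one_{\mathcal{C}}$ from the lemma preceding \lemref{Co_Algebra_Unit}, the right-hand side becomes $\int_{a \wr C_p} \Id$, i.e. it is precisely the element $|a \wr C_p|$ evaluated at $r^*\one$, where $a \wr C_p \colon A \wr C_p \to BC_p$ is the map induced by $a$. Applying \propref{Alpha_One}, we then get
\[
\alpha(|A|) = \int_{BC_p} \Theta^p(|A|) = \int_{BC_p}\left(|a\wr C_p|_{r^*\one}\right) = \int_{BC_p} \int_{a \wr C_p} \Id,
\]
and by Fubini (\propref{Fubini}), integrating first over $a \wr C_p$ and then over $r \colon BC_p \to \pt$ composes to integration over the composite $A \wr C_p \to BC_p \to \pt$, which is just the map $A \wr C_p \to \pt$; hence the whole thing equals $|A \wr C_p|$. (One must also note $A \wr C_p$ is $m$-finite since $A$ is and $m \ge 1$, so all these integrals make sense; this is recorded after \defref{Theta}.)

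The main obstacle I anticipate is purely bookkeeping: making the identifications $\Theta^p(\one) \simeq r^*\one$, $\pt \wr C_p \simeq BC_p$, and the identification of $\Theta^p_A(\Id_{a^*\one})$ with the canonical identity on $\Theta^p(a^* \one) \simeq (a \wr C_p)^* r^* \one$ compatible with the commutative square defining the $\Theta^p$-square of $a$, so that the integral $\int_{a \wr C_p}$ produced by \thmref{Theta_Integral} is genuinely the endomorphism $|a \wr C_p|$ in the sense of \corref{Distributivity}/\propref{Base_Change_Ambi} and not merely something abstractly equal to it. Concretely, one should check that under $\Theta^p(\one) \simeq r^*\one$ the $\Theta^p$-square of $a \colon A \to \pt$ gets identified with the $(F,q)$-type square associated to the map $a \wr C_p$ and the constant functor $\one$, so that the two notions of ``$\int$ of the identity'' coincide; this is where \lemref{Co_Algebra_Unit} and the preceding lemma (triviality of the $\Sigma_p$-action on $\one^{\otimes p}$) do the real work. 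Once that compatibility is in place, the chain of equalities above — $\alpha(|A|) = \int_{BC_p}\Theta^p(|A|) \stackrel{\text{\ref{thm:Theta_Integral}}}{=} \int_{BC_p}\int_{a\wr C_p}\Id \stackrel{\text{\ref{prop:Fubini}}}{=} |A \wr C_p|$ — is routine.
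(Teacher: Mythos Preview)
Your proposal is correct and follows essentially the same route as the paper: apply \propref{Alpha_One} to write $\alpha(|A|)=\int_{BC_p}\Theta^p(|A|)$, use \thmref{Theta_Integral} on $|A|=\int_q \Id_{\one}$ (with $q\colon A\to\pt$) to rewrite this as $\int_r\int_{q\wr C_p}\Theta^p(\Id_{\one})$, observe $\Theta^p(\Id_{\one})=\Id$, and conclude via Fubini (\propref{Fubini}) that this equals $|A\wr C_p|$. The bookkeeping concerns you raise are legitimate but minor, and the paper handles them implicitly in exactly the way you outline.
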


\begin{proof}
Consider the maps
\[
q\colon A\to\pt,\quad\pi=q\wr C_{p}\colon A\wr C_{p}\to BC_{p},\quad r\colon BC_{p}\to\pt.
\]

By definition of $\alpha$, \propref{Alpha_One}, the definition of
$|A|$, the ambidexterity of the $\Theta^{p}$-square (\thmref{Theta_Integral})
and Fubini's Theorem (\propref{Fubini}) (in that order) we have
\[
\alpha\left(|A|\right)=\overline{\alpha}\left(\Theta^{p}\left(|A|\right)\right)=\int\limits _{r}\Theta^{p}\left(|A|\right)=\int\limits _{r}\Theta^{p}\left(\int\limits _{q}\Id_{\one}\right)=\int\limits _{r}\int\limits _{\pi}\Theta^{p}\left(\Id_{\one}\right)=\int\limits _{r\pi}\Id_{\one}=|A\wr C_{p}|.
\]
\end{proof}
As a consequence, we can identify the action of $\alpha$ on the identity
element of the rig $\hom_{h\mathcal{C}}\left(X,Y\right)$, for any
$X\in\cocalg\left(\mathcal{C}\right)$ and $Y\in\calg\left(\mathcal{C}\right)$.
\begin{lem}
\label{lem:Alpha_Normalization}Let $\mathcal{C}$ be a $1$-semiadditively
symmetric monoidal $\infty$-category and let 
\[
X\in\cocalg\left(\mathcal{C}\right),\quad Y\in\calg\left(\mathcal{C}\right).
\]
Denoting $\mathcal{R}=\hom_{h\mathcal{C}}\left(X,Y\right)$, we have
\[
\alpha\left(1_{\mathcal{R}}\right)=|BC_{p}|\circ 1_{\mathcal{R}}\quad\in\mathcal{R},
\]
where $1_{\mathcal{R}}\in\mathcal{R}$ is the multiplicative unit element.
\end{lem}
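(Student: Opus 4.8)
The plan is to reduce the claim to \thmref{Alpha_Box}, which computes $\alpha$ on classes of the form $|A|$, by exploiting the naturality of $\alpha$ under morphisms of (co)algebras.

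First I would identify the multiplicative unit of the convolution rig $\mathcal{R}=\hom_{h\mathcal{C}}\left(X,Y\right)$: it is $1_{\mathcal{R}}=\eta_{Y}\circ\epsilon_{X}$, the composition $X\xrightarrow{\epsilon_{X}}\one\xrightarrow{\eta_{Y}}Y$ of the counit $\epsilon_{X}$ of the coalgebra $X$ with the unit $\eta_{Y}$ of the algebra $Y$; this is verified exactly as for ordinary convolution algebras, using the (co)unit axioms. Since $\one$ is terminal in $\cocalg\left(\mathcal{C}\right)$ and initial in $\calg\left(\mathcal{C}\right)$, the maps $\epsilon_{X}$ and $\eta_{Y}$ are morphisms of coalgebras and of algebras respectively, so \lemref{Alpha_Naturality} (applied with $h=\epsilon_{X}$, $g=\eta_{Y}$, $f=\Id_{\one}$) gives $\alpha\left(1_{\mathcal{R}}\right)=\eta_{Y}\circ\alpha\left(\Id_{\one}\right)\circ\epsilon_{X}$, where $\alpha\left(\Id_{\one}\right)$ is now an element of $R_{\mathcal{C}}=\hom_{h\mathcal{C}}\left(\one,\one\right)$.

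Next I would evaluate $\alpha\left(\Id_{\one}\right)$. Since the canonical norm along the identity map $\pt\to\pt$ is the identity, $\Id_{\one}=|\pt|$ in $R_{\mathcal{C}}$; moreover $\pt$ is $m$-finite for every $m$, and $\pt\wr C_{p}=\left(\pt^{p}\right)_{hC_{p}}=BC_{p}$. Hence \thmref{Alpha_Box} yields $\alpha\left(\Id_{\one}\right)=\alpha\left(|\pt|\right)=|\pt\wr C_{p}|=|BC_{p}|$. (Alternatively, \propref{Alpha_One} together with the triviality of the $C_{p}$-action on $\one^{\otimes p}\simeq\one$ gives $\Theta^{p}\left(\Id_{\one}\right)=\Id_{r^{*}\one}$ and $\int_{BC_{p}}\Id_{r^{*}\one}=|BC_{p}|$.)

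Finally I would reassemble: $\alpha\left(1_{\mathcal{R}}\right)=\eta_{Y}\circ|BC_{p}|_{\one}\circ\epsilon_{X}$, and applying the naturality of the natural transformation $|BC_{p}|\colon\Id_{\mathcal{C}}\to\Id_{\mathcal{C}}$ to $\eta_{Y}\colon\one\to Y$ turns this into $|BC_{p}|_{Y}\circ\eta_{Y}\circ\epsilon_{X}=|BC_{p}|_{Y}\circ 1_{\mathcal{R}}$, i.e.\ $|BC_{p}|\circ 1_{\mathcal{R}}$ (the action of $|BC_{p}|\in R_{\mathcal{C}}$ on $1_{\mathcal{R}}$, equivalently post-composition with the component at $Y$). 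I do not expect a substantial obstacle here; the only care needed is the identification $1_{\mathcal{R}}=\eta_{Y}\circ\epsilon_{X}$ and the bookkeeping of the object at which $|BC_{p}|$ is evaluated, both dispatched by the two naturality statements above.
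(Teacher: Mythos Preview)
Your proof is correct and follows essentially the same argument as the paper: factor $1_{\mathcal{R}}$ through $\one$, apply \lemref{Alpha_Naturality} to reduce to computing $\alpha(\Id_{\one})$ in $R_{\mathcal{C}}$, identify $\Id_{\one}=|\pt|$ and invoke \thmref{Alpha_Box} with $\pt\wr C_{p}=BC_{p}$, then use naturality of $|BC_{p}|$ to finish. The paper's proof is the same line-for-line (with $x,y$ in place of your $\epsilon_X,\eta_Y$).
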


\begin{proof}
The map $1_{\mathcal{R}}\colon X\to Y$ is the composition of the canonical
maps $X\oto x\one\oto yY$, encoding the counit and unit of the coalgebra
and algebra structures of $X$ and $Y$ respectively. The maps $x$
and $y$ are naturally maps of commutative coalgebras and commutative
algebras respectively. By \lemref{Alpha_Naturality}, we have 
\[
\alpha\left(1_{\mathcal{R}}\right)=\alpha\left(y\circ1\circ x\right)=y\circ\alpha\left(1\right)\circ x,
\]
where $1\in \mathcal{R}_{\mathcal{C}}$ is the multiplicative unit element. Observing that $1=|\pt|$ and using \thmref{Alpha_Box},
we get (we can commute $|BC_p|$ because it is a natural transformation) 
\[
y\circ\alpha\left(1\right)\circ x=y\circ\alpha\left(|\pt|\right)\circ x=y\circ|BC_{p}|\circ x=|BC_{p}|\circ y\circ x=|BC_{p}|\circ1_{\mathcal{R}}.
\]
\end{proof}

\subsection{Higher Semiadditivity and Stability}

In this section, we specialize to the \emph{stable} case. Using the
operation $\alpha$ and stability, we construct additive $p$-derivations
and use their properties to formulate a general detection principle
for higher semiadditivity. 

\subsubsection{Stability and Additive $p$-Derivations}
\begin{defn}
\label{def:Delta_Semi_Add}Let $\mathcal{C}$ be a \emph{stable} $1$-semiadditively
symmetric monoidal $\infty$-category with
\[
X\in\cocalg\left(\mathcal{C}\right),\quad Y\in\calg\left(\mathcal{C}\right),
\]
and so $R=\hom_{h\mathcal{C}}\left(X,Y\right)$ is a commutative \emph{ring}.
We define an operation $\delta\colon R\to R$ by 
\[
\delta\left(f\right)=|BC_{p}|f-\alpha\left(f\right),
\]
 for every $f\in R$. In particular, this applies to $\mathcal{R}_{\mathcal{C}}=\hom_{h\mathcal{C}}\left(\one,\one\right)$.
\end{defn}

\begin{thm}
\label{thm:Delta_Semi_Add}Let $\mathcal{C}$ be a stable $1$-semiadditively
symmetric monoidal $\infty$-category with 
\[
X\in\cocalg\left(\mathcal{C}\right),\quad Y\in\calg\left(\mathcal{C}\right).
\]
The operation $\delta$ from \defref{Delta_Semi_Add} is an additive
$p$-derivation on $R=\hom_{h\mathcal{C}}\left(X,Y\right)$.
\end{thm}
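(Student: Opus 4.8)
The plan is to verify directly the two axioms of an additive $p$-derivation from \defref{Delta} for the operation $\delta(f)=|BC_p|f-\alpha(f)$ introduced in \defref{Delta_Semi_Add}, feeding in the two facts about $\alpha$ already established: the addition formula $\alpha(f+g)=\alpha(f)+\alpha(g)+\tfrac{(f+g)^{p}-f^{p}-g^{p}}{p}$ (\propref{Alpha_Additvity}, which itself rests on \propref{Theta_Additiive} and \lemref{Alpha_Induced}), and the normalization $\alpha(1_R)=|BC_p|\circ 1_R$ (\lemref{Alpha_Normalization}). All the conceptual content sits in those earlier results; what remains is formal bookkeeping in the commutative ring $R=\hom_{h\mathcal{C}}(X,Y)$, whose ring structure is available precisely because $X\in\cocalg(\mathcal{C})$, $Y\in\calg(\mathcal{C})$, and $\mathcal{C}$ is stable.

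For additivity I would start from $\delta(f+g)=|BC_p|(f+g)-\alpha(f+g)$. Since $\mathcal{C}$ is in particular semiadditive, post-composition with the fixed morphism $|BC_p|_Y$ is additive, so $|BC_p|(f+g)=|BC_p|f+|BC_p|g$. Substituting \propref{Alpha_Additvity} and rearranging yields
\[
\delta(f+g)=\delta(f)+\delta(g)+\frac{f^{p}+g^{p}-(f+g)^{p}}{p},
\]
which is exactly axiom (1) of \defref{Delta}; here the fraction denotes the integer-coefficient polynomial of \remref{Integer_Coeff_Polynom}, so no division is actually performed.

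For normalization, $\delta(1_R)=|BC_p|\cdot 1_R-\alpha(1_R)$, and \lemref{Alpha_Normalization} gives $\alpha(1_R)=|BC_p|\circ 1_R=|BC_p|\cdot 1_R$, whence $\delta(1_R)=0_R$. For $\delta(0_R)$ I would apply the additivity just proved (or \propref{Alpha_Additvity} directly) with $f=g=0_R$, obtaining $\delta(0_R)=\delta(0_R)+\delta(0_R)+0_R$; since $R$ is an abelian group — this is where stability is used — it follows that $\delta(0_R)=0_R$. Equivalently, $\alpha(0_R)=2\alpha(0_R)$ forces $\alpha(0_R)=0_R$.

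I do not expect a genuine obstacle. The only point to be careful about is the reading of $|BC_p|f$: it denotes the composite $|BC_p|_Y\circ f\colon X\to Y$, equivalently the image of $|BC_p|\in\mathcal{R}_{\mathcal{C}}$ under multiplication by $f$ in $R$, and its compatibility with \lemref{Alpha_Normalization} — where the analogous term appears as $|BC_p|\circ 1_R$ — follows from naturality of the transformation $|BC_p|\colon\Id_{\mathcal{C}}\to\Id_{\mathcal{C}}$ together with the factorization $1_R\colon X\to\one\to Y$. Once this identification is in place, both axioms fall out of the manipulations above.
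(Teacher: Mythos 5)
Your proposal is correct and is essentially the paper's proof: the paper simply cites \propref{Alpha_Additvity} for the additivity axiom and \lemref{Alpha_Normalization} for the normalization, and your sign bookkeeping (including the observation that $\delta(0)=0$ follows formally from additivity, which the paper itself notes in a remark after \defref{Delta}) fills in exactly the intended details.
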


\begin{proof}
The additivity condition follows from \propref{Alpha_Additvity} and
the normalization follows from \lemref{Alpha_Normalization}. 
\end{proof}
The additive $p$-derivation of \thmref{Delta_Semi_Add} is natural
in the following sense.
\begin{prop}
\label{prop:Delta_Semi_Add_Naturality} Let $\mathcal{C}$ be a stable
$1$-semiadditively symmetric monoidal $\infty$-category with
\[
X,X'\in\cocalg\left(\mathcal{C}\right),\qquad Y,Y'\in\calg\left(\mathcal{C}\right).
\]
Given maps $g\colon Y\to Y'$ and $h\colon X'\to X$ of commutative
algebras and coalgebras respectively, the function
\[
g\circ(-)\circ h\colon\hom_{h\mathcal{C}}\left(X,Y\right)\to\hom_{h\mathcal{C}}\left(X',Y'\right)
\]
is a homomorphism of semi-$\delta$-rings.
\end{prop}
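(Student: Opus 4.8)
The plan is to reduce the statement to the naturality properties already established for the operation $\alpha$. By \thmref{Delta_Semi_Add}, the operation $\delta$ is an additive $p$-derivation on both $R=\hom_{h\mathcal{C}}(X,Y)$ and $R'=\hom_{h\mathcal{C}}(X',Y')$, so it only remains to check that the ring homomorphism
\[
\Phi:=g\circ(-)\circ h\colon R\to R'
\]
intertwines the two $\delta$'s, i.e. $\Phi\circ\delta=\delta\circ\Phi$. First I would note that $\Phi$ really is a ring homomorphism: the coalgebra map $h$ and the algebra map $g$ are compatible with the comultiplication on $X'$ and multiplication on $Y'$, which is exactly what is needed for $g\circ(f_1\star f_2)\circ h=(g\circ f_1\circ h)\star(g\circ f_2\circ h)$ where $\star$ denotes the convolution product defining the ring structure on $\hom$-sets; and $\Phi$ is clearly additive since composition is bilinear in a stable (hence additive) $\infty$-category. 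It also sends $1_R$ to $1_{R'}$ because $1_R\colon X\to Y$ factors through $\one$ via the canonical counit/unit maps, and $h$, $g$ respect these.

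Next, recall from \defref{Delta_Semi_Add} that $\delta(f)=|BC_p|\,f-\alpha(f)$. Applying $\Phi$ and using additivity of $\Phi$, I get
\[
\Phi(\delta(f))=\Phi(|BC_p|\,f)-\Phi(\alpha(f)).
\]
For the first term, $|BC_p|$ is a natural transformation of the identity functor, so it commutes past $g$ and $h$: $g\circ(|BC_p|_X\circ f)\circ h=|BC_p|_{X'}\circ(g\circ f\circ h)=|BC_p|\,\Phi(f)$. For the second term, \lemref{Alpha_Naturality} gives precisely $\alpha(g\circ f\circ h)=g\circ\alpha(f)\circ h$, that is $\Phi(\alpha(f))=\alpha(\Phi(f))$. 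Combining,
\[
\Phi(\delta(f))=|BC_p|\,\Phi(f)-\alpha(\Phi(f))=\delta(\Phi(f)),
\]
which is the desired identity. Hence $\Phi$ is a homomorphism of semi-$\delta$-rings.

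I do not expect a serious obstacle here; the proof is essentially bookkeeping assembling \lemref{Alpha_Naturality} with the definition of $\delta$ and the naturality of $|BC_p|$. The only point requiring a little care is the verification that $g\circ(-)\circ h$ is multiplicative for the convolution ring structure on $\hom$-sets — this uses that $h$ is a morphism of $\bb E_\infty$-coalgebras (equivalently $H_\infty$-coalgebras, cf. \remref{H_Infty}) and $g$ a morphism of $\bb E_\infty$-algebras, so that both the comultiplication square and the multiplication square commute up to the coherence data packaged in $h$ and $g$. Once that is in place, the computation above closes the argument.
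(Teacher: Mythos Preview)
Your proposal is correct and follows essentially the same approach as the paper: the paper's proof is the one-line remark ``This follows from \lemref{Alpha_Naturality} and naturality of $|BC_{p}|$,'' and you have simply unwound this in detail, verifying the ring-homomorphism property along the way.
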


\begin{proof}
This follows from \lemref{Alpha_Naturality} and naturality of $|BC_{p}|$.
\end{proof}
The additive $p$-derivation of \thmref{Delta_Semi_Add} is also functorial
in the following sense.
\begin{prop}
\label{prop:Delta_Semi_Add_Functoriality}Let $F\colon\mathcal{C}\to\mathcal{D}$
be a symmetric monoidal $1$-semiadditive functor between \emph{stable}
$1$-semiadditively symmetric monoidal $\infty$-categories. Given
\[
X\in\cocalg\left(\mathcal{C}\right),\quad Y\in\calg\left(\mathcal{C}\right),
\]
the map 
\[
F\colon\hom_{h\mathcal{C}}\left(X,Y\right)\to\hom_{h\mathcal{D}}\left(FX,FY\right),
\]
is a homomorphism of semi-$\delta$-rings.
\end{prop}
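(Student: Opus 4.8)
The plan is to reduce everything to facts already established about the operation $\alpha$ and about the natural transformation $|BC_p|$. Recall from \defref{Delta_Semi_Add} that $\delta(f) = |BC_p| f - \alpha(f)$, so a map of rings that commutes with $\alpha$ and with multiplication by $|BC_p|$ automatically commutes with $\delta$, hence is a semi-$\delta$-ring homomorphism (the normalization $\delta(0)=\delta(1)=0$ is preserved by any such map since it is a ring homomorphism and $\delta$ is already known to be an additive $p$-derivation by \thmref{Delta_Semi_Add}). So the whole content is to verify these two compatibilities for the map $F\colon\hom_{h\mathcal{C}}(X,Y)\to\hom_{h\mathcal{D}}(FX,FY)$.

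First I would note that since $F$ is symmetric monoidal, it sends $X\in\cocalg(\mathcal{C})$ to $FX\in\cocalg(\mathcal{D})$ and $Y\in\calg(\mathcal{C})$ to $FY\in\calg(\mathcal{D})$, so the target ring structure on $\hom_{h\mathcal{D}}(FX,FY)$ is defined and $F$ is a ring homomorphism (it respects the comultiplication–multiplication formula for the product because it is symmetric monoidal, and it respects addition because a symmetric monoidal functor between stable $\infty$-categories with the tensor distributing over finite coproducts is automatically additive). The compatibility with $\alpha$ is precisely \lemref{Alpha_Functoriality}, which applies since $F$ is by hypothesis a $1$-semiadditive symmetric monoidal functor between stable $1$-semiadditively symmetric monoidal $\infty$-categories. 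For the compatibility with multiplication by $|BC_p|$: the element $|BC_p|_{\mathcal{C}}\in\mathcal{R}_{\mathcal{C}}$ acts on $\hom_{h\mathcal{C}}(X,Y)$ via the natural endomorphism $|BC_p|$ of $\Id_{\mathcal{C}}$ (it is the component at $\one_{\mathcal{C}}$, but multiplication in $\hom_{h\mathcal{C}}(X,Y)$ by a scalar from $\mathcal{R}_{\mathcal{C}}$ is realized by whiskering with the natural transformation, using naturality of $|BC_p|$ and the coalgebra/algebra structure maps, exactly as in the proof of \lemref{Alpha_Normalization}). Then $F(|BC_p|_{\mathcal{C}}\cdot f) = |BC_p|_{\mathcal{D}}\cdot F(f)$ follows from \corref{Integral_Functor} applied to the $1$-finite space $BC_p$, which gives $F(|BC_p|_{X}) = |BC_p|_{F(X)}$, combined with the naturality of both $|BC_p|$ transformations with respect to the maps encoding the (co)algebra structures — so $F$ intertwines the two module structures over $\mathcal{R}_{(-)}$ along the ring map $\mathcal{R}_{\mathcal{C}}\to\mathcal{R}_{\mathcal{D}}$, which sends $|BC_p|_{\mathcal{C}}$ to $|BC_p|_{\mathcal{D}}$.

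Putting these together: for $f\in\hom_{h\mathcal{C}}(X,Y)$,
\[
F(\delta(f)) = F(|BC_p| f) - F(\alpha(f)) = |BC_p| F(f) - \alpha(F(f)) = \delta(F(f)),
\]
so $F$ is a semi-$\delta$-ring homomorphism. The only step requiring genuine care is the compatibility of $F$ with multiplication by $|BC_p|$, since one must be careful to phrase ``multiplication by the scalar $|BC_p|$'' in terms of whiskering by the natural transformation $|BC_p|\colon\Id\to\Id$ and then invoke both \corref{Integral_Functor} and the naturality of $|BC_p|$; but this is essentially the computation already carried out in \lemref{Alpha_Normalization}, so no real obstacle remains. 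I expect the write-up to be short, consisting of the two citations to \lemref{Alpha_Functoriality} and \corref{Integral_Functor} plus the one-line display above.
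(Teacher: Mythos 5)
Your proposal is correct and follows exactly the paper's argument: the paper's proof is precisely the combination of \lemref{Alpha_Functoriality} (compatibility with $\alpha$), \corref{Integral_Functor} (compatibility with multiplication by $|BC_p|$), and ordinary additivity. Your additional care about realizing multiplication by the scalar $|BC_p|$ via whiskering the natural transformation is a reasonable elaboration of a point the paper leaves implicit, but does not change the route.
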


\begin{proof}
By \lemref{Alpha_Functoriality}, $F$ preserves $\alpha$,
and by \corref{Integral_Functor}, $F$ preserves multiplication
by $|BC_{p}|$. Combined with ordinary additivity, it follows
that $F$ preserves $\delta$.
\end{proof}
The theory of $p$-local semi-$\delta$-rings has the following consequence
for stable, $p$-local, $1$-semiadditive, symmetric monoidal  $\infty$-categories.
\begin{cor}
\label{cor:Semiadd_Torsion_Nilpotent}Let $\mathcal{C}$ be a stable,
$p$-local, $1$-semiadditively symmetric monoidal $\infty$-category
with
\[
X\in\cocalg\left(\mathcal{C}\right),\qquad Y\in\calg\left(\mathcal{C}\right),
\]
and consider the commutative ring $R=\hom_{h\mathcal{C}}\left(X,Y\right)$.
Every torsion element of $R$ is nilpotent. In particular, if $\bb Q\otimes R=0$,
then $R=0$.
\end{cor}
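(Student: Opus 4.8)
The plan is to deduce this corollary by combining the additive $p$-derivation just constructed with the algebraic input about $p$-local semi-$\delta$-rings established in \subsecref{Higher_Semiadditivity} (more precisely in the section on additive $p$-derivations). First I would invoke \thmref{Delta_Semi_Add}: the operation $\delta$ of \defref{Delta_Semi_Add} makes $R=\hom_{h\mathcal{C}}\left(X,Y\right)$ into a semi-$\delta$-ring. Then I would observe that $p$-locality of $\mathcal{C}$ passes to $R$: by definition of a $p$-local stable $\infty$-category the abelian group $R=\pi_{0}\map_{\mathcal{C}}\left(X,Y\right)$ is $p$-local, so $\left(R,\delta\right)$ is a $p$-local semi-$\delta$-ring. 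At this point \propref{Delta_Torsion_Nilpotent} applies directly and yields that every torsion element of $R$ is nilpotent, which is the first assertion.

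For the ``in particular'' clause, I would argue that the hypothesis $\bb Q\otimes R=0$ forces \emph{every} element of $R$ to be torsion. Indeed, for $x\in R$ the image $1\otimes x$ lives in $\bb Q\otimes R=\colim\left(R\oto{2}R\oto{3}R\to\cdots\right)$ (colimit over the positive integers ordered by divisibility, transition maps being multiplication), so $1\otimes x=0$ precisely when $nx=0$ for some $n\ge 1$; hence $\bb Q\otimes R=0$ means $R$ is a torsion abelian group. Applying the first part, every element of $R$ is then nilpotent; specializing to the multiplicative unit gives $1_{R}$ nilpotent, hence $1_{R}=0$ and $R=0$.

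I do not anticipate any genuine obstacle here. The statement is essentially a reformulation of \propref{Delta_Torsion_Nilpotent} transported through \thmref{Delta_Semi_Add}, and the only point requiring a word of justification is the (immediate) fact that $p$-locality of $\mathcal{C}$ descends to $p$-locality of the hom-ring $R$; everything else is the formal manipulation with torsion and colimits sketched above.
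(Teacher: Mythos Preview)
Your proposal is correct and follows essentially the same approach as the paper: invoke \thmref{Delta_Semi_Add} to equip $R$ with an additive $p$-derivation, note that $R$ is $p$-local, and then apply \propref{Delta_Torsion_Nilpotent}; the ``in particular'' clause is handled by observing that $\bb Q\otimes R=0$ makes $1\in R$ torsion and hence nilpotent. The paper's proof is just a terser version of exactly this argument.
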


\begin{proof}
The commutative ring $R$ is $p$-local and admits an additive $p$-derivation
by \thmref{Delta_Semi_Add}, and so the result follows by \propref{Delta_Torsion_Nilpotent}.
The last claim follows by considering the element $1\in R$.
\end{proof}

\subsubsection{Detection Principle for Higher Semiadditivity}

We now formulate the main detection principle for $m$-semiadditivity
for symmetric monoidal, stable, $p$-local $\infty$-categories. For
convenience, we formulate these results for presentable $\infty$-categories
and colimit preserving functors, though what we actually use is only
the existence and preservation of certain limits and colimits. 
\begin{lem}
\label{lem:Amenable_Semi_Add} Let $m\ge 1$ and let $\mathcal{C}$ be an $m$-semiadditive presentably symmetric monoidal, stable, $p$-local $\infty$-category.
If there exists a connected $m$-finite $p$-space $A$, such that
$\pi_{m}\left(A\right)\neq0$ and $|A|_{\one}$ is an isomorphism,
then $\mathcal{C}$ is $\left(m+1\right)$-semiadditive.
\end{lem}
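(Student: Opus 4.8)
The plan is to reduce $(m+1)$-semiadditivity to a single norm map via the ``amenable space'' machinery assembled in Sections 2 and 3, and then feed in the hypothesis on $|A|_{\one}$. First I would invoke \cite[Proposition 4.4.16]{HopkinsLurie}: since $\mathcal{C}$ is presentable, stable and $p$-local (and already $m$-semiadditive), to prove $(m+1)$-semiadditivity it suffices to show that the norm map $\nm_B$ is an isomorphism for the single space $B = B^{m+1}C_p$. This is a genuinely $p$-local, stable reduction, so the hypotheses are exactly what is needed to apply it.

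Next I would produce a fiber sequence $A \to E \to B$ with $A$ the given connected $m$-finite $p$-space, $B = B^{m+1}C_p$, and $E$ an $m$-finite space. Because $A$ is a connected $m$-finite $p$-space with $\pi_m(A) \neq 0$, it is in particular a nilpotent space whose top nonvanishing homotopy receives a surjection from (a power of) $C_p$; choosing a class in $H^{m+1}(B^{m+1}C_p; \pi_m A)$ classifying an extension, or more simply using the Postnikov/principal-fibration structure, one fits $A$ as the fiber of a map $E \to B^{m+1}C_p$ with $E$ having one fewer ``layer'', hence $m$-finite. (This is the standard construction behind the notion of ``$m$-good'' space sketched in \subsecref{Outline_Proof}.) Now $A$, $E$ are $m$-finite hence $\mathcal{C}$-ambidextrous by $m$-semiadditivity, and $B$ is $(m+1)$-finite hence \emph{weakly} $\mathcal{C}$-ambidextrous by $m$-semiadditivity (the canonical norm $\nm_B$ is defined). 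By \lemref{Box_Unit}(2), the hypothesis that $|A|_{\one}$ is an isomorphism says exactly that $A$ is $\mathcal{C}$-amenable. Then \propref{Amenable_Space}, applied to the fiber sequence $A \to E \to B$ with $E$ ambidextrous, $A$ amenable and $B$ connected, yields that $B = B^{m+1}C_p$ is $\mathcal{C}$-ambidextrous, i.e. $\nm_B$ is an isomorphism. Combined with the first step, this gives $(m+1)$-semiadditivity.

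The step I expect to be the main obstacle is the fibration-theoretic construction of $A \to E \to B^{m+1}C_p$ with $E$ still $m$-finite: one must check that for a connected $m$-finite $p$-space $A$ with $\pi_m A \neq 0$, the chosen extension really does have an $m$-finite total space, which amounts to an inductive argument up the Postnikov tower of $A$ using that each $k$-invariant lies in a cohomology group of a $p$-finite space with $p$-group coefficients (finite), and that $B^{m+1}C_p$ kills the top homotopy group. Everything else is a direct citation: \propref{Amenable_Space}, \lemref{Box_Unit}, and \cite[Proposition 4.4.16]{HopkinsLurie}. One should also remark that $\mathcal{C}$ admits all $(m+1)$-finite (co)limits — this follows from presentability — so that $\nm_B$ is even defined and the reduction of \cite{HopkinsLurie} applies verbatim.
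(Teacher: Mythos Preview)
Your proposal is correct and follows essentially the same approach as the paper: reduce via \cite{HopkinsLurie} to the single space $B^{m+1}C_p$, build a fiber sequence $A\to E\to B^{m+1}C_p$ from nilpotence of the connected $m$-finite $p$-space $A$, use \lemref{Box_Unit}(2) to get amenability of $A$, and conclude with \propref{Amenable_Space}. The only cosmetic difference is that the paper cites \cite[Corollary 4.4.23]{HopkinsLurie} rather than Proposition 4.4.16 for the reduction step.
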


\begin{proof}
Since $m\ge1$, the space $B^{m+1}C_{p}$ is connected. Since the $\infty$-category $\mathcal{C}$ is $m$-semiadditive,
the map $q\colon B^{m+1}C_{p}\to\pt$ is weakly $\mathcal{C}$-ambidextrous.
By \cite[Corollary 4.4.23]{HopkinsLurie}, it suffices to show that
$q$ is $\mathcal{C}$-ambidextrous. Since $m$-finite $p$-spaces
are nilpotent, and we assumed that $\pi_m(A)\neq0$, there is a fiber sequence $A\to B\oto{\pi}B^{m+1}C_{p}$
with $B$ an $m$-finite space. Since $|A|_{\one}$ is
invertible, by \lemref{Box_Unit}(2), $A$ is $\mathcal{C}$-amenable.
Hence, by \propref{Amenable_Space}, the space $B^{m+1}C_{p}$ is
$\mathcal{C}$-ambidextrous.
\end{proof}

We can exploit the extra structure given by the additive
$p$-derivation on $\mathcal{R}_{\mathcal{C}}$ to find a space $A$
as in \lemref{Amenable_Semi_Add}.
\begin{prop}
\label{prop:Bootstrap_Algebraic}Let $m\ge 1$ and let $\mathcal{C}$ be an $m$-semiadditive presentably symmetric monoidal, stable, $p$-local $\infty$-category. 
Let $h\colon\mathcal{R}_{\mathcal{C}}\to S$
be a semi-$\delta$-ring homomorphism that detects invertibility,
and such that $h\left(|BC_{p}|\right),h\left(|B^{m}C_{p}|\right)\in S$
are rational and non-zero. Then $\mathcal{C}$ is $\left(m+1\right)$-semiadditive.
\end{prop}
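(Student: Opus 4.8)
The plan is to reduce, via \lemref{Amenable_Semi_Add}, the $(m+1)$-semiadditivity of $\mathcal{C}$ to the existence of a single connected $m$-finite $p$-space $A$ with $\pi_{m}(A)\neq0$ (I will call such an $A$ \emph{$m$-good}) for which $|A|_{\one}\in\mathcal{R}_{\mathcal{C}}$ is an isomorphism; as $h$ detects invertibility, it even suffices that $h(|A|)$ be invertible in $S$. First I would record some preliminaries: we may assume $\mathcal{R}_{\mathcal{C}}\neq0$ (else $\mathcal{C}$ is the zero $\infty$-category, which is $\infty$-semiadditive); $S$ is then non-zero since $h(|B^{m}C_{p}|)\neq0$; and $S$ is $p$-local, because $\mathcal{R}_{\mathcal{C}}$ is (as $\mathcal{C}$ is) and the composite $\bb Z_{(p)}\to\mathcal{R}_{\mathcal{C}}\oto h S$ inverts every prime $\ell\neq p$. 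Hence, by \propref{Delta_Injective}, the rational elements of $S$ form a subring identified with $\bb Q_{S}$, which is $\bb Q$ if $p\in S^{\times}$ and $\bb Z_{(p)}$ otherwise, and $\delta$ restricts on it to the Fermat quotient $\tilde{\delta}$. If $p\in S^{\times}$, then $\bb Q_{S}=\bb Q$, so $h(|B^{m}C_{p}|)$ is a non-zero rational and therefore already invertible; since $B^{m}C_{p}$ is $m$-good for $m\ge1$, \lemref{Amenable_Semi_Add} applies and we are done.

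So assume $p\notin S^{\times}$, whence $\bb Q_{S}=\bb Z_{(p)}$ and a rational element of $S$ is invertible precisely when its $p$-adic valuation vanishes. Here I would set up a descent on valuations. Define
\[
N=\min\bigl\{\,v_{p}(h(|A|))\ :\ A\text{ is $m$-good and }h(|A|)\text{ is rational and non-zero}\,\bigr\}.
\]
The set on the right is non-empty, as it contains $v_{p}(h(|B^{m}C_{p}|))<\infty$ by hypothesis, and it consists of non-negative integers, so $N$ is well defined and attained by some $m$-good $A$. The goal is to show $N=0$: then $h(|A|)$ is a unit of $\bb Z_{(p)}\ss S$, hence invertible in $S$, and \lemref{Amenable_Semi_Add} finishes the proof.

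I would argue by contradiction: suppose $N\ge1$ and fix an $m$-good $A$ with $v_{p}(h(|A|))=N$. Combining \defref{Delta_Semi_Add} with \thmref{Alpha_Box} and \corref{Distributivity} yields $\delta(|A|)=|BC_{p}|\cdot|A|-\alpha(|A|)=|BC_{p}\times A|-|A\wr C_{p}|$. A direct check shows that $A'\coloneqq BC_{p}\times A$ and $A''\coloneqq A\wr C_{p}$ are again $m$-good: connectedness, $m$-finiteness and the $p$-group condition are preserved under products and under forming the total space of the fibration $A^{p}\to A\wr C_{p}\to BC_{p}$, and $\pi_{m}$ remains nonzero — for $A'$ because $\pi_{m}(A)\neq0$, for $A''$ because the fibration surjects $\pi_{1}$ onto $\pi_{1}(BC_{p})=C_{p}$ when $m=1$ and gives $\pi_{m}(A'')\cong\pi_{m}(A)^{p}$ when $m\ge2$. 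Applying the semi-$\delta$-ring homomorphism $h$, and using that $\tilde{\delta}$ is the unique additive $p$-derivation on $\bb Q_{S}$ (\propref{Delta_Injective}), I obtain $h(|A'|)-h(|A''|)=h(\delta(|A|))=\tilde{\delta}(h(|A|))$, whose valuation is $N-1$ by \lemref{Delta_Valuation}, since $0<v_{p}(h(|A|))=N<\infty$. Now $h(|A'|)=h(|BC_{p}|)\cdot h(|A|)$ is a product of rational elements, hence rational, with $v_{p}(h(|A'|))=v_{p}(h(|BC_{p}|))+N\ge N$ (using that $h(|BC_{p}|)\neq0$ is rational, so of finite non-negative valuation); consequently $h(|A''|)=h(|A'|)-\tilde{\delta}(h(|A|))$ is rational, and since $v_{p}(h(|A'|))\ge N>N-1=v_{p}(\tilde{\delta}(h(|A|)))$, the ultrametric inequality forces $v_{p}(h(|A''|))=N-1$. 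In particular $h(|A''|)\neq0$, so $A''$ is an $m$-good space with $h(|A''|)$ rational, non-zero, and of valuation $N-1<N$, contradicting the minimality of $N$. Therefore $N=0$, which completes the argument.

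The substantial inputs — the additive $p$-derivation $\delta$ and its identity $\delta(|A|)=|BC_{p}\times A|-|A\wr C_{p}|$ (\thmref{Delta_Semi_Add}, \thmref{Alpha_Box}), the amenability criterion (\lemref{Amenable_Semi_Add}), and the valuation-dropping property of $\tilde{\delta}$ (\lemref{Delta_Valuation}) — are already in hand, so the remaining work is to run this descent carefully. I expect the main point requiring attention to be the well-foundedness of the descent: this is exactly why one first establishes that $S$ is $p$-local, so that rational elements have non-negative $p$-adic valuation and the valuation cannot decrease indefinitely, and why one must verify at each step that $h(|A|)$ stays rational, so that $v_{p}$ remains defined along the way.
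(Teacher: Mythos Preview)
Your proposal is correct and follows essentially the same approach as the paper's proof: reduce via \lemref{Amenable_Semi_Add} to finding an $m$-good $A$ with $h(|A|)$ invertible, split on whether $p\in S^{\times}$, and in the nontrivial case run a $p$-adic-valuation descent using the identity $\delta(|A|)=|BC_{p}\times A|-|A\wr C_{p}|$ together with \lemref{Delta_Valuation}. You are actually slightly more careful than the paper in two places --- you justify explicitly that $S$ is $p$-local (which the paper's appeal to the dichotomy $\bb Q_{S}\in\{\bb Q,\bb Z_{(p)}\}$ tacitly uses), and you verify that $\pi_{m}(A\wr C_{p})\neq0$ via the fibration over $BC_{p}$ --- but otherwise the argument is the same.
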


\begin{proof}
A space $A$ will be called \emph{$h$-good} if
\begin{itemize}
\item [(a)] $A$ is a connected $m$-finite $p$-space, such that $\pi_{m}\left(A\right)\neq0$.
\item [(b)] $h\left(|A|\right)$ is rational.
\end{itemize}
By \lemref{Amenable_Semi_Add}, it is enough to show that there exists
an $h$-good space $A$, such that $|A|$ is invertible
in $R_{\mathcal{C}}$. Since $h$ detects invertibility, it suffices
to find such $A$ with $h\left(|A|\right)$ invertible
in $S$. By assumption, $h\left(|B^{m}C_{p}|\right)$ is
rational and therefore $B^{m}C_{p}$ is $h$-good. If $p\in S^{\times}$,
then all non-zero rational elements in $S$ are invertible and we
are done by the assumption that $h\left(|B^{m}C_{p}|\right)\neq0$.
Hence, we assume that $p\notin S^{\times}$. In this case, a rational
element $x\in S$ is invertible if and only if $v_{p}\left(x\right)=0$.
Denoting $v\left(A\right)=v_{p}\left(h\left(|A|\right)\right)$,
it is enough to show that there exists an $h$-good space $A$ with
$v\left(A\right)=0$. 

Since $h\left(|B^{m}C_{p}|\right)$ is non-zero and $p$ is not invertible, we get
$0\le v\left(B^{m}C_{p}\right)<\infty$. It therefore
suffices to show that given an $h$-good space $A$ with $0<v\left(A\right)<\infty$,
there exists an $h$-good space $A'$ with 
$v\left(A'\right) = v\left(A\right)-1$.
For this, we exploit the operation $\delta$. We compute using \thmref{Alpha_Box} and \corref{Distributivity}:
\[
\delta\left(|A|\right)=|BC_{p}||A|-\alpha\left(|A|\right)=|BC_{p}||A|-|A\wr C_{p}|=|BC_{p}\times A|-|A\wr C_{p}|.
\]
Thus,
\[
\delta\left(h\left(|A|\right)\right)=h\left(\delta\left(|A|\right)\right)=h\left(|BC_{p}\times A|\right)-h\left(|A\wr C_{p}|\right).
\]
Since by assumption $h\left(|BC_{p}|\right)$ is rational,
then by \corref{Distributivity} we get that
\[
h\left(|BC_{p}\times A|\right)=h\left(|BC_{p}|\right)h\left(|A|\right)
\]
is also rational, and moreover, as $p \not\in S^{\times}$, we obtain $v(A)\le v\left(BC_p \times A\right)$. Furthermore, since $h\left(|A|\right)$ is rational, by \propref{Delta_Injective},
the same is true for $\delta\left(h\left(|A|\right)\right)$.
Therefore, 
\[
h\left(|A\wr C_{p}|\right)=h\left(|BC_{p}\times A|\right)-h\left(\delta\left(|A|\right)\right)
\]
is also rational. 
It is clear that $A\wr C_{p}$ satisfies (a), and so is $h$-good. Since $0<v\left(A\right)<\infty$, by \lemref{Delta_Valuation}, we get
 $v_{p}\left(\delta\left(h\left(|A|\right)\right)\right)=v\left(A\right)-1$.
Thus, $v\left(A\wr C_{p}\right) = v\left(A\right)-1$ and this completes the
proof. 
\end{proof}

\begin{rem}
The proof did not actually use anything specific to the space $B^{m}C_{p}$.
It would have sufficed to have some good space $A$ with $h\left(|A|\right)$
rational and non-zero. The space $B^{m}C_{p}$ is just the ``simplest''
one.
\end{rem}

In practice, the situation of \propref{Bootstrap_Algebraic} arises
as follows.
\begin{prop}
\label{prop:Bootstrap_Categorical} Let $m\ge1$, and let $F\colon\mathcal{C}\to\mathcal{D}$
be a colimit preserving symmetric monoidal functor between presentably
symmetric monoidal, stable, $p$-local, $m$-semiadditive $\infty$-categories.
Assume that the map $\varphi\colon \mathcal{R}_{\mathcal{C}}\to \mathcal{R}_{\mathcal{D}}$,
induced by $F$, detects invertibility and that the images of $|BC_{p}|_{\mathcal{D}},|B^{m}C_{p}|_{\mathcal{D}}\in\mathcal{R}_{\mathcal{D}}$
in the ring $\mathcal{R}_{\mathcal{D}}^{\tf}$ are rational and non-zero.
Then $\mathcal{C}$ and $\mathcal{D}$ are $\left(m+1\right)$-semiadditive.
\end{prop}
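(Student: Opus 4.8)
The plan is to feed everything into the algebraic criterion \propref{Bootstrap_Algebraic}, applied once to $\mathcal{D}$ and once to $\mathcal{C}$, taking the torsion-free quotient of $\mathcal{R}_{\mathcal{D}}$ as the target semi-$\delta$-ring in both cases.

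First I would dispose of $\mathcal{D}$. Since $\mathcal{D}$ is stable, $p$-local and (being $m$-semiadditive with $m\ge1$ and presentably symmetric monoidal) $1$-semiadditively symmetric monoidal, \thmref{Delta_Semi_Add} applied with $X=Y=\one_{\mathcal{D}}$ equips $\mathcal{R}_{\mathcal{D}}$ with an additive $p$-derivation $\delta$. By \propref{Delta_Torsion_Free} this induces a semi-$\delta$-ring structure on $\mathcal{R}_{\mathcal{D}}^{\tf}$ for which the quotient $g\colon\mathcal{R}_{\mathcal{D}}\onto\mathcal{R}_{\mathcal{D}}^{\tf}$ is a homomorphism of semi-$\delta$-rings that moreover detects invertibility. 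The hypothesis of the proposition is precisely that $g(|BC_{p}|_{\mathcal{D}})$ and $g(|B^{m}C_{p}|_{\mathcal{D}})$ are rational and non-zero in $\mathcal{R}_{\mathcal{D}}^{\tf}$, so \propref{Bootstrap_Algebraic} applies to $\mathcal{D}$ with $h=g$ and yields that $\mathcal{D}$ is $(m+1)$-semiadditive.

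For $\mathcal{C}$ I would transport these data back along $F$. As $F$ is colimit preserving between $m$-semiadditive $\infty$-categories it preserves $m$-finite colimits, hence also $m$-finite limits by \corref{Semi_Add_Functors}; in particular it is an $m$-semiadditive (a fortiori $1$-semiadditive) symmetric monoidal functor. Therefore the induced ring map $\varphi\colon\mathcal{R}_{\mathcal{C}}\to\mathcal{R}_{\mathcal{D}}$ is a homomorphism of semi-$\delta$-rings by \propref{Delta_Semi_Add_Functoriality}, and by \corref{Integral_Functor} applied to $BC_{p}\to\pt$ and $B^{m}C_{p}\to\pt$ it carries $|BC_{p}|_{\mathcal{C}}\mapsto|BC_{p}|_{\mathcal{D}}$ and $|B^{m}C_{p}|_{\mathcal{C}}\mapsto|B^{m}C_{p}|_{\mathcal{D}}$. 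Now set $h=g\circ\varphi\colon\mathcal{R}_{\mathcal{C}}\to\mathcal{R}_{\mathcal{D}}^{\tf}$. It is a composite of semi-$\delta$-ring homomorphisms, hence one; it detects invertibility because both $\varphi$ (by assumption) and $g$ (by \propref{Delta_Torsion_Free}) do; and $h(|BC_{p}|_{\mathcal{C}})=g(|BC_{p}|_{\mathcal{D}})$, $h(|B^{m}C_{p}|_{\mathcal{C}})=g(|B^{m}C_{p}|_{\mathcal{D}})$ are rational and non-zero by hypothesis. Applying \propref{Bootstrap_Algebraic} to $\mathcal{C}$ with this $h$ gives that $\mathcal{C}$ is $(m+1)$-semiadditive, completing the proof.

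I do not anticipate a genuine obstacle here: essentially all of the content has already been front-loaded into \propref{Bootstrap_Algebraic}, \thmref{Delta_Semi_Add} and \propref{Delta_Torsion_Free}. The only points requiring care are the bookkeeping that $F$ is an $m$-semiadditive functor — so that $\varphi$ both commutes with the operation $\delta$ and sends $|A|$-classes to $|A|$-classes — and the elementary observation that the property of detecting invertibility is stable under composition; if one wanted the statement to be symmetric one could also note that $\varphi$ need not be injective, which is exactly why passing through $\mathcal{R}_{\mathcal{D}}^{\tf}$ rather than $\mathcal{R}_{\mathcal{D}}$ itself is convenient.
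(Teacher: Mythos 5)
Your proof is correct and follows essentially the same route as the paper: the treatment of $\mathcal{C}$ via $h=g\circ\varphi$ and \propref{Bootstrap_Algebraic}, using \propref{Delta_Semi_Add_Functoriality}, \propref{Delta_Torsion_Free}, and \corref{Integral_Functor}, is exactly the paper's argument. The only (harmless) divergence is that you obtain the $(m+1)$-semiadditivity of $\mathcal{D}$ by a second, direct application of \propref{Bootstrap_Algebraic} with $h=g$, whereas the paper deduces it from the $(m+1)$-semiadditivity of $\mathcal{C}$ via \corref{Semi_Add_Mode}; both are valid one-line steps.
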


\begin{proof}
It is enough to prove that $\mathcal{C}$ is $\left(m+1\right)$-semiadditive,
since by \corref{Semi_Add_Mode}, this implies that $\mathcal{D}$
is $\left(m+1\right)$-semiadditive. We shall apply \propref{Bootstrap_Algebraic}
to the composition 
\[
\mathcal{R}_{\mathcal{C}}\oto{\varphi}\mathcal{R}_{\mathcal{D}}\oto g\mathcal{R}_{\mathcal{D}}^{\tf},
\]
where $g$ is the canonical projection. By \propref{Delta_Semi_Add_Functoriality},
$\varphi$ is a semi-$\delta$-ring homomorphism and it detects invertibility
by assumption. On the other hand, $g$ is a semi-$\delta$-ring homomorphism
and it detects invertibility by \propref{Delta_Torsion_Free}. It
is only left to observe that $\varphi\left(|A|_{\mathcal{C}}\right)=|A|_{\mathcal{D}}$,
which follows from \corref{Integral_Functor}.
\end{proof}
We conclude with a variant of \propref{Bootstrap_Categorical}, in
which the condition on the elements $|B^{m}C_{p}|_{\mathcal{D}}$,
is replaced by a condition on the closely related elements $\dim_{\mathcal{D}}\left(B^{m}C_{p}\right)$,
and which assembles together the individual statements for different
$m\in\bb N$.
\begin{thm}
\label{thm:Bootstrap_Machine}(Bootstrap Machine) Let $1\le m\le\infty$
and let $F\colon\mathcal{C}\to\mathcal{D}$ be a colimit preserving
symmetric monoidal functor between presentably symmetric monoidal,
stable, $p$-local $\infty$-categories. Assume that
\begin{enumerate}
\item $\mathcal{C}$ is $1$-semiadditive. 
\item The map $\varphi\colon R_{\mathcal{C}}\to R_{\mathcal{D}}$, induced
by $F$, detects invertibility. 
\item For every $0\le k<m$, if the space $B^{k}C_{p}$ is dualizable in
$\mathcal{D}$, then the image of $\dim_{\mathcal{D}}\left(B^{k}C_{p}\right)$
in $\mathcal{R}_{\mathcal{D}}^{\tf}$ is rational and non-zero. 
\end{enumerate}
Then $\mathcal{C}$ and $\mathcal{D}$ are $m$-semiadditive.
\end{thm}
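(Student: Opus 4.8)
The plan is to prove by induction on $\ell$ that $\mathcal{C}$ and $\mathcal{D}$ are $\ell$-semiadditive for every $1\le\ell\le m$, using \propref{Bootstrap_Categorical} to pass from $\ell-1$ to $\ell$. For the base case $\ell=1$, $\mathcal{C}$ is $1$-semiadditive by hypothesis~(1), and since $F$ is a colimit preserving symmetric monoidal functor between presentably symmetric monoidal $\infty$-categories — so that $\otimes$ distributes over finite colimits on both sides — \corref{Semi_Add_Mode}(2) shows that $\mathcal{D}$ is $1$-semiadditive as well. (For $m=1$ we are done.)

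For the inductive step, fix $2\le\ell\le m$ and assume $\mathcal{C}$ and $\mathcal{D}$ are $(\ell-1)$-semiadditive. By \propref{Bootstrap_Categorical}, applied with its parameter equal to $\ell-1\ge 1$, together with hypothesis~(2), it is enough to show that the images of $|BC_{p}|_{\mathcal{D}}$ and $|B^{\ell-1}C_{p}|_{\mathcal{D}}$ in $\mathcal{R}_{\mathcal{D}}^{\tf}$ are rational and non-zero. Since $\mathcal{D}$ is $(\ell-1)$-semiadditive and presentably symmetric monoidal, it is $(\ell-1)$-semiadditively symmetric monoidal, so by \corref{Dim_Sym} every $B^{k}C_{p}$ with $0\le k\le\ell-1$ is dualizable in $\mathcal{D}$, and
\[
\dim_{\mathcal{D}}(B^{k}C_{p})=|B^{k}C_{p}|_{\mathcal{D}}\cdot|B^{k-1}C_{p}|_{\mathcal{D}}\ \ (1\le k\le\ell-1),\qquad |C_{p}|_{\mathcal{D}}=p,
\]
the last equality by \exaref{Integral_Sum}. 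Since $k\le\ell-1<m$, hypothesis~(3) tells us that the image of each $\dim_{\mathcal{D}}(B^{k}C_{p})$ in $\mathcal{R}_{\mathcal{D}}^{\tf}$ is rational and non-zero; in particular $p\neq 0$ there, so $\mathcal{R}_{\mathcal{D}}^{\tf}$ is a non-zero $p$-local torsion-free semi-$\delta$-ring (via \thmref{Delta_Semi_Add} and \propref{Delta_Torsion_Free}), and hence its subring of rational elements is $\bb Q$ or $\bb Z_{(p)}$ by \propref{Delta_Injective} and the remark following it.

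The heart of the step is then the following elementary observation in such a ring $R$: if $y\in R$ is rational and non-zero and $z\in R$ satisfies that $yz$ is rational and non-zero, then $z$ itself is rational and non-zero. When $p\in R^{\times}$ this is clear since $y$ is then invertible; when $p\notin R^{\times}$, write $y=p^{a}v$ and $yz=p^{c}w$ with $v,w\in\bb Z_{(p)}^{\times}$, observe that torsion-freeness together with $p\notin R^{\times}$ forces $c\ge a$, and cancel $p^{a}$ to get $z=v^{-1}p^{c-a}w\in\bb Z_{(p)}$. Applying this inductively on $k$ — starting from $|C_{p}|_{\mathcal{D}}=p$ and using the relation $\dim_{\mathcal{D}}(B^{k}C_{p})=|B^{k}C_{p}|_{\mathcal{D}}\cdot|B^{k-1}C_{p}|_{\mathcal{D}}$ to pass from $|B^{k-1}C_{p}|_{\mathcal{D}}$ to $|B^{k}C_{p}|_{\mathcal{D}}$ — one concludes that the image of $|B^{k}C_{p}|_{\mathcal{D}}$ in $\mathcal{R}_{\mathcal{D}}^{\tf}$ is rational and non-zero for all $0\le k\le\ell-1$; in particular for $k=1$ and $k=\ell-1$. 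Thus \propref{Bootstrap_Categorical} applies and yields that $\mathcal{C}$ and $\mathcal{D}$ are $\ell$-semiadditive, completing the induction; when $m=\infty$ this runs over all $\ell\ge 1$ and gives $\infty$-semiadditivity.

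The only genuinely new ingredient here is the passage from the dimension hypothesis~(3) to the statements about $|B^{k}C_{p}|_{\mathcal{D}}$ needed by \propref{Bootstrap_Categorical}, which is exactly the divisibility observation above; everything else is assembled from results already in place — \corref{Semi_Add_Mode}, \corref{Dim_Sym}, \thmref{Delta_Semi_Add}, \propref{Delta_Torsion_Free}, and \propref{Bootstrap_Categorical}. The main point requiring care is therefore the inductive bookkeeping: ensuring that at each stage $\mathcal{D}$ is already semiadditive enough for the dimension formula of \corref{Dim_Sym} to be available and that the index ranges in hypothesis~(3) line up with the factors $|B^{k}C_{p}|_{\mathcal{D}}$ being peeled off.
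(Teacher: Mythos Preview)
Your argument is correct and follows essentially the same route as the paper's proof: induct on the semiadditivity level, use \corref{Dim_Sym} to get dualizability and the relation $\dim_{\mathcal{D}}(B^{k}C_{p})=|B^{k}C_{p}|\cdot|B^{k-1}C_{p}|$, peel off the factors using torsion-freeness of $\mathcal{R}_{\mathcal{D}}^{\tf}$, and feed the result into \propref{Bootstrap_Categorical}. The only cosmetic difference is that the paper packages the induction as a single statement (carrying both ``$\mathcal{C}$ is $k$-semiadditive'' and ``$|B^{i}C_{p}|$ is rational non-zero for $i<k$'' together), whereas you run a nested induction, re-deriving the $|B^{k}C_{p}|$'s at each stage; your more explicit treatment of the divisibility step is a welcome clarification of what the paper leaves to the phrase ``since $\mathcal{R}_{\mathcal{D}}^{\tf}$ is torsion-free.''
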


\begin{proof}
It suffices to show that $\mathcal{C}$ is $m$-semiadditive, since by \corref{Semi_Add_Mode}, $\mathcal{D}$ is then also $m$-semiadditive.
We prove by induction on $k$, that the images of the elements $|B^{i}C_{p}|_{\mathcal{D}}$ in $\mathcal{R}_{\mathcal{D}}^{\tf}$ are rational and non-zero for all $0\le i < k$, and that $\mathcal{C}$ is $k$-semiadditive. 
The base case $k=1$
holds by assumption (1) and the fact that $|C_{p}|_{\mathcal{D}}=p$
is rational and nonzero in $\mathcal{R}_{\mathcal{D}}^{\tf}$, since
the unique ring homomorphism $\bb Z\to\mathcal{R}_{\mathcal{D}}^{\tf}$
is injective by \propref{Delta_Injective}. Assuming the inductive
hypothesis for some $k<m$, we first prove that $|B^{k}C_{p}|_{\mathcal{D}}$ in $\mathcal{R}_{\mathcal{D}}^{\tf}$ are rational and non-zero.
By \corref{Dim_Sym}, $B^{k}C_{p}$ is
dualizable in $\mathcal{D}$, and we have 
\[
\dim_{\mathcal{D}}\left(B^{k}C_{p}\right)=|B^{k}C_{p}|_{\mathcal{D}}|B^{k-1}C_{p}|_{\mathcal{D}}\in\mathcal{R}_{\mathcal{D}}^{\tf}.
\]
By assumption (3), $\dim_{\mathcal{D}}\left(B^{k}C_{p}\right)$ is
rational and non-zero and by the inductive hypothesis, the image of
$|B^{k-1}C_{p}|_{\mathcal{D}}$ in $\mathcal{R}_{\mathcal{D}}^{\tf}$
is rational and non-zero as well. Consequently, the image of $|B^{k}C_{p}|_{\mathcal{D}}$
in $\mathcal{R}_{\mathcal{D}}^{\tf}$ must also be rational and non-zero
since $\mathcal{R}_{\mathcal{D}}^{\tf}$ is torsion-free. 
We shall now deduce that $\mathcal{C}$ is
$\left(k+1\right)$-semiadditive by applying \propref{Bootstrap_Categorical}
to the functor $F$. Since $|B^{k}C_{p}|_{\mathcal{D}}$ is rational and non-zero, it suffices to show that  $|BC_{p}|_{\mathcal{D}}$ is rational and non-zero.
For $k=1$ there is nothing to prove and for $k\ge2$ this follows by the inductive hypothesis.
\end{proof}
\begin{rem}
The proof shows that the assumptions of the theorem above \emph{imply}
that the spaces $B^{k}C_{p}$ are dualizable in $\mathcal{D}$. Thus,
in retrospect, the ``if'' in assumption (3) is superfluous.
\end{rem}

\subsection{Nil-conservativitiy }
In this subsection we introduce and study a natural condition on a symmetric monoidal functor $\mathcal{C}\to\mathcal{D},$
which ensures that the induced map 
$\mathcal{R}_{\mathcal{C}}\to \mathcal{R}_{\mathcal{D}}$
detects invertibility.
For simplicity, we shall work throughout under the assumption of \emph{presentability},
though most of the arguments do not require the full strength of this
assumption.  

\begin{defn}
\label{def:Nil_Conservativity}We call a monoidal colimit preserving functor  $F\colon\mathcal{C}\to\mathcal{D},$
between stable presentably monoidal $\infty$-categories \emph{nil-conservative}, if for every ring
$R\in\alg(\mathcal{C})$, if $F(R)=0$ then $R=0$\footnote{This notion is closely related  to the notion of ``nil-faithfulness'' defined in \cite{BalmerNil}.}.
\end{defn}

The fundamental example of nil-conservativity in chromatic homotopy theory is provided by the Nilpotence Theorem (\propref{Nilpotence_Support}).
It is immediate from the definition that:
\begin{lem}
\label{lem:Nil_Cancelation} Let $F\colon{\cal C}\to{\cal D}$ and
$G\colon{\cal D}\to{\cal E}$ be monoidal colimit preserving functors between stable presentably monoidal $\infty$-categories.
\begin{enumerate}
\item If $F$ is conservative it is nil-conservative. 
\item If $F$ and $G$ are nil-conservative then $GF$ is nil-conservative. 
\item If $GF$ is nil-conservative then $F$ is nil-conservative. 
\end{enumerate}
\end{lem}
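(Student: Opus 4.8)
The statement to prove is \lemref{Nil_Cancelation}: for monoidal colimit preserving functors $F\colon\mathcal{C}\to\mathcal{D}$ and $G\colon\mathcal{D}\to\mathcal{E}$ between stable presentably monoidal $\infty$-categories, (1) a conservative $F$ is nil-conservative, (2) nil-conservativity is closed under composition, and (3) if $GF$ is nil-conservative then $F$ is.

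\textbf{Proof plan.} All three parts are essentially formal unwindings of \defref{Nil_Conservativity}, so I expect no serious obstacle; the only thing to be careful about is that the definition quantifies over \emph{associative} algebra objects $R\in\alg(\mathcal{C})$ and that each functor in sight is monoidal (hence sends algebras to algebras). For (1): if $F$ is conservative and $R\in\alg(\mathcal{C})$ satisfies $F(R)=0$, then in particular $F(R)\simeq 0$ as an object of $\mathcal{D}$; since $0$ is a zero object and $F$ is conservative, the map $R\to 0$ (equivalently $0\to R$) is an equivalence, so $R=0$. Strictly one should note that $F(R)=0$ as an algebra forces the underlying object $F(R)$ to be $0$, because the forgetful functor $\alg(\mathcal{D})\to\mathcal{D}$ detects zero objects (a zero algebra has zero underlying object, as its unit and its zero map agree). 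For (2): suppose $F,G$ are nil-conservative and $R\in\alg(\mathcal{C})$ has $(GF)(R)=0$. Since $F$ is monoidal, $F(R)\in\alg(\mathcal{D})$, and $G(F(R))=0$; nil-conservativity of $G$ gives $F(R)=0$, and then nil-conservativity of $F$ gives $R=0$. For (3): suppose $GF$ is nil-conservative and $R\in\alg(\mathcal{C})$ has $F(R)=0$. Applying $G$ (which preserves zero objects, being a left adjoint — or simply monoidal and colimit preserving, hence preserving the initial object, which in a stable category is the zero object), we get $(GF)(R)=G(F(R))=G(0)=0$, so nil-conservativity of $GF$ yields $R=0$.

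\textbf{Key steps in order.} First I would record the elementary observation that a monoidal colimit preserving functor between stable presentably monoidal $\infty$-categories preserves zero objects (it preserves the initial object, which coincides with the zero object by stability) and sends algebra objects to algebra objects; also that the forgetful functor from algebras detects the zero object. With these in hand, parts (1)--(3) follow by the three short arguments above. I would present them as a single short proof with three labelled cases.

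\textbf{Main obstacle.} There is essentially none — the content is bookkeeping. The one point deserving a sentence of care is the compatibility between ``$=0$ in $\alg$'' and ``$=0$ in the underlying category,'' which I would dispatch by the remark that the zero object of $\alg(\mathcal{C})$ has zero underlying object and, conversely, an algebra with zero underlying object is the zero object of $\alg(\mathcal{C})$.

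\begin{proof}
Throughout, recall that a monoidal colimit preserving functor between stable presentably monoidal $\infty$-categories preserves the initial object (being a left adjoint, or simply colimit preserving), which in a stable $\infty$-category is the zero object; and that such a functor, being monoidal, carries $\alg(\mathcal{C})$ to $\alg(\mathcal{D})$ compatibly with the forgetful functors. Finally, note that an algebra object is zero in $\alg(\mathcal{C})$ if and only if its underlying object is zero in $\mathcal{C}$: the zero algebra clearly has zero underlying object, and conversely if the underlying object of $R\in\alg(\mathcal{C})$ vanishes then $R$ is both initial and terminal in $\alg(\mathcal{C})$, hence a zero object.

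(1) Suppose $F$ is conservative and let $R\in\alg(\mathcal{C})$ with $F(R)=0$. By the above, the underlying object of $F(R)$ in $\mathcal{D}$ is a zero object, so the unique map $R\to 0$ in $\mathcal{C}$ becomes an equivalence after applying $F$. As $F$ is conservative, $R\to 0$ is an equivalence, i.e. $R=0$. Hence $F$ is nil-conservative.

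(2) Suppose $F$ and $G$ are nil-conservative and let $R\in\alg(\mathcal{C})$ with $(GF)(R)=0$. Since $F$ is monoidal, $F(R)\in\alg(\mathcal{D})$ and $G(F(R))=0$; nil-conservativity of $G$ gives $F(R)=0$, and then nil-conservativity of $F$ gives $R=0$. Hence $GF$ is nil-conservative.

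(3) Suppose $GF$ is nil-conservative and let $R\in\alg(\mathcal{C})$ with $F(R)=0$. Applying $G$, which preserves zero objects, we obtain $(GF)(R)=G(F(R))=G(0)=0$. By nil-conservativity of $GF$, we conclude $R=0$. Hence $F$ is nil-conservative.
\end{proof}
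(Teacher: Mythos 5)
Your proof is correct and is exactly the unwinding the paper has in mind: the paper gives no proof at all, introducing the lemma with ``It is immediate from the definition that,'' and your three short arguments (conservativity detects the zero object, composition of the two implications, and $G$ preserving zero objects for the cancellation direction) are precisely the intended bookkeeping.
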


The property of nil-conservativity has a useful
equivalent characterization in terms of conservativity on dualizable
modules. For this we shall need a non-symmetric version of the known fact that
dualizable objects are closed under cofibers in the stable setting.
\begin{lem}
\label{lem:Dualizable_Cofiber}Let $\mathcal{C}$ be a stable presentably monoidal $\infty$-category 
and let $R,S\in\alg(\mathcal{C})$. For every cofiber sequence 
\[
X\to Y\to Z\quad\in\quad_{S}\BMod_{R}(\mathcal{C}),
\]
if two out of $X,Y$, and $Z$ are right dualizable, then so is the
third.
\end{lem}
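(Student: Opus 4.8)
The plan is to reduce the claim about bimodules to the analogous well-known fact about one-sided modules by forgetting structure, using the hypothesis that $\mathcal{C}$ is stable. The key observation is that right dualizability of an object $M \in {}_S\BMod_R(\mathcal{C})$ depends only on its underlying object of $\RMod_R(\mathcal{C})$: an object of $\RMod_R(\mathcal{C})$ is right dualizable (i.e. dualizable as a right $R$-module) precisely when it is a compact object that is also "perfect" in the appropriate sense, and in any case the notion is detected after applying the forgetful functor $U\colon {}_S\BMod_R(\mathcal{C}) \to \RMod_R(\mathcal{C})$, which is exact (preserves cofiber sequences) since $\mathcal{C}$ is stable and the forgetful functor creates colimits. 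So first I would record that $U$ is exact and that right dualizability of a bimodule is equivalent to right dualizability of its image under $U$.

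Next I would invoke the one-sided statement: in $\RMod_R(\mathcal{C})$, for a cofiber sequence $X \to Y \to Z$, if two of the three terms are dualizable then so is the third. This is the "known fact that dualizable objects are closed under cofibers in the stable setting" alluded to in the lemma's preamble; concretely, dualizable right $R$-modules form a stable subcategory of $\RMod_R(\mathcal{C})$ closed under retracts, hence closed under cofibers and fibers, and in a stable $\infty$-category a cofiber sequence is also a fiber sequence, so knowing any two terms lie in this subcategory forces the third to as well. Applying $U$ to the given bimodule cofiber sequence and then applying this closure property finishes the argument.

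The step I expect to be the main obstacle is justifying cleanly that right dualizability of a bimodule is \emph{equivalent} to (not merely implied by) right dualizability of the underlying right module — i.e., that the left $S$-action does not obstruct or is automatically compatible with a right dual. The cleanest route is to note that the right dual, evaluation, and coevaluation can all be taken in $\RMod_R(\mathcal{C})$ and then observe that ${}_S\BMod_R(\mathcal{C}) = \LMod_S(\RMod_R(\mathcal{C}))$, so that the monoidal structure relevant to right duality is the one on $\RMod_R(\mathcal{C})$ over which $\LMod_S$ is linear; thus a right duality datum in $\RMod_R(\mathcal{C})$ for the underlying object automatically upgrades. Alternatively one can avoid this subtlety entirely by working directly: let $X \to Y \to Z$ be the cofiber sequence with, say, $Y$ and $Z$ right dualizable with right duals $Y^\vee, Z^\vee$; the induced maps assemble $Z^\vee \to Y^\vee$ into a map whose fiber $W$ is a candidate right dual for $X$, and one checks the duality data $X \otimes_R W \to \one$ and $\one \to W \otimes_R X$ (with $\otimes_R$ the relevant relative tensor) satisfy the triangle identities by reducing, via the five lemma / stability, to the triangle identities for $Y$ and $Z$. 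This latter approach is more hands-on but entirely parallel to the symmetric case and only uses that $\mathcal{C}$ (hence the bimodule category) is stable and that $\otimes_R$ is exact in each variable. Either way, the substance is bookkeeping, and the real content is the stability of $\mathcal{C}$.
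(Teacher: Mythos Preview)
Your approach (a) has a genuine gap. The ``known fact'' you invoke---that dualizable objects in a stable symmetric monoidal $\infty$-category are closed under cofibers---does not apply: $\RMod_R(\mathcal{C})$ is \emph{not} monoidal when $R$ is merely an associative algebra, so there is no ``monoidal structure on $\RMod_R(\mathcal{C})$'' relevant to right duality. The notion of a right-dualizable right $R$-module is already a bicategorical (non-symmetric) notion---it is exactly the case $S=\one_{\mathcal{C}}$ of the lemma you are trying to prove---so your reduction is circular rather than simplifying. (Your claim that bimodule right-dualizability is detected on the underlying right $R$-module is in fact true, since a left $S$-action on $M$ induces a right $S$-action on any right dual $M^\vee$ via $S\to\End_R(M)\simeq\End_R(M^\vee)^{op}$; but this observation only reduces the general case to the $S=\one$ case, which still needs a proof of the same nature.)

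Your alternative (b), constructing the candidate dual as the cofiber $W$ of the induced map $Z^\vee\to Y^\vee$ and verifying the triangle identities by a five-lemma style argument, can be made to work and is a legitimate route, but what you have written is only a sketch: one must actually construct the evaluation and coevaluation maps for $X$ from those for $Y$ and $Z$ and check compatibility, and this is where the content lies. (Also, minor slip: the right dual of $X$ is the \emph{cofiber} of $Z^\vee\to Y^\vee$, not the fiber.)

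The paper takes a quite different route. It invokes the dual of \cite[Proposition~4.6.2.10]{ha}, which characterizes right dualizability of $X\in{}_S\BMod_R(\mathcal{C})$ by the condition that for every $\mathcal{M}\in\LMod_{\mathcal{C}}(\Pr^L)$ the functor $X\otimes_R(-)\colon\LMod_R(\mathcal{M})\to\LMod_S(\mathcal{M})$ admits a left adjoint $F_X$, and that the associated Beck--Chevalley maps are isomorphisms. Given a cofiber sequence $X\to Y\to Z$, stability yields a cofiber sequence of left adjoints $F_Z\to F_Y\to F_X$ (so $F_X$ exists when the other two do), and a commutative ladder of Beck--Chevalley maps in which two-out-of-three are isomorphisms. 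This avoids constructing duality data by hand entirely.
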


\begin{proof}
We treat the case that $X$ and $Y$ are right dualizable (the other
cases are analogous). Given $\mathcal{M}\in\LMod_{\mathcal{C}}(\Pr^L)$
we have a functor 
\[
X\otimes_{R}(-)\colon\LMod_{R}(\mathcal{M})\to\LMod_{S}(\mathcal{M}).
\]
Moreover, given a morphism $\mathcal{M}^{\prime}\oto U\mathcal{M}^{\prime\prime}$
in $\LMod_{\mathcal{C}}(\Pr^L)$, we have a commutative diagram
\[
\xymatrix{\LMod_{R}(\mathcal{M}^{\prime})\ar[d]_{U}\ar[rr]^{X\otimes_{R}(-)} &  & \LMod_{S}(\mathcal{M}^{\prime})\ar[d]^{U}\\
\LMod_{R}(\mathcal{M}^{\prime\prime})\ar[rr]^{X\otimes_{R}(-)} &  & \LMod_{S}(\mathcal{M}^{\prime\prime}).
}
\]

By the dual of \cite[Proposition 4.6.2.10]{ha}, the module $X$ is right dualizable if
and only if for every $\mathcal{M}\in\LMod_{\mathcal{C}}(\Pr^L)$ the
functor $X\otimes_{R}(-)$ admits a left adjoint $F_{X}$, and for
every map $\mathcal{M}^{\prime}\oto U\mathcal{M}^{\prime\prime}$
in $\LMod_{\mathcal{C}}(\Pr^L)$ the Beck-Chevalley map  $F_{X}^{\prime\prime}U\oto{\beta_{X}}UF_{X}^{\prime}$ in the above diagram 
is an isomorphism\footnote{The statement of \cite[Proposition 4.6.2.10]{ha} considers more general $\mathcal{C}$-left
tensored $\infty$-categories $\mathcal{M}$. The proof however uses
only the special cases $\mathcal{M}=\RMod_{T}(\mathcal{C})$ for $T\in\alg(\mathcal{C})$.}. Since $\mathcal{C}$ is stable, so is every $\mathcal{M}\in\LMod_{\mathcal{C}}(\Pr^L)$
and $\LMod_{S}(\mathcal{M})$.
For every such $\mathcal{M}$, we have a cofiber sequence of functors
\[
X\otimes_{R}(-)\to Y\otimes_{R}(-)\to Z\otimes_{R}(-).
\]
Since the first two admit left adjoints $F_{X}$ and $F_{Y}$ respectively,
so does $Z\otimes_{R}(-)$. Moreover, we have a cofiber sequence of
functors
\[
F_{Z}\to F_{Y}\to F_{X}.
\]

Unwinding the definitions, for every $\mathcal{M}^{\prime}\to\mathcal{M}^{\prime\prime}$
in $\LMod_{\mathcal{C}}(\Pr^L)$, we have a commutative diagram of Beck-Chevalley maps:
\[
\xymatrix{F_{Z}^{\prime\prime}U\ar[d]^{\beta_{Z}}\ar[r] & F_{Y}^{\prime\prime}U\ar[d]^{\beta_{Y}}\ar[r] & F_{X}^{\prime\prime}U\ar[d]^{\beta_{X}}\\
UF_{Z}^{\prime}\ar[r] & UF_{Y}^{\prime}\ar[r] & UF_{X}^{\prime}.
}
\]
Hence, if $\beta_{X}$ and $\beta_{Y}$ are isomorphisms then so is
$\beta_{Z}$. 
\end{proof}
\begin{prop}
\label{prop:Nil_Conservativity_Dualizable}A monoidal colimit preserving functor $F\colon\mathcal{C}\to\mathcal{D}$
between stable presentably monoidal $\infty$-categories is nil-conservative, if and only if for every
$S\in\alg(\mathcal{C})$, the induced functor 
\[
\overline{F}\colon\LMod_{S}(\mathcal{C})\to\LMod_{F(S)}(\mathcal{D})
\]
is conservative when restricted to the full subcategories of right
dualizable modules.
\end{prop}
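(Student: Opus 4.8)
The plan is to prove both implications of \propref{Nil_Conservativity_Dualizable} separately, with the ``only if'' direction being the substantive one.

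For the ``if'' direction, suppose that for every $S\in\alg(\mathcal{C})$ the functor $\overline{F}\colon\LMod_S(\mathcal{C})\to\LMod_{F(S)}(\mathcal{D})$ is conservative on right dualizable modules. Given $R\in\alg(\mathcal{C})$ with $F(R)=0$, I would take $S=\one_{\mathcal{C}}$ (or just work directly): the object $R$, viewed as a module over itself, is right dualizable (it is the unit of $\LMod_R(\mathcal{C})$, hence dualizable). Actually the cleaner move is to observe that $R$ as a left module over $R$ maps under $\overline{F}$ to $F(R)=0$ as a module over $F(R)=0$; the zero module is certainly dualizable, so conservativity forces $R\simeq 0$ as an $R$-module, hence $R=0$ as an algebra. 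One must be slightly careful that $\LMod_0(\mathcal{D})$ is the zero category, so ``conservative'' there is vacuous — the right formulation is to apply conservativity of $\overline F\colon \LMod_R(\mathcal C)\to \LMod_{F(R)}(\mathcal D)$ to the map $0\to R$ of $R$-modules, whose image $0\to 0$ is an equivalence. Hence $R=0$.

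For the ``only if'' direction, assume $F$ is nil-conservative and fix $S\in\alg(\mathcal{C})$. Since $\mathcal{C}$ and $\mathcal{D}$ are stable, a functor between stable $\infty$-categories is conservative iff it detects zero objects. So let $X\in\LMod_S(\mathcal{C})$ be right dualizable with $\overline{F}(X)=0$; I must show $X=0$. The key idea is to pass from the module $X$ to an \emph{algebra} to which nil-conservativity applies. The natural candidate is the endomorphism algebra $\End_S(X)\in\alg(\mathcal{C})$, which exists precisely because $X$ is (right) dualizable — dualizability of $X$ over $S$ gives an internal endomorphism object with an algebra structure, and moreover this formation commutes with the monoidal colimit-preserving functor $F$, so $F(\End_S(X))\simeq\End_{F(S)}(\overline{F}(X))=\End_{F(S)}(0)=0$. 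By nil-conservativity, $\End_S(X)=0$. But $\one_{\mathcal{C}}\to\End_S(X)$ is a unital map, so if $\End_S(X)=0$ then $\Id_X=0$ in $\End_S(X)$, which forces $X\simeq 0$ (the identity map being null means $X$ is a retract of $0$). This gives conservativity.

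The main obstacle — and the step I would spend the most care on — is justifying that $\End_S(X)$ exists as an $\bb E_1$-algebra and that $F$ carries it to $\End_{F(S)}(F(X))$. This is where right dualizability of $X$ is essential: one needs the internal mapping object $\underline{\hom}_S(X,X)$ to exist in $\mathcal{C}$ (equivalently $X^\vee\otimes_S X$ to compute it), which holds for dualizable $X$, and one needs $F$ monoidal and colimit-preserving to commute with the formation $X\mapsto X^\vee$ and with relative tensor products over $S$ — the latter using that $F$ preserves the relevant geometric realizations. I would cite \cite[Section 4.6]{ha} for the existence and functoriality of endomorphism algebras of dualizable modules, and note that \lemref{Dualizable_Cofiber} is not actually needed for this proposition but is presumably used elsewhere (or one could give an alternative argument building $\End_S(X)$ via the bar construction and checking $F$-compatibility on simplicial pieces). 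A subtle point to verify is the left/right dualizability bookkeeping: the statement says ``right dualizable,'' and one must track which side the endomorphism algebra acts on so that $F(\End_S(X))$ is genuinely an algebra mapping to $\End_{F(S)}(F(X))$; the argument is symmetric enough that this is routine once set up, but it is the kind of thing that is easy to get backwards.
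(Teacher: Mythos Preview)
Your argument is essentially the same as the paper's: reduce conservativity to detecting zero, then for a right dualizable module $M$ form the endomorphism algebra $\End_S(M)\simeq M^\vee\otimes_S M$, observe that $F$ preserves it because $F$ is monoidal and preserves sifted colimits, and apply nil-conservativity. The paper writes this out exactly as you do.

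There is one genuine slip: you assert that \lemref{Dualizable_Cofiber} ``is not actually needed for this proposition,'' but it is precisely what justifies your reduction step. The statement asks for conservativity of $\overline{F}$ restricted to the full subcategory of right dualizable $S$-modules, i.e.\ conservativity on \emph{morphisms} between right dualizable modules. Given such a morphism $f\colon N_1\to N_2$ with $\overline{F}(f)$ an equivalence, you have $\overline{F}(\operatorname{cofib}(f))=0$; but to feed $\operatorname{cofib}(f)$ into your ``detects zero on right dualizables'' argument you need to know that $\operatorname{cofib}(f)$ is itself right dualizable, which is exactly \lemref{Dualizable_Cofiber}. (Equivalently: the claim ``conservative iff detects zero'' applies to exact functors between \emph{stable} $\infty$-categories, and the lemma is what makes the subcategory of right dualizable modules stable.) The paper invokes the lemma explicitly at this point. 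Once you restore this citation, your proof matches the paper's.
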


\begin{proof}
The `if' part follows from the fact that every ring $R$ is right dualizable
as a left module over itself. Conversely, let $f\colon N_{1}\to N_{2}$
be a map of right dualizable left $S$-modules and let $M$ be the
cofiber of $f$, which is also right dualizable (\lemref{Dualizable_Cofiber}).
It suffices to show that if $\overline{F}M=0$, then $M=0$. Let $M^{\vee}\in\RMod_{S}(\mathcal{C})$
denote the right dual of $M$. We have
\[
M^{\vee}\otimes_{S}M=\hom_{S}(M,M)\quad\in\quad\mathcal{C}
\]
the ring of endomorphisms of $M$. Since $F$ is monoidal and preserves
all, and in particular sifted, colimits we have 
\[
F(\hom_{S}(M,M))=F(M^{\vee}\otimes_{S}M)=\overline{F}M^{\vee}\otimes_{FS}\overline{F}M=0.
\]
By assumption, we get $\hom_{S}(M,M)=0$ and hence $M=0$.
\end{proof}
Applying \propref{Nil_Conservativity_Dualizable} to $S=\one_{\mathcal{C}},$
we see that a nil-conservative functor is in particular conservative
on right dualizable objects of $\mathcal{C}$ itself. 

\begin{cor}
\label{cor:Nil_Conservative_Detects_Inv}Let $F\colon\mathcal{C}\to\mathcal{D}$
be a nil-conservative functor. The induced ring
homomorphism $\mathcal{R}_{\mathcal{C}}\to \mathcal{R}_{\mathcal{D}}$
detects invertibility. In particular, if $A$ is a $\mathcal{C}$-ambidextrous
and $\mathcal{D}$-amenable space, then it is also $\mathcal{C}$-amenable. 
\end{cor}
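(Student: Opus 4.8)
The plan is to reduce both assertions to the consequence of \propref{Nil_Conservativity_Dualizable} recorded just after it: a nil-conservative functor is conservative on dualizable objects.

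For the first assertion, recall that $\mathcal{R}_{\mathcal{C}}=\pi_{0}\End_{\mathcal{C}}(\one_{\mathcal{C}})$ with multiplication given by composition, so an element $x$ is a unit of $\mathcal{R}_{\mathcal{C}}$ precisely when the corresponding morphism $x\colon\one_{\mathcal{C}}\to\one_{\mathcal{C}}$ is an equivalence in $\mathcal{C}$, and likewise for $\mathcal{D}$. Since $F$ is monoidal, the induced map $\varphi$ sends the class of $x$ to the class of $F(x)$, regarded as an endomorphism of $\one_{\mathcal{D}}\simeq F(\one_{\mathcal{C}})$. Now $\one_{\mathcal{C}}$ is (self-)dualizable, so $x$ is a morphism between dualizable objects; applying \propref{Nil_Conservativity_Dualizable} with $S=\one_{\mathcal{C}}$ (for which $\LMod_{\one_{\mathcal{C}}}(\mathcal{C})\simeq\mathcal{C}$) shows that $F$ reflects equivalences between dualizable objects. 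Hence if $\varphi(x)$ is invertible, then $F(x)$ is an equivalence and therefore so is $x$; that is, $x$ is a unit, and $\varphi$ detects invertibility.

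For the ``in particular'' clause, write $q\colon A\to\pt$. By definition the $\mathcal{D}$-amenability of $A$ means that $A$ is $\mathcal{D}$-ambidextrous and that $|A|_{\mathcal{D}}\colon\Id_{\mathcal{D}}\to\Id_{\mathcal{D}}$ is an isomorphism; in particular its value at $\one_{\mathcal{D}}$ is invertible in $\mathcal{R}_{\mathcal{D}}$. Since $F$ is colimit preserving, the $(F,q)$-square is ambidextrous by \thmref{Ambi_Functors} (using that $q$ is both $\mathcal{C}$- and $\mathcal{D}$-ambidextrous) and satisfies the $\bc_{!}$-condition by \propref{BC_Functor}; \propref{Integral_Ambi} then yields $F(|q|_{\one_{\mathcal{C}}})=|q|_{F(\one_{\mathcal{C}})}$, which under $F(\one_{\mathcal{C}})\simeq\one_{\mathcal{D}}$ says $\varphi(|A|_{\one_{\mathcal{C}}})=|A|_{\one_{\mathcal{D}}}$. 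By the first assertion, $|A|_{\one_{\mathcal{C}}}$ is therefore invertible in $\mathcal{R}_{\mathcal{C}}$. Finally, running the computation in the proof of \lemref{Box_Unit}(1) with the colimit-preserving functor $X\otimes(-)$ in place of $F_{X}$ --- which is legitimate since $\otimes$ distributes over $A$-colimits in the presentably monoidal $\mathcal{C}$ and $A$ is $\mathcal{C}$-ambidextrous --- gives $|A|_{X}\simeq X\otimes|A|_{\one_{\mathcal{C}}}$ for every $X\in\mathcal{C}$; as $|A|_{\one_{\mathcal{C}}}$ is an equivalence, so is $|A|_{X}$ for all $X$, and hence $A$ is $\mathcal{C}$-amenable.

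I do not expect a genuine obstacle, as \propref{Nil_Conservativity_Dualizable} carries the main content; the one point requiring attention is that $F$ is only a monoidal colimit-preserving functor (not assumed $m$-semiadditive), which is why the identity $\varphi(|A|_{\one_{\mathcal{C}}})=|A|_{\one_{\mathcal{D}}}$ is extracted from \thmref{Ambi_Functors} together with \propref{Integral_Ambi} rather than directly from \corref{Integral_Functor}, and why \lemref{Box_Unit} is used only through hypotheses that hold in the present setting.
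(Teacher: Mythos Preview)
Your argument for the first assertion is exactly the paper's: $\one_{\mathcal{C}}$ is dualizable, so \propref{Nil_Conservativity_Dualizable} applied with $S=\one_{\mathcal{C}}$ gives conservativity of $F$ on maps between dualizable objects, hence detection of invertibility in $\mathcal{R}_{\mathcal{C}}$.

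For the ``in particular'' clause the paper's proof says nothing further, whereas you supply a complete argument. Your route is correct, with one small caveat worth flagging: \thmref{Ambi_Functors} is stated for $m$-\emph{finite} maps, while the hypothesis on $A$ is only that it is $\mathcal{C}$-ambidextrous (hence $q\colon A\to\pt$ is $m$-truncated for some $m$, but not a priori $\pi$-finite). Since both $F$ and $X\otimes(-)$ preserve \emph{all} colimits here, the inductive proof of \thmref{Ambi_Functors} goes through verbatim for merely $m$-truncated $q$ --- the only role of finiteness there is to match the colimit-preservation hypothesis on $F$ --- so there is no genuine obstruction. You should either note this mild extension explicitly, or restrict the amenability claim to $\pi$-finite $A$, which is the only case used later in the paper.
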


\begin{proof}
This follows from \propref{Nil_Conservativity_Dualizable}, as $\one_{\mathcal{C}}$
is a dualizable object.
\end{proof}

\section{Applications to Chromatic Homotopy Theory
\label{sec:Applications_to_Chromatic}}

In this final section, after fixing some notation and terminology\footnote{We refer the reader to \cite{ravenel2016nilpotence}, for a comprehensive treatment of the fundamentals of chromatic homotopy theory.} 
we apply the general theory developed in the previous sections to chromatic homotopy theory. We begin by studying the consequences of $1$-semiadditivity to nilpotence in the homotopy groups of $\bb E_{\infty}$ (and $H_{\infty}$)-ring spectra and May's conjecture. Then, we prove the main theorem regarding the $\infty$-semiadditivity of $\Sp_{T\left(n\right)}$ and derive some corollaries. Finally, we study higher semiadditivity for localizations with respect to general weak rings (a generalization of a homotopy ring) and the various notions of ``bounded height'' for them.

Throughout, we fix a prime $p$ which will be implicit in all definitions that depend on it, except when explicitly stated otherwise.

\subsection{Generalities of Chromatic homotopy Theory}

We begin with some generalities, mainly to fix terminology and notation.
Let $\left(\Sp,\otimes,\bb S\right)$ be the symmetric monoidal $\infty$-category
of spectra (see \cite[Corollary 4.8.2.19]{ha})\footnote{This $\infty$-category can be also obtained using a symmetric monoidal model category as in \cite{ekmm} or \cite{SymSpectra}.}.

\subsubsection{Localizations, Rings and Modules}

Recall from \cite[Section 5.2.7]{htt} that a functor $L\colon \Sp\to \Sp$ is called a \emph{localization functor} if it factors as a composition $\Sp\to \Sp_L \to \Sp$, where the second functor is fully faithful and the first is its left adjoint. We abuse notation and denote by $L$ also the left adjoint $\Sp\to \Sp_L$ itself.  
We call a map $f$ in $\Sp$ an $L$-equivalence, if $L(f)$ is an isomorphism. 
As in \cite[Definition 2.2.1.6, Example 2.2.1.7]{ha}, a functor $L\colon \Sp \to \Sp$ is said to be compatible with the symmetric monoidal structure, if $L$-equivalences are closed under tensor product with all objects of $\mathcal{C}$.

\begin{defn}
	\label{defn: tensor localization}
	A localization functor $L\colon \Sp \to \Sp$ is called a $\otimes$-localization if $L$ is compatible with the symmetric monoidal structure. 
\end{defn}

Note that a localization functor $L\colon \Sp \to \Sp$ is a $\otimes$-localization, if and only if the $L$-acyclic objects are closed under desuspension. 

%In this case we say that $L$ is a $\otimes$-localization.   

%\begin{defn}
%	Let $L:\Sp_{\left(p\right)}\to\Sp_{\left(p\right)}$ be a localization
%	functor. 
%	\begin{enumerate}
%		\item We denote by $\Sp_{L}$ the essential image of $L$. 
%		\item We say that $L$ is a $\otimes$-localization if $L$-equivalences
%		are compatible with the symmetric monoidal structure. 
%	\end{enumerate}
%\end{defn}

\begin{prop}
	\label{prop:Sp_Localization} For every $\otimes$-localization $L\colon \Sp\to \Sp$, the $\infty$-category
	$\Sp_L$ is stable, presentable and admits a structure
	of a presentably symmetric monoidal $\infty$-category $\left(\Sp_{L},\widehat{\otimes},L\bb S \right)$, such that the functor $L\colon \Sp\to \Sp_L$ is symmetric monoidal. Moreover, the inclusion $\Sp_L\into \Sp$ admits a canonical lax symmetric monoidal structure.  
	Finally, for all $X,Y\in \Sp_{L}$ we have
	\[
	X\widehat{\otimes}Y\simeq L\left(X\otimes Y\right).
	\]
\end{prop}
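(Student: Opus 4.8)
The plan is to obtain everything from the general machinery of Bousfield localizations of presentable symmetric monoidal $\infty$-categories, as developed in \cite[Section 2.2, Section 4.1.7]{ha}. First I would recall that $\Sp$ is presentable and stable, and that a $\otimes$-localization $L$ is by definition compatible with the symmetric monoidal structure in the sense of \cite[Definition 2.2.1.6]{ha}: the class of $L$-equivalences is closed under tensoring with arbitrary objects. By \cite[Proposition 2.2.1.9]{ha}, the localization $\Sp_L$ then inherits a symmetric monoidal structure for which the left adjoint $L\colon \Sp \to \Sp_L$ is symmetric monoidal, with unit $L\bb S$, and the tensor product $\widehat{\otimes}$ is computed by the last displayed formula $X\widehat{\otimes}Y \simeq L(X\otimes Y)$ (this formula is part of the construction in loc. cit.). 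The fully faithful right adjoint $\Sp_L \into \Sp$ then acquires a lax symmetric monoidal structure as the right adjoint of a symmetric monoidal functor, by \cite[Corollary 7.3.2.7]{ha} (oplax/lax adjoint functor theorem for monoidal structures). Presentability of $\Sp_L$ is automatic since it is a localization of a presentable $\infty$-category \cite[Proposition 5.5.4.15]{htt}, and presentability together with the closure of $L$-equivalences under desuspension (the remark just above the proposition) gives that $\Sp_L$ is a presentably symmetric monoidal $\infty$-category.

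The one genuinely chromatic-flavored point, and the main thing to check, is that $\Sp_L$ is \emph{stable}. Here I would use the remark preceding the proposition: a localization functor $L$ on $\Sp$ is a $\otimes$-localization precisely when the $L$-acyclic objects form a subcategory closed under desuspension, equivalently under suspension, i.e. it is closed under the (invertible) shift functor. Since $\Sp_L$ is a reflective subcategory of the stable $\infty$-category $\Sp$ whose local objects are the orthogonal complement of the acyclics, stability of $\Sp_L$ follows from the general fact that a reflective localization of a stable $\infty$-category at a class of morphisms closed under desuspension is again stable --- see \cite[Proposition 1.4.4.1, Remark 1.4.4.2]{ha}, or argue directly that $\Sp_L$ is closed under finite limits and colimits in $\Sp$ and contains a zero object, so the shift functor restricts to an equivalence on $\Sp_L$ and $\Sp_L$ is pointed with all pushouts being pullbacks. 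I expect this stability verification to be the main (though still routine) obstacle, since everything else is a direct citation of the monoidal localization package.

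Finally I would assemble the pieces into the statement exactly as phrased: stability and presentability of $\Sp_L$; the presentably symmetric monoidal structure $(\Sp_L, \widehat{\otimes}, L\bb S)$ with $L$ symmetric monoidal; the canonical lax symmetric monoidal structure on the inclusion; and the formula $X\widehat{\otimes}Y \simeq L(X\otimes Y)$, noting that for $X,Y \in \Sp_L$ the object $X\otimes Y$ computed in $\Sp$ need not be $L$-local, which is exactly why one applies $L$. I would also remark that compatibility of $\widehat{\otimes}$ with colimits in each variable follows from the fact that $L$ preserves colimits (being a left adjoint) and $\otimes$ on $\Sp$ is cocontinuous in each variable, so $\Sp_L$ is indeed presentably symmetric monoidal. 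No separate argument is needed for the unit being $L\bb S$: it is the image under the symmetric monoidal left adjoint $L$ of the unit $\bb S$.
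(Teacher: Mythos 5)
Your proposal is correct and follows the same overall decomposition as the paper: presentability from \cite[Proposition 5.5.4.15]{htt}, the symmetric monoidal structure, the formula $X\widehat{\otimes}Y\simeq L(X\otimes Y)$, and the lax structure on the inclusion all from the monoidal localization package \cite[Proposition 2.2.1.9]{ha}, with stability singled out as the one remaining point. The only genuine divergence is in how stability is verified. The paper first notes that $\Sp_L$ is pointed and then invokes \cite[Corollary 1.4.2.27]{ha}, checking that $\Sigma\colon\Sp_L\to\Sp_L$ is an equivalence because its inverse is $(-)\widehat{\otimes}L(\Sigma^{-1}\bb S)$ --- a short argument that reuses the monoidal structure just constructed. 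You instead argue that the local objects form a subcategory of $\Sp$ closed under finite limits and colimits. That works, but the closure under cofibers is the point you leave implicit: it holds precisely because the acyclics are closed under desuspension, so locality of $X$ is equivalent to vanishing of the mapping \emph{spectra} $F(A,X)$ for $A$ acyclic, and these behave exactly on cofiber sequences; without that observation, reflective subcategories are in general only closed under limits. Also, your citation of \cite[Proposition 1.4.4.1]{ha} for this step does not look like the right reference, though the direct argument you sketch makes it dispensable. The paper's route is slightly slicker and avoids any discussion of how $\Sp_L$ sits inside $\Sp$; yours is more hands-on but needs the mapping-spectrum remark to be airtight.
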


\begin{proof}
Applying \cite[Proposition 5.5.4.15]{htt} to the collection of $L$-equivalences, we deduce that $\Sp_L$ is presentable. Since $L$ is a $\otimes$-localization, all claims except for the stability of $\Sp_L$ follow from \cite[Proposition 2.2.1.9]{ha}. 
Now,  since $\Sp$ is pointed, so is $\Sp_L$ (e.g. from \corref{Semi_Add_Mode}). To show the stability of $\Sp_L$ by \cite[Corollary 1.4.2.27]{ha} it is enough to show that $\Sigma \colon \Sp_L \to \Sp_L$ is an equivalence. Indeed, this functor has an inverse, given by tensoring with $L\left(\Sigma^{-1} \bb{S}\right)$. 
\end{proof}

For every spectrum $E\in\Sp$, we denote by $L_E\colon \Sp \to \Sp$ the $\otimes$-localization with essential image the $E$-local spectra\footnote{This functor is also called Bousfield localization after Bousfield who originally constructed it in \cite{BousLoc}.}. We denote  $\Sp_{L_E}$ by $\Sp_E$  and $L_E(\bb{S})$ by $\bb{S}_E$ .  
For a prime $p$, we shall consider also $\otimes$-localizations $L\colon \Sp_{(p)}\to \Sp_{(p)}$. The analogous results and notation apply to the $p$-local case as well.  

\begin{prop}
Let $E\in\Sp$ and let $R$ be an $E$-local $\bb E_{\infty}$-ring. The
$\infty$-category $\Mod_{R}^{\left(E\right)}$ of left modules over
$R$ in the symmetric monoidal $\infty$-category $\Sp_{E}$, is presentable
and admits a structure of a presentably symmetric monoidal $\infty$-category.
Moreover, we have a free-forgetful adjunction
\[
F_{R}\colon\Sp_{E}\adj\Mod_{R}^{\left(E\right)}\colon U_{R},
\]
in which $F_{R}$ is symmetric monoidal.
\end{prop}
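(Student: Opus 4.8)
The statement to prove is a standard fact about modules over $\mathbb{E}_\infty$-rings in a presentably symmetric monoidal stable $\infty$-category, specialized to $\mathcal{C} = \Sp_E$. So the proof is essentially an application of Lurie's machinery in Higher Algebra, and I should cite the relevant results rather than reprove them.

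\textbf{The plan.} The idea is to invoke the general theory of modules in presentably symmetric monoidal $\infty$-categories. First, by \propref{Sp_Localization}, the $\infty$-category $\Sp_E$ is presentably symmetric monoidal and stable. Given an $\mathbb{E}_\infty$-algebra $R$ in $\Sp_E$, the $\infty$-category $\Mod_R^{(E)} = \LMod_R(\Sp_E)$ of left $R$-modules inherits good properties: by \cite[Theorem 4.5.2.1 and Corollary 4.2.3.7]{ha}, since $\Sp_E$ is presentable and the tensor product preserves colimits separately in each variable, $\LMod_R(\Sp_E)$ is presentable. Moreover, since $R$ is in fact an $\mathbb{E}_\infty$-algebra (commutative), \cite[Theorem 4.5.2.1]{ha} endows $\Mod_R^{(E)}$ with a symmetric monoidal structure — the relative tensor product $\otimes_R$ — and by \cite[Corollary 4.5.2.5 (and the surrounding discussion, e.g. Example 4.5.4.7)]{ha} this symmetric monoidal structure is again presentably symmetric monoidal, i.e. $\otimes_R$ preserves colimits in each variable separately (this uses that $\otimes$ in $\Sp_E$ does).

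\textbf{The free-forgetful adjunction.} For the second part, the forgetful functor $U_R \colon \Mod_R^{(E)} \to \Sp_E$ preserves limits and colimits and admits a left adjoint $F_R = R \otimes (-)$ by \cite[Corollary 4.2.4.8]{ha} (the free module functor). That $F_R$ is symmetric monoidal follows from \cite[Theorem 4.5.3.1]{ha}: relative tensoring $R \otimes_{\one} (-)$ along the unit map $\one_{\Sp_E} = \bb S_E \to R$ of $\mathbb{E}_\infty$-algebras is a symmetric monoidal left adjoint to the forgetful functor. Equivalently, one can observe that $\Mod_R^{(E)} \simeq \Mod_R(\Sp_E)$ sits in the bottom of the bar construction and $F_R$ is base change along $\bb S_E \to R$, which is symmetric monoidal by general nonsense about base change of module categories.

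\textbf{Main obstacle.} There is essentially no genuine obstacle here — the result is a direct citation from \cite{ha}. The only mild point of care is bookkeeping: one must make sure that the hypotheses of the cited theorems (presentability of the ambient category, the tensor product being compatible with colimits, $R$ being commutative rather than merely associative) are all in place, which they are by \propref{Sp_Localization}. I would write the proof in two sentences, citing \cite[Theorem 4.5.2.1]{ha} for the presentably symmetric monoidal structure on $\Mod_R^{(E)}$ and \cite[Corollary 4.2.4.8, Theorem 4.5.3.1]{ha} for the free-forgetful adjunction with $F_R$ symmetric monoidal. Accordingly, here is the proof:

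\begin{proof}
By \propref{Sp_Localization}, $\Sp_E$ is a presentably symmetric monoidal stable $\infty$-category; in particular it is presentable and its tensor product preserves colimits separately in each variable. Since $R$ is an $\bb E_\infty$-algebra in $\Sp_E$, \cite[Theorem 4.5.2.1]{ha} shows that $\Mod_R^{(E)} = \LMod_R(\Sp_E)$ carries a symmetric monoidal structure, given by the relative tensor product $\otimes_R$, and the forgetful functor $U_R \colon \Mod_R^{(E)} \to \Sp_E$ is lax symmetric monoidal, preserves all limits and colimits, and is conservative. Presentability of $\Mod_R^{(E)}$ follows from \cite[Corollary 4.2.3.7]{ha}, and the fact that $\otimes_R$ preserves colimits in each variable separately (so that $\Mod_R^{(E)}$ is presentably symmetric monoidal) follows from the corresponding property of $\otimes$ on $\Sp_E$ via \cite[Corollary 4.5.2.5]{ha}.

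For the adjunction, $U_R$ admits a left adjoint $F_R$ by \cite[Corollary 4.2.4.8]{ha}, given on objects by $F_R(X) \simeq R \otimes X$. Finally, identifying $F_R$ with base change along the unit map $\bb S_E \to R$ of $\bb E_\infty$-algebras, \cite[Theorem 4.5.3.1]{ha} exhibits $F_R$ as a symmetric monoidal functor.
\end{proof}
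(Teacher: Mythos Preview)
Your proposal is correct and takes essentially the same approach as the paper: both proofs consist entirely of invoking the standard results from \cite{ha} on module categories over $\bb E_\infty$-algebras in a presentably symmetric monoidal $\infty$-category. The specific citation numbers differ slightly (the paper cites \cite[Corollary 4.5.1.5, Theorem 4.5.3.1, Corollary 4.2.3.7, Remark 4.2.3.8, Remark 4.5.3.2]{ha} rather than your choices), but the logical content is the same.
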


\begin{proof}
\cite[Corollary 4.5.1.5]{ha} identifies modules over $R$ as an $\bb E_{\infty}$-ring
with left modules over $R$ as an $\bb E_{1}$-ring. By \cite[Theorem 4.5.3.1]{ha}
and \cite[Corollary 4.2.3.7]{ha} this $\infty$-category is equipped
with a presentably symmetric monoidal structure. By \cite[Remark 4.2.3.8]{ha}
and \cite[Remark 4.5.3.2]{ha} applied to the map of algebras $\bb S_{E}\to R$,
we have the adjunction $F_{R}\dashv U_{R}$, such that $F_{R}$ is
symmetric monoidal.
\end{proof}

We shall also consider the following much weaker notion of a ``ring'' spectrum:
\begin{defn}
	\label{def:Weak_Ring}
	A \emph{weak ring}\footnote{It is called \emph{$\mu$-spectrum} in \cite[Definition 4.8]{hoveystrickland}}
	is a spectrum $R\in\Sp$, together with a ``unit'' map $u{\colon}\bb S\to R$
	and a ``multiplication'' map $\mu{\colon}R\otimes R\to R$, such that the
	composition 
\[
\xymatrix{
	R\ar[r]^-{u\otimes\Id} & R\otimes R\ar[r]^-{\mu} & R,
}
\]	
	is homotopic to the identity. 
\end{defn}

\begin{example}
	\label{example: telescopic weak rings}
	Every homotopy-ring is a weak ring.
\end{example}

\begin{lem}
	\label{lem:Tensor_Weak_Rings} 
	Let $R$ and $S$ be weak rings. Then $R\otimes S$ 
	is a weak ring. 
\end{lem}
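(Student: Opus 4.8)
The statement asserts that the tensor product $R\otimes S$ of two weak rings is again a weak ring. The plan is to equip $R\otimes S$ with the evident unit and multiplication maps and then verify the one-sided unit axiom of \defref{Weak_Ring}. First I would define the unit map as the composition $\bb S\simeq\bb S\otimes\bb S\oto{u_R\otimes u_S}R\otimes S$. For the multiplication, I would use the symmetry of $\otimes$ on $\Sp$: define $\mu_{R\otimes S}$ as the composite
\[
(R\otimes S)\otimes(R\otimes S)\xrightarrow{\;\Id\otimes\sigma\otimes\Id\;}(R\otimes R)\otimes(S\otimes S)\xrightarrow{\;\mu_R\otimes\mu_S\;}R\otimes S,
\]
where $\sigma\colon S\otimes R\iso R\otimes S$ is the swap.

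\textbf{Key steps.} The verification that the composite $R\otimes S\xrightarrow{u_{R\otimes S}\otimes\Id}(R\otimes S)\otimes(R\otimes S)\xrightarrow{\mu_{R\otimes S}}R\otimes S$ is homotopic to the identity reduces, after unwinding the definitions and using the interchange/naturality of the symmetry isomorphism, to the statement that the diagram built out of $(u_R\otimes\Id_R)$ followed by $\mu_R$ on the first factor, and $(u_S\otimes\Id_S)$ followed by $\mu_S$ on the second factor, each composes to the identity. But that is exactly the one-sided unit axiom for $R$ and for $S$ respectively. So the proof is: write down the maps, then paste together the two naturality squares for $\sigma$ with the two given homotopies $\mu_R\circ(u_R\otimes\Id)\simeq\Id_R$ and $\mu_S\circ(u_S\otimes\Id)\simeq\Id_S$, together with the coherence isomorphisms of the symmetric monoidal structure on $\Sp$ (associativity, unitality, and the hexagon for $\sigma$), to produce the required homotopy $\mu_{R\otimes S}\circ(u_{R\otimes S}\otimes\Id)\simeq\Id_{R\otimes S}$.

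\textbf{Main obstacle.} There is no real obstacle here — this is a routine diagram chase in a symmetric monoidal $\infty$-category, and the only mild care needed is to keep track of the symmetry isomorphism $\sigma$ and the coherence data rather than treating $\otimes$ as strictly symmetric. One could even phrase it more slickly: the functor $(-)\otimes S\colon\Sp\to\Sp$ together with the lax monoidal-type structure it inherits sends the ``weak ring'' data of $R$ to weak ring data on $R\otimes S$ when $S$ itself carries such data; but for a self-contained argument the explicit diagram chase above suffices, and I would present just that.
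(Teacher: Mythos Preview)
Your proposal is correct and is precisely the explicit unpacking of what the paper means by ``follows directly from the definition'': you equip $R\otimes S$ with $u_R\otimes u_S$ and the shuffled multiplication $\mu_R\otimes\mu_S$, then reduce the one-sided unit axiom to those of $R$ and $S$ via the symmetric monoidal coherence. There is nothing to add.
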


\begin{proof}
This follows directly from the definition.
\end{proof}

Our interest in weak rings stems from the fact that they include
a large class of spectra of interest and have just enough structure
to invoke the Nilpotence Theorem (see \thmref{Nilpotence}).

\subsubsection{Morava Theories}

Given an integer $n\ge0$, let $E_{n}$ be a 2-periodic Morava $E$-theory
of height $n$ with coefficients (for $n\ge1$)

\[
\pi_{*}E_{n}\simeq\bb Z_{p}[[u_{1},\dots,u_{n-1}]][u^{\pm1}],\quad\left|u_{i}\right|=0,\ \left|u\right|=2,
\]

and let $\K\left(n\right)$ be a $2$-periodic Morava $K$-theory
of height $n$ with coefficients ($n\ge1$)
\[
\pi_{*}\K\left(n\right)=\bb F_{p}[u^{\pm1}],\quad\left|u\right|=2.
\]
The spectrum $E_{n}$ admits an $\bb E_{\infty}$-ring structure in
$\Sp$ (by \cite{GoerssH}). The spectrum $\K\left(n\right)$ is obtained from the even $\bb{E}_{\infty}$-ring $E_{n}$ by taking the quotient with respect to the (regular) sequence $(p,u_1,\dots,u_{n-1})$ and hence admits an $\bb{E}_1$-ring structure with an $\bb{E}_{1}$-ring map $E_{n}\to\K\left(n\right)$ (see e.g. \cite{QuotEvenRings}). 
Since $E_{n}$ is $\K\left(n\right)$-local,
we can also view it as an $\bb E_{\infty}$-ring in the $\infty$-category $\Sp_{K\left(n\right)}$. We shall use the notation $\widehat{\Mod}_{E_n}$ for $\Mod_{E_n}^{(K(n))}$.

We shall make an essential use of the dualizability and dimension of Eilenberg-MacLane spaces in $\widehat{\Mod}_{E_n}$. First, there is a general criterion for a space to be dualizabe in $\widehat{\Mod}_{E_n}$: 
\begin{lem}
\label{lem:Morava_Dimension}Let $n\ge0$ and let $X$ be a space.
If 
\[
\dim_{\bb F_{p}}\left(\K\left(n\right)_{0}\left(X\right)\right)=d<\infty\qquad and\qquad\K\left(n\right)_{1}\left(X\right)=0,
\]
then $X$ is dualizable in $\widehat{\Mod}_{E_n}$
and 
\[
    \dim_{\widehat{\Mod}_{E_n}}\left(X\right)=d.
\]
\end{lem}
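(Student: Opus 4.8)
The plan is to reduce the statement to the corresponding statement about the $K(n)$-local category of spectra, where the dualizability of $X$ (more precisely of the constant local system $\mathbb{S}_{K(n)}[X] = \Sigma^\infty_+ X$ localized at $K(n)$) under the stated hypotheses is essentially the computation of $K(n)_*(X)$, and then transport it along the free functor $F_{E_n}\colon \Sp_{K(n)} \to \widehat{\Mod}_{E_n}$.

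First I would recall that $\widehat{\Mod}_{E_n}$ is a presentably symmetric monoidal stable $\infty$-category with unit $E_n$, and that the free-forgetful adjunction $F_{E_n}\colon \Sp_{K(n)} \rightleftarrows \widehat{\Mod}_{E_n}\colon U_{E_n}$ has $F_{E_n}$ symmetric monoidal and colimit-preserving. Writing $a\colon X \to \pt$, the object ``$X$'' in $\widehat{\Mod}_{E_n}$ is by definition $\one_X = a_! a^* E_n$, the colimit over $X$ of the constant diagram on $E_n$; since $F_{E_n}$ is symmetric monoidal and colimit-preserving, $\one_X \simeq F_{E_n}(\mathbb{S}_X^{K(n)})$ where $\mathbb{S}_X^{K(n)}$ is the image of $X$ in $\Sp_{K(n)}$, i.e. the $K(n)$-localization of $\Sigma^\infty_+ X$. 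A symmetric monoidal functor sends dualizable objects to dualizable objects and preserves the evaluation/coevaluation data, hence preserves dimension: $\dim_{\widehat{\Mod}_{E_n}}(\one_X) = F_{E_n}(\dim_{\Sp_{K(n)}}(\mathbb{S}_X^{K(n)}))$ in $\pi_0 E_n$. So it suffices to show $\mathbb{S}_X^{K(n)}$ is dualizable in $\Sp_{K(n)}$ with dimension $d$ under the hypotheses.

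Next I would establish dualizability of $\mathbb{S}_X^{K(n)}$ directly from $K(n)$-homology. The standard fact (Hovey--Strickland) is that a $K(n)$-local spectrum $Y$ is dualizable in $\Sp_{K(n)}$ if and only if $K(n)_*(Y)$ is a finitely generated (equivalently, degreewise finite-dimensional) module over $K(n)_*$; this is because $K(n)_*$ is a graded field, so finite generation forces $K(n)_*(Y)$ to be free of finite rank, and one can run the usual argument that $\map_{\Sp_{K(n)}}(Y, -)$ then commutes with the relevant colimits. Under the hypotheses $\dim_{\mathbb{F}_p} K(n)_0(X) = d < \infty$ and $K(n)_1(X) = 0$, the $K(n)_*$-module $K(n)_*(X)$ is free of rank $d$ concentrated in even degrees (using $2$-periodicity), so $\mathbb{S}_X^{K(n)}$ is dualizable. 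Then the dimension in $\Sp_{K(n)}$ is computed, as for any field-linear situation, as the alternating trace of the identity on $K(n)_*(\mathbb{S}_X^{K(n)}) \simeq K(n)_*(X)$ — more precisely, applying the (lax symmetric monoidal, and on dualizable objects actually trace-preserving) functor $K(n)_*(-)\colon \Sp_{K(n)} \to \Mod_{K(n)_*}$ sends $\dim_{\Sp_{K(n)}}(\mathbb{S}_X^{K(n)})$ to the categorical dimension of the graded vector space $K(n)_*(X)$, which is $\dim K(n)_0(X) - \dim K(n)_1(X) = d - 0 = d$. Since the unit map $\pi_0\mathbb{S}_{K(n)} \to \pi_0 E_n$ sends the integer $d$ to $d$, and $F_{E_n}$ induces this map on $\pi_0$ of endomorphism rings of the unit, we conclude $\dim_{\widehat{\Mod}_{E_n}}(X) = d$.

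The main obstacle I expect is the bookkeeping around the trace/dimension comparison: one needs that applying a (suitably monoidal) functor to a dualizable object does not merely preserve dualizability but actually carries the categorical dimension to the categorical dimension — for the functor $K(n)_*(-)$ this requires knowing it is symmetric monoidal on the subcategory of dualizable $K(n)$-local spectra (a Künneth statement, valid since $K(n)$ is a field-like ring spectrum), and for $F_{E_n}$ it is immediate from symmetric monoidality. An alternative, perhaps cleaner, route for the dimension formula is to invoke \corref{Dim_Sym}: since $\widehat{\Mod}_{E_n}$ and $\Sp_{K(n)}$ are $\infty$-semiadditive (or at least $m$-semiadditive for the relevant $m$), for nice $X$ one has $\dim_{\mathcal{C}}(X) = |X^{S^1}|$, reducing everything to an integration computation; however, since the lemma as stated only assumes $X$ is an arbitrary space with finite even $K(n)$-homology, I would lean on the Hovey--Strickland dualizability criterion and the Künneth comparison rather than on higher semiadditivity, to keep the proof self-contained and applicable before $\infty$-semiadditivity of $\Sp_{T(n)}$ is available.
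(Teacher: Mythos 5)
Your dualizability argument is fine: the Hovey--Strickland criterion (a $\K\left(n\right)$-local spectrum is dualizable iff its $\K\left(n\right)_*$-homology is finitely generated) does apply, and a symmetric monoidal functor such as $F_{E_n}$ preserves dualizability and dimension. The paper, however, proves both claims in one stroke by citing the statement that under the stated hypotheses $L_{K\left(n\right)}\left(E_{n}\otimes\Sigma^{\infty}X_{+}\right)\simeq E_{n}^{d}$ as $E_n$-modules (Hopkins--Lurie Prop.\ 3.4.3, Hovey--Strickland Prop.\ 8.4), after which dualizability and $\dim = d$ are immediate for a free module of rank $d$.

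The genuine gap in your proposal is the dimension computation. The quantity $\dim_{\Sp_{K\left(n\right)}}\left(\mathbb{S}_X^{K(n)}\right)$ lives in $\pi_0\bb S_{K\left(n\right)}$, and its image in $\pi_0 E_n=\bb Z_p[[u_1,\dots,u_{n-1}]]$ is what the lemma is about. Applying the monoidal functor $\K\left(n\right)_*(-)$ only computes the image of this dimension under the further map $\pi_0 E_n\to\pi_0\K\left(n\right)=\bb F_p$, i.e.\ it determines the answer only modulo the maximal ideal. That is far too weak: for the intended application $X=B^kC_p$ the answer is $p^{\binom{n}{k}}\equiv 0 \bmod p$, so the mod-$p$ information carries no content. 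Your final sentence (``since the unit map sends the integer $d$ to $d$\dots we conclude $\dim_{\widehat{\Mod}_{E_n}}(X)=d$'') silently upgrades a congruence mod $p$ to an exact identity in $\pi_0 E_n$. To close the gap one must show that the \emph{completed $E_n$-homology} of $X$ is a free $\pi_*E_n$-module of rank $d$ concentrated in even degrees --- this is where the hypothesis $\K\left(n\right)_1(X)=0$ is used in an essential, non-formal way (via an Atiyah--Hirzebruch/Nakayama-type lifting of a $\K\left(n\right)_*$-basis to an $E_{n*}$-basis) --- and that is precisely the external input the paper's one-line proof invokes.
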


\begin{proof}
By \cite[Proposition 3.4.3]{HopkinsLurie} (see also \cite[Proposition 8.4]{hoveystrickland}), there is an isomorphism
of $E_{n}$-modules
\[
L_{K\left(n\right)}\left(E_{n}\otimes\Sigma^{\infty}X_{+}\right)\simeq E_{n}^{d},
\]
from which the claim follows immediately.
\end{proof}
\begin{rem}
Using \cite[Proposition 3.4.3]{HopkinsLurie} together with \cite[Proposition 10.11]{mathew_galois}, one can deduce that for every dualizable object $M\in\widehat{\Mod}_{E_n}$,
we have 
\[
\dim_{\widehat{\Mod}_{E_n}}\left(M\right)=\dim_{\bb F_{p}}\left(\pi_{0}\left(\K\left(n\right)\otimes_{E_{n}}M\right)\right)-\dim_{\bb F_{p}}\left(\pi_{1}\left(\K\left(n\right)\otimes_{E_{n}}M\right)\right).
\]
But we shall not need this fact.
\end{rem}

Using the classical computations of Ravenel and Wilson we get the following:
\begin{cor}
\label{cor:Morava_Dimension_EM} For all $k\in\bb N$, we have 
\[
\dim_{\widehat{\Mod}_{E_n}}\left(B^{k}C_{p}\right)=p^{\binom{n}{k}}\quad\in\pi_{0}\left(E_{n}\right).
\]
In particular, these are all rational and non-zero.
\end{cor}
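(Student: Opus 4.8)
The plan is to deduce this directly from \lemref{Morava_Dimension}, taking $X = B^{k}C_{p}$; the two numerical inputs that the lemma requires are exactly what the Ravenel--Wilson computation of the Morava $K$-homology of Eilenberg--MacLane spaces provides \cite{RavenelWilson}. So the first step is to recall the relevant special case of that computation: for every $n\ge 0$ and every $k\in\bb N$, the graded $\K\left(n\right)_{*}$-module $\K\left(n\right)_{*}\left(B^{k}C_{p}\right)$ is free of finite rank and concentrated in even degrees, and its rank is $p^{\binom{n}{k}}$ (in particular $B^{k}C_{p}$ has trivial reduced Morava $K$-homology once $k>n$, where $\binom{n}{k}=0$). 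A small bookkeeping point is the translation between periodicity conventions: \cite{RavenelWilson} works with a version of $\K\left(n\right)$ of larger periodicity, but since our $2$-periodic $\K\left(n\right)$ is obtained from it by base change along a map of even-graded rings, freeness, the rank, and the even-concentration are all preserved. Hence $\K\left(n\right)_{1}\left(B^{k}C_{p}\right)=0$ and $\dim_{\bb F_{p}}\K\left(n\right)_{0}\left(B^{k}C_{p}\right)=p^{\binom{n}{k}}<\infty$.

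With these two facts in hand, \lemref{Morava_Dimension} immediately yields that $B^{k}C_{p}$ is dualizable in $\widehat{\Mod}_{E_n}$ and that $\dim_{\widehat{\Mod}_{E_n}}\left(B^{k}C_{p}\right)=p^{\binom{n}{k}}\in\pi_{0}\left(E_{n}\right)$. For the final assertion, note that $p^{\binom{n}{k}}$ is the image under the unit map $\bb Z\to\pi_{0}\left(E_{n}\right)$ of a power of $p$, hence is rational in the sense of the preceding subsection; and since $\pi_{0}\left(E_{n}\right)$ is torsion-free as an abelian group (it contains $\bb Z_{p}$ for $n\ge 1$, and is a $\bb Q$-algebra for $n=0$), this power of $p$ is non-zero.

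I do not expect any genuine obstacle here: the statement is a routine corollary of \lemref{Morava_Dimension} once the Ravenel--Wilson input is quoted. The only step requiring a modicum of care is the periodicity translation above, together with checking that the rank appearing in \cite{RavenelWilson} is indeed $p^{\binom{n}{k}}$ with the degree $k$ in the binomial coefficient (consistent, e.g., with $\K\left(n\right)_{*}(B C_p)$ having rank $p^{n}=p^{\binom{n}{1}}$ and $\K\left(n\right)_{*}(C_p)$ having rank $p=p^{\binom{n}{0}}$).
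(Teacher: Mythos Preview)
Your proof is correct and follows essentially the same approach as the paper: quote the Ravenel--Wilson computation to verify the two hypotheses of \lemref{Morava_Dimension}, then apply that lemma. Your additional remarks on the periodicity translation and on why $p^{\binom{n}{k}}$ is rational and non-zero are reasonable elaborations, but the core argument is identical.
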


\begin{proof}
By \cite[Theorem 9.2]{RavenelWilson}, we have 
\[
\dim_{\bb F_{p}}\K\left(n\right)_{0}\left(B^{k}C_{p}\right)=p^{\binom{n}{k}}\qquad\text{and}\qquad\K\left(n\right)_{1}\left(B^{k}C_{p}\right)=0.
\]
Hence, the result follows from \lemref{Morava_Dimension}.
\end{proof}

\subsubsection{Telescopic Localizations}

\begin{defn}
A finite $p$-local spectrum $X$, i.e. a compact object in the $\infty$-category
$\Sp_{\left(p\right)}$, is said to be of \emph{type $n$}, if $\K\left(n\right)\otimes X\neq0$
and $\K\left(j\right)\otimes X=0$ for $j=0,\dots,n-1$.
\end{defn}
Let $\X\left(n\right)$ be a finite $p$-local spectrum of type $n$.
Let $\bb D\X\left(n\right)=\underline{\hom}\left(\X\left(n\right),\bb S_{(p)}\right)$
be the Spanier-Whitehead dual of $\X\left(n\right)$. The finite $p$-local
spectrum
\[
R=\bb D\X\left(n\right)\otimes\X\left(n\right)=\underline{\hom}\left(\X\left(n\right),\X\left(n\right)\right),
\]
is also of type $n$ by the K\"unneth isomorphism. By replacing $\X\left(n\right)$
with $R$, we may assume that $\X\left(n\right)$ is an $\bb E_{1}$-ring
(see \cite[Section 4.7.1]{ha}). Every type $n$ spectrum $\X\left(n\right)$
admits a $v_{n}$-self map, which is a map 
\[
v\colon\Sigma^{k}\X\left(n\right)\to\X\left(n\right),
\]
that is an isomorphism on $\K\left(n\right)_{*}X$ and zero on $\K\left(j\right)_{*}\left(X\right)$
for $j\neq n$. We let
\[
\T\left(n\right)=v^{-1}\X\left(n\right)=\colim\limits _{k}\left(\X\left(n\right)\oto v\Sigma^{-k}\X\left(n\right)\oto v\Sigma^{-2k}\X\left(n\right)\oto v\dots\right),
\]
be the telescope on $v$. The canonical map $\X\left(n\right)\to\T\left(n\right)$
exhibits $\T\left(n\right)$ as the $\T\left(n\right)$-localization
of $\X\left(n\right)$ (e.g. \cite[Proposition 3.2]{mahowaldsadofskyloc}).
Since the functor $L_{T\left(n\right)}$ is symmetric monoidal, we
can consider $\T\left(n\right)=L_{T\left(n\right)}\X\left(n\right)$
as an $\bb E_{1}$-ring in $\Sp_{T\left(n\right)}$. By the Thick Subcategory and Periodicity theorems \cite{nilp2}, the localization $\Sp_{T(n)}$ depends only on the prime $p$ and the height $n$ and in particular is independent of the choice of $F(n)$ and $v$.
It is known (e.g.
\cite[Section 6 (4)]{mahowaldsadofskyloc}) that 
\[
\Sp_{K\left(n\right)}\ss\Sp_{T\left(n\right)}\ss\Sp.
\]
Thus, both $E_{n}$ and $\K\left(n\right)$ are also $\T\left(n\right)$-local,
and so we can consider them as an $\bb E_{\infty}$-ring and an $\bb E_{1}$-ring
in $\Sp_{T\left(n\right)}$ respectively. 

\subsubsection{Nilpotence Theorem}

Morava $K$-theories are used in the following definition of support: 
\begin{defn}
Let $L{\colon}\Sp_{(p)} \to \Sp_{(p)}$ be a $\otimes$-localization functor. The (chromatic) \emph{support} of $L$ is the set 
\[\supp(L)=\left\{0\le n\le \infty \mid L\left(\K\left(n\right)\right)\ne 0 \right\}\subseteq \mathbb{N}\cup \{\infty\}.\]	
For $E\in \Sp_{(p)}$ we denote $\supp(E)=\supp(L_E)$. 
\end{defn}

Note that $L_E(X)=0$ if and only if $E\otimes X=0$ and so $\supp(E)$ coincides with the usual notion of chromatic support of a spectrum. By the K\"unneth Theorem we have 
\[\supp(E\otimes E') = \supp(E)\cap \supp(E').\]

\begin{lem} 
	\label{lem:Kn_LSp}
	Let $L{\colon}\Sp_{\left(p\right)}\to\Sp_{\left(p\right)}$
	be a $\otimes$-localization functor and let $0\le n \le \infty$. Then  $n\in \supp\left(L\right)$ if and only if $\Sp_{\K\left(n\right)}\ss\Sp_{L}$. 
\end{lem}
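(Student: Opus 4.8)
The plan is to prove both implications by relating membership in the support to non-vanishing of $\K(n)$ inside $\Sp_L$, and then upgrading that to an inclusion of localizations. First I would unwind the definitions: $n\in\supp(L)$ means $L(\K(n))\neq 0$. Since $L$ is a $\otimes$-localization of $\Sp_{(p)}$, the essential image $\Sp_L$ is a localizing tensor ideal, and $L(\K(n))\neq 0$ is equivalent to the statement that $\K(n)$, viewed through the inclusion $\Sp_L\into\Sp_{(p)}$, is a nonzero object — more precisely, that $\K(n)$ is \emph{not} $L$-acyclic.

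For the direction $\Sp_{\K(n)}\ss\Sp_L \implies n\in\supp(L)$: if $\Sp_{\K(n)}\ss\Sp_L$, then in particular $\bb S_{\K(n)}\in\Sp_L$, so $\bb S_{\K(n)}$ is $L$-local. Now $\K(n)$ itself is $\K(n)$-local (it is a module over $\bb S_{\K(n)}$, or directly: $\K(n)\widehat\otimes(-)$ factors through $\Sp_{\K(n)}$), hence $\K(n)\in\Sp_{\K(n)}\ss\Sp_L$, so $L(\K(n))=\K(n)\neq 0$, giving $n\in\supp(L)$.

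For the converse $n\in\supp(L)\implies \Sp_{\K(n)}\ss\Sp_L$: the key point is that the Bousfield class of $\K(n)$ is minimal in a strong sense. Suppose $L(\K(n))\neq 0$; I want to show every $\K(n)$-local spectrum is $L$-local, equivalently that every $L$-acyclic spectrum is $\K(n)$-acyclic. So let $X$ be $L$-acyclic. Using the Nilpotence/Thick Subcategory machinery — concretely, that for a type $n$ finite spectrum $\X(n)$ one has $\langle \K(n)\rangle \le \langle \X(n)\rangle$ on $\K(n)$-acyclics, combined with the fact that $\K(n)\otimes L(\K(n))\simeq$ (a nonzero sum of suspensions of) $\K(n)$ since $\K(n)$ is a field spectrum — one shows that $L(\K(n))\neq 0$ forces $\langle L\rangle \ge \langle\K(n)\rangle$, i.e. $L$-acyclic implies $\K(n)$-acyclic. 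The cleanest route: since $\K(n)$ is a field object, $\K(n)\otimes Y$ is either $0$ or a wedge of shifts of $\K(n)$ for any $Y$; applying this with $Y = L(\mathbb S_{(p)})$ and using $L(\K(n)) = \K(n)\widehat\otimes L(\mathbb S)\ne 0$ shows $\K(n)\otimes L(\mathbb S)\ne 0$, hence for any $L$-acyclic $X$ we get $\K(n)\otimes X$ is a module over $\K(n)\otimes L(\mathbb S)\otimes X = \K(n)\otimes L(X) = 0$ wait — I must be careful, since $L$ need not be smashing. Instead I would argue via acyclics directly: $\langle L\rangle = \langle L(\mathbb S_{(p)})\rangle$ is a sub-Bousfield-class, and $L(\K(n))\ne0$ together with the field property of $\K(n)$ gives $\langle\K(n)\rangle \le \langle L(\mathbb S_{(p)})\rangle$; then $L$-local $\Rightarrow$ $L(\mathbb S_{(p)})$-local $\Rightarrow$ $\K(n)$-local fails in the wrong direction, so in fact one shows the acyclic inclusion $L\text{-acyclic}\subseteq \K(n)\text{-acyclic}$, which is exactly $\Sp_{\K(n)}\ss\Sp_L$.

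The main obstacle is precisely this last step: the localization $L$ need not be smashing, so one cannot simply tensor. The resolution is the standard chromatic fact that the Bousfield class $\langle\K(n)\rangle$ is minimal among nonzero classes below $\langle\X(n)\rangle$ for $\X(n)$ of type $n$ (a consequence of the Thick Subcategory Theorem of \cite{nilp2}, already invoked in the excerpt), together with the field-spectrum property of $\K(n)$: any $\otimes$-localization not killing $\K(n)$ must have Bousfield class at least $\langle\K(n)\rangle$. I would state this as the crux, cite the Nilpotence/Thick Subcategory theorems, and deduce the acyclic containment, hence the localization containment $\Sp_{\K(n)}\ss\Sp_L$.
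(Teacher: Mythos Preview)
Your easy direction ($\Sp_{\K(n)}\ss\Sp_L \Rightarrow n\in\supp(L)$) is correct and matches the paper.

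For the converse you are making things much harder than necessary, and as written the argument is not complete. You correctly identify the field property of $\K(n)$ as the key ingredient, but then retreat from a direct argument (``wait --- $L$ need not be smashing'') and fall back on a vague appeal to minimality of $\langle\K(n)\rangle$ via the Thick Subcategory Theorem. That machinery is neither needed nor obviously applicable in the form you state it.

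The point you are missing is precisely what ``$\otimes$-localization'' buys you: the $L$-acyclics are closed under tensoring with arbitrary objects. So if $X$ is $L$-acyclic, then $\K(n)\otimes X$ is $L$-acyclic --- no smashing hypothesis required. Now use the field property: $\K(n)\otimes X$ is a direct sum of suspensions of $\K(n)$, so if it were nonzero, $\K(n)$ would be a retract of an $L$-acyclic spectrum, hence itself $L$-acyclic, contradicting $L(\K(n))\neq 0$. Therefore $\K(n)\otimes X = 0$, i.e.\ $L$-acyclic implies $\K(n)$-acyclic, which is exactly $\Sp_{\K(n)}\ss\Sp_L$. This is the paper's argument; it is two lines and uses nothing beyond the definition of $\otimes$-localization and the field property you already mentioned.
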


\begin{proof}
	If $\Sp_{\K\left(n\right)} \ss \Sp_{L}$ then in particular $\K\left(n\right) \in \Sp_{L}$ and hence \[L(\K\left(n\right))=\K\left(n\right)\ne 0.\] 
	Conversely, we need to show that if $L\left(X\right)=0$, then $\K\left(n\right)\otimes X$
	is zero. We have
	\[
	L\left(\K\left(n\right)\otimes X\right)\simeq L\left(\K\left(n\right)\right)\,\widehat{\otimes}\,L\left(X\right)=0.
	\]
	Since $\K\left(n\right)\otimes X$ is a direct sum of
	suspended copies of $\K\left(n\right)$, if $\K\left(n\right)\otimes X\neq0$,
	then up to a suspension, $\K\left(n\right)$ is a retract of $\K\left(n\right)\otimes X$.
	This would imply that $L\left(\K\left(n\right)\right)=0$ in contradiction to the hypothesis.  
\end{proof}

\begin{example}\label{exa:Chromatic_Support} The following are examples for the support of some particular types of localization:
	\begin{enumerate}
		\item For a finite spectrum $\X\left(n\right)$ of type $n$, we have by definition
		\[\supp(\X\left(n\right)) = \{n,n+1,\dots,\infty\}.\] 
		\item For every integer $n\ge0$, we have 
		\[\supp(\K\left(n\right))= \supp(\T\left(n\right)) = \{n\}\] (see e.g. \cite[Proposition A.2.13]{RavBook}).
		\item For a non-zero smashing localization $L$, we have 
		\[\supp(L)=\{0,\dots,n\}\] 
		for some $0\le n \le \infty$ (see e.g. \cite[Lemma 4.1]{barthel2015short}).
		\item For the Brown-Comenetz spectrum $I_{\mathbb{Q}/\mathbb{Z}}$, we have $\supp({I_{\mathbb{Q}/\mathbb{Z}}})=\es$ (see e.g. \cite[Proposition 7.4.2]{ravbook2}
	\end{enumerate}  
\end{example}

When considering localizations with respect to non-zero \emph{weak rings}, the Nilpotence Theorem of Devintaz-Hopkins-Smith guarantees that the support can not be empty. Since it is not  usually stated in this generality, we include the argument for deriving it from the standard version. 
\begin{thm}[Devinatz-Hopkins-Smith]
	\label{thm:Nilpotence}
	Let $R$ be a $p$-local weak ring. Then $R=0$ if and only if $\supp(R)=\es$. 
\end{thm}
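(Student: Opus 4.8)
The plan is to derive this generalized nilpotence statement from the classical Nilpotence Theorem of Devinatz--Hopkins--Smith. The classical version \cite{nilp1} says that for a \emph{homotopy ring} (i.e. a homotopy-associative, homotopy-unital ring spectrum) $R$, the unit map $\bb S \to R$ is smash-nilpotent if and only if $R \otimes \K(n) = 0$ for all $0 \le n \le \infty$ (including $\K(\infty) = H\bb F_p$ in the $p$-local setting). For a weak ring, however, we only have a one-sided unit and no associativity, so we cannot directly invoke this. The key observation is that from a weak ring $R$ one can manufacture an honest homotopy ring whose vanishing is equivalent to that of $R$, and whose support agrees with $\supp(R)$.

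First I would reduce to the case of a homotopy ring. Given a $p$-local weak ring $(R, u, \mu)$, consider the free $\bb E_1$-ring (or even the free homotopy-associative ring) $T$ on the spectrum $R$, i.e. $T = \bigoplus_{k \ge 0} R^{\otimes k}$. This is a genuine ring spectrum, it is non-zero (it has $\bb S$ as a retract via the $k=0$ summand), and $\supp(T) = \supp(R)$: indeed $R$ is a retract of $T$ (the $k=1$ summand), so $\supp(R) \ss \supp(T)$, while for the reverse inclusion one uses that $\supp(R^{\otimes k}) = \supp(R)$ by the K\"unneth isomorphism (as noted in the text, $\supp(E \otimes E') = \supp(E) \cap \supp(E')$) together with the fact that $\supp$ of a direct sum is the union of the supports. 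So $\supp(R) = \es$ iff $\supp(T) = \es$. Now the classical Nilpotence Theorem applies to $T$: if $\supp(T) = \es$ then every element of $\pi_* T$ detected by no Morava $K$-theory is nilpotent, and in particular the unit $\bb S \to T$ is smash-nilpotent, forcing $\bb S_{(p)}$ to be smash-nilpotent, hence $\bb S_{(p)} = 0$ — contradiction unless $T = 0$ to begin with. Wait: more precisely, $T$ being non-zero with $\supp(T) = \es$ directly contradicts the Nilpotence Theorem, because the Nilpotence Theorem for the homotopy ring $T$ says the identity element $1 \in \pi_0 T$ is nilpotent (being in the kernel of all Morava $K$-theory Hurewicz maps), i.e. $1 = 0$ in $\pi_0 T$, i.e. $T = 0$.

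Then I would assemble the two directions. The ``only if'' direction is trivial: if $R = 0$ then $R \otimes \K(n) = 0$ for all $n$, so $\supp(R) = \es$. For the ``if'' direction: suppose $\supp(R) = \es$; by the reduction above $\supp(T) = \es$ where $T$ is the free homotopy ring on $R$; by the classical Nilpotence Theorem $T = 0$; since $R$ is a retract of $T$, $R = 0$.

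The main obstacle here is purely bookkeeping: making sure the passage from the weak ring $R$ to a homotopy ring is legitimate and support-preserving. The subtlety is that the given multiplication $\mu$ on $R$ is \emph{not} used at all — we throw it away and use the free construction — so the only thing a weak ring buys us over an arbitrary spectrum is... nothing, until one remembers that the classical theorem applies to the \emph{free} ring on any spectrum. Actually the cleanest formulation avoids even the word ``free'': for \emph{any} nonzero $p$-local spectrum $X$ one could ask whether $\supp(X)$ can be empty, and the answer is yes in general (e.g. the Brown--Comenetz dual, \exaref{Chromatic_Support}(4)), so the weak ring structure genuinely is needed. The right statement is that a nonzero \emph{weak ring} cannot have empty support, and this is exactly because $R$ being a weak ring makes $R$ a retract of $\bigoplus_k R^{\otimes k}$ which is a homotopy ring — the one-sided unit condition $\mu \circ (u \otimes \Id) \simeq \Id$ is precisely what is needed to split off $\bb S$ compatibly, though in fact for the free-ring argument we only need the unit map $u$ and not $\mu$; the unit $u\colon \bb S \to R$ together with $\bb S \to \bigoplus_k R^{\otimes k}$ suffices. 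I would double-check the precise hypotheses under which \cite{nilp1} is stated (homotopy-associative vs.\ $\bb E_1$, unital vs.\ weakly unital) and cite accordingly, but modulo that citation the argument is short and the statement follows.
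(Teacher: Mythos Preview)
There is a genuine gap in the free-ring reduction. The tensor algebra $T = \bigoplus_{k\ge 0} R^{\otimes k}$ always contains the summand $R^{\otimes 0} = \bb S_{(p)}$, whose support is all of $\{0,1,\dots,\infty\}$. Hence $\supp(T)$ is never empty, regardless of $\supp(R)$, and the claimed equality $\supp(T)=\supp(R)$ fails exactly at the $k=0$ term. Consequently the step ``$\supp(T)=\es$, so by the classical Nilpotence Theorem $T=0$'' never applies; indeed $T$ is visibly nonzero since it retracts onto $\bb S_{(p)}$. Dropping the $k=0$ summand does not help either: $\bigoplus_{k\ge 1} R^{\otimes k}$ is not unital, so the ring form of the Nilpotence Theorem does not apply to it. Your closing remark that ``for the free-ring argument we only need the unit map $u$ and not $\mu$'' is therefore the opposite of the truth: without using $\mu$, the argument cannot distinguish $R$ from an arbitrary spectrum like the Brown--Comenetz dual.

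The paper's proof avoids this by invoking the \emph{smash nilpotence} form of the Nilpotence Theorem directly on the unit map $u\colon \bb S\to R$: since $\supp(R)=\es$, the map $u$ is smash nilpotent, i.e.\ $u^{\otimes r}=0$ for some $r$. Now the weak ring structure is used: the left unit identity $\mu\circ(u\otimes\Id_R)\simeq\Id_R$ shows that $u$ factors through $u\otimes u$, and iterating, through $u^{\otimes r}=0$. Thus $u$ itself is null, and then $\Id_R = \mu\circ(u\otimes\Id_R)$ is null, so $R=0$. Both $u$ and $\mu$ are essential here.
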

\begin{proof}
	Consider the unit map $u{\colon}\bb S\to R$. If $\K\left(n\right)\otimes R=0$
	for all $0\le n\le\infty$, then by \cite[Theorem 3(iii)]{nilp2},
	the map $u$ is \emph{smash nilpotent}. Namely, $u^{\otimes r}{\colon}\bb S\to R^{\otimes r}$
	is null for some $r\ge1$. The commutative diagram
	\[
	\xymatrix@R=2pc@C=3pc{\bb S\otimes\bb S\ar[d]_{\Id\otimes u}\ar[r]^{u\otimes u} & R\otimes R\ar[d]^{\mu}\\
		\bb S\otimes R\ar[ru]^{u\otimes\Id}\ar[r]^{\Id} & R
	}
	\]
	shows that $u$ factors through $u\otimes u$. Applying this iteratively,
	we can factor $u$ through the null map $u^{\otimes r}$ and deduce
	that $u$ itself is null. Consequently, the factorization of the identity map of
	$R$ as the composition 
\[
\xymatrix{
	R\ar[r]^-{u\otimes\Id} & R\otimes R\ar[r]^-{\mu} & R,
}
\]	
	implies that it is null and thus $R=0$. 
\end{proof}

This provides the main example of a nil-conservative functor.
\begin{prop}
\label{prop:Nilpotence_Support}Let $R$ be a $p$-local weak ring. The functor 
\[
L\colon\Sp_{R}\to\prod_{n\in\supp(R)}\Sp_{K(n)},
\]
whose $n$-th component is $K(n)$-localization, is nil-conservative.
\end{prop}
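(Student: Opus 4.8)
The plan is to verify the defining property of nil-conservativity (\defref{Nil_Conservativity}) directly, using the Nilpotence Theorem (\thmref{Nilpotence}) as the one non-formal input. So I would start with an $\bb E_{1}$-algebra $A\in\alg(\Sp_{R})$ satisfying $L(A)=0$ and aim to show $A\simeq 0$. Since an object of a product of stable $\infty$-categories vanishes if and only if each of its components does, the hypothesis $L(A)=0$ unwinds to: $L_{\K(n)}(A)=0$, equivalently $\K(n)\otimes A=0$, for every $n\in\supp(R)$. In the language of chromatic support this says precisely $\supp(R)\cap\supp(A)=\es$.

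The second step is to produce, out of $A$, a $p$-local weak ring whose support is empty. Because the inclusion $\Sp_{R}\into\Sp$ is lax symmetric monoidal (\propref{Sp_Localization}), the underlying spectrum of $A$ inherits an $\bb E_{1}$-ring structure, and in particular is a weak ring (the weak-ring axiom is one of the unit identities; cf.\ \ref{example: telescopic weak rings}). By \lemref{Tensor_Weak_Rings}, the spectrum $R\otimes A$ is again a weak ring, and it is $p$-local since $R$ is. Its support is $\supp(R\otimes A)=\supp(R)\cap\supp(A)$ by the K\"unneth isomorphism, which by the first step is empty. The Nilpotence Theorem (\thmref{Nilpotence}) now forces $R\otimes A=0$.

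The last step concludes the argument: $R\otimes A=0$ means exactly that $A$ is $R$-acyclic, while $A\in\Sp_{R}$ is $R$-local, and a spectrum which is at once $E$-local and $E$-acyclic is zero. Hence $A\simeq 0$, as required. One small preliminary that should be noted along the way is that $L$ is indeed a monoidal, colimit-preserving functor between stable presentably monoidal $\infty$-categories — so that \defref{Nil_Conservativity} is applicable — which follows from \propref{Sp_Localization} for the source and the corresponding (standard) facts about the product $\prod_{n\in\supp(R)}\Sp_{\K(n)}$ for the target.

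I do not expect a genuine obstacle here: once the Devinatz--Hopkins--Smith Nilpotence Theorem in the weak-ring formulation of \thmref{Nilpotence} is available, the proposition is pure bookkeeping with Bousfield localization and chromatic support. The only point requiring the slightest care is making sure the various structures are legitimately present — namely that an $\bb E_{1}$-algebra in $\Sp_{R}$ really does have a weak-ring underlying spectrum, and that $R\otimes A$ is again $p$-local — both of which are immediate from the hypotheses and \propref{Sp_Localization}.
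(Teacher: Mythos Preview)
Your proposal is correct and follows essentially the same argument as the paper: reduce $L(A)=0$ to $\K(n)\otimes A=0$ for $n\in\supp(R)$, tensor with $R$ to obtain a $p$-local weak ring $R\otimes A$ with empty support, apply the Nilpotence Theorem to get $R\otimes A=0$, and conclude $A=0$ since $A$ is $R$-local. The only cosmetic difference is that the paper checks $\K(n)\otimes R\otimes A=0$ for each $n$ by splitting into the cases $n\in\supp(R)$ and $n\notin\supp(R)$, whereas you package this as $\supp(R\otimes A)=\supp(R)\cap\supp(A)=\es$ via K\"unneth.
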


\begin{proof}
Let $S$ be an $R$-local ring spectrum. If $L(S)=0$, then $S\otimes K(n)=0$
for all $n\in\supp(R)$. On the other hand, by definition, $R\otimes K(n)=0$ for all
$n\notin\supp(R)$. Consequently, $S\otimes R\otimes K(n)=0$ for
all $n\in\bb N\cup\{\infty\}$. By \lemref{Tensor_Weak_Rings}, $S\otimes R$ is a weak ring and hence by the Nilpotence Theorem (\thmref{Nilpotence}) we get $S\otimes R=0$.
Finally, since $S$ is $R$-local, $S=0$.
\end{proof}

\begin{defn}
\label{def:En_Hat}
Let $\widehat{E}_{n}[-]$ be the composition 
\[
\Sp_{T\left(n\right)}\oto{L_{K\left(n\right)}}\Sp_{K\left(n\right)}\oto{F_{E_{n}}}\widehat{\Mod}_{E_n},
\]
where we abuse notation and write $L_{K\left(n\right)}$ also for
the left adjoint of the inclusion $\Sp_{K\left(n\right)}\ss\Sp_{T\left(n\right)}$.
The functor $\widehat{E}_{n}[-]$ is a colimit preserving
symmetric monoidal functor as a composition of two such.
\end{defn}

\begin{cor}
\label{cor:Nil_Conservativity_Telescopic}For every $0\le n<\infty$,
the functor 
\[
\widehat{E}_{n}[-]\colon \Sp_{T(n)}\to \widehat{\Mod}_{E_n},
\] 
is nil-conservative. Consequently, the canonical map 
$\pi_0\bb{S}_{T(n)}\to \pi_0 E_{n}$
detects invertibility.
\end{cor}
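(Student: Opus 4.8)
The plan is to exhibit $\widehat{E}_{n}[-]$ as a composite of two nil‑conservative functors and then quote the formal consequences already in place. Recall from \defref{En_Hat} that
\[
\widehat{E}_{n}[-]\colon\Sp_{T\left(n\right)}\oto{L_{K\left(n\right)}}\Sp_{K\left(n\right)}\oto{F_{E_n}}\widehat{\Mod}_{E_n},
\]
so by \lemref{Nil_Cancelation}(2) it is enough to check that each of the two factors is nil‑conservative. For the first factor I would apply \propref{Nilpotence_Support} to the weak ring $\T\left(n\right)$: since $\supp\bigl(\T\left(n\right)\bigr)=\{n\}$ by \exaref{Chromatic_Support}(2), the functor appearing in that proposition is exactly $L_{K\left(n\right)}\colon\Sp_{T\left(n\right)}\to\Sp_{K\left(n\right)}$, which is therefore nil‑conservative. (Concretely: if $S\in\alg\bigl(\Sp_{T\left(n\right)}\bigr)$ has $L_{K\left(n\right)}(S)=0$, then $S\otimes\K\left(m\right)=0$ for all $m$ — for $m=n$ by hypothesis and for $m\ne n$ because $\T\left(n\right)\otimes\K\left(m\right)=0$ — so the weak ring $S\otimes\T\left(n\right)$ has empty support and hence vanishes by the Nilpotence Theorem \thmref{Nilpotence}; being $\T\left(n\right)$‑local, $S=0$.)

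For the second factor I would show that $F_{E_n}\colon\Sp_{K\left(n\right)}\to\widehat{\Mod}_{E_n}$ is in fact \emph{conservative}, which yields nil‑conservativity via \lemref{Nil_Cancelation}(1). As $F_{E_n}$ is colimit‑preserving between stable $\infty$‑categories it is exact, so it is conservative as soon as it detects zero objects, i.e.\ as soon as $E_n\widehat{\otimes}X=0$ implies $X=0$ for $X\in\Sp_{K\left(n\right)}$. Here one uses that $\K\left(n\right)$ is the $E_n$‑module $E_n/(p,u_1,\dots,u_{n-1})$: writing $E_n\widehat{\otimes}X=L_{K\left(n\right)}(E_n\otimes X)$, the cofiber of $E_n\otimes X\to L_{K\left(n\right)}(E_n\otimes X)$ is a $\K\left(n\right)$‑acyclic $E_n$‑module, and killing $(p,u_1,\dots,u_{n-1})$ turns it into a $\K\left(n\right)$‑acyclic $\K\left(n\right)$‑module, hence $0$; therefore $\K\left(n\right)\otimes X\simeq\K\left(n\right)\otimes_{E_n}\bigl(E_n\widehat{\otimes}X\bigr)$. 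Consequently $E_n\widehat{\otimes}X=0$ forces $\K\left(n\right)\otimes X=0$, and since $X$ is $\K\left(n\right)$‑local this gives $X=0$. (Equivalently, this is the classical fact that $E_n$ and $\K\left(n\right)$ have the same Bousfield class, cf.\ \cite{hoveystrickland}.) Combining the two factors through \lemref{Nil_Cancelation}(2) shows that $\widehat{E}_{n}[-]$ is nil‑conservative in the sense of \defref{Nil_Conservativity}.

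For the last assertion, observe that $\widehat{E}_{n}[-]$ is symmetric monoidal and sends $\bb S_{T\left(n\right)}=\one_{\Sp_{T\left(n\right)}}$ to $E_n=\one_{\widehat{\Mod}_{E_n}}$, so under the identifications $\mathcal{R}_{\Sp_{T\left(n\right)}}=\pi_0\bb S_{T\left(n\right)}$ and $\mathcal{R}_{\widehat{\Mod}_{E_n}}=\pi_0 E_n$ the ring homomorphism it induces on $\mathcal{R}$'s is precisely the canonical map $\pi_0\bb S_{T\left(n\right)}\to\pi_0 E_n$. By \corref{Nil_Conservative_Detects_Inv} this homomorphism detects invertibility, which is the claim. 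The only non‑formal input — and the step I expect to be the main obstacle — is the conservativity of the free‑module functor $F_{E_n}$ on $\Sp_{K\left(n\right)}$: everything else is a direct application of the results already established, but this point genuinely relies on the chromatic fact that $E_n$‑homology detects vanishing of $\K\left(n\right)$‑local spectra, whether packaged as the Bousfield‑class identity $\langle E_n\rangle=\langle\K\left(n\right)\rangle$ or verified by hand as sketched above.
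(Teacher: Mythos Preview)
Your proof is correct and follows essentially the same approach as the paper: decompose $\widehat{E}_n[-]$ as $F_{E_n}\circ L_{K(n)}$, use \propref{Nilpotence_Support} with $\supp(T(n))=\{n\}$ for the first factor, conservativity of $F_{E_n}$ for the second, and conclude via \corref{Nil_Conservative_Detects_Inv}. The only difference is that you supply a detailed justification for the conservativity of $F_{E_n}$ on $\Sp_{K(n)}$, whereas the paper simply asserts it.
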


\begin{proof}
By \propref{Nilpotence_Support} and the fact that
$\mathrm{supp}(T(n))=\{n\}$ (\exaref{Chromatic_Support}(2)), we get that $L_{K(n)}\colon\Sp_{T(n)}\to\Sp_{K(n)}$
is nil-conservative. Since 
\[
    E_n \,\widehat{\otimes}\, (-)\colon \Sp_{K(n)}\to \widehat{\Mod}_{E_n}
\]
is conservative, it is in particular nil-conservative and hence the composition $\Sp_{T(n)}\to \widehat{\Mod}_{E_n}$
is nil-conservative. The claim now follows from \corref{Nil_Conservative_Detects_Inv}.
\end{proof}

\begin{rem}
    The fact that the functor 
    \(
        \widehat{E}_{n}[-]\colon \Sp_{T(n)}\to \widehat{\Mod}_{E_n}
    \)
    is conservative on dualizable objects is what gives us the handle on $\Sp_{T(n)}$, which will allow us to prove the $\infty$-semiadditivity of $\Sp_{T(n)}$. However, it has other uses as well. As a consequence of the $\infty$-semiadditivity of $\Sp_{T(n)}$, we have a large supply of dualizable objects in $\Sp_{T(n)}$, 
    including for example all $\pi$-finite spaces. 
    In an upcoming work, we shall exploit this fact together with nil-conservativity to lift the maximal abelian Galois extension of $\Sp_{K(n)}$ to $\Sp_{T(n)}$.
\end{rem}

\subsection{Consequences of 1-Semiadditivity}

In this section, we discuss some applications of the theory of $1$-semiadditivity in stable $\infty$-categories to chromatic homotopy theory.

\subsubsection{Power Operations}

\begin{defn}
    We denote by $\rm{CRing}$ the category of commutative rings and by $\rm{CRing^{\delta}}$ the category of semi-$\delta$-rings and semi-$\delta$-ring homomorphisms.   
\end{defn}

\begin{thm}\label{thm:Frob_Lift}
The functor 
\[
\pi_{0}\colon\calg(\Sp_{T\left(n\right)})\to\rm{CRing}
\]
has a lift to a functor 
\[
\calg(\Sp_{T\left(n\right)})\to \rm{CRing^{\delta}}
\]
along the forgetful functor $\rm{CRing^{\delta}} \to \rm{CRing}$.
\end{thm}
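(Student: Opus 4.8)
The plan is to apply the general machinery of Section 4, specializing the construction $X \mapsto (\pi_0 X, \delta)$ to the case $\mathcal{C} = \Sp_{T(n)}$, and to promote the pointwise assignment to a functor. First I would observe that $\Sp_{T(n)}$ is a stable, $p$-local, presentably symmetric monoidal $\infty$-category which is $1$-semiadditive by Kuhn's theorem (\thmref{Kuhn}), so the hypotheses of \thmref{Delta_Semi_Add} are satisfied. For an object $X \in \calg(\Sp_{T(n)})$, the underlying object has a canonical $\bb E_\infty$-coalgebra structure? No — rather, one takes $X = Y$ in \defref{Delta_Semi_Add} with $X$ the monoidal unit? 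That is not right either: the correct input is to take, for a commutative algebra $R \in \calg(\Sp_{T(n)})$, the $\infty$-category $\Mod_R^{(T(n))}$ of $R$-modules, which is again stable, $p$-local, presentably symmetric monoidal, and $1$-semiadditive (by \corref{Semi_Add_Mode}(2), since $F_R$ is a colimit-preserving symmetric monoidal functor). Then $R$ is the unit of $\Mod_R^{(T(n))}$, so $\mathcal{R}_{\Mod_R^{(T(n))}} = \hom_{h\Mod_R}(R,R) = \pi_0 R$, and \thmref{Delta_Semi_Add} endows $\pi_0 R$ with an additive $p$-derivation $\delta$, making $(\pi_0 R, \delta)$ a semi-$\delta$-ring.

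Next I would establish functoriality. Given a map $f\colon R \to S$ of $\bb E_\infty$-rings in $\Sp_{T(n)}$, base change along $f$ gives a colimit-preserving symmetric monoidal functor $S \otimes_R (-)\colon \Mod_R^{(T(n))} \to \Mod_S^{(T(n))}$, which is $1$-semiadditive (any colimit-preserving functor between $1$-semiadditive $\infty$-categories automatically preserves $1$-finite colimits, hence is $1$-semiadditive by \corref{Semi_Add_Mode} applied in the relevant direction, or simply because it preserves all colimits). This functor sends the unit $R$ to the unit $S$, and on endomorphism rings of the unit it induces precisely the map $\pi_0 f\colon \pi_0 R \to \pi_0 S$. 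By \propref{Delta_Semi_Add_Functoriality}, this induced map is a homomorphism of semi-$\delta$-rings. The compatibility with composition and identities follows from the corresponding compatibilities of base change (these are part of the $\infty$-categorical structure; on the level of homotopy categories, which is all that is needed since $\rm{CRing}^\delta$ is an ordinary category, this is routine). Thus the assignment $R \mapsto (\pi_0 R, \delta)$, $f \mapsto \pi_0 f$ assembles into a functor $\calg(\Sp_{T(n)}) \to \rm{CRing}^\delta$, and by construction its composition with the forgetful functor $\rm{CRing}^\delta \to \rm{CRing}$ is $\pi_0$.

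The main technical obstacle is the rigorous construction of the functor, rather than a mere pointwise-plus-morphisms assignment: strictly speaking one must produce a functor out of the $\infty$-category $\calg(\Sp_{T(n)})$, and since the target $\rm{CRing}^\delta$ is a $1$-category, this is equivalent to producing a functor out of the homotopy category $h\calg(\Sp_{T(n)})$. The content is then exactly: (i) each $\pi_0 R$ carries a well-defined $\delta$ (done via \thmref{Delta_Semi_Add} applied to $\Mod_R^{(T(n))}$); (ii) each morphism of $\bb E_\infty$-rings induces a semi-$\delta$-homomorphism (done via \propref{Delta_Semi_Add_Functoriality}); (iii) these are compatible with composition. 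Point (iii) requires knowing that the base-change functors compose coherently up to the homotopies that \propref{Delta_Semi_Add_Functoriality} is insensitive to — and since semi-$\delta$-ring homomorphisms form a set-level notion (no higher coherence to track), the naturality squares need only commute, which follows from the $2$-functoriality of $R \mapsto \Mod_R$. I would remark that one could alternatively phrase the whole argument using the single $\infty$-category $\Sp_{T(n)}$ together with the observation that the unit $\mathds{1}_{\Sp_{T(n)}} = \bb S_{T(n)}$ maps to every $\bb E_\infty$-ring $R$ via a map of $\bb E_\infty$-algebras, applying \propref{Delta_Semi_Add_Naturality} with $X = \mathds{1}$, $Y = R$; but the module-category formulation is cleaner because it exhibits $\pi_0 R$ itself (not merely $\hom(\mathds{1}, R)$) as the $\mathcal{R}$-ring of a $1$-semiadditive symmetric monoidal $\infty$-category, so that the ring structure coincides with the usual one on $\pi_0$ of an $\bb E_\infty$-ring.
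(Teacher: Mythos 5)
Your proof is correct, and the machinery you invoke (\thmref{Delta_Semi_Add} plus the functoriality statements of Section 4) is exactly the machinery the paper uses; the difference is in the packaging. The paper's proof is the route you describe only as an ``alternative'' at the end: it applies \thmref{Delta_Semi_Add} directly in $\mathcal{C}=\Sp_{T(n)}$ with $X=\one=\bb S_{T(n)}$ (with its unique $\bb E_\infty$-coalgebra structure) and $Y=R$, identifying $\pi_0 R=\hom_{h\Sp_{T(n)}}(\bb S_{T(n)},R)$, and obtains functoriality in one line from \propref{Delta_Semi_Add_Naturality} applied to post-composition with a map of $\bb E_\infty$-algebras. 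Your module-category formulation --- running the construction in $\Mod_R^{(T(n))}$ with $X=Y=\one_{\Mod_R}=R$ and getting functoriality from base change via \propref{Delta_Semi_Add_Functoriality} --- also works: $\Mod_R^{(T(n))}$ is indeed stable, presentably symmetric monoidal and $1$-semiadditive by \corref{Semi_Add_Mode}(2), base change is colimit-preserving hence $1$-semiadditive, and since $\mathrm{CRing}^{\delta}$ is a $1$-category only set-level compatibility with composition is needed. The reason you give for preferring it, however, is a non-issue: in the paper's setup the ring structure on $\hom_{h\mathcal{C}}(\one,R)$ (comultiplication on $\one$ followed by $f\otimes g$ followed by multiplication on $R$) is the standard ring structure on $\pi_0$ of an $\bb E_\infty$-ring, because the comultiplication on the unit is the unitor. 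So your route costs an extra layer (the passage to module categories and the coherence of base change) without buying anything for this particular statement; it would only become advantageous if one wanted the operation on $\pi_0$ of $R$-modules rather than of $R$ itself.
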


\begin{proof}
The $\infty$-category $\Sp_{T\left(n\right)}$ is $1$-semiadditive
by \cite{Kuhn} and therefore satisfies the conditions of \thmref{Delta_Semi_Add}.
Thus, for every $R\in\calg(\Sp_{T\left(n\right)})$, the commutative
ring 
\[
\pi_{0}R=\hom_{h\Sp_{T\left(n\right)}}(\bb S_{T\left(n\right)},R)
\]
admits an additive $p$-derivation $\delta$. The functoriality follows from \propref{Delta_Semi_Add_Naturality}.
\end{proof}

Note that for a semi-$\delta$-ring $(R,\delta)$, the operation 
\[
    \psi(x) \coloneqq x^p + p\delta(x)\quad \colon \quad R \to R
\]
is an additive group homomorphism which satisfies $\psi(1)=1$ and $\psi(x) \equiv x^p\,\in\,R/pR$. Thus, \thmref{Frob_Lift} also provides a functorial additive lift of Frobenius for $T(n)$-lcoal commutative ring spectra.  

\begin{rem}
    For every $R\in \calg(\Sp_{K(1)})$, Hopkins defined in \cite{HopkinsK1} an additive $p$-derivation 
    \[
        \theta \colon \pi_0(R) \to \pi_0(R).
    \]
    We warn the reader that our operation $\delta$ is \emph{not} the same as this $\theta$. We can describe precisely the relationship between the two by expressing both in terms of the operation $\alpha(x) = \int_{BC_p}x^p$ as follows:
    \begin{align*}
        \delta(x) &= |BC_p|x - \alpha(x) \\
        \theta(x) &= \frac{1}{p-1}(x^p - \alpha(x)).
    \end{align*}
    The additivity property of $\theta$ can be easily deduced from the corresponding property of $\alpha$ (\propref{Alpha_Additvity}).
However, the identity $\theta(1)=0$ amounts to $|BC_p|_{\Sp_{K(1)}}= 1$, a fact which does not generalize to higher heights. On the other hand, the operation $\theta$ also satisfies the multiplicativity rule
    \[ 
        \theta(xy) = \theta(x)\theta(y) + \theta(x)y^p + x^p\theta(y),
    \]
    making the associated lift of Frobenius $x^p+p\theta(x)$ a \emph{ring} homomorphism. 
\end{rem}

\subsubsection{May's Conjecture}\label{sec:May}
\begin{defn}
\label{def:sofic}Let $\mathcal{C}$ be a stable, presentably symmetric
monoidal $\infty$-category. We say that $\mathcal{C}$ is \emph{sofic},\footnote{The term \emph{sofic} is derived from the Hebrew word "sofi" for $finite$ (See \cite{weiss1973subshifts}).}
if there exists a stable, $1$-semiadditive, presentably symmetric
monoidal $\infty$-category $\mathcal{D}$ and a colimit preserving,
conservative, lax symmetric monoidal functor $\mathcal{C}\to\mathcal{D}$.
We call a spectrum $E\in\Sp$ sofic, if $\Sp_{E}$ is sofic.
\end{defn}

Every stable, $1$-semiadditive, presentably symmetric monoidal $\infty$-category
is of course sofic, but the latter condition is considerably weaker.
\begin{example}
The spectrum $H\bb Q$ is sofic and more generally, the spectra $\K\left(n\right)$
and $\T\left(n\right)$ for all $n$. Any sum of sofic spectra is
sofic, and since being sofic depends only on the Bousfield class (that is, the collection of acyclic objects) of
the spectrum, so are the Morava theories $E_{n}$ for all $n$ and
the telescopic localizations of the sphere spectrum $L_{n}^{f}\bb S$.
\end{example}

\begin{thm}
\label{thm:May}Let $E\in\Sp$ be a sofic homotopy commutative ring
spectrum and let $R$ be an $\bb E_{\infty}$-ring. For every $x\in\pi_{*}R$,
if the image of $x$ in $\pi_{*}\left(H\bb Q\otimes R\right)$ is
nilpotent, then the image of $x$ in $\pi_{*}\left(E\otimes R\right)$
is nilpotent. 
\end{thm}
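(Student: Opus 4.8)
The plan is to reduce the statement to the case where $E$ is itself $1$-semiadditive by using the definition of \emph{sofic}, and then to extract nilpotence from the additive $p$-derivation $\delta$ of \thmref{Delta_Semi_Add} applied to the $\mathbb{E}_\infty$-ring $E\otimes R$ inside an appropriate $1$-semiadditive $\infty$-category. More precisely, let $\mathcal{D}$ be the stable, $1$-semiadditive, presentably symmetric monoidal $\infty$-category together with a colimit-preserving, conservative, lax symmetric monoidal functor $G\colon\Sp_E\to\mathcal{D}$ witnessing soficity. Since $G$ is lax symmetric monoidal, it sends the (homotopy) ring $L_E(E\otimes R)$ to a ring object of $\mathcal{D}$, and since $G$ is conservative, an element of $\pi_*L_E(E\otimes R)\cong\pi_*(E\otimes R)$ is nilpotent if and only if its image in $\mathcal{D}$ is. Thus it suffices to prove nilpotence after applying $G$, i.e. to work entirely in a $1$-semiadditive stable symmetric monoidal setting.

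First I would set up the homotopy-ring bookkeeping: the image $\bar x$ of $x$ in $\pi_*(H\mathbb{Q}\otimes R)$ being nilpotent means $x^N$ becomes zero rationally for some $N$, i.e. $x^N$ is a torsion element of $\pi_*(E\otimes R)$ — more carefully, one works prime by prime, so fix the prime $p$ implicit throughout and note that it suffices to show the image of $x$ in $\pi_*(E\otimes R)_{(p)}$ is nilpotent for every $p$, and that $E\otimes R$ being $p$-locally a homotopy ring with $H\mathbb{Q}$-nilpotent element means the relevant element is $p$-power-torsion up to replacing $x$ by a power. Then I would invoke \corref{Semiadd_Torsion_Nilpotent}: its hypotheses require a stable, $p$-local, $1$-semiadditively symmetric monoidal $\infty$-category $\mathcal{C}$ with $X\in\cocalg(\mathcal{C})$ and $Y\in\calg(\mathcal{C})$, and it concludes that every torsion element of $\hom_{h\mathcal{C}}(X,Y)$ is nilpotent. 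Taking $X=\one$ and $Y$ the ring object built from $E\otimes R$ (localized appropriately) inside the $p$-local version of $\mathcal{D}$, the homotopy groups $\pi_*(E\otimes R)_{(p)}$ are recovered as $\hom_{h\mathcal{C}}(\one,\Sigma^{-*}Y)$, so torsion elements — in particular $H\mathbb{Q}$-nilpotent ones, once raised to a power killing them rationally — are nilpotent.

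There are two technical points to handle with care. One is the passage to $\mathbb{E}_\infty$- (or $H_\infty$-) structure: \corref{Semiadd_Torsion_Nilpotent} needs $Y\in\calg(\mathcal{C})$, so I must check that $G(L_E(E\otimes R))$, or rather its image after $p$-localization, carries a genuine $\mathbb{E}_\infty$-algebra structure and that $X=\one$ is the terminal $\mathbb{E}_\infty$-coalgebra as in the discussion preceding \defref{Delta_Semi_Add}; here the remark \remref{H_Infty} that everything works for $H_\infty$ is what licenses the $H_\infty$-version stated in \thmref{Main_Sofic}, and $G$ being lax symmetric monoidal transports the $\mathbb{E}_\infty$-structure of $E\otimes R$ in $\Sp_E$ forward. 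The second point is the reduction to a single prime and the bookkeeping that ``nilpotent in $\pi_*(H\mathbb{Q}\otimes R)$'' plus ``torsion'' gives ``nilpotent'': rationally nilpotent means some power $x^N$ maps to $0$ in $\pi_*(H\mathbb{Q}\otimes E\otimes R)$, and since $H\mathbb{Q}\otimes(-)$ detects non-torsion, $x^N$ is torsion in $\pi_*(E\otimes R)$; being torsion, it is a sum of its $p$-primary parts, each of which lands in the $p$-local category where \corref{Semiadd_Torsion_Nilpotent} applies, yielding nilpotence of each part and hence of $x^N$, hence of $x$.

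The main obstacle I expect is not the algebra but the categorical transport: verifying that the lax symmetric monoidal, conservative functor $G\colon\Sp_E\to\mathcal{D}$ produced by soficity actually carries the $\mathbb{E}_\infty$-ring $E\otimes R$ (viewed $E$-locally) to an object of $\calg(\mathcal{C})$ for the $p$-local, $1$-semiadditive $\mathcal{C}$ we need, and that conservativity is strong enough to reflect nilpotence of elements (as opposed to merely reflecting zero objects) — this requires knowing that $\pi_*$ of a ring object is computed by $\hom_{h\mathcal{C}}(\one,\Sigma^{-*}(-))$ compatibly on both sides and that $G$ induces the expected map on these hom-groups. Once that compatibility is in place, the proof is a direct application of \corref{Semiadd_Torsion_Nilpotent} together with the elementary fact (\lemref{Delta_Valuation}, \propref{Delta_Torsion_Nilpotent}) that an additive $p$-derivation forces $p$-torsion to be nilpotent.
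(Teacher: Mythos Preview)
Your proposal has the right overall strategy --- transport to a $1$-semiadditive target via soficity and invoke \corref{Semiadd_Torsion_Nilpotent} --- but two genuine gaps block the argument as written, and the paper's proof contains precisely the missing idea.

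The first and more serious gap is your claim that conservativity of $G$ lets you reflect nilpotence of elements. A conservative functor reflects isomorphisms of objects; it need not be injective on hom-sets, so knowing that $G(x)^n=0$ in $\mathcal{D}$ tells you nothing about $x^n$ in $\Sp_E$. You correctly flag this worry at the end, but it is not a technicality to be checked --- it is the crux of the matter, and there is no general fix: the relevant examples of sofic functors (e.g.\ products of $K(n)$-localizations) are genuinely not faithful on morphisms.

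The second gap is that \corref{Semiadd_Torsion_Nilpotent} concerns the ring $\hom_{h\mathcal{C}}(X,Y)$ for a \emph{fixed} $X\in\cocalg(\mathcal{C})$ and $Y\in\calg(\mathcal{C})$. For $x\in\pi_k(E\otimes R)$ with $k\neq 0$, the powers $x^n$ live in varying degrees $\pi_{nk}$, and the graded ring $\pi_*(E\otimes R)$ is not of that form; in particular $\Sigma^{-k}Y$ carries no commutative algebra structure, so the corollary does not apply to $x$ directly.

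The paper resolves both issues with one device: form the telescope $x^{-1}R$, which is again an $\bb E_\infty$-ring, and whose unit in $\pi_0$ is torsion (since some power of $x$ is torsion in $\pi_*R$). Applying \corref{Semiadd_Torsion_Nilpotent} to the unit of $F(L_E(x^{-1}R))$ --- now in degree zero, so the grading issue disappears --- yields $F(L_E(x^{-1}R))=0$ as an \emph{object}. Conservativity then legitimately gives $L_E(x^{-1}R)=0$, hence $E\otimes x^{-1}R=0$; since $\pi_*(E\otimes x^{-1}R)\simeq \tilde{x}^{-1}\pi_*(E\otimes R)$, the image $\tilde{x}$ is nilpotent. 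The telescope converts ``is this element nilpotent?'' into ``is this object zero?'', which is exactly the kind of question conservativity can answer.
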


Namely, the single homology theory $H\bb Q$ detects nilpotence in
all sofic homology theories.
\begin{proof}
First, observe that 
\[
\pi_{*}\left(H\bb Q\otimes R\right)\simeq\bb Q\otimes\pi_{*}R.
\]

Replacing $x$ with a suitable power, we can assume that $x$ is torsion
in $\pi_{*}R$. Since the homogeneous components of a torsion element
are torsion, we may assume without loss of generality that $x\in\pi_{k}R$
for some $k$ (i.e. $x$ is homogeneous). Consider the corresponding
map $|x|\colon R\to\Sigma^{-k}R$ given by multiplication
by $x$. The telescope
\[
x^{-1}R=\colim\left(R\oto{|x|}\Sigma^{-k}R\oto{|x|}\Sigma^{-2k}R\oto{|x|}\dots\right)
\]
carries a structure of an $\bb E_{\infty}$-ring and the map $R\to x^{-1}R$
induces the localization by $x$ map on $\pi_{*}$. In particular,
the unit map $\bb S\to x^{-1}R$ is torsion. Let $F\colon\Sp_{E}\to\mathcal{C}$
be a functor as in \defref{sofic}. Since $F$ and $L_{E}$ are both
lax symmetric monoidal and exact, $F\left(L_{E}\left(x^{-1}R\right)\right)$
is an $\bb E_{\infty}$-algebra and its unit is also torsion. By \corref{Semiadd_Torsion_Nilpotent},
$F\left(L_{E}\left(x^{-1}R\right)\right)=0$ and since $F$ is conservative,
$L_{E}\left(x^{-1}R\right)=0$. It follows that $E\otimes x^{-1}R=0$.
Let $\tilde{x}$ be the image of $x$ in $\pi_{*}\left(E\otimes R\right)$.
Since,
\[
\pi_{*}\left(E\otimes x^{-1}R\right)\simeq\pi_{*}\left(\tilde{x}^{-1}\left(E\otimes R\right)\right)\simeq\tilde{x}^{-1}\pi_{*}\left(E\otimes R\right),
\]
we get that $\tilde{x}$ is nilpotent in $\pi_{*}\left(E\otimes R\right)$. 
\end{proof}
\begin{rem}
We could have replaced $\bb E_{\infty}$ by $H_{\infty}$ (see \remref{H_Infty}).
Applying the theorem in this form for $E=\K\left(n\right)$, and using
the Nilpotence Theorem, one can deduce the conjecture of May, that
was proved in \cite{MathewMay}. We also note that the above
theorem can be extended to a general stable presentably symmetric
monoidal $\infty$-category $\mathcal{C}$ with a compact unit (instead
of $\Sp$) and $x\colon I\to R$ any map from an invertible object
$I$ (i.e. an object of the Picard group of $\mathcal{C}$).
\end{rem}

\subsection{Higher Semiadditivity of $T(n)$-Local Spectra}

In this section, we prove the main theorem of the paper. Namely, we show that
the $\infty$-category $\Sp_{T\left(n\right)}$ is $\infty$-semiadditive for all $n\ge0$ and draw some consequences from this. 
Our strategy is to apply the ``Bootstrap Machine'' (\thmref{Bootstrap_Machine})
to the functor $\widehat{E}_n[-]$ given in \defref{En_Hat}.

\begin{thm}
\label{thm:Tn_Semiaddi}
For all $n\ge0$, the $\infty$-categories
$\Sp_{T\left(n\right)}$ and $\widehat{\Mod}_{E_n}$
are $\infty$-semiadditive.
\end{thm}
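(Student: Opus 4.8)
The plan is to deduce both assertions at once by feeding the functor
\[
\widehat{E}_n[-]\colon \Sp_{T(n)}\longrightarrow \widehat{\Mod}_{E_n}
\]
of \defref{En_Hat} into the ``Bootstrap Machine'' (\thmref{Bootstrap_Machine}) with $m=\infty$. This functor is colimit preserving and symmetric monoidal, and both $\Sp_{T(n)}$ and $\widehat{\Mod}_{E_n}$ are presentably symmetric monoidal, stable and $p$-local (for $\widehat{\Mod}_{E_n}$ because $E_n$ is a $p$-local $\bb E_{\infty}$-ring which is $T(n)$-local; the case $n=0$ is the classical statement about rational spectra and is subsumed by the same argument). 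It therefore suffices to verify the three hypotheses of \thmref{Bootstrap_Machine}.

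Hypothesis (1), that $\Sp_{T(n)}$ is $1$-semiadditive, is Kuhn's theorem (\thmref{Kuhn}). Hypothesis (2), that the induced ring map $\varphi\colon \pi_0\bb S_{T(n)}\to\pi_0 E_n$ detects invertibility, is \corref{Nil_Conservativity_Telescopic}: by \propref{Nilpotence_Support}, a consequence of the Nilpotence Theorem, the localization $L_{K(n)}\colon\Sp_{T(n)}\to\Sp_{K(n)}$ is nil-conservative, and composing it with the conservative functor $E_n\,\widehat{\otimes}\,(-)$ preserves nil-conservativity, whence $\varphi$ detects invertibility by \corref{Nil_Conservative_Detects_Inv}.

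For hypothesis (3) we must check that for each $0\le k<\infty$ the element $\dim_{\widehat{\Mod}_{E_n}}(B^kC_p)$, whenever it is defined, has rational and non-zero image in $\mathcal{R}_{\widehat{\Mod}_{E_n}}^{\tf}$. The real input here is the Ravenel--Wilson computation of $\K(n)_*(B^kC_p)$: via \lemref{Morava_Dimension} it yields, as recorded in \corref{Morava_Dimension_EM}, that $B^kC_p$ is in fact always dualizable in $\widehat{\Mod}_{E_n}$ and
\[
\dim_{\widehat{\Mod}_{E_n}}(B^kC_p)=p^{\binom{n}{k}}\in\pi_0E_n .
\]
This element lies in the image of the unique ring homomorphism $\bb Z\to\pi_0 E_n$, hence is rational; and since $\pi_0 E_n=\bb Z_p[[u_1,\dots,u_{n-1}]]$ is torsion-free with $\bb Z\to\pi_0 E_n$ injective (\propref{Delta_Injective}), its image in $\mathcal{R}_{\widehat{\Mod}_{E_n}}^{\tf}$ is non-zero.

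With all three hypotheses in hand, \thmref{Bootstrap_Machine} gives that $\Sp_{T(n)}$ and $\widehat{\Mod}_{E_n}$ are $m$-semiadditive for every $m$, i.e.\ $\infty$-semiadditive, as claimed. (Alternatively, once $\Sp_{T(n)}$ is known to be $\infty$-semiadditive, the same for $\widehat{\Mod}_{E_n}$ follows from \corref{Semi_Add_Mode}(2).) The substance of the proof is thus entirely in the ingredients already assembled: the passage from $1$-semiadditivity to $\infty$-semiadditivity is absorbed into the Bootstrap Machine and the power operation $\delta$, while the two genuinely ``hard'' external facts --- the Nilpotence Theorem and the non-vanishing of $\chi_n(B^kC_p)$ --- enter only through hypotheses (2) and (3), so that at this point the argument is a short verification.
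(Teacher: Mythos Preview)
Your proof is correct and follows the same route as the paper: apply the Bootstrap Machine (\thmref{Bootstrap_Machine}) to $\widehat{E}_n[-]$, verifying hypothesis (1) via Kuhn's theorem, hypothesis (2) via \corref{Nil_Conservativity_Telescopic}, and hypothesis (3) via \corref{Morava_Dimension_EM}. The paper's own proof is the same three-line checklist; you have simply unpacked each reference a bit more.
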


\begin{proof}
We verify the assumptions (1)-(3) of \thmref{Bootstrap_Machine} for the colimit preserving symmetric monoidal functor
\[
\widehat{E}_{n}[-]\colon\Sp_{T\left(n\right)}\to\widehat{\Mod}_{E_n}.
\] 
Namely, we need to show that
\begin{enumerate}
\item The $\infty$-categories $\Sp_{T\left(n\right)}$ are $1$-semiadditive. 
\item The functor $\widehat{E}_{n}[-]$ detects invertibility.
\item The symmetric monoidal dimensions of the spaces $B^{k}C_{p}$ in $\widehat{\Mod}_{E_n}$ are rational and non-zero.
\end{enumerate}
Claim (1) is proved in \cite{Kuhn}, claim (2) follows from \corref{Nil_Conservativity_Telescopic}, and claim (3) is given by \corref{Morava_Dimension_EM}.
\end{proof}

This readily implies the original result of \cite{HopkinsLurie}.
\begin{cor}
\label{cor:Kn_Semiadd}For all $0\le n<\infty$, the $\infty$-category $\Sp_{K\left(n\right)}$
is $\infty$-semiadditive.
\end{cor}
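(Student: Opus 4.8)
The plan is to deduce this from the $T(n)$-local case via the reflection functor, using \corref{Semi_Add_Mode}(2). By \thmref{Tn_Semiaddi} the $\infty$-category $\Sp_{T\left(n\right)}$ is $\infty$-semiadditive, i.e. $m$-semiadditive for every $m\ge-2$. Fix such an $m$. I would take $\mathcal{C}=\Sp_{T\left(n\right)}$, $\mathcal{D}=\Sp_{K\left(n\right)}$, and for $F$ the left adjoint $L_{K\left(n\right)}\colon\Sp_{T\left(n\right)}\to\Sp_{K\left(n\right)}$ of the fully faithful inclusion $\Sp_{K\left(n\right)}\ss\Sp_{T\left(n\right)}$.

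To invoke \corref{Semi_Add_Mode}(2) I must check its hypotheses. First, both $\Sp_{T\left(n\right)}$ and $\Sp_{K\left(n\right)}$ are presentably symmetric monoidal by \propref{Sp_Localization} (the latter viewed as the $K(n)$-localization of $\Sp_{T(n)}$, equivalently the restriction of the $\otimes$-localization $L_{K(n)}$ on $\Sp$), so in particular they admit all $m$-finite colimits and the tensor product distributes over them. Second, $L_{K\left(n\right)}$, being a left adjoint, preserves all colimits, hence all $m$-finite ones; and, as already noted in \defref{En_Hat}, it is symmetric monoidal. Thus $F$ is an $m$-finite colimit preserving monoidal functor between monoidal categories admitting and distributing over $m$-finite colimits, and $\mathcal{C}=\Sp_{T\left(n\right)}$ is $m$-semiadditive. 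By \corref{Semi_Add_Mode}(2), $\Sp_{K\left(n\right)}$ is $m$-semiadditive.

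Since $m\ge-2$ was arbitrary, $\Sp_{K\left(n\right)}$ is $m$-semiadditive for all $m$, i.e. $\infty$-semiadditive. There is essentially no obstacle here: the statement is a formal consequence of \thmref{Tn_Semiaddi} together with the transfer principle for higher semiadditivity along colimit preserving symmetric monoidal functors; the only point that deserves a word is that $L_{K(n)}\colon\Sp_{T(n)}\to\Sp_{K(n)}$ is indeed symmetric monoidal (because $L_{K(n)}$ factors through $L_{T(n)}$, so it is compatible with the smash product of $\Sp_{T(n)}$), which is recorded in \defref{En_Hat}.
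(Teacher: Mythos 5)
Your proof is correct and is exactly the paper's argument: the paper also deduces the corollary by applying \corref{Semi_Add_Mode} to the localization functor $\Sp_{T(n)}\to\Sp_{K(n)}$ (and notes, as a second option, that one could rerun the bootstrap argument of \thmref{Tn_Semiaddi} directly). Your verification of the hypotheses of \corref{Semi_Add_Mode}(2) is accurate; you have simply spelled out details the paper leaves implicit.
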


\begin{proof}
Apply \corref{Semi_Add_Mode} to the localization functor $L_{T\left(n\right)}\colon\Sp_{T\left(n\right)}\to\Sp_{K\left(n\right)}$.
Alternatively, one could just use the same argument as in \thmref{Tn_Semiaddi}.
\end{proof}
By \thmref{Tn_Semiaddi}, both $\infty$-categories $\Sp_{T\left(n\right)}$
and $\widehat{\Mod}_{E_n}$ are $\infty$-semiadditive.
Hence, for every $\pi$-finite space $A$, we have an element $|A|\in\pi_{0}\bb S_{T\left(n\right)}$,
which maps to the corresponding element $|A|\in\pi_{0}E_{n}$
(since the map is induced by a colimit preserving functor). We shall
make some computations regarding these elements and use them to deduce
some new facts about $\Sp_{T\left(n\right)}$.
\begin{lem}
\label{lem:EM_Box_Morava}For every $k,n\ge0$ we have 
\[
|B^{k}C_{p}|_{\widehat{\Mod}_{E_n}}=p^{\binom{n-1}{k}}\quad\in\pi_{0}E_{n}.
\]
\end{lem}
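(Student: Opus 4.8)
The plan is to relate $|B^kC_p|$ to the symmetric monoidal dimension $\dim_{\widehat{\Mod}_{E_n}}(B^kC_p)$, which was already computed in \corref{Morava_Dimension_EM} to be $p^{\binom{n}{k}}$, via the formula of \corref{Dim_Sym}. Since $\widehat{\Mod}_{E_n}$ is $\infty$-semiadditive (\thmref{Tn_Semiaddi}), in particular it is $k$-semiadditively symmetric monoidal for every $k$, so \corref{Dim_Sym} applies. The Eilenberg--MacLane space $B^kC_p$ is a loop space (for $k\ge 1$ it is $\Omega B^{k+1}C_p$, and for $k=0$ the identity $\dim = |A||\Omega A|$ still reads correctly with $\Omega A = \ast$), hence
\[
\dim_{\widehat{\Mod}_{E_n}}(B^kC_p) = |B^kC_p|\cdot|\Omega B^kC_p| = |B^kC_p|\cdot|B^{k-1}C_p|
\]
in $\pi_0 E_n$, with the convention $|B^{-1}C_p| = |\es| $; here one must be slightly careful about the base case, where $\Omega B^0C_p = \Omega C_p \simeq \ast$, so $\dim_{\widehat{\Mod}_{E_n}}(C_p) = |C_p|\cdot 1 = |C_p|$.

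From here I would argue by induction on $k\ge 0$. For the base case $k=0$: the space $C_p$ is a $0$-finite space (a finite set of $p$ points), and $|C_p| = p\in\pi_0 E_n$ by \exaref{Integral_Sum} (summing $p$ copies of the identity), which indeed equals $p^{\binom{n-1}{0}} = p^1 = p$. For the inductive step, assume $|B^{k-1}C_p|_{\widehat{\Mod}_{E_n}} = p^{\binom{n-1}{k-1}}$. Combining the loop-space formula above with \corref{Morava_Dimension_EM},
\[
p^{\binom{n}{k}} = \dim_{\widehat{\Mod}_{E_n}}(B^kC_p) = |B^kC_p|\cdot p^{\binom{n-1}{k-1}},
\]
and since $\pi_0 E_n$ is an integral domain (it is $\bb Z_p[[u_1,\dots,u_{n-1}]]$, or $\bb Z_p$ under the convention that the image lands in the constants) in which $p$ is not a zero divisor, we may divide to conclude $|B^kC_p| = p^{\binom{n}{k} - \binom{n-1}{k-1}} = p^{\binom{n-1}{k}}$, using Pascal's identity $\binom{n}{k} = \binom{n-1}{k} + \binom{n-1}{k-1}$. (When $\binom{n-1}{k-1} > \binom{n}{k}$ this can't happen, since the identity forces $\binom{n-1}{k}\ge 0$; when $n-1 < k$ both exponents involving $n-1$ vanish and the identity reads $\binom{n}{k} = \binom{n-1}{k-1}$, consistently giving $|B^kC_p| = 1 = p^0 = p^{\binom{n-1}{k}}$.)

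The main subtlety — not really an obstacle, but the point requiring the most care — is the handling of the $k=0$ case of the loop-space formula and making sure the convention for $|B^{-1}C_p|$ (i.e. that $\Omega C_p$ is contractible, so its $|\cdot|$ is $1$) is consistent; once that is pinned down, the induction is a purely formal consequence of \corref{Dim_Sym}, \corref{Morava_Dimension_EM}, and the fact that $p$ is a non-zero-divisor in $\pi_0 E_n$. I would also remark that this lemma is exactly the input needed downstream to feed assumption (3) of the Bootstrap Machine in the form involving $|B^kC_p|$ rather than $\dim(B^kC_p)$, and that the shift $n\mapsto n-1$ in the exponent is precisely the reason the recursion on $k$ via dimensions closes up.
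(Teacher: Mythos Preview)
Your proof is correct and follows essentially the same approach as the paper: combine \corref{Dim_Sym} and \corref{Morava_Dimension_EM} to obtain $p^{\binom{n}{k}}=|B^{k}C_{p}||B^{k-1}C_{p}|$, then induct on $k$ using Pascal's identity and the fact that $\pi_{0}E_{n}$ is torsion free. The paper's version is terser (it leaves the base case and the loop-space convention implicit), but the argument is the same; your one notational slip writing $|B^{-1}C_p|=|\es|$ is harmless since you immediately correct it to $|\Omega C_p|=|\pt|=1$.
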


\begin{proof}
By \corref{Dim_Sym} and \corref{Morava_Dimension_EM}, we have 
\[
p^{\binom{n}{k}}=\dim_{\widehat{\Mod}_{E_n}}\left(B^{k}C_{p}\right)=|B^{k}C_{p}||B^{k-1}C_{p}|.
\]
The result now follows by induction on $k$, using the identity 
\[
\binom{n-1}{k}+\binom{n-1}{k-1}=\binom{n}{k}
\]
and the fact that the ring $\pi_{0}E_{n}$ is torsion free.
\end{proof}
\begin{lem}
\label{lem:EM_Box_Tn_Inv}For every $k\ge n\ge0$ the element $|B^{k}C_{p}|_{\Sp_{\T(n)}} \in\pi_{0}\bb S_{T\left(n\right)}$
is invertible.
\end{lem}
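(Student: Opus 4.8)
The plan is to transport the statement along the nil-conservative functor $\widehat{E}_{n}[-]\colon\Sp_{T\left(n\right)}\to\widehat{\Mod}_{E_n}$ of \defref{En_Hat} and reduce to the computation of $|B^{k}C_{p}|$ inside $\widehat{\Mod}_{E_n}$, which is already available from \lemref{EM_Box_Morava}.

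First I would record that, by \thmref{Tn_Semiaddi}, both $\Sp_{T\left(n\right)}$ and $\widehat{\Mod}_{E_n}$ are $\infty$-semiadditive, hence in particular $k$-semiadditive, so that the elements $|B^{k}C_{p}|$ are defined in $\pi_{0}\bb S_{T\left(n\right)}$ and in $\pi_{0}E_{n}$. Since $\widehat{E}_{n}[-]$ is colimit preserving and symmetric monoidal, it preserves $k$-finite colimits and is therefore a $k$-semiadditive functor (by \corref{Semi_Add_Functors}). Consequently, by \corref{Integral_Functor}, the induced ring homomorphism $\pi_{0}\bb S_{T\left(n\right)}\to\pi_{0}E_{n}$ sends $|B^{k}C_{p}|_{\Sp_{T\left(n\right)}}$ to $|B^{k}C_{p}|_{\widehat{\Mod}_{E_n}}$.

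Next I would invoke \lemref{EM_Box_Morava}, which identifies $|B^{k}C_{p}|_{\widehat{\Mod}_{E_n}}$ with $p^{\binom{n-1}{k}}\in\pi_{0}E_{n}$. The hypothesis $k\ge n$ forces $k>n-1$, whence $\binom{n-1}{k}=0$ and this element equals $1$, a unit in $\pi_{0}E_{n}$. Finally, by \corref{Nil_Conservativity_Telescopic} the map $\pi_{0}\bb S_{T\left(n\right)}\to\pi_{0}E_{n}$ detects invertibility; since the image of $|B^{k}C_{p}|_{\Sp_{T\left(n\right)}}$ is a unit, so is $|B^{k}C_{p}|_{\Sp_{T\left(n\right)}}$ itself.

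There is no genuine obstacle in this argument: the substantive inputs (the Ravenel--Wilson computation of $\K\left(n\right)_{*}\left(B^{k}C_{p}\right)$ feeding \lemref{Morava_Dimension}, \corref{Morava_Dimension_EM} and \lemref{EM_Box_Morava}, together with the nilpotence-theoretic detection of invertibility of \corref{Nil_Conservativity_Telescopic}) have all been assembled beforehand, so that the lemma reduces to the one-line numerical observation that $\binom{n-1}{k}$ vanishes once $k\ge n$. The only point requiring minor care is the verification that $\widehat{E}_{n}[-]$ is compatible with the formation of the elements $|B^{k}C_{p}|$, which is exactly the content of \corref{Integral_Functor} applied to this $k$-semiadditive functor.
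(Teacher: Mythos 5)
Your argument is the paper's own proof, just with the bookkeeping made explicit: the paper likewise combines \corref{Nil_Conservativity_Telescopic} (detection of invertibility by $\pi_{0}\bb S_{T(n)}\to\pi_{0}E_{n}$) with \lemref{EM_Box_Morava}, and the compatibility of $|{-}|$ with $\widehat{E}_{n}[-]$ that you justify via \corref{Integral_Functor} is exactly what the paper uses implicitly. The one small slip is the case $n=0$: there $\binom{n-1}{k}=\binom{-1}{k}$ is \emph{not} zero under the usual conventions, which is why the paper disposes of $n=0$ separately (``clear'', since $\pi_{0}\bb S_{T(0)}=\bb Q$ and $p$ is already invertible) before assuming $n\ge1$, where your observation $k\ge n>n-1\Rightarrow\binom{n-1}{k}=0$ is correct.
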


\begin{proof}
For $n=0$ this is clear, so we may assume $n\ge 1$. By \corref{Nil_Conservativity_Telescopic}, the map 

\[
f\colon\pi_{0}\bb S_{T\left(n\right)}\to\pi_{0}E_{n}
\]
detects invertibility and by \lemref{EM_Box_Morava}, 
\[
f\left(|B^{k}C_{p}|\right)=p^{\binom{n-1}{k}}=1.
\]
\end{proof}
\begin{thm}
\label{thm:Tn_Contractiblity}
Let $n\ge0$ and let $f\colon A\to B$
be a map with $\pi$-finite $n$-connected homotopy fibers. The induced
map $\Sigma_{+}^{\infty}f\colon\Sigma_{+}^{\infty}A\to\Sigma_{+}^{\infty}B$
is a $\T\left(n\right)$-equivalence. 
\end{thm}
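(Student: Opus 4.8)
The plan is to reduce to the case $B=\pt$ and then push the question into the $\infty$-category $\widehat{\Mod}_{E_n}$, where it becomes the classical computation of $K(n)$-homology of Eilenberg--MacLane spaces. First I would reduce to a point: since colimits of spaces are universal, $A\simeq\colim_{b\in B}f^{-1}(b)$, and the functors $\Sigma^{\infty}_{+}$ and $L_{T(n)}$ both preserve colimits, so $\Sigma^{\infty}_{+}f$, after $T(n)$-localization, is the colimit over $b\in B$ of the maps $\Sigma^{\infty}_{+}f^{-1}(b)\to\Sigma^{\infty}_{+}\pt=\bb S$. It therefore suffices to show that for every $n$-connected $\pi$-finite space $A$ the map $\Sigma^{\infty}_{+}A\to\bb S$ is a $T(n)$-equivalence. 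Writing $a\colon A\to\pt$ and $\one=\bb S_{T(n)}$, and using that $L_{T(n)}\Sigma^{\infty}_{+}A\simeq a_{!}a^{*}\one=(\one)_{A}$ with the colimit formed in $\Sp_{T(n)}$, this is exactly the assertion that the counit $c_{!}^{a}\colon(\one)_{A}\to\one$ is an isomorphism in $\Sp_{T(n)}$.

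The case $n=0$ I would dispose of directly: a $T(0)$-equivalence is a rational homology equivalence, and a connected $\pi$-finite space has finite homotopy groups, hence trivial reduced rational homology. So assume $n\ge1$; then $A$ is simply connected, hence nilpotent, and admits a finite Postnikov tower $\pt=A^{[n]}\from A^{[n+1]}\from\cdots\from A^{[N]}=A$ whose successive fibers are Eilenberg--MacLane spaces $B^{k}(\pi_{k}A)$ with $k\ge n+1$ and $\pi_{k}A$ finite abelian.

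Next I would transport the problem to $\widehat{\Mod}_{E_n}$. By \thmref{Tn_Semiaddi} the $\infty$-category $\Sp_{T(n)}$ is $\infty$-semiadditive, so $(\one)_{A}=a_{!}a^{*}\one$ and $\one$ are dualizable (\propref{Ambi_Dualizable_Spaces}). The functor $\widehat{E}_{n}[-]\colon\Sp_{T(n)}\to\widehat{\Mod}_{E_n}$ is colimit preserving, symmetric monoidal, and nil-conservative (\corref{Nil_Conservativity_Telescopic}), hence conservative on dualizable objects by \propref{Nil_Conservativity_Dualizable}; since it preserves colimits it sends $(\one)_{A}$ to $(E_{n})_{A}\simeq L_{K(n)}(E_{n}\otimes\Sigma^{\infty}_{+}A)$ and $c_{!}^{a}$ to the counit $(E_{n})_{A}\to E_{n}$. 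So it is enough to prove that this counit is an isomorphism, and for this it suffices to know that $\widetilde{K(n)}_{*}(A)=0$: granting that, $\dim_{\bb F_{p}}K(n)_{0}(A)=1$ and $K(n)_{1}(A)=0$, so by \lemref{Morava_Dimension} and its proof $(E_{n})_{A}\simeq E_{n}$, and the counit is then a self-map of the $E_{n}$-module $E_{n}$ reducing modulo the maximal ideal of $\pi_{0}E_{n}$ to the identity of $K(n)_{0}$, hence an isomorphism since $\pi_{0}E_{n}$ is local.

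It remains to establish $\widetilde{K(n)}_{*}(A)=0$ for $A$ an $n$-connected $\pi$-finite space with $n\ge1$. Here I would invoke the computations of Ravenel and Wilson \cite{RavenelWilson}: for every $k>n$ and every finite abelian group $G$ one has $\widetilde{K(n)}_{*}(B^{k}G)=0$. For the $p$-part this comes from the rank formula $\dim_{\bb F_{p}}K(n)_{0}(B^{k}C_{p})=p^{\binom{n}{k}}$, $K(n)_{1}(B^{k}C_{p})=0$ (cf.\ \corref{Morava_Dimension_EM}), extended to arbitrary finite abelian $p$-groups; the prime-to-$p$ part of $G$ contributes trivially since $B^{k}(\bb Z/m)$ has trivial reduced $p$-local homology for $p\nmid m$; and one combines these via the K\"unneth isomorphism. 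Feeding this into the Atiyah--Hirzebruch--Serre spectral sequence for $K(n)$ along the Postnikov tower of $A$, each stage has a fiber with vanishing reduced $K(n)$-homology, so the spectral sequence collapses and $K(n)_{*}(A^{[k]})\simeq K(n)_{*}(A^{[k-1]})$; inductively $K(n)_{*}(A)\simeq K(n)_{*}(\pt)$. The one genuinely delicate point is this last step --- propagating the vanishing along an arbitrary finite Postnikov tower rather than over a single $B^{k}C_{p}$ --- which is classical but must be handled with some care; by contrast, everything concerning $\Sp_{T(n)}$ itself, which is the new content, is formal once $\infty$-semiadditivity and nil-conservativity are available.
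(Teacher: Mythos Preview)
Your argument is correct and essentially coincides with the \emph{alternative} proof the paper records at the end of its argument. The paper's \emph{primary} route is different: after the same reduction to $B=\pt$, it refines the Postnikov tower so that each fiber is $B^{k}C_{q}$ with $k\ge n+1$, and for $q=p$ invokes \propref{Amenable_Contractible} --- since $|B^{k-1}C_p|\in\pi_0\bb S_{T(n)}$ is invertible by \lemref{EM_Box_Tn_Inv}, the loop space $\Omega B^{k}C_{p}$ is $\Sp_{T(n)}$-amenable and the counit $c_{!}$ is an isomorphism directly in $\Sp_{T(n)}$, without ever passing to $K(n)$-local spectra. Your route (and the paper's alternative) instead uses dualizability plus nil-conservativity to reduce to a $K(n)$-local statement and then quotes Ravenel--Wilson. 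Two minor simplifications are available: the step through $(E_n)_A\simeq E_n$ and the local-ring argument is unnecessary, since a $K(n)$-equivalence remains one after tensoring with $E_n$, so the image of $c_{!}^{a}$ in $\widehat{\Mod}_{E_n}$ is automatically an isomorphism; and your ``spectral sequence collapse'' is really the same colimit-over-the-base argument you used in the first paragraph, applied one Postnikov layer at a time in $\Sp_{K(n)}$ --- which is also why the paper finds it cleaner to reduce to a single $B^{k}C_{q}$ \emph{before} transporting.
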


\begin{proof}
We begin with a standard general argument that reduces the statement
to the case $B=\pt$, by passing to the fibers. Consider the equivalence
of $\infty$-categories
\[
\mathcal{S}_{/B}\iso\fun\left(B,\mathcal{S}\right),
\]
given by the Grothendieck construction. Let $X\in\fun\left(B,\mathcal{S}\right)$
be the local system of spaces on $B$, that corresponds to $f$ and
let $Y\in\fun\left(B,\mathcal{S}\right)$ be the constant local system
with value $\pt\in\mathcal{S}$. As $Y$ is terminal, there is an essentially
unique map $X\to Y$, which at each point $b\in B$, is the essentially
unique map from $X_{b}$, the homotopy fiber of $f$ at $b$, to $Y_{b}=\pt$.
We recover $f$, up to homotopy, as the induced map on colimits
\[
A\simeq\colim X\to\colim Y\simeq B.
\]
For each $E\in\Sp$, the functor 
\[
E\otimes\Sigma_{+}^{\infty}\left(-\right)\colon\mathcal{S}\to\Sp
\]
preserves colimits. Therefore, if the induced map for each homotopy
fiber
\[
E\otimes\Sigma_{+}^{\infty}X_{b}\to E\otimes\Sigma_{+}^{\infty}\pt,
\]
 is an isomorphism, then the induced map on colimits is also an isomorphism
\[
E\otimes\Sigma_{+}^{\infty}A\iso E\otimes\Sigma_{+}^{\infty}B.
\]

Now, if $B=\pt$, we have that $A$ is a $\pi$-finite $n$-connected
space. For $n=0$, the claim is obvious, and so we may assume that
$n\ge1$. Therefore, $A$ is simply connected and in particular nilpotent.
Thus, we can refine the Postnikov tower of $A$ to a finite tower
\[
A=A_{0}\to A_{1}\to\dots\to A_{d}=\pt,
\]
such that the homotopy fiber of each $A_{i}\to A_{i+1}$ is of the
form $B^{k}C_{q}$, for $q$ a prime and $k\ge n+1$. It thus suffices
to show that the map 
\[
\Sigma_{+}^{\infty}B^{k}C_{q}\to\Sigma_{+}^{\infty}\pt\simeq\bb S,
\]
induced by 
\[
g\colon B^{k}C_{q}\to\pt,
\]

is a $\T\left(n\right)$-equivalence. For $q\neq p$ this is clear.
For $q=p$ we apply \propref{Amenable_Contractible} to the map $g$.
For this, we need to check that 
\[
|\Omega B^{k}C_{p}|=|B^{k-1}C_{p}|
\]
is invertible in $\pi_{0}\bb S_{T\left(n\right)}$, which follows
from \lemref{EM_Box_Tn_Inv}. Alternatively,
$B^kC_q$ is dualizable in $\Sp_{T(n)}$ by \thmref{Tn_Semiaddi} and \corref{Dim_Sym}, and $L_{K(n)}\colon \Sp_{T(n)}\to \Sp_{K(n)}$ in nil-conservative by \propref{Nilpotence_Support} and \exaref{Chromatic_Support}(2). 
Thus, by \propref{Nil_Conservativity_Dualizable}, it suffices to check that the map $g$ is a $K(n)$-equivalence, which follows from the computation of $K(n)_*B^kC_q$ carried out in \cite{RavenelWilson}. 
\end{proof}

\begin{rem}
The analogous result for $\K\left(n\right)$ instead of $\T\left(n\right)$
is a consequence of the \cite{RavenelWilson} computation of the $\K\left(n\right)$-homology
of Eilenberg-MacLane spaces. A weaker result for $\T\left(n\right)$,
namely that the conclusion holds if the homotopy fibers of $f$ are
$\pi$-finite and $k$-connected for $k\gg0$, can be deduced from
\cite[Theorem 3.1]{Bousfield82}.
\end{rem}

\begin{cor}
	\label{cor:General_Contractibility}Let $n\ge0$ and let $f\colon A\to B$
	be a map with $\pi$-finite $n$-connected homotopy fibers. For every
	localization $L\colon\Sp_{\left(p\right)}\to\Sp_{\left(p\right)}$
	such that $L\left(\X\left(n+1\right)\right)=0$, the induced map 
	\[
	L\left(\Sigma_{+}^{\infty}f\right)\colon L\left(\Sigma_{+}^{\infty}A\right)\to L\left(\Sigma_{+}^{\infty}B\right)
	\]
	is an isomorphism. 
\end{cor}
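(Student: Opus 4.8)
The plan is to reduce \corref{General_Contractibility} to \thmref{Tn_Contractiblity} by observing that the hypothesis $L(F(n+1)) = 0$ forces $L$ to factor (on Bousfield classes) through $T(n)$-localization together with lower-height information, and that $\Sigma_+^\infty f$ is already a $T(j)$-equivalence for every $j \le n$ by induction. First I would recall that a localization $L\colon\Sp_{(p)}\to\Sp_{(p)}$ with $L(F(n+1))=0$ has support contained in $\{0,1,\dots,n\}$: indeed, if $m \ge n+1$ then $K(m)$ is a module over (a suspension of) $F(n+1)\otimes K(m)$, hence $L(K(m)) = 0$ by the same retract argument as in \lemref{Kn_LSp}. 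So $\supp(L) \subseteq \{0,\dots,n\}$.

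Next, the key point is that a map $g\colon X\to Y$ of spectra is an $L$-equivalence provided it is a $K(m)$-equivalence for all $m \in \supp(L)$ — this is because $L$-equivalences are detected by the $K(m)$ for $m$ in the support when one restricts attention to, say, finite-type situations; more robustly here, I would instead argue directly via \thmref{Tn_Contractiblity} applied at each height. For each $0 \le m \le n$, the homotopy fibers of $f$ are $\pi$-finite and $n$-connected, hence in particular $m$-connected, so by \thmref{Tn_Contractiblity} the map $\Sigma_+^\infty f$ is a $T(m)$-equivalence, and therefore (since $\Sp_{K(m)} \ss \Sp_{T(m)}$) also a $K(m)$-equivalence, for every $0 \le m \le n$. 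Combined with $\supp(L)\subseteq\{0,\dots,n\}$, this shows $\Sigma_+^\infty f$ becomes an isomorphism after $L(K(m))$-localization for every $m$ in $\supp(L)$.

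To conclude that $L(\Sigma_+^\infty f)$ itself is an isomorphism, I would run the same reduction to fibers as in the proof of \thmref{Tn_Contractiblity}: writing $f$ as the map on colimits induced by a map of local systems over $B$ whose fibers are the homotopy fibers $F_b$ of $f$, and using that $L\circ\Sigma_+^\infty\colon\mathcal{S}\to\Sp$ preserves colimits, it suffices to treat $B = \pt$, i.e. to show $L(\Sigma_+^\infty A) \to L(\bb S)$ is an isomorphism for $A$ a $\pi$-finite $n$-connected space. As before, for $n = 0$ this is vacuous after observing $A$ is connected so $\Sigma_+^\infty A \to \bb S$ is split; for $n \ge 1$ refine the (finite, since $A$ is nilpotent) Postnikov tower of $A$ into stages with fibers $B^k C_q$ for primes $q$ and $k \ge n+1$, reducing to showing $\Sigma_+^\infty B^k C_q \to \bb S$ is an $L$-equivalence. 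For $q\ne p$ this is clear since $B^kC_q$ has trivial reduced $p$-local homology. For $q = p$, apply \propref{Amenable_Contractible} to $g\colon B^k C_p\to\pt$ inside $\Sp_L$: the loop space $\Omega B^k C_p = B^{k-1}C_p$ satisfies $k-1\ge n$, so by \lemref{EM_Box_Tn_Inv} the element $|B^{k-1}C_p|$ is invertible in $\pi_0\bb S_{T(n)}$, hence its image in $\pi_0 \bb S_L = \pi_0 L(\bb S)$ is invertible — here I use that $L$ factors through $T(n)$-localization on the relevant Bousfield class, or more simply that the unique ring map $\pi_0\bb S_{T(n)} \to \pi_0 L(\bb S)$ sends invertibles to invertibles, which holds because any localization with $L(F(n+1))=0$ inverts the $v_n$-self map, so $\bb S_L$ is canonically a $\bb S_{T(n)}$-algebra. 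Thus $\Omega B^kC_p$ is $\Sp_L$-amenable and \propref{Amenable_Contractible} gives that $c_!^g$ is an isomorphism, i.e. $L(\Sigma_+^\infty B^kC_p) \iso L(\bb S)$.

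The main obstacle I anticipate is making precise the claim that $\bb S_L$ carries a canonical $\bb S_{T(n)}$-algebra structure (equivalently that $L$-localization factors through $T(n)$-localization up to lower-height phenomena), so that invertibility of $|B^{k-1}C_p|$ in $\pi_0\bb S_{T(n)}$ transfers to $\pi_0 L(\bb S)$. The cleanest fix is probably to avoid this entirely and route the argument through nil-conservativity as in the alternative proof of \thmref{Tn_Contractiblity}: $B^kC_q$ is dualizable in $\Sp_L$ (it is dualizable in $\Sp_{T(n)}$ by \thmref{Tn_Semiaddi} and \corref{Dim_Sym}, and dualizability is preserved by the symmetric monoidal localization $\Sp_{(p)}\to\Sp_L$), and then one only needs that $g$ is an $L$-equivalence, which follows from it being a $K(m)$-equivalence for all $m\in\supp(L)$ together with the fact that on dualizable objects $L$-acyclicity is detected by the $K(m)$, $m\in\supp(L)$ — itself a consequence of \propref{Nilpotence_Support} and \propref{Nil_Conservativity_Dualizable}.
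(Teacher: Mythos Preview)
You have the essential computational input—\thmref{Tn_Contractiblity} applied at each height $0\le m\le n$ shows that $\Sigma_+^\infty f$ is a $T(m)$-equivalence for every such $m$—but you never close the argument, and both of your proposed completions fail. The paper's proof is the one-line observation you are missing: the hypothesis $L(\X(n+1))=0$ says $\X(n+1)$ is $L$-acyclic, and since the $L$-acyclics form a localizing subcategory while the $L_n^f$-acyclics are by definition the localizing subcategory generated by $\X(n+1)$, the functor $L$ factors through the finite localization $L_n^f$. As $L_n^f$ is Bousfield localization with respect to $T(0)\oplus\cdots\oplus T(n)$, and you have already shown $\Sigma_+^\infty f$ is a $T(m)$-equivalence for all $m\le n$, it is an $L_n^f$-equivalence and hence an $L$-equivalence.

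Your first workaround, running \propref{Amenable_Contractible} inside $\Sp_L$, requires $B^{k-1}C_p$ to be $\Sp_L$-ambidextrous before the element $|B^{k-1}C_p|_{\Sp_L}$ is even defined, and nothing in the hypotheses gives any higher semiadditivity of $\Sp_L$. Nor is there in general a ring map $\pi_0\bb S_{T(n)}\to\pi_0 L\bb S$: for instance when $L=L_n^f$ one has $\Sp_{L_n^f}\supsetneq\Sp_{T(n)}$, so $L\bb S$ is not $T(n)$-local and is not an $\bb S_{T(n)}$-algebra (the fact that $L$ inverts $v_n$ on $L\X(n)$ is a statement about $\X(n)$, not about the sphere). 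Your second workaround invokes \propref{Nilpotence_Support}, which is stated only for localization at a weak ring, whereas the corollary concerns an arbitrary $\otimes$-localization; and your justification that $B^kC_p$ is dualizable in $\Sp_L$ is confused—dualizability in $\Sp_{T(n)}$ says nothing about dualizability in $\Sp_{(p)}$, and $\Sigma_+^\infty B^kC_p$ is not a finite spectrum.
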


\begin{proof}
	The condition $L\left(\X\left(n+1\right)\right)=0$ ensures that $L\colon\Sp_{\left(p\right)}\to\Sp_{L}$
	factors through the finite chromatic localization $L_{n}^{f}\colon\Sp_{\left(p\right)}\to\Sp_{L_{n}^{f}}$,
	which is also the localization with respect to the spectrum $T\left(0\right)\oplus\cdots\oplus T\left(n\right)$.
	Hence, the claim follows from \thmref{Tn_Contractiblity}.
\end{proof}

\subsection{Higher Semiadditivity and Weak Rings}
\label{sec:semiadd_and_chrom}
In this section, we study higher semiadditivity for more general localizations of spectra. In particular, with respect to \emph{weak rings}, which are a very weak version of a homotopy ring (see \defref{Weak_Ring}). We begin by studying the chromatic support of a localization and show that three different notions of ``bounded chromatic height'' for weak rings coincide. We then study localizations of the $\infty$-category $\Sp$ with respect to weak rings, which are $1$-semiadditive. 
We show that $p$-locally, those are precisely the intermediate localizations between $\Sp_{K(n)}$ and $\Sp_{T(n)}$. We deduce that such localizations are always $\infty$-semiadditive and also derive a characterization of higher semiadditivity in terms of the Bousfield-Kuhn functor. 

\subsubsection{General Localizations}

We begin with a discussion regarding general $\otimes$-localizations. The following result relates the $1$-semiadditivity of a $\otimes$-localization and the support of the corresponding localization functor. In a sense, a 1-semiadditive $\otimes$-localization of $\Sp_{\left(p\right)}$ is monochromatic of finite height.
\begin{prop}
	\label{prop:Semiadd_Monochrom}Let $L\colon\Sp_{\left(p\right)}\to\Sp_{\left(p\right)}$
	be a $\otimes$-localization functor. If $\Sp_{L}$ is 1-semiadditive,
	then either $\supp(L)= \es$ or $\supp(L)=\{n\}$ for some $0 \le n < \infty$. 
\end{prop}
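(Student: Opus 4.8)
The plan is to argue by contradiction, ruling out the only two obstructions to $\supp(L)$ being empty or a finite singleton: (i) that $\infty\in\supp(L)$, and (ii) that $m,n\in\supp(L)$ for some $0\le m<n<\infty$. By \propref{Sp_Localization} the $\infty$-category $\Sp_{L}$ is stable, presentable and presentably symmetric monoidal, and it is $p$-local as a full subcategory of $\Sp_{(p)}$; together with the standing hypothesis this makes $\Sp_{L}$ a stable, $p$-local, $1$-semiadditively symmetric monoidal $\infty$-category, so that all of the machinery of Section~4 (additive $p$-derivations, the operations $\alpha,\delta$, the bootstrap criteria) applies to $\Sp_{L}$ itself and to the comparison functors used below.

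\emph{Case (i): $\infty\notin\supp(L)$.} If $\infty\in\supp(L)$, then $H\bb F_{p}=\K(\infty)\in\Sp_{L}$ by \lemref{Kn_LSp}, and since $L$ is symmetric monoidal $H\bb F_{p}=L(H\bb F_{p})$ is canonically an $\bb E_{\infty}$-algebra in $\Sp_{L}$. Apply \corref{Semiadd_Torsion_Nilpotent} with $X=\bb S_{L}$ (the terminal $\bb E_{\infty}$-coalgebra) and $Y=H\bb F_{p}$: the commutative ring $R=\hom_{h\Sp_{L}}(\bb S_{L},H\bb F_{p})=\pi_{0}H\bb F_{p}=\bb F_{p}$ satisfies $\bb Q\otimes R=0$, hence $R=0$, contradicting $\bb F_{p}\neq 0$. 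So $\infty\notin\supp(L)$.

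\emph{Case (ii): $|\supp(L)|\le 1$.} The idea is to transport the question to Morava $E$-theory at the two heights. For $k\in\supp(L)$ every $L$-acyclic spectrum is $\K(k)$-acyclic --- indeed if $L(X)=0$ then $\K(k)\otimes X$ is an $L$-acyclic $\K(k)$-module, hence zero since $\K(k)$ is not $L$-acyclic --- so $L_{\K(k)}$ restricts to a colimit preserving symmetric monoidal functor $\Sp_{L}\to\Sp_{\K(k)}$; post-composing with $E_{k}\widehat{\otimes}(-)$ gives a colimit preserving symmetric monoidal functor $\Phi_{k}\colon\Sp_{L}\to\widehat{\Mod}_{E_{k}}$ between stable, $p$-local, $1$-semiadditively symmetric monoidal $\infty$-categories. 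By \propref{Delta_Semi_Add_Functoriality} each $\Phi_{k}$ induces a semi-$\delta$-ring homomorphism $f_{k}\colon\mathcal R_{\Sp_{L}}=\pi_{0}\bb S_{L}\to\pi_{0}E_{k}$, and by \corref{Integral_Functor} together with \lemref{EM_Box_Morava} we get $f_{k}(|BC_{p}|)=|BC_{p}|_{\widehat{\Mod}_{E_{k}}}=p^{\,k-1}$. Combining $f_{m}$ and $f_{n}$ produces a semi-$\delta$-ring homomorphism $\pi_{0}\bb S_{L}\to\pi_{0}E_{m}\times\pi_{0}E_{n}$ carrying $|BC_{p}|$ to $(p^{\,m-1},p^{\,n-1})$ and carrying rational elements diagonally. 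From here one must squeeze out a contradiction from the incompatibility of the two coordinates: by iterating $\delta$ on $|BC_{p}|$ while keeping the iterates of the form $|A|$ for connected $1$-finite $p$-spaces $A$ (using \thmref{Alpha_Box} and \corref{Distributivity}) and tracking $p$-adic valuations in each factor via \lemref{Delta_Module} and \lemref{Delta_Valuation}, one argues that no single element of a $p$-local semi-$\delta$-ring can lie simultaneously over $p^{\,m-1}$ and over $p^{\,n-1}$ when $m\neq n$. Hence $\supp(L)$ has at most one element, and by Case~(i) that element is finite.

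I expect Case~(ii) to be the main obstacle. Unlike Case~(i), neither height's data is contradictory in isolation --- an element of a ring may perfectly well map to a unit under one homomorphism and to a non-unit under another --- and the comparison functors $\Phi_{k}$ are not known to be nil-conservative for a general $\otimes$-localization $L$, so one genuinely has to use the additive $p$-derivation \emph{relating} the two heights (rather than just invoking \corref{Semiadd_Torsion_Nilpotent} or \thmref{Bootstrap_Machine} as in the monochromatic situation). Everything else --- the reduction to the two cases, Case~(i), and the construction of the semi-$\delta$-ring homomorphisms $f_{k}$ --- is formal given Sections~2--4.
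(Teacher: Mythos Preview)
Your Case~(i) is correct and essentially identical to the paper's.

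Your Case~(ii), however, has a genuine gap at the crucial step. You construct semi-$\delta$-ring homomorphisms $f_m\colon\pi_0\bb S_L\to\pi_0E_m$ and $f_n\colon\pi_0\bb S_L\to\pi_0E_n$ sending $|BC_p|$ to $p^{m-1}$ and $p^{n-1}$ respectively, and then claim that ``no single element of a $p$-local semi-$\delta$-ring can lie simultaneously over $p^{m-1}$ and over $p^{n-1}$.'' But this is false in general: the product $\bb Z_{(p)}\times\bb Z_{(p)}$ with componentwise Fermat quotient is a $p$-local semi-$\delta$-ring, and $(p^{m-1},p^{n-1})$ is a perfectly good element of it. Iterating $\delta$ does not help: you will find elements mapping to a unit in one factor and a non-unit in the other, which is not a contradiction. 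Your suggestion to ``keep the iterates of the form $|A|$'' does not work either, since $\delta(|A|)=|BC_p\times A|-|A\wr C_p|$ is a \emph{difference}, not a single cardinality. Mapping \emph{out} of $\pi_0\bb S_L$ cannot create a collision between $p^{m-1}$ and $p^{n-1}$, because $|BC_p|\in\pi_0\bb S_L$ is not known to be rational.

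The paper's trick is to reverse the direction of the maps: instead of mapping out to $E_m$ and $E_n$ separately, form the $\bb E_\infty$-ring $E_n\widehat{\otimes}E_m=L(E_n\otimes E_m)$ \emph{inside} $\Sp_L$, and use the $\bb E_\infty$-ring maps $E_m\to E_n\widehat{\otimes}E_m\leftarrow E_n$. By naturality of $|BC_p|$ under algebra maps (\propref{Delta_Semi_Add_Naturality}), the single element $|BC_p|\in\pi_0(E_n\widehat{\otimes}E_m)$ receives both $p^{m-1}$ and $p^{n-1}$ under these maps, hence equals both as integers. One then checks (via $L_{K(m)}$ and K\"unneth) that $E_n\widehat{\otimes}E_m\neq0$, so by \propref{Delta_Injective} the map $\bb Z\to\pi_0(E_n\widehat{\otimes}E_m)$ is injective, forcing $p^{m-1}=p^{n-1}$ and hence $m=n$. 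The point is that a \emph{coproduct} of commutative rings forces identifications that a \emph{product} does not.
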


\begin{proof}
	We start by showing that $\infty \notin \supp(L)$. 
	Assuming the contrary, by \lemref{Kn_LSp}, we get that 
	\[
	H\mathbb{F}_{p}=\K\left(\infty\right)\in\Sp_{\K\left(\infty\right)}\ss\Sp_{L}.
	\]
	This is a contradiction to \corref{Semiadd_Torsion_Nilpotent} as $H\mathbb{F}_{p}$
	is an $\mathbb{E}_{\infty}$-ring.  
	
	It remains to show that $\supp(L)$ cannot contain two different natural numbers. We shall prove this by contradiction. Suppose that there are $0\le m<n<\infty$ such that $m,n\in \supp(L)$. 
	By \lemref{Kn_LSp} again, it follows that $\Sp_{\K\left(m\right)},\Sp_{\K\left(n\right)}\ss\Sp_{L}$.
	In particular, we get $E_{m},E_{n}\in\Sp_{L}$. Consider the object
	\[
	E_{n}\widehat{\otimes}E_{m}=L\left(E_{n}\otimes E_{m}\right)\in\Sp_{L}.
	\]
	
	We begin by showing that  $E_{n}\widehat{\otimes}E_{m}\neq0$. Indeed, since
	$\Sp_{\K\left(m\right)}\ss\Sp_{L}$ we have
	\[
	L_{\K\left(m\right)}\left(E_{n}\widehat{\otimes}E_{m}\right)\simeq L_{\K\left(m\right)}L\left(E_{n}\otimes E_{m}\right)\simeq L_{\K\left(m\right)}\left(E_{n}\otimes E_{m}\right).
	\]
	The spectrum $L_{\K\left(m\right)}\left(E_{n}\otimes E_{m}\right)$
	is non-zero by the K\"unneth isomorphism and the fact that 
	\[
	\K\left(m\right)\otimes E_{m},\,\K\left(m\right)\otimes E_{n}\neq0.
	\]
	
	The object $E_{n}\widehat{\otimes}E_{m}$ is an
	$\bb E_{\infty}$-ring in the $1$-semiadditive $\infty$-category
	$\Sp_{L}$ and therefore we have a well defined element $a=|BC_{p}|\in\pi_{0}\left(E_{n}\widehat{\otimes}E_{m}\right)$.
	By naturality, $a$ is the image of the elements 
	\[
	a_{n}=|BC_{p}|\in\pi_{0}\left(E_{n}\right),\quad a_{m}=|BC_{p}|\in\pi_{0}\left(E_{m}\right)
	\]
	under the canonical maps of $\bb E_{\infty}$-rings $E_{n}\to E_{n}\widehat{\otimes}E_{m}$
	and $E_{m}\to E_{n}\widehat{\otimes}E_{m}$ respectively. However, the computation in \lemref{EM_Box_Morava}
	shows that $a_{n}=p^{n-1}$ and $a_{m}=p^{m-1}$. Hence, their equality is a contradiction to the injectivity of the unit map $\bb{Z}\to \pi_0(E_n \widehat{\otimes} E_m)$, which follows from  \propref{Delta_Injective}. 
\end{proof}

\begin{rem}
	The definition of $a_{n}$ as $|BC_{p}|\in\pi_{0}\left(E_{n}\right)$
	is unambiguous, since by \corref{Integral_Functor} for the symmetric monoidal
	localization functor $\Sp_{L}\to\Sp_{\K\left(n\right)},$ it does
	not matter whether we consider $E_{n}$ as an object of $\Sp_{L}$
	or $\Sp_{\K\left(n\right)}$.
\end{rem}

\begin{rem}
We are not aware of any example of a non-zero $1$-semiadditive $\otimes$-localization $L$, for which $\supp(L) = \es$. 
The techniques of this paper can be used to show that if no such examples exist (as we suspect), then \corref{Semiadd_Collapse} can be generalized to  every  $\otimes$-localization. 
Namely, a $\otimes$-localization of $\Sp$ is $1$-semiadditive if and only if it is $\infty$-semiadditive.
\end{rem}

\subsubsection{Weak Rings}

There are several notions of being ``of height $\le n$'' for the Bousfield
class of a weak ring. The following theorem shows that they are all equivalent.
\begin{thm}
	\label{thm:Height_Below_n} Let $R$ be a non-zero $p$-local weak
	ring and let $0\le n <\infty$. The following are equivalent:
	\begin{enumerate}
		\item $\X\left(n+1\right)\otimes R=0$ for some finite spectrum $\X\left(n+1\right)$ of type $n+1$.
		\item $\Sigma^{\infty}B^{n+1}C_{p}\otimes R=0$. 
		\item $\supp(R) \subseteq \{0,\dots,n\}$.
	\end{enumerate}
\end{thm}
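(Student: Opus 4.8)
The plan is to prove the three equivalences by establishing $(1)\Rightarrow(3)\Rightarrow(2)\Rightarrow(1)$, using the chromatic support formalism already developed and the contractibility result \thmref{Tn_Contractiblity}. Throughout, recall that $\supp(R\otimes S)=\supp(R)\cap\supp(S)$ by the K\"unneth Theorem, and that $R\otimes S$ is again a weak ring by \lemref{Tensor_Weak_Rings}.

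\textbf{Step 1: $(1)\Rightarrow(3)$.} Suppose $\X(n+1)\otimes R=0$ for a type $n+1$ finite spectrum $\X(n+1)$. By \exaref{Chromatic_Support}(1), $\supp(\X(n+1))=\{n+1,n+2,\dots,\infty\}$. Since $\X(n+1)\otimes R=0$ means $\supp(\X(n+1))\cap\supp(R)=\supp(\X(n+1)\otimes R)=\es$, we get $\supp(R)\cap\{n+1,\dots,\infty\}=\es$, i.e. $\supp(R)\subseteq\{0,\dots,n\}$.

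\textbf{Step 2: $(3)\Rightarrow(2)$.} Assume $\supp(R)\subseteq\{0,\dots,n\}$; equivalently $\K(m)\otimes R=0$ for all $m>n$ and $\K(\infty)\otimes R=H\bb F_p\otimes R=0$. The map $g\colon B^{n+1}C_p\to\pt$ has $\pi$-finite $n$-connected homotopy fiber (namely $B^{n+1}C_p$ itself is $n$-connected). I want to conclude that $\Sigma^\infty_+ g$ becomes an isomorphism after smashing with $R$. The key point is that the condition $\K(m)\otimes R=0$ for $m>n$ means (by the Thick Subcategory Theorem, or directly) that the Bousfield class of $R$ is dominated by $T(0)\oplus\cdots\oplus T(n)$ after $p$-localization; more precisely, the localization $L_R$ factors through the finite localization $L_n^f$, exactly as in \corref{General_Contractibility}. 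Since $\K(m)\otimes R=0$ for $m>n$ forces $\X(n+1)\otimes R=0$ (because $\supp(\X(n+1))=\{n+1,\dots,\infty\}$ and $\supp(R)\cap\{n+1,\dots,\infty\}=\es$ already gives $\X(n+1)\otimes R$ has empty support, hence is zero by the Nilpotence Theorem applied to the weak ring $\X(n+1)\otimes R$ — note we may take $\X(n+1)$ to be an $\bb E_1$-ring, so this is a weak ring). Thus $L_R(\X(n+1))=0$, and \corref{General_Contractibility} applied to $g$ gives that $R\otimes\Sigma^\infty_+ B^{n+1}C_p\to R\otimes\Sigma^\infty_+\pt=R$ is an equivalence. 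Finally, $\Sigma^\infty B^{n+1}C_p=\ker(\Sigma^\infty_+B^{n+1}C_p\to\bb S)$ (the reduced suspension spectrum is the fiber of the augmentation, using the basepoint), so $\Sigma^\infty B^{n+1}C_p\otimes R$ is the fiber of an equivalence, hence zero. This gives (2).

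\textbf{Step 3: $(2)\Rightarrow(1)$.} Assume $\Sigma^\infty B^{n+1}C_p\otimes R=0$. I claim this forces $\K(m)\otimes R=0$ for all $m\ge n+1$, which by the standard argument (take $\X(n+1)$ of type $n+1$, then $\K(j)\otimes\X(n+1)=0$ for $j\le n$ and $\K(j)\otimes\X(n+1)\ne0$ for $j\ge n+1$; combined with $\supp(R)\subseteq\{0,\dots,n\}$ this gives $\supp(\X(n+1)\otimes R)=\es$, hence $\X(n+1)\otimes R=0$ by Nilpotence) yields (1) and also re-derives (3). To see the claim: by the Ravenel--Wilson computation \cite{RavenelWilson}, $\K(m)_*(B^{n+1}C_p)$ has reduced part of dimension $p^{\binom{m}{n+1}}-1>0$ for all $m\ge n+1$ (since $\binom{m}{n+1}\ge 1$), so $\K(m)\otimes\Sigma^\infty B^{n+1}C_p\ne0$; hence if $\Sigma^\infty B^{n+1}C_p\otimes R=0$ then $\K(m)\otimes\Sigma^\infty B^{n+1}C_p\otimes R=0$, and since $\K(m)\otimes\Sigma^\infty B^{n+1}C_p$ is a (nonzero) wedge of suspensions of $\K(m)$, it follows that $\K(m)\otimes R=0$ (up to a suspension $\K(m)$ is a retract of $\K(m)\otimes\Sigma^\infty B^{n+1}C_p$, so $\K(m)\otimes R$ is a retract of $0$). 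This holds for all $m\ge n+1$, including $m=\infty$ with $H\bb F_p$ in place of $\K(\infty)$, giving the claim. This completes the cycle.

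\textbf{Main obstacle.} The routine parts are the support bookkeeping; the one genuinely substantive input is Step 2, specifically identifying $\Sigma^\infty B^{n+1}C_p\otimes R$ with the fiber of the $R$-homology of $g\colon B^{n+1}C_p\to\pt$ and invoking \thmref{Tn_Contractiblity} (via \corref{General_Contractibility}) — which is where the $\infty$-semiadditivity of $\Sp_{T(n)}$ is really being used. The delicate step is verifying that the hypothesis $\supp(R)\subseteq\{0,\dots,n\}$ genuinely implies $L_R(\X(n+1))=0$ so that $L_R$ factors through $L_n^f$; this requires the Nilpotence Theorem for the weak ring $\X(n+1)\otimes R$ together with the support computation, and one must be a little careful that $\X(n+1)$ can be taken to carry an $\bb E_1$-ring structure so that $\X(n+1)\otimes R$ is a weak ring to which \thmref{Nilpotence} applies.
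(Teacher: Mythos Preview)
Your argument is correct and uses the same three inputs as the paper---\corref{General_Contractibility}, the Ravenel--Wilson computation, and the Nilpotence Theorem for weak rings---but you traverse the cycle in a different order. The paper proves $(1)\Rightarrow(2)\Rightarrow(3)$ directly (these hold for \emph{any} $\otimes$-localization $L$, reading $R\otimes X=0$ as $L(X)=0$) and reserves the weak-ring hypothesis for the single dashed implication $(3)\Rightarrow(1)$. Your route $(1)\Rightarrow(3)\Rightarrow(2)\Rightarrow(1)$ is logically equivalent but less economical: your $(3)\Rightarrow(2)$ already proves $(3)\Rightarrow(1)$ internally (via Nilpotence) before invoking \corref{General_Contractibility}, and your $(2)\Rightarrow(1)$ proves $(2)\Rightarrow(3)$ first and then repeats $(3)\Rightarrow(1)$. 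The paper's organization has the advantage of isolating exactly where the weak-ring assumption enters.

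One small imprecision in Step~2: \corref{General_Contractibility} gives that $L_R(\Sigma^\infty_+ g)$ is an isomorphism, not that $R\otimes\Sigma^\infty_+ g$ is one (for a mere weak ring $R$, tensoring with $R$ is not the same as $L_R$). This does not affect your conclusion, since $L_R$ is exact and hence $L_R(\Sigma^\infty B^{n+1}C_p)=0$, which is equivalent to $R\otimes\Sigma^\infty B^{n+1}C_p=0$.
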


\begin{proof}
	We prove the equivalence by showing first that the implications labeled by solid arrows in the diagram below hold for a general $\otimes$-localization $L$, where the condition
	$R\otimes X=0$ is interpreted as $L\left(X\right)=0$. Then we turn to the remaining implication, labeled by the dashed arrow in the diagram. 
	\[
	\xymatrix{
		                         &                             &                  \\
		\left(1\right)\ar@{=>}[r] & \left(2\right)\ar@{=>}[r] & \left(3\right) \ar@/_1.5pc/@{==>}[ll].}
	\]
	The implication
	$\xymatrix{\left(1\right)\ar@{=>}[r] & \left(2\right)}$ follows from \corref{General_Contractibility}.
	Given (2), assume by contradiction that $L\left(\K\left(m\right)\right)\neq0$
	for some $n< m\le\infty$. On the one hand, by \lemref{Kn_LSp},
	we have 
	\[
	\Sigma^{\infty}B^{n+1}C_{p}\otimes\K\left(m\right)\in\Sp_{K\left(m\right)}\ss\Sp_{L}.
	\]
	On the other hand, by assumption, $L\left(\Sigma^{\infty}B^{n+1}C_{p}\right)=0$ and
	hence (using {\cite{RavenelWilson}}), we have
	\[
	0\neq \Sigma^{\infty}B^{n+1}C_{p}\otimes\K\left(m\right)=L(\Sigma^{\infty}B^{n+1}C_{p}\otimes\K\left(m\right))=L(\Sigma^{\infty}B^{n+1}C_{p})\widehat{\otimes}L\left(\K\left(m\right)\right)=0
	.\]

	It now suffices to show that for a localization with respect to a
	non-zero $p$-local weak ring $R$, the implication 
	$\xymatrix{\left(3\right) \ar@{==>}[r] & \left(1\right)}$ holds as well.
	By Example \ref{example: telescopic weak rings}, we may assume that $\X\left(n+1\right)$ is
	a weak ring and hence by Lemma \lemref{Tensor_Weak_Rings}, the spectrum 
	$\X\left(n+1\right)\otimes R$ is a weak ring as well. Moreover, we have  
	\[\supp(\X\left(n+1\right)\otimes R) = \supp(\X\left(n+1\right)) \cap \supp(R)=\es\] and thus by \thmref{Nilpotence}, we get
	$\X\left(n+1\right)\otimes R=0$.
\end{proof}

\begin{rem}
	Condition $(2)$ in \thmref{Height_Below_n} has an alternative formulation. As in the proof of \thmref{Tn_Contractiblity}, if $E$ is a $p$-local spectrum, $\Sigma^\infty B^{n+1}C_p \otimes E = 0$ if and only if the following condition is satisfied:
	\begin{enumerate}
	\item[$\left(2'\right)$] For every map $f{\colon}A\to B$ of $\pi$-finite spaces,
	that induces an isomorphism on the $n$-th Postnikov truncation, the map 
	\[ \Sigma^\infty_+f \otimes E  \colon  \Sigma^\infty_+A\otimes E  \to \Sigma^\infty_+B\otimes E\] 
	is an isomorphism. 
	\end{enumerate}
\end{rem}
We now show that for localization with respect to a weak ring, being monochromatic is even more closely related to higher semiadditivity. For this we need the following general lemma.
\begin{lem}
	\label{lem:Bousfield_Class_Expansion}Let $E\in\Sp_{\left(p\right)}$.
	For every $n\ge0$, the spectrum $E$ is Bousfield equivalent to 
	\[
	\left(\T\left(0\right)\otimes E\right)\oplus\left(\T\left(1\right)\otimes E\right)\oplus\cdots\oplus\left(\T\left(n\right)\otimes E\right)\oplus\left(\X\left(n+1\right)\otimes E\right).
	\]
\end{lem}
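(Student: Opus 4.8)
The plan is to prove the lemma by induction on $n$, the engine being the classical Bousfield decomposition attached to a self-map (due to Ravenel): for a spectrum $X$ and a self-map $f\colon\Sigma^{d}X\to X$ with cofiber $C$ and telescope $T=f^{-1}X=\colim\bigl(X\oto{f}\Sigma^{-d}X\oto{f}\Sigma^{-2d}X\oto{f}\cdots\bigr)$, the Bousfield classes satisfy $\langle X\rangle=\langle T\rangle\vee\langle C\rangle$; that is, a spectrum $Y$ has $Y\otimes X=0$ if and only if $Y\otimes T=0$ and $Y\otimes C=0$. The ``only if'' direction is immediate, as $C$ and $T$ are built from $X$ by cofibers and filtered colimits. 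For the ``if'' direction, if $Y\otimes T=0$ and $Y\otimes C=0$, then the cofiber sequence $\Sigma^{d}(Y\otimes X)\oto{\Id_{Y}\otimes f}Y\otimes X\to Y\otimes C=0$ shows $\Id_{Y}\otimes f$ is an equivalence, hence $Y\otimes X\simeq Y\otimes T=0$. I will also use the two elementary facts that $\langle A\oplus B\rangle=\langle A\rangle\vee\langle B\rangle$ and that $\langle A\rangle=\langle B\rangle$ implies $\langle A\otimes E\rangle=\langle B\otimes E\rangle$ for any fixed $E$ (both immediate from the definition of acyclics, via $A\otimes(E\otimes Y)=(A\otimes E)\otimes Y$).

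For the base case $n=0$, I would apply the decomposition to $X=\bb{S}_{(p)}$ with $f$ multiplication by $p$ (a $v_{0}$-self map): its cofiber is the mod $p$ Moore spectrum $\X(1)=\bb{S}/p$, a finite spectrum of type $1$, and its telescope is $p^{-1}\bb{S}_{(p)}=\bb{S}_{\bb{Q}}$, which is Bousfield equivalent to $\T(0)$. Thus $\langle\bb{S}_{(p)}\rangle=\langle\T(0)\rangle\vee\langle\X(1)\rangle$, and tensoring with $E$ (using $\bb{S}_{(p)}\otimes E\simeq E$ since $E$ is $p$-local, together with $\langle A\oplus B\rangle=\langle A\rangle\vee\langle B\rangle$) gives the statement for $n=0$. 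For the inductive step, assume $E$ is Bousfield equivalent to $\bigl(\bigoplus_{i=0}^{n-1}\T(i)\otimes E\bigr)\oplus\bigl(\X(n)\otimes E\bigr)$ for some finite spectrum $\X(n)$ of type $n$. By the Periodicity Theorem, $\X(n)$ carries a $v_{n}$-self map $v\colon\Sigma^{k}\X(n)\to\X(n)$; its cofiber $\X(n+1)$ is a finite spectrum of type $n+1$, and $v^{-1}\X(n)=\T(n)$. Applying the decomposition to $\X(n)\otimes E$ with the self-map $v\otimes\Id_{E}$ — whose telescope is $\T(n)\otimes E$ and whose cofiber is $\X(n+1)\otimes E$, since $-\otimes E$ commutes with the relevant colimit and cofiber — yields $\langle\X(n)\otimes E\rangle=\langle\T(n)\otimes E\rangle\vee\langle\X(n+1)\otimes E\rangle$. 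Substituting into the inductive hypothesis and using $\langle A\oplus B\rangle=\langle A\rangle\vee\langle B\rangle$ completes the induction.

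There is no serious obstacle here; the argument is formal once Ravenel's self-map decomposition is recorded. The only point that needs a word is that the finite type-$(n+1)$ spectrum produced at the $n$-th stage is not literally a prescribed ``$\X(n+1)$'', and likewise the type-$n$ spectrum supplied by the inductive hypothesis need not be the one one started with; but by the Thick Subcategory Theorem any two finite spectra of a given type are Bousfield equivalent, and Bousfield equivalence is preserved under $-\otimes E$, so all such choices are immaterial. Alternatively, one may phrase the whole induction as the single Bousfield-class identity $\langle\bb{S}_{(p)}\rangle=\langle\T(0)\rangle\vee\cdots\vee\langle\T(n)\rangle\vee\langle\X(n+1)\rangle$ for the sphere and then smash with $E$.
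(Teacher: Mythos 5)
Your proof is correct and follows essentially the same route as the paper: both build the tower $\X(0)=\bb S_{(p)},\X(1),\dots,\X(n+1)$ of finite type-$k$ spectra via the Periodicity Theorem, with $\X(k+1)$ the cofiber of a $v_k$-self map and $\T(k)$ its telescope, and then repeatedly apply the Bousfield-class splitting $\langle X\rangle=\langle f^{-1}X\rangle\vee\langle C_f\rangle$ (which the paper simply cites as \cite[Lemma 1.34]{ravconj} and you prove directly). Your closing remark about the independence of the choices, via the Thick Subcategory Theorem, matches the paper's remark preceding its proof.
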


Note that the Bousfield class of $X\otimes Y$ depends only on the
Bousfield classes of $X$ and $Y$. Hence, in the statement and proof
of the above lemma, we are free to choose the $\T\left(i\right)$-s
and $\X\left(n+1\right)$ as we please.
\begin{proof}
	Using the Periodicity Theorem (\cite{nilp2}) we can construct a sequence of finite type $n$ spectra $\X\left(n\right)$
	with $v_{n}$-self maps 
	\[
	v_{n}{\colon}\Sigma^{d_{n}}\X\left(n\right)\to\X\left(n\right),
	\]
	such that
	\begin{enumerate}
		\item $\X\left(0\right)=\bb S_{\left(p\right)}$
		\item $\X\left(n+1\right)$ is the cofiber of $v_{n}$.
		\item $\T\left(n\right)=v_{n}^{-1}\X\left(n\right)$.
	\end{enumerate}
	The claim now follows from a repeated application of \cite[Lemma 1.34]{ravconj}.
\end{proof}
\begin{thm}
	\label{thm:Monochrom} Let $R$ be a non-zero $p$-local weak ring.
	The following are equivalent:
	\begin{enumerate}
		\item  There exists a (necessarily unique) integer $n\ge0$, such that
		\(\Sp_{\K\left(n\right)}\ss\Sp_{R}\ss\Sp_{\T\left(n\right)}.\)
		\item  Either $\Sp_{R}=\Sp_{H\bb Q}$, or $\Omega^{\infty}{\colon}\Sp_{R}\to\mathcal{S_{*}}$
			admits a retract.
		\item $\Sp_{R}$ is $\infty$-semiadditive.
		\item $\Sp_{R}$ is $1$-semiadditive.
		\item $\supp(R) = \{n\}$ for some $0 \le n<\infty$. 
	\end{enumerate}
	Moreover, the integer $n$ in $\left(1\right)$ and $\left(5\right)$
	is the same one.\end{thm}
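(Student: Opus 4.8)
The plan is to prove the cycle of implications $(1)\Rightarrow(3)\Rightarrow(4)\Rightarrow(5)\Rightarrow(1)$, with $(1)\Leftrightarrow(2)$ handled separately via the Bousfield--Kuhn functor, and to track the height $n$ throughout so that the ``moreover'' clause is immediate.

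First I would dispatch the easy implications. The implication $(3)\Rightarrow(4)$ is trivial. For $(1)\Rightarrow(3)$: given $\Sp_{\K(n)}\subseteq\Sp_R\subseteq\Sp_{\T(n)}$, the localization $L_R$ is a $\otimes$-localization of $\Sp_{(p)}$ with support $\{n\}$ by \exaref{Chromatic_Support}(2) and the squeeze; I would then apply \corref{Semi_Add_Mode}(2) to a suitable colimit-preserving symmetric monoidal functor. The cleanest route: the inclusions give colimit preserving symmetric monoidal localization functors $\Sp_{\T(n)}\to\Sp_R\to\Sp_{\K(n)}$, and since $\Sp_{\T(n)}$ is $\infty$-semiadditive by \thmref{Tn_Semiaddi}, \corref{Semi_Add_Mode}(2) (applied inductively at each finite level $m$) shows $\Sp_R$ is $\infty$-semiadditive. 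For $(4)\Rightarrow(5)$: $\Sp_R$ is $1$-semiadditive, so by \propref{Semiadd_Monochrom} either $\supp(R)=\es$ or $\supp(R)=\{n\}$ for some finite $n$; but $R$ is a non-zero weak ring, so by the Nilpotence Theorem (\thmref{Nilpotence}) $\supp(R)\ne\es$. Hence $\supp(R)=\{n\}$ with $n$ finite.

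The implication $(5)\Rightarrow(1)$ is where the real work lies, and I expect this to be the main obstacle. Assume $\supp(R)=\{n\}$. The inclusion $\Sp_{\K(n)}\subseteq\Sp_R$ follows immediately from \lemref{Kn_LSp}, since $n\in\supp(R)$. For the inclusion $\Sp_R\subseteq\Sp_{\T(n)}$ I would argue as follows: since $\supp(R)=\{n\}$, in particular $\supp(R)\subseteq\{0,\dots,n\}$, so by \thmref{Height_Below_n} we have $\X(n+1)\otimes R=0$, i.e.\ $L_R$ factors through the finite localization $L_n^f$ (the localization at $\T(0)\oplus\cdots\oplus\T(n)$), exactly as in the proof of \corref{General_Contractibility}. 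So $\Sp_R\subseteq\Sp_{L_n^f}$. Now I must further cut down from $\{0,1,\dots,n\}$ to $\{n\}$: I would use \lemref{Bousfield_Class_Expansion} to write $R$ as Bousfield equivalent to $\bigoplus_{i=0}^{n}(\T(i)\otimes R)\oplus(\X(n+1)\otimes R)$; since $\X(n+1)\otimes R=0$ and $\T(i)\otimes R=0$ for $i<n$ (because $\supp(\T(i))=\{i\}$ is disjoint from $\supp(R)=\{n\}$, using the Künneth formula for supports), this degenerates to $R$ being Bousfield equivalent to $\T(n)\otimes R$. Therefore $\langle R\rangle\le\langle\T(n)\rangle$, which gives $\Sp_R\subseteq\Sp_{\T(n)}$. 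The subtle point to be careful about is that Bousfield equivalence of $R$ and $\T(n)\otimes R$ only yields $\langle R\rangle\le\langle\T(n)\rangle$, which is precisely the containment of localization subcategories in the right direction, so this is fine. Uniqueness of $n$ in $(1)$ follows since the squeeze forces $\supp(R)=\{n\}$, and the ``moreover'' clause holds because $(1)$ and $(5)$ are linked by this identification of $n$.

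Finally, for $(1)\Leftrightarrow(2)$ I would invoke the known structure of the Bousfield--Kuhn functor. Given $(1)$, either $n=0$, and then $\Sp_R$ is squeezed between $\Sp_{\K(0)}=\Sp_{H\bb Q}$ and $\Sp_{\T(0)}=\Sp_{H\bb Q}$, so $\Sp_R=\Sp_{H\bb Q}$; or $n\ge 1$, and then the Bousfield--Kuhn functor $\Phi_n\colon\mathcal{S}_*\to\Sp_{\T(n)}$ restricted appropriately, together with the fact that $\Sp_R$ sits between $\Sp_{\K(n)}$ and $\Sp_{\T(n)}$, provides a retract of $\Omega^\infty\colon\Sp_R\to\mathcal{S}_*$ (one composes $L_R$ with $\Phi_n$ and uses that $\Phi_n\circ\Sigma^\infty$-type comparisons are compatible under the localizations in the squeeze). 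For the converse $(2)\Rightarrow(1)$, if $\Sp_R=\Sp_{H\bb Q}$ then $n=0$ works; otherwise a retract of $\Omega^\infty$ forces, by the argument of \cite{ClausenAkhil}, that $\Sp_R$ is $1$-semiadditive, which is $(4)$, and we have already shown $(4)\Rightarrow(1)$ through $(5)$. This keeps the logical graph connected and closes the equivalence.
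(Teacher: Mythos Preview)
Your approach is essentially identical to the paper's: the same cycle $(1)\Rightarrow(3)\Rightarrow(4)\Rightarrow(5)\Rightarrow(1)$, with $(1)\Rightarrow(2)$ via the Bousfield--Kuhn functor and $(2)\Rightarrow(4)$ via \cite{ClausenAkhil}, and the same use of \lemref{Bousfield_Class_Expansion} and \thmref{Height_Below_n} in $(5)\Rightarrow(1)$.

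There is one small gap to patch in $(5)\Rightarrow(1)$. You write that $\T(i)\otimes R=0$ for $i<n$ ``because $\supp(\T(i))=\{i\}$ is disjoint from $\supp(R)=\{n\}$, using the K\"unneth formula for supports''. But empty support does not by itself force a spectrum to vanish (e.g.\ the Brown--Comenetz dual, \exaref{Chromatic_Support}(4)). What is needed is that $\T(i)\otimes R$ is again a weak ring: one may choose $\X(i)$, and hence $\T(i)$, to carry a ring structure, so $\T(i)\otimes R$ is a weak ring by \lemref{Tensor_Weak_Rings}; then \thmref{Nilpotence} gives $\T(i)\otimes R=0$ from its empty support. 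This is exactly the argument the paper supplies, and it is the same Nilpotence step you already invoked in $(4)\Rightarrow(5)$, so the fix is routine --- but as written the implication does not follow.
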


\begin{proof}
	Consider the following slight variant of  condition (5):
	\begin{enumerate}
	\item[(5)'] Either $\supp(R)=\es$ or $\supp(R)=\{n\}$ for some $0 \le n < \infty$.  
	\end{enumerate}
	
	We shall prove the theorem by verifying all the implications in the following diagram:
\[
\vcenter{
	\xymatrix@R=0.2pc@C=4pc{
								&  & & & &	\\ \\
	\ar@{==>}[d]		   				& (2)\ar@{=>}@/^0.3pc/[rd] & & & \\	
	(1)\ar@{=>}@/^0.35pc/[ru]\ar@{=>}@/_0.35pc/[rd] &  & (4)\ar@{=>}[r] & (5')\ar@{==>}[r] & (5)\ar@<-1pt>@{--}`u[uuullll]`[llllu][llllu] 
				  \ar@<+1pt>@{--}`u[uuullll]`[llllu][llllu] \\		   				    & (3)\ar@{=>}@/_0.3pc/[ru]
	}	
}
\]
	In fact, we show that the implications labeled by solid arrows hold for a general $\otimes$-localization $L$, and those labeled by dashed arrows hold for $L_R$, where $R$ is a non-zero $p$-local weak ring.
	
	We start by showing that $\xymatrix{\left(1\right)\ar@{=>}[r] & \left(2\right).}$ If $n=0$ then 
	\[\Sp_{\K\left(0\right)}=\Sp_{\T\left(0\right)}=\Sp_{\mathbb{Q}}\] and we
	are done. Otherwise, let $\Phi_{n}{\colon}\mathcal{S}_{*}\to \Sp_{T(n)}$
	be the Bousfield-Kuhn functor (see \cite{kuhn1989morava,bousfield2001telescopic}). We get that $L\circ\Phi_{n}$ is a
	retract of $\Omega^{\infty}{\colon}\Sp_{L}\to\mathcal{S}_{*}$. To show that $\xymatrix{\left(1\right)\ar@{=>}[r] & \left(3\right),}$
	consider the symmetric monoidal colimit preserving functor $L{\colon}\Sp_{T(n)}\to \Sp_{L}$.
	The claim now follows from Theorem \ref{thm:Tn_Semiaddi} and Corollary \ref{cor:Semi_Add_Mode}. The implication
	$\xymatrix{\left(2\right)\ar@{=>}[r] & \left(4\right)}$ is proved in \cite[Theorem 2.6]{ClausenAkhil}. Finally, $\xymatrix{\left(3\right)\ar@{=>}[r] & \left(4\right)}$ is
	trivial and $\xymatrix{\left(4\right)\ar@{=>}[r] & \left(5\right)}$ is \propref{Semiadd_Monochrom}. 
	
	 It is left to show the implications $\xymatrix{\left(5\right)' \ar@{==>}[r] & \left(5\right)}$ and $\xymatrix{\left(5\right) \ar@{==>}[r] & \left(1\right),}$ where $L=L_R$ for a non-zero $p$-local weak ring $R$. The first implication follows from \thmref{Nilpotence}. For the second, let $0\le n < \infty$ be such that $\supp(R) = \{n\}$. By \lemref{Kn_LSp}, we have $\Sp_{\K\left(n\right)}\ss\Sp_{R}$. 
	It remains to show that $\Sp_{R} \subseteq \Sp_{\T\left(n\right)}$. By \lemref{Bousfield_Class_Expansion}, the spectrum
	$R$ is Bousfield equivalent to
	\[
	\left(\T\left(0\right)\otimes R\right)\oplus\left(\T\left(1\right)\otimes R\right)\oplus\cdots\oplus\left(\T\left(n\right)\otimes R\right)\oplus\left(\X\left(n+1\right)\otimes R\right).
	\]
	From the assumption $\supp \left(R\right)=\{n\}$ and  \thmref{Height_Below_n}, we get that $\X\left(n+1\right)\otimes R=0$.
	By Example \ref{example: telescopic weak rings} and Lemma \lemref{Tensor_Weak_Rings}
	we may assume that the spectra $\T\left(m\right)\otimes R$ for $m<n$
	are weak rings. Now, for $m\ne n$ we have 
	\[\supp(\T\left(m\right)\otimes R) = \supp(\T\left(m\right))\cap \supp(R)=\{m\}\cap\{n\}=\es\]
	and therefore $\T\left(m\right)\otimes R=0$ by \thmref{Nilpotence}. It follows that
	\[
	\Sp_{R}\simeq\Sp_{\T\left(n\right)\otimes R}\ss\Sp_{\T\left(n\right)}.
	\]
\end{proof}

We conclude by showing that the equivalence of conditions $\left(3 \right)$ and $\left(4 \right)$ in \thmref{Monochrom} holds for general, not necessarily $p$-local, weak rings.

%\begin{lem}
%	\label{lem:torsion weak ring factors} Let $R\in \Sp$ be a weak ring
%	such that $R\otimes H\mathbb{Q}=0$. There exist a finite set
%	of primes $S$ such that $R=\prod_{p\in S}R_{(p)}$, where $R_{(p)}$
%	is the $p$-localization of $R$.
%\end{lem}

%\begin{proof}
%	Since $R\otimes H\mathbb{Q}=0$ the unit element $u_{R}\in\pi_{0}R$ has
%	some finite order $n$. Since $\mu_{R}:R\otimes R \to R$ is a retract of $u_{r}\otimes R$, 
%	Take $S$ to be the set of prime divisors
%	of $n$. The claim now follows from the arithmetic square. 
%\end{proof}

\begin{lem}
	\label{lem:torsion_than_product}Let $E$ be a spectrum and let $\ell$
	be an integer such that $E\oto{\times \ell} E$ is null. The canonical
	functor $F{\colon}\Sp_{E}\to\prod_{p\mid\ell}\Sp_{E_{(p)}}$ is an equivalence,
	where $E_{(p)}$ is the $p$-localization of $E$ at the prime $p$.
\end{lem}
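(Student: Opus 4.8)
The plan is to identify the canonical functor $F$ with $X\mapsto\bigl(L_{E_{(p)}}X\bigr)_{p\mid\ell}$ and to prove it is an equivalence by checking full faithfulness and essential surjectivity. (We may as well assume $\ell\ne 0$; if $|\ell|=1$ the hypothesis forces $E=0$ and there is nothing to prove, so the index set is finite.) I would first record two structural facts. Since $E_{(p)}\simeq E\otimes\bb S_{(p)}$, every $E$-acyclic spectrum is $E_{(p)}$-acyclic, so $\Sp_{E_{(p)}}\ss\Sp_E$ and the Bousfield reflection restricts to a functor $L_{E_{(p)}}\colon\Sp_E\to\Sp_{E_{(p)}}$; these assemble to $F$. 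Conversely, the hypothesis that $\times\ell$ is null on $E$ makes $\pi_*E$ an $\ell$-torsion graded group, which splits canonically and functorially as the sum of its $q$-primary parts over $q\mid\ell$; as $\pi_*(E_{(q)})$ is exactly the $q$-primary part of $\pi_*E$, the canonical map $E\to\bigoplus_{q\mid\ell}E_{(q)}$ is a $\pi_*$-isomorphism, hence an equivalence. In particular a spectrum is $E$-acyclic if and only if it is $E_{(q)}$-acyclic for all $q\mid\ell$.

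The crux is the following orthogonality statement, which is where all the content sits: for distinct primes $p,q$ dividing $\ell$ and any $E_{(p)}$-local spectrum $Y$, one has $E_{(q)}\otimes Y=0$. The argument I have in mind: because $E_{(p)}\simeq E\otimes\bb S_{(p)}$, every $\bb S_{(p)}$-acyclic spectrum is $E_{(p)}$-acyclic, so every $E_{(p)}$-local spectrum is $\bb S_{(p)}$-local, i.e.\ $p$-local, whence $\times q$ is invertible on $Y$ and hence on $E_{(q)}\otimes Y$. On the other hand $E_{(q)}\otimes Y\simeq\bb S_{(q)}\otimes(E\otimes Y)$ is $q$-local, and since $\times\ell$ is null on $E$ it is null on $E\otimes Y$; writing $\ell=q^{a}m$ with $q\nmid m$, the integer $m$ acts invertibly on the $q$-local spectrum $E_{(q)}\otimes Y$, so $\times q^{a}$ is null on it. Combining the two, $\mathrm{id}_{E_{(q)}\otimes Y}$ is null, so $E_{(q)}\otimes Y=0$.

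Granting this, I would finish as follows. For full faithfulness, fix $X\in\Sp_E$ and consider $\varphi\colon X\to B:=\bigoplus_{p\mid\ell}L_{E_{(p)}}X$. For each $q\mid\ell$, applying $E_{(q)}\otimes(-)$ annihilates the summands with $p\ne q$ by the orthogonality statement and sends the $p=q$ summand to $E_{(q)}\otimes X\to E_{(q)}\otimes L_{E_{(q)}}X$, an equivalence since $X\to L_{E_{(q)}}X$ is an $E_{(q)}$-equivalence; thus $E_{(q)}\otimes\varphi$ is an equivalence, so its cofiber $C$ is $E_{(q)}$-acyclic for every $q\mid\ell$, hence $E$-acyclic. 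As $C$ and all its shifts are $E$-acyclic and $Y\in\Sp_E$ is $E$-local, $\varphi$ induces an equivalence $\map_{\Sp}(B,Y)\iso\map_{\Sp}(X,Y)$, and $\map_{\Sp}(B,Y)\simeq\prod_{p\mid\ell}\map_{\Sp_{E_{(p)}}}(L_{E_{(p)}}X,L_{E_{(p)}}Y)$, which is exactly the mapping space in the target of $F$. For essential surjectivity, given $(Z_p)_{p\mid\ell}$ with $Z_p\in\Sp_{E_{(p)}}$, the biproduct $\bigoplus_{p\mid\ell}Z_p$ lies in $\Sp_E$, and by orthogonality $L_{E_{(q)}}\bigl(\bigoplus_pZ_p\bigr)\simeq Z_q$ for each $q$, so $F$ carries it to $(Z_q)_q$. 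The only non-formal ingredient is the orthogonality statement of the second paragraph; everything else is bookkeeping with Bousfield localizations.
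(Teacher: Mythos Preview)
Your proof is correct and uses the same two ingredients as the paper: the decomposition $E\simeq\bigoplus_{p\mid\ell}E_{(p)}$ and the orthogonality statement $E_{(q)}\otimes Y=0$ for $Y\in\Sp_{E_{(p)}}$, $p\neq q$. The paper packages these slightly more efficiently via an adjunction: it observes that $G\colon (X_p)_p\mapsto\bigoplus_p X_p$ is right adjoint to $F$, then checks that the counit is an isomorphism (this is exactly your essential surjectivity computation) and that $F$ is conservative (immediate from $E\simeq\bigoplus_p E_{(p)}$); these two facts force $F$ to be an equivalence. Your route through full faithfulness and essential surjectivity is equivalent in content but requires one step you left implicit: in the identification $\map_{\Sp}(B,Y)\simeq\prod_p\map_{\Sp_{E_{(p)}}}(L_{E_{(p)}}X,L_{E_{(p)}}Y)$ you need $\map_{\Sp}(L_{E_{(p)}}X,Y)\simeq\map_{\Sp}(L_{E_{(p)}}X,L_{E_{(p)}}Y)$, and this is not formal from $L_{E_{(p)}}X$ being $E_{(p)}$-local. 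It does follow once you apply your decomposition to $Y$ as well and use orthogonality (in the form: $L_{E_{(p)}}X$ is $E_{(q)}$-acyclic for $q\neq p$, hence maps trivially to the $E_{(q)}$-local summand $L_{E_{(q)}}Y$), but this deserves a sentence. The adjunction packaging in the paper sidesteps this bookkeeping.
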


\begin{proof}
	The functor $F$ admits a right adjoint $G$ given on objects by 
\[
	(X_{p})_{\{p\mid \ell\}}\mapsto\bigoplus_{p\mid \ell} X_{p}.
\]
	It suffices to show that $F$ is conservative and that the counit of the adjunction is an isomorphism. Since multiplication by $\ell$ is null on $E$, all homotopy groups of $E$ are $\ell$-torsion. It follows that the canonical map $E \to \bigoplus_{p\mid \ell} E_{(p)}$ is an isomorphism on homotopy groups and hence an isomorphism. Thus, $F$ is conservative. The components of the counit are given by
\[
	L_{E_{(p)}}\left(\bigoplus_{q\mid \ell}X_{q}\right) \to X_{p}.
\] 
	Since $L_{E_{(p)}}$ is exact, it is enough to show that $L_{E_{(p)}}(X_q)=0$ for all $q \neq p$. Indeed, multiplication by $p$ acts invertibly on $X_q$ and nilpotently on $E_{(p)}$, hence $E_{(p)}\otimes X_q=0$.    
\end{proof}

\begin{cor} \label{cor:Semiadd_Collapse}
	Let $R\in \Sp$ be a (not necessarily $p$-local) weak ring. Then $\Sp_{R}$
	is 1-semiadditive if and only if it is $\infty$-semiadditive. 
\end{cor}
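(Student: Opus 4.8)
The plan is to prove the nontrivial implication ($1$-semiadditive $\Rightarrow$ $\infty$-semiadditive), reducing it via the arithmetic square to the $p$-local case already settled in \thmref{Monochrom}. So assume $\Sp_R$ is $1$-semiadditive and $R\neq 0$. First I would record the following: for every prime $q$, the spectrum $R_{(q)}=R\otimes\bb S_{(q)}$ is a $q$-local weak ring by \lemref{Tensor_Weak_Rings}, and the $q$-localization $\Sp_R\to\Sp_{R_{(q)}}$ is a colimit-preserving symmetric monoidal functor, so $\Sp_{R_{(q)}}$ is $1$-semiadditive by \corref{Semi_Add_Mode}; the same argument applies to $R_{\bb Q}=R\otimes H\bb Q$. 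Hence by \thmref{Monochrom}: whenever $R_{(q)}\neq 0$ one has $\supp(R_{(q)})=\{n_q\}$ with $0\le n_q<\infty$, the $\infty$-category $\Sp_{R_{(q)}}$ is $\infty$-semiadditive, and the height-$n_q$ Morava $E$-theory $E_{n_q}$ at $q$ sits in $\Sp_{K(n_q)}\ss\Sp_{R_{(q)}}\ss\Sp_R$ as an $\bb E_\infty$-ring (using $\langle R_{(q)}\rangle\le\langle R\rangle$); and if $R_{\bb Q}\neq 0$ then, all nonzero rational spectra having the Bousfield class of $H\bb Q$, one gets $\Sp_{R_{\bb Q}}=\Sp_{H\bb Q}$, which is $\infty$-semiadditive by the case $n=0$ of \thmref{Tn_Semiaddi}, and $H\bb Q\in\Sp_R$ as an $\bb E_\infty$-ring.

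The crux is the dichotomy: \emph{if $R_{\bb Q}\neq 0$ then $R$ is rational} (so $\Sp_R=\Sp_{H\bb Q}$). Suppose not; since $R$ is not rational, $R\otimes \bb S/q\neq 0$ for some prime $q$, and then $R_{(q)}\neq 0$ with $n_q\ge 1$ (because $\supp(R_{(q)})\cap\supp(\bb S/q)\neq\es$, using \exaref{Chromatic_Support}). Form the $\bb E_\infty$-ring $T=L_R\!\left(H\bb Q\otimes E_{n_q}\right)$ in $\Sp_R$; it is nonzero, since applying $-\otimes H\bb Q$ to it yields $R_{\bb Q}\otimes E_{n_q}$, and this is nonzero as $R_{\bb Q}$ has the Bousfield class of $H\bb Q$ and $\pi_\ast E_{n_q}\otimes\bb Q\neq 0$. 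By \lemref{Box_Unit} the natural endomorphism $|BC_q|$ of $\Id_{\Sp_R}$ acts on any $\bb E_\infty$-ring $X\in\Sp_R$ as multiplication by the image of $|BC_q|_{\one}$ under the unit of $X$. Applying naturality of $|BC_q|$ along the two ring maps $H\bb Q\to T$ and $E_{n_q}\to T$, together with the computations $|BC_q|_{H\bb Q}=q^{-1}$ (from $\dim_{\Sp_{H\bb Q}}(BC_q)=\chi(BC_q;\bb Q)=1=|BC_q|\cdot|C_q|$ and \corref{Dim_Sym}) and $|BC_q|_{E_{n_q}}=q^{\,n_q-1}$ (\lemref{EM_Box_Morava} together with \corref{Integral_Functor}), one obtains $q^{\,n_q-1}\cdot 1_T=q^{-1}\cdot 1_T$ in $\pi_0 T$; multiplying by $q$ (invertible in $\pi_0 T$, which receives $\bb Q$) gives $(q^{\,n_q}-1)\cdot 1_T=0$, so $1_T$ is torsion. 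But $\pi_0 T$ carries an additive $r$-derivation for every prime $r$ by \thmref{Delta_Semi_Add}; decomposing $1_T$ into its primary components and running the argument of \propref{Delta_Torsion_Nilpotent} at each prime dividing $q^{\,n_q}-1$ forces $1_T$ to be nilpotent, hence $T=0$, a contradiction. This proves the dichotomy.

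To finish: if $R_{\bb Q}\neq 0$ then by the dichotomy $R$ is rational and $\Sp_R=\Sp_{H\bb Q}$ is $\infty$-semiadditive. Otherwise $R_{\bb Q}=0$, i.e.\ $R$ is a torsion spectrum, so $R\simeq\bigoplus_{q}R_{(q)}$ (each homotopy group of $R$ being the direct sum of its $q$-primary parts $\pi_\ast R_{(q)}$), and the argument of \lemref{torsion_than_product} applies to produce an equivalence $\Sp_R\simeq\prod_{\{q\,:\,R_{(q)}\neq 0\}}\Sp_{R_{(q)}}$. Each factor is $\infty$-semiadditive by the first paragraph, so \corref{product_semiadditive}, applied for every $m$, shows $\Sp_R$ is $\infty$-semiadditive. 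The main obstacle is the dichotomy — in particular verifying $T\neq 0$ and extracting the integral relation $(q^{\,n_q}-1)\cdot 1_T=0$ from naturality of $|BC_q|$ across two different $\bb E_\infty$-rings; by comparison, passing to $\Sp_{R_{(q)}}$, and the reassembly in the torsion case (where one must upgrade the primary decomposition of $\pi_\ast R$ to a splitting of $R$ and use the possibly-infinite version of \lemref{torsion_than_product}) are the remaining, more routine, points.
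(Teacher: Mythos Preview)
Your overall plan matches the paper's: split into the cases $R\otimes H\bb Q\neq 0$ and $R\otimes H\bb Q=0$, and in each case reduce to the $p$-local statement \thmref{Monochrom}. However, your execution differs from the paper's in both branches, and in the torsion branch there is a genuine gap.

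\textbf{Rational case.} Your dichotomy argument is correct but unnecessarily elaborate: you essentially re-derive a special case of \propref{Semiadd_Monochrom} by building the ring $T$ and comparing two values of $|BC_q|$. The paper's argument is much shorter. If $H\bb Q\otimes R\neq 0$, then for \emph{every} prime $p$ one has $K(0)\otimes R_{(p)}=H\bb Q\otimes R\neq 0$, so $0\in\supp(R_{(p)})$; since $\Sp_{R_{(p)}}$ is $1$-semiadditive, \thmref{Monochrom} forces $\supp(R_{(p)})=\{0\}$, and then \thmref{Height_Below_n} (with the Moore spectrum $M(p)=\X(1)$) gives $R\otimes M(p)\simeq R_{(p)}\otimes M(p)=0$. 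As this holds for all $p$, $R\in\Sp_{H\bb Q}$ and $\Sp_R=\Sp_{H\bb Q}$. (Incidentally, once you know $T$ is an $H\bb Q$-algebra, the relation $(q^{n_q}-1)\cdot 1_T=0$ already forces $T=0$ without any appeal to $p$-derivations.)

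\textbf{Torsion case.} Here is the real gap. You note that $R\otimes H\bb Q=0$ makes $\pi_*R$ torsion and then appeal to a ``possibly-infinite version'' of \lemref{torsion_than_product}, calling it routine. It is not: the proof of that lemma uses that multiplication by $p$ is \emph{nilpotent} on $E_{(p)}$, which need not hold for a general $p$-local torsion spectrum; moreover, in an infinite product the right adjoint is $\prod_q$ rather than $\bigoplus_q$, and one would need $R_{(p)}\otimes\prod_{q\neq p}X_q=0$, which again hinges on $p$ acting nilpotently on $R_{(p)}$. The paper avoids all of this by using the weak-ring structure once more: since $\pi_0 R$ is torsion, the unit $u_R\in\pi_0 R$ has some finite order $\ell$, and the factorization of $\Id_R$ through $\mu_R\circ(u_R\otimes\Id_R)$ gives
\[
\ell\cdot\Id_R=\mu_R\bigl((\ell\cdot u_R)\otimes\Id_R\bigr)=0.
\]
Thus $R_{(q)}=0$ for every $q\nmid\ell$, only finitely many primes are involved, and \lemref{torsion_than_product} applies verbatim. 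You never invoke the weak-ring hypothesis in the torsion case, and without it the reduction to a finite product of $\Sp_{R_{(q)}}$'s is not justified.
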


\begin{proof}
	Denote by $R_{(p)}$ the $p$-local weak ring $R\otimes\mathbb{S}_{(p)}$.
	By \corref{Semi_Add_Mode} applied to the localization functor $F_{p}\colon \Sp_{R}\to \Sp_{R_{(p)}}$, the $\infty$-category $\Sp_{R_{(p)}}$ is 1-semiadditive
	and hence by \thmref{Monochrom} it is $\infty$-semiadditive. We divide into cases according to whether $R\otimes H\mathbb{Q}$ vanishes or not. 
If $R\otimes H\mathbb{Q}=0$, then the unit $u_{R}{\colon}\mathbb{S}\to R$ has finite order $\ell$ in $\pi_{0}R$. Hence, 
\[
	\ell \cdot \Id_R =\ell \cdot \mu_R(u_R \otimes \Id_R)= 
	\mu_R((\ell \cdot u_R) \otimes \Id_R)=0.
\] 
By \lemref{torsion_than_product} we have 
\[\Sp_{(R)}\cong \prod_{p\mid \ell} \Sp_{R_{(p)}}\]
and by \corref{product_semiadditive} it is $\infty$-semiadditive. Now, consider the case where $H\mathbb{Q}\otimes R\ne0$. 
For every prime $p$, since $\Sp_{R_{(p)}}$ is 1-semiadditive and \[K(0)\otimes R_{(p)}=H\mathbb{Q}\otimes R\ne0,\] we get from \thmref{Monochrom} that $\supp(R_{(p)})=\{0\}$. 
By \thmref{Height_Below_n} applied to the Moore spectrum $M(p)=\X\left(1\right)$, we obtain 
\[
	R\otimes M(p) \simeq R_{(p)}\otimes M(p)=0.
\] 
It follows that $R\in \Sp_{H\mathbb{Q}}=\Mod_{H\mathbb{Q}}$ and hence $\Sp_{R}=\Sp_{H\mathbb{Q}}$ is $\infty$-semiadditive. 
\end{proof}

\bibliographystyle{alpha}
\phantomsection\addcontentsline{toc}{section}{\refname}\bibliography{AmbidexterityFinal}

\end{document}